\documentclass[11pt,a4paper,twoside]{amsproc}
\usepackage[top=1in, bottom=1.25in, marginparwidth=1in, inner=1in, outer=1in]{geometry}

\usepackage{lmodern,microtype}
\usepackage[T2A,T1]{fontenc} %
\usepackage[utf8]{inputenc}
\usepackage[USenglish]{babel}
\usepackage{booktabs}%
\usepackage{pinlabel}

\usepackage{graphicx}
\usepackage{tikz,tikz-cd}
\usetikzlibrary{calc,cd,decorations.pathreplacing,decorations.markings, decorations.pathmorphing, decorations.shapes,shapes,decorations,arrows,arrows.meta,bending}
\tikzcdset{arrow style=tikz, diagrams={>=latex}}
\tikzset{>=latex}
\tikzstyle{tight}=[font=\scriptsize, inner sep=1pt, outer sep=1pt]
\tikzstyle{label}=[inner sep=2pt, outer sep=1pt,fill=white, rounded corners]
\usepackage{float}
\usepackage[labelfont=bf]{caption}%
\DeclareCaptionFormat{myformat}{\fontsize{9}{10}\selectfont#1#2#3}
\usepackage[labelfont=up,margin=5pt]{subcaption}
\usepackage{etoolbox}
\usepackage{tabularx}
\AtBeginEnvironment{tabularx}{\footnotesize}
\usepackage{picins}

\usepackage{enumitem}
\usepackage{setspace} 
\usepackage[hidelinks]{hyperref}
\hypersetup{%
	pdfencoding=auto, %
	bookmarksdepth=3%
}
\usepackage[nameinlink,capitalise]{cleveref}
\let\cref\Cref
\usepackage{bookmark}
\usepackage{xurl}%

\usepackage[only,llbracket,rrbracket]{stmaryrd}
\usepackage{amsmath}
\usepackage{nicefrac}
\usepackage{amsfonts}
\usepackage{amssymb}
\usepackage{amsthm}
\usepackage{mathtools}
\usepackage{mathrsfs}
\mathtoolsset{mathic=true}
\def\co{\colon\thinspace\relax}%

\newtheorem{theorem}{Theorem}[section]
\newtheorem*{theorem*}{Theorem}
\newtheorem{lemma}[theorem]{Lemma}

\newtheorem{corollary}[theorem]{Corollary}
\newtheorem*{corollary*}{Corollary}
\newtheorem{proposition}[theorem]{Proposition}
\theoremstyle{definition}
\newtheorem{definition}[theorem]{Definition}
\newtheorem{notation}[theorem]{Notation}
\newtheorem{assumptions}[theorem]{Assumptions}
\newtheorem{example}[theorem]{Example}

\newtheorem{remark}[theorem]{Remark}
\newtheorem{warn}[theorem]{Warning}

\makeatletter
\newcommand*{\math@version@bold}{bold}
\DeclareMathOperator\DD{%
	\textrm{%
		\usefont{T2A}{cmr}{\ifx\math@version\math@version@bold bx\else m\fi}{n}%
		\CYRD
	}%
} 
\makeatother

\renewcommand{\geq}{\geqslant}
\renewcommand{\leq}{\leqslant}

\newcommand{\Z}{\mathbb{Z}}
\newcommand{\Q}{\mathbb{Q}} 
\newcommand{\R}{\mathbb{R}}
\newcommand{\CoeffRing}{\mathbf{R}}
\newcommand{\CoeffSRing}{\mathbf{S}}
\newcommand{\CoeffField}{\mathbf{k}}
\newcommand{\field}{\CoeffField}%
\newcommand{\CoeffFieldTwo}{\mathbb{F}}

\renewcommand{\C}{\mathcal{C}}
\newcommand{\F}{\mathcal{F}}

\newcommand{\halfbullet}[1][bottom]{%
	\mathbin{%
		\mathchoice
		{\halfbulletaux{#1}{0.5ex}}%
		{\halfbulletaux{#1}{0.4ex}}%
		{\halfbulletaux{#1}{0.3ex}}%
		{\halfbulletaux{#1}{0.25ex}}%
	}%
}
\newcommand{\halfbulletaux}[2]{%
	\vc{\begin{tikzpicture}
			\path (0,0) circle (#2*1.45);
			\path[draw, fill=white] (0,0) circle (#2);
			\ifthenelse{\equal{#1}{left}}{%
				\clip (-#2,-#2) rectangle (0,#2);
			}{%
				\ifthenelse{\equal{#1}{right}}{%
					\clip (0,-#2) rectangle (#2,#2);
				}{%
					\ifthenelse{\equal{#1}{top}}{%
						\clip (-#2,0) rectangle (#2,#2);
					}{%
						\ifthenelse{\equal{#1}{bottom}}{%
							\clip (-#2,-#2) rectangle (#2,0);
						}{}}}}%
			\fill (0,0) circle (#2);
			\draw (0,0) circle (#2);
	\end{tikzpicture}}%
}

\DeclareMathOperator{\varG}{\mathtt{G}} %

\newcommand{\A}{\mathcal{A}}
\newcommand{\B}{\mathcal{B}}
\newcommand{\D}{\mathcal{D}}
\newcommand{\Acone}{\A_{\operatorname{cone}}}
\newcommand{\Bcone}{\Binf_{\operatorname{cone}}}
\newcommand{\Ccone}{\C_{\operatorname{cone}}}

\newcommand{\Rd}{R^\partial}
\newcommand{\Binf}{\mathcal{B}^\infty}
\newcommand{\BinfU}{\mathcal{B}^\infty_U}

\newcommand{\Ad}{\operatorname{\mathcal{A}}^\partial}
\newcommand{\Aminus}{\operatorname{\mathcal{A}}^-}
\newcommand{\SaddleBC}{S_{\circ}}
\newcommand{\SaddleCB}{S_{\bullet}}
\newcommand{\hol}{{\circ}}
\newcommand{\sol}{{\bullet}}
\newcommand{\DotcobB}{D_{\bullet}}
\newcommand{\DotcobC}{D_{\circ}}

\newcommand{\W}{\mathcal{W}}

\DeclareMathOperator{\MF}{MF}
\DeclareMathOperator{\I}{\mathcal{I}} 
 
\DeclareMathOperator{\Mod}{Mod}

\DeclareMathOperator{\GL}{\mathit{GL}}
\DeclareMathOperator{\PSL}{\mathit{PSL}}

\DeclareMathOperator{\eval}{eval}
\newcommand{\boldnowarningf}{\text{\itshape\textbf{f}}\,}

\DeclareMathOperator{\Mor}{Mor}
\DeclareMathOperator{\End}{End}

\DeclareMathOperator{\Tw}{Tw}

\DeclareMathOperator{\Cobb}{\operatorname{Cob}_{\bullet}}
\DeclareMathOperator{\Cobl}{\operatorname{Cob}_{/{\mathit{l}}}}

\DeclareMathOperator{\gr}{g}

\newcommand{\ZmodTwo}{\Z/2}

\newcommand{\Diag}{\mathcal{D}} %
\DeclareMathOperator{\HFK}{\widehat{HFK}}

\newcommand{\typeAD}[3]{\!\prescript{}{#1}{#2}^{#3}}
\newcommand{\CAB}{\typeAD{A}{\C}{B}}
\newcommand{\CuAB}{\typeAD{A}{\underline{\C}}{B}}
\newcommand{\ddCAB}{\partial_{\!\CAB}}

\newcommand{\BraidB}{\sigma_{\bullet}}
\newcommand{\BraidW}{\sigma_{\circ}}
\newcommand{\TwistB}{\tau_{\bullet}}
\newcommand{\TwistW}{\tau_{\circ}}
\newcommand{\TwistL}{\tau_{\halfbullet}}

\newcommand{\BB}[1]{\prescript{}{\B}{#1}^{\B}}

\newcommand{\HF}{\operatorname{HF}}

\newcommand{\conn}[1]{\operatorname{x}(#1)}
\newcommand{\conni}{\operatorname{x}_i}
\newcommand{\connx}{\operatorname{x}_1}
\newcommand{\conny}{\operatorname{x}_3}
\newcommand{\connz}{\operatorname{x}_2}
\newcommand{\conns}{\operatorname{x}(\slope)}

\DeclareMathOperator{\HFT}{HFT}
\newcommand{\CFTminus}{\operatorname{CFT}^-}
\DeclareMathOperator{\CFTd}{CFT^\partial}

\newcommand{\KhT}[1]{\llbracket #1 \rrbracket} %
\newcommand{\KhTs}[1]{{\llbracket #1 \rrbracket}^s} %
\DeclareMathOperator{\Kh}{Kh}%
\DeclareMathOperator{\BN}{BN}%
\newcommand{\Khr}{\widetilde{\Kh}}%
\newcommand{\BNr}{\widetilde{\BN}}%

\newcommand{\aKh}{\mathbf{a}}
\newcommand{\rKh}{\mathbf{r}}
\newcommand{\sKh}{\mathbf{s}}

\newcommand{\slope}{\nicefrac{p}{q}}

\newcommand{\mutx}{\operatorname{mut}_1} %
\newcommand{\muty}{\operatorname{mut}_3} %
\newcommand{\mutz}{\operatorname{mut}_2} %
\newcommand{\muti}{\operatorname{mut}_i} %

\newcommand{\ThreePuncturedDisk}{D^2_3}
\newcommand{\FourPuncturedSphere}{S^2_4}

\newcommand{\TwoTorus}{\mathbb{T}^2}
\newcommand{\TwoTorusKh}{\TwoTorus_{4}}
\newcommand{\PlanarCover}{\R^2\smallsetminus\Z^2}

\newcommand{\PuncturedPlane}{\mathbb{R}^2\smallsetminus \Z^2}

\newcommand{\QPI}{\operatorname{\mathbb{Q}P}^1}

\newcommand{\nodes}{\operatorname{nodes}}
\newcommand{\arcs}{\operatorname{arcs}}

\newcommand{\EndsAll}{\mathtt{E}_*}
\newcommand{\EndsThree}{\mathtt{E}}
\newcommand{\Endi  }{\mathtt{e}_1}
\newcommand{\Endii }{\mathtt{e}_2}
\newcommand{\Endiii}{\mathtt{e}_3}

\DeclareFontFamily{U}{mathb}{\hyphenchar\font45}
\DeclareFontShape{U}{mathb}{m}{n}{%
	<-6> mathb5
	<6-7> mathb6
	<7-8> mathb7
	<8-9> mathb8
	<9-10> mathb9
	<10-12> mathb10
	<12-> mathb12 }{}
\DeclareSymbolFont{mathb}{U}{mathb}{m}{n}
\DeclareMathSymbol{\arcT}{\mathbin}{mathb}{"0D}
\DeclareMathSymbol{\arcD}{\mathbin}{mathb}{"05}
\newcommand{\DiagD}{\Diag_\arcD}
\newcommand{\DiagT}{\Diag_\arcT}
\newcommand{\lab}{\ell}
\newcommand{\KK}{\mathbf{K}}

\newcommand{\Cone}[1]{\mathrm{Cone}\left(#1\right)}

\newcommand{\iW}{\iota_\arcD}
\newcommand{\iB}{\iota_\arcT}
\newcommand{\sW}{s_\arcD}
\newcommand{\sB}{s_\arcT}
\newcommand{\dW}{d_\arcD}
\newcommand{\dB}{d_\arcT}

\newcommand{\Ib}{\iota_\bullet}
\newcommand{\Sw}{S_\circ}
\newcommand{\Sb}{S_\bullet}
\newcommand{\Dw}{D_\circ}
\newcommand{\Db}{D_\bullet}

\newcommand{\pairofpants}{%
	\vc{$
		\begin{tikzpicture}[scale=0.07]
			\draw[rotate around={60:(-30:2)}] (-30:2) ellipse (1 and 0.4);
			\draw[rotate around={-60:(210:2)}] (210:2) ellipse (1 and 0.4);
			\draw (90:2) ellipse (1 and 0.4);
			\draw[-] ($(210:2)+(120:1)$) .. controls ($(210:2)+(120:1)+(30:1)$) and (-1,1) .. (-1,2);
			\draw[-] ($(-30:2)+(60:1)$) .. controls ($(-30:2)+(60:1)+(150:1)$) and (1,1) .. (1,2);
			\draw[-] ($(-30:2)+(60:-1)$) .. controls ($(-30:2)+(60:-1)+(150:1)$) and ($(210:2)+(120:-1)+(30:1)$) .. ($(210:2)+(120:-1)$);
		\end{tikzpicture}
		$}%
}
\newcommand{\HMS}{\text{\upshape\textsc{hms}}\big(\pairofpants\big)}
\newcommand{\DW}[2]{\(#1\)\,\colorbox{lightgray}{\(#2\)}}
\newcommand{\DWo}[1]{\colorbox{lightgray}{#1}}

\newcommand{\vc}[1]{\vcenter{\hbox{#1}}}%
\newcommand{\mypic}[2]{%
  \newcommand{#2}{%
    \vc{%
      \includegraphics[page=#1]%
      {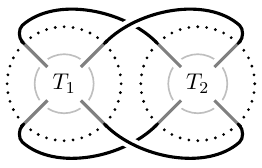}%
    }%
  }%
}%

\mypic{1}{\tanglepairingI}
\mypic{2}{\tanglepairingII}
\mypic{3}{\ConnZ}
\mypic{4}{\Li}
\mypic{5}{\Lo}
\mypic{6}{\Ni}
\mypic{7}{\No}
\mypic{6}{\ConnX}
\mypic{7}{\ConnY}
\mypic{10}{\Nil}
\mypic{11}{\Nol}
\mypic{12}{\CrossingL}
\mypic{13}{\CrossingR}
\mypic{20}{\LoDotB}
\mypic{21}{\NiDotL}
\mypic{22}{\NiDotR}
\mypic{23}{\NoDotT}
\mypic{24}{\NoDotB}
\mypic{26}{\Circle}
\mypic{27}{\WindingNonSpecialA}
\mypic{28}{\WindingNonSpecialB}
\mypic{29}{\WindingNonSpecialC}
\mypic{30}{\WindingNonSpecialD}
\mypic{31}{\WindingNonSpecialE}
\mypic{32}{\LittleSpecialWrapping}
\mypic{33}{\MuchSpecialWrapping}
\mypic{34}{\GeographyForSpecials}
\mypic{35}{\tube}
\mypic{36}{\plane}
\mypic{37}{\planedot}
\mypic{38}{\planedotdot}
\mypic{39}{\planedotstar}
\mypic{40}{\SpherePic}
\mypic{41}{\Spheredot}
\mypic{42}{\Spheredotdot}
\mypic{43}{\Spheredotdotdot}
\mypic{44}{\DiscT}
\mypic{45}{\DiscB}
\mypic{46}{\DiscR}
\mypic{47}{\DiscL}
\mypic{48}{\DiscTdot}
\mypic{49}{\DiscBdot}
\mypic{50}{\DiscRdot}
\mypic{51}{\DiscLdot}
\mypic{52}{\GeographyCovering}
\mypic{53}{\MutationTangle}
\mypic{54}{\MutationTangleX}
\mypic{55}{\MutationTangleY}
\mypic{56}{\MutationTangleZ}
\mypic{57}{\tanglesum}
\mypic{58}{\braidaction}
\mypic{59}{\braidactionB}
\mypic{60}{\braidactionW}

\begin{document}
\title{On mutation invariance in Khovanov homology}

\newcommand{\myemail}[1]{\href{mailto:#1}{#1}}
\author{Artem Kotelskiy}
\address{Mathematics Department \\ Stony Brook University}
\email{\myemail{artofkot@gmail.com}}
\urladdr{\url{https://artofkot.xyz/}}

\author{Liam Watson}
\address{Department of Mathematics \\ University of British Columbia}
\email{\myemail{liam@math.ubc.ca}}
\urladdr{\url{https://personal.math.ubc.ca/~liam/}}

\author{Claudius Zibrowius}
\address{Fakultät für Mathematik \\ Ruhr-Universität Bochum}
\email{\myemail{claudius.zibrowius@rub.de}}
\urladdr{\url{https://cbz20.raspberryip.com/}}

\begin{abstract}
We show that reduced Khovanov homology over any field is invariant under component-preserving Conway mutation. 
Our proof relies on strong geography restrictions for a certain Khovanov multicurve invariant associated with Conway tangles that we introduced in previous work \cite{KWZ}. 
Applying ideas from homological mirror symmetry, we give a full classification of the components of this invariant.
\end{abstract}
\maketitle

\let\emptyset\undefined
\let\coloneqq\undefined
\let\eqqcolon\undefined

{
\renewcommand{\thetheorem}{\Alph{theorem}}
\addtocounter{theorem}{1}

\section{Introduction}\label{sec:intro}

The purpose of this paper is twofold. 
First, we establish invariance of reduced Khovanov homology with field coefficients under component-preserving Conway mutation. 
Second, we develop structural results for certain multicurve invariants associated with Conway tangles. 
An earlier, unpublished version of this latter work establishes analogous results over the field~\(\CoeffFieldTwo\) of two elements \cite{KWZ-linear}. %
The present paper formulates these results over general fields by incorporating sign refinements, which is an essential part of the mutation result.

The multicurve results of this paper have already found a number of applications.
Over~\(\CoeffFieldTwo\), 
they form a key input to the proof of an equivariant version 
of the Cosmetic Surgery Conjecture
and allow for an elementary reproof 
of the Cosmetic Crossing Conjecture 
for split links \cite{KLMWZ}, 
originally due to Wang \cite{Wang}.
Furthermore, 
they provide the foundation of the thin gluing criteria established in~\cite{KWZ-thin}
and play an important role in the definition of the concordance invariant \(\vartheta\) from Khovanov homology and formulas for the Rasmussen invariant of Whitehead doubles and other satellite knots \cite{LZ};
and they explain phenomena observed by the second author in the study of the Khovanov homology of strongly invertible knots \cite{Watson2017,KWZ-strong}. 
With the extension to arbitrary fields established here, 
these applications generalize accordingly. 

\subsection{Khovanov homology and Conway mutation}
Conway mutation is a basic operation on knots and links that is defined as follows:
Given a knot or link \(L\) in \(S^3\), consider a sphere that intersects \(L\) transversely in four points, a so-called Conway sphere. 
This sphere decomposes \(S^3\) into two three-dimensional balls and \(L\) into two Conway tangles within those balls. 
Conway mutation is the process of regluing these tangles using one of the three hyper-elliptic involutions of the Conway sphere; see \cref{sec:mutation} for details. 
The new knot or link obtained is called a Conway mutant of~\(L\). 
Examples are shown in \cref{fig:example}. 
A Conway mutation is said to be component-preserving if any two open tangle strands in the Conway tangles lie on the same component of \(L\) if and only if they lie on the same component of the mutant.
Note that there are two distinct cases to consider: 
If all open tangle components lie on the same component of the link \(L\), 
as in the example in \cref{fig:example:mutant}, 
any mutation is component-preserving. 
If the two open components of each tangle lie on different components of \(L\),
only mutation along one of the axes is component-preserving.
This is illustrated in \cref{fig:example:mutant:Wehrli}.  

\begin{figure}[t]
	\centering
	\begin{subfigure}{0.5\textwidth}
		\centering
		\includegraphics[scale=0.9]{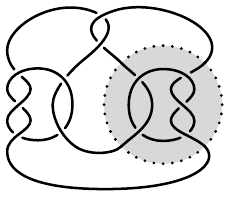}
		\includegraphics[scale=0.9]{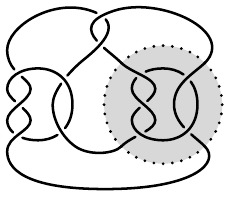}
		\caption{}\label{fig:example:mutant}
	\end{subfigure}%
	\begin{subfigure}{0.5\textwidth}
		\centering
		\includegraphics[scale=0.9]{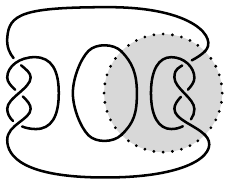}
		\includegraphics[scale=0.9]{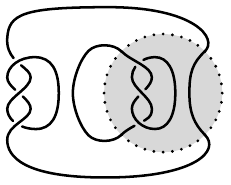}
		\caption{}\label{fig:example:mutant:Wehrli}
	\end{subfigure}
	\caption{%
		Two pairs of mutant links: 
	    Figure (a) shows the Kinoshita-Terasaka knot %
	    and the Conway knot, %
	    whose Khovanov homologies agree. 
	    One knot is obtained from the other by rotating 
	    the highlighted Conway tangle in the plane by \(\pi\).
	    In contrast, the links shown in (b) 
	    have different Khovanov homology with rational coefficients. 
	    Here, mutation does not preserve components. 
	}\label{fig:example}
\end{figure}

Interestingly, Conway mutation preserves many knot and link invariants, including the Seifert matrix up to S-equivalence \cite{Cooper}, the volume of hyperbolic knots \cite{mutation_volume}, the Alexander polynomial, the Jones polynomial, and, more generally, the HOMFLY-PT polynomial \cite{mutation_HOMFLY}. 
Counterexamples include the Seifert genus; for instance, the Kinoshita-Terasaka knot (\cref{fig:example:mutant}) has Seifert genus 2, while it is equal to 3 for the Conway knot, its mutant. 
Consequently, knot Floer homology \(\HFK\), which categorifies the Alexander polynomial, is not mutation invariant, since it detects the Seifert genus \cite{OS-taut}. 
However, the total rank of knot Floer homology is mutation invariant, as is a certain singly graded version of \(\HFK\) \cite{pqSym}.  

For Khovanov homology, a categorification of the Jones polynomial, the situation is also subtle, but in a different way. 
In 2003, Wehrli found a pair of mutant links (namely the one in \cref{fig:example:mutant:Wehrli}) whose Khovanov homologies with rational coefficients have different dimensions \cite{WehrliCounterexample}. 
In 2005, Bar-Natan outlined a strategy for proving that Khovanov homology was invariant under component-preserving Conway mutation  \cite{BN_mutation}, but there were gaps in the argument. 
These were subsequently filled in by Wehrli for Khovanov homology with coefficients in the field \(\CoeffFieldTwo\) of two elements \cite{wehrli2010mutation}, and Bloom gave an independent proof that generalizes to odd Khovanov homology \cite{Bloom}. 
Yet another proof of mutation invariance of Khovanov homology with \(\CoeffFieldTwo\) coefficients was given by Saltz \cite{Saltz}. 
This perspective generalizes to a class of strong Khovanov-Floer theories that includes Szabó homology and singular instanton homology. 

These proofs all make a crucial appeal to the tangle {\it in situ} as part of a link diagram. 
In contrast, our proof establishes symmetries of the Khovanov tangle invariants, 
which may be seen as a categorification of the symmetry seen at the level of the Kaufman bracket. In earlier work, we gave a fourth proof of mutation invariance with \(\CoeffFieldTwo\) coefficients \cite{KWZ}. This proof goes back to Bar-Natan's original proposal for the proof of mutation invariance \cite{BN_mutation} and is extremely short. 
The key ingredient is a certain choice of basis for the morphism spaces in Bar-Natan's cobordism category; these basis elements have the property that Conway mutation simply acts by multiplication by \(\pm1\). 
By classifying Bar-Natan's tangle invariants using multicurves, the authors then used this observation to prove mutation invariance of Rasmussen invariants over fields of any characteristic \cite{KWZ}. Combining these ideas with our new geography restrictions for multicurves discussed below, we now show:

\begin{theorem}\label{thm:mutation_invariance:links}
	For any field \(\CoeffField\) and any two pointed links \(L\) and \(L'\) that are related by component-preserving Conway mutation, the reduced Khovanov homologies of \(L\) and \(L'\) with coefficients in \(\CoeffField\) are bigraded isomorphic vector spaces.
\end{theorem}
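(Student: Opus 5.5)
We will follow the strategy of \cite{KWZ}: decompose \(L=T_1\cup T_2\) along the Conway sphere, so that \(L'=T_1\cup \mu(T_2)\), where \(\mu\) is one of the three hyperelliptic involutions of the four-punctured sphere \(\FourPuncturedSphere\). The reduced Khovanov homology of \(L\) is then computed by a gluing theorem as a Lagrangian Floer homology \(\HF(\Khr(T_1^{\mathrm{mirror}}),\Khr(T_2))\) of the multicurve invariants of the two tangles in the (covering of the) four-punctured sphere, with the reduction point placed on \(T_1\). The task is therefore to show that
\[
\HF\big(\Khr(T_1^{\mathrm{mirror}}),\Khr(T_2)\big)\cong \HF\big(\Khr(T_1^{\mathrm{mirror}}),\Khr(\mu(T_2))\big)
\]
as bigraded \(\CoeffField\)-vector spaces.

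The first step is to understand how \(\mu\) acts on the tangle invariant. Using the special basis of Bar-Natan's cobordism morphism spaces, on which mutation acts by multiplication by \(\pm1\), the type D structure of \(\mu(T_2)\) is obtained from that of \(T_2\) by sign changes on certain morphisms. Geometrically, \(\Khr(\mu(T_2))\) is the image of \(\Khr(T_2)\) under the hyperelliptic involution of \(\FourPuncturedSphere\). This involution fixes each curve up to homotopy but may change the local systems, twisting holonomies by signs. Over \(\CoeffFieldTwo\) the signs vanish and we recover the earlier proof. Over a general field, the remaining issue is exactly how the sign twists change the local systems.

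The second step, which we expect to be the main obstacle, is the geography result. We need to show that every component of \(\Khr(T)\) is either a rational curve carrying an arbitrary local system or a special (figure-eight type) component, and that in either case the induced sign twist does not change the Floer homology with any other curve. For rational curves, the sign change from \(\mu\) multiplies the holonomy around the curve by the product of signs over the punctures it encircles. We expect the component-preserving hypothesis to force this product to be \(+1\), or else to be absorbable into the pairing, since multiplying the local system of only one side by \(-1\) changes the Floer homology only through the holonomy product. To establish this, we would classify components via the homological mirror symmetry description of the wrapped Fukaya category of \(\FourPuncturedSphere\), identified with modules over the algebra \(\B\). We would then determine which local systems can occur, using the linear/graded constraints coming from the quantum grading and the connectivity data \(\conn{T}\).

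The final step handles the component-preserving hypothesis. When the open strands of \(T_2\) lie on different components, only the mutation along the axis that preserves the connectivity is allowed. For that axis, the sign character on the punctures is trivial on the relevant curves. When all strands lie on one component, every axis works, because the parity constraints on the slopes of the components, dictated by \(\conn{T_2}\), pair up the punctures compatibly. Combining these observations, the Floer homologies agree, and with them the bigradings, since the involution preserves the gradings. This gives \cref{thm:mutation_invariance:links}.
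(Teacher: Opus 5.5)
Your proposal has genuine gaps. Steps 1 and 3 fail as stated, and the argument that handles the hard case is missing.

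\textbf{Step 1 is false for \(\Khr\).} You claim that \(\Khr(\mu(T_2))\) differs from \(\Khr(T_2)\) only by sign changes in local systems. This holds for \(\BNr\): by \cref{thm:mutation:DD}, \(\DD(\muti(T))=\muti\DD(T)\), where \(\muti\) only flips signs of some \(D\)-labels. It fails for \(\Khr\) in general. The reason is that \(\DD_1(T)=\Cone{\varG\cdot 1_{\DD(T)}}\), and the sign automorphisms \(\varphi_i\) do not fix \(\varG=S^2-\Db-\Dw\). For instance, \(\mutx\DD_1(T)=\Cone{(S^2+\Db-\Dw)\cdot 1_{\mutx\DD(T)}}\). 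Identifying this with \(\DD_1(\mutx(T))\) requires \(\Db\cdot 1_{\DD(\mutx(T))}\simeq 0\). That comes from the basepoint moving lemma (\cref{lem:Basepoint_Moving_Lemma}) and holds only because \(\conn{T}=\connx\) (\cref{prop:connectivity-implies-action-trivial}). The special basis of cobordisms alone does not give it. Without the hypothesis \(\conn{T}=\conni\), \cref{thm:mutation:Khr} is false. So your framework covers at most the case \(\conni=\conn{T_2}\), where the open strands lie on different components. Even there it works only once you supply this basepoint argument, which your proposal does not contain.

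\textbf{The remaining case needs a different pairing.} In the other component-preserving case, \(\conn{T_1}\neq\conn{T_2}\) and \(\conni\) is arbitrary. There you have no control over \(\Khr(\muti(T_2))\), so a \(\Khr\)--\(\Khr\) pairing cannot be compared. Your remark that ``parity constraints pair up the punctures compatibly'' does not replace an actual argument. The missing idea is the asymmetric gluing \(\Khr(L)\cong\HF(-\Khr(T_1),\BNr(T_2))\) from \cref{thm:GlueingTheorem:Kh}.
\begin{itemize}
\item Put \(\BNr\) on the mutated tangle, where mutation along every axis only multiplies local systems by \(\pm1\) (\cref{cor:mutation:BNr-curves}).
\item Put \(\Khr\) on the fixed tangle, where the geography theorem applies.
\item Pairs of components with different underlying curves are then insensitive to local systems (\cref{rem:local-systems-gluing}).
\item If a component \(\gamma\) of \(-\Khr(T_1)\) coincides with a component of \(\BNr(T_2)\), then \(\gamma\) is special or rational. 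For special and even-length rational curves the sign twist is trivial, by counting \(D_\bullet\)- and \(D_\circ\)-labels (\cref{lem:mutation-on-Khr-special-or-rational}).
\item For an odd-length rational \(\gamma\), the twist \(X'\mapsto -X'\) can change \(\HF(\gamma(X),\gamma(X'))\). This contradicts your assertion that the twist never changes Floer homology with any other curve.
\item The paper excludes this case by connectivity detection (\cref{prop:connectivity-detected-by-rationals-of-odd-length}), applied to both \(\Khr(-T_1)\) and \(\BNr(T_2)\). Such a common curve of slope \(\slope\) would force \(\conn{T_1}=\conns=\conn{T_2}\), contradicting the case assumption.
\end{itemize}

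\textbf{Minor point.} Pairing \(\Khr\) with \(\Khr\), as in your gluing formula, computes \(\Khr(L)\otimes(q^{-1}h^{-1}\CoeffField\oplus q^{1}h^{0}\CoeffField)\), not \(\Khr(L)\).
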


Here, we require the link components carrying the basepoint to intersect the Conway sphere.
Since Conway mutation on a knot is always component-preserving, we obtain:

\begin{corollary*}
	Over any field \(\CoeffField\), the reduced Khovanov homologies of any two Conway mutant knots are bigraded isomorphic vector spaces.\qed
\end{corollary*}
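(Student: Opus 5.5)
Write \(L = T_1 \cup T_2\) as the union of two Conway tangles along the Conway sphere, and let \(L'\) be obtained by regluing \(T_2\) via a hyperelliptic involution \(\mu\) of the four-punctured sphere. The plan is to use the gluing theorem from \cite{KWZ}: the reduced Khovanov homology \(\Khr(L;\CoeffField)\) is the Lagrangian Floer homology \(\HF\) of the Khovanov multicurve invariants of the two tangles. One tangle enters as the multicurve \(\Khr(T_1)\) in the four-punctured sphere \(\FourPuncturedSphere\) (carrying the basepoint). The other enters through its Bar-Natan invariant \(\BNr(T_2)\), or equivalently its mirror. Both invariants are multicurves with local systems over \(\CoeffField\), and the pairing is bigraded.

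Mutation replaces \(T_2\) by \(\mu(T_2)\), and the key input from \cite{KWZ} is the basis of Bar-Natan's cobordism category on which \(\mu\) acts by signs. At the level of multicurves, this gives \(\BNr(\mu T_2) = \mu(\BNr(T_2))\): the underlying curves are the images under the involution, but the local systems may be twisted by signs, that is, tensored with a rank-one local system with holonomy \(\pm1\). Over \(\CoeffFieldTwo\) the signs vanish, so we are done, since \(\HF\) is invariant under the diffeomorphism \(\mu\) applied to both curves, together with the observation that \(\mu\) fixes every curve up to homotopy. Over a general field, the task is to show that the twisted pairing computes the same vector space.

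The steps, in order, are as follows.
\begin{enumerate}
\item Show that \(\mu\) fixes each immersed curve in \(\FourPuncturedSphere\) up to homotopy, as an unoriented curve. This holds because the hyperelliptic involutions act trivially on the mapping class group modulo the Klein four-group, and in the cover \(\PlanarCover\) they correspond to rotation by \(\pi\) about lattice points, which preserves lines of rational slope.
\item Use the geography classification of the components of \(\Khr(T)\) and \(\BNr(T)\) from later in the paper, i.e.\ rational and special curves, to pin down exactly how the sign twist acts on each component. The relevant quantity is the holonomy of the twisting local system along a component. It should be determined by which pair of punctures the component's lifts wind around, and component-preservation should force this holonomy to be trivial (or a global sign) on the relevant curves.
\item Conclude, by bigraded Floer pairing, that \(\HF(\Khr(T_1), \BNr(T_2)) \cong \HF(\Khr(T_1), \BNr(\mu T_2))\). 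When the holonomy is nontrivial, for example on special components that are not \(\mu\)-symmetric, argue instead that \(\HF\) against a curve with local system \(X \otimes (\pm1)\) has the same dimension. To do this, use the geography restriction that special components of \(\Khr(T_1)\) and rational components of \(\BNr(T_2)\) intersect minimally and transversally, so that the Floer differential vanishes and the rank is just the intersection count, which is independent of the local system.
\end{enumerate}

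The main obstacle is the second step, together with the non-vanishing-differential case in the third. One must show that the sign twist is invisible whenever the Floer differential can be nonzero, namely for pairs of parallel curves of the same slope. This is where component-preservation must enter. If the two strands of a tangle lie on different components of \(L\), the twist along the relevant slope is a genuine \(-1\), which, together with parallel local systems, explains Wehrli's example. I would first analyse parallel rational components, computing the connectivity of the tangle via the slope and using the fact that component-preservation identifies the twist's holonomy with \(+1\) along that slope.
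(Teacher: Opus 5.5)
Your framework is the one the paper uses. The corollary is immediate from \cref{thm:mutation_invariance:links}, and for a knot \(T_1\cup T_2\) one always has \(\conn{T_1}\neq\conn{T_2}\) (otherwise the open strands close up into two components). That puts you in the second case of that proof: pair \(-\Khr(T_1)\) with \(\BNr(T_2)\) and use that mutation only flips signs of local systems of \(\BNr(T_2)\) (\cref{cor:mutation:BNr-curves}). The gap is in Steps 2--3, which aim at the wrong case. By \cref{rem:local-systems-gluing}, \(\HF(\gamma,\gamma')\) can depend on the local system of \(\gamma'\) only if \(\gamma'\) has the \emph{same underlying curve} as \(\gamma\); having the same slope is not enough. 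For distinct curves nothing more is needed, and in any case the ``geography restriction'' for \(\BNr\) that you invoke does not exist: components of \(\BNr(T)\) need not be rational or special (\(\BNr(P_{2,-3})\) wraps around \(\Endiii\)). The one case that matters is a component \(\gamma\) shared by \(-\Khr(T_1)\) and \(\BNr(T_2)\). There the two curves are parallel copies, the Floer differential genuinely sees the local system, and no transversality argument is available. You must instead use that \(\gamma\) lies in \(-\Khr(T_1)=h^1\Khr(-T_1)\), so it is \(\rKh_n(\slope)\) or \(\sKh_{2n}(\slope)\) by \cref{thm:geography:Khr}. Then count the arrows whose signs \(\muti\) flips. \(C(\sKh_{2n}(\slope))\) has \(\max(2,2n|q|)\) arrows \(\Db\) and \(\max(2,2n|p|)\) arrows \(\Dw\), while \(C(\rKh_n(\slope))\) has \(n|q|\) and \(n|p|\) (\cref{lem:Khr-components:number-of-Ds}). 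So special and even-length rational components are never twisted, contrary to your suggestion in Step 3.

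For an odd-length rational component of slope \(\slope\), the twist is \(-1\) exactly when \(\conni\neq\conns\) (\cref{lem:mutation-on-Khr-special-or-rational}). Every axis \(i\) is allowed for a knot, so component preservation cannot force this holonomy to be \(+1\). Your proposed mechanism only works in the other case \(\conni=\conn{T_2}\). The missing idea is that such a shared component cannot exist at all. Odd-length rational components of \(\Khr\) \emph{and} of \(\BNr\) detect connectivity (\cref{prop:connectivity-detected-by-rationals-of-odd-length}, proved via the basepoint action \(a_i\cdot 1\simeq 0\)). Applying this to both tangles gives \(\conn{T_1}=\conn{-T_1}=\conns=\conn{T_2}\), contradicting that \(T_1\cup T_2\) is a knot. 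Your Step 1 is also unnecessary, and its justification as written does not hold up. That the underlying curves agree follows directly from \(\DD(\muti(T))=\muti\DD(T)\), which only changes signs of \(D\)-labels. A geometric argument would have to account for \(\muti\) exchanging \(\ast\) with \(\mathtt{e}_i\).
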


\subsection{On the geography of multicurves} 
A Conway tangle is a proper embedding of two intervals and a finite (possibly empty) set of circles into the three-dimensional ball $B^3$.
We consider such embeddings up to ambient isotopy fixing the boundary pointwise. 
Given a Conway tangle $T$ there are two associated objects in the Fukaya category of the four-punctured sphere $\partial B^3\smallsetminus\partial T$, namely, the bigraded multicurves
\[
\HFT(T)
\quad
\text{and}
\quad
\Khr(T)
\] 
The first invariant was defined by the third author using Heegaard Floer theory \cite{pqMod}; it should be understood as a relative version of knot Floer homology \(\HFK\) %
\cite{OSHFK,Jake}.
Similarly, the second invariant should be understood as a relative version of reduced Khovanov homology; it was defined by the authors as a geometric interpretation of Bar-Natan's Khovanov homology for tangles \cite{KWZ}. 
Remarkably, these two multicurve invariants share many formal properties. 
For instance, both satisfy gluing theorems that recover the corresponding link homology theory in terms of Lagrangian Floer theory \cite{KWZ,pqMod} and
both invariants detect rational tangles or, more generally, they detect if a tangle is split \cite{LMZ,KLMWZ}. 
As a key ingredient in the proof of \cref{thm:mutation_invariance:links}, the main purpose of this paper is to extend the similarity between \(\HFT(T)\) and \(\Khr(T)\) to the fundamental structure of the invariants themselves.

Setting aside technicalities such as local systems and gradings, we now briefly explain the main structural result about \(\Khr(T)\) established in this paper.
By definition, both \(\HFT(T)\) and \(\Khr(T)\) take the form of collections of immersed curves on the four-punctured sphere 
\(S^2 \smallsetminus 4\text{pt} =\partial B^3\smallsetminus\partial T\), 
considered up to regular isotopy. 
In each case, the immersed curves in question are rather highly structured. To describe this structure, we first pass to a planar cover 
that factors through the toroidal two-fold cover: %
\[
\left(\mathbb R^2 \smallsetminus \mathbb Z^2\right)  
\to 
\left(\mathbb T^2 \smallsetminus 4\text{pt}\right) 
\to 
\left(S^2 \smallsetminus 4\text{pt}\right) 
\]
Our interest is in those immersed curves whose lift to this cover are homotopic to straight lines; in general terms, the lines of interest fall into two classes, which we call rational and special. 
This dichotomy will be discussed in detail (see \cref{def:RationalVsSpecialKht}), but the important point is that the difference between the two classes amounts to how the lines interact with lifts of the tangle ends in the planar cover. In \cite{pqSym}, the third author solved the geography question for components of \(\HFT(T)\):

\begin{theorem*}\label{thm:geography:HFT:intro}
Every component of \(\HFT(T)\) is either rational or special.
\end{theorem*}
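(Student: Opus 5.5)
This is the third author's theorem from \cite{pqSym}, so here we only outline how the argument goes. The idea is to transfer structural properties of knot Floer homology, through the gluing theorem for \(\HFT\), into restrictions on the shape of individual curve components.

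\textbf{Step 1: reduction to the cover.} Lift a component \(\gamma\) of \(\HFT(T)\) to the planar cover \(\PuncturedPlane\) and pull it tight (for instance, a geodesic representative for a hyperbolic metric). For each slope \(\slope\in\QPI\), pair \(T\) with the rational tangle of that slope. The gluing theorem \cite{pqMod} identifies \(\HF(\gamma,\cdot)\) with a summand of \(\HFK\), or of its \(\delta\)-graded version, of the resulting link. Thus intersection numbers of \(\gamma\) with straight lines of every rational slope are controlled by knot Floer homology of Dehn-filling-type links.

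\textbf{Step 2: the key input.} The symmetry and non-degeneracy properties of \(\HFK\) must constrain these numbers. Concretely:
\begin{itemize}
\item the Alexander grading symmetry, together with the behaviour of the Alexander grading under the covering group;
\item the fact that the total rank depends only on the Euler characteristic up to a controlled error, via the \(\delta\)-grading arguments of \cite{pqSym}.
\end{itemize}
The aim is to show that \(\gamma\) is invariant, up to homotopy, under the hyperelliptic involutions that act on the four-punctured sphere. The aim is then to show that a non-linear lift would produce a slope at which the minimal intersection with the line of that slope is strictly larger than the grading constraints allow.

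\textbf{Step 3: classification.} A closed curve in the toroidal cover whose intersection numbers with all lines are "linear", meaning they equal a constant times the determinant pairing of slopes, must be homotopic to a straight line of a single slope. Such a line either avoids all punctures, which is the rational case, or passes through punctures in the prescribed way, which is the special case. This final step is a combinatorial argument using the lattice structure of \(\Z^2\).

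\textbf{Expected obstacle.} The main difficulty is Step 2: extracting from Floer-theoretic data a symmetry or convexity property that is strong enough to force straightness. The naive intersection-number bounds only give inequalities.

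\textbf{What to try first.} I would first try to use the conjugation symmetry of \(\HFT\), namely \(\gamma\) mapped to its image under the elliptic involution, together with the Alexander grading shift. The hope is that these rule out curves that turn corners, since each corner would create an asymmetric grading distribution.
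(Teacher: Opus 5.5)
Your Step~2 is where the entire proof would have to happen, and the inputs you list cannot carry it. The gluing theorem only expresses \(\HFK\) of \(T\cup Q_{p/q}\) in terms of \(\HFT(T)\). Since nothing is known about these links beyond properties shared by all links, it constrains \(\HFT(T)\) only through such general properties, and these are too weak:
\begin{itemize}
\item There is no bound on the rank of \(\HFK\) in terms of its Euler characteristic. The Kinoshita--Terasaka knot of \cref{fig:example} has trivial Alexander polynomial, yet its \(\HFK\) has rank at least three, because \(\HFK\) detects its genus two.
\item The \(\delta\)-graded arguments of \cite{pqSym} are \emph{consequences} of this geography theorem. The introduction says the geography result is what feeds into \(\delta\)-graded mutation invariance, so invoking them here is circular.
\item The Alexander-grading symmetry, and invariance under the hyperelliptic involutions (even if established), are properties of the multicurve as a whole. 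You give no mechanism by which they would stop an individual component from turning at a puncture. The paper's \(\BNr(P_{2,-3})\) (\cref{exa:Khr:2m3pt}) shows that a curve invariant obeying a gluing theorem against rational tangles can still wrap around a puncture, here \(\Endiii\).
\item Step~3 is not purely combinatorial either. Even once linearity is known, the paper's Khovanov analogue needs separate algebraic arguments (\cref{thm:geography:special_curves,thm:geography:rational_curves}) to pin linear components down to the rational and special families.
\end{itemize}

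The missing idea is local and algebraic. As the introduction explains, the proof in \cite{pqSym} uses that the curved type~D structure \(\CFTd(T)\) over \(\Ad\) extends to a curved type~D structure \(\CFTminus(T)\) over an algebra \(\Aminus\), with an epimorphism \(\Aminus\rightarrow\Ad\). This extension is built directly from the Heegaard diagram that defines the invariant. If a component of \(\HFT(T)\) changed direction at a puncture, its structure relation over \(\Aminus\) would contain a term that nothing can cancel, so no such extension could exist. This is the same mechanism as in the paper's proof of \cref{thm:no_wrapping_around_special}, where a corner at \(\ast\) produces the uncancellable term coming from \(\mu_4(S,D,S,D)=U\) in \(\BinfU\). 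Most of the present paper (\cref{thm:extension}) is devoted to constructing such an extension for Khovanov homology via homological mirror symmetry, precisely because no Heegaard diagram is available there. Without an input of this kind, nothing in your outline rules out corners, so it does not prove the statement.
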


\(\HFT(T)\) is a geometric interpretation of an algebraic invariant, namely, a curved type~D structure \(\CFTd(T)\) over an algebra~\(\Ad\). The proof of this theorem is based on the existence of an extension of \(\CFTd(T)\) to a curved type D structure \(\CFTminus(T)\) over an algebra~\(\Aminus\) that comes with an epimorphism \(\Aminus\rightarrow\Ad\). This additional structure and, ultimately, the geography result that it leads to play a key role in establishing $\delta$-graded mutation invariance in link Floer homology. To some degree, this extension remains internal to tangle Floer homology; its existence appeals to the Heegaard diagram present in the definition of the invariant. For context, an earlier appeal to extensions of this form appears in work of the second author with Hanselman and Rasmussen \cite{HRW-prop}.

Similarly, \(\Khr(T)\) is a geometric interpretation of a type~D structure \(\DD_1(T)^{\B}\) over an algebra~\(\B\). This type~D structure recasts Bar-Natan's tangle variant of Khovanov homology \cite{BarNatanKhT}. 
Given this starting point it is considerably less clear where the requisite additional structure comes from. Nevertheless, in this paper, we show:

\begin{theorem}\label{thm:geography:Khr:intro}
	Every component of \(\Khr(T)\) is either rational or special.
\end{theorem}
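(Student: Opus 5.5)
The plan is to imitate the Heegaard Floer strategy. We first produce an analogue of the extension \(\CFTminus(T)\) for the Khovanov type~D structure, and then show that this extra structure forces the shape of each curve component. Concretely, we want a larger algebra \(\Binf\) (or a curved version) with a surjection \(\Binf\to\B\), together with a type~D structure over \(\Binf\) lifting \(\DD_1(T)^{\B}\). The natural candidate comes from mirror symmetry for the thrice-punctured disk and pair of pants (cf.\ \(\HMS\)): \(\B\) should be a quotient of a wrapped-Fukaya-type algebra in which the tangle ends are filled in. The additional generators then record the winding of curves around the punctures. We then identify the lifted object with a twisted complex over this larger category, using the description of \(\DD_1(T)^{\B}\) as Bar-Natan's complex built from saddles and dots.

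\textbf{Step 1: construct the lift.} We would lift Bar-Natan's complex to an enhanced cobordism category, adding a formal variable \(H\) for each end (dots modulo \(H\)), so that \(D^2=H\)-type relations are preserved. The key check is that \(\partial^2=0\) (or curved \(=\) a central element) holds for the lift. Because \(\DD_1(T)^{\B}\) is built locally from crossings, it suffices to lift the complex of a single crossing and verify that the tensor products (tangle gluing along the three-punctured disk) are compatible. This is the step we expect to be the main obstacle. Unlike \(\CFTminus\), there is no Heegaard diagram supplying the extension, so the lift has to be written by hand, and signs over an arbitrary field \(\CoeffField\) must be tracked.

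\textbf{Step 2: translate the lift into a geometric constraint.} Given a component \(\gamma\) of \(\Khr(T)\), the existence of the lift means the loop-type type~D structure for \(\gamma\) extends over \(\Binf\). We would classify these extendable structures combinatorially, in the same way Zibrowius's classification of curves over \(\B\) reads curves as words in the arrows \(D,S\). An extension exists only if certain length conditions on consecutive \(D^k\) and \(S^k\) labels hold. In the planar cover \(\PlanarCover\), these conditions say that the lift of \(\gamma\) turns consistently, so it is homotopic to a straight line of some slope \(\slope\).

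\textbf{Step 3: compare with \cref{def:RationalVsSpecialKht}.} Straight-line components whose lifts avoid encircling the punctures are rational. Those that wrap around a single puncture pair in the prescribed way, possibly carrying local systems, are special. Any other winding pattern contradicts the Step~2 constraint. To bring an arbitrary slope \(\slope\) to slope \(0\) or \(\infty\), where the combinatorial check is direct, we would use the mapping class group action (twisting the tangle ends), which acts on \(\Khr\) by the corresponding Dehn twists. Including the \(\Z/2\)-graded sign refinements then gives the statement over every field.
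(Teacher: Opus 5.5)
There is a genuine gap in Steps~2--3: extendability over a mirror-symmetric enlargement of \(\B\) cannot by itself force a component to be straight.

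The extension that homological mirror symmetry actually yields fills in only the special puncture \(*\). It consists of twisted complexes over the \(A_\infty\)-algebra \(\BinfU\), whose higher products \(\mu_{2m}\) are weighted by \(U^{m-1}\). The paper constructs it for \(\DD^c(T)\) exactly as for \(\DD^c_1(T)\) (\cref{thm:extension}). Yet components of \(\BNr(T)\) do wrap around non-special punctures (\(\BNr(P_{2,-3})\) wraps around \(\Endiii\)), and they can carry indecomposable higher-dimensional local systems. So no classification of extendable loops can produce the rational/special dichotomy.

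What the extension buys is local at \(*\). A corner there forces an uncancellable term \(\mu_4(S,D,S,D)=U\), so no component wraps around \(*\) (\cref{thm:no_wrapping_around_special}). It is used once more for almost special components, where the forced \(U\)-differentials pin down equal leg lengths and the local system \((-1)\). Relatedly, Step~1 is left open exactly where the work lies. The lift needs a genuine \(A_\infty\)-structure, not one obtained by adjoining a variable per end and gluing crossing by crossing. The paper gets it from matrix factorizations over \(\CoeffField[x,y,z]\) with potential \(xyz\), a delooping lemma for these, and the quasi-isomorphism \(\A\to\Binf\) with explicit control on homology.

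The missing idea is the input that distinguishes \(\Khr\) from \(\BNr\). Since \(\DD_1(T)=\Cone{\varG\cdot1_{\DD(T)}}\), the map \(\varG\cdot1\) is null-homotopic on every direct summand, hence on each component (\cref{lem:H_is_nullhomotopic_on_Khr}). The paper uses this in three ways:
\begin{itemize}
\item A clean-up and simply-faced precurve argument turns it into the statement that the differential of \(\DD^c_1(T)\) involves only \(D\), \(S\), \(S^2\) (\cref{lem:higher_powers_when_wrapping}). After a shear from the mapping class group, wrapping around \(\Endi\), \(\Endii\) or \(\Endiii\) would produce a label \(D^n\) with \(n>1\) or \(S^m\) with \(m>2\). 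So every component is linear.
\item An explicit attempted null-homotopy of \(\varG\cdot1\) along a leg forces an almost rational component of slope \(0\) to be \(\rKh_n(0)\) with local system \((-1)\).
\item The same null-homotopy excludes the unequal-leg configuration in the almost special case.
\end{itemize}

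Your Step~3 also misdescribes the target. Neither rational nor special curves wrap around anything. For a linear curve, the dichotomy is whether its \(\varepsilon\)-peg-board representative touches the peg around \(*\) for every \(\varepsilon\). Both types carry exactly the local system \((-1)\), and this has to be proved rather than left free.
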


\begin{figure}[t]
	\centering
	\tikzstyle{mypath}=[decorate,decoration={snake,post length=3pt,segment length=3pt,amplitude=1pt,post length=5pt,pre length=5pt}]
	\begin{tikzpicture}[xscale=3]
		\node (cor) at (0,0) [rounded corners,draw] {cube of resolutions};
		\node (mf) at (0,2) {\(M(\Diag_T)\)};
		\node (BinfU) at (2,2) {\(\DD(\Diag_T)^{\BinfU}\)};%
		\node (B) at (2,0) {\(\DD(T)^{\B}\)};
		\footnotesize
		\draw[mypath,->] (cor) -- (mf);
		\draw[mypath,->] (cor) -- (B);
		\draw[<->] (mf) -- node (x)[above]{\(\HMS\)} (BinfU);
		\draw[->] (BinfU) -- node (x)[right]{\(U=0\)} (B);
	\end{tikzpicture}
	\caption{%
		Outline of the proof of the extension property 
		appealing to the homological mirror symmetry \(\protect\HMS\) 
		for the pair of pants. 
	}\label{fig:overview_extension}
\end{figure}

A more detailed version of this statement is given in \cref{thm:geography:Khr} once the necessary notation is in place.
Its proof, 
outlined in \cref{fig:overview_extension}, 
is also based on an extension of the algebraic invariant: 
We extend \(\DD_1(T)^{\B}\) 
to a type~D structure \(\DD_1(\Diag_T)^{\BinfU}\) 
over an algebra~\(\BinfU\).
However, the algebra \(\BinfU\) is an \(A_\infty\)-algebra 
and the extension \(\DD_1(\Diag_T)^{\BinfU}\) 
reaches outside of Bar-Natan's framework. 
Namely, we leverage the matrix factorizations framework~\cite{KR_mf_I,KR_mf_II} 
to define an $A_\infty$-enhancement of Bar-Natan's cobordism category $\Cobb$, 
and in the case of Conway tangles, 
we describe this $A_\infty$-enhancement explicitly. 
The latter description depends 
on a particular quasi-isomorphism of algebras 
provided by the homological mirror symmetry 
of the three-punctured sphere~\cite{AAEKO,LekPol,Orlov}.
This quasi-isomorphism allows us to define the extension \(\DD_1(\Diag_T)^{\BinfU}\)
in terms of a matrix factorization \(M(\Diag_T)\)
associated with a tangle diagram \(\Diag_T\) of \(T\) and its cube of resolutions. 

\subsection*{Notation}
Throughout this paper, we fix a commutative unital ring \(\CoeffRing\). 
When we restrict coefficients to fields, 
we will write \(\CoeffField\) for \(\CoeffRing\).
The field of two elements is denoted by \(\CoeffFieldTwo\).

\subsection*{Organization}
\Cref{sec:review:Kh}
reviews the construction of immersed curve invariants
in Khovanov theory for tangles, 
following 
\cite{KWZ}, 
and provides the framework for the precise formulation
of \cref{thm:geography:Khr:intro} 
in \cref{sec:new:Kh} 
(see \cref{thm:geography:Khr}). 
There we also introduce two further essential new results: 
naturality of the invariants 
under the mapping class group action over arbitrary fields 
(\cref{thm:twisting:general-coeff}) 
and connectivity detection 
(\cref{prop:connectivity-detected-by-rationals-of-odd-length}; see also \cref{thm:Khr:connectivity-detection}). 
Deferring their proofs to later sections, 
we apply these tools 
in \cref{sec:mutation}
to establish the central result of this paper: 
component-preserving mutation invariance 
of reduced Khovanov homology over any field 
(\cref{thm:mutation_invariance:links}).

\cref{sec:MCGaction,sec:connectivity} 
contain the proofs of naturality and connectivity detection, 
respectively. 
The proof of 
\cref{thm:geography:Khr:intro}, 
occupying the remainder of the paper, 
is reduced 
in \cref{sec:geography:Khr}
to a statement about the behaviour 
of multicurve invariants at the marked boundary point \(*\) 
(\cref{thm:no_wrapping_around_special}). 
Its proof combines two ingredients: 
a variation of Khovanov--Rozansky's framework of matrix factorizations, 
developed in 
\cref{sec:mfs}, 
and a deep result from homological mirror symmetry 
for the three-punctured sphere, 
which we recall and adapt to our setup in 
\cref{sec:quasi-iso}. 
\cref{sec:no_wrapping_around_special} 
then uses these two inputs, 
together with a technical lemma proved in 
\cref{sec:delooping:mf}, 
to construct the desired  \(A_\infty\)-extension 
of the tangle multicurve invariant.
}
\section{\texorpdfstring{The tangle invariant \(\Khr\)}{The tangle invariant Khr}}\label{sec:review:Kh}

\subsection{Bar-Natan's cobordism category and the quiver algebra 
	\texorpdfstring{\(\B\)}{B}}

\begin{notation}\label{def:Cob}
	Let \(\EndsAll\) be the set of the following four points 
	on the boundary of the 2-dimensional unit disk:
	\begin{align*}
		\ast & = -e^{-i\pi/4}
		&
		\Endi & = -e^{i\pi/4}
		&
		\Endii & = e^{-i\pi/4}
		&
		\Endiii & = e^{i\pi/4}
	\end{align*}
	We call \(*\) the distinguished end
	and write \(\EndsThree\) for \(\EndsAll\smallsetminus\{*\}\).
	\end{notation}

Let \(\CoeffRing[\varG]\) denote the polynomial ring in a single variable \(\varG\). 
Consider the following graded \(\CoeffRing[\varG]\)-linear category, denoted $\operatorname{Cob}$:
	\begin{enumerate}
		\item 
		The objects of the category are 
		crossingless tangle diagrams 
		in the closed unit disk 
		with four ends on \(\EndsAll\). 
		\item 
		A morphism \( T_0\rightarrow T_1 \) 
		between two objects \( T_0 \) and \( T_1 \)
		is a formal \(\CoeffRing[\varG]\)-linear combination of dotted cobordisms 
		from \( T_0 \) to \( T_1 \). 
		\end{enumerate}
In this setting, a cobordism is an orientable (possibly disconnected) surface with boundary,
		together with an identification of the boundary with
		\[
		(T_0\times\{0\})\cup (\EndsAll\times [0,1])\cup (T_1\times\{1\})
		\subset
		\partial(D^2\times [0,1]).
		\]
A dotted cobordism is a cobordism in the above sense 
		together with a finite (possibly empty) set 
		of marked points \(\bullet\) 
		on the surface. 
		We consider dotted cobordisms up to 
		boundary preserving homeomorphisms 
		that bijectively map the set of marked points to each other.
		Identity morphisms are given by product cobordisms and 
		composition of cobordisms is given by concatenation. We define the quantum grading of a dotted cobordism \( C \) by
	\[
	q(C)=\chi(C)-2-\#\{\text{marked points on }C\}
	\] 
	where \(\chi(C)\) denotes the Euler characteristic of \(C\).
	The category \( \Cobb \) is defined as a quotient of $\operatorname{Cob}$
	by imposing certain relations on morphism spaces \cite[Section 4.1]{KWZ}. 
	Here is a concise summary: 
	\begin{equation}\label{eq:cobb_relations}
		\SpherePic=0,~  
		\Spheredot=1,~ 
		\planedotdot=-\varG\cdot\planedot\,,~ 
		\tube=\DiscLdot\DiscR+\DiscL\DiscRdot +\varG\cdot \DiscL \DiscR,~
		\planedotstar =0
	\end{equation}
	The above quantum grading descends to \( \Cobb \). 
\begin{remark}\label{rem:G-vs-H}
	The variable \(\varG\) is related to the variable \(H\) used in previous works by the identity \(\varG=-H\) \cite{KWZ,KWZ-strong,KWZ-thin}. 
	It turns out that the action of \(\varG\) on the tangle invariants is more natural than the action by \(H\). 
	Specifically, \cref{thm:twisting:general-coeff} does not hold if we use \(H\) in place of \(\varG\).
\end{remark}

\begin{remark}\label{rmk:frobenius-extensions}
	There are different versions of the category \(\Cobb\)
	 corresponding to various Frobenius extensions \cite{Kh_frob}. 
	Bar-Natan's original version 
	\cite[Page~1493]{BarNatanKhT}
	corresponds to the $\mathcal F_1$ Frobenius extension 
	\cite[Section~11.2]{BarNatanKhT}
	while \(\Cobb\) corresponds to~$\mathcal F_7$.
	Furthermore, 
	\(\Cobb\) is equivalent 
	to a category \(\Cobl\) 
	whose morphisms are represented by undotted cobordisms
	\cite[Definition 4.3]{KWZ}.
	While \(\Cobb\) is often more convenient for calculations, the category \(\Cobl\) is somewhat more natural than \(\Cobb\), specifically when it comes to Conway mutation. In fact, the equivalence between \(\Cobb\) and \(\Cobl\) is crucial for understanding how Conway mutation affects Bar-Natan's tangle invariants \cite[Theorem 9.8, Observation~4.24]{KWZ}. 
\end{remark}

\begin{example}\label{ex:cobb}
	The category \(\Cobb\) has two distinguished objects, 
	the crossingless tangles without closed components \(\Li\) and \(\Lo\). 
	There are distinguished morphisms between \(\Li\) and \(\Lo\) 
	with maximal quantum grading, given by contractible, unmarked cobordisms; 
	these are called the saddle cobordisms. 
	The identity morphisms are the product cobordisms
	\(\Li\times [0,1]\) and \(\Lo\times [0,1]\).
	By the final relation in \eqref{eq:cobb_relations}, 
	a single dot on the component of a product cobordism 
	that is adjacent to the distinguished end \(\ast\)
	results in the zero cobordism. 
	However, a single dot on the other component
	yields a non-zero morphism, 
	called a dot cobordism.
\end{example}

The following result is due to Bar-Natan; see \cite[Observation~4.18]{KWZ}.

\begin{lemma}[Delooping Lemma]
	\label{lem:delooping}
	Let \(\Diag_\varnothing\in\Cobb\) 
	be a diagram obtained 
	from some \(\Diag\in\Cobb\)
	by removing a closed component. In the additive enlargement of \(\Cobb\),
	there are isomorphisms
	\[
	\Diag
	\longrightarrow
	q^{-1}\Diag_\varnothing\oplus q^{+1}\Diag_\varnothing
	\longrightarrow
	\Diag	\]
	defined by extending the following morphisms 
	by product cobordisms: 
	\[
	\begin{tikzcd}[row sep=-0.2cm, column sep=2cm]
		&
		q^{-1} \varnothing
		\arrow{dr} [near start] {\DiscRdot}%
		\\
		\Circle
		\arrow{ur} [near end] {\DiscL}%
		\arrow{dr} [swap, near end] {\DiscLdot+\varG\cdot\DiscL}%
		&\oplus&
		\Circle
		\\
		&
		q^{+1}\varnothing
		\arrow{ur} [swap, near start] {\DiscR\phantom{+\varG\cdot\DiscL}}%
	\end{tikzcd}
	\]
\end{lemma}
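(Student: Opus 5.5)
The plan is to verify directly that the two composites are identities, using only the relations \eqref{eq:cobb_relations}. Because every morphism is the indicated local cobordism extended by a product cobordism on the rest of \(\Diag\), it suffices to work locally near the closed component, that is, with the circle \(\Circle\) and the empty diagram \(\varnothing\). First we check the quantum gradings. A cap or cup disc has \(\chi=1\), so \(q=-1\). A dotted disc has \(q=-2\). The components \(q^{-1}\varnothing\) and \(q^{+1}\varnothing\) are shifted so that every map is homogeneous of degree \(0\).

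Next we compute the composite \(\varnothing\oplus\varnothing\to\Circle\to\varnothing\oplus\varnothing\) as a \(2\times 2\) matrix. Each entry is a sphere, obtained by gluing a cup to a cap, carrying some number of dots. The entries are as follows.
\begin{enumerate}
\item \(\DiscL\circ\DiscRdot\) is a sphere with one dot, which equals \(1\).
\item \(\DiscL\circ\DiscR\) is an undotted sphere, which equals \(0\).
\item \((\DiscLdot+\varG\DiscL)\circ\DiscR\) is a once-dotted sphere plus \(\varG\) times an undotted sphere, which equals \(1+0=1\).
\item \((\DiscLdot+\varG\DiscL)\circ\DiscRdot\) is a twice-dotted sphere plus \(\varG\) times a once-dotted sphere.
\end{enumerate}
For the last entry, the relation \(\planedotdot=-\varG\cdot\planedot\) applied to a sphere turns the twice-dotted sphere into \(-\varG\) times the once-dotted sphere, which is \(-\varG\). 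The entry is therefore \(-\varG+\varG=0\), and the composite is the identity matrix.

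For the other composite \(\Circle\to\Circle\), the sum is
\[
\DiscRdot\circ\DiscL+\DiscR\circ(\DiscLdot+\varG\DiscL)=\DiscLdot\DiscR+\DiscL\DiscRdot+\varG\,\DiscL\DiscR .
\]
The neck-cutting relation (the fourth relation in \eqref{eq:cobb_relations}) says exactly that this equals the tube, i.e.\ the identity cobordism of \(\Circle\). We must take care with which side carries the dot: the cap/cup conventions \(\DiscL,\DiscR\) have to be matched with the pictures in the relation. This matching is the only step needing attention, since with the \(\mathcal F_7\) conventions the sign in front of \(\varG\) matters.

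Finally, the relation \(\planedotstar=0\) plays no role here, because the closed component is disjoint from the distinguished end. This locality is what lets us extend the local identities by product cobordisms to all of \(\Diag\). The two composites are identities, so the maps are mutually inverse isomorphisms in the additive enlargement.
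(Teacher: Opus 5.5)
Your computation of both composites is correct, and it is the standard argument. The paper gives no proof here: it attributes the lemma to Bar-Natan and cites \cite[Observation~4.18]{KWZ}, where the argument is exactly this evaluation of dotted spheres together with neck-cutting.

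The one thing that does not work as written is your grading check. With your values (undotted disc \(-1\), dotted disc \(-2\)), the morphisms \(\DiscL\) and \(\DiscLdot\) differ in degree by \(1\). Their targets \(q^{-1}\varnothing\) and \(q^{+1}\varnothing\) differ by \(2\), so the two maps cannot both have degree \(0\).

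Two corrections are needed:
\begin{enumerate}
\item A dot has to carry degree \(-2\). This is forced by homogeneity of \(\planedotdot=-\varG\cdot\planedot\), since \(\varG\) has quantum degree \(-2\). It also matches \(q({}_{\bullet}D_{\bullet})=q({}_{\circ}D_{\circ})=-2\) in \(\B\). So the displayed formula for \(q(C)\) should be read with \(2\cdot\#\{\text{dots}\}\).
\item The normalising \(-2\) refers to the whole cobordism, not the local disc. The product sheets over the two open arcs contribute \(\chi=2\).
\end{enumerate}

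Measured this way, \(\DiscL\) and \(\DiscR\) have degree \(+1\), while \(\DiscLdot\), \(\DiscRdot\) and \(\varG\cdot\DiscL\) have degree \(-1\). The shifts \(q^{-1}\) and \(q^{+1}\) compensate exactly for this. With that correction your proof is complete.
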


By iteratively applying the Delooping Lemma to closed components, 
every diagram \(\Diag\in\Cobb\) 
is isomorphic in the additive enlargement of \(\Cobb\) 
to a direct sum of (quantum grading shifted) copies of either \(\Li\) or \(\Lo\).

\begin{definition}\label{def:B}
	Let \(\mathcal{Q}\) be the quiver 
	\[
	\begin{tikzcd}[row sep=2cm, column sep=1.5cm]
		\bullet
		\arrow[leftarrow,in=145, out=-145,looseness=5]
		{rl}[description]{{}_{\bullet}D_{\bullet}}
		\arrow[leftarrow,bend left]
		{r}[description]{{}_{\bullet}S_{\circ}}
		&
		\circ
		\arrow[leftarrow,bend left]
		{l}[description]{{}_{\circ}S_{\bullet}}
		\arrow[leftarrow,in=35, out=-35,looseness=5]
		{rl}[description]{{}_{\circ}D_{\circ}}
	\end{tikzcd}
	\]
	and let \(\CoeffRing[\mathcal{Q}]\) denote the free path algebra of \(\mathcal{Q}\). 
	The algebra \(\B\) is the quotient of \(\CoeffRing[\mathcal{Q}]\) by the ideal
	\[
			({}_{\bullet}D_{\bullet} \cdot {}_{\bullet}S_{\circ},
			{}_{\bullet}S_{\circ}\cdot  {}_{\circ}D_{\circ},
			{}_{\circ}D_{\circ}\cdot  {}_{\circ}S_{\bullet},
			{}_{\circ}S_{\bullet}\cdot  {}_{\bullet}D_{\bullet})
	\]
	and \(\B\) carries a quantum grading \(q\) determined by 
	\[
	q({}_{\bullet}D_{\bullet}) = q({}_{\circ}D_{\circ}) = -2
	\qquad 
	\text{and}
	\qquad
	q({}_{\circ}S_{\bullet}) = q({}_{\bullet}S_{\circ}) = -1.
	\] 
	Setting 
	\[
	\varG
	= 
	{}_{\bullet}S_{\circ}{}_{\circ}S_{\bullet}+
	{}_{\circ}S_{\bullet}{}_{\bullet}S_{\circ}-
	{}_{\circ}D_{\circ}-
	{}_{\bullet}D_{\bullet}
	\in\B
	\]
	 gives \(\B\)
	an \(\CoeffRing[\varG]\)-module structure.
	The idempotents given by constant paths on 
	\(\bullet\) and \(\circ\) are denoted by
	\(\iota_{\bullet}\) and \(\iota_{\circ}\), respectively.
\end{definition}

\begin{notation}
	We will sometimes abuse notation by using $S$ 
	for either ${}_{\circ}S_{\bullet}$, ${}_{\bullet}S_{\circ}$, or
	${}_{\circ}S_{\bullet}+{}_{\bullet}S_{\circ}$
	and using $D$ 
	for either ${}_{\circ}D_{\circ}$, ${}_{\bullet}D_{\bullet}$, or
	${}_{\circ}D_{\circ}+{}_{\bullet}D_{\bullet}$.
	This allows shorthand notation \(\varG= S^2-D\).
	In addition, 
	a subscript $\halfbullet \in \{\circ, \bullet\}$ on the left or right of an algebra element $a$ 
	indicates that left or right multiplication by $\iota_{\halfbullet}$, respectively, 
	leaves $a$ unchanged.
	This allows shorthand notation 
	such as $S_{\circ}={}_{\bullet}S_{\circ}$ and 
	$
	S^3_{\bullet}
	=
	{}_{\circ}S_{\bullet}\cdot {}_{\bullet}S_{\circ} \cdot  {}_{\circ}S_{\bullet}
	$.
\end{notation}

The following is a consequence of \cite[Theorem~4.21]{KWZ}.
	
\begin{theorem}\label{thm:OmegaFullyFaithful}
	There is a \(\varG\)-equivariant isomorphism
	\[ \omega\co\B\rightarrow \End_{\Cobb}(\Li\oplus\Lo)\]
	uniquely determined by 
	\(
	\bullet\mapsto\Lo, 
	\circ\mapsto\Li
	\)
	and the requirement that the \(S\) are sent to saddle morphisms and the \(D\) are sent to dot morphisms (following the terminology of \cref{ex:cobb}).
\end{theorem}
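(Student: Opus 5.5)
The plan is to build \(\omega\) directly and then check, by comparing bases, that it is bijective. The statement itself says it follows from \cite[Theorem~4.21]{KWZ}, so the real work is matching conventions, in particular the \(\varG\) versus \(H\) sign from \cref{rem:G-vs-H} and the \(\mathcal F_7\) relations \eqref{eq:cobb_relations}.

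\emph{Step 1: define \(\omega\) on the path algebra.} I would first define a map \(\CoeffRing[\mathcal{Q}]\to\End_{\Cobb}(\Li\oplus\Lo)\). The idempotents \(\iota_\bullet\) and \(\iota_\circ\) go to the identity (product) cobordisms of \(\Lo\) and \(\Li\). Each \(S\) goes to the saddle between \(\Lo\) and \(\Li\) in the appropriate direction. Each \(D\) goes to the product cobordism with a single dot on the component not adjacent to \(\ast\). A path algebra is freely generated by its arrows, so this extends uniquely to an algebra map once composition order is fixed. This also gives the uniqueness claim.

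\emph{Step 2: the map kills the ideal and is \(\varG\)-equivariant.}
\begin{itemize}
\item A relation such as \(D_\bullet S_\circ=0\) holds because a dot can slide across the saddle onto the component containing \(\ast\), where it vanishes by the last relation in \eqref{eq:cobb_relations}. The other three relations are symmetric.
\item The gradings match: saddles have \(\chi=-1\), so \(q=-1\); dots contribute \(-2\).
\item For equivariance, I need \(\omega(S^2-D)=\varG\cdot\mathrm{id}\) on each object. The composite \(S^2\) of two saddles on \(\Lo\), say, is a product cobordism with a tube joining the two arcs. Applying the neck-cutting relation gives
\[
\DiscLdot\DiscR+\DiscL\DiscRdot+\varG\cdot\DiscL\DiscR,
\]
where the pieces are the two planar components, one of which meets \(\ast\). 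The term with the dot on the \(\ast\)-component vanishes. What remains is \(D+\varG\cdot\mathrm{id}\), so \(S^2-D=\varG\cdot\mathrm{id}\), as required. This is exactly where the choice \(\varG\) rather than \(H\) gives the right sign.
\end{itemize}

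\emph{Step 3: bijectivity.} This is the main obstacle. The quotient algebra \(\B\) has an explicit \(\CoeffRing\)-basis:
\begin{itemize}
\item the idempotents;
\item the powers \(D_\bullet^n\) and \(D_\circ^n\) for \(n\geq1\);
\item the alternating saddle words \(S^n\) for \(n\geq1\), with all four endpoint types occurring.
\end{itemize}
On the cobordism side, I would compute each morphism space \(\Mor(T_0,T_1)\) for \(T_0,T_1\in\{\Li,\Lo\}\). Cut necks to reduce every cobordism to disks carrying dots. Remove spheres using \(\SpherePic=0\) and \(\Spheredot=1\). Reduce multiple dots using \(\planedotdot=-\varG\cdot\planedot\) and kill dots on the \(\ast\)-component. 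This shows each morphism space is a free \(\CoeffRing[\varG]\)-module of rank two:
\begin{itemize}
\item on endomorphisms, spanned by the identity and the dot;
\item on the off-diagonal spaces, spanned by the saddle and the saddle composed with a dot.
\end{itemize}
Comparing ranks in each quantum degree, with \(\varG\) of degree \(-2\), shows the images of the basis elements of \(\B\) form a basis. Concretely, \(\omega(S^{2k})\) and \(\omega(D^k)\) expand triangularly in \(\varG^k\cdot\mathrm{id}\) and \(\varG^{k-1}D\). For example, \(\omega(D^k)=(-\varG)^{k-1}D\) and \(S^{2}=D+\varG\), so the change of basis is unitriangular.

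The delicate point is verifying that these rank-two spanning sets are actually linearly independent. I would deduce this from the equivalence with \(\Cobl\), or from evaluating against closures via the Frobenius algebra \(\mathcal F_7\), as in \cite[Theorem~4.21]{KWZ}. I would cite that result rather than re-derive the independence argument.
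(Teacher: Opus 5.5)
Steps~1 and~2 are correct, and for linear independence you rely on the same input as the paper, which gives no argument beyond citing \cite[Theorem~4.21]{KWZ}. (A small slip: the saddle is a single disk, so \(\chi=1\) and \(q=1-2=-1\); it does not have \(\chi=-1\).)

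\emph{The error in Step~3.} The off-diagonal spaces \(\Mor(\Li,\Lo)\) and \(\Mor(\Lo,\Li)\) are free of rank \emph{one} over \(\CoeffRing[\varG]\), spanned by the saddle alone, not rank two. The boundary of any cobordism between \(\Li\) and \(\Lo\) is a single circle, so the only simple cobordism is the saddle disk. That disk contains \(\{\ast\}\times[0,1]\), so a dot on it vanishes by the last relation in \eqref{eq:cobb_relations}. This is exactly the relation \(D_\bullet S_\circ=0\) you verified in Step~2, so ``saddle composed with a dot'' is zero and cannot be a basis element.

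With your stated ranks the degree-by-degree comparison does not close. In quantum degree \(-3\), for instance, the cobordism side would have two generators \(\varG\cdot S\) and \(S\cdot D\), while \(\iota_\bullet\B\iota_\circ=\CoeffRing\langle\varG^nS_\circ\rangle_{n\geq0}\) has only the single element \(\varG S_\circ\) there. So your claimed ranks would actually show \(\omega\) is not surjective, contradicting the statement.

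\emph{The repair.} Since \(DS=0\), you already have \(S^{2k+1}=\varG^kS\) in \(\B\). Then \(\varG\)-equivariance gives \(\omega(S^{2k+1})=\varG^k\cdot\omega(S)\), which matches the rank-one spanning set \(\{\varG^k\cdot S\}\) exactly.

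Your unitriangularity argument on the endomorphism spaces is right. From \((D+\varG)D=0\) you get \(\omega(S^{2k})=(D+\varG)^k=\varG^k+\varG^{k-1}D\) and \(\omega(D^k)=(-\varG)^{k-1}D\) for \(k\geq1\). The transition matrix to \(\{\varG^k,\varG^{k-1}D\}\) has determinant \(\pm1\).

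With the corrected off-diagonal rank, and linear independence of simple cobordisms taken from \cite{KWZ}, bijectivity follows.
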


\begin{definition}\label{def:standard_basis}
	The standard basis of \(\B\) is given by 
	\begin{align*}
		\iota_\bullet\B\iota_\bullet
		&=\CoeffRing
		\langle
		\iota_\bullet,
		\varG^n\! D_\bullet,
		\varG^n\!S S_\bullet\rangle_{n\geq0}
		&
		\iota_\circ\B\iota_\bullet
		&=\CoeffRing
		\langle
		\varG^n\! S_\bullet\rangle_{n\geq0}
		\\
		\iota_\circ\B\iota_\circ
		&=\CoeffRing
		\langle
		\iota_\circ,
		\varG^n\!D_\circ,
		\varG^n\!S S_\circ
		\rangle_{n\geq0}
		&
		\iota_\bullet\B\iota_\circ
		&=\CoeffRing
		\langle
		\varG^n\! S_\circ
		\rangle_{n\geq0}
	\end{align*}
	We will call elements of the standard basis of \(\B\) pure.
	We will also make use of the \(\varG\)-equivariant basis of \(\B\) given by 
	\begin{align*}
		\iota_\bullet\B\iota_\bullet
		&=\CoeffRing
		\langle
		\varG^n\! D_\bullet,
		\varG^n\! \iota_\bullet
		\rangle_{n\geq0}
		&
		\iota_\circ\B\iota_\bullet
		&=\CoeffRing
		\langle
		\varG^n\! S_\bullet
		\rangle_{n\geq0}
		\\
		\iota_\circ\B\iota_\circ
		&=\CoeffRing
		\langle
		\varG^n\! D_\circ,
		\varG^n\! \iota_\circ
		\rangle_{n\geq0}
		&
		\iota_\bullet\B\iota_\circ
		&=\CoeffRing
		\langle
		\varG^n\!S_\circ
		\rangle_{n\geq0}
	\end{align*}
\end{definition}

\subsection{\texorpdfstring{From tangles to complexes over \(\B\)}{From tangles to complexes over B}}\label{sub:complexes}

Consider the set of marked points \(\EndsAll=\{\pm e^{\pm i\pi/4}\}\) (\cref{def:Cob})
as a subset of the equatorial circle \(S^2\cap \R^2\times\{0\}\subset S^2\). 
Let \( T \) be an oriented tangle in the unit 3-ball \(D^3\) 
with four ends at \(\EndsAll\times\{0\}\subset\partial D^3\).
Choose a diagram \(\Diag\) for \( T \)
in the unit disk \(D^2\) with four ends at \(\EndsAll\)
and fix an order on the crossings of \(\Diag\).
Let \( n_+ \) be the number of all positive crossings, 
\( n_- \) the number of all negative crossings so that 
\( n=n_++n_- \) is the total number of crossings. 
Define the 0- and 1-resolution of a crossing by
\[ 
\text{(0-resolution) }
\No 
\leftarrow 
\CrossingR 
\rightarrow
\Ni
\text{ (1-resolution)}
\] 
Given vertices $v,v'\in \{0,1\}^n$, where $n$ is a non-negative integer, an edge \(v\rightarrow v'\) is a pair of vertices that differ in exactly one coordinate. This gives rise to the standard preorder on vertices of  the hypercube: write \(v\leq v'\) if and only if there exists a (possibly length 0) sequence of edges \(v\rightarrow\dots\rightarrow v'\). 

Bar-Natan defines a chain complex \( \KhT{\Diag}\) over \(\Cobb\) as follows.
Each vertex \( v=(v_1,\dots,v_n)\in\{0,1\}^n \) is assigned
the object \(\Diag(v) \in\Cobb\) 
obtained by taking 
the \( v_i \)-resolution of 
the \( i^\text{th} \) crossing 
for each \( i=1,\dots, n\). 
Each edge \(v\rightarrow v'\) (where $v_i=0$ and $v'_i=1$, say)
is assigned the morphism  
in \(\Cobb(\Diag(v),\Diag(v'))\)
that is a product cobordism outside a neighbourhood of the \( i^\text{th} \) crossing and in a neighbourhood of the \( i^\text{th} \) crossing 
agrees with the saddle cobordism, up to a sign. The sign is chosen %
so that that the number of signs 
on the boundary of every 2-dimensional face of the cube is odd. 
Finally, the objects \(\Diag(v) \in\Cobb\) 
are shifted into the following quantum and homological gradings:
\[
\KhT{\Diag}_v
= 
h^{|v|-n_-}q^{|v|+n_+-2n_-}\Diag(v)
\]
The morphism associated with an edge \(v\rightarrow v'\) 
gives rise to a morphism 
\[
d^{v}_{v'}
\co
\KhT{\Diag}_v
\rightarrow
\KhT{\Diag}_{v'}
\]
that preserves the quantum grading and raises homological grading by 1. 
(If \(v,v'\in\{0,1\}^n\) are not related by an edge \(v\rightarrow v'\) then \(d^{v}_{v'}=0\).)
The chain complex \( \KhT{\Diag} \) is then defined as 
the formal sum of the objects \(\KhT{\Diag}_v\)
over all vertices \( v\in\{0,1\}^n \) 
together with the endomorphism given by the matrix 
\((d^{v}_{v'})_{v,v'\in\{0,1\}^n}\).
The chain complex \(\KhT{\Diag} \) is
an invariant of the oriented tangle \(T\) 
up to chain homotopy \cite[BN05, Theorem 1, Section 4.2]{BarNatanKhT}; write $\KhT{T}$ for this invariant.

As a consequence of \cref{lem:delooping}, 
the chain complex \(\KhT{\Diag}\) is chain homotopic to a chain complex 
that is built entirely out of the tangles \(\Li\) and \(\Lo\). Denote this complex by \(\KhTs{\Diag}\).
The isomorphism \(\omega\) 
from \cref{thm:OmegaFullyFaithful} 
allows to view \(\KhTs{\Diag}\)
as a chain complex $\DD(\Diag)$ over $\B$. %
(An explicit description 
of the differential of $\DD(\Diag)$ %
is given in \cref{sec:no_wrapping_around_special:delooping_cobs}.)
Write \(\DD(T)\) %
for any bigraded chain complex in the same homotopy class as 
\(\DD(\Diag) \), or \(\DD(\Diag;\CoeffRing)\) when dependence on \(\CoeffRing\) needs emphasis. %
Since the homotopy type of \(\KhT{\Diag}\) is a tangle invariant, we obtain: 

\begin{theorem}
	Up to homotopy, \(\DD(T)\) is an invariant of the oriented tangle \(T\).\qed
\end{theorem}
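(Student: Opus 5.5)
The plan is to reduce the statement to Bar-Natan's invariance theorem for \(\KhT{\Diag}\), and then check that the delooping step and the identification \(\omega\) change nothing up to homotopy. Concretely, let \(\Diag\) and \(\Diag'\) be two diagrams of \(T\), possibly with different crossing orders. By \cite[Theorem 1, Section 4.2]{BarNatanKhT}, \(\KhT{\Diag}\) and \(\KhT{\Diag'}\) are chain homotopy equivalent over \(\Cobb\), via a homotopy equivalence preserving the bigrading. Changing the crossing order only changes the sign assignment on the cube, and any two admissible sign assignments give isomorphic complexes via a diagonal \(\pm1\) change of basis. This step is routine; the one thing to check is that Bar-Natan's argument is insensitive to the Frobenius extension. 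It is, since his Reidemeister moves only use the relations in \eqref{eq:cobb_relations}, or equivalently they are local and hold in \(\Cobl\) \cite{KWZ}.

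Next I would show that \(\KhT{\Diag}\simeq\KhTs{\Diag}\), and that \(\KhTs{\Diag}\) is well-defined up to homotopy independently of the order in which closed components are delooped. For this I apply \cref{lem:delooping} objectwise, which gives an isomorphism in the additive enlargement from each \(\Diag(v)\) to a sum of shifted copies of \(\Li\) and \(\Lo\). Conjugating the differential by these isomorphisms gives an isomorphic complex \(\KhTs{\Diag}\); no Gaussian elimination is needed, since the maps are isomorphisms. Different choices of delooping isomorphisms produce complexes that are isomorphic by conjugation, hence homotopy equivalent.

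Finally, \cref{thm:OmegaFullyFaithful} says that \(\omega\) is an isomorphism of graded \(\CoeffRing[\varG]\)-algebras. It therefore induces an equivalence between the category of chain complexes over \(\Cobb\) built from \(\Li,\Lo\) and the category of chain complexes of free graded \(\B\)-modules, i.e.\ type~D structures over \(\B\). This equivalence preserves chain maps and homotopies, since homotopies are just matrices of morphisms, which \(\omega\) translates bijectively. So a homotopy equivalence \(\KhTs{\Diag}\simeq\KhTs{\Diag'}\), obtained by composing the previous steps, transports to a homotopy equivalence \(\DD(\Diag)\simeq\DD(\Diag')\).

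The only potential obstacle is the first step, namely making sure that Bar-Natan's invariance statement applies verbatim in \(\Cobb\), with the \(\varG\)-deformed neck-cutting relation and the relation killing dots near \(\ast\). I would handle it by noting that the Reidemeister homotopy equivalences use only delooping and Gaussian elimination, both of which are available in \(\Cobb\). The remaining steps are formal.
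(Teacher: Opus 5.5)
Your proposal is correct and follows the same route as the paper. The paper combines Bar-Natan's invariance of \(\KhT{\Diag}\) with the delooping isomorphisms of \cref{lem:delooping} and then transports the result along the identification \(\omega\) of \cref{thm:OmegaFullyFaithful}. The one imprecision is in your first step: Bar-Natan's proof uses the S, T and 4Tu relations rather than those in \eqref{eq:cobb_relations}. The cleanest justification is therefore that S, T and 4Tu hold in \(\Cobb\) (the latter two via neck-cutting), which gives a functor from his category to \(\Cobb\) along which his homotopy equivalences push forward.
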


For each \(n\geq0\)
there is a bigraded chain complex \(\DD_n(\Diag)\)
obtained as the mapping cone 
\[
\DD_n(\Diag)
=
\Cone{\varG^n\cdot 1_{\DD(\Diag)}}
=
\Big[
q^{-n}h^{-1}\DD(\Diag)
\xrightarrow{\varG^n\cdot 1_{\DD(\Diag)}}
q^{n}h^{0}\DD(\Diag)
\Big]
\]
The homotopy class of \(\DD_n(\Diag)\) is also a tangle invariant; write \(\DD_n(T)\) for \(\DD_n(\Diag)\). 

\subsection{An aside on type D structures}\label{sub:typeD} The complexes \(\DD(\Diag)\) and \(\DD_n(\Diag)\) are equivalent to type D structures over $\B$, which are more familiar in the context of Heegaard Floer theory \cite{LOT}; see \cite[Section 2]{KWZ}. Let $\I\subset\B$ be the subring of idempotents and consider the additive category of (bigraded) right $\I$-modules $\underline{\mathcal{C}}^\B$. The morphisms $\underline{\mathcal{C}}^\B(X,Y)$ for objects $X,Y\in\underline{\mathcal{C}}^\B$ are given by \(\I\)-module homomorphisms of the form \(f\co X\rightarrow Y\otimes \B\). (The identity morphism for any object \(X\) is given by \(1_X\otimes \eval_1\) where \(\eval_1\co \Z\rightarrow \B,1\mapsto 1\).) Writing $\mu_\B$ for multiplication in $\B$, composition is given by \[g\circ f
	=
	(1_Z\otimes\mu_\B)
	\circ
	(g\otimes 1_\B)
	\circ
	f\]
	for
	\(X,Y,Z\in\underline{\mathcal{C}}^\B\) and $f\in\underline{\mathcal{C}}^\B(X,Y)$, $g\in\underline{\mathcal{C}}^\B(Y,Z)$.
Then the category 
	\(\C^{\B}\)
	of type~D structures over \(\B\) is the dg category of chain complexes over
	$\underline{\mathcal{C}}^\B$; see \cite[Definition~2.4]{KWZ}. 
	Explicitly:

\begin{definition}	The objects in the category 
	\(\C^{\B}\)
	are pairs \((Y,\delta)\),
	where 
	\(Y\in\underline{\mathcal{C}}^\B\)
	and
	\(\delta\in\underline{\mathcal{C}}^\B(Y,Y)\)
	such that
	\(\delta^2=0\) and with
	grading
	\(\gr(\delta)=q^0h^{1}\). 
	For any 
	\((X,\delta_X),(Y,\delta_Y)\in\C^{\B}\),
	the differential 
	\(\partial_{\C^{\B}}\)
	on
	\(\C^{\B}(X,Y)=\underline{\mathcal{C}}^{\B}(X,Y)\)
	is defined by 
	\[
	\partial_{\C^{\B}}(f)
	=
	\delta_Y\circ f
	-
	(-1)^{|f|}
	f\circ\delta_X
	\]	satisfying the Leibniz  rule
	\[
	\partial_{\C^{\B}}(g\circ f)
	=
	\partial_{\C^{\B}}(g)\circ f
	+
	(-1)^{|g|}
	g\circ \partial_{\C^{\B}}(f)
	\]
where $|\cdot|$ in these expressions denotes the homological grading. 
\end{definition}
Type D structures are special cases of type DA bimodules, which are described in detail in \cref{sub:bordered}. Following the notation from \cite{LOT}, the superscript \(\cdot^{\B}\) can be used to highlight the algebra over which a type~D structure is defined. For instance, \(\DD(T)\) will sometimes be written as \(\DD(T)^{\B}\). 

\begin{remark}\label{rem:typeD:graphical-notation}
	Objects and morphisms in \(\underline{\C}^{\B}\) and \(\C^{\B}\) can be described graphically. 
	Suppose
	\(X,Y\in\underline{\C}^{\B}\)
	are two \(\I\)-modules freely generated by \(\{x_i\}_i\) and \(\{y_j\}_j\), respectively.  
	Then a morphism \(f\in\underline{\C}^{\B}(X,Y)\) 
	with 
	\(f(x_i)=\sum_j y_j \otimes a_{j,i}\) can be represented uniquely 
	by a collection of arrows
	\[
	\begin{tikzcd}
		x_i
		\arrow{r}{a_{j,i}}
		&
		y_j
	\end{tikzcd}
	\]
	By convention arrows labelled by the zero algebra element are omitted.
	Composition of morphisms in \(\underline{\C}^{\B}\) can then be expressed as follows:
	\[
	\Big(
	\begin{tikzcd}
		y
		\arrow{r}{b}
		&
		z
	\end{tikzcd}
	\Big)
	\circ
	\left(
	\begin{tikzcd}
		x
		\arrow{r}{a}
		&
		y
	\end{tikzcd}
	\right)
	=
	\Big(
	\begin{tikzcd}
		x
		\arrow{r}{ba}
		&
		z
	\end{tikzcd}
	\Big)
	\]
\end{remark}

	The centre of \(\B\), denoted \(Z(\B)\), acts on the morphism space \(\C^\B(X,X')\) for any \(X,X'\in\C^\B\): given \(f\in\Mor(X,X')\)
	and \(z\in Z(\B)\) define
	\[
	z\cdot f
	=
	(1_Y\otimes \mu_\B)
	\circ
	(f\otimes z)
	\]
 	In particular,  
	\(
	\partial_{\C^\B}(z\cdot f)
	=
	z\cdot\partial_{\C^\B}(f)
	\)
	and, if 
	\(g\in\C^\B(X',X'')\)
	for some
	\(X''\in\C^\B\), 
	\[
	z\cdot(g\circ f)
	=
	(z\cdot g)\circ f
	=
	g\circ(z\cdot f)
	\]

\begin{lemma}\label{lem:homotopies-of-identity-multiples}
	Let \(z,w\in Z(\B)\) and
	\(X,Y\in\C^\B\) 
	with \(X\simeq Y\). 
	Then 
	\(z\cdot1_X\simeq w\cdot1_X\)
	implies
	\(z\cdot1_Y\simeq w\cdot1_Y\). 
	\qed
\end{lemma}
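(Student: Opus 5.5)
Since \(X\simeq Y\), we fix chain maps \(f\in\C^\B(X,Y)\) and \(g\in\C^\B(Y,X)\), both homologically of degree zero and closed under \(\partial_{\C^\B}\), together with a homotopy \(h\in\C^\B(Y,Y)\) satisfying \(f\circ g-1_Y=\partial_{\C^\B}(h)\). By assumption there is also a homotopy \(k\in\C^\B(X,X)\) with
\[
z\cdot 1_X-w\cdot 1_X=\partial_{\C^\B}(k).
\]
The plan is to transport \(k\) to \(Y\) through \(f\) and \(g\), and to absorb the error coming from \(f\circ g\neq 1_Y\) using \(h\).

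The first step is to conjugate. By the compatibility of the central action with composition stated just before the lemma, \((z-w)\cdot(f\circ g)=f\circ\big((z-w)\cdot 1_X\big)\circ g\). The Leibniz rule, together with \(\partial f=\partial g=0\), gives
\[
f\circ\partial(k)\circ g=\partial(f\circ k\circ g).
\]
Combining these two identities, \((z-w)\cdot(f\circ g)\) is nullhomotopic.

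Next we compare with the identity on \(Y\). Since the central action commutes with \(\partial_{\C^\B}\), we have \((z-w)\cdot(f\circ g-1_Y)=\partial\big((z-w)\cdot h\big)\). Subtracting this from the previous step yields
\[
(z-w)\cdot 1_Y=\partial\big(f\circ k\circ g-(z-w)\cdot h\big),
\]
so \(z\cdot1_Y\simeq w\cdot1_Y\). Note that \(z\cdot 1_Y-w\cdot 1_Y=(z-w)\cdot 1_Y\) because the action is bilinear.

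There is no real obstacle here. The only points to check are that the central action is \(\CoeffRing\)-linear in \(z\), commutes with \(\partial_{\C^\B}\), and is compatible with composition; all three were recorded just before the lemma. One should also confirm that no Koszul signs enter: \(f\) and \(g\) have homological degree zero, so the Leibniz signs \((-1)^{|g|}\) are trivial.
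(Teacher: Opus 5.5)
Your argument is correct. Conjugating the nullhomotopy \(k\) by \(f\) and \(g\), and correcting by \((z-w)\cdot h\), gives \((z-w)\cdot 1_Y=\partial_{\C^\B}\big(f\circ k\circ g-(z-w)\cdot h\big)\). This uses exactly the linearity, \(\partial\)-compatibility and composition-compatibility of the central action recorded just before the lemma.

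The paper states the lemma without proof, treating it as immediate, and your verification is the routine one it leaves to the reader. Note that you only use \(f\circ g\simeq 1_Y\), so the conclusion already holds whenever \(Y\) is a homotopy retract of \(X\).
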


\begin{lemma}\label{lem:nullhomotopy-splits-under-direct-sums}
	If 
	\(X,Y\in\C^\B\) 
	and \(z\in Z(\B)\)
	with
	\(z\cdot1_{X\oplus Y}\simeq 0\)
	then 
	\(z\cdot1_X\simeq 0\)
	and 
	\(z\cdot1_Y\simeq 0\).
\end{lemma}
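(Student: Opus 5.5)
The proof restricts a null-homotopy of \(z\cdot 1_{X\oplus Y}\) along the inclusion and projection of the summand \(X\); the statement for \(Y\) follows symmetrically. Write \(\iota_X\co X\to X\oplus Y\) and \(\pi_X\co X\oplus Y\to X\) for the inclusion and projection of the direct sum in \(\C^\B\). Both are given by the identity of \(X\) tensored with \(\eval_1\), viewed inside \(\underline{\C}^\B\). Since \(\delta_{X\oplus Y}=\delta_X\oplus\delta_Y\) is diagonal, both maps are closed of homological degree zero, i.e.
\[
\partial_{\C^\B}(\iota_X)=0
\qquad\text{and}\qquad
\partial_{\C^\B}(\pi_X)=0,
\]
and they satisfy \(\pi_X\circ\iota_X=1_X\).

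Suppose \(z\cdot 1_{X\oplus Y}=\partial_{\C^\B}(H)\) for some \(H\in\C^\B(X\oplus Y,X\oplus Y)\) of homological degree \(-1\). Set \(H_X=\pi_X\circ H\circ\iota_X\). By the Leibniz rule, and because \(\iota_X\) and \(\pi_X\) are closed of degree zero,
\[
\partial_{\C^\B}(H_X)=\pi_X\circ\partial_{\C^\B}(H)\circ\iota_X=\pi_X\circ\bigl(z\cdot 1_{X\oplus Y}\bigr)\circ\iota_X .
\]
The centre acts compatibly with composition, as recorded just before \cref{lem:homotopies-of-identity-multiples}. Hence the right-hand side equals
\[
z\cdot\bigl(\pi_X\circ 1_{X\oplus Y}\circ\iota_X\bigr)=z\cdot 1_X,
\]
so \(H_X\) is a null-homotopy of \(z\cdot 1_X\). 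The same argument with \(\iota_Y,\pi_Y\) gives \(z\cdot 1_Y\simeq0\).

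There is no real obstacle here. The only point to check is that the sign in the Leibniz rule vanishes, which holds because \(\pi_X\) and \(\iota_X\) have degree zero. If the gradings in \(\C^\B\) are only defined up to the bigrading shifts, note that \(\iota_X\) and \(\pi_X\) have bigrading \(q^0h^0\), so \(H_X\) has the correct degree.
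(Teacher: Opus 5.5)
Your proposal is correct and is essentially the paper's argument: the paper takes a null-homotopy \(\left(\begin{smallmatrix} f & * \\ * & g\end{smallmatrix}\right)\), discards the off-diagonal blocks, and reads off \(f\) and \(g\) as null-homotopies. Your \(\pi_X\circ H\circ\iota_X\) is exactly that diagonal block \(f\), and your Leibniz-rule computation (using that \(\delta_{X\oplus Y}\) is diagonal, so \(\iota_X\) and \(\pi_X\) are closed of degree zero) is the justification the paper leaves implicit.
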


\begin{proof}
	If 
	\(
	\left(
	\begin{smallmatrix}
		f & *
		\\
		* & g
	\end{smallmatrix}
	\right)
	\)
	is a nullhomotopy for \(z\cdot1_{X\oplus Y}\), 
	then so is 
	\(
	\left(
	\begin{smallmatrix}
		f & 0
		\\
		0 & g
	\end{smallmatrix}
	\right)
	\) and
	\(f\) and \(g\) 
	are nullhomotopies for 
	\(X\) and \(Y\), 
	respectively.
\end{proof}

\begin{lemma}
	As a subalgebra, \(Z(\B)\) is generated by 
	\(\Db\), \(\Dw\), and \((\Sb+\Sw)\). \qed
\end{lemma}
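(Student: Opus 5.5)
The plan is to compute the centre directly in the standard basis from \cref{def:standard_basis}, using the idempotent decomposition. An element \(z\in\B\) is central if and only if it commutes with the idempotents \(\Ib,\iota_\circ\) and with the generators \(S_\bullet,S_\circ,\Db,\Dw\). Commuting with both idempotents forces \(z\in\Ib\B\Ib\oplus\iota_\circ\B\iota_\circ\), so we may write \(z=z_\bullet+z_\circ\) with \(z_\bullet\in\Ib\B\Ib\) and \(z_\circ\in\iota_\circ\B\iota_\circ\). We then expand \(z_\bullet=a\Ib+\sum_n b_n\varG^n\Db+\sum_n c_n\varG^nSS_\bullet\), and \(z_\circ\) analogously with coefficients \(a',b'_n,c'_n\).

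The first step is to record the multiplication rules in this basis. The relations give \(DS=SD=0\). Since \(\varG=S^2-D\) and \(SD=DS=0\), we get \(\varG S=S^3\), and \(\varG\) commutes with \(S\) and \(D\). Moreover \(D^2=-\varG D\), which follows from \(D\varG=DS^2-D^2=-D^2\). With these rules, \(\varG^n\Db\) and \(\varG^n\Dw\) are seen to be central directly: they live in a single idempotent, commute with both \(D\)'s, and are killed by \(S\) on either side. For the saddle part, \((S_\bullet+S_\circ)^{2}=SS_\bullet+SS_\circ\) is central, because \(S\) commutes with it and \(D\) annihilates it on both sides. Also \(\varG\) itself is central, being a polynomial in these elements: \(\varG=(\Sb+\Sw)^2-\Db-\Dw\). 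The unit is \(\Ib+\iota_\circ\).

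The second step is the converse: show that a central \(z\) lies in the subalgebra generated by \(1,\Db,\Dw,\Sb+\Sw\). Commutation with \(\Db\) and \(\Dw\) imposes nothing new on such \(z\), since all terms are products involving \(D\) or \(S\) and the cross terms vanish. The real constraint comes from \(zS_\bullet=S_\bullet z\). Here \(S_\bullet z\) uses \(z_\bullet\), giving \(aS_\bullet+\sum_n c_n\varG^{n+1}S_\bullet\), while \(zS_\bullet\) uses \(z_\circ\), giving \(a'S_\bullet+\sum_n c'_n\varG^{n+1}S_\bullet\). Comparing coefficients in the basis \(\{\varG^nS_\bullet\}\) yields \(a=a'\) and \(c_n=c'_n\); the equation for \(S_\circ\) gives the same conditions. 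Hence
\[
z=a\cdot 1+\sum_n b_n\varG^n\Db+\sum_n b'_n\varG^n\Dw+\sum_n c_n\varG^n(\Sb+\Sw)^2,
\]
and each summand is a polynomial in \(\Db,\Dw,\Sb+\Sw\) because \(\varG\) is. I am reading ``generated'' as including the unit.

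The main obstacle is the bookkeeping of the \(\varG\)-power identities: checking that \(\varG^n SS_\bullet\) times \(S\) equals \(\varG^{n+1}S\), which needs \(S^3=\varG S\), and that the standard basis elements behave as claimed under left versus right multiplication by \(S\). These identities are immediate from \(SD=DS=0\), and everything else reduces to comparing coefficients in a basis.
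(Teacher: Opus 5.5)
Your computation of the centre is correct. You reduce to $z=z_\bullet+z_\circ$, expand in the standard basis, and compare coefficients in $\iota_\circ\B\Ib=\CoeffRing\langle\varG^n\Sb\rangle_{n\geq0}$. This shows that $Z(\B)$ is spanned by $1$, $\varG^n\Db$, $\varG^n\Dw$ and $\varG^n(\Sb+\Sw)^2$ for $n\geq0$, and the argument works over any coefficient ring. The paper gives no argument for this lemma, so your computation is the verification it leaves to the reader.

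The problem is your last sentence. The element $\Sb+\Sw$ is not central, since $\Ib(\Sb+\Sw)=\Sw$ while $(\Sb+\Sw)\Ib=\Sb$. So the subalgebra generated by $\Db,\Dw,\Sb+\Sw$ strictly contains $Z(\B)$. Showing that every central element is a polynomial in $\Db,\Dw,\Sb+\Sw$ gives only the inclusion $Z(\B)\subseteq\langle\Db,\Dw,\Sb+\Sw\rangle$, and the statement read literally is false.

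It is a typo for $(\Sb+\Sw)^2$. The paper's very next sentence lists $S^2=(\Sb+\Sw)^2$ among the central elements, and later $a_2=-S^2$ is treated as an element of $Z(\B)$. Your own first step checks centrality of $(\Sb+\Sw)^2$, not of $\Sb+\Sw$. So your argument proves exactly the corrected statement once you conclude that each summand is a polynomial in $\Db$, $\Dw$ and $(\Sb+\Sw)^2$, together with the unit, using $\varG=(\Sb+\Sw)^2-\Db-\Dw$. You should have noticed and flagged the mismatch rather than asserting the literal claim.
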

In particular, 
for \(z\in\{\varG,D_{\circ},D_{\bullet},S^2=(\Sb+\Sw)^2\}\), 
the type D structure endomorphisms 
\begin{equation*}%
	\varG\cdot1_X, ~ 
	S^2\cdot 1_X , ~ 
	D_{\bullet} \cdot 1_X, ~
	D_{\circ} \cdot 1_X
\end{equation*}
on any chain complex \(X\) over \(\B\)
 are defined by the formulas
\begin{align*}
	\varG\cdot1_X (x) 
	&= x \otimes \varG, 
	&
	S^2\cdot1_X (x) 
	&= x \otimes S^2, 
	\\ 
	D_{\bullet} \cdot1_X (x_{\bullet})
	&= x_{\bullet} \otimes D_{\bullet},
	~
	D_{\bullet}  \cdot 1_X (x_{\circ})
	= 0,
	&
	D_{\circ}  \cdot 1_X (x_{\circ})
	&= x_{\circ} \otimes D_{\circ},
	~
	D_{\circ}  \cdot 1_X (x_{\bullet})
	= 0,
\end{align*}
where $x_{\bullet}$ and $x_{\circ}$ denote generators in idempotents \(\iota_\bullet\) and \(\iota_\circ\), respectively.

\subsection{Curves on the four-punctured sphere}
Towards the classification of  bigraded chain complexes over the algebra \(\B\) up to chain homotopy, let \(\mathcal{X}\) be a finite, connected, oriented graph with vertices of valence at most~2. 
If every vertex has valence 2 we call \(\mathcal{X}\) cyclic and we call \(\mathcal{X}\) linear otherwise.
In other words, \(\mathcal{X}\) is an oriented graph of the form
\[
	\begin{tikzcd}
		x_0
		\arrow[leftrightarrow]{r}{e_1}
		&
		x_1
		\arrow[leftrightarrow]{r}{e_2}
		&
		\dots
		\arrow[leftrightarrow]{r}{e_\ell}
		&
		x_\ell
	\end{tikzcd}
	\quad
	\text{or}
	\quad
	\begin{tikzcd}
		x_0
		\arrow[leftrightarrow]{r}{e_1}
		&
		x_1
		\arrow[leftrightarrow]{r}{e_2}
		&
		\dots
		\arrow[leftrightarrow]{r}{e_\ell}
		&
		x_\ell
		\arrow[leftrightarrow]{r}{e_0}
		&
		x_0
		\arrow[llll,-,
		to path={ ([xshift=-1pt]\tikztostart.south)
			-- ([yshift=-6pt,xshift=-1pt]\tikztostart.south)
			-- ([yshift=-6pt,xshift=1pt]\tikztotarget.south)
			-- ([xshift=1pt]\tikztotarget.south)}]
		\arrow[llll,-,
		to path={ ([xshift=1pt]\tikztostart.south)
		-- ([yshift=-8pt,xshift=1pt]\tikztostart.south)
		-- ([yshift=-8pt,xshift=-1pt]\tikztotarget.south)
		-- ([xshift=-1pt]\tikztotarget.south)}]
	\end{tikzcd}
\]
where each edge is oriented either  to the left or to the right. %

\begin{definition}
	A formal curve $F$ of shape $\mathcal{X}$ is 
	\begin{enumerate}
		\item an object \(F(x)\in\{\bullet,\circ\}\) for every vertex \(x\) in \(\mathcal{X}\); and 
		\item for every edge \(e\co x\rightarrow y\) in \(\mathcal{X}\)
		an element 
		\(
		F(e)=\iota_{F(y)}\cdot F(e)\cdot\iota_{F(x)}
		\)
		of the standard basis of \(\B\)
		with \(F(e)\neq \iota_{\bullet},\iota_{\circ}\)
		such that
		\[
		F(e)\in\{\varG^n D_\circ,\varG ^n D_\bullet\mid n\geq0\}
		\Longleftrightarrow
		F(e')\in\{\varG^n S_\bullet,\varG^n S_\circ,\varG^n SS_\bullet,\varG ^n SS_\circ\mid n\geq0\}
		\]
		for any two distinct edges \(e\) and \(e'\) 
		sharing a common vertex.
	\end{enumerate}
	If \(\mathcal{X}\) is cyclic we call \(F\) compact. 
	If \(\mathcal{X}\) is linear we call \(F\) non-compact. 
	Two formal curves 
	$F$ %
	and 
	$F'$ %
	are equivalent if there exists an isomorphism 
	\(f\co\mathcal{X}'\rightarrow\mathcal{X}\)
	such that 
	\(Ff=F'\).	
\end{definition}

	Formal curves correspond to certain $\B$-decorated graphs in the sense of \cite{HanselmanWatson}.
	In particular \(F\) can be realized as 
	a decoration of the vertices of the graph \(\mathcal{X}\) 
	by objects \(\halfbullet_i= F(x_i)\in\{\bullet,\circ\}\)
	and its edges 
	by elements \(a_i=F(e_i)\) of the standard basis (subject to the conditions above):
	 \begin{equation}\label{eqn:curve-like_functor}
		\begin{tikzcd}
			\halfbullet_0
			\arrow[leftrightarrow]{r}{a_1}
			&
			\halfbullet_1
			\arrow[leftrightarrow]{r}{a_2}
			&
			\dots
			\arrow[leftrightarrow]{r}{a_\ell}
			&
			\halfbullet_\ell
		\end{tikzcd}
		\quad
		\text{or}
		\quad
		\begin{tikzcd}
			\halfbullet_0
			\arrow[leftrightarrow]{r}{a_1}
			&
			\halfbullet_1
			\arrow[leftrightarrow]{r}{a_2}
			&
			\dots
			\arrow[leftrightarrow]{r}{a_\ell}
			&
			\halfbullet_\ell
			\arrow[leftrightarrow]{r}{a_0}
			&
			\halfbullet_0
			\arrow[llll,-,
			to path={ ([xshift=-1pt]\tikztostart.south)
				-- ([yshift=-6pt,xshift=-1pt]\tikztostart.south)
				-- ([yshift=-6pt,xshift=1pt]\tikztotarget.south)
				-- ([xshift=1pt]\tikztotarget.south)}]
			\arrow[llll,-,
			to path={ ([xshift=1pt]\tikztostart.south)
				-- ([yshift=-8pt,xshift=1pt]\tikztostart.south)
				-- ([yshift=-8pt,xshift=-1pt]\tikztotarget.south)
				-- ([xshift=-1pt]\tikztotarget.south)}]
		\end{tikzcd}
	\end{equation}
	Concretely, here is a non-compact formal curve: \[
	\begin{tikzcd}[row sep=2pt,column sep=1cm]
		\bullet
		\arrow[r,"S\cdot\varG"]
		&
		\circ
		\arrow[r,"D\cdot\varG"]
		&
		\circ
		\arrow[r,"S^2"]
		&
		\circ
		\arrow[r,"D"]
		&
		\circ
		\arrow[r,"S"]
		&
		\bullet
		&
		\bullet
		\arrow[l,"D",swap]
	\end{tikzcd}
	\]

Let \(\FourPuncturedSphere\) denote \(S^2\smallsetminus \EndsAll\). Formal curves admit a geometric interpretation, as follows. 

\begin{definition}
	A compact curve in \(\FourPuncturedSphere\) 
	is a free loop
	\(\gamma\co S^1\rightarrow\FourPuncturedSphere\). 
	We consider compact curves
	up to homotopy and orientation reversal.
	A non-compact curve in \(\FourPuncturedSphere\) 
	is a continuous map
	\(
	\gamma\co [0,1]\rightarrow S^2\smallsetminus \{*\}
	\)
	with
	\(\gamma(0),\gamma(1)\in \EndsThree\) 
	and 
	\(\gamma(0,1)\subseteq\FourPuncturedSphere\).
	We consider non-compact curves
	up to homotopy through non-compact curves and orientation reversal.
\end{definition}

Curves in \(\FourPuncturedSphere\) and 
formal curves are in one-to-one correspondence. 
This correspondence goes through curves on the quiver \(\mathcal{Q}\)
(or, more precisely, the cell complex associated with \(\mathcal{Q}\)). 

\begin{definition}
	A compact curve in \(\mathcal{Q}\) 
	is a free loop
	\(\gamma\co S^1\rightarrow\mathcal{Q}\), 
	 considered up to homotopy and orientation reversal.
	A non-compact curve in \(\mathcal{Q}\)
	is a continuous map
	\(
	\gamma\co [0,1]\rightarrow \mathcal{Q}
	\)
	with
	\(\gamma(0),\gamma(1)\in \{\bullet,\circ\}\),
	considered up to homotopy fixing endpoints
	and orientation reversal.
\end{definition}

Any formal curve \(F\) determines a curve \(\gamma_\mathcal{Q}(F)\) 
in \(\mathcal{Q}\) by concatenation of the paths in \(\mathcal{Q}\) 
associated with individual edges in \(\mathcal{X}\). 
Indeed every curve in \(\mathcal{Q}\)
is of the form \(\gamma_\mathcal{Q}(F)\) 
for some formal curve \(F\), 
and if two curves 
\(\gamma_\mathcal{Q}(F)\) 
and 
\(\gamma_\mathcal{Q}(F')\) 
are equivalent, then so are \(F\) and~\(F'\). 
Consulting \cref{fig:Kh:example} on page~\pageref{fig:Kh:example}, \(\mathcal{Q}\) embeds into \(\FourPuncturedSphere\) so that there is a deformation retract
\(r\co \FourPuncturedSphere\rightarrow\mathcal{Q}\)
where \(r^{-1}(\bullet)\) and \(r^{-1}(\circ)\) are two disjoint arcs \(a_\bullet\) and \(a_\circ\), respectively, starting and ending on the distinguished puncture \(\ast\). 
The map 
\([\gamma]\mapsto [r\gamma]\)
sets up a bijection between 
equivalence classes of
compact curves in~\(\FourPuncturedSphere\) 
and in~\(\mathcal{Q}\).
Observe that \(\gamma\) can be chosen to intersect 
\(a_\bullet \cup a_\circ\) minimally
and such that any self-intersection points of \(\gamma\) are
in the complement of \(a_\bullet \cup a_\circ\).
Then, if 
$F$ %
is a formal curve associated with $\mathcal{X}$, where
\([r\gamma]=\gamma_\mathcal{Q}(F)\),
the vertices in \(\mathcal{X}\)
correspond exactly to the intersection points
\(\gamma\cap (a_\bullet \cup a_\circ)\)
and the edges in \(\mathcal{X}\)
correspond to how these intersection points are connected in~\(\gamma\). 
Similarly, if \(\gamma\) is a non-compact curve 
in \(\FourPuncturedSphere\), 
we can choose \(\gamma\) 
to intersect \(a_\bullet \cup a_\circ\) minimally
and such that any self-intersection points of \(\gamma\) are
in the complement of \(a_\bullet \cup a_\circ\).
We then restrict \(\gamma\) to a maximal subpath \(\gamma|_m\)
with ends on \(a_\bullet \cup a_\circ\). 
The map 
\([\gamma]\mapsto [r\gamma|_m]\)
sets up a bijection between 
equivalence classes of 
non-compact curves in~\(\FourPuncturedSphere\) 
and in~\(\mathcal{Q}\). In summary:

\begin{proposition}\label{prop:functors-curves}
	There is a one-to-one correspondence \(F\leftrightarrow\gamma_F\) between formal curves
	and curves in
	\(\FourPuncturedSphere\), 
	up to the respective notions of equivalence.
	\qed
\end{proposition}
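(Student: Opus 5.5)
The correspondence is the composite of two bijections: between formal curves and curves in the quiver \(\mathcal{Q}\), and between curves in \(\mathcal{Q}\) and curves in \(\FourPuncturedSphere\). I will treat the compact and non-compact cases in parallel.

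\emph{Step 1: formal curves and curves in \(\mathcal{Q}\).} The paths in \(\mathcal{Q}\) assigned to standard basis elements are as follows. \(\varG^n D_\halfbullet\) is the loop \(D\) at \(\halfbullet\) traversed \(n+1\) times. \(\varG^n S_\halfbullet\) and \(\varG^n SS_\halfbullet\) are the alternating \(S\)-paths of length \(2n+1\) and \(2n+2\); this uses that in \(\B\) one has \(\varG^n S = S^{2n+1}\) and \(\varG^n SS = S^{2n+2}\), since \(DS=SD=0\). A curve in \(\mathcal{Q}\), after homotoping to a reduced edge path (no backtracking), is a cyclic or linear word in the edges \(D_\bullet^{\pm1}, D_\circ^{\pm1}, S_\bullet^{\pm1}, S_\circ^{\pm1}\). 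Reducedness means the word never contains an edge followed by its inverse.

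The plan is to cut this word into maximal runs. Each maximal run is either a power of a single \(D\)-loop, or a consistently oriented alternating string of \(S\)-edges. Each run is exactly the path of a unique standard basis element, up to reversing the arrow direction in \(\mathcal{X}\). The alternation condition in the definition of a formal curve says precisely that consecutive runs are of different types, so the decomposition into runs is unique.

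One point needs care. An \(S\)-run traversed forwards and then immediately backwards would be backtracking, so it cannot occur in a reduced word. An \(S\)-run reversing orientation without backtracking is impossible, because each vertex has only one outgoing and one incoming \(S\)-edge. Hence maximal \(S\)-runs are well defined.

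Since reduced representatives of free homotopy classes of loops in a graph are unique up to cyclic rotation, and likewise for paths rel endpoints, this gives the bijection up to the stated equivalences (graph isomorphism corresponds to rotation and reversal). Two cases need separate checking. The first is a cyclic word consisting of a single run, such as a pure power of \(D\); these are excluded or handled by the requirement \(F(e)\neq\iota\) together with the cyclic alternation. The second is the empty word, which corresponds to the one-vertex linear graph.

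\emph{Step 2: curves in \(\mathcal{Q}\) and in \(\FourPuncturedSphere\).} The deformation retraction \(r\) is a homotopy equivalence \(\FourPuncturedSphere\simeq\mathcal{Q}\), so free loops correspond bijectively, including up to reversal. For non-compact curves, I will use the arcs \(a_\bullet, a_\circ\). Put \(\gamma\) in minimal position with respect to \(a_\bullet\cup a_\circ\) and truncate it to \(\gamma|_m\) between its first and last intersections. The initial and final segments lie in a component of the complement of the arcs that contains exactly one end in \(\EndsThree\). These components are disks with one puncture, which is how \(r\) is set up: each complementary region of \(a_\bullet\cup a_\circ\) retracts to an edge of \(\mathcal{Q}\). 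Those tails are therefore determined up to homotopy by which arc they leave from.

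Conversely, the remaining data is a path between arc points, and homotopy rel the arcs corresponds to homotopy rel endpoints in \(\mathcal{Q}\). Minimal position makes the correspondence well defined. Bigon removal shows that any two minimal representatives are related by homotopy through minimal ones, which fixes the intersection sequence.

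The main obstacle is Step 2 for non-compact curves. One has to check that homotopy through non-compact curves, with endpoints free to slide in \(S^2\smallsetminus\{*\}\) but confined to the ends, matches homotopy rel endpoints of the truncated path. I would handle this by a bigon and half-bigon argument against \(a_\bullet\cup a_\circ\), using the explicit picture of the complementary regions.
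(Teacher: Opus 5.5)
Your route is the paper's: formal curves correspond to curves in $\mathcal{Q}$ by concatenation, and curves in $\mathcal{Q}$ correspond to curves in $\FourPuncturedSphere$ via $r$, with truncation at $a_\bullet\cup a_\circ$ for non-compact curves. Your decomposition of reduced words into maximal $D$- and $S$-runs is a correct and more careful version of the paper's sketch.

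However, your treatment of single-run cyclic words fails. Neither $F(e)\neq\iota$ nor alternation excludes a cyclic graph with one vertex and one loop edge: the definition allows it, and alternation is vacuous there. Single-run cyclic words are exactly the peripheral loops $D_\bullet^k$, $D_\circ^k$, $(S_\circ S_\bullet)^k$, and on them the correspondence genuinely breaks. For example, $(S_\circ S_\bullet)^k$ comes from two inequivalent formal curves: a single vertex $\bullet$ with loop $\varG^{k-1}SS_\bullet$, or a single vertex $\circ$ with loop $\varG^{k-1}SS_\circ$. If one-vertex cycles are disallowed, these loops instead come from no formal curve. The null-homotopic loop also never comes from a formal curve, and neither does a non-compact curve from an end back to itself that is homotopic into a small disk around that end. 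So these curves must be excluded from the statement rather than handled. This is harmless later, because a one-vertex cyclic formal curve admits no bigrading (its edge $x\to x$ would need $h(x)-h(x)=1$), and the paper's sketch silently ignores these cases too.

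In Step~2, the tails are not determined by which arc they leave from. The arc $a_\bullet$ borders two regions containing different ends (those retracting onto the $D_\bullet$-loop and onto the $S$-cycle), and similarly for $a_\circ$. What pins the tail down is transversality at the first crossing: the tail lies in the region on the other side of the arc from the first segment of $\gamma|_m$. So the end is read off from whether the first (resp.\ last) edge of $F$ is of $D$- or $S$-type. An edgeless $F$ gives a curve crossing one arc once, with one tail on each side. You need exactly this to define the inverse map $F\mapsto\gamma_F$ on non-compact curves.

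Relatedly, ``each complementary region retracts to an edge'' is not right. A small loop around an end avoids both arcs, so it would map into the union of open edges of $\mathcal{Q}$. Hence no homotopy equivalence $r$ can have $r^{-1}(\bullet)=a_\bullet$ and $r^{-1}(\circ)=a_\circ$, and a punctured region retracts onto a loop of $\mathcal{Q}$, not an edge. What you can arrange, and all you actually use, is $r(a_\bullet)=\bullet$ and $r(a_\circ)=\circ$. This matches your run picture: each segment of $\gamma$ between consecutive crossings is one edge of $F$, labelled by a power of $D$ or $S$ according to how often it winds around the end in its region.
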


\subsection{%
	Graded curves with local systems 
	and complexes over \texorpdfstring{\(\B\)}{B}}

A bigrading adapted to a formal curve \(F\) 
is a bigrading of objects 
\[
	g\co
	\{x_i\}_{i=0,\dots,\ell}
	\rightarrow
	\Z^2,\quad
	x
	\mapsto
	g(x)=(q(x),h(x))
\]
such that for every edge \(e\co x\rightarrow y\) in \(A\), 
\begin{equation}\label{eqn:grading_on_curves:c}
	q(y)-q(x)+q(F(e))=0
	\quad \text{ and }\quad
	h(y)-h(x)=1.
\end{equation}
Note that the space of such functions is an affine copy of \(\Z^2\) if it is non-empty, 
since any function satisfying \eqref{eqn:grading_on_curves:c} 
is determined by its value at a single point.
Moreover, every non-compact formal curve admits a grading.

\begin{definition}
	A graded curve is a pair \((F,g)\) where 
	$F$ %
	is a formal curve %
	\(g\) is a bigrading adapted to \(F\). 
	Two graded curves 
	$(F,g)$ %
	and 
	$(F,g')$ %
	are equivalent if there exists an isomorphism 
	\(f\co\mathcal{X}'\rightarrow\mathcal{X}\)
	such that 
	\(Ff=F'\) and \(gf=g'\).
\end{definition}

\begin{definition}\label{def:functor-to-complex}
	Given a graded curve \((F,g)\)
	define a chain complex \(C(F,g)\) over \(\B\) 
	by
	\[
	\left(
	\bigoplus_{i=0}^\ell q^{q(x_i)}h^{h(x_i)}F(x_i),
	\Big(
	\sum_{e} F(e)
	\Big)_{ji}
	\right)
	\]
	where the sum in the differential is over all edges 
	\(e\co x_i\rightarrow x_j\) 
	in \(\mathcal{X}\).
\end{definition}

\begin{remark}
The sum in the differential 
has at most one non-zero term, 
unless \(\mathcal{X}\) is the cyclic graph 
with two vertices.
So, in simple terms, 
\(C(F,g)\) is the chain complex represented
by the labelled graph~\eqref{eqn:curve-like_functor};
the assumption that the labels \(a_i\) 
alternate between \(S\) and \(D\) 
ensures that 
the differential squares to zero.
Note that the chain complexes 
associated with equivalent graded curves 
are isomorphic.
\end{remark}

The above construction generalizes as follows: 
An \(n\)-dimensional local system 
is a pair \((X,e)\) 
where \(e\) is an edge of \(\mathcal{X}\) and 
\(X\in\GL_n(\CoeffRing)\), where $n$ is a positive integer.
Two local systems \((X,e)\) and \((X',e')\) are equivalent
if \(X^\sigma\) and \(X'\) are conjugate, 
where \(\sigma\in\{\pm 1\}\) is equal to \(+1\) 
if and only if the edges \(e\) and \(e'\) 
point in the same direction (both left or both right). 

\begin{definition}
	A graded curve with local system is a tuple \((F,g,X,e)\) for
	$(F,g)$ a graded curve and \((X,e)\) a local system on~\(\mathcal{X}\), where
	we require \(X=(1)\in\GL_1(\CoeffRing)\)
	if \(F\) is non-compact.
	Two graded curves with local systems
	\((F,g,X,e)\) %
	and 
	\((F',g',X',e')\) %
	are equivalent if there exists an isomorphism 
	\(f\co\mathcal{X}'\rightarrow\mathcal{X}\)
	such that 
	\(Ff=F'\), \(gf=g'\), and \((X,e)\) is equivalent to \((X',f(e'))\).
\end{definition}

When \(\CoeffRing\) is a field \(\CoeffField\), 
an \(n\)-dimensional local system on a bigraded compact curve \((F,g)\)
can be thought of as the monodromy 
of an \(n\)-dimensional \(\CoeffField\)-vector bundle over the curve \(\gamma_F\), 
where \(\CoeffField\) is equipped with the discrete topology. 
Vector bundles over non-compact curves are unique (up to dimensions), hence the
unique equivalence class of local system for non-compact curves.

\begin{definition}
	Given a %
	graded curve with local system \((F,g,X,e)\) define a chain complex \(C(F,g,X,e)\) over \(\B\) 
	by
	\[
	\left(
	\bigoplus_{i=0}^\ell q^{q(x_i)}h^{h(x_i)}
	F(x_i)\otimes \CoeffRing^n,
	\Big(
	\sum_{e'} F(e')\otimes X(e')
	\Big)_{ji}
	\right)
	\]
	where the sum in the differential is over all edges 
	\(e'\co x_i\rightarrow x_j\) 
	in \(\mathcal{X}\), \(X(e)=X\), and 
	\(X(e')\) is the \(n\)-dimensional identity matrix 
	for \(e'\neq e\).
\end{definition}

\begin{remark}
	If \(X=(1)\in\GL_1(\CoeffRing)\) then
	\(
	C(F,g,X,e)=C(F,g)
	\)
	for any edge \(e\) in~\(\mathcal{X}\).
	As such every graded curve may be regarded as a graded curve with trivial local system.%
\end{remark}

\begin{example}
	If \(\mathcal{X}\) is cyclic
	the chain complex \(C(F,g,X,e_0)\) 
	is represented by a labelled oriented graph of the form
	\[
	\begin{tikzcd}[column sep=40pt]
		\halfbullet_0\otimes \CoeffRing^n
		\arrow[leftrightarrow]{r}{a_1\otimes 1_{\CoeffRing^n}}
		&
		\halfbullet_1\otimes \CoeffRing^n
		\arrow[leftrightarrow]{r}{a_2\otimes 1_{\CoeffRing^n}}
		&
		\dots
		\arrow[leftrightarrow]{r}{a_\ell\otimes 1_{\CoeffRing^n}}
		&
		\halfbullet_\ell\otimes \CoeffRing^n
		\arrow[leftrightarrow]{r}{a_0\otimes X}
		&
		\halfbullet_0\otimes \CoeffRing^n
		\arrow[llll,-,
		to path={ ([xshift=-1pt]\tikztostart.south)
			-- ([yshift=-6pt,xshift=-1pt]\tikztostart.south)
			-- ([yshift=-6pt,xshift=1pt]\tikztotarget.south)
			-- ([xshift=1pt]\tikztotarget.south)}]
		\arrow[llll,-,
		to path={ ([xshift=1pt]\tikztostart.south)
			-- ([yshift=-8pt,xshift=1pt]\tikztostart.south)
			-- ([yshift=-8pt,xshift=-1pt]\tikztotarget.south)
			-- ([xshift=-1pt]\tikztotarget.south)}]
	\end{tikzcd}
	\]
\end{example}

Suppose \((F,g,X,e)\) is a compact graded curve and 
\((X',e')\) is another local system for \((F,g)\). 
Then we define a new local system
\[
(X,e)\oplus(X',e')
=
\left(
\left(
\begin{smallmatrix}
	X & 0
	\\
	0 & X''
\end{smallmatrix}
\right),
e
\right)
\]
where \(X''\) is chosen such that 
\((X',e')\) is equivalent to \((X'',e)\). 
The equivalence class of \((X,e)\oplus(X',e')\) 
is independent of the choices. Proofs of the following observations are left to the reader: 
\begin{proposition}
	There exists an isomorphism
	\[
	C(F,g,X,e)\oplus C(F,g,X',e')
	\cong 
	C(F,g,(X,e)\oplus(X',e'))
	\]
	and, moreover, 
	if \((X,e)\) and \((X',e')\) are equivalent then 
	\(
	C(F,g,X,e)\cong C(F,g,X',e').
	\)
	\qed
\end{proposition}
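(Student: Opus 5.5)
We argue directly at the level of the explicit chain complexes. Throughout, the vertex modules are \(q^{q(x_i)}h^{h(x_i)}F(x_i)\otimes\CoeffRing^n\), and the differential has components \(F(e')\otimes X(e')\). Any family of invertible matrices \(P_i\in\GL_n(\CoeffRing)\), one for each vertex \(x_i\), defines a bigrading-preserving isomorphism of underlying \(\I\)-modules \(\Phi=\bigoplus_i 1_{F(x_i)}\otimes P_i\). Its inverse is \(\bigoplus_i 1\otimes P_i^{-1}\).

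Conjugating the differential by \(\Phi\) replaces, for an edge \(e'\co x_i\to x_j\), the coefficient \(X(e')\) by \(P_jX(e')P_i^{-1}\). The algebra element \(F(e')\) is untouched, since the \(P_i\) only act on the \(\CoeffRing^n\) factor and all scalars are central. Hence \(\Phi\) is a chain isomorphism from \(C(F,g,X,e)\) to the complex obtained by this change of edge coefficients. The whole proof consists of choosing the \(P_i\) well.

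\emph{Equivalent local systems give isomorphic complexes.}
\begin{enumerate}
\item \emph{Conjugation.} Take all \(P_i=P\) constant. This replaces \(X\) by \(PXP^{-1}\) on \(e\) and leaves every identity coefficient unchanged.
\item \emph{Moving the monodromy to an adjacent edge.} Let \(x\) be the vertex shared by \(e\) and a neighbouring edge \(e''\). Take \(P=X^{\pm1}\) at \(x\) and the identity at all other vertices. The sign is chosen according to whether \(e\) enters or leaves \(x\), so that the coefficient on \(e\) becomes the identity.
\item \emph{Tracking the exponent.} The coefficient on \(e''\) then becomes \(X\) or \(X^{-1}\), depending on whether \(e''\) has the same or the opposite orientation as \(e\) along the cycle. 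Iterating around the cycle moves \((X,e)\) to \((X^{\sigma},e')\) for any edge \(e'\), with \(\sigma\) exactly as in the definition of equivalence.
\item \emph{Conclusion.} Combining this transport with conjugation yields \(C(F,g,X,e)\cong C(F,g,X',e')\) whenever \((X,e)\) and \((X',e')\) are equivalent.
\end{enumerate}
For non-compact curves the local system is trivial, so there is nothing to prove.

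\emph{Direct sums.} By the previous paragraph we may first replace \((X',e')\) by an equivalent \((X'',e)\) living on the same edge \(e\). Then \(C(F,g,X,e)\oplus C(F,g,X'',e)\) has vertex modules
\[
(F(x_i)\otimes\CoeffRing^n)\oplus(F(x_i)\otimes\CoeffRing^{m})\cong F(x_i)\otimes\CoeffRing^{n+m}.
\]
Its differential has coefficients \(\left(\begin{smallmatrix}X&0\\0&X''\end{smallmatrix}\right)\) on \(e\) and the identity elsewhere. This is literally \(C(F,g,(X,e)\oplus(X',e'))\). The independence of the equivalence class of \((X,e)\oplus(X',e')\) from the choice of \(X''\) follows from the same transport argument, applied blockwise.

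\emph{Expected obstacle.} The only delicate point is the degenerate cyclic graph with two vertices. There the differential component between two vertices may be a sum of two edge terms, and one should check that the vertex-wise rescaling still acts on each term separately. It does, since \(\Phi\) acts edge by edge through \(P_jX(e')P_i^{-1}\), even when two edges join the same pair of vertices. Beyond this, one only needs to verify the orientation bookkeeping for \(\sigma\), which is routine.
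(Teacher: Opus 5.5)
Your proposal is correct: the vertex-wise change of basis \(\Phi=\bigoplus_i 1_{F(x_i)}\otimes P_i\) replaces the coefficient on each edge \(e'\co x_i\to x_j\) by \(P_jX(e')P_i^{-1}\), and this handles conjugation, transport of the monodromy with the right exponent \(\sigma\), and the blockwise direct sum, including the two-vertex cyclic graph. The paper gives no proof of this proposition (it is explicitly left to the reader), and your argument is the routine verification the authors have in mind; note only that independence from the choice of \(X''\) needs nothing beyond conjugation by \(\left(\begin{smallmatrix}1&0\\0&Q\end{smallmatrix}\right)\).
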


We call a compact formal curve \(F\) primitive if  \(\gamma_F\) defines a primitive element \([\gamma_F]\in\pi_{1}(\FourPuncturedSphere)\).

\begin{proposition}
	If \((F,g,X,e)\) is a bigraded compact curve with local system
then there exists a primitive element 
	\(\tau\in\pi_1(\FourPuncturedSphere)\) 
	such that
	\([\gamma_F]=\tau^n\in\pi_1(\FourPuncturedSphere)\) 
	for some \(n>0\) and,
	moreover, there exists a bigraded compact curve with local system \((F',g',X',e')\)
	with \(\gamma_{F'}=\tau\) and 
	\(
	C(F,g,X,e)
	\cong 
	C(F',g',X',e')
	\). \qed
\end{proposition}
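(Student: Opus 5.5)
Since \(\pi_1(\FourPuncturedSphere)\) is free of rank three, every nontrivial element is a positive power of a unique primitive element up to inversion. So we may write \([\gamma_F]=\tau^n\) with \(\tau\) primitive and \(n>0\); after reversing orientation if necessary, \(n\) is positive. The work is to realize \(\tau\) by a formal curve and to absorb the \(n\)-fold repetition into a local system.

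Step one is combinatorial. Choose \(\gamma_F\) in minimal position with respect to \(a_\bullet\cup a_\circ\). By \cref{prop:functors-curves} and the retraction \(r\), the cyclic sequence of labelled vertices and edges of \(F\) encodes the reduced cyclic word of \(r\gamma_F\) in the free groupoid of \(\mathcal{Q}\). Since \([\gamma_F]=\tau^n\), this reduced cyclic word is the \(n\)-fold repetition of the reduced cyclic word of \(\tau\). Cyclic words in a free group are unique, and the equivalence of formal curves to curves in \(\mathcal{Q}\) is a bijection. Hence the cyclic graph \(\mathcal{X}\) has \(n(\ell'+1)\) vertices, there is a formal curve \(F'\) on a cyclic graph \(\mathcal{X}'\) with \(\ell'+1\) vertices and \(\gamma_{F'}=\tau\), and there is an \(n\)-fold covering map of graphs \(p\co\mathcal{X}\to\mathcal{X}'\) with \(F=F'\circ p\), compatible with edge orientations. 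The grading \(g\) descends to \(g'\): condition \eqref{eqn:grading_on_curves:c} is local, so \(g'\) adapted to \(F'\) exists. Going once around \(\mathcal{X}'\) must change the grading by zero, because going around it \(n\) times, which is going once around \(\mathcal{X}\), changes it by zero and \(\Z^2\) is torsion-free. Moreover \(g\) is periodic under the deck rotation, since both \(g\) and its rotate satisfy \eqref{eqn:grading_on_curves:c} and differ at one vertex by \(n\) times the period defect, which is zero.

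Step two constructs the isomorphism. Using the equivalence of local systems, move \(X\) to an edge \(e\) lying over a chosen edge \(e'\) of \(\mathcal{X}'\). Here \(C(F,g,X,e)\) has generators \(F(x_i)\otimes\CoeffRing^m\) indexed by \(\mathcal{X}\). Group the \(n\) lifts of each vertex \(x'_j\) of \(\mathcal{X}'\) into one summand \(F'(x'_j)\otimes\CoeffRing^{nm}\), which has the same bigrading by the periodicity above. The differential then takes the form \(F'(e'')\otimes \mathrm{id}\) on edges \(e''\neq e'\), and \(F'(e')\otimes P\) on \(e'\). Here \(P\) is the \(nm\times nm\) block companion matrix, the cyclic permutation of the \(n\) blocks with \(X\) inserted in one block (or \(X^{-1}\), with the convention on orientation). \(P\) is invertible, so this is exactly \(C(F',g',P,e')\), and we obtain \(C(F,g,X,e)\cong C(F',g',X',e')\) with \(X'=P\). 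Only a reindexing of basis vectors is involved, so the isomorphism is an honest isomorphism of complexes over \(\B\).

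The main obstacle is step one. We need that the cyclic label sequence of \(F\) is genuinely \(n\)-periodic, and not merely that the homotopy class is a power. This is where minimal position and the uniqueness of reduced cyclic words in the free groupoid \(\pi_1(\mathcal{Q})\) are used. We also need care with the edge case \(n(\ell'+1)=2\), the two-vertex cycle, where the differential sums two edges.
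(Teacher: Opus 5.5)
Your proof is correct. It is the standard argument the paper leaves to the reader, since the proposition is stated with no proof: the cyclic label sequence of \(F\) is the \(n\)-fold unrolling of a formal curve \(F'\) representing \(\tau\), the grading descends because \(\Z^2\) is torsion-free, and regrouping the \(n\) lifts of each generator turns \((X,e)\) into the block-permutation local system \(P\). One small correction to your final remark: the two-vertex case that needs comment is when the quotient \(\mathcal{X}'\) has two vertices, not \(\mathcal{X}\) (which forces \(n=1\)); there your regrouped differential matches the summed differential in the definition of \(C(F',g',P,e')\) automatically.
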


\subsection{%
	Classification of complexes over \texorpdfstring{\(\B\)}{B} 
	and the multicurve invariants}

	A multicurve 
	is a finite (possibly empty)
	multiset of graded curves with local systems,
	which are required to be primitive when they are compact.
	A multicurve is reduced if no two of its compact curves are equivalent 
	as graded curves (without local systems). 
	By taking direct sums of local systems, 
	we can associate with any multicurve 
	a reduced multicurve 
	that is unique up to bijections 
	matching equivalent graded curves with local systems. 
	Two multicurves are considered equivalent
	if their reductions agree. 
The chain complex associated with a multicurve is 
the direct sum of the chain complexes 
associated with all components of the multicurve. 
We note:

\begin{proposition}
	Chain complexes 
	associated with equivalent multicurves
	are isomorphic. 
	\qed
\end{proposition}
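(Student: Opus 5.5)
The statement follows once we know that the chain complex of a multicurve depends, up to isomorphism, only on its reduction. The plan is to show that each operation relating a multicurve to its reduction changes the associated complex only by an isomorphism, and then to chain these isomorphisms together.

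First, recall what equivalence of multicurves means. Two multicurves are equivalent if their reductions agree, that is, if the reductions are related by a bijection of components that matches equivalent graded curves with local systems. So it suffices to prove two things:
\begin{enumerate}
\item any multicurve \(M\) has a complex isomorphic to that of its reduction \(M^{\mathrm{red}}\);
\item two reduced multicurves related by such a bijection have isomorphic complexes.
\end{enumerate}
Since the complex of a multicurve is the direct sum of the complexes of its components, and finite direct sums respect isomorphisms componentwise (the sum does not depend on the order of the summands), item (2) follows at once. Equivalent graded curves with local systems have isomorphic complexes: for the same underlying \((F,g)\) this is the second part of the earlier proposition on local systems, and in general one transports along the graph isomorphism \(f\co\mathcal{X}'\rightarrow\mathcal{X}\), which permutes the generators and matches the differentials because \(Ff=F'\) and \(gf=g'\).

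For item (1), the reduction is obtained by grouping the compact components of \(M\) into classes of graded curves that are equivalent without local systems. Within each class we first use graph isomorphisms to replace every member by one with a common underlying graded curve \((F,g)\). By the remark above, this step does not change the complex up to isomorphism. We then replace the members \((F,g,X_1,e_1),\dots,(F,g,X_k,e_k)\) of the class by the single curve \((F,g,(X_1,e_1)\oplus\dots\oplus(X_k,e_k))\). The first part of the earlier proposition on local systems, applied inductively on \(k\), gives
\[
\bigoplus_i C(F,g,X_i,e_i)\cong C\big(F,g,\textstyle\bigoplus_i (X_i,e_i)\big).
\]
Non-compact components are untouched by reduction. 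Summing these isomorphisms over all classes, and adding the identity on the non-compact components, gives \(C(M)\cong C(M^{\mathrm{red}})\).

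Finally, for equivalent multicurves \(M\) and \(M'\), we combine the two items:
\[
C(M)\cong C(M^{\mathrm{red}})\cong C(M'^{\mathrm{red}})\cong C(M').
\]
The only point needing care is that the choices made in forming direct sums of local systems, such as the edge at which the sum is taken and the conjugates used, do not matter. This is guaranteed by the stated independence of the equivalence class of \((X,e)\oplus(X',e')\) from these choices, together with the second part of the same proposition. I expect no real obstacle here; the argument is bookkeeping built on the earlier propositions.
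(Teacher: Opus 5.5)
Your argument is correct and takes the same route the paper intends. The paper states this result without proof, as an immediate consequence of the preceding proposition on direct sums and equivalence of local systems, together with the remark that equivalent graded curves give isomorphic complexes; your proposal simply writes out that bookkeeping.
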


The following is a special case of 
\cite[Theorem~4.3]{HKK}; 
see also \cite[Theorem~1.5]{KWZ}.

\begin{theorem}
	\label{thm:classification:complexes_over_B:simplified}
	When $\CoeffRing$ is a field
	there exists a one-to-one correspondence between 
	bigraded chain complexes over \(\B\) 
	up to chain homotopy 
	and 
	multicurves up to equivalence.\qed
\end{theorem}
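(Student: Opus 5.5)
The plan is to deduce the statement from the general classification of \cite[Theorem~4.3]{HKK} for twisted complexes over gentle algebras, after checking that \(\B\) and its grading fit that framework. The argument has three steps.

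The first step is to put \(\B\) in the right form. By \cref{def:B}, \(\B\) is the path algebra of the quiver \(\mathcal{Q}\) modulo the four length-two relations \(D S=0\), \(S D=0\), with no relation on \(S S\) or on \(D\) alone. Each vertex has two incoming and two outgoing arrows, and for each arrow there is exactly one composable arrow giving a zero product and exactly one giving a nonzero product. So \(\B\) is a (graded, infinite-dimensional) gentle algebra. Following \cite{HKK}, such an algebra is the endomorphism algebra of a generating collection of arcs in the partially wrapped Fukaya category of a graded surface with stops. Here that surface is \(\FourPuncturedSphere\), with \(\ast\) playing the role of the stop and the arcs being \(a_\bullet\) and \(a_\circ\). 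The \(S\)'s correspond to boundary paths between arc endpoints at the three fully wrapped punctures \(\Endi,\Endii,\Endiii\), and the \(D\)'s to paths around the puncture on which both endpoints of an arc lie. I would check that the quantum grading \(q\) and the homological grading come from a line field together with an extra \(\Z\)-grading, which \cite{HKK} allows.

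The second step is to invoke the classification. Over a field, \cite[Theorem~4.3]{HKK} says that every twisted complex over such a gentle algebra is homotopy equivalent to a direct sum of complexes of two kinds. The first kind comes from graded immersed arcs ending on punctures that are not stops, which are the non-compact curves with ends in \(\EndsThree\). The second comes from graded primitive closed curves carrying indecomposable local systems. The summands are unique up to reordering and equivalence. I would then match these complexes with the \(C(F,g,X,e)\) of our definitions using \cref{prop:functors-curves}. Under this correspondence the gradings satisfying \eqref{eqn:grading_on_curves:c} are exactly the grading structures of \cite{HKK}, and local systems correspond to conjugacy classes in \(\GL_n(\CoeffField)\). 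The non-primitive case is reduced to the primitive one by the proposition preceding the multicurve definitions.

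The third step, which I expect to be the main obstacle, is finiteness and boundedness. Our complexes are finitely generated but are over an algebra with the infinite-dimensional tail \(\varG^n\), whereas \cite{HKK} works with twisted complexes over the \(A_\infty\)/dg category of arcs. I would argue that a bigraded finitely generated chain complex over \(\B\) is exactly a twisted complex in the sense of \cite{HKK}, since the differential is strictly lower-triangular after filtering by homological grading. Chain homotopy equivalence then coincides with isomorphism in the triangulated envelope. This identification is precisely the content of \cite[Theorem~1.5]{KWZ}, so I would cite it for the translation and only verify that our conventions (signs, \(\varG=S^2-D\), and the quantum grading) agree with theirs. With that in place, the bijection follows: injectivity comes from the uniqueness part of the classification, and surjectivity because every multicurve produces a complex via \(C(F,g,X,e)\).
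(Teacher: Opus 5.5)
Your proposal matches the paper, which gives no argument beyond citing \cite[Theorem~4.3]{HKK} together with \cite[Theorem~1.5]{KWZ}; the latter is exactly the translation you fall back on in your third step. One correction to your first step: the generating objects are not \(a_\bullet\) and \(a_\circ\), since both endpoints of each lie on \(\ast\) and so no boundary paths at \(\Endi,\Endii,\Endiii\) connect them. They are instead the arcs dual to them, namely \(\aKh(0)\) joining \(\Endi\) and \(\Endii\) for \(\bullet\) and the analogous arc joining \(\Endii\) and \(\Endiii\) for \(\circ\), so that \(D_\bullet\), \(S\), \(D_\circ\) are the boundary paths around \(\Endi\), \(\Endii\), \(\Endiii\) respectively, while \(a_\bullet, a_\circ\) only serve as the parametrizing arcs from which formal curves are read off.
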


\begin{notation}\label{notation:multicurve-reps}
	Over fields, the (equivalence class of) multicurves 
	corresponding to 
	\(\DD(T)\) and \(\DD_1(T)\) 
	are denoted by 
	\(\BNr(T)\) and \(\Khr(T)\),
	respectively.
	Chain complexes 
	associated with (representatives of) the multicurves
	\(\BNr(T)\) and \(\Khr(T)\)
	are denoted by
	\(\DD^c(T)\) and \(\DD^c_1(T)\).
\end{notation}

\begin{figure}[t]
	\begin{subfigure}{0.235\textwidth}
		\centering
		\includegraphics[scale=1]{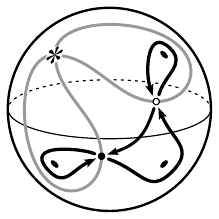}
		\caption{}\label{fig:Kh:example:quiver}
	\end{subfigure}
	\begin{subfigure}{0.235\textwidth}
		\centering
		\includegraphics[scale=1]{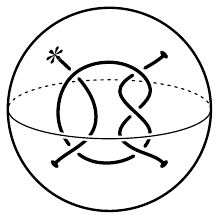}
		\caption{}\label{fig:Kh:example:tangle}
	\end{subfigure}
	\begin{subfigure}{0.235\textwidth}
		\centering
		\includegraphics[scale=1]{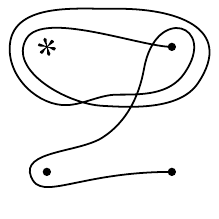}
		\caption{}\label{fig:Kh:example:BNr}
	\end{subfigure}
	\begin{subfigure}{0.235\textwidth}
		\centering
		\includegraphics[scale=1]{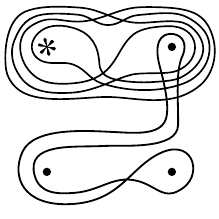}
		\caption{}\label{fig:Kh:example:Khr}
	\end{subfigure}
	\caption{%
		The embedded quiver \(\mathcal{Q}\) (a), 
		the \((2,-3)\)-pretzel tangle (b) and 
		its multicurves (c--d)
		}\label{fig:Kh:example}
\end{figure}

\begin{example}\label{exa:Khr:2m3pt}
	We usually draw the four-punctured sphere \(\FourPuncturedSphere\) 
	as the plane, together with a point at infinity,
	minus the four punctures. 
	\Cref{fig:Kh:example:BNr,fig:Kh:example:Khr}
	show 
	the curves in \(\FourPuncturedSphere\) defined by
	\(\BNr(P_{2,-3})\) and \(\Khr(P_{2,-3})\), 
	respectively, 
	where \(P_{2,-3}\) is the \((2,-3)\)-pretzel tangle 
	from \cref{fig:Kh:example:tangle}, 
	compare \cite[Example~6.7]{KWZ}.
	The local systems on the compact curves 
	are equal to \(-1_{\CoeffRing^1}\). 
	Omitting gradings, 
	the corresponding chain complexes are:
	\begin{align*}
	\DD(P_{2,-3})
	&=
	\left[
	\begin{tikzcd}[row sep=12pt,column sep=16pt,ampersand replacement=\&]
		\bullet
		\&
		\bullet
		\arrow[l,swap,"D"]
		\&
		\circ
		\arrow[l,swap,"S"]
		\&
		\circ
		\arrow[l,swap,"D"]
		\arrow[r,"S"]
		\&
		\bullet
		\arrow[r,"D"]
		\&
		\bullet
		\arrow[r,"S^2"]
		\&
		\bullet
		\arrow[r,"D"]
		\&
		\bullet
		\arrow[r,"S"]
		\&
		\circ
	\end{tikzcd}
	\right]
	\\
	\DD_1(P_{2,-3})
	&=
	\left[
	\begin{tikzcd}[row sep=16pt,column sep=16pt,ampersand replacement=\&]
		\circ
		\arrow{r}{-S}
		\arrow{d}[swap]{D}
		\&
		\bullet
		\arrow{r}{D}
		\&
		\bullet
		\arrow{r}{S^2}
		\&
		\bullet
		\arrow{r}{D}
		\&
		\bullet
		\arrow{r}{S}
		\&
		\circ
		\arrow{d}{D}
		\\
		\circ
		\arrow{r}{S}
		\&
		\bullet
		\arrow{r}{D}
		\&
		\bullet
		\arrow{r}{S^2}
		\&
		\bullet
		\arrow{r}{D}
		\&
		\bullet
		\arrow{r}{S}
		\&
		\circ
	\end{tikzcd}
	\right]
	\oplus
	\left[
	\begin{tikzcd}[row sep=16pt,column sep=16pt,ampersand replacement=\&]
		\circ
		\arrow{r}{-S}
		\arrow{d}[swap]{D}
		\&
		\bullet
		\arrow{r}{D}
		\&
		\bullet
		\arrow{d}{S^2}
		\\
		\circ
		\arrow{r}{S}
		\&
		\bullet
		\arrow{r}{D}
		\&
		\bullet
	\end{tikzcd}
	\right]	
	\end{align*}
	Note in particular that 
	\(\DD(T)\) is homotopic to a chain complex 
	associated with a multicurve over any ring \(\CoeffRing\) in this example, 
	not only fields \(\CoeffRing=\CoeffField\). 
	This is not true in general. 
	Moreover,
	different fields can give rise to different multicurves for the same tangle.
\end{example}

\begin{proposition}[{\cite[Propositions 6.15 and 6.17]{KWZ}}]
	For any pointed tangle~\(T\), 
	\(\Khr(T)\) contains no non-compact curve. 
	Moreover, the number of non-compact components of \(\BNr(T)\)
	is \(2^{|T|}\), where \(|T|\) is the number of closed components of \(T\).\qed
\end{proposition}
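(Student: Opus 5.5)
The plan is to work on the algebraic side and then use the classification (\cref{thm:classification:complexes_over_B:simplified}) to return to curves. There are two claims: \(\Khr(T)\) has no non-compact curve, and \(\BNr(T)\) has exactly \(2^{|T|}\) non-compact components.

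\emph{Non-compact components of \(\BNr(T)\).} I would show that the number of non-compact components of the multicurve of a complex \(X\) over \(\B\) can be read off from the homology of \(X\) after inverting \(\varG\), or more precisely after localizing to \(\CoeffField[\varG,\varG^{-1}]\). The chain complex of a compact curve is built from edges of the form \(\varG^n D\) and \(\varG^n S\) or \(\varG^n SS\). In the localization, \(D\) and \(S^2\) differ by the unit \(\varG\), so one expects each compact component to become acyclic there, giving a homotopy that contracts it. A non-compact curve has ends, and I expect each to contribute a free rank-one summand over \(\CoeffField[\varG^{\pm1}]\); at minimum its Euler characteristic is nonzero there, and I would check this directly from the graph.

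Next I would compute the \(\varG\)-localized invariant of the tangle. On the Bar-Natan side, inverting \(\varG\) corresponds to Lee/Bar-Natan deformation with distinct roots. The standard Lee-type argument (Rasmussen, Bar-Natan–Morrison) for tangles then gives that \(\KhT{T}\) localized is homotopy equivalent to a sum indexed by orientations of \(T\) that agree with the given one on the two open strands. There are \(2^{|T|}\) such orientations, because each closed component may be reoriented independently, and each contributes one copy of a crossingless object with trivial differential. Matching ranks then gives the count. This Lee-type localization step is the main obstacle. It requires care with the reduction at \(*\), since the last relation in \eqref{eq:cobb_relations} kills dots near \(*\), and with the sign conventions of \(\varG\) versus \(H\). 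I would prove it either by citing the tangle version of Lee's theorem or by induction on crossings using Gaussian elimination.

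\emph{No non-compact curves in \(\Khr(T)\).} Here \(\DD_1(T)=\Cone{\varG\cdot 1_{\DD(T)}}\), and \(\varG\cdot 1\) acts on the cone by a nullhomotopic map, since on any mapping cone of \(z\cdot 1\) the map \(z\cdot 1\) is nullhomotopic. By \cref{lem:homotopies-of-identity-multiples} this property passes to the multicurve representative \(\DD_1^c(T)\). By \cref{lem:nullhomotopy-splits-under-direct-sums} it then holds for each component separately. It therefore suffices to show that \(\varG\cdot 1_{C(F,g)}\) is never nullhomotopic for a non-compact curve \(F\). For this I would reduce modulo \(D_\bullet, D_\circ, S^2\) or use an augmentation-type functor \(\B\to\CoeffField[\varG]\), under which a non-compact curve retains a nonzero end generator on which \(\varG\) acts freely in homology. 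A nullhomotopy would then force \(\varG\) to annihilate a free \(\CoeffField[\varG]\)-summand, which is a contradiction. The localization computation of the first part would serve equally well here: the homology of a non-compact curve after inverting \(\varG\) is nonzero, so \(\varG\) cannot be nullhomotopic on it.
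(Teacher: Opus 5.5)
Your argument is correct in outline. It is the standard \(\varG\)-localization proof; the paper gives no proof of its own and only cites \cite{KWZ}. Several intermediate claims need repair.

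\emph{Compact curves become contractible.} To make this precise, note that \(e_1=-\Db/\varG\), \(e_2=S^2/\varG\) and \(e_3=-\Dw/\varG\) are orthogonal central idempotents of \(\B[\varG^{-1}]\) with \(e_1+e_2+e_3=1\); use \(D^2=-\varG D\) and \(S^4=\varG S^2\). Each block is Morita equivalent to the graded field \(\CoeffField[\varG^{\pm1}]\). In the three blocks only the \(\Db\)-arrows, the \(S\)-type arrows, and the \(\Dw\)-arrows survive, respectively, and they survive as isomorphisms. Hence a compact curve, with any local system, becomes a sum of cones of isomorphisms. A non-compact curve keeps exactly one rank-one summand per end.

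\emph{Euler characteristic.} Drop this fallback. The two ends of a non-compact curve can lie in the same block with opposite homological parity, e.g.\ \(\bullet\xrightarrow{S}\circ\xrightarrow{D}\circ\xrightarrow{S}\bullet\). In any case only ranks give the count.

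\emph{Lee-type decomposition.} This is misstated. For a single crossing, \([\bullet\xrightarrow{S}\circ]\) localizes to \(e_1\iota_\bullet\oplus e_3\iota_\circ\), which is not the localization of any single crossingless object. In general one gets \(2^{|T|+1}\) rank-one Karoubi summands. There is one for each orientation of \(T\) in which the strand through \(\ast\) has a fixed orientation, because the reduction at \(\ast\) forces the colour of that strand. The total rank is still \(2\cdot 2^{|T|}\), so your count is unaffected.

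\emph{The \(\Khr\) part.} Reducing modulo \(\Db,\Dw,S^2\) sends \(\varG=S^2-\Db-\Dw\) to \(0\), so it gives nothing. Any algebra map \(\B\to\CoeffField[\varG]\) kills one of the two idempotents and hence kills \(S\). It therefore cannot detect a curve such as \(\bullet\xrightarrow{D}\bullet\), whose ends both lie in the \(S\)-block and on whose image \(\varG\) is nullhomotopic. Use the localization argument you give at the end instead: a nullhomotopic \(\varG\cdot 1\) becomes invertible after localizing, which forces contractibility. Even more directly, \(\DD_1(T)[\varG^{-1}]\) is the cone of an isomorphism.
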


\subsection{Mirroring}

Reflection 
in the plane \(\R^2\times\{0\}\subset \R^3\), which 
restricts to both \(D^3\) and~\(\FourPuncturedSphere\), 
associates with any pointed 4-ended tangle \(T\)
a tangle \(-T\)
and with any curve \(\gamma\) in \(\FourPuncturedSphere\)
a curve \(-\gamma\) in \(\FourPuncturedSphere\). 
Appealing to \cref{prop:functors-curves}, 
the curve \(-\gamma\) corresponds to a formal curve described as follows.
Let \(\B^{op}\) be the opposite algebra of \(\B\)
and define an isomorphism
\[
\phi\co \B^{op}\rightarrow\B,
\quad
(D_\bullet)^{op}\mapsto D_\bullet, 
(D_\circ)^{op}\mapsto D_\circ,
(S_\circ)^{op}\mapsto S_\bullet,
(S_\bullet)^{op}\mapsto S_\circ. 
\]
For an oriented graph \(\mathcal{X}\), 
let \(\mathcal{X}^{op}\) be the graph obtained 
by reversing all arrows. 
Then given a formal curve 
\(F\) (associated with \(\mathcal{X}\))
let \(-F\) be the formal curve associated with \(\mathcal{X}^{op}\)
defined
by \((-F)(x)=F(x)\) on vertices and 
by \((-F)(e^{op})=\phi((F(e))^{op})\) on edges. 
Furthermore, if \(C\) is a chain complex over~\(\B\), 
let \(-C\) be obtained by
multiplying the gradings by \(-1\), 
reversing all arrows,
and applying \(\phi\) to the labels. 

\begin{proposition}\label{lem:mirror}
	Let \((F,g,X,e)\) 
	be a graded curve with local system
	and \(C\) its associated chain complex. 
	Then \(-C=C(-F,-g,X^t, e^{op})\)
	and \(\gamma_{-F}=-\gamma_F\). 
	\qed
\end{proposition}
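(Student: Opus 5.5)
The plan is to verify the two claims separately: first the algebraic identity \(-C=C(-F,-g,X^t,e^{op})\), then the geometric identity \(\gamma_{-F}=-\gamma_F\).

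For the algebraic claim we unwind the definitions. By definition \(C=C(F,g,X,e)\) has underlying object \(\bigoplus_i q^{q(x_i)}h^{h(x_i)}F(x_i)\otimes\CoeffRing^n\). Its differential has components \(F(e')\otimes X(e')\) from \(x_i\) to \(x_j\), one for each edge \(e'\co x_i\to x_j\). We apply the operation \(-C\), which does three things:
\begin{itemize}
\item it negates gradings, giving \(-g\);
\item it reverses every arrow, giving the graph \(\mathcal{X}^{op}\);
\item it replaces each label \(a\) by \(\phi(a^{op})\).
\end{itemize}
We must interpret the tensor factor when an arrow labelled \(a\otimes M\) is reversed. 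The natural convention is that it becomes \(\phi(a^{op})\otimes M^t\), i.e.\ we dualise the \(\CoeffRing^n\) factor. This is exactly what happens when \(-C\) is viewed as the dual complex \(\operatorname{Hom}(C,\CoeffRing)\) transported along \(\phi\).

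With this convention, every identity matrix stays the identity, and the single local-system edge \(e\) becomes \(e^{op}\) carrying \(X^t\). The remaining data match directly:
\begin{itemize}
\item on vertices, \((-F)(x)=F(x)\), since \(\phi\) fixes idempotents;
\item on edges, \((-F)(e'^{op})=\phi(F(e')^{op})\), which is the definition of \(-F\).
\end{itemize}
We should also check that \(-F\) is again a formal curve and that \(-g\) is adapted to it. The map \(\phi\) sends standard basis elements to standard basis elements, swapping \(S_\circ\leftrightarrow S_\bullet\) and fixing \(D\) and \(\varG\), so it preserves the \(S\)/\(D\) alternation. For the grading condition, reversing an edge swaps \(x\) and \(y\) in \eqref{eqn:grading_on_curves:c}, and negating \(g\) restores both \(q(y)-q(x)+q(F(e))=0\) and \(h(y)-h(x)=1\). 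Finally, the equivalence of local systems uses \(X^\sigma\), where the edge direction has flipped, so passing between \((X^t,e^{op})\) and other representatives is consistent.

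For the geometric claim we use the correspondence of \cref{prop:functors-curves} via the retraction \(r\) onto the embedded quiver \(\mathcal{Q}\) of \cref{fig:Kh:example:quiver}. Reflection in the plane \(\R^2\times\{0\}\) fixes the equatorial circle pointwise, which is where the arcs \(a_\bullet,a_\circ\) and the punctures lie. It therefore maps \(\mathcal{Q}\) to itself up to isotopy: the vertices and the \(D\)-loops are preserved, while the two saddle edges \({}_\bullet S_\circ\) and \({}_\circ S_\bullet\) are interchanged (more precisely, their orientations are reversed relative to the path composition). This is precisely the effect of \(\phi\) composed with the op-involution on paths.

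Hence if \(\gamma\) meets \(a_\bullet\cup a_\circ\) minimally with intersection pattern \(\mathcal{X}\), then \(-\gamma\) meets the arcs in the same points. Its edge paths are the mirrored paths, which read as \(\phi(F(e)^{op})\). Therefore \(\gamma_{-F}=-\gamma_F\).

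The main obstacle is this last identification: checking on the picture that mirroring exchanges the two saddle generators and reverses the reading direction, and that it does not, for example, exchange the two dot loops. The proof is routine otherwise, and I would settle this point by a direct inspection of \cref{fig:Kh:example:quiver}.
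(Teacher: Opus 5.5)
Your strategy is the intended one: the paper leaves this statement as a direct check with no written proof. Your algebraic half is fine. Note that the transpose is forced rather than a convention: reversing each arrow \(x_i\otimes v_l\to x_j\otimes v_k\) labelled \(X_{kl}F(e)\) between individual basis generators automatically puts the matrix \(X^t\) on \(e^{op}\). Your checks that \(-F\) is again a formal curve and that \(-g\) is adapted to it are correct.

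The geometric half, however, rests on a false premise: \(a_\bullet\) and \(a_\circ\) do not lie on the equatorial circle. Each is a loop based at \(*\) avoiding \(\Endi,\Endii,\Endiii\). The equator minus \(*\) is a single open arc passing through all three of these punctures, so neither loop can be contained in it. Hence the reflection does not fix the arcs pointwise, and \(-\gamma\) does not meet them ``in the same points''.

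What you actually need is that the arc system can be chosen \emph{setwise} invariant under the reflection. The formal curve \([\bullet]\) corresponds to \(\aKh(0)\), which is represented by the equatorial segment from \(\Endi\) to \(\Endii\). Applying \(\rho\), the equatorial segment from \(\Endiii\) to \(\Endii\) has formal curve \([\circ]\). So you may take \(a_\bullet\) to be the union of two mirror-image chords, one in each hemisphere, from \(*\) to a point \(p_\bullet\) of the equator between \(\Endi\) and \(\Endii\). Likewise take \(a_\circ\) through a point \(p_\circ\) between \(\Endii\) and \(\Endiii\).

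The reflection then maps each arc to itself. It therefore induces a bijection (not the identity) between the intersection points of \(\gamma\) and of \(-\gamma\) with each arc. This bijection preserves minimal position and the order along the curve.

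Now place the vertices of \(\mathcal{Q}\) at \(p_\bullet\) and \(p_\circ\). The loops \(D_\bullet\) and \(D_\circ\), around \(\Endi\) and \(\Endiii\) respectively, can be chosen symmetric, so the reflection maps each to itself with reversed direction. The two \(S\)-edges pass on opposite sides of \(\Endii\), one in each hemisphere, so the reflection exchanges them. This is exactly the action on \(\mathcal{Q}\) you asserted, and with this justification your conclusion \(\gamma_{-F}=-\gamma_F\) goes through.
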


\begin{notation}
	The curve \(\gamma=\gamma_F\) 
	in \(\FourPuncturedSphere\) will often be used
	as shorthand for the data \((F,g,X,e)\).
	Grading shifts \((F,g+(a,b),X,e)\) for \(a,b\in\Z\) are indicated by \(q^ah^b\gamma\). 
\end{notation}

\begin{proposition}[{\cite[Propositions 4.26 and~7.1]{KWZ}}]\label{prop:mirroring:Khr}
	For any pointed Conway tangle \(T\), 
	\(-\DD(T)=\DD(-T)\) and 
	\(-\DD_1(T)=h^{1}\DD_1(-T)\).
	Thus,
	\(-\BNr(T)=\BNr(-T)\) and 
	\(-\Khr(T)=h^{1}\Khr(-T)\).\qed
\end{proposition}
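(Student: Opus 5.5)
The claim has two layers: statements about chain complexes (\(-\DD(T)=\DD(-T)\) and \(-\DD_1(T)=h^{1}\DD_1(-T)\)) and their translation into multicurves. The plan is to prove the complex-level statements directly from Bar-Natan's construction, then deduce the multicurve statements from \cref{lem:mirror} together with the classification in \cref{thm:classification:complexes_over_B:simplified}.

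First, fix a diagram \(\Diag\) of \(T\); its mirror \(-\Diag\) is a diagram of \(-T\). Mirroring swaps positive and negative crossings and swaps 0- and 1-resolutions. Hence the cube of \(-\Diag\) is the cube of \(\Diag\) with vertices indexed by \(v\mapsto \bar v=(1,\dots,1)-v\). The gradings then check as \(|\bar v|-n_+ = -(|v|-n_-)\) in \(h\) and similarly in \(q\). So \(\KhT{-\Diag}\) has the same resolutions as \(\KhT{\Diag}\), but viewed upside down: every saddle arrow is reversed, and gradings are negated.

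At the level of \(\Cobb\), reflecting a cobordism in the time direction gives an anti-involution of \(\Cobb\). It sends saddles to saddles and dot cobordisms to dot cobordisms, and it preserves the relations \eqref{eq:cobb_relations} and the basepoint relation. Under \(\omega\) from \cref{thm:OmegaFullyFaithful}, combined with the planar reflection identifying resolutions of \(-\Diag\) with those of \(\Diag\), this anti-involution corresponds to \(\phi\circ(\cdot)^{op}\). The planar reflection exchanges the roles of the two saddles \(\Li\to\Lo\) and \(\Lo\to\Li\), which is why \(\phi\) swaps \(S_\circ\) and \(S_\bullet\) while fixing the \(D\)'s. The Delooping Lemma \ref{lem:delooping} is compatible with this: the reflected delooping isomorphisms are again delooping isomorphisms, with the roles of the two maps exchanged. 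Therefore \(\DD(-\Diag)\) is \(-\DD(\Diag)\), up to the sign conventions on the cube edges. Any two valid sign assignments give isomorphic complexes, so the signs do not matter.

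For \(\DD_1\), the cone of \(\varG\cdot 1\) mirrors to a cone in the opposite direction. One needs \(\phi(\varG^{op})=\varG\), which holds since \(\varG=S^2-D\) is symmetric. Reversing arrows turns \([q^{-1}h^{-1}X\to qX]\) into \([q^{-1}(-X)\to q h^{1}(-X)]\), which is \(h^{1}\Cone{\varG\cdot 1_{-X}}\). This gives \(-\DD_1(T)=h^1\DD_1(-T)\). The multicurve statements then follow by applying \cref{lem:mirror} componentwise and using uniqueness in \cref{thm:classification:complexes_over_B:simplified}.

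The main obstacle is verifying that time-reversal of cobordisms in \(\Cobb\) matches \(\phi\circ op\) exactly, including the \(\varG\)-action. Because the relations in \(\Cobb\) single out the end \(*\), one must check that the reflection preserves the component adjacent to \(*\). The plan is to check this on the standard basis of \cref{def:standard_basis}.
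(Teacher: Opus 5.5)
Your strategy is the standard duality argument behind the cited result: mirroring turns the cube upside down, so \(\KhT{-\Diag}\) is the dual of \(\KhT{\Diag}\) under the time-reversal anti-involution of \(\Cobb\); delooping is compatible with this; the dual of \(\Cone{\varG\cdot 1}\) is \(h^{1}\) times a cone; and \cref{lem:mirror} together with the classification finishes. Your grading bookkeeping for \(v\mapsto\bar v\) and for \(\DD_1\) is correct.

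One ingredient is wrong and must be removed: there is no planar reflection. Reflection in \(\R^2\times\{0\}\) only switches crossings, so the resolutions of \(-\Diag\) are literally the same crossingless diagrams as those of \(\Diag\), as you yourself note a paragraph earlier. The swap \(S_\circ\leftrightarrow S_\bullet\) in \(\phi\) comes purely from reversing direction. Time reversal sends \(\omega(S_\circ)\), the saddle \(\Li\to\Lo\), to the saddle \(\Lo\to\Li\), which is \(\omega(S_\bullet)\). It fixes each dot cobordism.

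A planar reflection compatible with the pointing would have to be the reflection in the diameter through \(*\). That map interchanges \(\Li\) and \(\Lo\), so composing with it would also swap the idempotents and \(D_\bullet\leftrightarrow D_\circ\). You would then obtain \(\rho(-\DD(T))\), which is already false for the crossingless tangle \(T=Q_0=-Q_0\): there \(\DD(Q_0)=[\bullet]\), but \(\rho(-[\bullet])=[\circ]\). For the same reason, your claim that the reflection exchanges the saddles while fixing the \(D\)'s is self-contradictory.

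With that sentence deleted, the argument goes through, and the ``main obstacle'' is a short check. Time reversal is well defined on \(\Cobb\) because it maps the component through \(\{*\}\times[0,1]\) to itself and the relations \eqref{eq:cobb_relations} are symmetric under \(t\mapsto 1-t\). Both time reversal and \(\omega\circ\phi\circ(\cdot)^{op}\circ\omega^{-1}\) are \(\CoeffRing[\varG]\)-linear anti-homomorphisms, so it suffices to compare them on the generators \(S_\circ,S_\bullet,D_\circ,D_\bullet\). You do not need to check the whole standard basis.
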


\subsection{Gluing}

The following is a consequence of \cite[Proposition~4.31]{KWZ}; see also \cite[Theorem~7.2]{KWZ}.

\begin{proposition}
	\label{prop:GlueingTheorem:Kh-cx}
	Let \(L=T_1\cup T_2\) 
	be the result of gluing two oriented pointed Conway tangles as in \cref{fig:tanglepairing} such that the orientations match. 
	Then
	\[
	\Khr(L)
	\cong 
	H_*(\Mor(-\DD_1(T_1),\DD(T_2)))
	\cong 
	H_*(\Mor(-\DD(T_1),\DD_1(T_2)))
	\]
	as bigraded $\CoeffRing$-modules.
	Moreover, 
	\[
	\pushQED{\qed}
	\Khr(L)\otimes (q^{-1}h^{-1}\CoeffRing\oplus q^{1}h^{0}\CoeffRing)
	\cong 
	H_*(\Mor(-\DD_1(T_1),\DD_1(T_2))).
	\qedhere
	\popQED
	\]
\end{proposition}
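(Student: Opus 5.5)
The plan is to reduce the statement to Bar-Natan's planar-algebra gluing together with the identification of morphism spaces in \(\Cobb\) with closed cobordisms, and then account for the mapping cones defining \(\DD_1\).

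\emph{Step 1: gluing at the level of \(\Cobb\).} First I would show that, for crossingless \(a,b\in\{\Li,\Lo\}\), the morphism space \(\Cobb(-a,b)\) is naturally isomorphic, as a graded \(\CoeffRing[\varG]\)-module, to the reduced Bar-Natan chain group of the closed diagram \(a\cup b\). Here the mirror \(-a\) is glued to \(b\) as in \cref{fig:tanglepairing}, and reduction is taken at the component through \(*\).
\begin{itemize}
\item A cobordism from \(-a\) to \(b\) is the same as a cobordism from \(\varnothing\) to the closed one-manifold \(a\cup b\). This is obtained by bending the mirrored input side around.
\item The last relation in \eqref{eq:cobb_relations}, which kills a dot adjacent to \(*\), is exactly the reduction.
\item Delooping (\cref{lem:delooping}) shows that both sides are free over \(\CoeffRing[\varG]\) with matching ranks and gradings.
\end{itemize}
Naturality of this identification in \(a\) and \(b\) means that saddle and dot morphisms act compatibly on both sides. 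Via \(\omega\) from \cref{thm:OmegaFullyFaithful}, it then assembles over the cubes of resolutions of \(\Diag_{T_1}\) and \(\Diag_{T_2}\). This gives a chain isomorphism \(\Mor(-\DD(T_1),\DD(T_2))\cong \BNr(L)\), the reduced Bar-Natan complex of \(L\) over \(\CoeffRing[\varG]\). The signs in the cube for \(L\) are the product signs, which satisfy the odd-face condition; \(-\DD(T_1)\) absorbs the dualization as in \cref{lem:mirror}. This is essentially the content of \cite[Proposition~4.31]{KWZ}, and I would cite it for the sign checks.

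\emph{Step 2: the first two isomorphisms.} In the Frobenius extension \(\mathcal F_7\), setting \(\varG=0\) recovers Khovanov's theory. Since \(\BNr(L)\) is a complex of free \(\CoeffRing[\varG]\)-modules, the cone of \(\varG\cdot 1\) on it is quasi-isomorphic to \(\BNr(L)\otimes_{\CoeffRing[\varG]}\CoeffRing[\varG]/(\varG)\), whose homology is \(\Khr(L)\).
\begin{itemize}
\item Since \(\varG\) is central, \(\Mor(X,\Cone{\varG\cdot 1_Y})\cong\Cone{\varG\cdot 1_{\Mor(X,Y)}}\). This gives the isomorphism involving \(\DD_1(T_2)\).
\item Dually, \(\Mor(\Cone{\varG\cdot1_X},Y)\) is a shifted cone of \(\varG\) on \(\Mor(X,Y)\). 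This gives the isomorphism involving \(\DD_1(T_1)\).
\end{itemize}
The grading shifts \(q^{\mp1}h^{-1},q^{\pm1}h^0\) in the definition of \(\DD_1\) and the sign flip from mirroring have to cancel. I expect this bookkeeping, which is also what produces the \(h^1\) in \cref{prop:mirroring:Khr}, to be the fiddliest point.

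\emph{Step 3: the final statement.} By Step 2, \(\Mor(-\DD_1(T_1),\DD_1(T_2))\) is the cone of \(\varG\) acting on \(M=\Mor(-\DD_1(T_1),\DD(T_2))\).
\begin{itemize}
\item The map \(\varG\cdot1_{\DD_1(T)}\) is nullhomotopic: on \(\Cone{\varG\cdot 1_X}\), the map sending the target copy of \(X\) identically to the source copy is a nullhomotopy.
\item Hence \(\varG\) acts nullhomotopically on \(M\), so the cone splits as \(M\oplus M\) with shifts.
\item Using the first isomorphism, its homology is \(\Khr(L)\otimes(q^{-1}h^{-1}\CoeffRing\oplus q^{1}h^0\CoeffRing)\), after the same grading check as in Step 2.
\end{itemize}
The main obstacle I expect is Step 1: the sign conventions and the compatibility of \(\omega\) with the closed-cobordism identification. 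I would handle it by citing \cite{KWZ} rather than reproving it.
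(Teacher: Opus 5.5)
Your proposal is correct and takes the same route as the paper, which gives no argument of its own and simply cites \cite[Proposition~4.31]{KWZ} and \cite[Theorem~7.2]{KWZ}: your Step~1 is the former, and Steps~2--3 are the cone manipulations (centrality of $\varG$, freeness over $\CoeffRing[\varG]$, and $\varG\cdot 1_{\DD_1}\simeq 0$) that yield the $\DD_1$ versions. The one grading slip is that $[h^{-1}q^{-1}M\xrightarrow{\varG}q^{1}M]$, with $M=\Mor(-\DD(T_1),\DD(T_2))$, is quasi-isomorphic to $q^{1}(M/\varG M)$ rather than $M/\varG M$; this matches the stated isomorphisms only if your Step~1 identification puts the unknot's generator in $q$-degree $-1$, as it does (for the unknot $Q_0\cup Q_\infty$, $M=\Mor(\bullet,\circ)$ is generated by $S_\bullet$, which has $q=-1$).
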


\begin{figure}[bt]
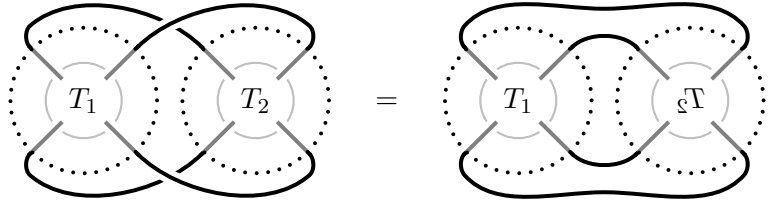

	\centering
	\(
	\tanglepairingI
	\quad = \quad
	\tanglepairingII
	\)
	\caption{Two Conway tangle decompositions defining the link \(T_1\cup T_2\). The tangle \protect\reflectbox{\(T_2\)} is the result of rotating \(T_2\) around the vertical axis. By rotating the entire link on the right-hand side around the vertical axis, we can see that \(T_1\cup T_2=T_2\cup T_1\).}
	\label{fig:tanglepairing} 
\end{figure}

We now reinterpret \cref{prop:GlueingTheorem:Kh-cx} 
in terms of the multicurve invariants in the case when $\CoeffRing$ is some field \(\CoeffField\).
Given two graded curves with local systems \(\gamma\) and \(\gamma'\), 
one can define a bigraded vector space 
\(\HF(\gamma,\gamma')\) 
over \(\CoeffField\) via
the Lagrangian Floer homology of \(\gamma\) and \(\gamma'\);
its definition is irrelevant for the present paper.
Instead, we take the next result %
as a definition. 

\begin{theorem}[{\cite[Theorem 1.5; see also Theorem 5.22]{KWZ}}]
	\label{thm:classification-morphisms}
	Let
	\(\gamma\) and \(\gamma'\) 
	be graded curves with local systems and 
	let \(C\) and \(C'\) be the associated complexes, respectively. 
	Then 
	\(\HF(\gamma,\gamma')\)
	is isomorphic to the homology of
	the morphism space \(\Mor(C,C')\).\qed
\end{theorem}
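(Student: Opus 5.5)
The plan is to realize both sides inside one framework: the partially wrapped Fukaya category of \(\FourPuncturedSphere\) with a stop at the distinguished puncture \(\ast\), in the sense of Haiden--Katzarkov--Kontsevich \cite{HKK}. The two arcs \(a_\bullet\) and \(a_\circ\), both starting and ending on \(\ast\), form a generating collection of objects. The first step is to check that the endomorphism algebra of \(a_\bullet\oplus a_\circ\) in this category is formal and isomorphic to \(\B\). Concretely:
\begin{itemize}
\item the idempotents \(\iota_\bullet,\iota_\circ\) are the identity morphisms of the two arcs;
\item the saddles \(S\) are the boundary paths between the two arcs around the punctures \(\Endi\) and \(\Endiii\);
\item the dots \(D\) are the boundary paths from an arc to itself around the remaining puncture;
\item the relations \(DS=SD=0\) come from the stop at \(\ast\), which prevents these paths from being concatenated.
\end{itemize}
Since the relevant disks are all bigons or degenerate, the higher products vanish, so the category of type~D structures over \(\B\) is a model for the (split-closed) twisted complexes over this Fukaya category.

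The second step is to identify, under this equivalence, the complex \(C(F,g,X,e)\) of a graded curve with local system with the object \(\gamma_F\) equipped with the corresponding grading and local system. One puts \(\gamma_F\) in minimal position with respect to \(a_\bullet\cup a_\circ\) and reads off the twisted complex: each intersection point gives one generator \(x_i\) in idempotent \(F(x_i)\). Each segment of \(\gamma_F\) between consecutive intersection points is an immersed bigon-like region against the arcs, and it contributes the differential label \(F(e)\). The local system enters as the matrix \(X\) on the distinguished edge \(e\). The grading condition \eqref{eqn:grading_on_curves:c} matches the Maslov and quantum grading data of the graded curve. With this identification, Yoneda gives a quasi-isomorphism \(\Mor(C,C')\simeq CF(\gamma,\gamma')\), and passing to homology yields the claim.

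Alternatively, and more elementarily as in \cite[Section~5]{KWZ}, one can take as the definition of \(\HF(\gamma,\gamma')\) the vector space generated by intersection points of minimal-position representatives, tensored with \(\Hom\) of the local systems, with the bigon-count differential. One then simplifies \(\Mor(C,C')\) directly. Its generators are triples \((x,y,b)\) with \(b\) a pure basis element of \(\iota_{F'(y)}\B\iota_{F(x)}\). Lifting to the planar cover \(\PuncturedPlane\), these triples correspond to intersection points of \(\gamma\) with a pushed-off, wrapped copy of \(\gamma'\) near \(\ast\). The strategy is to filter by the \(\varG\)-power and length of \(b\) and apply Gaussian elimination to cancel all pairs of generators joined by invertible components of the differential. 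These cancelling pairs correspond to bigons removed when passing to minimal position.

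The main obstacle I expect is controlling this cancellation globally, in two situations:
\begin{itemize}
\item compact curves with nontrivial local systems;
\item the case where \(\gamma\) and \(\gamma'\) are parallel, so that minimal position requires a small perturbation and the answer involves \(\ker\) and \(\operatorname{coker}\) of \(X^{-1}\otimes X'-1\).
\end{itemize}
Here I would first treat trivial local systems, organizing \(\Mor(C,C')\) as a complex indexed by pairs of lifts in the cover. I would then handle the parallel case separately by an explicit computation on the annular cover. Finally I would deduce the general case from the equivalence of local systems and the direct-sum proposition for \(C(F,g,X,e)\).
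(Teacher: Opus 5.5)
The paper gives no proof of this statement. It is imported from \cite{KWZ}, with the object-level classification attributed to \cite{HKK}, and the sentence just before it says the paper simply takes it as the definition of \(\HF(\gamma,\gamma')\). So the question is whether your Fukaya-categorical sketch holds up. Your first step does not.

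\textbf{The generating objects are wrong.} The arcs \(a_\bullet\) and \(a_\circ\) have all four endpoints at \(\ast\). Every morphism between them is therefore a chord at \(\ast\), and the punctures \(\Endi,\Endii,\Endiii\) never enter.
\begin{itemize}
\item If \(\ast\) carries a stop, there are only finitely many such chords, whereas \(\B\) is infinite-dimensional (\(\varG^n\Db\neq0\) for all \(n\)).
\item If \(\ast\) is wrapped, note that the endpoints occur around \(\ast\) in the cyclic order \((\bullet,\bullet,\circ,\circ)\), since each of the three regions cut out by \(a_\bullet\cup a_\circ\) contains one non-special puncture. Concatenating the four corner chords gives the same quiver with the complementary relations \(D\cdot D=0=S\cdot S\) and \(S\cdot D\neq0\neq D\cdot S\), i.e.\ the Koszul dual of \(\B\).
\end{itemize}
The objects with endomorphism algebra \(\B\) are the arcs \emph{dual} to \(a_\bullet,a_\circ\): a non-compact arc from \(\Endi\) to \(\Endii\) for \(\bullet\) (namely \(\aKh(0)\), with \(C(\aKh(0))=[\bullet]\)), and one from \(\Endii\) to \(\Endiii\) for \(\circ\). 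The ambient category is wrapped at the non-special punctures and stopped at \(\ast\); compare \(\C\cong\B\) in \cref{sec:quasi-iso:pair-of-pants}. In that model:
\begin{itemize}
\item \(\Db\) and \(\Dw\) are the chords at \(\Endi\) and \(\Endiii\), and the two \(S\) are chords at \(\Endii\) (compare \(a_1=\Db\), \(a_2=-S^2\), \(a_3=\Dw\)). Your assignment of punctures is reversed.
\item \(DS=SD=0\) holds because \(D\) and \(S\) are chords at different ends of an arc, not because of the stop.
\item What the stop buys is formality. The two arcs cut out a single \(4\)-gon containing \(\ast\), and filling \(\ast\) in produces exactly the \(\mu_4\) of \(\Binf\).
\end{itemize}
With these generators your second step becomes consistent: a twisted complex for \(\gamma_F\) has one generator per point of \(\gamma_F\cap(a_\bullet\cup a_\circ)\), which is the paper's dictionary. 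As written, however, it clashes with your first step. \(\operatorname{Hom}(a_\bullet\oplus a_\circ,\gamma_F)\) is spanned by those intersection points, but it is a module over the Koszul dual, not the type~D structure \(C(F,g,X,e)\). After this correction the argument is the HKK one. The functor \(\Tw\B\to\mathcal{F}\) is fully faithful and sends \(C(F,g,X,e)\) to \(\gamma_F\) with its grading and local system, and \(\HF\) is defined as the morphism space in \(\mathcal{F}\).

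\textbf{The elementary alternative has its own gap.} Defining \(\HF\) by intersection points of minimal-position representatives fails whenever both curves are non-compact with a common end. For \(\gamma=\gamma'=\aKh(0)\), the complex \(C=[\bullet]\) has zero differential, so \(H_*\Mor(C,C)=\iota_\bullet\B\iota_\bullet\) is infinite-dimensional, while a minimal-position count is finite. The missing generators are the powers of \(\Db\) and of \(S^2\) in \(\iota_\bullet\B\iota_\bullet\), i.e.\ Reeb chords at \(\Endi\) and \(\Endii\). So wrapping must happen at the non-special punctures, not near \(\ast\). Near \(\ast\) wrapping is exactly what is forbidden: polygons covering \(\ast\) are what the deformation \(\BinfU\) of \cref{sec:no_wrapping_around_special} records. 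Finally, the Gaussian elimination meant to match the two sides is only announced. You name global control and the parallel case as the obstacles but supply no mechanism, so this route is not yet an argument.
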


The following property, which is %
a consequence of 
\cite[Theorem~5.25]{KWZ}, is also needed.

\begin{lemma}
	\label{rem:local-systems-gluing}
	Let \(\gamma_1\) and \(\gamma_2\) be 
	two graded primitive compact curves 
	that are identical except for their local systems. 
	If 
	\[
	\HF(\gamma_2,\gamma)\not\cong
	\HF(\gamma_1,\gamma)
	\]
	for some graded curve with local system \(\gamma\)
	then \(\gamma\) is compact. 
	If we furthermore assume that \(\gamma\) is primitive 
	then its underlying curve agrees 
	with the underlying curve of \(\gamma_i\).\qed
\end{lemma}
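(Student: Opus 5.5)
The plan is to compute both sides explicitly with \cref{thm:classification-morphisms}, which identifies \(\HF(\gamma_i,\gamma)\) with the homology of \(\Mor(C_i,C)\), where \(C_i\) and \(C\) are the complexes associated with \(\gamma_i\) and \(\gamma\). I then show that this homology depends only on the underlying graded curves and on the dimensions of the local systems, unless the underlying curves of \(\gamma\) and \(\gamma_i\) coincide. I read ``identical except for their local systems'' as including that the local systems have the same dimension \(n\). Otherwise the statement could fail even for non-compact \(\gamma\) meeting \(\gamma_i\), since dimensions then differ.

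\textbf{Step 1 (chain level).} Write \(C_i=C(F,g,X_i,e)\) with the same \((F,g)\) and \(X_1,X_2\in\GL_n(\CoeffField)\). Both local systems may be placed on the same edge \(e\). As a bigraded \(\CoeffField\)-vector space, \(\Mor(C_i,C)\) is independent of \(i\). Only the differential \(\partial_{\C^\B}\) sees \(X_i\), namely through the component \(f\mapsto f\circ(F(e)\otimes X_i)\). Following the proof of \cite[Theorem~5.25]{KWZ}, I would cancel the differential of \(\Mor(C_i,C)\) step by step. Pairs of generators connected by an isomorphism component can be cancelled, and these cancellations come from \emph{local} pieces of the two formal curves, i.e.\ overlapping paths in \(\mathcal{Q}\). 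Geometrically, this is putting \(\gamma_F\) and \(\gamma\) in minimal position.

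\textbf{Step 2 (independence of \(X_i\)).} If the underlying curve of \(\gamma\) is not that of \(\gamma_F\), every maximal common subpath of \(\gamma_\mathcal{Q}(F)\) and \(\gamma_\mathcal{Q}(F')\) is finite; in particular this holds whenever \(\gamma\) is non-compact, since a non-compact curve cannot fully wrap a compact one. I want to choose the edge \(e\) carrying \(X_i\) so that it is not used by the cancellations along any of the finitely many overlaps. Alternatively, I can use the gauge freedom to move \((X_i,e)\) to an equivalent local system on another edge. After this, all cancellations are identical for \(i=1,2\). Each surviving generator is an intersection point tensored with \(\operatorname{Hom}(\CoeffField^n,V)\), where \(V\) is the fibre of the local system on \(\gamma\), and the residual differential is zero by minimality. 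Hence \(\HF(\gamma_1,\gamma)\cong\HF(\gamma_2,\gamma)\) as bigraded spaces. Contrapositively, a difference forces \(\gamma\) to be compact and, in the primitive case, to have the same underlying curve as \(\gamma_i\).

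\textbf{Main obstacle.} The hard part is Step 2: showing that the edge carrying \(X_i\) can always be moved off every overlap region. This requires that no overlap wraps all the way around \(\gamma_F\). For primitive \(\gamma_F\) and \(\gamma\) with distinct underlying curves, I would argue this via unique lifting in the cover \(\PlanarCover\). An overlap that covered a full period of \(\gamma_F\) would give lifts agreeing on a fundamental domain of \(\gamma_F\). Since the overlap is a common subpath of \(\gamma\), the same period must then repeat along \(\gamma\) as well. Therefore \(\gamma_F\) and \(\gamma\) would be powers of the same element of \(\pi_1(\FourPuncturedSphere)\), contradicting primitivity together with distinctness. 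The finer bookkeeping in the cancellations is exactly what \cite[Theorem~5.25]{KWZ} supplies.
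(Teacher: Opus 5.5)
Your reading that both local systems must have the same dimension is the right one. In the end you, like the paper, rely on \cite[Theorem~5.25]{KWZ}; the paper gives no argument beyond that citation. The problem is the mechanism you supply yourself in Step~2, which does not work.

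First, the edge carrying \(X_i\) cannot in general be kept away from the cancellations, and the cancellations are not confined to overlaps of the two paths in \(\mathcal{Q}\). Take \(\gamma=\aKh(0)\), so that \(C=[\bullet]\). Take \(\gamma_i=\rKh_1(0)\) with an \(n\)-dimensional local system \(X_i\) on its \(D\)-edge. Then \(C_i\) has two generators \(\bullet_a,\bullet_b\) (tensored with \(\CoeffField^n\)) and arrows \(S^2\otimes 1\) and \(D\otimes X_i\) from \(\bullet_a\) to \(\bullet_b\). In \(\Mor(C_i,C)\), the generator \(\bullet_b\to\bullet\) labelled \(b\otimes\phi\) is sent to \(\bullet_a\to\bullet\) labelled \(\pm(bS^2\otimes\phi+bD\otimes\phi X_i)\). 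So the generators with \(b=D^k\) cancel through the \(D\)-edge, those with \(b=S^{2k}\) cancel through the \(S^2\)-edge, and \(b=\iota_\bullet\) uses both. This happens even though the two paths in \(\mathcal{Q}\) share no edge. There is therefore no edge to move \(X_i\) to. The homology is \(2n\)-dimensional for every \(X_i\) because \(X_i\) is invertible, not because it is avoided.

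Second, the argument in your last paragraph is false. Unique lifting only gives agreement of the lifts along the common subpath. It does not make the lift of \(\gamma\) invariant under the deck transformation of \(\gamma_F\), so a common subpath covering a full period of \(\gamma_F\) can simply end. For instance, consider the compact formal curve
\[
\bullet\xrightarrow{S^2}\bullet\xleftarrow{D}\bullet\xrightarrow{S^2}\bullet\xleftarrow{D}\bullet\xrightarrow{S}\circ\xrightarrow{D}\circ\xleftarrow{S}\bullet\xleftarrow{D}\bullet
\]
with the last generator identified with the first. It admits a bigrading, is primitive, and differs from \(\rKh_1(0)\). Yet its path in \(\mathcal{Q}\) contains two full periods of the path of \(\rKh_1(0)\); in \(\pi_1(\FourPuncturedSphere)\) this is just \(w\) versus \(w^2u\).

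What Step~2 really needs is invertibility. Each component of the differential on \(\Mor(C_i,C)\) is a pure algebra element acting on the \(\operatorname{Hom}\)-factor by one of \(\phi\mapsto\phi\), \(\phi\mapsto\phi X_i\), or \(\phi\mapsto Y\phi\), where \(Y\) is the holonomy of \(\gamma\). Hence chains of such cancellations proceed identically for \(X_1\) and \(X_2\). The holonomies can only affect the homology when such a chain closes up into a cycle, which for primitive curves requires the underlying curves to coincide. Making this precise is the content of \cite[Theorem~5.25]{KWZ}. The simplest repair is to invoke that theorem for the whole statement, as the paper does.
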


For multicurves
\(\Gamma=\{q^{a_i}h^{b_i}\gamma_i\}_{i}\)
and 
\(\Gamma'=\{q^{a'_j}h^{b'_j}\gamma'_j\}_{j}\)
define
\[
\HF(\Gamma,\Gamma')
=
\bigoplus_{i,j}
q^{a_i-a'_j}h^{b_i-b'_j}
\HF(\gamma_i,\gamma'_j).
\]
Thus, 
\(\HF(\Gamma,\Gamma')\cong H_*\Mor(C,C')\)
if \(C\) and \(C'\) 
are the bigraded chain complexes 
associated with \(\Gamma\) and \(\Gamma'\). 
Combining \cref{thm:classification-morphisms} 
with \cref{prop:GlueingTheorem:Kh-cx}:

\begin{theorem}[{\cite[Theorem~1.9]{KWZ}}]
	\label{thm:GlueingTheorem:Kh}
	Let \(L=T_1\cup T_2\) 
	be the result of gluing two oriented pointed Conway tangles 
	as in \cref{fig:tanglepairing} 
	such that the orientations match. 
	Then  
	\[
	\Khr(L)
	\cong
	\HF\left(-\Khr(T_1),\BNr(T_2)\right)
	\cong 
	\HF\left(-\BNr(T_1),\Khr(T_2)\right)
	\]
	as bigraded vector spaces over \(\CoeffField\).\qed
\end{theorem}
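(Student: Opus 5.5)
This follows by combining \cref{prop:GlueingTheorem:Kh-cx} with \cref{thm:classification-morphisms}. Work over a field \(\CoeffField\). By \cref{thm:classification:complexes_over_B:simplified} and \cref{notation:multicurve-reps}, we may replace \(\DD(T_2)\) and \(\DD_1(T_1)\) up to homotopy by the chain complexes \(\DD^c(T_2)\) and \(\DD^c_1(T_1)\) associated with representatives of the multicurves \(\BNr(T_2)\) and \(\Khr(T_1)\). The homology of the morphism space \(\Mor(-,-)\) is invariant under chain homotopy equivalence in either argument, since \(\C^\B\) is a dg category. Hence
\[
\Khr(L)\cong H_*\Mor(-\DD_1(T_1),\DD(T_2))\cong H_*\Mor(-\DD^c_1(T_1),\DD^c(T_2)).
\]

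The next step is to identify \(-\DD^c_1(T_1)\) with the complex of the multicurve \(-\Khr(T_1)\). By \cref{lem:mirror}, applied component by component, the mirror \(-C\) of the complex of a graded curve with local system \((F,g,X,e)\) is the complex of \((-F,-g,X^t,e^{op})\), and \(\gamma_{-F}=-\gamma_F\). Note that \(X^t\) is again a valid local system and primitivity is preserved. Since the operation \(C\mapsto -C\) commutes with direct sums, \(-\DD^c_1(T_1)\) is the complex associated with the multicurve \(-\Khr(T_1)\). The sign and grading conventions for mirroring curves were set up exactly so that this holds.

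Then \(\Mor\) of two direct sums splits as the direct sum of \(\Mor\) between the summands, with grading shifts \(q^{a_i-a'_j}h^{b_i-b'_j}\) coming from the shifts of the components. Applying \cref{thm:classification-morphisms} to each pair of components therefore gives
\[
H_*\Mor(-\DD^c_1(T_1),\DD^c(T_2))\cong\bigoplus_{i,j}q^{a_i-a'_j}h^{b_i-b'_j}\HF(\gamma_i,\gamma'_j)=\HF(-\Khr(T_1),\BNr(T_2)).
\]
This is precisely the remark stated just before the theorem. The second isomorphism, \(\Khr(L)\cong\HF(-\BNr(T_1),\Khr(T_2))\), follows in the same way from the second isomorphism in \cref{prop:GlueingTheorem:Kh-cx}, using \(-\DD(T_1)\simeq-\DD^c(T_1)\).

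The only delicate point is bookkeeping. One must check that the grading shifts attached to multicurve components are compatible with the convention \(q^{a}h^{b}\gamma\) and with the sign flip of gradings under mirroring. One must also check that the equivalence relations on multicurves (reduction and summing local systems) do not affect the result; this is ensured because equivalent multicurves give isomorphic complexes. I expect this to be routine, with no substantive obstacle.
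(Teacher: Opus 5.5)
Your proposal is correct and follows the paper's own route: the paper obtains the statement precisely by combining \cref{prop:GlueingTheorem:Kh-cx} with \cref{thm:classification-morphisms}, via the componentwise definition of \(\HF\) for multicurves, with \cref{lem:mirror} identifying \(-\DD^c_1(T_1)\) with the complex of \(-\Khr(T_1)\). The only step you use silently is that \(C\mapsto -C\) sends homotopy equivalences to homotopy equivalences; this holds because mirroring is a contravariant dg functor, essentially dualization twisted by \(\phi\), and it is what justifies replacing \(-\DD_1(T_1)\) by \(-\DD^c_1(T_1)\) inside \(\Mor\).
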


\section{\texorpdfstring{New features of immersed curve invariants}{New features of immersed curve invariants}}\label{sec:new:Kh}

\subsection{The mapping class group of the four-punctured sphere}\label{sec:newKh:mcg}

Let \(\Mod(\FourPuncturedSphere,*)\)
denote 
the group of isotopy classes of orientation preserving
homeomorphisms of the unit sphere \(S^2\subset\R^3\) that map the set of punctures \(\EndsAll=\{(\pm e^{\pm i\pi/4},0)\}\subset S^2\) to itself and 
fix the distinguished puncture \(*=(-e^{-i\pi/4},0)\). 
This mapping class group can be realized 
as a quotient of the braid group on three strands
\[
B_3
=
\langle
\BraidB,
\BraidW
\mid 
\BraidB\BraidW\BraidB
=
\BraidW\BraidB\BraidW
\rangle.
\]
To see this, 
let \(D^*\) be a regular open neighbourhood of the distinguished point~\(*\) and
 denote \(\FourPuncturedSphere\smallsetminus D^*\) by \(\ThreePuncturedDisk\) (the disk with three punctures). %
The mapping class group \(\Mod(\ThreePuncturedDisk)\)
is defined as the group  
of orientation preserving homeomorphisms of \(\ThreePuncturedDisk\)
that restrict to the identity on the boundary, 
modulo isotopies that fix the boundary pointwise.
It is well-known that \(\Mod(\ThreePuncturedDisk)\) can be naturally identified with \(B_3\), 
 and the inclusion 
\(\ThreePuncturedDisk\hookrightarrow \FourPuncturedSphere\) 
induces an epimorphism
\[\varphi_\circledast\co \Mod(\ThreePuncturedDisk)\rightarrow\Mod(\FourPuncturedSphere,*)\]
whose kernel is generated by the Dehn twist around the boundary of \(\ThreePuncturedDisk\); 
see for example \cite[Proposition 3.19]{FarbMargalit}. 
In terms of the generators of \(B_3\) 
this boundary Dehn twist can be written as the full twist
\(
\BraidW\BraidB
\BraidW\BraidB
\BraidW\BraidB
\). 
Writing 
\(\TwistB=\varphi_\circledast(\BraidB)\)
and 
\(\TwistW=\varphi_\circledast(\BraidW)\): %

\begin{lemma}\label{lem:MCG:presentation}
	\(\Mod(\FourPuncturedSphere,*)\)
	has presentation
	\(\langle
	\TwistB,
	\TwistW
	\mid 
	\TwistB\TwistW\TwistB
	=
	\TwistW\TwistB\TwistW,
	\TwistW\TwistB
	\TwistW\TwistB
	\TwistW\TwistB
	\rangle.\)
	\qed
\end{lemma}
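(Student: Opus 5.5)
This presentation follows from the identification \(\Mod(\ThreePuncturedDisk)\cong B_3\) and the kernel description of \(\varphi_\circledast\) recalled above. First I will make the identification concrete, with \(\BraidB\) and \(\BraidW\) the half-twists exchanging the two pairs of adjacent punctures among \(\Endi,\Endii,\Endiii\) along arcs disjoint from \(D^*\). This identifies \(\Mod(\ThreePuncturedDisk)\) with \(\langle\BraidB,\BraidW\mid\BraidB\BraidW\BraidB=\BraidW\BraidB\BraidW\rangle\).

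Next I will apply \cite[Proposition 3.19]{FarbMargalit}, the capping sequence. Gluing the disk \(D^*\) punctured at \(*\) onto \(\ThreePuncturedDisk\) gives an exact sequence
\[
1\rightarrow \langle T_{\partial}\rangle\rightarrow \Mod(\ThreePuncturedDisk)\xrightarrow{\varphi_\circledast}\Mod(\FourPuncturedSphere,*)\rightarrow 1 .
\]
Here \(T_\partial\) is the Dehn twist about \(\partial\ThreePuncturedDisk\). Surjectivity holds because any mapping class fixing \(*\) can be isotoped to fix a small disk around \(*\) pointwise. The kernel is generated by \(T_\partial\) because the remaining ambiguity is rotation around the capped puncture, which is exactly the boundary twist.

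Then I will identify \(T_\partial\) in \(B_3\). The boundary twist of a disk with three punctures is the generator of the centre of \(B_3\), namely the full twist \(\Delta^2=(\BraidW\BraidB)^3\). This can be checked from the standard fact that \(\Delta^2=(\sigma_1\sigma_2)^3\) corresponds to rotating the boundary circle once. Since \(\Delta^2\) is central, the normal closure of \(\langle T_\partial\rangle\) is \(\langle T_\partial\rangle\) itself. Therefore the quotient \(\Mod(\FourPuncturedSphere,*)\cong B_3/\langle\Delta^2\rangle\) has the stated presentation: add the relator \(\TwistW\TwistB\TwistW\TwistB\TwistW\TwistB\) to the braid relation, with \(\TwistB,\TwistW\) the images of the generators.

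The main point needing care is the exactness of the capping sequence, in particular that the kernel is exactly the boundary twist and not more. Here I simply cite \cite[Proposition 3.19]{FarbMargalit}, which the text already invokes, so the proof reduces to these formal steps.
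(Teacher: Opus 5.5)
Your proposal is correct and follows the paper's route. The paper also identifies \(\Mod(\ThreePuncturedDisk)\) with \(B_3\) and uses the capping sequence \cite[Proposition 3.19]{FarbMargalit} to see that \(\varphi_\circledast\) is surjective with kernel generated by the boundary Dehn twist. It then identifies that twist with the full twist \(\BraidW\BraidB\BraidW\BraidB\BraidW\BraidB\), and the presentation follows by adding this relator; your remark that this kernel is central, so one relator suffices, just spells out a step the paper leaves implicit.
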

By convention the generators 
\(\BraidB\) and \(\BraidW\) 
correspond to the elementary braids
\(\vc{%
\begin{tikzpicture}[scale=0.3,line cap=round]
	\draw[thick] (0,0) .. controls (0.5,0) and (0.5,0.6) .. (1,0.6);
	\draw[white,line width=3pt] (0.6,0.2) -- (0.4,0.4);
	\draw[thick] (0,0.6) .. controls (0.5,0.6) and (0.5,0) .. (1,0);
	\draw[thick] (0,1.2) -- (1,1.2);
\end{tikzpicture}%
}\)
 and 
\(\vc{%
	\begin{tikzpicture}[scale=0.3,line cap=round]
		\draw[thick] (0,0.6) .. controls (0.5,0.6) and (0.5,1.2) .. (1,1.2);
		\draw[white,line width=3pt] (0.6,0.8) -- (0.4,1.0);
		\draw[thick] (0,1.2) .. controls (0.5,1.2) and (0.5,0.6) .. (1,0.6);
		\draw[thick] (0,0) -- (1,0);
	\end{tikzpicture}%
}\)
respectively.
The actions of the generators of \(\Mod(\FourPuncturedSphere,*)\) on \(\FourPuncturedSphere\)
are shown in \cref{fig:covering:conventions}.
This also fixes the identification
\(
B_3
\cong
\Mod(\ThreePuncturedDisk)
\); 
compare \cref{fig:twists:generators}.

A second useful description of \(\Mod(\FourPuncturedSphere,*)\) makes use of a covering space of \(\FourPuncturedSphere\), which is constructed in two steps:
First, let \(\TwoTorusKh\rightarrow \FourPuncturedSphere\) be the two-fold covering corresponding to the kernel of the unique epimorphism \(\pi_1(\FourPuncturedSphere)\rightarrow \ZmodTwo\) sending the meridian around each puncture to 1. As a topological space, \(\TwoTorusKh\) is the complement of four points in a 2-dimensional torus \(\TwoTorus\). Second, let \(\PlanarCover\rightarrow\TwoTorusKh\) be the pullback of the universal covering \(\R^2\rightarrow\TwoTorus\) along the inclusion \(\TwoTorusKh\hookrightarrow\TwoTorus\). Let 
\[p\co \PlanarCover\rightarrow \FourPuncturedSphere\]
be the composition of these two coverings, where our explicit model for \(p\) is such that the labels of lattice points correspond to the labels of the punctures of \(\FourPuncturedSphere\) as indicated in \cref{fig:covering:conventions} and such that \(p\) maps the square \((0,1)^2\) to points on \(\FourPuncturedSphere\) with negative \(z\)-coordinate (the unshaded region in \cref{fig:covering:conventions}).

\begin{figure}[t]
	\centering
	\labellist 
	\scriptsize
	\pinlabel $2$ at 12 118
	\pinlabel $1$ at 55 118
	\pinlabel $2$ at 117 118
	\pinlabel $3$ at 3 66
	\pinlabel $3$ at 107 66
	\pinlabel $2$ at 12 13
	\pinlabel $1$ at 55 13
	\pinlabel $2$ at 117 13
	\pinlabel $1$ at 172 30
	\pinlabel $2$ at 233 30
	\pinlabel $3$ at 233 90
	\pinlabel $\longrightarrow$ at 137 59
	\pinlabel $\TwistB$ at 230 59
	\pinlabel $\TwistW$ at 203 33
	\endlabellist
	\includegraphics[scale=1]{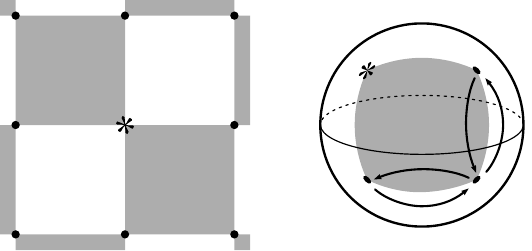}
	\caption{%
		The covering 
		\(p\co \PlanarCover\rightarrow \FourPuncturedSphere\). 
		The points \(\mathtt{e}_i\) in \(\EndsThree\) 
		and their preimages under \(p\) 
		are labelled by their index \(i\).
		The two pairs of arrows indicate the action 
		of the mapping class group generators.
	}
	\label{fig:covering:conventions}
\end{figure}

The group of deck transformations of \(p\) consists of 
translations by elements in \((2\Z)^2\) followed by multiplication by \(\pm1\). 
Any \(\tau\in\operatorname{Mod}(\FourPuncturedSphere,*)\) can be lifted to a homeomorphism \(\PuncturedPlane\rightarrow\PuncturedPlane\) that preserves the puncture at the origin and extends uniquely to a homeomorphism \(\tilde{\tau}\co\R^2\rightarrow\R^2\). Setting \(\tilde{\tau}(1,0)=(a,b)\) and \(\tilde{\tau}(0,1)=(c,d)\), it can be checked that 
the matrix
\[
\psi(\tau)
=
\begin{bmatrix*}[c]
	a & c \\
	b & d
\end{bmatrix*},
\]
as an element of \(\PSL_2(\Z)\), is %
independent of the choices involved. %
In fact: 
\begin{lemma}\label{lem:MCG:PSL}
	The map 
	\(
	\psi\co\operatorname{Mod}(\FourPuncturedSphere,*) \rightarrow \PSL_2(\Z)
	\)
	is an isomorphism with
	\[
	\pushQED{\qed} 
	\psi(\TwistB)=
	\begin{bmatrix*}[c]
		1 & 0 \\
		-1 & 1
	\end{bmatrix*}
	\qquad\text{and}\qquad 
	\psi(\TwistW)=
	\begin{bmatrix*}[c]
		1 & 1 \\
		0 & 1
	\end{bmatrix*}
	\qedhere
	\popQED
	\]
\end{lemma}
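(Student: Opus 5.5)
The plan is to show that \(\psi\) is a well-defined homomorphism, then compute it on the two generators, and finally compare with the presentation in \cref{lem:MCG:presentation}.

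First I will check that \(\psi\) is well defined and a homomorphism. Let \(\tilde\tau\) be a lift of a representative homeomorphism that fixes the origin. Since \(\tilde\tau\) is a lift, conjugation by \(\tilde\tau\) preserves the deck group. This deck group is \(\{x\mapsto \pm x+2v\mid v\in\Z^2\}\).
\begin{enumerate}
\item Its translation subgroup \((2\Z)^2\) is characteristic: it consists of the elements of infinite order together with the identity. Conjugation therefore induces an automorphism of it, so there is \(A\in\GL_2(\Z)\) with \(\tilde\tau(x+2v)=\tilde\tau(x)+2Av\).
\item The only nontrivial deck transformation fixing the origin is \(x\mapsto -x\), and \(\tilde\tau\) fixes the origin. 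Hence \(\tilde\tau(-x)=-\tilde\tau(x)\).
\item Combining the two with \((1,0)=-(-1,0)\) and \((1,0)=(-1,0)+(2,0)\) gives \(\tilde\tau(1,0)=-\tilde\tau(1,0)+2Ae_1\), so \(\tilde\tau(1,0)=Ae_1\). Likewise \(\tilde\tau(0,1)=Ae_2\).
\end{enumerate}
So the matrix \(\psi(\tau)\) is exactly \(A\).

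Since \(\tau\) preserves orientation, \(A\in \operatorname{SL}_2(\Z)\); indeed \(A\) describes the induced map on \(H_1\) of the torus \(\R^2/(2\Z)^2\). The lift fixing the origin is unique only up to post-composition with \(-1\), and this accounts for the passage to \(\PSL_2(\Z)\). An isotopy of \(\tau\) lifts to an isotopy of \(\tilde\tau\), and \(A\) is a discrete invariant, so \(A\) is unchanged. A composite of lifts is a lift of the composite, so \(\psi\) is a homomorphism.

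Next I will determine \(\psi(\TwistB)\) and \(\psi(\TwistW)\) directly from \cref{fig:covering:conventions}. Each of \(\TwistW\) and \(\TwistB\) is a half twist exchanging two of the punctures \(\Endi,\Endii,\Endiii\). Such a half twist lifts to a Dehn twist of the torus about the lift of the circle enclosing those two punctures. In the cover these are the horizontal and vertical directions, which gives the shears \(\left[\begin{smallmatrix}1&1\\0&1\end{smallmatrix}\right]\) and \(\left[\begin{smallmatrix}1&0\\-1&1\end{smallmatrix}\right]\). The signs are fixed by the braid conventions chosen above; this sign and convention bookkeeping is the only real obstacle, and I would settle it by tracking the image of one lattice point in the figure.

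Finally, for surjectivity and injectivity, let \(s\) and \(t\) denote these two matrices. They satisfy \(sts=tst\), and \((ts)^3=-1\) in \(\operatorname{SL}_2(\Z)\), so \((ts)^6=1\) in \(\PSL_2(\Z)\) and \((ts)^3=1\) in \(\PSL_2(\Z)\). Hence \(\psi\) agrees with the map induced by the classical isomorphism \(B_3/\langle(\sigma_\circ\sigma_\bullet)^3\rangle\cong\PSL_2(\Z)\). This classical isomorphism comes from \(\operatorname{SL}_2(\Z)\cong B_3/\langle(\sigma_\circ\sigma_\bullet)^6\rangle\) together with the fact that the centre of \(\operatorname{SL}_2(\Z)\) is \(\{\pm1\}\). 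By \cref{lem:MCG:presentation}, the source \(\Mod(\FourPuncturedSphere,*)\) is precisely \(B_3/\langle(\sigma_\circ\sigma_\bullet)^3\rangle\). Therefore \(\psi\) is this isomorphism, which proves the lemma.
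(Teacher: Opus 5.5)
Your proof is correct. The paper states this lemma without proof as a standard fact, and your route is the natural verification it leaves implicit: the deck-group argument showing that the origin-fixing lift acts linearly on the lattice, identifying the generators as transvections, and then matching $\psi$ with the classical isomorphism $B_3/\langle(\sigma_\circ\sigma_\bullet)^3\rangle\cong\PSL_2(\Z)$ via \cref{lem:MCG:presentation}. One small slip: you mean $(ts)^6=1$ in $\operatorname{SL}_2(\Z)$. What the last step actually uses is $(ts)^3=-1$, that is, that the central element $(\sigma_\circ\sigma_\bullet)^3$ maps to $-1$, rather than the fact that the centre of $\operatorname{SL}_2(\Z)$ is $\{\pm1\}$.
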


\subsection{The action of the mapping class group on multicurves} Given a formal curve $F$ and some $\tau\in \Mod(\FourPuncturedSphere,*)$ there is a formal curve $\tau(F)$ determined by the identity $\tau(\gamma_F)=\gamma_{\tau(F)}$. Towards a direct description of the formal curve $\tau(F)$, consider the following recipe producing \(\TwistB(F)\): First consider each arrow
	\[\begin{tikzcd}[column sep=30pt]
		\circ
		\arrow{r}{S}
		&
		\bullet
	\end{tikzcd}\]
	in \(F\). 
	If no other arrow is incident at the generator~\(\circ\) 
	the arrow and the generator~\(\circ\) is removed;  
	this is case (\( \SaddleBC \)a)
	of \cref{tab:MCG:TwistB}.
	Otherwise, the other arrow incident at the generator~\(\circ\) 
	is labelled by \(D\cdot\varG^k\) for some \(k\in\Z^{\geq0}\): 
	\[
	\begin{tikzcd}[column sep=30pt]
		\circ
		&
		\circ
		\arrow{r}{S}
		\arrow[leftrightarrow,swap]{l}{D\cdot\varG^k}
		&
		\bullet
	\end{tikzcd}
	\]
	If there is an outgoing arrow starting at the leftmost \(\circ\) that is labelled by \(S\) then the labelled subgraph is replaced 
	as shown in case (\( \SaddleBC \)b) 
	of \cref{tab:MCG:TwistB}. 
	Otherwise, the subgraph is replaced
	according to cases (\( \SaddleBC \)c) or (\( \SaddleBC \)d)
	depending on the direction of the arrow labelled \(D\cdot\varG^k\). 
	Finally, all remaining arrows in \(F\) are replaced
	according to the remaining cases in \cref{tab:MCG:TwistB}.

	\begin{proposition}
	For any formal curve $F$ the above recipe produces a formal curve $F'$ satisfying \(\TwistB(\gamma_F)=\gamma_{F'}\). In particular, $F' = \TwistB(F)$.
	\end{proposition}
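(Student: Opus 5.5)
The plan is to argue geometrically, via the correspondence of \cref{prop:functors-curves}. A formal curve \(F\) is read off from a representative \(\gamma_F\) that meets the arcs \(a_\bullet\cup a_\circ\) minimally. The vertices of \(F\) are the intersection points, and each edge label records how the segment of \(\gamma_F\) between two consecutive intersection points winds in the complement of the arcs. The segment lies in one of the two faces (discs containing a single puncture \(\Endi,\Endii\) or \(\Endiii\), or around \(*\)), and the exponent of \(\varG\) counts full turns around \(*\).

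First I will fix a model for \(\TwistB\) as a half-twist supported in a disc that contains exactly the two punctures separated by one of the arcs, namely the arc corresponding to \(\circ\), say. I choose this disc so that \(\TwistB\) fixes \(a_\bullet\) setwise and carries \(a_\circ\) to an arc \(\TwistB(a_\circ)\). Then \(\TwistB(\gamma_F)\) meets \(a_\bullet\cup\TwistB(a_\circ)\) exactly as \(\gamma_F\) meets \(a_\bullet\cup a_\circ\). It therefore suffices to compute the intersection pattern of \(\gamma_F\) with \(a_\bullet\cup\TwistB^{-1}(a_\circ)\) and read off the resulting formal curve. Equivalently, I pull the arc \(a_\circ\) back by the inverse twist.

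Next I make a local analysis, one edge type at a time. Each segment of \(\gamma_F\) has a type: \(S\) (crossing between faces through a saddle region), \(D\varG^k\), or \(SS\varG^k\). I isotope the segment rel endpoints to be minimal with respect to the new arc and record the new intersection points and the labels between them. This yields the replacement rules of \cref{tab:MCG:TwistB}.

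The main obstacle is that the local replacements do not obviously glue to minimal position. Intersection points with the new arc may cancel across adjacent edges, producing bigons. This is exactly the situation that cases (\(\SaddleBC\)a)--(\(\SaddleBC\)d) address. An \(S\)-arrow \(\circ\to\bullet\) whose \(\circ\)-endpoint is a curve end, or is followed by a \(D\varG^k\) that turns back, creates a removable bigon with \(\TwistB^{-1}(a_\circ)\). I will check each such configuration by drawing the relevant neighbourhood, and verify that after the removal the resulting sequence still alternates between \(D\)-type and \(S\)-type labels. Then I verify minimality by the bigon criterion: the new curve has no immersed bigon with the arcs, because every potential bigon would project to one already excluded in \(\gamma_F\) or to one eliminated by the rule. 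Combined with the bijection of \cref{prop:functors-curves}, this gives \(F'=\TwistB(F)\).

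Finally, I treat the compact and non-compact cases together, taking care with the ends of linear curves at \(\EndsThree\), which give case (\(\SaddleBC\)a). Local systems and gradings are carried along unchanged, since they are determined by the underlying curve up to shift.
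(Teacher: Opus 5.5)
Your overall plan (realize \(\TwistB\) as a half-twist fixing \(a_\bullet\), pull back \(a_\circ\), redraw locally, and conclude with \cref{prop:functors-curves}) is the same kind of direct local verification the paper performs. As written, however, two steps would not go through.

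\textbf{The labels are misread.} The arcs \(a_\bullet\) and \(a_\circ\) cut \(\FourPuncturedSphere\) into three faces, not two: the \(\Db\)-face containing \(\Endi\), the face between the arcs that carries the \(S\)'s and contains \(\Endii\), and the \(\Dw\)-face containing \(\Endiii\). The puncture \(*\) lies only on their boundaries, so no segment can wind around \(*\). Every product of a \(D\) and an \(S\) vanishes in \(\B\), so \(D\cdot\varG^k=\pm D^{k+1}\) and \(S\cdot\varG^k=S^{2k+1}\). Thus the exponent of \(\varG\) counts extra turns around the non-special puncture of that face. Under your reading the table could not be explained: a twist supported away from \(*\) creates no turns around \(*\), yet rules (\(\SaddleBC\)c) and (\(\SaddleBC\)e) shift the exponent by one. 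That shift is winding around \(\Endiii\) being converted into winding around \(\Endii\), and vice versa, as \(\TwistB\) swaps these punctures.

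\textbf{The localization is at the wrong vertices.} Cutting at every vertex and isotoping ``rel endpoints'' does not work cleanly. The \(\circ\)-vertices are crossings with the arc that moves; some disappear (cases (\(\SaddleBC\)a)--(\(\SaddleBC\)d)) while new ones appear elsewhere. The global minimality you then need is only asserted (``every potential bigon would project to one already excluded'').

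The paper uses a fact you set up but never exploit: \(\TwistB\) fixes \(a_\bullet\), so the \(\bullet\)-generators survive. It therefore cuts \(F\) only at them, into the pieces (1)--(4) of its proof; the case without \(\bullet\)-generators is immediate from rules (\(S\SaddleBC\)) and (\(\Dw\)).

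The justification for this localization is a short argument you should supply. An immersed bigon between \(\gamma_F\) and \(\TwistB^{-1}(a_\circ)\) cannot meet \(a_\bullet\), because \(a_\bullet\) is disjoint from \(\TwistB^{-1}(a_\circ)\) and ends at \(*\). Otherwise an outermost arc of the preimage of \(a_\bullet\) in the bigon would cut off a bigon between \(\gamma_F\) and \(a_\bullet\), contradicting minimal position. Hence minimality, and the replacement rules, can be checked piece by piece inside the twice-punctured disc on the \(\Endii,\Endiii\)-side of \(a_\bullet\). Pieces of type (1) lie in the \(\Db\)-face and are untouched.

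Your closing remark about gradings and local systems is beside the point, since the statement concerns formal curves only. It is also false as stated: local systems are not determined by the underlying curve.
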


	\begin{proof}
	If \(F\) does not contain any generators \(\bullet\) 
	only the replacement rules
	(\( S\SaddleBC \)) and (\(\Dw\)) in \cref{tab:MCG:TwistB}
	apply and the identity is immediate.
	Otherwise, 
	split the labelled graph 
	associated with \(F\) 
	at generators \(\bullet\) into pieces of the following types (compare \cite[Section 3]{HanselmanWatson}):
	\begin{enumerate}
		\item 
		\( 
		\begin{tikzcd}[column sep=30pt]
			\bullet
			\arrow{r}{D\cdot\varG^k}
			&
			\bullet
		\end{tikzcd}
		\)
		\item 
		\( 
		\begin{tikzcd}[column sep=30pt]
			\bullet
			\arrow{r}{S^2\cdot\varG^k}
			&
			\bullet
		\end{tikzcd}
		\)
		\item 
		\( 
		\begin{tikzcd}[column sep=30pt]
			\bullet
			\arrow[leftrightarrow]{r}{S\cdot\varG^{k_0}}
			&
			\circ
			\arrow[leftrightarrow]{r}{D\cdot\varG^{k_1}}
			&
			\dots
			\arrow[leftrightarrow]{r}{D\cdot\varG^{k_{n-1}}}
			&
			\circ
			\arrow[leftrightarrow]{r}{S\cdot\varG^{k_{n}}}
			&
			\bullet
		\end{tikzcd}
		\)
		\item
		\( 
		\begin{tikzcd}[column sep=30pt]
			\bullet
			\arrow[leftrightarrow]{r}{S\cdot\varG^{k_0}}
			&
			\circ
			\arrow[leftrightarrow]{r}{D\cdot\varG^{k_1}}
			&
			\dots
			\arrow[leftrightarrow]{r}{}
			&
			\circ
		\end{tikzcd}
		\)
	\end{enumerate}
	By the replacement rule (\(\Db\)) in \cref{tab:MCG:TwistB},
	pieces of the first type remain unchanged.
	For the other three types one checks directly that the replacements 
	occur exactly as prescribed by the identity 
	\(\TwistB(\gamma_F)=\gamma_{\TwistB(F)}\).
	\end{proof}
	
	A similar description for \(\TwistW(F)\) is obtained 
	by swapping the labels of generators and indices of algebra elements.
\begin{example}
	\begin{align*}
	F &=\left[
	\begin{tikzcd}[column sep=20pt,ampersand replacement=\&]
		\circ
		\arrow{r}{S}
		\&
		\bullet
		\arrow{r}{D}
		\&
		\bullet
		\arrow[leftarrow]{r}{S}
		\&
		\circ
		\arrow{r}{D\cdot\varG^2}
		\&
		\circ
		\arrow{r}{S}
		\&
		\bullet
		\arrow{r}{D}
		\&
		\bullet
		\arrow{r}{S}
		\&
		\circ
	\end{tikzcd}
	\right]
	\\
		\TwistB(F)
		&=
		\left[
		\begin{tikzcd}[column sep=20pt,ampersand replacement=\&]
			\bullet
			\arrow{r}{D}
			\&
			\bullet
			\arrow{r}{S^2\cdot\varG^2}
			\&
			\bullet
			\arrow{r}{D}
			\&
			\bullet
			\arrow{r}{S}
			\&
			\circ
			\arrow{r}{D}
			\&
			\circ
		\end{tikzcd}
		\right]
		\\
		\TwistW(F)
		&=
		\left[
		\begin{tikzcd}[column sep=20pt,ampersand replacement=\&]
			\circ
			\arrow{r}{S}
			\&
			\bullet
			\arrow{r}{D}
			\&
			\bullet
			\arrow{r}{S^2}
			\&
			\bullet
			\arrow[leftarrow]{r}{D}
			\&
			\bullet
			\arrow[leftarrow]{r}{S}
			\&
			\circ
			\arrow{r}{D\cdot\varG^2}
			\&
			\circ
			\arrow{r}{S}
			\&
			\bullet
			\arrow{r}{D}
			\&
			\bullet
			\arrow{r}{S}
			\&
			\circ
		\end{tikzcd}
		\right]
	\end{align*}
\end{example}

\begin{table}
	\centering
	\begin{tabular}{ccc}
		cases
		&
		old labelled subgraph
		&
		new labelled subgraph
		\\
		\hline
		(\( \SaddleBC \)a)&
		\( 
		\begin{tikzcd}[column sep=30pt]
			\phantom{\circ}
			\arrow[leftrightarrow,"\times" anchor=center]{r}{}
			&
			\circ
			\arrow{r}{S}
			&
			\bullet
		\end{tikzcd}
		\)
		&
		\(
		\begin{tikzcd}[column sep=30pt]
			\bullet
		\end{tikzcd}
		\)
		\\
		(\( \SaddleBC \)b)&
		\( 
		\begin{tikzcd}[column sep=30pt]
			\bullet
			&
			\circ
			\arrow{r}{D\cdot\varG^k}
			\arrow{l}[swap]{S}
			&
			\circ
			\arrow{r}{S}
			&
			\bullet
		\end{tikzcd}
		\)
		&
		\(
		\begin{tikzcd}[column sep=30pt]
			\bullet
			\arrow{r}{S^2\cdot\varG^k}
			&
			\bullet
		\end{tikzcd}
		\)
		\\
		(\( \SaddleBC \)c)&
		\( 
		\begin{tikzcd}[column sep=30pt]
			\circ
			&
			\circ
			\arrow{r}{S}
			\arrow[swap]{l}{D\cdot\varG^k}
			&
			\bullet
		\end{tikzcd}
		\)
		&
		\( 
		\begin{tikzcd}[column sep=30pt]
			\circ
			&
			\bullet
			\arrow{l}[swap]{ S\cdot\varG^{k+1}}
		\end{tikzcd}
		\)
		\\
		(\( \SaddleBC \)d)&
		\( 
		\begin{tikzcd}[column sep=30pt]
			\circ
			\arrow{r}{D\cdot\varG^k}
			&
			\circ
			\arrow{r}{S}
			&
			\bullet
		\end{tikzcd}
		\)
		&
		\(
		\begin{tikzcd}[column sep=30pt]
			\circ
			\arrow{r}{ S\cdot\varG^k}
			&
			\bullet
		\end{tikzcd}
		\)
		\\
		(\( \SaddleBC \)e)&
		\( 
		\begin{tikzcd}[column sep=30pt]
			\circ
			\arrow{r}{S\cdot\varG^{k+1}}
			&
			\bullet
		\end{tikzcd}
		\)
		&
		\( 
		\begin{tikzcd}[column sep=30pt]
			\circ
			\arrow{r}{D\cdot\varG^k}
			&
			\circ
			&
			\bullet
			\arrow[swap]{l}{ S}
		\end{tikzcd}
		\)
		\\
		(\( \SaddleCB \))&
		\( 
		\begin{tikzcd}[column sep=30pt]
			\bullet
			\arrow{r}{S\cdot\varG^k}
			&
			\circ
		\end{tikzcd}
		\)
		&
		\( 
		\begin{tikzcd}[column sep=30pt]
			\bullet
			\arrow{r}{ S}
			&
			\circ
			\arrow{r}{D\cdot\varG^k}
			&
			\circ
		\end{tikzcd}
		\)
		\\
		(\( S\SaddleBC \))&
		\( 
		\begin{tikzcd}[column sep=30pt]
			\circ
			\arrow{r}{S^2\cdot\varG^k}
			&
			\circ
		\end{tikzcd}
		\)
		&
		\( 
		\begin{tikzcd}[column sep=30pt]
			\circ
			\arrow{r}{D\cdot\varG^k}
			&
			\circ
		\end{tikzcd}
		\)
		\\
		(\( S\SaddleCB \))&
		\( 
		\begin{tikzcd}[column sep=30pt]
			\bullet
			\arrow{r}{S^2\cdot\varG^k}
			&
			\bullet
		\end{tikzcd}
		\)
		&
		\( 
		\begin{tikzcd}[column sep=30pt]
			\bullet
			\arrow{r}{ S}
			&
			\circ
			\arrow{r}{D\cdot\varG^{k}}
			&
			\circ
			&
			\bullet
			\arrow[swap]{l}{ S}
		\end{tikzcd}
		\)
		\\
		(\( \Dw \))&
		\( 
		\begin{tikzcd}[column sep=30pt]
			\circ
			\arrow{r}{D\cdot\varG^k}
			&
			\circ
		\end{tikzcd}
		\)
		&
		\( 
		\begin{tikzcd}[column sep=30pt]
			\circ
			\arrow{r}{S^2\cdot\varG^k}
			&
			\circ
		\end{tikzcd}
		\)
		\\
		(\( \Db \))&
		\( 
		\begin{tikzcd}[column sep=30pt]
			\bullet
			\arrow{r}{D\cdot\varG^k}
			&
			\bullet
		\end{tikzcd}
		\)
		&
		\( 
		\begin{tikzcd}[column sep=30pt]
			\bullet
			\arrow{r}{D\cdot\varG^k}
			&
			\bullet
		\end{tikzcd}
		\)
	\end{tabular}
	\caption{The action of \(\TwistB\) in formal curves; \(k\in\Z^{\geq0}\)}
	\label{tab:MCG:TwistB}
\end{table}

Given a graded curve with local system \(\gamma=(F,g,X,e)\) we define a new graded curve with local system \(\Phi_\mathtt{e}(\TwistB)(\gamma)=(\TwistB(F),g',X,e')\) depending on a choice of endpoint \(\mathtt{e}\in\EndsThree\).
Here \(e'\) is chosen such that 
the edges \(e\) and \(e'\) induce matching orientations 
of the curves \(\gamma(F)\) and \(\gamma(F')\). To define the grading \(g'\), note that \cref{tab:MCG:TwistB} sets up a one-to-one correspondence between vertices \(x_\bullet\) from \(F\) and vertices \(x_\bullet'\) from \(\TwistB(F)\) that are labelled by \(\bullet\).
Then \(g'\) is chosen such that
\[
g'(x_\bullet')=g(x_\bullet)+
\begin{cases*}
	(1,0) & if \(\mathtt{e}=\Endi\)
	\\
	(-2,-1) & otherwise
\end{cases*}
\]
Such a grading exists and is unique unless there is no vertex labelled \(\bullet\). In the remaining case when all vertices are labelled by \(\circ\) there is a one-to-one correspondence between vertices \(x_\circ\) from \(F\) and vertices \(x_\circ'\) from \(\TwistB(F)\). In this case \(g'\) is chosen such that
\[
g'(x_\circ')=g(x_\circ)+
\begin{cases*}
	(3,1) & if \(\mathtt{e}=\Endi\)
	\\
	(0,0) & otherwise
\end{cases*}
\]
Extend \(\Phi_\mathtt{e}(\TwistB)\)
to multicurves componentwise resulting in 
a map \(\Phi_\mathtt{e}(\TwistB)\co\mathcal{C}\rightarrow\mathcal{C}\), where \(\mathcal{C}\) denotes the set of graded curves with local systems, and note that
there is a map \(\Phi_\mathtt{e}(\TwistW)\co\mathcal{C}\rightarrow\mathcal{C}\)
defined in the same way, except that the roles of \(\circ\) and \(\bullet\) are reversed and \(\Endiii\) plays the role of \(\Endi\).

\begin{proposition}\label{prop:twisting-curves}
	There is a left group action of 
	\(\Mod(\FourPuncturedSphere,*)\)
	on
	\(
	\mathcal{C}\times\EndsThree
	\)
	determined by
	\[\TwistL(\Gamma,\mathtt{e})=(\Phi_\mathtt{e}(\TwistL)(\Gamma),\TwistL(\mathtt{e}))\] 
	for \(\halfbullet\in\{\bullet,\circ\}\) and 
	\(\mathtt{e}\in\EndsThree\), where \(\TwistB\) permutes \(\Endii\leftrightarrow\Endiii\) and \(\TwistW\) permutes \(\Endi\leftrightarrow\Endii\).
\end{proposition}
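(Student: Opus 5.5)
The plan is to build the action first for the free group on \(\TwistB,\TwistW\), and then check that it factors through the presentation of \cref{lem:MCG:presentation}. For this I split each graded curve with local system \((F,g,X,e)\) into three layers: the formal curve \(F\), the local system \((X,e)\), and the bigrading \(g\).

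First I check that each generator acts bijectively, so that we get an action of the free group on \(\mathcal{C}\times\EndsThree\). On the formal-curve layer, \(\TwistB(F)\) is characterized by \(\TwistB(\gamma_F)=\gamma_{\TwistB(F)}\), using the preceding proposition and \cref{prop:functors-curves}. The local system is transported by the orientation-matching rule for \(e'\). The grading is shifted by a constant that depends only on \(\mathtt{e}\) and on whether \(F\) has \(\bullet\)-vertices, and \(\TwistB\) preserves the existence of such vertices. So the inverse is \(\TwistB^{-1}\) on formal curves combined with the opposite shift, read off at the endpoint \(\TwistB(\mathtt{e})\). The same holds for \(\TwistW\) with \(\circ\) and \(\bullet\) swapped.

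Next I verify the two relations: the braid relation \(\TwistB\TwistW\TwistB=\TwistW\TwistB\TwistW\) and the triviality of \((\TwistW\TwistB)^3\). The endpoint layer is immediate. The permutations \(\Endii\leftrightarrow\Endiii\) and \(\Endi\leftrightarrow\Endii\) satisfy the braid relation in \(S_3\), and \(\TwistW\TwistB\) is a \(3\)-cycle, so its cube is the identity. The formal-curve layer holds because \(\tau\mapsto(\gamma\mapsto\tau(\gamma))\) is a genuine action of \(\Mod(\FourPuncturedSphere,*)\) on homotopy classes of curves, together with the bijection of \cref{prop:functors-curves}. For local systems, the matrix \(X\) is never changed and the edge is moved compatibly with orientation, so both sides of each relation give equivalent local systems.

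It remains to treat the bigrading. Both sides of a relation give bigradings adapted to the same formal curve \(F'\), and by \eqref{eqn:grading_on_curves:c} such bigradings form an affine copy of \(\Z^2\). Hence the two sides differ by a constant \(c(F,\mathtt{e})\in\Z^2\), and it suffices to compute this constant at a single vertex. I track one vertex through the composite word. Under \(\TwistB\) I follow a \(\bullet\)-vertex via the bijection of \cref{tab:MCG:TwistB}. Under \(\TwistW\) I follow a \(\circ\)-vertex. To switch between the two, I use an adjacent edge \(S\cdot\varG^k\), whose quantum degree is known, to convert between neighbouring \(\bullet\)- and \(\circ\)-vertices before and after each twist.

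This bookkeeping is the main obstacle. The cases (\(\SaddleBC\)a)–(\(\SaddleBC\)e) create and destroy \(\circ\)-vertices, and edge labels change their powers of \(\varG\), which alters the quantum degrees used in the conversion. My plan is to show that \(c(F,\mathtt{e})\) depends only on the local pattern near one \(\bullet\)–\(\circ\) adjacency. This reduces the check to finitely many pieces, namely the four piece types listed in the proof of the preceding proposition, plus the all-\(\circ\) and all-\(\bullet\) curves, which use separate shift rules. On each piece I compute both sides of the two relations for each of the three choices of \(\mathtt{e}\) and confirm that \(c=0\). Here the specific constants \((1,0)\), \((-2,-1)\) and \((3,1)\) must make the totals cancel. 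If a uniform local argument proves awkward, a fallback is to verify the relations on a generating family of test curves: the rational arcs of small slope and the curves with no \(\bullet\)-vertices.
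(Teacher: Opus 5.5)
Your handling of the endpoint layer, of the underlying curves (via the geometric action together with \cref{prop:functors-curves}), and of the local systems is fine and agrees with the paper. The gap is the bigrading layer. This is the only non-formal content of the proposition, and you only sketch it.

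Your claim that checking \(c(F,\mathtt{e})=0\) reduces to ``finitely many pieces'' is asserted rather than argued, and it is not routine, for three reasons. First, \(\TwistB\) is computed by cutting the curve at \(\bullet\)-vertices and \(\TwistW\) by cutting at \(\circ\)-vertices, so the pieces for one twist are not pieces for the next. Second, each piece type is an infinite family, varying in chain length, powers of \(\varG\), and arrow directions. Third, cases (\(\SaddleBC\)a)--(\(\SaddleBC\)d) delete \(\circ\)-vertices, so an intermediate curve in the word can become all-\(\bullet\) (or, for \(\TwistW\), all-\(\circ\)); the next twist then switches to the separate \((3,1)/(0,0)\) rule.

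Your fallback is not a valid argument. The relations have to hold on every element of \(\mathcal{C}\times\EndsThree\). Verifying them on a family of test curves says nothing about other curves unless you already have the locality statement you have not proved.

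The idea you are missing is to move the comparison to chain complexes, where it becomes a single computation. The bigrading of a graded curve is determined by its associated complex up to homotopy. By \cref{prop:twisting:complexes}, the operation \(\Phi_{\mathtt{e}}\) on curves is modelled by box tensoring with \(M_+\) or \(M_-\). In fact the constants \((1,0)\), \((-2,-1)\), \((3,1)\), \((0,0)\) are exactly the gradings of the generators \(x\) and \(z\) in \(M_+\) and \(M_-\).

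Hence the complex of the image of \(\Gamma\) under \(\TwistB\TwistW\TwistB\) is \(C(\Gamma)\) box-tensored with a triple product of these bimodules, exactly one of which is an \(M_+\) for every starting endpoint. The bimodule computation in the proof of \cref{prop:twisting-complexes} identifies this triple product with \(\typeAD{\B}{R}{\B}\) up to homotopy. With \cref{lem:rho*DD} this gives \(C(\TwistB\TwistW\TwistB\cdot\Gamma)\simeq\rho(C(\Gamma))=C(\rho\Gamma)\), and likewise for \(\TwistW\TwistB\TwistW\). So both braid words act as \((\Gamma,\mathtt{e})\mapsto(\rho\Gamma,\rho(\mathtt{e}))\).

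This also settles your second relation with no further computation, since \((\TwistW\TwistB)^3=(\TwistW\TwistB\TwistW)(\TwistB\TwistW\TwistB)\) then acts as \(\rho^2=1\). It likewise makes your separate bijectivity check unnecessary, because the inverses of the generators are then positive words.
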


This group action is the lift of a group action on the category of type D structures over \(\B\); see \cref{prop:twisting-complexes}.

\subsection{Naturality under the action of the mapping class group}
\label{subsec:naturalityMCG}
	Given a pointed oriented Conway tangle  \(T\) and a mapping class
	\( 
	\tau\in \Mod(\FourPuncturedSphere,*)
	\)
	let \(\tau(T)\) be the four-ended tangle defined in \cref{fig:rhoT}, 
	where \(\beta\in B_3\) is chosen such that \(\varphi_\circledast(\beta)=\tau\).
Note that \(\tau(T)\) is independent of the choice of \(\beta\in \varphi_\circledast^{-1}(\tau)\), 
since \(\beta\) is unique up to full twists. 
\cref{fig:twists:generators:homeoB,fig:twists:generators:homeoW} illustrate this definition for \(\tau=\TwistB\) and \(\tau=\TwistW\), respectively. 

\begin{figure}
	\centering
	\begin{subfigure}{0.3\textwidth}
		\centering
		\(\braidaction\)
		\caption{\(\tau(T)\)}\label{fig:rhoT}
	\end{subfigure}%
	\begin{subfigure}{0.25\textwidth}\centering		
		\(\braidactionB\)
		\caption{\(\TwistB(T)\)}\label{fig:twists:generators:homeoB}
	\end{subfigure}%
	\begin{subfigure}{0.25\textwidth}\centering		
		\(\braidactionW\)
		\caption{\(\TwistW(T)\)}\label{fig:twists:generators:homeoW}
	\end{subfigure}%
	\caption{The tangle \(\tau(T)\) in \cref{thm:twisting:general-coeff}}
	\label{fig:twists:generators}
\end{figure}

\begin{theorem}\label{thm:twisting:general-coeff}
	Let \(T\) be a pointed oriented Conway tangle and
	\( 
	\tau\in \Mod(\FourPuncturedSphere,*)
	\).
	Let \(\mathtt{e}\in\EndsThree\)
	be the tangle end 
	that has the opposite orientation 
	of the other two ends in \(\EndsThree\). 
	Suppose the chain complex \(\DD(T;\CoeffRing)\)
	is chain homotopic to \(C(\Gamma)\) for some graded multicurve with local system~\(\Gamma\). Then the chain complex \(\DD(\tau(T);\CoeffRing)\) 
	is chain homotopic to \(C(\Phi_{\mathtt{e}}(\tau)(\Gamma))\). Moreover, the same holds with \(\DD_k(T;\CoeffRing)\) in place of \(\DD(T;\CoeffRing)\) for any positive integer \(k\). 
\end{theorem}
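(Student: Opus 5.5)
Since \(\Mod(\FourPuncturedSphere,*)\) is generated by \(\TwistB\) and \(\TwistW\), and both \(\tau\mapsto\tau(T)\) and the action of \cref{prop:twisting-curves} are left actions compatible with composition (the distinguished end \(\mathtt{e}\) is carried along by the permutation of ends, consistently with how the orientations of the tangle ends are permuted), I will first reduce to the two generators and then induct on word length. For \(\tau=\TwistB\), the tangle \(\TwistB(T)\) is obtained from \(T\) by adding a single crossing between the ends \(\Endii\) and \(\Endiii\) (\cref{fig:twists:generators:homeoB}). Bar-Natan's planar algebra structure then expresses \(\KhT{\TwistB(T)}\) as a cone: the two resolutions of the new crossing, composed with \(\KhT{T}\), give \(\KhT{T}\) glued with \(\No\) or with \(\Ni\), joined by a saddle. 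After delooping this is a type DA bimodule over \(\B\) applied to \(\DD(T)\). The plan is to write down this bimodule \(\mathcal{M}_{\bullet}\) explicitly on the generators \(\iota_\bullet,\iota_\circ,S,D\) of \(\B\). I will do this with signs and with \(\varG\) rather than \(H\), using \cref{lem:delooping}, and track the grading shifts coming from \(n_\pm\). Which of the two shifts applies depends on whether the new crossing is positive or negative, and that is determined by the orientation of the end \(\mathtt{e}\). This is the source of the two cases \((1,0)\) versus \((-2,-1)\) in the definition of \(\Phi_{\mathtt{e}}\).

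Next, since \(\DD(T)\simeq C(\Gamma)\) and box tensoring with a DA bimodule preserves homotopy, it suffices to compute \(\mathcal{M}_\bullet\boxtimes C(\gamma)\) for a single graded curve with local system \(\gamma\). I will show it is homotopic to \(C(\Phi_{\mathtt{e}}(\TwistB)(\gamma))\), and I will do this locally. Split the labelled graph of \(C(\gamma)\) at \(\bullet\)-generators into the four types of pieces from the proof of the recipe proposition. Tensoring each arrow type from \cref{tab:MCG:TwistB} with the bimodule gives a small complex. Cancelling the isomorphism components by the usual cancellation lemma (arrows labelled \(\pm\iota\)) then produces the replacement pieces in the table. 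The main obstacle is precisely here: over a general ring I must check that the signs produced after cancellation are consistent, so that the local system \(X\) is unchanged and not multiplied by some \(\pm1\) depending on the number of pieces. Arrows such as \(S\cdot\varG^{k+1}\mapsto D\cdot\varG^k\) (case \(\SaddleBC\)e) must also come out with coefficient \(+1\). This is exactly where the convention \(\varG=S^2-D\) matters, cf.\ \cref{rem:G-vs-H}. I would first verify the arrow-level identities in the bimodule, such as \(\delta^1_2(S,S)\) producing \(S^2\) terms and the relation \(\varG=S^2-D\) producing a clean \(D\varG^k\). If stray signs remain, I would absorb them by rescaling generators along each piece, and then show that the product of these rescalings around a compact curve is \(1\). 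The all-\(\circ\) curve, where only rules (\(S\SaddleBC\)) and (\(\Dw\)) apply, is handled separately with its own grading shift.

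The case \(\TwistW\) is symmetric: swap \(\bullet\leftrightarrow\circ\), use the crossing between \(\Endi\) and \(\Endii\), and let \(\Endiii\) play the role of \(\Endi\). Finally, for \(\DD_k\), the construction \(\DD_k=\Cone{\varG^k\cdot1}\) commutes with tensoring by the bimodule, because \(\varG\) is central and the bimodule is \(\varG\)-equivariant. Hence \(\DD_k(\tau(T))\simeq\mathcal{M}\boxtimes\DD_k(T)\). If \(\DD_k(T)\simeq C(\Gamma')\), the same local computation applies verbatim, since that computation never used that the complex came from \(\DD\) rather than \(\DD_k\).
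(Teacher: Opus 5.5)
Your proposal is correct and follows essentially the same route as the paper. It reduces to $\TwistB,\TwistW$; every mapping class is a positive word because $\TwistW\TwistB\TwistW\TwistB\TwistW\TwistB=1$, so inverses need no separate treatment. It then identifies $\DD(\TwistB(T))\simeq\DD(T)\boxtimes M_\pm(\TwistB)$ via Bar-Natan's gluing and delooping (\cref{prop:AddingASingleCrossing}). Next it checks piece by piece that tensoring with this bimodule realizes the replacement rules, with an even number of signs on each $\bullet$-to-$\bullet$ segment (\cref{prop:twisting:complexes}). Finally it passes to $\DD_k$ using that $\varG^k$ enters the bimodule only through $\delta_1$ (\cref{lem:twists-and-basepoint-action} with \cref{lem:signs-boxtimes-mapping-cone}); that is the precise form of your ``$\varG$-equivariance''.

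One correction and a few minor differences. Pieces such as ($\SaddleBC$e) do come out with nontrivial signs (the paper gets $-D\cdot\varG^k$ and $\pm S$), so your fallback, showing the product of signs around a compact curve is $1$, is what is actually needed, not coefficient $+1$ arrow by arrow. The paper also first simplifies the bimodule to $M'_\pm(\TwistB)\cong M_\pm(\TwistB)$ before tensoring, and obtains the $\TwistW$ case by conjugating with $\rho$ rather than redoing the computation.
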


Note that the hypothesis on \(\DD(T;\CoeffRing)\) is satisfied 
whenever \(\CoeffRing\) is a field $\CoeffField$. This result will usually be applied in the following more geometric way, which for $\CoeffField=\CoeffFieldTwo$ recovers \cite[Theorem~1.13]{KWZ}.

\begin{corollary}\label{cor:twisting:field-coeff:ungraded}
	Let \(T\) be a pointed oriented Conway tangle and
	\( 
	\tau\in \Mod(\FourPuncturedSphere,*)
	\).
	There is a one-to-one correspondence between graded curves \((F,g)\)
	in \(\BNr(T;\CoeffField)\) and \((F',g')\) in \(\BNr(\tau(T);\CoeffField)\) such that \(\gamma(F')=\tau(\gamma(F))\).
	Moreover, if \((F,g)\) is compact with local system \((X,e)\), then the local system on \((F',g')\) is equal to \((X,e')\), where \(e'\) is chosen such that the edges \(e\) and \(e'\) induce matching orientations of the curves \(\gamma(F)\) and \(\gamma(F')\). The same holds for \(\Khr(T;\CoeffField)\) in place of \(\BNr(T;\CoeffField)\). \qed
\end{corollary}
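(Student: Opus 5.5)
The corollary should follow from \cref{thm:twisting:general-coeff} together with the classification \cref{thm:classification:complexes_over_B:simplified}. Over a field \(\CoeffField\), that classification gives a multicurve \(\Gamma=\BNr(T;\CoeffField)\) with \(\DD(T;\CoeffField)\simeq C(\Gamma)\). So the hypothesis of \cref{thm:twisting:general-coeff} holds, and
\[
\DD(\tau(T);\CoeffField)\simeq C(\Phi_{\mathtt{e}}(\tau)(\Gamma)),
\]
where \(\mathtt{e}\) is the oppositely oriented end of \(T\).

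The plan is to show that \(\Phi_{\mathtt{e}}(\tau)(\Gamma)\) is again a multicurve, in the sense of the classification, and then invoke uniqueness. This is the step that needs care; it has three parts.
\begin{enumerate}
\item \(\Phi_{\mathtt{e}}(\tau)\) acts componentwise, so primitive compact components must stay primitive. This holds because \(\gamma_{\tau(F)}=\tau(\gamma_F)\) and \(\tau\) is a homeomorphism, hence an automorphism of \(\pi_1(\FourPuncturedSphere)\) up to conjugacy, which preserves primitivity.
\item Non-compact components must stay non-compact, with trivial local systems. This is preserved by construction.
\item Reducedness must be preserved: two components that are inequivalent as graded curves must remain inequivalent after applying \(\Phi_{\mathtt{e}}(\tau)\). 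By \cref{prop:twisting-curves}, \(\Phi_{\mathtt{e}}(\tau)\) is part of a group action on \(\mathcal{C}\times\EndsThree\), so it is invertible on graded curves, with inverse \(\Phi_{\tau(\mathtt{e})}(\tau^{-1})\). Injectivity, and hence reducedness, follows.
\end{enumerate}

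Since the multicurve attached to a complex is unique up to equivalence, \(\BNr(\tau(T);\CoeffField)\) equals \(\Phi_{\mathtt{e}}(\tau)(\Gamma)\) as an equivalence class. This gives the bijection \((F,g)\mapsto(\tau(F),g')\) on components, and \(\gamma(F')=\tau(\gamma(F))\) holds by the defining property of \(\tau(F)\).

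For the local systems, \(\Phi_{\mathtt{e}}(\tau)\) keeps the matrix \(X\) and moves the marked edge to \(e'\), where \(e'\) is chosen to induce matching orientations. For a generator this is the stated definition of \(\Phi_\mathtt{e}(\TwistL)\). For a general \(\tau\), induct on the word length in \(\TwistB^{\pm1},\TwistW^{\pm1}\): the orientation-matching condition composes, and the inverse generators are handled by invertibility of the action.

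One must also check that local systems in a reduced multicurve are only defined up to the equivalence used in the classification, so the bijection is well defined on equivalence classes. This is immediate because \(\Phi\) respects conjugation of \(X\) and the orientation convention.

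The same argument with \(\DD_1\) in place of \(\DD\), using the last sentence of \cref{thm:twisting:general-coeff} with \(k=1\), gives the statement for \(\Khr\).
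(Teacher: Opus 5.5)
Your proposal is correct and matches the paper's implicit argument. Over a field, the classification \cref{thm:classification:complexes_over_B:simplified} supplies the hypothesis of \cref{thm:twisting:general-coeff}, and uniqueness of the multicurve identifies \(\BNr(\tau(T);\CoeffField)\) with \(\Phi_{\mathtt{e}}(\tau)(\Gamma)\). The correspondence of curves and local systems is then read off from the definition of \(\Phi_{\mathtt{e}}(\tau)\); your extra checks (primitivity under \(\tau\), and injectivity on graded curves via the group action of \cref{prop:twisting-curves}) just make explicit what the paper leaves to the reader.
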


\begin{remark}\label{rmk:twisting:field-coeff:ungraded}Denoting the underlying multiset of ungraded curves 
of \(\Khr(T)\) and \(\BNr(T)\) by
\(\underline{\Khr}(T)\) and \(\underline{\BNr}(T)\), 
respectively, the concise summary of \cref{cor:twisting:field-coeff:ungraded} is that
\[
\underline{\Khr}(\tau(T))
=
\tau(\underline{\Khr}(T))
\quad
\text{and}
\quad
\underline{\BNr}(\tau(T))
=
\tau(\underline{\BNr}(T)).
\]
\end{remark}

Let 
\(\rho=\TwistB\TwistW\TwistB=\TwistW\TwistB\TwistW \in \Mod(\FourPuncturedSphere,*)\). 
\cref{thm:twisting:general-coeff} for the special case \(\tau=\rho\) has a simple reformulation because the tangle \(\rho(T)\)
is homotopic to the tangle obtained from a given tangle \(T\)
by \(180^\circ\) rotation of the unit 3-ball
around the line through the origin and the point \(*\).
Let \(\rho\co\B\rightarrow\B\) be 
the \(\CoeffRing\)-linear isomorphism
that switches \(\bullet\) and \(\circ\) 
in the indices of all elements of the standard basis of \(\B\). 
The involution \(\rho\) induces an autofunctor 
on the category of chain complexes over \(\B\),
which we denote by the same symbol. 
Moreover, if \(\gamma=(F,g,X,e)\) is a bigraded curve with local system, 
we define  \(\rho\gamma=(\rho\circ F,g,X,e)\). 

\begin{lemma}\label{lem:rho*DD}
	For any bigraded multicurve \(\Gamma\), 
	\(\rho (C(\Gamma))=C(\rho\Gamma)\). 
	In particular, if \(F\) is a formal curve then
	\(\gamma_{\rho\circ F}=\rho(\gamma_F)\). 
	\qed
\end{lemma}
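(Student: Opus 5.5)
The first claim, \(\rho(C(\Gamma))=C(\rho\Gamma)\), follows directly from the definitions. The second claim, \(\gamma_{\rho\circ F}=\rho(\gamma_F)\), needs a small topological check against \cref{prop:functors-curves} and the embedding of \(\mathcal{Q}\) in \(\FourPuncturedSphere\).

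\emph{Algebraic part.} I would first note that the map \(\rho\co\B\to\B\), which swaps \(\bullet\) and \(\circ\) in the indices of the standard basis, is an algebra automorphism. It is induced by the quiver automorphism of \(\mathcal{Q}\) exchanging the two vertices, \({}_\bullet D_\bullet\leftrightarrow{}_\circ D_\circ\) and \({}_\bullet S_\circ\leftrightarrow{}_\circ S_\bullet\). This automorphism maps the defining ideal of \(\B\) to itself, preserves the quantum grading, and fixes \(\varG\). Consequently \(\rho(\varG^n D_\bullet)=\varG^n D_\circ\), and so on, so \(\rho\) really is the stated bijection on the standard basis. The induced autofunctor acts on a complex over \(\B\) by relabelling each generator's idempotent via the vertex swap and applying \(\rho\) to each matrix entry of the differential.

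Now apply this to \(C(F,g,X,e)\) from \cref{def:functor-to-complex}. The generators \(q^{q(x_i)}h^{h(x_i)}F(x_i)\otimes\CoeffRing^n\) become \(q^{q(x_i)}h^{h(x_i)}\rho(F(x_i))\otimes\CoeffRing^n\). The differential entries \(F(e')\otimes X(e')\) become \(\rho(F(e'))\otimes X(e')\). This is exactly \(C(\rho\circ F,g,X,e)\). I would also check that \(\rho\circ F\) is again a formal curve: the alternation condition between \(D\)-type and \(S\)-type labels is preserved, since \(\rho\) maps each of these classes to itself. The grading \(g\) is still adapted, since \(q\) is preserved. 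Taking direct sums over components gives \(\rho(C(\Gamma))=C(\rho\Gamma)\).

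\emph{Geometric part.} The main point is that the rotation \(\rho\in\Mod(\FourPuncturedSphere,*)\) can be realized by a homeomorphism that preserves the embedded quiver \(\mathcal{Q}\) of \cref{fig:Kh:example:quiver}. Such a homeomorphism swaps the arcs \(a_\bullet\) and \(a_\circ\) and maps each edge of \(\mathcal{Q}\) to the corresponding edge under the vertex swap, in a way compatible with the retraction \(r\). Concretely, the \(180^\circ\) rotation of the ball about the axis through \(*\) fixes \(*\) and its antipode and exchanges the two remaining pairs of punctures. With the symmetric picture of \(\mathcal{Q}\) (one \(S\)-edge in each direction, one \(D\)-loop at each vertex), I would verify from the figure that the rotation sends the \(\bullet\)-loop to the \(\circ\)-loop and exchanges the two \(S\)-edges. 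This matches the identification \(\rho=\TwistB\TwistW\TwistB\) stated just before the lemma, and I expect it to be the only real obstacle.

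Given such a symmetric representative, suppose \(\gamma_F\) is in minimal position with respect to \(a_\bullet\cup a_\circ\). Then \(\rho(\gamma_F)\) is in minimal position as well. Its intersection points with the arcs correspond bijectively to those of \(\gamma_F\), with the arc labels swapped, and the connecting path segments are sent to the \(\rho\)-images of the corresponding quiver paths. So \(r(\rho\gamma_F)=\rho(r\gamma_F)=\gamma_\mathcal{Q}(\rho\circ F)\). The same argument applies to non-compact curves, because \(*\) is fixed. By the bijection in \cref{prop:functors-curves}, this gives \(\gamma_{\rho\circ F}=\rho(\gamma_F)\).
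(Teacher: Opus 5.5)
Your proposal is correct and is the routine verification the paper intends; the lemma is stated there with \(\qed\) as immediate from the definitions. The algebraic identity is exactly your observation that \(\rho\) is the algebra automorphism induced by the vertex swap of \(\mathcal{Q}\), which fixes \(\varG\) and the quantum grading. The geometric identity follows, as you say, by choosing \(a_\bullet\), \(a_\circ\) and \(\mathcal{Q}\) symmetric under the \(180^\circ\) rotation about the axis through \(\ast\) and \(\mathtt{e}_2\). That rotation fixes \(\ast\) and \(\mathtt{e}_2\) and exchanges the remaining two punctures \(\mathtt{e}_1\) and \(\mathtt{e}_3\), not ``two remaining pairs''.
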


\begin{proposition}\label{prop:rho*Khr}
	If \(T\) is a Conway tangle then 
	\(
	\rho(\DD(T))=\DD(\rho(T))
	\) %
	and %
	\( %
	\rho(\DD_1(T))=\DD_1(\rho(T)).
	\)
	Thus,
	\(\rho(\BNr(T))=\BNr(\rho(T))\) and 
	\(\rho(\Khr(T))=\Khr(\rho(T))\).
\end{proposition}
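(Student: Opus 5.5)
The plan is to prove the statement at the level of Bar-Natan's complexes \(\KhT{\Diag}\), before delooping, and then transport it through delooping and the isomorphism \(\omega\) of \cref{thm:OmegaFullyFaithful}. The underlying observation is that the rotation by \(180^\circ\) about the axis through the origin and \(*\) fixes \(*\) and swaps \(\Endi\leftrightarrow\Endiii\) while fixing \(\Endii\). In the disk picture this is the reflection of \(D^2\) across the diagonal through \(*\) and \(\Endii\), combined with reflection in the height coordinate, which turns over every crossing. Applying it to a diagram \(\Diag\) of \(T\) gives a diagram \(\rho(\Diag)\) of \(\rho(T)\). This diagram has the same number of crossings, and each crossing keeps its sign because the rotation preserves orientation in \(\R^3\).

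The key steps are as follows.

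\begin{enumerate}
\item Observe that this symmetry induces an isomorphism of categories \(\rho_*\co\Cobb\to\Cobb\). It acts on crossingless diagrams by the diagonal reflection and on cobordisms by the induced homeomorphism of \(D^2\times[0,1]\), carrying dots along. The relations \eqref{eq:cobb_relations} are preserved. In particular the last relation is preserved because \(*\) is fixed, and the relations are \(\varG\)-linear.
\item Note that \(\rho_*\) swaps \(\Li\) and \(\Lo\), since the reflection through the diagonal \(*\)–\(\Endii\) exchanges the two crossingless matchings. It sends saddles to saddles and dot cobordisms to dot cobordisms, the latter because the dot sits on the component away from \(*\).
\item Combining step 2 with \(\omega\), conclude that \(\omega^{-1}\rho_*\omega\) is the algebra automorphism of \(\B\) swapping \(\bullet\) and \(\circ\). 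Indeed, it sends \({}_\bullet S_\circ\mapsto{}_\circ S_\bullet\) and \(D_\bullet\mapsto D_\circ\), and it fixes \(\varG\). This is exactly the involution \(\rho\) on pure elements.
\item Check that \(\rho_*\KhT{\Diag}=\KhT{\rho(\Diag)}\). The \(i\)-th crossing of \(\Diag\) corresponds to the \(i\)-th crossing of \(\rho(\Diag)\). I must verify that its \(0\)- and \(1\)-resolutions correspond under \(\rho_*\). Both the reflection of the disk and the flip of the crossing reverse the handedness of the resolution rule, so the two effects cancel. Hence vertices and edge saddles match, with the same sign assignment. The gradings agree because \(n_\pm\) is unchanged.
\item Delooping commutes with \(\rho_*\): the image of the delooping isomorphism of \cref{lem:delooping} under \(\rho_*\) is again the delooping isomorphism for the reflected circle. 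Therefore \(\rho(\DD(\Diag))=\DD(\rho(\Diag))\).
\item Deduce the statement for \(\DD_1\). Since \(\rho\) fixes \(\varG\), it commutes with the mapping cone of \(\varG\cdot1\).
\item Obtain the multicurve statements from \cref{lem:rho*DD} together with \cref{thm:classification:complexes_over_B:simplified}.
\end{enumerate}

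I expect the main obstacle to be step 4. The resolution convention for a crossing and the meaning of ``rotation'' versus ``reflection of the diagram plus flipping crossings'' must be tracked carefully. If the resolutions were swapped instead, one would get \(\rho(T)\) only up to mirroring, so I would first verify this on a single-crossing tangle, for which both sides are explicit. A second subtlety is whether \(\rho_*\) introduces signs on saddles, as happens under mutation in \(\Cobl\). Here the symmetry is orientation-preserving on the cobordism side, so no such sign should arise. If signs did occur, they would be absorbed by the freedom in the cube's sign assignment, which yields isomorphic complexes.
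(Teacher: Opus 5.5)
Your proposal is correct and is essentially the paper's proof. The paper also reflects the diagram across the line through \(*\) and the origin while switching all crossings, which preserves crossing signs and resolutions. It observes that the cube of resolutions, its saddles and the delooping are carried to the corresponding data for \(\rho(T)\), that under \(\omega\) this symmetry is exactly the involution \(\rho\) of \(\B\), and then uses that \(\rho\) fixes \(\varG\) together with \cref{lem:rho*DD}.

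One minor correction: the reason no signs arise on saddles is not that the induced map is orientation-preserving. On \(D^2\times[0,1]\) it is a reflection times the identity, hence orientation-reversing. The actual reason is that morphisms in \(\Cobb\) are abstract dotted surfaces carrying no orientation data, and the relations \eqref{eq:cobb_relations} are preserved because \(*\) is fixed.
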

\begin{proof}
	If \(\Diag\) is a diagram for \(T\) 
	then a diagram \(\Diag'\) for \(\rho(T)\) 
	is obtained by reflection along the diagonal line 
	through \(*\) and the origin and 
	reversing over and under strands at each crossing. 
	Note that this preserves the signs of the crossings. 
	Therefore, the vertices of the cubes of resolutions 
	associated with \(\Diag\) and \(\Diag'\) 
	carry the same gradings and are labelled
	by crossingless tangles
	that are related by reflection along said line; the same is true for 
	the cobordisms along the differentials. 
	Delooping can be done simultaneously and preserves this symmetry. 
	Under the isomorphism \(\omega\) from \cref{thm:OmegaFullyFaithful}, 
	the symmetry corresponds precisely to the involution \(\rho\). 
	This proves the first equality. 
	The second follows from the observation that 
	\(\rho\) fixes the element~\(\varG\), while the third and fourth equalities reinterpret the first two using \cref{lem:rho*DD}.
\end{proof}

\subsection{Rational and special curves}

\begin{definition}
	Let \(\aKh(0)\) be the non-compact curve in \(\FourPuncturedSphere\) 
	connecting the two bottom-most punctures and 
	 lifting to a straight horizontal line segment 
	under the covering map 
	\(p\co\PuncturedPlane\rightarrow\FourPuncturedSphere\). 
	For \(n\in\mathbb N\), 
	let \(\rKh_{n}(0)\) and \(\sKh_{2n}(0)\) be the compact curves in \(\FourPuncturedSphere\) with local systems \((-1)\in\GL_1(\CoeffRing)\) 
	that lift to the curves $\tilde{\rKh}_{n}(0)$ and $\tilde{\sKh}_{2n}(0)$,
	respectively, in \cref{fig:geography:Upstairs}.
\end{definition}

We will refer to the subscripts \(n\) and \(2n\) as the lengths of the curves \(\rKh_{n}(0)\) and \(\sKh_{2n}(0)\), respectively. 
The curves \(\rKh_{n}(0)\) and \(\sKh_{2n}(0)\) for $n=1,2,3$ are illustrated in \cref{fig:geography:Downstairs}.

\begin{definition}\label{def:grading}
	\newcommand{\GGzqh}[4]{\prescript{#3}{}{#1}_{#4}}%
	We equip the curves defined above with the following grading, 
	where the notation \(\GGzqh{\halfbullet}{0}{s}{t}\) means that the bigrading of the generator \(\halfbullet\) is equal to \(q^sh^t\):
	\begin{align*}
		C(\aKh(0))
		&=
		[\GGzqh{\bullet}{0}{0}{0}]
		\\
		C(\rKh_n(0))
		&=
		\left[
		\begin{tikzcd}[row sep=16pt,column sep=16pt,ampersand replacement=\&]
			\GGzqh{\bullet}{0}{-n}{-n}
			\arrow{r}{S^2}
			\arrow{d}[swap]{-D}
			\&
			\bullet
			\arrow{r}{D}
			\&
			\bullet
			\arrow{r}{S^2}
			\&
			\cdots
			\arrow{r}{}
			\&
			\GGzqh{\bullet}{0}{n-2}{-1}
			\arrow{d}{D \textnormal{ or } S^2}
			\\
			\GGzqh{\bullet}{0}{2-n}{1-n}
			\arrow{r}{S^2}
			\&
			\bullet
			\arrow{r}{D}
			\&
			\bullet
			\arrow{r}{S^2}
			\&
			\cdots
			\arrow{r}{}
			\&
			\GGzqh{\bullet}{0}{n}{0}
		\end{tikzcd}
		\right]
		\\
		C(\sKh_{2n}(0))
		&=
		\left[
		\begin{tikzcd}[row sep=16pt,column sep=16pt,ampersand replacement=\&]
			\GGzqh{\circ}{0}{-2n-1}{-n-1}
			\arrow{r}{S}
			\arrow{d}[swap]{-D}
			\&
			\bullet
			\arrow{r}{D}
			\&
			\bullet
			\arrow{r}{S^2}
			\&
			\cdots
			\arrow{r}{S^2}
			\&
			\bullet
			\arrow{r}{D}
			\&
			\bullet
			\arrow{r}{S}
			\&
			\GGzqh{\circ}{0}{2n-1}{n}
			\arrow{d}{D}
			\\
			\GGzqh{\circ}{0}{1-2n}{-n}
			\arrow{r}{S}
			\&
			\bullet
			\arrow{r}{D}
			\&
			\bullet
			\arrow{r}{S^2}
			\&
			\cdots
			\arrow{r}{S^2}
			\&
			\bullet
			\arrow{r}{D}
			\&
			\bullet
			\arrow{r}{S}
			\&
			\GGzqh{\circ}{0}{2n+1}{n+1}
		\end{tikzcd}
		\right]
	\end{align*}
	In the last two complexes the number generators in idempotent \(\iota_\bullet\) is \(2n\) and \(4n\), respectively.
\end{definition}

\begin{remark}
	As a special case, we obtain
	\(
	C(\rKh_1(0))
	=
	\Bigg[
	\begin{tikzcd}[column sep=1cm,ampersand replacement=\&]
		\bullet
		\arrow[r,"-D" above, bend left]
		\arrow[r,"S^2" below, bend right]
		\&
		\bullet
	\end{tikzcd}
	\Bigg]
	=
	\Cone{\varG\cdot1_{C(\aKh(0))}}
	.
	\)
\end{remark}

\begin{figure}[t]
	\centering
	\(\GeographyCovering\)
	\caption{%
		Lifts of the curves \(\rKh_n(0)\) and \(\sKh_{2n}(0)\) 
		on \(\FourPuncturedSphere\) to \(\PuncturedPlane\)
	}\label{fig:geography:Upstairs}
\end{figure}

\begin{figure}[t]
	\centering
	\begin{subfigure}{0.235\textwidth}
		\centering
		\labellist 
		\footnotesize
		\pinlabel $\sKh_{2}(0)$ at 45 75
		\pinlabel $\rKh_{1}(0)$ at 45 35
		\endlabellist
		\includegraphics[scale=1]{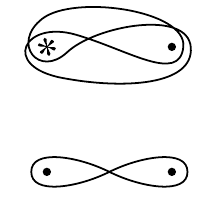}
		\caption{\(n=1\)}\label{fig:geography:Downstairs1}
	\end{subfigure}
	\begin{subfigure}{0.235\textwidth}
		\centering
		\labellist 
		\footnotesize
		\pinlabel $\sKh_{4}(0)$ at 65 76
		\pinlabel $\rKh_{2}(0)$ at 65 27
		\endlabellist
		\includegraphics[scale=1]{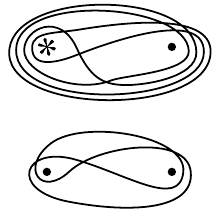}
		\caption{\(n=2\)}\label{fig:geography:Downstairs2}
	\end{subfigure}
	\begin{subfigure}{0.235\textwidth}
		\centering
		\labellist 
		\footnotesize
		\pinlabel $\sKh_{6}(0)$ at 65 78
		\pinlabel $\rKh_{3}(0)$ at 65 17
		\endlabellist
		\includegraphics[scale=1]{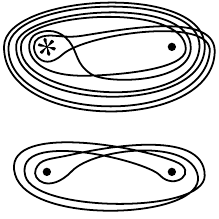}
		\caption{\(n=3\)}\label{fig:geography:Downstairs3}
	\end{subfigure}
	\caption{%
		The curves \(\rKh_n(0)\) and \(\sKh_{2n}(0)\) 
		on \(\FourPuncturedSphere\) 
		for some values of \(n\geq1\)
	}\label{fig:geography:Downstairs}
\end{figure}

Given a slope \(\nicefrac{p}{q}\in\QPI\) 
with \(p\) and \(q\) coprime, 
choose 
\(\tau_{p/q}\in\Mod(\FourPuncturedSphere,*)\) 
such that
its image under the isomorphism \(\psi\) 
from \cref{lem:MCG:PSL}
is equal to
\[
\begin{bmatrix*}[c]
	q & r \\
	p & s
\end{bmatrix*}\in \PSL_2(\Z)
\]
Note that this matrix, 
considered as an automorphism 
of the covering space \(\PuncturedPlane\), 
maps straight lines of slope 0 
to straight lines of slope \(\nicefrac{p}{q}\).
The mapping class \(\tau_{p/q}\) is unique 
up to multiplication by \(\TwistW\) on the right. The following definitions are independent of this choice.

\begin{definition}\label{def:basic_curves}
	Given \(\nicefrac{p}{q}\in\QPI\) and \(\mathtt{e}\in\EndsThree\),
	let \(\mathtt{e}'=\tau_{p/q}^{-1}(\mathtt{e})\). Define 
	\(\aKh(\nicefrac{p}{q};\mathtt{e})\),
	\(\rKh_n(\nicefrac{p}{q};\mathtt{e})\), 
	and
	\(\sKh_{2n}(\nicefrac{p}{q};\mathtt{e})\)
	as the images of 
	\(\aKh(0)\), 
	\(\rKh_n(0)\),
	and
	\(\sKh_{2n}(0)\),
	respectively, 
	under \(\Phi_{\mathtt{e}'}(\tau_{p/q})\). 
\end{definition}

When considering curves only up to a grading shift \(\mathtt{e}\) is ommited from the notation.

\begin{definition}
	Given \(\nicefrac{p}{q}\in\QPI\) the rational tangle of slope \(\nicefrac{p}{q}\) is \(Q_{p/q}=\tau_{p/q}(\Lo)\).
\end{definition}

\begin{corollary}
	Given a slope \(\nicefrac{p}{q}\in\QPI\),
	choose an orientation on \(Q_{p/q}\) 
	and let \(\mathtt{e}\) be the corresponding non-distinguished tangle end 
	(as in \cref{thm:twisting:general-coeff}). 
	Then 
	\[
	\DD(Q_{p/q};\CoeffRing)
	\simeq
	C(\aKh(\nicefrac{p}{q};\mathtt{e});\CoeffRing)
	\quad\text{and}\quad
	\DD_1(Q_{p/q};\CoeffRing)
	\simeq
	C(\rKh_1(\nicefrac{p}{q};\mathtt{e});\CoeffRing)
	\]
	for any ring \(\CoeffRing\).
\end{corollary}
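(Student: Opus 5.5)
The plan is to reduce to the base case \(\nicefrac{p}{q}=0\) and transport it with \cref{thm:twisting:general-coeff}. Since \(Q_{p/q}=\tau_{p/q}(\Lo)\), I apply that theorem with \(T=\Lo\) and \(\tau=\tau_{p/q}\).

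For the base case, the crossingless diagram \(\Lo\) has no crossings. Its cube of resolutions is the single object \(\Lo\) in grading \(q^0h^0\), and no delooping is needed. Under \(\omega\) from \cref{thm:OmegaFullyFaithful}, which sends \(\bullet\mapsto\Lo\), we get \(\DD(\Lo;\CoeffRing)=[\bullet]\) in bigrading \(q^0h^0\). This is exactly \(C(\aKh(0))\) by \cref{def:grading}, over any ring \(\CoeffRing\). For \(\DD_1\), by definition
\[
\DD_1(\Lo)=\Cone{\varG\cdot 1_{[\bullet]}}=\big[q^{-1}h^{-1}\bullet\xrightarrow{\varG}q^{1}h^0\bullet\big].
\]
On idempotent \(\iota_\bullet\) we have \(\varG=S^2-D\), that is \(SS_\bullet-D_\bullet\). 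The remark following \cref{def:grading} identifies this cone with \(C(\rKh_1(0))\), where the two arrows are \(-D\) and \(S^2\), and the gradings \(q^{-1}h^{-1}\) and \(q^1h^0\) match the \(n=1\) case of \cref{def:grading}. So \(\DD(\Lo)\simeq C(\aKh(0))\) and \(\DD_1(\Lo)\simeq C(\rKh_1(0))\) hold on the nose over any \(\CoeffRing\), and the hypothesis of \cref{thm:twisting:general-coeff} is satisfied with \(\Gamma\) a single curve.

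Applying \cref{thm:twisting:general-coeff}, with its \(k=1\) version for \(\DD_1\), gives
\[
\DD(Q_{p/q})\simeq C(\Phi_{\mathtt{e}_0}(\tau_{p/q})(\aKh(0)))
\quad\text{and}\quad
\DD_1(Q_{p/q})\simeq C(\Phi_{\mathtt{e}_0}(\tau_{p/q})(\rKh_1(0))).
\]
Here \(\mathtt{e}_0\in\EndsThree\) is the oppositely oriented end of \(\Lo\), for the orientation on \(\Lo\) induced from the chosen orientation of \(Q_{p/q}\).

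The main point to check is the matching of endpoints. \cref{def:basic_curves} uses \(\Phi_{\mathtt{e}'}(\tau_{p/q})\) with \(\mathtt{e}'=\tau_{p/q}^{-1}(\mathtt{e})\), where \(\mathtt{e}\) is the distinguished end of \(Q_{p/q}\). So I must show \(\mathtt{e}_0=\tau_{p/q}^{-1}(\mathtt{e})\). Equivalently, the braid in \cref{fig:rhoT} carries the oppositely oriented end of \(\Lo\) to the oppositely oriented end of \(\tau_{p/q}(\Lo)\), with ends permuted by the action on \(\EndsThree\) described in \cref{prop:twisting-curves}.

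This holds because the strands of \(\tau(T)\) are the strands of \(T\) continued through the braid. The orientation of each end of \(\tau(T)\) is therefore inherited from the end of \(T\) it is connected to, and the induced permutation of \(\EndsThree\) is the one by which \(\tau\) acts. I would verify this on the generators \(\TwistB\) and \(\TwistW\), using \cref{fig:twists:generators:homeoB,fig:twists:generators:homeoW} and the permutations stated in \cref{prop:twisting-curves}. The general case then follows by composing, since the action in \cref{prop:twisting-curves} is a group action.

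The result is independent of the choice of \(\tau_{p/q}\). The choice is ambiguous up to right multiplication by \(\TwistW\), and \(\TwistW\) fixes \(\Lo\) up to isotopy. Moreover, \cref{def:basic_curves} is already asserted to be independent of this choice.
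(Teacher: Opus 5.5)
Your proposal is correct and follows the paper's proof: check directly that \(\DD(Q_0)=C(\aKh(0))\) and \(\DD_1(Q_0)=C(\rKh_1(0))\) over any ring, then transport both along \(\tau_{p/q}\) using \cref{thm:twisting:general-coeff}. The only difference is bookkeeping. The paper orients \(Q_0\) so that \(\tau_{p/q}^{-1}(\mathtt{e})\) is its oppositely oriented end, whereas you pull the orientation back from \(Q_{p/q}\) and check the end-matching explicitly, a point the paper leaves implicit. Also, where you write ``distinguished end'' you mean the oppositely oriented end \(\mathtt{e}\), not \(\ast\).
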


\begin{proof}
	Let \(\mathtt{e}'=\tau_{p/q}^{-1}(\mathtt{e})\) 
	as in \cref{def:basic_curves}. 
	Then \(\mathtt{e}'=\Endi\) or \(\Endii\), 
	depending on the choice of \(\tau_{p/q}\) 
	for our fixed slope \(\nicefrac{p}{q}\in\QPI\).
	Note that \(\DD(Q_0;\CoeffRing)=C(\aKh(0))\), 
	independent of the orientation of \(Q_0\). 
	So by \cref{thm:twisting:general-coeff}
	with \(T=Q_0\) oriented such that 
	\(\mathtt{e}'\) has the opposite orientation 
	to the other two ends in \(\EndsThree\), 
	\(\DD(Q_{p/q};\CoeffRing)=\DD(\tau_{p/q}(Q_0);\CoeffRing)\)
	is chain homotopic to 
	\(
	C(\Phi_{\mathtt{e}'}(\tau_{p/q})(\aKh(0));\CoeffRing)
	=
	C(\aKh(\nicefrac{p}{q};\mathtt{e});\CoeffRing)
	\). 
	The proof for the second identity is similar. 
\end{proof}

Now \cite[Theorem~5.7]{KWZ-thin} can be established for arbitrary coefficients: 
\begin{theorem}
	\label{thm:Kh:rational_tangle_detection}
	Let \(T\) be an oriented Conway tangle 
	and \(\nicefrac{p}{q}\in\QPI\). 
	Then \(\DD_1(T)\) is chain homotopic 
	to \(C(\rKh_1(\nicefrac{p}{q}))\) 
	up to grading shifts if and only if \(T=Q_{p/q}\). 
	The same remains true replacing 
	\(\DD_1(T)\) by \(\DD(T)\) and 
	\(\rKh_1(\nicefrac{p}{q})\) by \(\aKh(\nicefrac{p}{q})\).
\end{theorem}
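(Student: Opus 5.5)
The "if" direction follows from the preceding corollary: for \(T=Q_{p/q}\), \(\DD_1(Q_{p/q};\CoeffRing)\simeq C(\rKh_1(\nicefrac{p}{q};\mathtt{e}))\), and likewise \(\DD(Q_{p/q};\CoeffRing)\simeq C(\aKh(\nicefrac{p}{q};\mathtt{e}))\), for any orientation. For the "only if" direction, the plan is to reduce to coefficients in a field and then invoke the known detection result \cite[Theorem~5.7]{KWZ-thin}, which holds over \(\CoeffFieldTwo\). More generally, the underlying detection argument (rational tangles are detected by Khovanov homology of closures / split-tangle detection \cite{LMZ,KLMWZ}) should apply over any field.

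\textbf{Step 1: reduce to the slope \(0\).} Using \cref{thm:twisting:general-coeff} with \(\tau=\tau_{p/q}^{-1}\), if \(\DD_1(T)\simeq C(\rKh_1(\nicefrac{p}{q}))\) up to grading shift, then \(\DD_1(\tau_{p/q}^{-1}(T))\simeq C(\rKh_1(0))\) up to shift. Since \(\tau_{p/q}^{-1}(Q_{p/q})=Q_0\) and \(\tau(T)\) is obtained by adding a braid, \(T=Q_{p/q}\) if and only if \(\tau_{p/q}^{-1}(T)=Q_0\). It therefore suffices to treat \(\nicefrac{p}{q}=0\).

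\textbf{Step 2: pass to \(\CoeffFieldTwo\).} The complexes \(\DD(\Diag;\CoeffRing)\) are defined over \(\Z\) by free modules and base-changed, so \(\DD_1(T;\CoeffRing)\) is \(\DD_1(T;\Z)\otimes\CoeffRing\). The difficulty is that a homotopy equivalence over a general ring \(\CoeffRing\) need not come from \(\Z\). Instead, I will extract a numerical invariant. Using the gluing result, \cref{prop:GlueingTheorem:Kh-cx}, which holds over any ring, pair with suitable rational tangles. Then \(\Khr\) of the closures \(T\cup Q_{r}\) is determined, as a bigraded \(\CoeffRing\)-module, by \(\DD_1(T;\CoeffRing)\), and hence coincides with that of \(Q_0\cup Q_r\): an unlink of two components, or an unknot, or a 2-bridge link.

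\textbf{Step 3: apply detection.} Choose \(\CoeffRing\to\CoeffField\) a residue field and base-change, so that \(\DD_1(T;\CoeffField)\simeq C(\rKh_1(0))\) over \(\CoeffField\). Over a field, \(\Khr(T;\CoeffField)=\rKh_1(0)\), which is exactly the hypothesis of the argument in \cite[Theorem~5.7]{KWZ-thin}. That argument shows the closure with \(Q_\infty\)-type tangles has reduced Khovanov homology of rank \(1\) or \(2\) of unknot/unlink type, and then uses unknot detection (Kronheimer–Mrowka, valid over \(\Q\) and characteristic-independent via the \(\CoeffFieldTwo\) version for rank 1 when combined with universal coefficients) together with split-link detection to conclude \(T=Q_0\).

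The \textbf{main obstacle} is exactly this field-independence. Unknot detection by reduced Khovanov homology is known over \(\Q\) and over \(\CoeffFieldTwo\) (via the Ozsváth–Szabó spectral sequence), but over a general field I would argue via the universal coefficient theorem. Rank one over \(\CoeffField\) forces the integral homology to be \(\Z\) plus torsion of characteristic \(\operatorname{char}\CoeffField\) appearing in pairs, which would increase the rank, so the integral homology is \(\Z\). Hence the rank is \(1\) over \(\Q\), giving the unknot. The argument for \(\DD\) and \(\aKh\) is identical, or follows since \(\DD_1=\Cone{\varG\cdot 1}\) and \(C(\rKh_1(0))=\Cone{\varG\cdot1_{C(\aKh(0))}}\).
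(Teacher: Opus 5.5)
Your reduction to slope $0$ via \cref{thm:twisting:general-coeff}, your base change along $\CoeffRing\to\CoeffField$, and your deduction of the $\DD$/$\aKh$ statement from the $\DD_1$/$\rKh_1$ one via $C(\rKh_1(0))=\Cone{\varG\cdot 1_{C(\aKh(0))}}$ are all fine and match the paper. The gap is in Step~3.

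\textbf{The gap.} The argument of \cite[Theorem~5.7]{KWZ-thin} is not characteristic-free. The closure that carries the real information is the one that would be the two-component unlink if $T=Q_0$; its $\Khr$ is then rank two, in the unlink bigradings. That proof needs reduced Khovanov homology to detect the two-component unlink (a split-link detection statement) in order to conclude that $T$ is split. The paper states explicitly that the $\CoeffFieldTwo$ proof generalizes only under this extra hypothesis.

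The other closure, the one that would be an unknot, is useless on its own. If $T$ glued to a rational tangle gives the unknot, the conclusion of \cref{lem:RatTanDet} is vacuous, and many non-rational tangles have such an unknotted closure.

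Your ``main obstacle'' paragraph only treats unknot detection. There your universal coefficient argument is fine: odd $\CoeffField$-dimension forces free rank one and no $\operatorname{char}\CoeffField$-torsion. Applied to the rank-two closure, the same argument plus Lee would give $\Khr(L;\Q)\cong\Khr(U_2;\Q)$ bigraded. You would then still need an unlink-detection theorem over $\Q$ or $\CoeffField$, which you invoke but do not justify. Step~2 does not fill this either. Knowing $\Khr(T\cup Q_r)$ for all $r$ only pins down $T$ if Khovanov homology detects the resulting unlinks and two-bridge links over $\CoeffField$.

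\textbf{How the paper avoids it.} The paper sidesteps unlink detection by gluing $T$ to a rotated copy of itself. Set $L=\rho(T)\cup T$. By \cref{prop:GlueingTheorem:Kh-cx} and \cref{prop:rho*Khr},
\[
\Khr(L;\CoeffRing)\otimes(q^{-1}h^{-1}\CoeffRing\oplus q^{1}h^{0}\CoeffRing)\cong H_*\Mor(-\rho C(\rKh_1(0)),C(\rKh_1(0))),
\]
and a direct computation gives $\Khr(L;\CoeffRing)\cong\CoeffRing$ up to shift. Now only unknot detection is needed:
\begin{enumerate}
\item Universal coefficients bound the rank over $\Q$ by one.
\item Lee's spectral sequence forces rank exactly one and shows that $L$ is a knot.
\item Kronheimer--Mrowka then gives $L=U$.
\end{enumerate}
Because both pieces of $L$ are copies of $T$ up to the mapping class $\rho$, \cref{lem:RatTanDet} now genuinely forces $T$ to be rational. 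The slope is then read off from the invariant, giving $T=Q_0$.
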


\begin{proof}
	The if-directions of both statements follow from \cref{thm:twisting:general-coeff}
	and the fact that 
	\(\DD(Q_0)=C(\aKh(0))\) 
	and 
	\(\DD_1(Q_0)=C(\rKh_1(0))\). 
	By \cref{thm:twisting:general-coeff}, 
	it suffices for the only-if-direction 
	to consider the case \(\nicefrac{p}{q}=0\). 
	Moreover, since 
	\(
	C(\rKh_1(0))
	=
	\Cone{\varG\cdot1_{C(\aKh(0))}}
	\),
	the second statement follows from the first. 
	The proof for \(\CoeffRing=\CoeffFieldTwo\) 
	from \cite[Theorem~5.7]{KWZ-thin} generalizes 
	if the two-component unlink is detected 
	by reduced Khovanov homology with coefficients in \(\CoeffRing\). 
	Alternatively, 
	one can modify the argument as follows,
	so that it relies instead on the unknot detection 
	of Khovanov homology over arbitrary coefficients: Consider the link \(L=\rho(T)\cup T\). 
	By \cref{prop:GlueingTheorem:Kh-cx,prop:rho*Khr}, 
	\[
	\Khr(L;\CoeffRing)\otimes (q^{-1}h^{-1}\CoeffRing\oplus q^{1}h^{0}\CoeffRing)
	\cong 
	H_*\Mor(-\!\rho C(\rKh_1(0)),C(\rKh_1(0))).
	\]
	A direct computation of the right-hand side 
	identifies \(\Khr(L;\CoeffRing)\) 
	with \(\Khr(U;\CoeffRing)\cong \CoeffRing\) 
	up to a potential grading shift. 
	By the universal coefficient theorem, we deduce that 
	\(\Khr(L;\Q)\) has rank at most 1. 
	By the spectral sequence to Lee homology \cite{Lee2005}, 
	we deduce that \(\Khr(L;\Q)\) has rank exactly 1 and that \(L\) is a knot.
	Kronheimer and Mrowka stated their unknot detection result as follows:
	a knot \(L\) is the unknot 
	if and only if \(\Khr(L;\Z)\cong \Z\) 
	\cite[Theorem~1.1]{KhDetectsUnknot}. 
	It is known among experts that 
	their proof generalizes to rational coefficients; 
	see \cite[Corollary~1.3 and Proposition~1.4]{KhDetectsUnknot}.
	Thus \(\rho(T)\cup T=U\) and we conclude with \cref{lem:RatTanDet}.
\end{proof}

\begin{lemma}\label{lem:RatTanDet}
	Let \(T_1\) and \(T_2\) be two 4-ended tangles that glue together to form the unknot. 
	Then either \(T_1\) or \(T_2\) is a rational tangle. 
\end{lemma}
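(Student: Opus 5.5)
Throughout, the unknot \(U=T_1\cup T_2\) comes with the Conway sphere \(S\subset S^3\) meeting \(U\) in four points. I would argue purely three-dimensionally, using the standard characterization of rational tangles: a Conway tangle \((B^3,T)\) is rational if and only if the complement of \(T\) in \(B^3\) is a genus two handlebody in which the strands are simultaneously boundary parallel (equivalently, \((B^3,T)\) is homeomorphic to the trivial tangle \(Q_0\), not necessarily rel boundary).

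\textbf{Step 1: compressing the Conway sphere.} Let \(P=S\smallsetminus \nu(U)\), a four-punctured sphere properly embedded in the unknot exterior \(X=S^3\smallsetminus\nu(U)\cong S^1\times D^2\). Since \(\partial X\) is a torus and \(P\) has meridional boundary slopes, \(P\) cannot be incompressible and \(\partial\)-incompressible in a solid torus; the only essential surfaces in a solid torus are meridian disks, and \(P\) has four boundary components. So \(P\) is compressible or \(\partial\)-compressible, and I would first arrange it to be compressible: a \(\partial\)-compression of a meridional planar surface in a knot exterior can be converted into a compression (standard argument), or one can argue directly that in \(X\) a \(\partial\)-compression along an arc joining two distinct meridional boundary circles produces a compressing disk after tubing along \(\partial X\).

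\textbf{Step 2: the compressing disk lies on one side.} A compressing disk \(\Delta\) for \(P\) lies in \(B_1\smallsetminus T_1\) or \(B_2\smallsetminus T_2\), say in \(B_i\smallsetminus T_i\). Its boundary is an essential curve in \(P\), which splits the four punctures two and two, since a curve around one puncture is inessential and a curve around three punctures is parallel to one around the fourth. Then \(\Delta\) separates \(B_i\) into two balls, each containing one strand of \(T_i\) (no closed components of \(T_i\) may lie there, as \(U\) is connected). So \(T_i\) is split into two one-strand tangles in balls. Each such ball-arc pair is a summand: gluing it back along the disk expresses \(U\) as a connected sum in which that arc contributes a knot. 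Since the unknot is prime and has no nontrivial summands (additivity of genus), each arc is unknotted in its ball. Hence \(T_i\) consists of two separated unknotted arcs, which is the trivial tangle up to homeomorphism, and therefore \(T_i\) is rational.

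\textbf{Main obstacle.} The delicate point is Step 1, namely producing a genuine compressing disk rather than only a \(\partial\)-compression. Here I would use the fact that \(X\) is a solid torus and that \(P\) is a planar surface with integral (meridional) slope. A nonempty incompressible surface in a solid torus with nonempty boundary on \(\partial X\) must be a meridian disk or \(\partial\)-parallel. If \(P\) were incompressible, each boundary curve of \(P\) would be a meridian of \(U\), which is a longitude of \(X\), not a meridian of \(X\). An incompressible, \(\partial\)-compressible surface in a solid torus with such slopes is a \(\partial\)-parallel annulus, but \(P\) is not an annulus. This forces compressibility. If needed, I would alternatively cite Scharlemann's result that a tangle sum yielding the unknot has a rational summand.
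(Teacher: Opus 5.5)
Your proof is correct, but it obtains the key compressing disk by a different route than the paper.

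The paper works directly with a spanning disk \(\Delta\) of the unknot, transverse to the Conway sphere \(S\), and inducts on the number of circles in \(S\cap\Delta\):
\begin{enumerate}
\item Innermost circles that bound disks in \(S\smallsetminus U\) are removed by disk swaps.
\item If no circles remain, the two arcs of \(S\cap\Delta\) cut off two disjoint subdisks of \(\Delta\). These exhibit the strands of one tangle as simultaneously boundary parallel, so that tangle is rational with no further input.
\item If an innermost circle is essential, its subdisk is a compressing disk splitting one tangle. The paper then concludes exactly as in your Step~2: a knotted strand would make \(U\) a nontrivial connected sum.
\end{enumerate}

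You instead get the compressing disk abstractly, from the fact that the unknot exterior is a solid torus. You appeal to the classification of incompressible surfaces there, or to the lemma that an incompressible, \(\partial\)-compressible surface with torus boundary in an irreducible manifold is a \(\partial\)-parallel annulus. This is valid and isolates the conceptual point: the Conway sphere is compressible in the unknot exterior. However, it relies on loop-theorem machinery. The paper's cut-and-paste induction is essentially a hands-on proof of that same compressibility, and it avoids this machinery entirely.

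Two small remarks. First, your suggestions in Step~1 about converting a \(\partial\)-compression into a compression by tubing are vague and unnecessary. The cleanest justification is that \(\pi_1(P)\) is free of rank \(3\) and cannot inject into \(\pi_1(X)\cong\Z\), so the loop theorem supplies a compressing disk. Second, a compressing curve around a single puncture is ruled out not by convention but for a concrete reason. A meridian of \(U\) has linking number one with \(U\), so it bounds no disk in the complement of \(U\).
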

\begin{proof}
	Adapting the proof of \cite[Lemma~6.3]{pqMod},
	let \(\FourPuncturedSphere\) be the sphere 
	along which we glue the two tangle complements. 
	Let \(\Delta\) be the disk bounding the unknot.
	Without loss of generality, we may 
	assume that \(\FourPuncturedSphere\) and \(\Delta\) 
	intersect transversely. 
	The intersection \(\FourPuncturedSphere\cap\Delta\) then
	consists of two arcs and a finite number of circles. 
	We proceed by induction on this number of circles. 
	
	If there is no circle in \(\FourPuncturedSphere\cap\Delta\), 
	\(\Delta\) is divided by two arcs into three topological disks. 
	Two of them cobound the union of 
	\(\FourPuncturedSphere\cap\Delta\) and 
	the components of one of the two tangles,
	which is therefore rational. 
	
	If there is a circle in \(\FourPuncturedSphere\cap\Delta\), 
	we can find an innermost circle \(\gamma\) in \(\FourPuncturedSphere\cap\Delta\) (that is, a circle that bounds a disk \(D\) in \(\Delta\) 
	that contains no other circle of \(\FourPuncturedSphere\cap\Delta\)). 
	If \(\gamma\) bounds a disk \(D'\) in \(\FourPuncturedSphere\), 
	then \(D\cup D'\) bounds a 3-ball, 
	which we can use as a homotopy for \(\Delta\) to remove \(\gamma\) 
	(along with any other components of \(\FourPuncturedSphere\cap \Delta\) in \(D'\)). 
	This strictly reduces the number of components of \(\FourPuncturedSphere\cap\Delta\),
	so we are done by the induction hypothesis.
	If \(\gamma\) does not bound a disk in \(\FourPuncturedSphere\), 
	it separates two punctures from the other two. 
	Hence \(D\) separates the two strands in one of the tangles. 
	If those strands were knotted,
	\(T_1\cup T_2\) would be a connected sum of two knots, 
	contradicting the assumption that it is the unknot. 
	Thus the strands must be unknotted and therefore belong to a rational tangle. 
\end{proof}

We can now state a more precise version of \cref{thm:geography:Khr:intro}.  

\begin{theorem}\label{thm:geography:Khr}
	Let \(\field\) be a field, \(T\) a Conway tangle, and \(\gamma\) a component of \(\Khr(T;\field)\). 
	Then, up to an overall shift in bigrading, \(\gamma\) is equal to either \(\rKh_n(\slope)\) or \(\sKh_{2n}(\slope)\) for some positive integer \(n\) and some slope \(\slope\in\QPI\). 
\end{theorem}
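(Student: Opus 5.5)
The plan follows the outline in the introduction: reduce the classification to a statement about how components of \(\Khr(T)\) behave near the distinguished puncture \(*\), and then prove that statement with an \(A_\infty\)-extension of \(\DD_1(T)^{\B}\).

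\emph{Step 1: reduction to a local statement at \(*\).} Take a component \(\gamma\) of \(\Khr(T;\field)\). It is compact, since \(\Khr(T)\) has no non-compact curves, and it is primitive. Lift it to \(\PuncturedPlane\) and pull it tight. A primitive compact curve in \(\FourPuncturedSphere\) whose lift is homotopic to a straight line of slope \(\slope\) is already determined by its slope, its length, and its position relative to the lifts of \(*\). Once a line passes through or beside lattice points lying over \(*\), the only options are the patterns \(\rKh_n(\slope)\) and \(\sKh_{2n}(\slope)\). The first step is therefore to show that "not a straight line" or "the wrong pattern" is always witnessed by a piece of the formal curve that wraps around \(*\). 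In the language of formal curves, this piece is a subword in which the labels \(S\) and \(D\) fail to cancel through multiplication by \(\varG\).

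Concretely, I would formulate the local statement as follows: no component of \(\Khr(T)\) contains a segment turning around the lifts of \(*\), other than the segments appearing in \(\rKh_n\) and \(\sKh_{2n}\). This is the paper's \cref{thm:no_wrapping_around_special}. The reduction to it would go through \cref{thm:twisting:general-coeff} and \cref{cor:twisting:field-coeff:ungraded}. Applying \(\tau\in\Mod(\FourPuncturedSphere,*)\) moves any slope to \(0\), and \(\tau(T)\) is again a tangle, so it is enough to analyse curves near slope \(0\) in a normal form. A curve that is not homotopic to a line, once suitably twisted, must turn around \(*\) in a forbidden way. This is a combinatorial argument about the formal curves, using \cref{tab:MCG:TwistB}. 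The local systems come along unchanged, with the convention \((-1)\) fixed by the gradings in \cref{def:grading}.

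\emph{Step 2: the extension.} To rule out wrapping around \(*\), construct an \(A_\infty\)-algebra \(\BinfU\) over \(\field[U]\) with \(\BinfU/(U)\simeq\B\). Morally, \(\BinfU\) is the wrapped Fukaya algebra of the three-punctured sphere, in which \(*\) is filled in with \(U\) counting passes over it. Then construct a type D structure \(\DD_1(\Diag_T)^{\BinfU}\) reducing to \(\DD_1(T)\) at \(U=0\).

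The construction proceeds in three parts:
\begin{enumerate}
\item Replace Bar-Natan's cube of resolutions by a Khovanov--Rozansky-style matrix factorization \(M(\Diag_T)\) over the polynomial ring in the edge variables, with potential of the form \(x^2\)-type terms at the ends.
\item Use homological mirror symmetry for the pair of pants to identify the relevant category of matrix factorizations with \(A_\infty\)-modules over \(\BinfU\).
\item Check through delooping (the technical lemma) that setting \(U=0\) recovers \(\DD_1(\Diag_T)^{\B}\).
\end{enumerate}

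\emph{Step 3: extendability implies the absence of forbidden wrapping.} A curve that turns around \(*\) would have a type D structure that admits no extension over \(\BinfU\). The obstruction appears as a nonvanishing \(U\)-term in the \(A_\infty\)-relation, arising from the higher products \(\mu_k\) that encode the disk capping \(*\). I would show that extensions are compatible with direct sums up to homotopy, in the spirit of \cref{lem:nullhomotopy-splits-under-direct-sums}. I would then test extendability componentwise by pairing with the test curves \(\rKh_1(\slope)\).

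I expect Step 2 to be the main obstacle, specifically making the matrix-factorization model match \(\BinfU\) explicitly, including signs. My first attempt would be to compute the endomorphism algebra of the generators \(\Li\oplus\Lo\) in the matrix-factorization category and to match it with the Abouzaid et al.\ (AAEKO) quasi-isomorphism by a direct minimal-model computation.
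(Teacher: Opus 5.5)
Your Step~1 is where the argument fails. It is not true that every way of failing to be \(\rKh_n(\slope)\) or \(\sKh_{2n}(\slope)\) shows up as wrapping around \(*\). Your plan never uses the one input that distinguishes \(\Khr(T)\) from \(\BNr(T)\): since \(\DD_1(T)=\Cone{\varG\cdot 1_{\DD(T)}}\), the element \(\varG\) acts null-homotopically on every direct summand (\cref{lem:H_is_nullhomotopic_on_Khr}).

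The extension alone cannot suffice, as \(\BNr\) shows. \cref{thm:extension} and \cref{thm:no_wrapping_around_special} hold verbatim for \(\BNr(T)\). Yet \(\BNr(P_{2,-3})\) wraps around the non-special puncture \(\Endiii\), and \(\BNr\) can carry indecomposable two-dimensional local systems (\cref{rmk:local_systems_can_be_complicated}). Twisting cannot turn wrapping around \(\mathtt{e}_i\) into wrapping around \(*\), because \(\Mod(\FourPuncturedSphere,*)\) fixes \(*\).

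Even a curve that wraps around no puncture is not determined by its slope. An almost rational component of slope \(0\) is a priori any primitive cycle of \(\bullet\)-generators with alternating labels \(D,S^2\), balanced arrow directions and an arbitrary local system; for instance the direction pattern \(\to\to\leftarrow\to\leftarrow\leftarrow\) is allowed. By contrast, \(\rKh_n(0)\) is the case of one source, one sink, two legs of equal length and local system \((-1)\). Similarly, an almost special curve may a priori have legs of different lengths \(L\neq R\). So the local system \((-1)\) is something to prove, not a convention fixed by the gradings.

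In the paper, the \(\varG\)-null-homotopy supplies exactly what your plan is missing:
\begin{itemize}
\item It forces the simplified complex to have only the labels \(D,S,S^2\) (\cref{lem:higher_powers_when_wrapping}). After shearing, this excludes wrapping around the non-special punctures (\cref{prop:almost_no_wrapping}).
\item An explicit null-homotopy analysis for \(\varG\cdot 1_{C(\gamma)}\) forces an almost rational slope-\(0\) component to be \(\rKh_n(0)\) with local system \((-1)\) (\cref{thm:geography:rational_curves}).
\item It excludes the case \(L>R\) for almost special components.
\end{itemize}
The extension is used only for two things: to rule out wrapping around \(*\), and, via an uncancellable \(U\)-term produced by \(\mu_4\), to handle the case \(L\le R\), where it forces \(L=R\) and \(c=-1\) (\cref{thm:no_wrapping_around_special,thm:geography:special_curves}).

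Steps~2--3 also need two smaller repairs.
\begin{itemize}
\item The extension must be transported from \(\DD_1(\Diag_T)\) to the curve representative \(\DD^c_1(T)\). The paper does this by checking that the cancellations and clean-ups of the arrow-sliding algorithm preserve extendability (\cref{prop:cancel_n_clean}).
\item You cannot expect an extension of a direct sum to split componentwise, and pairing with \(\rKh_1(\slope)\) does not detect extendability. The obstruction is a single local \(U\)-term in the structure relation of the whole extended complex that nothing can cancel.
\end{itemize}
Also, HMS identifies \(\Tw\A\) with \(\Tw\Binf\), not with \(\BinfU\). The variable \(U\) is inserted afterwards from the bifiltration, using \cref{lem:complex_over_Binf_has_no_even_diffs,prop:AddingU}, and the potential is \(xyz\).
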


In other words, components of \(\Khr(T;\field)\) are classified by their type, slope, length, and bigrading.

\begin{remark}\label{rmk:local_systems_can_be_complicated}
	\Cref{thm:geography:Khr} says in particular that the local systems on the multicurve \(\Khr(T)\) are uninteresting for any tangle \(T\). 
	The analogous statement is not true for the multicurve \(\BNr(T)\). 
	For example, there exists a tangle \(T\) obtained by cutting open the torus knot \(T_{6,11}\)
	such that 
	\(\BNr(T;\Z/7)\) contains a component with indecomposeable 2-dimensional local system \cite{khtpp-non-trivial-local-system}.
\end{remark}

\subsection{Detecting tangle connectivity}
\label{subsec:connectivity}
The invariants described above are sensitive to certain coarse properties of the given tangle. To describe this, set \[
	\connx=\ConnX
	\qquad
	\connz=\ConnZ
	\qquad	
	\conny=\ConnY
	\]
\begin{definition}\label{def:connectivity}
		 A Conway tangle \(T\) has connectivity \(\conn{T}=\conni\) if the two open strands of~\(T\) are connected as in the diagram \(\conni\), for \(i\in\{1,2,3\}\).\end{definition}
		 
The notation is chosen so that \(\conn{T}=\conni\) if and only if the Conway tangle $T$ connects the tangle end \(\mathtt{e}_i\in\EndsThree\) to the distinguished tangle end, for each \(i\in\{1,2,3\}\). In particular, writing $\conns
	=
	\conn{Q_{\slope}}$
for \(\slope\in\QPI\) we calculate that
	\[
	\conns
	=
	\begin{cases*}
		\connx
		&
		if \(p\) is odd and \(q\) is even;
		\\
		\connz
		&
		if \(p\) and \(q\) are both odd;
		\\	
		\conny
		&
		if \(p\) is even and \(q\) is odd.
	\end{cases*}
	\]
Connectivity detection, in the following sense, plays an essential role in the proof of \cref{thm:mutation_invariance:links}:

\begin{theorem}\label{prop:connectivity-detected-by-rationals-of-odd-length}
Let \(\Gamma\) be the multicurve \(\Khr(T)\) or \(\BNr(T)\) associated with a Conway tangle~$T$. 
	If \(\Gamma\) includes a rational curve of slope \(\slope\in\QPI\), of odd length, and carrying a local system that may be non-trivial, then \(\conn{T}=\conns\).
\end{theorem}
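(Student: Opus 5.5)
We reduce to slope \(0\) and then detect connectivity by pairing with a rational tangle and computing the number of components of the resulting link. By \cref{thm:twisting:general-coeff} and \cref{cor:twisting:field-coeff:ungraded}, twisting \(T\) by \(\tau=\tau_{p/q}^{-1}\) maps the components of \(\Gamma(T)\) bijectively onto those of \(\Gamma(\tau(T))\). Under this bijection, rational curves of slope \(\slope\) and length \(n\) go to rational curves of slope \(0\) and the same length, and local systems are carried along unchanged. Connectivity transforms compatibly: \(\conn{\tau(T)}\) is obtained from \(\conn{T}\) by the permutation of \(\EndsThree\) induced by \(\tau\), and \(\conns\) is sent to \(\operatorname{x}(0)=\conny\). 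It therefore suffices to treat \(\slope=0\) and to show \(\conn{T}=\conny\). We may also assume the coefficients form a field, since the multicurve is only defined there.

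Suppose \(\conn{T}\neq\conny\). We glue \(T\) to a rational tangle \(Q\) chosen so that the resulting link \(L\) is sensitive to the connectivity: for a test slope \(s\), the link \(L=Q_s\cup T\) is a knot or a two-component link depending only on \(\conn{T}\) and \(\operatorname{x}(s)\). Concretely, \(L\) is a knot if and only if the two connectivities differ. For a two-component link with basepoint, reduced Khovanov homology lives in even or odd \(\delta\)-grading (more precisely, homological gradings of a fixed parity relative to the quantum grading), governed by linking data. The cleaner invariant is the parity of \(\dim\Khr(L)\): it equals the parity of the determinant, which is odd for knots and even for two-component links. Thus for each of the three connectivities of \(Q_s\) we know the parity of \(\dim\Khr(Q_s\cup T)\).

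We compute these parities via \cref{thm:GlueingTheorem:Kh} as \(\HF(-\aKh(s'),\Khr(T))\) or \(\HF(-\Khr(Q),\BNr(T))\), summed over components of \(\Gamma\). The required input is a component-by-component computation of the parity of \(\dim\HF(\aKh(s),\gamma)\) for each curve \(\gamma\) and each of the three slopes \(s\in\{0,1,\infty\}\), say. For a compact curve \(\gamma\) with local system of dimension \(d\), this parity is \(d\) times the parity of the minimal geometric intersection number of the arc of slope \(s\) with \(\gamma\), counted in the cover. A rational curve of slope \(0\) and odd length \(n\) intersects arcs of slope \(\infty\) and \(1\) an odd number of times per unit of length, hence an odd total number of times, but meets the slope-\(0\) arc zero times.

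The main obstacle is controlling the contributions of all other components, whose shapes are not known at this stage since \cref{thm:geography:Khr} is proved later. To handle this I would argue with mod-\(2\) homology classes: the parity of intersection with an arc depends only on the class of the curve in \(H_1\) of the toroidal cover relative to the punctures. Summing over all components of \(\Gamma\) gives a total class determined by the tangle's connectivity; for \(\Khr\) this class equals twice a \(\BNr\) class, with local system dimensions included. We then compare the parities of the pairings with the three test arcs. If the odd-length rational component of slope \(0\) is split off, the remaining components contribute a class that must be consistent with \(\conn{T}\). The inconsistency arising when \(\conn{T}\neq\conny\) then yields the desired contradiction.
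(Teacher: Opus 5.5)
Your reduction to slope \(0\) is fine, and so is the parity fact that \(\dim\Khr(L)\) is odd exactly when \(L\) is a knot. The final step, however, is a genuine gap, and no parity count of this kind can close it.

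The parity of \(\dim\HF(\aKh(s),\gamma)\) is \(d_\gamma\) times a mod~\(2\) intersection number. So gluing with rational tangles only sees the mod~\(2\) pairing of the \emph{total} class \(v=\sum_\gamma d_\gamma[\gamma]\) with the test arcs. That pairing is exactly what \(\conn{T}\) dictates, and nothing constrains how \(v\) splits among components. Concretely, a hypothetical \(\Khr(T)=\rKh_1(0)\sqcup\rKh_1(1)\) pairs with \(\aKh(0),\aKh(\infty),\aKh(1)\) in dimensions \(2+1,\ 1+1,\ 1+2\). This gives parities \((1,0,1)\), which is precisely the pattern forced by \(\conn{T}=\connx\). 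So no inconsistency arises, even though \(\rKh_1(0)\) is an odd-length rational curve of slope \(0\) and \(\connx\neq\conny\).

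The approach fails in two further ways:
\begin{itemize}
\item If \(T\) has a closed component, every closure \(Q_s\cup T\) has at least two components. Then all parities are even regardless of \(\conn{T}\). The theorem is needed for such tangles in the proof of \cref{thm:mutation_invariance:links}.
\item A component with an even-dimensional local system is invisible mod~\(2\). The statement allows this, and it does occur for \(\BNr\) (\cref{rmk:local_systems_can_be_complicated}).
\end{itemize}
Your claim that the class of \(\Khr(T)\) is twice a \(\BNr\) class cannot be right either: it would make every pairing even, contradicting the knot case. (Minor: \(\HF(\aKh(0),\rKh_n(0))\) is \(2\)-dimensional, not \(0\); the parity is unaffected.)

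What is missing is an obstruction that localizes to a single direct summand of \(\DD(T)\) or \(\DD_1(T)\). The paper uses the central elements \(a_1=\Db\), \(a_2=-S^2\), \(a_3=\Dw\) of \(\B\).
\begin{itemize}
\item The Basepoint Moving Lemma gives \(a_i\cdot 1_{\DD(T)}\simeq0\) whenever \(\conn{T}=\conni\), with or without closed components. Hence the same holds on \(\DD_n(T)\).
\item A nullhomotopy for multiplication by a central element restricts to any direct summand. In particular it restricts to the complex \(\DD^{\rKh}_d(\slope;X)\) of your odd-length component, for any local system \(X\).
\item A direct computation (\cref{lem:curve_based_detection}) shows that for odd \(d\) and slope \(0\), both \(\Db\) and \(S^2\) act non-nullhomotopically. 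After the forced homotopy, a component \(-c\,S^2\) or \(-c^{-1}D\) remains that cannot be removed.
\item \cref{prop:twists-and-basepoint-action} shows the mapping class group permutes the \(a_i\) exactly as it permutes connectivities. This transports the slope-\(0\) computation to arbitrary \(\slope\).
\end{itemize}
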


\begin{remark}
When combined with \cref{thm:geography:Khr:intro} 
(Theorem \ref{thm:geography:Khr}), 
\cref{prop:connectivity-detected-by-rationals-of-odd-length} 
can be used to prove something stronger and of independent interest, 
namely that \(\Khr(T)\) detects the connectivity \(\conn{T}\) 
when \(T\) has no closed components; 
see \cref{thm:Khr:connectivity-detection}.
A reasonable question is whether \(\Khr(T)\) 
contains an odd-length rational curve for {\em any} tangle $T$; 
if yes then \(\Khr(T)\) detects connectivity for all tangles.
\end{remark}

Also required is an algebraic (and more technical) form of detection. 
To state this, consider the central elements 
	\[
	a_1
	=
	\Db
	\qquad
	a_2
	=
	-S^2
	\qquad
	a_3
	=
	\Dw
	\] of the algebra \(\B\).
The sign in the definition of \(a_2\) is cosmetic; this choice helps avoid signs later
(compare \cref{prop:twists-and-basepoint-action}).

\begin{proposition}\label{prop:connectivity-implies-action-trivial}
	Let \(T\) be a pointed Conway tangle and 
	\(\DD\) a direct summand of \(\DD(T)\) or of \(\DD_n(T)\) 
	for some \(n\geq1\). If \(\conn{T}=\conni\) then \(
		a_i\cdot1_{\DD}
		\simeq 
		0
	\), for each \(i\in \{1,2,3\}\).
		\end{proposition}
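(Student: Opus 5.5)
By \cref{lem:nullhomotopy-splits-under-direct-sums} it suffices to treat \(\DD=\DD(T)\) or \(\DD=\DD_n(T)\) itself. The case \(\DD_n(T)=\Cone{\varG^n\cdot 1_{\DD(T)}}\) follows from the case \(\DD(T)\): a nullhomotopy \(H\) of \(a_i\cdot1_{\DD(T)}\) gives the nullhomotopy \(\left(\begin{smallmatrix} H&0\\0&H\end{smallmatrix}\right)\) of \(a_i\cdot 1\) on the cone. This works because \(a_i\) is central and therefore commutes with the cone map \(\varG^n\cdot 1\). By \cref{lem:homotopies-of-identity-multiples} the statement depends only on the homotopy type, so we may work with any diagram \(\Diag\) of \(T\) and with Bar-Natan's complex \(\KhT{\Diag}\) before delooping. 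Under \(\omega\), the central elements \(a_i\) correspond to natural endomorphisms of objects of \(\Cobb\). \(D_\bullet\) and \(D_\circ\) are a dot placed near the end \(\mathtt{e}_1\), resp. \(\mathtt{e}_3\), on a component of \(\Lo\), resp. \(\Li\), and \(-S^2\) is (up to sign) the corresponding "dot near \(\mathtt{e}_2\)".

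Concretely, for \(j\in\{1,2,3\}\) define the endomorphism \(X_j\) of any crossingless tangle \(\Diag(v)\) as the identity cobordism with a single dot on the component adjacent to the end \(\mathtt{e}_j\). Since the dot is placed near a boundary end, it commutes with all saddle cobordisms in the cube (dots slide along the product part), so \(X_j\) is a chain map of \(\KhT{\Diag}\). First step: check on \(\Li\) and \(\Lo\), using the relations \eqref{eq:cobb_relations}, in particular \(\planedotstar=0\), that \(X_j\) corresponds under \(\omega\) to \(a_j\cdot 1\). For example, on \(\Lo\) the end \(\mathtt{e}_1\) lies on the non-distinguished component, so \(X_1=D_\bullet\), while \(X_1\) on \(\Li\) is a dot on the \(*\)-component, hence \(0=\iota_\circ D_\bullet\). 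The case \(X_2\) should give \(-S^2\) using the neck-cutting relation and \(\varG=S^2-D\). Compatibility with delooping is automatic since \(X_j\) is natural with respect to all cobordisms.

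Main step: if \(\conn{T}=\conni\), then the strand of \(T\) from \(*\) ends at \(\mathtt{e}_i\). Let \(\delta\) be the dot-at-\(*\) map, which is zero in \(\Cobb\). The claim is that \(X_i\simeq \pm\delta = 0\). Following a strand, moving a dot across a crossing changes it by a homotopy. This is the standard Khovanov fact that \(x_{\text{before}}+x_{\text{after}}\) (with the appropriate sign and \(\varG\)-correction) is nullhomotopic, via the homotopy given by the saddle map at that crossing. Explicitly, on the cone of the crossing, the difference of dots on the two arcs meeting at the crossing equals \(d h+h d\) with \(h\) the reverse saddle. Summing these local homotopies along the strand from \(\mathtt{e}_i\) to \(*\) yields \(X_i\simeq \pm X_*+(\text{multiple of }\varG\text{-terms})=0\).

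The main obstacle is bookkeeping of signs and \(\varG\)-corrections in the dot-sliding relation under the \(\mathcal{F}_7\) Frobenius structure. Passing through a crossing may produce \(-x\) or \(-x-\varG\) rather than \(x\). I would organize this with the involution \(x\mapsto -x-\varG\) and track parity, and expect the corrections to cancel because dots at \(*\) vanish (\(x_*=0\) and \(x_*'=-\varG\) both annihilate after multiplying). If parity issues arise, I would use that the relevant relation forces \(X_i\) to be homotopic to either \(0\) or \(-\varG\cdot\) (dot at \(*\)), and both vanish by \(\planedotstar=0\).
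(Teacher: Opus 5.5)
Your strategy is the paper's: reduce to $\DD(T)$, slide a dot along the strand from $*$ to $\mathtt{e}_i$ with the Basepoint Moving Lemma, kill the dot at $*$, and transport the result through delooping using naturality of dots next to boundary ends. However, the bookkeeping you defer is where the content lies, and two of your claims there are false.

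First, the dot next to $\mathtt{e}_2$ is not $\pm a_2$. The end $\mathtt{e}_2$ lies on the non-distinguished component of both crossingless tangles, so $X_2$ corresponds to $D_\bullet+D_\circ=S^2-\varG$. Second, the $\varG$-corrections do not cancel in general. The local relation at every crossing passage, over or under, is $(D_p+D_{p'}+\varG)\cdot 1\simeq 0$; it is not ``$x-x'$'' and never just $-x$. So after $k$ passages you get $X_i\simeq X_*=0$ if $k$ is even, but $X_i\simeq -X_*-\varG\cdot 1=-\varG\cdot 1$ if $k$ is odd. The latter is not null-homotopic in general. This already fails for the one-crossing tangle $Q_1$, whose connectivity is $\connz$ and whose $\DD(Q_1)$ is a single saddle arrow between $\bullet$ and $\circ$, on which $\varG\cdot 1$ admits no nullhomotopy. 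Your fallback ``$0$ or $-\varG\cdot$(dot at $*$)'' misreads this relation.

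The missing ingredient is that the parity of $k$ is forced by the connectivity. In a checkerboard colouring of the diagram, the colour of the region on your left flips at each crossing passage. Comparing the boundary arcs adjacent to $*$ and to $\mathtt{e}_i$ shows that $k$ is even for $i\in\{1,3\}$ and odd for $i=2$. Hence $X_1\simeq X_3\simeq 0$, and after delooping these are $D_\bullet\cdot 1$ and $D_\circ\cdot 1$. For $i=2$ one gets $(X_2+\varG)\cdot 1\simeq 0$, and $X_2+\varG=D_\bullet+D_\circ+\varG=S^2=-a_2$. Your two false claims happen to cancel in the conclusion for $i=2$, but neither holds as stated.

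Finally, the nullhomotopy on the cone needs opposite signs. Since $h^{-1}\DD(T)$ carries the differential $-\delta$, the choice $\operatorname{diag}(H,H)$ yields $-a_i$ in the first diagonal block and $2\varG^n H$ off the diagonal. The correct choice is $\operatorname{diag}(-H,H)$.
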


\section{%
	Proof of 
	\texorpdfstring{\cref{thm:mutation_invariance:links}}
		{Theorem \ref{thm:mutation_invariance:links}}
	(%
	Mutation invariance%
	)
}\label{sec:mutation}

	Given a tangle \( R \), let \( \mutx(R) \), \( \mutz(R) \) and \( \muty(R) \) be the tangles obtained from \( R \) by  $180^\circ$ rotations about the horizontal axis, the axis pointing out of the drawing plane, and the vertical axis, respectively; see \cref{fig:mutation}.  
	The notation is chosen such that the mutation \(\muti\) maps the tangle end \(\mathtt{e}_i\) to the distinguished tangle end for \(i\in\{1,2,3\}\); the tangles \( \muti(R) \) are called mutants of the tangle \(R\). 
  If \( R \) is oriented these three tangles are oriented such that the orientations agree at the tangle ends. When this requires reversal of the orientation of the two open components of~\( R \) then we also reverse the orientation of all other components; otherwise we do not change any orientation.

\begin{figure}[H]
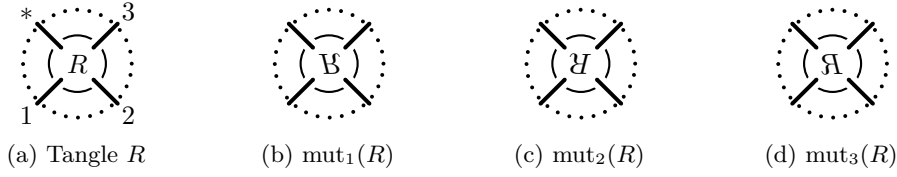

  \centering
  \begin{subfigure}{0.2\textwidth}
    \centering
     $\MutationTangle$
    \caption{Tangle \( R \)}
  \end{subfigure}
  \begin{subfigure}{0.2\textwidth}
    \centering
     $\MutationTangleX$
    \caption{\( \mutx(R) \)}
  \end{subfigure}
  \begin{subfigure}{0.2\textwidth}
    \centering
     $\MutationTangleZ$
    \caption{\( \mutz(R) \)}
  \end{subfigure}
  \begin{subfigure}{0.2\textwidth}
    \centering
     $\MutationTangleY$
    \caption{\( \muty(R) \)}
  \end{subfigure}
  \caption{Conway mutation.}\label{fig:mutation}
\end{figure}

\begin{definition}\label{def:mutation}	
	Two links \(L\) and \(L'\) are Conway mutants
	if they agree outside a 3-ball 
	whose boundary intersects the links transversely in four points, 
	and the tangles inside the 3-ball are related by mutation. 
	In symbols, 
	\(L=T\cup T'\) and 
	\(L'=T\cup\muti(T')\) 
	for some Conway tangles \(T\), \(T'\) and \(i\in\{1,2,3\}\).
\end{definition}

	Let 
	\( \varphi_1 \), 
	\( \varphi_2 \), and 
	\( \varphi_3 \) 
	be the algebra automorphisms of \( \B \) defined by
	\begin{align*}
	\varphi_1\co & 
	\DotcobB \mapsto -\DotcobB,\quad
	\DotcobC \mapsto +\DotcobC,\quad
	\SaddleCB \mapsto \SaddleCB,\quad
	\SaddleBC \mapsto \SaddleBC\\
	\varphi_2\co & 
	\DotcobB \mapsto -\DotcobB,\quad
	\DotcobC \mapsto -\DotcobC,\quad
	\SaddleCB \mapsto \SaddleCB,\quad
	\SaddleBC \mapsto \SaddleBC\\
	\varphi_3\co & 
	\DotcobB \mapsto +\DotcobB,\quad
	\DotcobC \mapsto -\DotcobC,\quad
	\SaddleCB \mapsto \SaddleCB,\quad
	\SaddleBC \mapsto \SaddleBC
	\end{align*}
	and denote the induced functors on type~D structures by \( \mutx \), \( \mutz \), and \( \muty \), respectively.
The following theorem is \cite[Theorem~9.8]{KWZ}. 

\begin{theorem}\label{thm:mutation:DD}
	For any pointed Conway tangle \(T\),
	\(
	\DD(\muti(T);\CoeffRing)=\muti\DD(T;\CoeffRing)
	\) where \( i\in\{1,2,3\} \).\qed
\end{theorem}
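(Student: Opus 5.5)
The plan is to work at the level of a fixed diagram \(\Diag\) for \(T\) and compare cubes of resolutions directly. For each \(i\in\{1,2,3\}\) I will choose a diagram \(\Diag_i\) for \(\muti(T)\): take the image of \(\Diag\) under the \(180^\circ\) rotation of the unit ball defining \(\muti\), projected back to the disk.
\begin{itemize}
\item For \(i=2\) this is a planar rotation.
\item For \(i=1,3\) it is a reflection of the disk combined with swapping over- and under-strands.
\end{itemize}
With the orientation conventions of \cref{fig:mutation} (reverse all components whenever the open strands are reversed), crossing signs are preserved. Hence \(n_\pm\) agree, and the vertices of the two cubes carry identical bigradings. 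The crossingless resolutions \(\Diag_i(v)\) are the images of \(\Diag(v)\) under the symmetry, and the edge saddles correspond as well. The sign assignment for \(\Diag\) can be transported verbatim. So \(\KhT{\Diag_i}\) is the image of \(\KhT{\Diag}\) under a functor \(M_i\) on crossingless tangles and cobordisms, induced by the symmetry of \(D^2\times[0,1]\).

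The main point is that \(M_i\) is \emph{not} an endofunctor of \(\Cobb\) as defined. The symmetry sends the end \(\mathtt{e}_i\) to \(*\), so it does not preserve the final relation in \eqref{eq:cobb_relations}, which kills dots on the component adjacent to \(*\). This is exactly where I would invoke the equivalence \(\Cobb\simeq\Cobl\) from \cref{rmk:frobenius-extensions}.

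In \(\Cobl\) morphisms are undotted cobordisms, and the relations are symmetric in the four ends. The symmetry therefore defines an honest autoequivalence of \(\Cobl\), and hence of \(\Cobb\) after transporting through the equivalence. The task is then to compute this autoequivalence on \(\End(\Li\oplus\Lo)\cong\B\) via \(\omega\) of \cref{thm:OmegaFullyFaithful}.

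The symmetry \(\muti\) maps \(\Li\) and \(\Lo\) to themselves, since they are the two crossingless matchings with no closed components and both are symmetric under the three rotations. It sends saddles to saddles, which gives \(S\mapsto S\). For dot cobordisms there are two cases.
\begin{itemize}
\item If the dotted component is not moved to the \(*\)-component, the dot cobordism is preserved.
\item Otherwise its image is a dot on the \(*\)-component. Rewriting it in the basis of \(\B\) must give \(-D\); this is where the dictionary between dotted and undotted cobordisms enters. The sign is forced by the choice of basis in \(\Cobl\), where mutation acts on basis elements by \(\pm1\).
\end{itemize}
I would check the four generators case by case against the three axes, recovering exactly \(\varphi_1,\varphi_2,\varphi_3\). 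I expect this sign bookkeeping, including confirming that the result is an algebra automorphism compatible with the relations of \cref{def:B}, to be the main obstacle.

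Finally I would show that delooping commutes with \(M_i\). The symmetry preserves closed components, and the delooping isomorphisms of \cref{lem:delooping} are mapped to delooping isomorphisms, possibly with the roles of \(q^{\pm1}\varnothing\) adjusted by the same dot rewriting; this I would verify directly. It follows that \(\DD(\Diag_i)=\varphi_i(\DD(\Diag))\) on the nose, and passing to homotopy classes gives the theorem over any \(\CoeffRing\).
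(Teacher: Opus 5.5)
Your proposal is correct and is essentially the argument the paper relies on. The statement is quoted from \cite[Theorem~9.8]{KWZ}, whose proof likewise transports the symmetry through the equivalence \(\Cobb\simeq\Cobl\) and checks that it acts on \(\B\) by the sign changes \(\varphi_i\) (cf.\ \cite[Observation~4.24]{KWZ}).

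One correction: the induced autoequivalence is only \(\CoeffRing\)-linear, not \(\CoeffRing[\varG]\)-linear (e.g.\ \(\varphi_1(\varG)=\varG+2\Db\)), and the image of a standard delooping isomorphism generally differs from the standard one by a unipotent term involving a dot on the component through \(\mathtt{e}_i\). So you obtain \(\DD(\muti(T))\cong\muti\DD(T)\) rather than equality on the nose. That is all the theorem asserts, since the functor sends any delooping isomorphism to an isomorphism, and no separate compatibility check for delooping is needed.
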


In combination with \cref{thm:classification:complexes_over_B:simplified}, we obtain:

\begin{corollary}\label{cor:mutation:BNr-curves}
	The multicurves \(\BNr(T;\CoeffField)\) and \(\BNr(\muti(T);\CoeffField)\) agree up to multiplication of some (possibly all or none) local systems by \(-1\) for \(i\in\{1,2,3\}\), where $T$ is any pointed Conway tangle and $\CoeffField$ is any choice of field. \qed
\end{corollary}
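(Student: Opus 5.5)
The plan is to combine \cref{thm:mutation:DD} with \cref{thm:classification:complexes_over_B:simplified}. Over a field \(\CoeffField\), choose a multicurve representative, so that \(\DD(T;\CoeffField)\simeq C(\Gamma)\) with \(\Gamma=\BNr(T;\CoeffField)\) a direct sum of complexes \(C(F,g,X,e)\). Since \(\muti\) is the functor induced by the algebra automorphism \(\varphi_i\), it is an autoequivalence of the category of type~D structures. In particular it preserves chain homotopy and direct sums. Hence \(\DD(\muti(T);\CoeffField)\simeq\muti C(\Gamma)=\bigoplus \muti C(F,g,X,e)\), and by the classification theorem it suffices to identify each \(\muti C(F,g,X,e)\) with \(C(F,g,\pm X,e)\) up to isomorphism.

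The key computation goes as follows. The automorphism \(\varphi_i\) fixes \(\Sb,\Sw\) and sends each of \(\Db,\Dw\) to \(\pm\) itself. It therefore sends \(\varG=S^2-D\) to \(S^2\mp\Db\mp\Dw\), which in general is not \(\varG\). So I must check what \(\varphi_i\) does to the standard basis elements \(\varG^nD_\halfbullet\), \(\varG^nS_\halfbullet\), \(\varG^nSS_\halfbullet\). Using the relations \(DS=SD=0\) in \(\B\), we get \(\varG^nD_\bullet=(-1)^nD_\bullet^{n+1}\) and \(\varG^nS=S^{2n+1}\), and similarly for the other labels. Consequently \(\varphi_i\) sends every pure basis element to \(\pm\) itself: the saddle-type labels are fixed, and the dot-type labels get sign \(\epsilon^{n+1}\), where \(\epsilon=\pm1\) is the sign \(\varphi_i\) puts on that \(D\). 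Therefore \(\muti C(F,g,X,e)\) is the labelled graph of \(F\) with the same gradings and the same local system, except that each edge carries an extra sign \(\epsilon_{e'}\in\{\pm1\}\).

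Next I remove these edge signs by rescaling generators. Along a linear graph (non-compact curve), I successively replace the generator \(x_j\) by \(\pm x_j\), which absorbs all signs; this gives \(\muti C\cong C\). For a cyclic graph, the same rescaling absorbs all signs except their product \(\prod_{e'}\epsilon_{e'}\). That product can be pushed onto the distinguished edge \(e\), giving \(C(F,g,\pm X,e)\). One subtlety is the cyclic graph with two vertices, where two edges join the same pair of vertices. There I treat the differential entries edge by edge, and I expect the same rescaling argument to go through.

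The main obstacle is really bookkeeping. I must verify that the rescalings are genuine isomorphisms of complexes over \(\B\): they are diagonal \(\pm1\) matrices in idempotent-respecting bases, so they commute appropriately with the differential. I must also make sure the resulting reduced multicurve is well defined. Multiplying a local system \(X\) by \(-1\) is compatible with the equivalence relation on local systems, because \((-X)^{-1}=-(X^{-1})\), and it is compatible with taking direct sums of local systems. Finally, I must confirm that the identity \(\varG^nD=(\pm)D^{n+1}\) holds with the stated signs. Once this is done, the uniqueness part of \cref{thm:classification:complexes_over_B:simplified} identifies \(\BNr(\muti(T);\CoeffField)\) with \(\BNr(T;\CoeffField)\) after negating some local systems.
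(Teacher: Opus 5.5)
Your proposal is correct and takes essentially the paper's route: combine \cref{thm:mutation:DD} with \cref{thm:classification:complexes_over_B:simplified}. You use that \(\varphi_i\) sends each pure basis element to \(\pm\) itself (your identities \(\varG^nD_\bullet=(-1)^nD_\bullet^{n+1}\) and \(\varG^nS=S^{2n+1}\) are right). The resulting edge signs can then be absorbed by rescaling generators, except for their product around a cycle, which ends up on the local system. The two-vertex cyclic case you flagged works the same way, because rescaling a vertex multiplies both parallel edges by the same sign, so only their product survives.
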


\cref{thm:mutation:DD} is the key result about how Conway mutation affects 
the algebraic tangle invariant \(\DD(T)\). It has several consequence when combined with \cref{thm:GlueingTheorem:Kh}.
First, it recovers mutation invariance of Khovanov homology over characteristic 2, 
which was first proved by Wehrli and Bloom \cite{Bloom,wehrli2010mutation}.
Second, by contrast, this new perspective immediately generalizes to Bar-Natan homology in characteristic 2. 
Third, it gives a conceptual explanation for the failure of mutation invariance
for links. 
Fourth, it allowed us to show that the Rasmussen invariant over any field is preserved under Conway mutation. 
These consequences are described in detail in \cite[Section~9]{KWZ}.
In a first step towards the proof of \cref{thm:mutation_invariance:links} we now show:

\begin{theorem}\label{thm:mutation:Khr}
	If \(T\) is a pointed Conway tangle and \(i\in\{1,2,3\}\) such that \(\conn{T}=\conni\) then \(
	\Khr(\muti(T);\CoeffField)
	\simeq
	\Khr(T;\CoeffField)
	\) for any field \(\CoeffField\). 
\end{theorem}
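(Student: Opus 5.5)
The plan is to reduce the statement to a curve-by-curve sign check, using three ingredients: \cref{thm:mutation:DD}, the geography result \cref{thm:geography:Khr}, and the algebraic connectivity detection \cref{prop:connectivity-implies-action-trivial}.

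First, \cref{thm:mutation:DD} gives \(\DD(\muti(T))=\muti\DD(T)\), i.e.\ it is obtained by applying \(\varphi_i\) to the labels. The functor \(\muti\) commutes with taking the mapping cone of \(\varG\cdot 1\), since \(\varphi_i(\varG)=\varG\) for each \(i\). Hence the same identity holds with \(\DD_1\) in place of \(\DD\).

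Over a field, write \(\DD_1(T)\simeq C(\Khr(T))=\bigoplus_\gamma C(\gamma)\). It then suffices to show \(\varphi_i C(\gamma)\cong C(\gamma)\) for every component \(\gamma\). By \cref{thm:geography:Khr}, each \(\gamma\) is, up to bigrading shift, some \(\rKh_n(\slope)\) or \(\sKh_{2n}(\slope)\). In particular \(\gamma\) is compact, has trivial underlying local system data (local system \((-1)\)), and its complex is a single cycle.

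Applying \(\varphi_i\) to such a cyclic complex only changes the signs of certain \(D\)-labelled edges. These are the \(D_\bullet\) edges for \(i=1\), all \(D\) edges for \(i=2\), and the \(D_\circ\) edges for \(i=3\). Rescaling generators by \(\pm1\) successively along the cycle absorbs all but one of these signs. Consequently \(\varphi_i C(\gamma)\cong C(\gamma')\), where \(\gamma'\) is \(\gamma\) with its local system multiplied by \((-1)^{N_i(\gamma)}\), and \(N_i(\gamma)\) is the number of edges of the relevant type. So the task reduces to showing that \(N_i(\gamma)\) is even for every component \(\gamma\) of \(\Khr(T)\).

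To get this parity from the hypothesis, apply \cref{prop:connectivity-implies-action-trivial} with \(\conn{T}=\conni\) to \(\DD_1(T)\simeq\bigoplus_\gamma C(\gamma)\). Then \cref{lem:homotopies-of-identity-multiples,lem:nullhomotopy-splits-under-direct-sums} give \(a_i\cdot 1_{C(\gamma)}\simeq 0\) for each component \(\gamma\) individually. The key claim is then:
\begin{quote}
for \(\gamma\in\{\rKh_n(\slope),\sKh_{2n}(\slope)\}\), if \(a_i\cdot1_{C(\gamma)}\simeq0\) then \(N_i(\gamma)\) is even.
\end{quote}
A cleaner way to state the conclusion is that \(\varphi_i C(\gamma)\cong C(\gamma)\); this is all that is needed.

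I would first reduce the claim to slope \(0\) using \cref{thm:twisting:general-coeff} and the explicit action in \cref{tab:MCG:TwistB}. The generators \(\TwistB,\TwistW\) permute the three ends and should correspondingly permute the central elements \(a_1,a_2,a_3\) and the automorphisms \(\varphi_1,\varphi_2,\varphi_3\). This is the compatibility between twisting and the basepoint action referred to in the paper, and it is where the sign choice \(a_2=-S^2\) matters. Both the hypothesis and the conclusion are then transported to slope \(0\).

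At slope \(0\) the curves are given explicitly in \cref{def:grading}, and the check is a finite computation in each case.
\begin{itemize}
\item For \(\rKh_n(0)\) there are no \(\circ\) generators, so \(N_3=0\). The numbers of \(D_\bullet\) edges and of \(D\) edges are both \(\lceil n/2\rceil\) or so, counting the vertical \(-D\) edge.
\item For \(\sKh_{2n}(0)\) one counts two \(D_\circ\) edges and \(2n\) \(D_\bullet\) edges, so these parities are always even.
\end{itemize}
In the remaining odd cases, e.g.\ \(\rKh_n(0)\) with \(n\) odd and \(i=1,2\), I expect to show directly that \(a_i\cdot1\) is not nullhomotopic. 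This should follow by computing \(H_*\Mor(C(\gamma),C(\gamma))\), or by pairing with a suitable rational curve via \cref{thm:classification-morphisms}. The obstruction is consistent with \cref{prop:connectivity-detected-by-rationals-of-odd-length}: odd-length rational curves force \(\conn{T}=\conns\), which should exclude precisely the bad \(i\).

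I expect the main obstacle to be this last step. It requires correctly matching the \(a_i\) and \(\varphi_i\) under the mapping class group action, including signs, and verifying the nullhomotopy criterion for the odd-length rational curves over an arbitrary field.
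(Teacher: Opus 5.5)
The gap is in your first step: \(\varphi_i\) does \emph{not} fix \(\varG=S^2-\Db-\Dw\), except in characteristic~\(2\). In fact
\(\varphi_1(\varG)=\varG+2a_1\), \(\varphi_2(\varG)=-\varG-2a_2\) and \(\varphi_3(\varG)=\varG+2a_3\).
So \(\muti\DD_1(T)=\Cone{\varphi_i(\varG)\cdot 1_{\muti\DD(T)}}\) is not the same object as \(\DD_1(\muti(T))=\Cone{\varG\cdot 1_{\DD(\muti(T))}}\).

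This is not a technicality. Suppose \(\DD_1(\muti(T))\simeq\muti\DD_1(T)\) held for all tangles. Then for every mutation, \(\Khr(\muti(T))\) would differ from \(\Khr(T)\) only in signs of local systems, and \cref{thm:geography:Khr} would force \(\Khr(\muti(T))=\Khr(T)\) unconditionally. The paper notes this is false (Wehrli's example), so the connectivity hypothesis must enter already at this step.

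The repair is the paper's \cref{lem:mutation:DD1}. The tangle \(\muti(T)\) has the same connectivity \(\conni\), so \cref{prop:connectivity-implies-action-trivial} gives \(a_i\cdot 1_{\DD(\muti(T))}\simeq 0\). Hence on \(\DD(\muti(T))=\muti\DD(T)\), the map \(\varphi_i(\varG)\cdot 1\) is homotopic to \(\pm\varG\cdot 1\). Cones of homotopic maps are homotopy equivalent, and \(\Cone{-\varG\cdot 1}\cong\Cone{\varG\cdot 1}\).

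With this repair the rest of your argument is correct, but it does more work than needed. Once \(\DD_1(\muti(T))\simeq\muti\DD_1(T)\) is known, \(\Khr(\muti(T))\) and \(\Khr(T)\) differ only in signs of local systems. Applying \cref{thm:geography:Khr} to \emph{both} \(T\) and \(\muti(T)\) forces every local system to be \((-1)\), so the signs agree. No parity count, reduction to slope \(0\), or non-nullhomotopy computation is required.

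Your per-component route does go through. The step you flag as the main obstacle is exactly \cref{lem:curve_based_detection}: for odd length, \(a_i\cdot 1\simeq 0\) if and only if \(\conni=\conns\). You would combine it with the count in \cref{lem:mutation-on-Khr-special-or-rational}, which the paper obtains from the lift to \(\PuncturedPlane\) rather than by transporting \(\varphi_i\) under twisting. This is how the paper handles the link-level case \(\conn{T}\neq\conn{T'}\).

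One correction to your count: \(C(\rKh_n(0))\) has exactly \(n\) arrows labelled \(D_\bullet\). So \(N_1=N_2=n\) and \(N_3=0\), which is why only odd \(n\) with \(i\in\{1,2\}\) needs the detection lemma.
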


	Without the condition \(\conn{T}=\conni\), 
	\cref{thm:mutation:Khr} is false; compare \cref{fig:example:mutant:Wehrli} and see \cite{WehrliCounterexample} and \cite[Example~9.11]{KWZ}.

\begin{lemma}\label{lem:mutation:DD1}
	With notation as in \cref{thm:mutation:Khr},
	\(
	\DD_1(\muti(T);\CoeffRing)
	\simeq
	\muti(\DD_1(T);\CoeffRing).
	\)
\end{lemma}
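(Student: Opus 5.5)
The plan is to compare the two mapping cones directly. By definition
\[
\DD_1(\muti(T)) = \Cone{\varG\cdot 1_{\DD(\muti(T))}},
\]
and by \cref{thm:mutation:DD} we may replace \(\DD(\muti(T))\) by \(\muti\DD(T)\), the type~D structure obtained by applying the automorphism \(\varphi_i\) to all coefficients. The functor induced by \(\varphi_i\) commutes with mapping cones. Hence
\[
\muti(\DD_1(T)) = \Cone{\varphi_i(\varG)\cdot 1_{\muti\DD(T)}}.
\]
It therefore suffices to show that the cones of \(\varG\cdot1\) and of \(\varphi_i(\varG)\cdot 1\) on \(X=\muti\DD(T)\) are homotopy equivalent. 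For this it is enough that \(\varphi_i(\varG)\cdot 1_X\simeq \varG\cdot 1_X\), since homotopic maps have isomorphic cones up to homotopy equivalence.

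Next I compute \(\varphi_i(\varG)-\varG\) using \(\varG=S^2-\Db-\Dw\); note that \(\varphi_i\) fixes \(S^2\).
\begin{itemize}
\item For \(i=1\): \(\varphi_1(\varG)=S^2+\Db-\Dw\), so \(\varphi_1(\varG)-\varG=2\Db=2a_1\).
\item For \(i=3\): the difference is \(2\Dw=2a_3\).
\item For \(i=2\): \(\varphi_2(\varG)=S^2+\Db+\Dw\), so the difference is \(2(\Db+\Dw)=2(S^2-\varG)\). This equals \(-2a_2-2\varG\), which is not obviously of the required form, so this case needs care.
\end{itemize}
In the cases \(i=1,3\), \cref{prop:connectivity-implies-action-trivial} (applied with \(\DD=\DD(T)\) and \(\conn{T}=\conni\)) gives \(a_i\cdot 1_{\DD(T)}\simeq0\). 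The central element \(a_i\) is fixed by \(\varphi_i\) up to sign, so applying the functor \(\muti\) gives \(a_i\cdot 1_X\simeq 0\). Therefore \(\varphi_i(\varG)\cdot1_X\simeq\varG\cdot 1_X\), and we are done.

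The main obstacle is the case \(i=2\). Here I would avoid the issue by using a different identity. Instead of comparing \(\varG\) with \(\varphi_2(\varG)\), compare \(\varphi_2(\varG)\) with \(-\varG\) up to homotopy: \(\varphi_2(\varG)+\varG=2S^2=-2a_2\), which is nullhomotopic on \(X\) by \cref{prop:connectivity-implies-action-trivial} when \(\conn{T}=\connz\). So \(\varphi_2(\varG)\cdot1_X\simeq -\varG\cdot 1_X\). The cone of \(-f\) is isomorphic to the cone of \(f\) by negating one summand, which finishes this case.

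It remains to confirm two routine points. First, \(\varphi_2\) fixes \(a_2\), and \(\varphi_1\), \(\varphi_3\) send \(a_i\) to \(\pm a_i\), so transporting the nullhomotopy through \(\muti\) is legitimate. Alternatively, one can apply \cref{lem:homotopies-of-identity-multiples} directly to \(\DD(\muti(T))\) together with the fact that \(\conn{\muti(T)}=\conn{T}\) when \(\conn{T}=\conni\). Second, the grading shifts in the cone are unchanged, since \(\varphi_i\) preserves quantum gradings.
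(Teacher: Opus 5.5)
Your proposal is correct and follows essentially the same route as the paper: commute \(\muti\) past the mapping cone, identify \(\muti\DD(T)\) with \(\DD(\muti(T))\) via \cref{thm:mutation:DD}, and remove the difference between \(\varphi_i(\varG)\) and \(\pm\varG\) (a multiple of \(a_i\)) using \cref{prop:connectivity-implies-action-trivial}. For \(i=2\) you also conclude, as the paper does, via \(\Cone{-\varG\cdot 1}\cong\Cone{\varG\cdot 1}\). The only cosmetic difference is that you transport the nullhomotopy of \(a_i\cdot 1_{\DD(T)}\) through the functor \(\muti\), whereas the paper applies the proposition directly to \(\DD(\muti(T))\), implicitly using that \(\muti\) preserves the connectivity \(\conni\).
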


\begin{proof}
	Consider the case \(i=1\) first. 
	Unpacking the definitions:
	\[
	\mutx\DD_1(T)
	=
	\mutx\Cone{(S^2-\Db-\Dw)\cdot1_{\DD(T)}}
	=
	\Cone{(S^2+\Db-\Dw)\cdot1_{\mutx\DD(T)}}
	\]
	By \cref{thm:mutation:DD} the right-hand side is homotopy equivalent to 
	\[
	\Cone{(S^2+\Db-\Dw)\cdot1_{\DD(\mutx(T))}}
	\]
	and, according to \cref{prop:connectivity-implies-action-trivial},
	\(\Db\cdot1_{\DD(\mutx(T))}\simeq0\) so that
	\[
	\Cone{(S^2+\Db-\Dw)\cdot1_{\DD(\mutx(T))}}
	\simeq
	\Cone{(S^2-\Db-\Dw)\cdot1_{\DD(\mutx(T))}}
	\]
	 recovering \(\Cone{\varG\cdot1_{\DD(\mutx(T))}}=\DD_1(\mutx(T))\) as claimed. The proof for the cases \(i=2\) and \(i=3\) are analogous, except that in the case \(i=2\) 
	we obtain \(\Cone{(-\varG)\cdot1_{\DD(\mutz(T))}}\) instead of \(\Cone{\varG\cdot1_{\DD(\mutz(T))}}\) in the last step. However these mapping cones are isomorphic. 
\end{proof}

\begin{proof}[Proof of \cref{thm:mutation:Khr}]Without loss of generality, choose multicurve representatives \(\DD_1^c(\muti(T))\) and \(\DD_1^c(T)\) for \(\DD_1(\muti(T))\) and \(\DD_1(T)\), respectively, for the type D structures in \cref{lem:mutation:DD1}. 
	Then the multicurves  
	\(\Khr(\muti(T))\)
	and 
	\(\Khr(T)\)
	 differ in at most the signs of their local systems. 
	By \cref{thm:geography:Khr},
	components of 
	\(\Khr(\muti(T))\)
	and 
	\(\Khr(T)\)
	only carry the 
	1-dimensional local systems \((-1)\in\GL_1(\field)\), so the signs of the local systems must agree.
\end{proof}

\begin{lemma}\label{lem:Khr-components:number-of-Ds}
	The complex \(C(\mathbf{s}_{2n}(\slope))\) contains 
	\(\max(2,2n|q|)\) differentials labelled~\(D_\bullet\) and 
	\(\max(2,2n|p|)\) differentials labelled~\(D_\circ\).
	Similarly, the complex \(C(\mathbf{r}_{n}(\slope))\) contains
	\(n|q|\) differentials labelled~\(D_\bullet\) and 
	\(n|p|\) differentials labelled~\(D_\circ\).
\end{lemma}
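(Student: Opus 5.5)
We count the arrows geometrically rather than by tracking the formal-curve replacement rules of \cref{tab:MCG:TwistB} through a word for \(\tau_{p/q}\). By \cref{prop:functors-curves} and the discussion preceding it, the vertices of a formal curve correspond to the intersection points of a minimal representative \(\gamma\) with the arcs \(a_\bullet\cup a_\circ\). An edge labelled \(D_\bullet\) (resp.\ \(D_\circ\)) corresponds to a segment of \(\gamma\) running from \(a_\bullet\) back to \(a_\bullet\) (resp.\ \(a_\circ\) to \(a_\circ\)) around the loop of \(\mathcal{Q}\) at \(\bullet\) (resp.\ \(\circ\)). That loop encircles the puncture \(\Endi\) (resp.\ \(\Endiii\)), with the conventions of \cref{fig:covering:conventions}. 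Edges labelled \(S\) or \(S^2\) are exactly the segments passing between the two arcs, possibly through a vertex. Thus the number of \(D_\bullet\)-labels equals the number of times \(\gamma\) turns around the puncture \(\Endi\) in the sense that it crosses the loop of \(\mathcal{Q}\) at \(\bullet\); this count is unchanged by grading shifts and local systems.

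Next we lift to \(\PuncturedPlane\) via \(p\). Up to the deck group, the preimage of the \(\bullet\)-loop (the arc system separating \(\Endi\) from the rest) is a family of segments around the lifts of \(\Endi\), which are lattice points with one parity pattern, say odd first coordinate and even second. Those around \(\Endiii\) sit at lattice points of the complementary parity. For a straight line of slope \(\slope\) passing near the lattice, we count how often one period of its image in \(\FourPuncturedSphere\) passes a lift of \(\Endi\) in the relevant way. One period of a closed curve of slope \(\slope\) in \(\TwoTorusKh\) corresponds to the translation vector \((2q,2p)\) upstairs, and it descends to \(\FourPuncturedSphere\) with period \((q,p)\) modulo the \(\pm1\) action. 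During such a period the line crosses \(|q|\) vertical unit strips, and in each strip it passes exactly one lift of \(\Endi\) on the relevant side. This gives \(|q|\) turns around \(\Endi\) and, symmetrically, \(|p|\) around \(\Endiii\).

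For \(\rKh_n(\slope)\), \cref{fig:geography:Upstairs} shows the lift as a thin loop around a segment of length \(n\) on a line of slope \(\slope\). Traversing it once means going along the segment and back on both sides. We expect the total counts \(n|q|\) and \(n|p|\), and we check this at slope \(0\) against \cref{def:grading}: there are \(2n\) generators \(\bullet\) with alternating \(S^2\) and \(D\) labels, which gives \(n\) arrows labelled \(D_\bullet\) and none labelled \(D_\circ\). For \(\sKh_{2n}(\slope)\) the lift surrounds a segment of length \(2n\) whose ends wrap around lifts of \(\ast\). The straight part contributes \(2n|q|\) and \(2n|p|\). At slope \(0\), \cref{def:grading} shows \(2n\) labels \(D_\bullet\) and \(2\) labels \(D_\circ\), namely the two vertical arrows. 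The extra \(\Endiii\)-type turns at the ends come from the hooks around the lifts of \(\ast\); for \(p\ne0\) these are absorbed in the \(2n|p|\) count, since the end-hook then crosses lifts of \(\Endiii\) already counted. This is the source of \(\max(2,\cdot)\).

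The main obstacle is making the upstairs counting rigorous, in particular the end behaviour of \(\sKh\) near the lifts of \(\ast\) giving exactly \(\max(2,2n|p|)\). As a fallback, we plan to verify the formula by induction on the word for \(\tau_{p/q}\) using \cref{tab:MCG:TwistB}. The rules there show that \(\TwistB\) fixes the number of \(D_\bullet\)-labels (rule \((\Db)\)). It converts \(D_\circ\)-labels into \(S^2\)-labels and creates new ones, matching the action \(p\mapsto p-q\) of \(\psi(\TwistB)\) on \(|p|\) in the Farey-monotone (continued fraction) sequence of slopes.
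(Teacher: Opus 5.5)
Your strategy (read the counts off the lifts to \(\PuncturedPlane\)) is the same as the paper's, whose proof is just ``inspection of the lift''. However, the inspection you carry out rests on a wrong picture, and the step that actually produces the numbers is never made. The lifts of \(\rKh_n(\slope)\) and \(\sKh_{2n}(\slope)\) are not closed thin loops around a segment. Nor does \(\sKh_{2n}\) hook around lifts of \(\ast\); these curves are linear. Already \(C(\rKh_1(0))\) is a 2-cycle whose two edges \(D_\bullet\) and \(S^2\) are traversed in opposite directions. So \(\rKh_1(0)\) is a loop around \(\Endi\) followed by the inverse of a loop around \(\Endii\). Its deck transformation is a product of point reflections about two distinct lattice points, hence a non-trivial translation, and the lift is not closed. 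In fact the lift of \(\rKh_n(\slope)\) is a pegboard-straight periodic curve along a line of slope \(\slope\) through lattice points. It passes \(n\) consecutive lattice points on one side and the next \(n\) on the other, with period vector \(\pm n(2q,2p)\). The lift of \(\sKh_{2n}(\slope)\) behaves the same way with \(2n\) points, along a line through lifts of \(\ast\), with period \(\pm 2n(2q,2p)\). Your intermediate claims are correspondingly off. No deck transformation translates by \((q,p)\). A loop traversed `there and back' would meet every grid line twice. And you never derive \(n|q|\), \(n|p|\); you only check them at slope \(0\).

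The missing ingredient is a precise description of what a \(D\)-label is upstairs. An edge labelled \(D_\bullet\) is a component of \(\gamma\smallsetminus(a_\bullet\cup a_\circ)\) lying in the region of \(\FourPuncturedSphere\smallsetminus(a_\bullet\cup a_\circ)\) that contains \(\Endi\); similarly \(D_\circ\) corresponds to \(\Endiii\). The arcs \(a_\bullet\) and \(a_\circ\) lift to thin lenses around the lifts of \(\Endi\) and \(\Endiii\), respectively. Lifts of \(\Endi\) sit at \((\text{even},\text{odd})\), not \((\text{odd},\text{even})\) as you guessed, and lifts of \(\Endiii\) sit at \((\text{odd},\text{even})\). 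Together these lenses form the grid \(\{x\in 2\Z\}\cup\{y\in 2\Z\}\) with vertices at the lifts of \(\ast\). Hence \(D_\bullet\)-labels are exactly the transverse passages of the lift through vertical grid lines, and \(D_\circ\)-labels the passages through horizontal ones. Per period their number is half the horizontal, respectively vertical, displacement: \(n|q|\) and \(n|p|\) for \(\rKh_n\), and \(2n|q|\) and \(2n|p|\) for \(\sKh_{2n}\). The \(\max(2,\cdot)\) occurs exactly when the line of a special curve lies on a grid line, i.e.\ \(p=0\) or \(q=0\). The curve then runs along that grid line through the lifts of \(\ast\) and crosses it only where it switches sides, which happens twice per period. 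Your remark that the end-hooks are `absorbed' for \(p\neq0\) does not establish this.

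The fallback via \cref{tab:MCG:TwistB} is also incomplete. You correctly observe that only rule \((\Db)\) involves \(D_\bullet\). But you never show that the number of \(D_\circ\)-labels changes from \(n|p|\) to \(n|p-q|\) under \(\TwistB\), which requires tracking the \(D_\circ\)-labels created and destroyed by the remaining rules.
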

\begin{proof}
	This is immediate by inspection of the lift of the corresponding curve to the covering space \(\PuncturedPlane\rightarrow\FourPuncturedSphere\). 
\end{proof}

The proof of \cref{thm:mutation_invariance:links} requires one more calculation, 
for which we introduce the following notation:
Let 
\(
\DD^{\sKh}_{2n}(\slope;X)
\)
denote the chain complex \(C(\gamma)\),
where \(\gamma\) is equal to the special curve \(\sKh_{2n}(\slope)\) 
as a graded curve and carries the local system \((X,e)\) for some fixed edge \(e\).
Similarly, 
let 
\(
\DD^{\rKh}_{2n}(\slope;X)
\)
denote the chain complex \(C(\gamma)\),
where \(\gamma\) is equal to the rational curve \(\rKh_{2n}(\slope)\) 
as a graded curve and carries the local system \((X,e)\) for some fixed edge \(e\).

\begin{lemma}\label{lem:mutation-on-Khr-special-or-rational}
	Let \(\slope\in\QPI\), \(i\in\{1,2,3\}\), \(n\) and \(m\) two positive integers, and \(X\in\GL_m(\Z)\) a local system of rank \(m\). 
	Then
	\[
	\muti\DD^\mathbf{s}_{2n}(\slope;X)
	\cong
	\DD^\mathbf{s}_{2n}(\slope;X)
	\]
	and
	\[
	\muti\DD^\mathbf{r}_{n}(\slope;X)
	\cong
	\begin{cases*}
		\DD^\mathbf{r}_{n}(\slope;X)
		&
		if \(n\) is even or \(\conni=\conns\)
		\\
		\DD^\mathbf{r}_{n}(\slope;-X)
		&
		otherwise
	\end{cases*}
	\]
\end{lemma}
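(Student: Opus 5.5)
The functor \(\muti\) acts on a chain complex \(C(F,g,X,e)\) associated with a graded curve with local system by applying \(\varphi_i\) to every arrow label. Since \(\varphi_i\) fixes the saddles and only multiplies \(\Db\) and/or \(\Dw\) by \(-1\), and since every edge of a formal curve is labelled by a single pure basis element \(\varG^k S\), \(\varG^k SS\), or \(\varG^k D\) (with \(\varphi_i(\varG)=\varG\) because \(\varG\) is central and equal to \(S^2-D\), up to the sign of \(D\) terms on basis elements \(\varG^kD\)), the complex \(\muti C(F,g,X,e)\) is again of the form \(C\) of the same graded curve, but with some edge labels multiplied by \(-1\). Concretely, for \(\varphi_1\) the labels \(\varG^kD_\bullet\) change sign, for \(\varphi_3\) the labels \(\varG^kD_\circ\) change sign, and for \(\varphi_2\) both do. A little care is needed with \(\varG^kD\) where \(\varphi_i(\varG)\ne\varG\) a priori; but \(\varG D_\bullet = -D_\bullet^2\) in \(\B\) (as \(S D=0\)), so \(\varphi_i(\varG^k D_\bullet)=\pm\varG^kD_\bullet\) with sign \((-1)^{k+1}\) when \(D_\bullet\mapsto -D_\bullet\). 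This sign depends on \(k\), but the curves \(\rKh_n\) and \(\sKh_{2n}\) at any slope have all \(k=0\), since they arise from \cref{def:grading} via \cref{tab:MCG:TwistB} applied to \(\varG\)-free curves, where only rule (\(\SaddleBC\)c) introduces powers of \(\varG\) and it never applies to such compact curves. I would verify this first, so that \(\muti\) simply flips the sign of each \(D_\bullet\)- and/or \(D_\circ\)-labelled edge.

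Next, I would use the standard gauge argument. Flipping the sign of an edge label in a cyclic labelled graph can be absorbed by the isomorphism that rescales generators by \(\pm1\) along the cycle. The only invariant is the product of all signs around the cycle, which multiplies the monodromy \(X\). Hence \(\muti C(\gamma)\cong C(\gamma')\), where \(\gamma'\) equals \(\gamma\) with local system \((-1)^{N}X\), and \(N\) is the number of edges whose sign is flipped. For \(i=1\), \(N\) is the number of \(D_\bullet\) edges; for \(i=3\), the number of \(D_\circ\) edges; for \(i=2\), their sum.

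Finally, I would count using \cref{lem:Khr-components:number-of-Ds}. For \(\sKh_{2n}\) all counts, namely \(\max(2,2n|q|)\) and \(\max(2,2n|p|)\), are even, so \(N\) is even and the first claim follows. For \(\rKh_n\) the counts are \(n|q|\) and \(n|p|\). If \(n\) is even, every \(N\) is even. If \(n\) is odd, the parity of \(N\) is that of \(q\) (for \(i=1\)), \(p+q\) (for \(i=2\)), or \(p\) (for \(i=3\)). Since \(p,q\) are coprime, comparing with the table for \(\conns\) shows that \(N\) is even exactly when \(\conni=\conns\). For example, for \(i=1\), \(q\) is even iff \(\conns=\connx\), and similarly in the other cases.

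The main obstacle I expect is the first step: justifying that the curves at arbitrary slope carry no \(\varG\)-powers on \(D\)-edges (or otherwise tracking the \(k\)-dependent signs). If that fails, I would instead compute the sign of \(\varphi_i\) on \(\varG^kD_\bullet\) exactly and include these extra signs in the parity count. Alternatively, I would use the geometric picture in the planar cover, where edge counts are read off directly.
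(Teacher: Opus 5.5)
Your proposal is correct and follows the paper's proof: \(\muti\) only flips the signs of \(D\)-labelled edges, a sign pattern on a cyclic complex can be gauged away up to its total parity (which multiplies the local system), and that parity is read off from \cref{lem:Khr-components:number-of-Ds} and compared with the table for \(\conns\). Your extra care about labels \(\varG^kD\) covers a point the paper passes over. Note that \(\varphi_i(\varG)\neq\varG\) genuinely, not just a priori (e.g.\ \(\varphi_1(\varG)=S^2+\Db-\Dw\)). The issue does not arise, however, because rational and special curves at any slope carry only the labels \(D\), \(S\), \(S^2\), as their lifts to \(\PuncturedPlane\) show.
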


\begin{proof}
	Assume first that the rank \(m\) of the local system is \(1\). 
	The operation \(\muti\) acts on complexes by changing the signs of some differentials. Since each of the given complexes corresponds to a closed curve with 1-dimensional local system, its isomorphism class is changed by \(\muti\) if and only if the number of sign changes is odd. 
	By \cref{lem:Khr-components:number-of-Ds}, 
	the number of sign changes is even for the complexes \(\DD^\mathbf{s}_{2n}(\slope)\), 
	which proves the first isomorphism. 
	The same applies for \(\DD^\mathbf{r}_{n}(\slope)\) if \(n\) is even. 
	Suppose then that \(n\) is odd. 
	Counting modulo~2, 
	the complex \(\DD^\mathbf{r}_{n}(\slope)\) contains
	\(q\)~differentials labelled~\(D_\bullet\) and
	\(p\)~differentials labelled~\(D_\circ\).
	We now analyse each case for \(i\) separately:
	If \(i=1\) only the differentials labelled~\(D_\bullet\) change their sign. 
	So the isomorphism class of the complex changes 
	if and only if \(q\) is odd,
	that is, when \(\conni\neq \conns\).
	The argument for the other two cases is similar. 
	For local systems of rank \(m>1\) the proof generalises immediately. 
\end{proof}

\begin{proof}[Proof of \cref{thm:mutation_invariance:links}]
	The links \(L\) and \(L'\) are related by Conway mutation, so write 
	\[
	L=
	T\cup T'
	\quad
	\text{and}
	\quad
	L'=
	T\cup \muti(T')
	\]
	for Conway tangles \(T\) and \(T'\) and some \(i\in\{1,2,3\}\).
	By assumption, the mutation preserves components.
	This means that either \(\conni=\conn{T'}\) or \(\conn{T}\neq\conn{T'}\). 
	
	In the case \(\conni=\conn{T'}\), \cref{thm:GlueingTheorem:Kh} gives
	\begin{align*}
		\Khr(L)
		&\cong
		\HF(-\BNr(T),\Khr(T'))
		\\
		\Khr(L')
		&\cong
		\HF(-\BNr(T),\Khr(\muti(T')))
	\end{align*}
	where the right hand sides agree since \(\Khr(\muti(T'))=\Khr(T')\), according to \cref{thm:mutation:Khr}. 
	
	In the case \(\conn{T}\neq\conn{T'}\) consider the alternate isomorphisms given by \cref{thm:GlueingTheorem:Kh}: 
	\begin{align*}
		\Khr(L)
		&\cong
		\HF(-\Khr(T),\BNr(T'))
		\\
		\Khr(L')
		&\cong
		\HF(-\Khr(T),\BNr(\muti(T')))
	\end{align*}
	Write %
	\[
	-\Khr(T)=\coprod_{i=1}^n \gamma_i(X_i)
	\quad
	\text{and}
	\quad
	\BNr(T')=\coprod_{j=1}^m \gamma'_j(X'_j)
	\]
	where \(\gamma_i\) and \(\gamma'_j\) denote primitive curves with their respective local systems \(X_i\) and \(X'_j\). 	
	By \cref{cor:mutation:BNr-curves}, mutation of the tangle \(T'\) changes at most the signs of the local system. 
	In other symbols,
	\[
	\BNr(\muti(T'))=\coprod_{j=1}^m \gamma'_j(\varepsilon_j \cdot X'_j)
	\]
	for some \(\varepsilon_j\in\{\pm1\}\). 
	It suffices to show that
	\begin{equation}\label{eqn:mutation-invariance-components}
		\HF(\gamma_i(X_i),\gamma'_j(X'_j))
		\cong
		\HF(\gamma_i(X_i),\gamma'_j(\varepsilon_j \cdot X'_j))
	\end{equation}
	for all \(i=1,\dots,n\) and \(j=1,\dots,m\).
	When \(\gamma_i\neq \gamma'_j\),
	the isomorphism \eqref{eqn:mutation-invariance-components} follows from \cref{rem:local-systems-gluing}. 
	So suppose \(\gamma=\gamma_i=\gamma'_j\) for some pair \((i,j)\).
	Then applying the geography restrictions for \(\Khr\) (\cref{thm:geography:Khr:intro}), 
	the curve \(\gamma\) is either rational or special. 
	If \(\gamma\) is rational of even length or if it is special 
	then \(\varepsilon_j=+1\) 
	by \cref{lem:mutation-on-Khr-special-or-rational} and 
	so \eqref{eqn:mutation-invariance-components} holds trivially. 
	Finally, if the curve \(\gamma\) is rational of odd length, 
	its slope detects the connectivity of the tangles 
	\(-T\) and \(T'\) 
	(\cref{prop:connectivity-detected-by-rationals-of-odd-length}) since  
	 \(-\Khr(T)=h^{1}\Khr(-T)\) (\cref{prop:mirroring:Khr}). 
	However $\conn{-T}=\conn{T}$ so this case does not occur since \(\conn{T}\neq\conn{T'}\) by assumption. 
\end{proof}

This proof appeals to the classification of objects in \(\C^\B\) in terms of multicurves, which only works over fields, and as such it does not readily adapt to establish mutation invariance of reduced Khovanov homology over the integers. It is also unclear how to adapt the proof to Bar-Natan homology or unreduced Khovanov homology, even restricting to fields: the proof of \cref{thm:mutation_invariance:links} relies in a crucial way on \cref{thm:geography:Khr:intro}, which classifies components of \(\Khr(T)\) into rational and special components. Components of \(\BNr(T)\) and the unreduced invariant \(\Kh(T)\) are more complicated in general; compare in particular \cref{rmk:local_systems_can_be_complicated}. 

\section{Naturality of the mapping class group action}\label{sec:MCGaction}

In this section we prove \cref{prop:twisting-curves,thm:twisting:general-coeff}, extending  the proof over \(\CoeffFieldTwo\) 
from \cite[Theorem 8.1]{KWZ}
to integer coefficients. 

\subsection{Preliminaries on type AD bimodules}\label{sub:bordered}
We first collect material from bordered Floer homology and pin down sign conventions, omitting proofs as these follow from the same routine calculations as the \(\CoeffFieldTwo\) versions in \cite{LOT,LOTBimodules,LOTMor}. Throughout this subsection fix  bigraded algebras \(A\) and \(B\) over some commutative, unital rings \(\CoeffRing\) and \(\CoeffSRing\), respectively. (Later, \(\CoeffRing\) and \(\CoeffSRing\) will be \(\Z\) or some rings of idempotents.) Write \(\mu_A\) and \(\mu_B\) for the multiplication maps in \(A\) and \(B\), respectively.
We follow the Koszul sign rule for tensor products: 

\begin{definition}\label{def:koszul-rule}
Given homogeneous homomorphisms 
\(f\co X_1\rightarrow X_2\) and  
\(g\co Y_1\rightarrow Y_2\) 
between (bi)graded abelian groups 
\(X_1\), \(X_2\), \(Y_1\), \(Y_2\),
and elements 
\(x\in X_1\), \(y\in Y_1\),
we define 
\[
(f\otimes g)
(x\otimes y)
=
(-1)^{|x|\cdot|g|}
(f(x)\otimes g(y))
\] 
where \(|\cdot|\) denotes the homological grading.
\end{definition}

	Let 
	\(\CuAB\)
	be the dg category whose objects are bigraded \(\CoeffRing\)-\(\CoeffSRing\)-bimodules and whose morphisms 
	\(f\in \CuAB(X,Y)\) 
	for 
	\(X,Y\in\CuAB\)
	are sequences 
	\((f_i)_{i\geq0}\) 
	of \(\CoeffRing\)-\(\CoeffSRing\)-bimodule homomorphisms
	\[
	f_i
	\co
	A^{\otimes i}\otimes X
	\rightarrow
	Y\otimes B.
	\] 
	A morphism
	\(f\in \CuAB(X,Y)\) 
	is bounded if \(f_i=0\) for \(i\gg 0\), and it  
	is strictly unital if, for all \(x\in X\), \(f_1(1\otimes x)=x\) and
	\[
	f_k(a_k\otimes \dots\otimes a_1\otimes x)=0
	\quad
	\text{for all \(a_1,\dots,a_k\in A\) with \(a_j=1\) for some \(j=1,\dots,k\).}
	\]
	Moreover, a morphism \(f\) is homogeneous of grading \(\gr(f)=q^rh^s\)
	if \(f_i\) is homogeneous of grading \(\gr(f_i)=q^rh^{s-i}\) for all \(i\geq0\). 
	For any 
	\(X,Y,Z\in\CuAB\), 
	there is an \(\CoeffRing\)-\(\CoeffSRing\)-bimodule homomorphism
	\[
	\CuAB(Y,Z)
	\otimes
	\CuAB(X,Y)
	\rightarrow
	\CuAB(X,Z),
	\quad
	g\otimes f
	\mapsto
	(g\circ f)
	\]
	defined by
	\[
	(g\circ f)_i
	=
	\sum_{i=j+k}
	(-1)^{k|g_j|}
	(1_Z\otimes\mu_B)
	\circ
	(g_j\otimes 1_B)
	\circ
	(
	1_{A^{\otimes j}}
	\otimes
	f_k
	)
	\]
	The identity morphism for any object \(X\)
	is given by \((1_X)_0=1_X\otimes \eval_1\), 
	where \(\eval_1\co \Z\rightarrow B,1\mapsto 1\), 
	and 
	\((1_Y)_{i}=0\)
	for \(i\geq1\). 
	For any 
	\(X,Y\in\CuAB\) the morphism spaces 
	\(\CuAB(X,Y)\)
	carry a differential 
	\[
	(\underline{\partial}(f))_{i}
	=
	\sum_{i=\ell+2+m}
	(-1)^{\ell+1}
	f_{i-1}
	\circ
	(
	1_{A^{\otimes \ell}}
	\otimes
	\mu_A
	\otimes
	1_{A^{\otimes m}\otimes X}
	)
	\]
	for \(i\geq0\). 
	In particular, \((\underline{\partial}(f))_{i}=0\) for \(i=0,1\). 
	
	\begin{definition}\label{def:typeAD}
	The category 
	of type~AD structures over \(A\) and \(B\) is the dg category of chain complexes over
	\(\CuAB\). 
	We will write \(\CAB\) for the full subcategory of strictly unital type~AD structures. 
	More explicitly, the objects in 
	\(\CAB\)
	are pairs \((Y,\delta)\),
	where 
	\(Y\in\CuAB\)
	and
	\(\delta\in\CuAB(Y,Y)\)
	is strictly unital
	with
	\(\gr(\delta)=q^0h^{1}\)
	such that
	\(\delta^2+\underline{\partial}(\delta)=0\). 
	For any 
	\((X,\delta^X),(Y,\delta^Y)\in\CAB\),
	the differential 
	\(\ddCAB\)
	on
	\(\CAB(X,Y)=\CuAB(X,Y)\)
	is defined by 
	\[
	\ddCAB(f)
	=
	\delta^Y\circ f
	-
	(-1)^{|f|}
	f\circ\delta^X
	+
	\underline{\partial}(f)
	\]
	\end{definition}

\begin{lemma}\label{lem:signs-typeAD-partial-well-defined}
	The categories 
	\(\CuAB\)
	and
	\(\CAB\)
	are well-defined. 
	In particular, 
	\[
	\underline{\partial}(g\circ f)
	=
	\underline{\partial}(g)\circ f
	+
	(-1)^{|g|}
	g\circ\underline{\partial}(f)
	\]
	for any 
	\(f\in\CuAB(X,Y)\)
	and
	\(g\in\CuAB(Y,Z)\), 
	where 
	\(X,Y,Z\in\CuAB\).
	Moreover, 
	\[
	\ddCAB(g\circ f)
	=
	\ddCAB(g)\circ f
	+
	(-1)^{|g|}
	g\circ \ddCAB(f)
	\]
	for any 
	\(f\in\CAB(X,Y)\)
	and
	\(g\in\CAB(Y,Z)\), 
	where 
	\(X,Y,Z\in\CAB\). 
	\qed
\end{lemma}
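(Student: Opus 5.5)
Both claims are direct sign computations with the Koszul rule from \cref{def:koszul-rule} and the sign \((-1)^{k|g_j|}\) in the composition formula. The main work is organising the terms so the cancellations are visible.

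For the Leibniz rule for \(\underline{\partial}\), I would expand \((\underline{\partial}(g\circ f))_i\) as a sum over a splitting \(i-1=j+k\) of the inputs between \(g_j\) and \(f_k\), and over a position \(\ell\) at which \(\mu_A\) merges two adjacent algebra inputs before they enter \(g\circ f\). Each term falls into one of three classes:
\begin{enumerate}
\item both merged inputs feed \(f_k\);
\item both merged inputs feed \(g_j\);
\item one merged input feeds \(g_j\) and the other feeds \(f_k\).
\end{enumerate}
Terms of the first class reassemble into \((-1)^{|g|}g\circ\underline{\partial}(f)\). Here the sign \((-1)^{\ell+1}\) must be reconciled with the shift of \(\ell\) by \(j\), with the Koszul sign from passing \(1_{A^{\otimes j}}\otimes\mu_A\) past \(g_j\), and with the change of \(|f_k|\) under \(\underline{\partial}\). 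I would check this using \(|g_j|=|g|-j\). Terms of the second class reassemble into \(\underline{\partial}(g)\circ f\). Terms of the third class do not occur as such: in \(g\circ f\) the map \(f_k\) consumes the innermost \(k\) inputs, and \(\mu_A\) is applied before the splitting, so a merge straddling the boundary simply becomes an input of \(g\circ f\) with a different splitting. The real content is therefore just the bookkeeping of the first two classes. Associativity of composition in \(\CuAB\) and the unit axioms are checked the same way, using associativity of \(\mu_A\) and \(\mu_B\) together with the Koszul rule.

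For \(\CAB\), I would first check that \(\ddCAB^2=0\). Expanding gives \(\ddCAB^2(f)=(\delta^2+\underline{\partial}\delta)\circ f-f\circ(\delta^2+\underline{\partial}\delta)\), using \(\underline{\partial}^2=0\), the Leibniz rule just proved, and \(|\delta|=1\). The right-hand side vanishes by the defining relation \(\delta^2+\underline{\partial}(\delta)=0\).

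I would then prove the Leibniz rule for \(\ddCAB\) by summing three separate Leibniz identities. The \(\underline{\partial}\) part is the identity just established. The term \(\delta^Z\circ(g\circ f)\) pairs with \((\delta^Z\circ g)\circ f\) by associativity. The term \((g\circ f)\circ\delta^X\) pairs with \(g\circ(f\circ\delta^X)\), and it carries the sign \((-1)^{|g|+|f|}\). The cross terms \(\pm g\circ\delta^Y\circ f\) cancel exactly as in the usual dg category of chain complexes. Finally, strict unitality is preserved under composition and under \(\underline{\partial}\), because any term with a unit input either vanishes or reduces via \(\mu_A(1\otimes a)=a\); these contributions cancel in pairs inside \(\underline{\partial}\).

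The main obstacle is the sign check in the first class of terms, i.e.\ matching \((-1)^{\ell+1}\) after the index shift against the Koszul and composition signs. I would verify it first in the lowest nontrivial degree \(i=3\), with \(j=1\) and \(k=1\), and then argue that the general case follows because each sign depends only on parities of \(j\), \(k\) and \(\ell\).
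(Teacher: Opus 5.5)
Your outline is the same routine computation the paper intends; the paper states the lemma without proof, as the signed analogue of the \(\CoeffFieldTwo\) bordered calculations. The core of the outline is right: terms of \(\underline{\partial}(g\circ f)\) split according to whether the merged input goes to \(g\) or to \(f\), and there are no straddling terms on either side. Your formula \(\ddCAB^2(f)=(\delta^2+\underline{\partial}\delta)\circ f-f\circ(\delta^2+\underline{\partial}\delta)\) and the three-piece Leibniz argument for \(\ddCAB\) are also correct. Three points need fixing.

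First, you use \(\underline{\partial}^2=0\) without proving it, although it is part of the well-definedness of \(\CuAB\). It holds for two reasons. Each pair of disjoint merges occurs twice, with signs \((-1)^{\ell_A+\ell_B}\) and \((-1)^{\ell_A+\ell_B+1}\). The two ways of merging three adjacent inputs cancel by associativity of \(\mu_A\). This is where associativity of \(\mu_A\) enters; associativity of composition needs only \(\mu_B\).

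Second, checking \(i=3\) and then appealing to parities does not prove the sign identity, but the general check is short.
\begin{itemize}
\item \(\mu_A\) has degree \(0\), so it produces no Koszul sign.
\item The Koszul sign of \(1_{A^{\otimes j}}\otimes f_k\) is the same on both sides, because \(|\underline{\partial}(f)_{k+1}|=|f_k|\).
\item In your first class the only discrepancy is \((-1)^{j}\), from the shift of \(\ell\), times \((-1)^{|g_j|}\), from \(k\mapsto k+1\) in the composition sign. Since \(|g_j|=|g|-j\), this is exactly \((-1)^{|g|}\).
\item In the second class the signs agree on the nose, since \(|\underline{\partial}(g)_{j+1}|=|g_j|\).
\end{itemize}

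Third, drop the final claim about strict unitality. \(\CAB\) is a full subcategory, so morphisms carry no unitality condition and nothing needs checking. The claim is also false for \(\underline{\partial}\) alone. Put a unit in the outermost slot: the merge \((1,a_{n-1})\) has no partner and leaves \(-f_{n-1}(a_{n-1},\dots,a_1,x)\). That term is cancelled only inside \(\ddCAB\), by the \((1|1)\) component of \(\delta\) in \(\delta\circ f\).
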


\begin{remark}\label{rem:typeAD:graphical-notation}
	Similar to the graphical notation from \cref{rem:typeD:graphical-notation} for type D structures, objects and morphisms in \(\CuAB\) and \(\CAB\) can often be described by labelled graphs. Suppose
	\(X,Y\in\CuAB\)
	are freely generated
	as \(\CoeffRing\)-\(\CoeffSRing\)-bimodules 
	by \(\{x_i\}_i\) and \(\{y_j\}_j\), 
	respectively. 
	Suppose further that the algebra \(A\) is freely generated by elements \(\{a_\ell\}_\ell\). 
	Then morphisms \(f=(f_k)_{k\geq0}\in\CuAB(X,Y)\) 
	with 
	\[
	f_k
	\left(
	a_{\ell(k,i,k)}\otimes \cdots \otimes a_{\ell(k,i,1)}\otimes x_i
	\right)
	=
	\sum_j y_j \otimes b_{j,k,i}
	\]
	can be uniquely represented by the collection of arrows
	\[
	\begin{tikzcd}[column sep = 4.5cm]
		x_i
		\arrow{r}{%
			\left(\left.
			a_{\ell(k,i,k)},\dots,a_{\ell(k,i,1)}\right|b_{j,k,i}
			\right)
		}
		&
		y_j
	\end{tikzcd}
	\]
	By convention, arrows with \(b_{j,k,i}=0\) are omitted. To simplify the signs, assume that the algebras \(A\) and \(B\) are concentrated in homological degree 0. Then composition of morphisms in \(\CuAB\) can be expressed graphically as follows:
	\[
	\Big(%
	\begin{tikzcd}[column sep = 2cm]
		x
		\arrow{r}{(a_{k},\dots,a_{1}|b_1)}
		&
		y
		\arrow{r}{(a_{k+j},\dots,a_{k+1}|b_2)}
		&
		z
	\end{tikzcd}
	\Big)%
	=
	\Big(%
	\begin{tikzcd}[column sep = 3.5cm]
		x
		\arrow{r}{(a_{k+j},\dots,a_{1}|(-1)^{k\sigma}b_2b_1)}
		&
		z
	\end{tikzcd}
	\Big)%
	\]
	where \(\sigma\) is the grading of the morphism \(y\rightarrow z\). 
	In particular, if the arrow \(y\rightarrow z\) is part of a differential, \(\sigma=1-j\).
To express the differential \(\underline{\partial}\) in this notation, we make the assumption that the basis \(\{a_\ell\}_\ell\) of \(A\) is closed under multiplication, by which we mean that the product of any two basis elements is either equal to zero or \(\pm a_\ell\) for some \(\ell\). 
	Under this assumption, the differential of an arrow
	\[
	\begin{tikzcd}[column sep = 2.5cm]
		x
		\arrow{r}{(a_{i},\dots,a_{1}|b)}
		&
		y
	\end{tikzcd}
	\]
	is equal to the collection of arrows
	\[
	\bigcup_{\ell=0}^{i-1}
	\Big\{
	\begin{tikzcd}[column sep = 6.5cm]
		x
		\arrow{r}{(a_i,\dots,a_{i-\ell+1}
			,a'',a',a_{i-\ell-1},\dots,a_{1}|\pm(-1)^{\ell+1}\cdot b)}
		&
		y
	\end{tikzcd}
	\Big|%
	a''a' = \pm a_{i-\ell}
	\Big\}
	\]
\end{remark}

\begin{warn}
	The standard basis of \(\B\) is closed under multiplication in the sense of the previous remark. 
	By contrast, however, the \(\varG\)-equivariant basis of \(\B\) is not closed under multiplication. 
\end{warn}

Following the notation from \cite{LOTBimodules}, 
	we often use the super- and subscripts \(\typeAD{A}{\cdot}{B}\) 
	to highlight the rings over which a type~AD structure is defined. 
	Notice that a type AD structure
	\((Y,\delta)\in\CAB\)
	with \(\delta_k=0\) for all \(k>0\)
	is a type D structure over $B$. 
	In particular, when $B$ is the Bar-Natan algebra $\B$, 
	we recover the type D structures encountered in \cref{sub:typeD}. 

\begin{definition}\label{def:boxtensoring}
	Given a type D structure 
	\(X^A=(X,\delta_X)\in\C^{A}\),
  there is a recursively defined sequence 
	\((\delta^i_X)_{i\geq0}\)
	of \(\CoeffRing\)-module homomorphisms
	\[
	\delta^i_X
	\co
	X\rightarrow
	X\otimes A^{\otimes i}
	\]
	by setting 
	\(\delta^0_X=1_X\)
	and 
	\(\delta^{i+1}_X=(\delta^i_X\otimes 1_A)\circ \delta_X\). 
	If \(\delta^i_X=0\) for \(i\gg 0\), we say that \(\delta_X\) is bounded. Consider an object 
	\(\typeAD{A}{Y}{B}=(Y,\delta^Y)\) in \(\CAB\)
	and suppose that either
	\(\delta_X\)
	or
	\(\delta^Y\)
	is bounded. 
	We define 
	\((X\boxtimes Y,\delta_X\boxtimes \delta^Y)\in\C^B\) 
	by setting 
	\(X\boxtimes Y= X\otimes Y\)
	and
	\[
	\delta_X\boxtimes \delta^Y
	=
	\sum_{i=0}^\infty
	(1_X\otimes\delta^Y_i)
	\circ
	(\delta^i_X\otimes 1_Y)
	\]
\end{definition}

\begin{remark}\label{rem:bounded}
	Assuming that the algebra \(A\) is concentrated in homological grading 0, the differential of any finitely generated type~D over \(A\) is bounded.  
	The same is true for any finitely generated type~AD structures 
	over algebras with vanishing homological gradings. 
	Moreover, any morphism between two such type~AD structures is bounded. 
\end{remark}

\begin{lemma}\label{lem:signs-boxtimes-diff-well-defined}
	The differential \(\delta_X\boxtimes\delta^Y\) on \(X\boxtimes Y\) is well-defined. 
	\qed
\end{lemma}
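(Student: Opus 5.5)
The claim is that \(\delta_X\boxtimes\delta^Y\) is a well-defined type~D structure on \(X\otimes Y\) over \(B\). This amounts to three checks: the sum converges, the map has the right grading, and it squares to zero. The last is the only real content, and I will reduce it to the structure equation \(\delta^Y\circ\delta^Y+\underline{\partial}(\delta^Y)=0\) for the type~AD structure together with \(\delta_X^2=0\).

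First, well-definedness of the sum. By the boundedness hypothesis, either \(\delta^i_X=0\) for \(i\gg0\) or \(\delta^Y_i=0\) for \(i\gg0\). In either case only finitely many terms \((1_X\otimes\delta^Y_i)\circ(\delta^i_X\otimes 1_Y)\) are nonzero, so \(\delta_X\boxtimes\delta^Y\) is an honest \(\CoeffRing\)-linear map \(X\otimes Y\to X\otimes Y\otimes B\); the Koszul convention of \cref{def:koszul-rule} fixes all signs. For the grading: \(\delta_X\) has grading \(q^0h^1\), so \(\delta^i_X\) has grading \(h^{i}\), while \(\delta^Y_i\) has grading \(q^0h^{1-i}\) by the convention \(\gr(f_i)=q^rh^{s-i}\). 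Each summand therefore has total grading \(q^0h^1\), as required.

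Second, the square \((\delta_X\boxtimes\delta^Y)^2=(1\otimes\mu_B)\circ\big((\delta_X\boxtimes\delta^Y)\otimes 1_B\big)\circ(\delta_X\boxtimes\delta^Y)\). Expanding gives a double sum over \(j,k\) of terms
\[
(1_X\otimes 1\otimes\mu_B)\circ(1_X\otimes\delta^Y_j\otimes1_B)\circ(1_X\otimes1_{A^{\otimes j}}\otimes\delta^Y_k)\circ(\delta^{j+k}_X\otimes 1_Y),
\]
using \(\delta^{j}_X\otimes 1\) composed after \(\delta^{k}_X\) equals \(\delta^{j+k}_X\) by the recursive definition. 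The sign comes from commuting \(\delta_X^j\) past \(\delta^Y_k\), and should be \((-1)^{k|\delta^Y_j|}\) or a compatible variant. Grouping by \(i=j+k\) reproduces exactly \(1_X\otimes(\delta^Y\circ\delta^Y)_i\) applied after \(\delta^i_X\otimes 1_Y\), with the composition sign \((-1)^{k|g_j|}\) from \(\CuAB\).

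By the structure relation, \((\delta^Y\circ\delta^Y)_i=-\underline{\partial}(\delta^Y)_i\). This is a sum over positions \(\ell\) of \(\delta^Y_{i-1}\circ(1\otimes\mu_A\otimes1)\), weighted by \((-1)^{\ell+1}\). So I must show that
\[
\sum_i(1_X\otimes\underline{\partial}(\delta^Y)_i)\circ(\delta^i_X\otimes1_Y)=0 .
\]
Here I use \(\delta_X^2=0\) in the form \((1_X\otimes\mu_A)\circ(\delta_X\otimes 1_A)\circ\delta_X=0\). Inserting \(\mu_A\) at position \(\ell\) of \(\delta^i_X\) yields \(\delta^{\ell}\)-, \(\delta_X^2\)-, and \(\delta^{i-\ell-2}\)-factors, and so each \(\ell\)-term vanishes individually. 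This uses the identity \((1\otimes 1_{A^{\otimes a}}\otimes\mu_A\otimes 1_{A^{\otimes b}})\circ\delta^{a+b+2}_X=(\dots)\circ\delta_X^{2}\circ\dots\), which I will prove by induction on \(a\) from the recursive definition. Strict unitality of \(\delta^Y\) plays no role here beyond being part of the definition.

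The main obstacle is the sign bookkeeping. It has three sources: the Koszul rule when \(1_X\otimes\delta^Y_j\) is pushed past elements of \(A^{\otimes k}\) with \(A\) possibly homologically graded; the composition sign \((-1)^{k|g_j|}\); and the signs \((-1)^{\ell+1}\) in \(\underline{\partial}\). I would first settle the case where \(A\) and \(B\) are concentrated in homological degree \(0\), which is the only case used later (\cref{rem:bounded}). There the Koszul signs depend only on \(|x|\) and the degrees \(1-i\) of \(\delta^Y_i\), and I would verify directly that the total sign on the \((j,k)\) term matches \((-1)^{k(1-j)}\). The general case follows by the same calculation with extra Koszul terms, mirroring the \(\CoeffFieldTwo\) computations in \cite{LOTBimodules}.
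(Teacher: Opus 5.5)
Your proposal is correct and is exactly the routine verification the paper has in mind; the paper omits the proof, deferring to the analogous \(\CoeffFieldTwo\) computations in \cite{LOT}. The steps are the same: regroup \((\delta_X\boxtimes\delta^Y)^2\) by \(i=j+k\) into \(\big(1_X\otimes(\delta^Y\circ\delta^Y)_i\big)\circ(\delta^i_X\otimes 1_Y)\), use \(\delta^Y\circ\delta^Y=-\underline{\partial}(\delta^Y)\), and kill each \(\mu_A\)-insertion term separately with \(\delta_X^2=0\). On signs, the \((j,k)\)-sign agrees with the composition sign \((-1)^{k(1-j)}\) only up to a factor depending on \(i\) and the input (it is \((-1)^{|x|\,i}\) when \(A\) sits in degree \(0\)). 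That is all you need, since the \(\underline{\partial}\)-terms vanish individually.
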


\begin{example}
	With notation as in \cref{def:boxtensoring}, suppose that \(A=B=\CoeffRing\), \(\delta^Y_i=0\) for \(i>1\), and \(\delta^Y_1(1\otimes y)=y\otimes 1\) for all \(y\in Y\). 
	Then \((X,\delta_X)\) and \((Y,\delta^Y_0)\) are chain complexes over \(\CoeffRing\), and 
	\(
	(\delta_X\boxtimes \delta^Y)
	=
	(1_X\otimes\delta^Y_0)
	+
	(\delta_X\otimes 1_Y)
	\)
	coincides with the usual differential on the tensor product \((X,\delta_X)\otimes(Y,\delta^Y_0)\).
\end{example}

\begin{remark}	
	We now interpret \cref{def:boxtensoring} graphically, continuing the discussion from \cref{rem:typeD:graphical-notation,rem:typeAD:graphical-notation}. 	
	Suppose
	\(X\in\C^{A}\)
	is freely generated 
	as an \(\CoeffRing\)-module 
	by \(\{x_i\}_i\), 
	and that 
	\(Y\in\CAB\)
	is freely generated	
	as an \(\CoeffRing\)-\(\CoeffSRing\)-bimodule 
	by \(\{y_j\}_j\). 
	Furthermore, suppose that the algebra \(A\) is freely generated by elements \(\{a_\ell\}_\ell\). 
	Write
	\[
	\delta_X^k(x_i)
	=
	\sum_{j}
	x_j\otimes a_{\ell(j,k,i,k)}\otimes\cdots \otimes a_{\ell(j,k,i,1)}.
	\] for any  \(k\geq0\).
	Then for any \(y\in Y\) 
	\begin{align*}
		(\delta_X\boxtimes \delta^Y)(x_i\otimes y)
		=&
		\sum_{k=0}^\infty
		(1_X\otimes \delta^Y_k)
		(\sum_{j}
		x_{j}\otimes a_{\ell(j,k,i,k)}\otimes\cdots \otimes a_{\ell(j,k,i,1)} \otimes y)
		\\
		=&
		\sum_{k=0}^\infty
		\sum_{j}
		(-1)^{|x_j|\cdot(k+1)}%
		x_j\otimes \delta^Y_k(a_{\ell(j,k,i,k)}\otimes\cdots \otimes a_{\ell(j,k,i,1)} \otimes y)
	\end{align*}
	So, choosing graphical representations of \(X^A\)  and \(\typeAD{A}{Y}{B}\) with respect to the chosen bases of \(A\), \(X\), and \(Y\), we obtain the following interpretation of the differential in \(X\boxtimes Y\):
	For every chain of arrows
	\[
	\left(
	\begin{tikzcd}
		x
		\arrow{r}{a_1}
		&
		\cdots
		\arrow{r}{a_k}
		&
		x'
	\end{tikzcd}
	\right)
	\quad
	\text{in \(X^A\)}
	\]
	and for every matching arrow
	\[
	\Big(%
	\begin{tikzcd}[column sep = 3cm]
		y
		\arrow{r}{%
			\left(\left.
			a_k,\dots,a_1\right|b
			\right)
		}
		&
		y'
	\end{tikzcd}
	\Big)%
	\quad
	\text{in \(\typeAD{A}{Y}{B}\)}
	\]
	there is exactly one arrow 
	\[
	\Big(%
	\begin{tikzcd}[column sep = 2.5cm]
		x\boxtimes y
		\arrow{r}{(-1)^{|x'|\cdot(k+1)} b}
		&
		x'\boxtimes y'
	\end{tikzcd}
	\Big)%
	\quad
	\text{in \((X\boxtimes Y)^B\)}
	\]
	If we further assume that the algebra \(A\) is concentrated in homological grading 0 then \(|x'|=|x|+k\) and the sign on this arrow is \((-1)^{|x|\cdot(k+1)}\).
\end{remark}

\begin{remark}\label{rem:boxtensoring-AD-AD}
	\cref{def:boxtensoring} can be extended to boxtensor products of two type AD structures such that 
	\[
	(X\boxtimes Y) \boxtimes Z
	=
	X\boxtimes (Y \boxtimes Z)
	\]
	for any type D structure \(X^A\) and 
	type AD structures \(\typeAD{A}{Y}{B}\) and \(\typeAD{B}{Z}{C}\) 
	over compatible algebras. 
	(For simplicity, we assume that both type AD structures are bounded.)
	For every chain of arrows
	\[
	\Big(%
	\begin{tikzcd}[column sep=2.5cm]
		y
		\arrow{r}{(a^1_{\ell_1},\dots,a^1_1|b^1)}
		&
		\cdots
		\arrow{r}{(a^k_{\ell_k},\dots,a^k_1|b^k)}
		&
		y'
	\end{tikzcd}
	\Big)%
	\quad
	\text{in \(\typeAD{A}{Y}{B}\)}
	\]
	and for every matching arrow
	\[
	\Big(%
	\begin{tikzcd}[column sep = 3cm]
		z
		\arrow{r}{%
			\left(\left.
			b^k,\dots,b^1\right|c
			\right)
		}
		&
		z'
	\end{tikzcd}
	\Big)%
	\quad
	\text{in \(\typeAD{B}{Z}{C}\)}
	\]
	there is exactly one arrow 
	\[
	\Big(%
	\begin{tikzcd}[column sep = 2.5cm]
		y\boxtimes z
		\arrow{r}{(a^k_{\ell_k},\dots,a^1_1|(-1)^{\star} c)}
		&
		y'\boxtimes z'
	\end{tikzcd}
	\Big)%
	\quad
	\text{in \(\typeAD{B}{(Y\boxtimes Z)}{C}\)}
	\]
	where 
	\(
	\star
	=
	|y'|%
	(k+1)
	+
	\sum_{i=1}^k (\ell_k+\dots+\ell_{i+1})(\ell_i+1).
	\)
\end{remark}

The boxtensor products between type~D and type~AD, and (more generally between two type~AD structures) are compatible with notions of homotopy equivalence in the following sense, assuming sufficient boundedness conditions; see \cref{rem:bounded}.

\begin{proposition}\label{prop:boxtensoring-and-homotopies}
	Let \(X_1\simeq X_2\in\C^{A}\) and 
	\(Y_1 \simeq Y_2\in\CAB\).
	Suppose that either 
	\(\delta_{X_1}\) and \(\delta_{X_2}\) are bounded
	or that \(\delta_{Y_1}\) and \(\delta_{Y_2}\) are bounded and the isomorphisms up to chain homotopy between \(Y_1\) and \(Y_2\) are bounded.
	Then
	\(
	X_1\boxtimes Y_1
	\simeq
	X_2\boxtimes Y_2.
	\)
	\qed
\end{proposition}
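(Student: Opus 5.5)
The plan is to reduce the statement to the standard fact that the box tensor product is a dg functor in each variable, and then compose two one-sided equivalences:
\[
X_1\boxtimes Y_1\simeq X_2\boxtimes Y_1\simeq X_2\boxtimes Y_2 .
\]
Concretely, given a morphism \(f\in\C^{A}(X,X')\), I define \(f\boxtimes 1_Y\in\C^B(X\boxtimes Y,X'\boxtimes Y)\) by the same recipe as the differential in \cref{def:boxtensoring}. I feed \(f\) into the iterated sequence, so the terms have the form \((\delta^{i}_{X'}\otimes 1)\circ f\circ(\delta^j_X\otimes\ldots)\), contract with \(\delta^Y_{i+j+1}\), and add the Koszul signs of \cref{def:koszul-rule}. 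Likewise, for \(g\in\CAB(Y,Y')\) I set
\[
1_X\boxtimes g=\sum_i (1_X\otimes g_i)\circ(\delta^i_X\otimes 1_Y).
\]
Boundedness is exactly what makes these sums finite. If \(\delta_X\) is bounded, then \(\delta^i_X=0\) for \(i\gg0\). Otherwise \(\delta^Y\) and \(g\) are bounded, as assumed.

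The key identities to verify are the following two:
\[
\partial_{\C^B}(f\boxtimes 1_Y)=\partial_{\C^A}(f)\boxtimes 1_Y,
\qquad
\partial_{\C^B}(1_X\boxtimes g)=1_X\boxtimes\ddCAB(g),
\]
together with compatibility with composition: \((f'\circ f)\boxtimes 1=(f'\boxtimes 1)\circ(f\boxtimes 1)\) and \(1\boxtimes(g'\circ g)=(1\boxtimes g')\circ(1\boxtimes g)\), possibly only up to homotopy in the first variable. One also needs \(1\boxtimes 1=1\), which uses strict unitality. The second identity is the same computation as \cref{lem:signs-boxtimes-diff-well-defined}. In \(1_X\boxtimes(\delta^{Y'}\circ g-(-1)^{|g|}g\circ\delta^Y+\underline\partial g)\), the terms involving \(\mu_A\) coming from \(\underline\partial g\) cancel against the terms where \(\delta_X\) is applied twice, because \(\delta_X^2=0\) is expressed through \(\mu_A\). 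This is the same cancellation that shows \((\delta_X\boxtimes\delta^Y)^2=0\).

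Given these identities, a homotopy equivalence \(g\co Y_1\to Y_2\) with inverse \(g'\) and homotopies \(h,h'\) yields \(1\boxtimes g\) and \(1\boxtimes g'\). Applying \(1\boxtimes-\) to the relation \(g'\circ g-1=\ddCAB(h)\) gives
\[
(1\boxtimes g')(1\boxtimes g)-1=\partial(1\boxtimes h),
\]
and the other composite is handled the same way. The same argument in the first variable handles \(X_1\simeq X_2\).

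For the first variable, I would avoid the awkward definition of \(f\boxtimes 1_Y\) for general \(Y\) by a cleaner alternative. Since all algebras involved are honest (not \(A_\infty\)) algebras, a homotopy equivalence of type D structures \(X_1\simeq X_2\) can be realised by type D morphisms \(f\co X_1\to X_2\otimes A\). Then \(f\boxtimes 1_Y\) is given by a single extra arrow inserted into a chain, and the required identities follow from the associativity of \(\mu_A\) and the \(A_\infty\)-relation \(\delta^2+\underline\partial\delta=0\) for \(Y\).

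The main obstacle is bookkeeping rather than conceptual. I have to check that the Koszul signs \((-1)^{|x|(k+1)}\) from the graphical description and the sign \((-1)^{\ell+1}\) in \(\underline\partial\) cancel in pairs, and that composition is strictly preserved. I would first do the sign check for \(\delta\boxtimes\delta\) with \(A,B\) in homological degree \(0\), using the graphical rules of \cref{rem:typeAD:graphical-notation}, and then observe that inserting a morphism of degree \(|f|\) only changes signs by the factor \((-1)^{|f|}\), which matches the definition of \(\ddCAB\).
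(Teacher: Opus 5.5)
Your proposal is correct and is essentially the paper's argument. The paper also deduces the statement from functoriality of \(\boxtimes\) on morphisms (\cref{def:boxtimes-morphisms-D,def:boxtimes-morphisms-AD,lem:signs-boxtimes-diff-D,lem:signs-boxtimes-composition-D,lem:signs-boxtimes-diff-AD,lem:signs-boxtimes-composition-AD,rem:typeD:identity-boxtensor}), packaged in \cref{prop:boxtensoring-morphisms} via \(f\boxtimes g=(1_{X_2}\boxtimes g)\circ(f\boxtimes 1_{Y_1})\). That is exactly your factorisation through \(X_2\boxtimes Y_1\), with composition strict in the type~AD variable and only up to homotopy in the type~D variable.

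One correction to your sign bookkeeping: the Koszul signs of \cref{def:koszul-rule} alone are not enough. You need the additional signs \((-1)^{k|f|+k}\) and \((-1)^{i|g|+i}\) from \cref{def:boxtimes-morphisms-D,def:boxtimes-morphisms-AD}; the second is missing from your displayed formula for \(1_X\boxtimes g\). Without them, the identities \(\partial_{\C^B}(f\boxtimes 1_Y)=\partial_{\C^A}(f)\boxtimes 1_Y\) and \(\partial_{\C^B}(1_X\boxtimes g)=1_X\boxtimes\ddCAB(g)\) fail in general, already for morphisms of degree zero.
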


\cref{prop:boxtensoring-and-homotopies}
follows immediately from
\cref{prop:boxtensoring-morphisms}
below.

\begin{definition}\label{def:boxtimes-morphisms-D}
	Given 
	\(f\in \C^{A}(X_1,X_2)\)
	define a sequence 
	\((f^{i})_{i\geq1}\) 
	of \(\CoeffRing\)-module homomorphisms
	\[
	f^{i}
	\co
	X_1\rightarrow
	X_2\otimes A^{\otimes i}
	\]
	where \(f^1=f\) and
	\[
	f^i
	=
	\sum_{i=j+1+k}
	(-1)^{k|f|+k}
	(\delta^j_{X_2}\otimes1_{A^{\otimes (k+1)}})
	\circ
	(f\otimes 1_{A^{\otimes k}})
	\circ
	\delta^k_{X_1}
	\quad
	\text{for } i>1.
	\]
	Furthermore, let 
	\(Y\in\CAB\)
	and assume that either \(\delta^Y\) is bounded or \(\delta_{X_1}\) and \(\delta_{X_2}\) are bounded. 
	We define 
	\[
	f
	\boxtimes
	1_Y
	=
	\Big(%
	\sum_{i=1}^\infty
	(1_{X_2}\otimes \delta^Y_i)
	\circ
	(f^{i}\otimes 1_{Y})
	\Big)%
	\in
	\C^{B}(X_1\boxtimes Y,X_2\boxtimes Y).
	\]
\end{definition}

\begin{lemma}\label{lem:signs-boxtimes-diff-D}
	With notation as in \cref{def:boxtimes-morphisms-D},
	\(
	\partial_{\C^{B}}(f\boxtimes 1_Y)
	=
	\partial_{\C^A}(f)\boxtimes 1_Y
	\).
	\qed
\end{lemma}

\begin{lemma}\label{lem:signs-boxtimes-composition-D}
	Let 
	\(
	f\in \C^{A}(X_1,X_2)
	\)
	and 
	\(
	f'\in \C^{A}(X_2,X_3)
	\)
	be two morphisms in the kernel of  
	\(\partial_{\C^{A}}\).
	Let
	\(Y\in\CAB\)
	and suppose that either 
	\(\delta_{X_1}\), \(\delta_{X_2}\), and \(\delta_{X_3}\) are bounded
	or \(\delta^Y\) is bounded. 
	Then	
	\[
	\pushQED{\qed} 
	(f'\boxtimes 1_Y)
	\circ
	(f\boxtimes 1_Y)
	\simeq
	(f'\circ f)\boxtimes 1_Y.
	\qedhere
	\popQED
	\]
\end{lemma}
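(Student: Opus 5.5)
The plan is to reduce the homotopy relation to an explicit nullhomotopy of a boundary term. The central identity is an exact Leibniz-type formula
\[
(f'\boxtimes 1_Y)\circ(f\boxtimes 1_Y)-(f'\circ f)\boxtimes 1_Y
=
\partial_{\C^{B}}(H),
\]
where the correction \(H\in\C^{B}(X_1\boxtimes Y,X_3\boxtimes Y)\) is built from the higher operations \(\delta^Y_i\) of \(Y\) applied to mixed strings \(f'\) and \(f\).

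\textbf{Step 1: expand both sides.} Using \cref{def:boxtimes-morphisms-D}, the composite \((f'\boxtimes 1_Y)\circ(f\boxtimes 1_Y)\) is a sum of terms
\[
(1\otimes\delta^Y_j)\circ(f'^{\,j}\otimes 1)\circ(1\otimes\delta^Y_i)\circ(f^{i}\otimes 1).
\]
Here \(f^i\) is a string \(\delta_{X_2}^{a}\, f\, \delta_{X_1}^{b}\) with \(a+1+b=i\), and similarly for \(f'^{\,j}\). By contrast, \((f'\circ f)\boxtimes 1_Y\) involves \(\delta^Y_k\) applied to strings \((f'\circ f)^k\). In each such string a single algebra element arises as the product \(\mu_A(f'(\cdot)\cdot f(\cdot))\) of the outputs of \(f\) and \(f'\).

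\textbf{Step 2: define the homotopy.} Set
\[
H=\sum_{k}(1_{X_3}\otimes\delta^Y_k)\circ(h^k\otimes 1_Y),
\]
where \(h^k\co X_1\to X_3\otimes A^{\otimes k}\) is the sum over all strings
\[
\delta_{X_3}^{a}\, f'\, \delta_{X_2}^{c}\, f\, \delta_{X_1}^{b},\qquad a+b+c+2=k,
\]
with signs analogous to those in \cref{def:boxtimes-morphisms-D}. Thus \(H\) feeds both \(f\) and \(f'\) into a single higher operation of \(Y\).

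\textbf{Step 3: compute \(\partial H\).} This uses three relations.
\begin{itemize}
\item The type~AD relation \(\delta^Y\circ\delta^Y+\underline{\partial}(\delta^Y)=0\). Applied to such a string, the splitting of \(\delta^Y_k\) into \(\delta^Y_j\circ\delta^Y_i\) with the cut between \(f\) and \(f'\) yields the composite term. Cuts elsewhere cancel against terms of \(\delta\circ H\) and \(H\circ\delta\).
\item The \(\underline{\partial}\)-terms, which multiply adjacent algebra elements. Products of neighbouring \(\delta_X\)-outputs are handled by \(\delta_X^2=0\).
\item The cycle conditions \(\partial f=0\) and \(\partial f'=0\), which dispose of products involving one \(\delta_X\)-output and one \(f\)-output. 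The remaining product, of the \(f\)-output with the \(f'\)-output, reassembles exactly into \((f'\circ f)\boxtimes 1_Y\).
\end{itemize}

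This is the same computation as for \(\F_2\) in \cite{LOT,LOTMor}. The boundedness hypothesis makes every sum finite, either through \cref{rem:bounded} or directly from the assumption.

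\textbf{Main obstacle.} I expect the main obstacle to be the sign bookkeeping in Step 3. The signs in \(h^k\) must be chosen compatibly with the Koszul rule (\cref{def:koszul-rule}), with the signs \((-1)^{\ell+1}\) in \(\underline{\partial}\), and with the signs \((-1)^{k|f|+k}\) in \(f^i\), so that the cancellations pair up exactly. My first attempt would be the sign \((-1)^{b|f|+b+(b+c+1)|f'|+c}\), obtained by formally composing the sign conventions for \(f^i\) twice. I would then verify the cancellations on the lowest-length terms (\(k=2\)) before trusting the general pattern.
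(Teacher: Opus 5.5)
Your plan is correct and is exactly the routine computation the paper defers to, namely the characteristic-two arguments of \cite{LOT,LOTBimodules,LOTMor}: \(H\) is the standard homotopy (the second component of the \(A_\infty\)-functor \(-\boxtimes Y\)), and the cut terms and \(\underline{\partial}\)-terms of \(\delta^Y\circ\delta^Y+\underline{\partial}(\delta^Y)=0\) sort out as you describe. The one thing to fix is your tentative sign: applying the rule of \cref{def:boxtimes-morphisms-D} to \(f'\), which has \(b+c+1\) preceding outputs, gives the factor \((-1)^{(b+c+1)(|f'|+1)}\), so the sign should be \((-1)^{b(|f|+1)+(b+c+1)(|f'|+1)}\); this works and yields \(\partial H=(-1)^{|f'|}\bigl((f'\boxtimes 1_Y)\circ(f\boxtimes 1_Y)-(f'\circ f)\boxtimes 1_Y\bigr)\), whereas your exponent differs from it by \(b+1\), so the terms of \(H\circ\delta_{X_1\boxtimes Y}\) fail to assemble into the structure relation for \(\delta^Y\) already on the string \(f'\,f\,\delta_{X_1}\), and your proposed check at \(k=2\) cannot detect this because \(b=0\) there.
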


\begin{definition}\label{def:boxtimes-morphisms-AD}
	Let 
	\(
	g\in \CAB(Y_1,Y_2)
	\) 
	and
	let 
	\(X\in\C^{A}\).
	Suppose that either \(\delta_X\) is bounded or
	\(\delta^{Y_1}\), \(\delta^{Y_2}\), and \(g\) are bounded. 
	We define 
	\[
	1_X
	\boxtimes
	g
	=
	\Big(
	\sum_{i=0}^\infty
	(-1)^{i|g|+i}
	(1_{X}\otimes g_i)
	\circ
	(\delta_X^{i}\otimes 1_{Y_1})
	\Big)%
	\in
	\C^{B}(X\boxtimes Y_1,X\boxtimes Y_2).
	\]
\end{definition}

\begin{remark}\label{rem:typeD:identity-boxtensor}
	Since we are assuming that type AD structures are strictly unital,
	the formulas in \cref{def:boxtimes-morphisms-AD,def:boxtimes-morphisms-D}
	for the morphism \(1_X\boxtimes 1_Y\) agree 
	and are both equal to \(1_{X\boxtimes Y}\).
\end{remark}

\begin{lemma}\label{lem:signs-boxtimes-diff-AD}
	With notation as in \cref{def:boxtimes-morphisms-AD}, 
	\(
	\partial_{\C^{B}}(1_X\boxtimes g)
	=
	1_X\boxtimes \ddCAB(g)
	\).
	\qed
\end{lemma}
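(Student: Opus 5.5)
The plan is a direct computation: expand both sides of the identity in \cref{def:boxtimes-morphisms-AD} as sums indexed by how a chain of arrows of \(\delta_X\) is distributed among the various components of the type~AD maps, and match terms. Write \(X\boxtimes Y_1\) and \(X\boxtimes Y_2\) with their differentials \(\delta_X\boxtimes\delta^{Y_1}\) and \(\delta_X\boxtimes\delta^{Y_2}\) from \cref{def:boxtensoring}. The left-hand side is then
\[
(\delta_X\boxtimes\delta^{Y_2})\circ(1_X\boxtimes g)-(-1)^{|g|}(1_X\boxtimes g)\circ(\delta_X\boxtimes\delta^{Y_1}).
\]
By \cref{def:typeAD}, the right-hand side is \(1_X\boxtimes\) applied to the three summands \(\delta^{Y_2}\circ g\), \(-(-1)^{|g|}g\circ\delta^{Y_1}\) and \(\underline{\partial}(g)\). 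By \cref{rem:bounded} (or the stated boundedness hypotheses), all sums involved are finite, so the terms can be rearranged freely.

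The first step is to use the recursion \(\delta^{j+k}_X=(\delta^j_X\otimes 1_{A^{\otimes k}})\circ\delta^k_X\), which follows by induction from the definition \(\delta^{i+1}_X=(\delta^i_X\otimes 1_A)\circ\delta_X\).

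\begin{itemize}
\item Consider the composite \((1_X\otimes\delta^{Y_2}_j)(\delta^j_X\otimes 1)\circ(1_X\otimes g_k)(\delta^k_X\otimes 1)\). In it, the \(j\) algebra outputs produced after \(g_k\) commute past \(g_k\), since \(g_k\) acts on a different tensor factor. Using the recursion, the composite is identified with \((1_X\otimes(\delta^{Y_2}_j\otimes 1_B)(1_{A^{\otimes j}}\otimes g_k))\circ(\delta^{j+k}_X\otimes 1)\), followed by multiplication in \(B\). Summing over \(i=j+k\) reproduces \(1_X\boxtimes(\delta^{Y_2}\circ g)\), with \((\delta^{Y_2}\circ g)_i\) given by the composition formula in \(\CuAB\).
\item The same argument, with the roles of \(g\) and \(\delta^{Y_1}\) exchanged, identifies \((1_X\boxtimes g)\circ(\delta_X\boxtimes\delta^{Y_1})\) with \(1_X\boxtimes(g\circ\delta^{Y_1})\).
\end{itemize}

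The second step is to show that \(1_X\boxtimes\underline{\partial}(g)=0\); this is consistent with the left-hand side having no corresponding term. By the definition of \(\underline{\partial}\), each summand of \(1_X\boxtimes\underline{\partial}(g)\) has the form
\[
(1_X\otimes g_{i-1})\circ\big(1_X\otimes 1_{A^{\otimes\ell}}\otimes\mu_A\otimes 1_{A^{\otimes m}}\otimes 1_{Y_1}\big)\circ(\delta^i_X\otimes 1_{Y_1}).
\]
Writing \(\delta^i_X=(\delta^\ell_X\otimes 1)\circ\big((\delta_X\otimes 1_A)\circ\delta_X\otimes 1_{A^{\otimes m}}\big)\circ\delta^m_X\), the middle factor becomes \((1_X\otimes\mu_A)(\delta_X\otimes 1_A)\delta_X=\delta_X\circ\delta_X\) in \(\underline{\C}^A\). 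This vanishes by the type~D structure relation \(\delta^2=0\), since \(A\) has zero differential. Hence every such summand is zero, and no sign bookkeeping is needed for these terms.

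The main obstacle is the sign bookkeeping in the first step; the combinatorial matching itself is clear. The signs come from four sources:
\begin{itemize}
\item the prefactor \((-1)^{i|g|+i}\) in \cref{def:boxtimes-morphisms-AD};
\item the Koszul signs of \cref{def:koszul-rule} that arise when \(1_X\otimes g_k\) passes the generator \(x\), which contribute \((-1)^{|x|\,|g_k|}\) with \(|g_k|=|g|-k\);
\item the sign \((-1)^{k|g_j|}\) in the composition law of \(\CuAB\);
\item the factor \((-1)^{|g|}\) in \(\ddCAB\), together with the implicit \(\delta\)-prefactors in \(\delta_X\boxtimes\delta^Y\), where \(\delta^Y\) has degree \(1\).
\end{itemize}
I would first reduce to \(A\) and \(B\) concentrated in homological degree~\(0\), so that \(|x'|=|x|+k\) along a chain of length \(k\), as in the graphical remark after \cref{def:boxtensoring}. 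Then I would verify, for a single chain of length \(j+k\), that the exponents on the two sides agree modulo~\(2\); the key identity is \(k|g|+k+(j+k)+\ldots\equiv k(|g|-j)+\ldots\). To keep this manageable, I would check it on the graphical arrows of \cref{rem:typeAD:graphical-notation} rather than on the abstract formulas. The general graded case follows by the same computation, with an additional Koszul sign for the degrees of the algebra elements.
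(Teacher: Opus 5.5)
Your proposal is correct and is exactly the routine computation the paper omits: the two cross terms of \(\partial_{\C^{B}}(1_X\boxtimes g)\) match \(1_X\boxtimes(\delta^{Y_2}\circ g)\) and \(-(-1)^{|g|}\,1_X\boxtimes(g\circ\delta^{Y_1})\) term by term, and \(1_X\boxtimes\underline{\partial}(g)=0\) because every \(\mu_A\)-contraction of \(\delta^i_X\) factors through \((1_X\otimes\mu_A)(\delta_X\otimes 1_A)\delta_X=0\). The sign check you defer does go through, with two small remarks: the Koszul sign for \(1_X\otimes g_k\) involves the output generator \(x'\) of \(\delta^k_X\) rather than \(x\), and the exponents agree modulo \(2\) even without assuming that \(A\) and \(B\) are concentrated in degree \(0\).
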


\begin{lemma}\label{lem:signs-boxtimes-composition-AD}
	Let
	\(g\in \CAB(Y_1,Y_2)\)
	and 
	\(g'\in \CAB(Y_2,Y_3)\).
	Let
	\(X\in\C^{A}\)
	and suppose that either \(\delta_X\) is bounded or 
	\(\delta^{Y_1}\), \(\delta^{Y_2}\), \(\delta^{Y_3}\), \(g\), and \(g'\) are bounded. 
	Then
	\[
	\pushQED{\qed} 
	(1_X\boxtimes g')
	\circ
	(1_X\boxtimes g)
	=
	1_X\boxtimes (g'\circ g).
	\qedhere
	\popQED
	\]
\end{lemma}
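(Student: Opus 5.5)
The identity \((1_X\boxtimes g')\circ(1_X\boxtimes g)=1_X\boxtimes(g'\circ g)\) is a direct computation with the explicit formulas of \cref{def:boxtimes-morphisms-AD} and the composition rule in \(\CuAB\). No homotopy is involved, unlike \cref{lem:signs-boxtimes-composition-D}, and I expect strict equality to come out of reindexing a double sum. The boundedness hypotheses only guarantee that every sum below is finite, so they play no further role.

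I would first expand the left-hand side. Composition in \(\C^B\) is \((1_{X\otimes Y_3}\otimes\mu_B)\circ(\,\cdot\otimes 1_B)\circ(\,\cdot\,)\), and \(1_X\boxtimes g'\) contributes \((-1)^{j|g'|+j}(1_X\otimes g'_j)\circ(\delta^j_X\otimes 1_{Y_2})\). So the composite is
\[
\sum_{j,k\geq0}(-1)^{j|g'|+j+k|g|+k}\,(1_X\otimes 1_{Y_3}\otimes\mu_B)\circ\big((1_X\otimes g'_j)(\delta^j_X\otimes 1_{Y_2})\otimes 1_B\big)\circ(1_X\otimes g_k)\circ(\delta^k_X\otimes 1_{Y_1}).
\]
The key move is to push \(\delta^j_X\otimes 1\) past \(1_X\otimes g_k\). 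These act on different tensor factors, so by \cref{def:koszul-rule} they commute up to the sign \((-1)^{j\cdot|g_k|}\). Here \(|g_k|=|g|-k\), and \(\delta^j_X\) has homological degree \(j\) when \(A\) sits in degree \(0\); in general the \(A\)-factors carry their own degrees, but the bookkeeping is the same.

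After commuting, I would use the recursion \(\delta^{j+k}_X=(\delta^j_X\otimes 1_{A^{\otimes k}})\circ\delta^k_X\). Strictly, this identity relates \(\delta^{j+k}\) to \(\delta^{j}\circ\delta^k\) with the \(A\)-factors ordered correctly, which follows by induction from the definition \(\delta^{i+1}=(\delta^i\otimes 1)\circ\delta\). This rewrites each summand as \((1_X\otimes[(1\otimes\mu_B)(g'_j\otimes 1_B)(1_{A^{\otimes j}}\otimes g_k)])\circ(\delta^{i}_X\otimes 1)\) with \(i=j+k\). Summing over \(j+k=i\) gives exactly the formula for \((g'\circ g)_i\), which carries the sign \((-1)^{k|g'_j|}\). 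Since \(|g'\circ g|=|g|+|g'|\), the target sign is \((-1)^{i(|g|+|g'|)+i}\).

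The only real content, and the main obstacle, is the sign check. I must verify that
\[
j|g'|+j+k|g|+k+j(|g|-k)\equiv k(|g'|-j)+(j+k)(|g|+|g'|)+(j+k)\pmod 2 .
\]
The left side is \(j|g'|+j+k|g|+k+j|g|+jk\). The right side is \(k|g'|+jk+j|g|+j|g'|+k|g|+k|g'|+j+k\), and the two \(k|g'|\) terms cancel. Both sides therefore reduce to \(j|g'|+j|g|+k|g|+jk+j+k\), so they agree.

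Before trusting this, I would re-derive the Koszul sign from moving \(g_k\), which consumes \(k\) algebra inputs, past the \(j\) algebra outputs of \(\delta^j\). If the convention instead forces a sign like \((-1)^{j(|g|+1)}\), the extra \((-1)^{j}\)-type terms should be absorbed by the \((-1)^{i}\) normalisation in \cref{def:boxtimes-morphisms-AD}; that normalisation was presumably chosen for exactly this purpose. I would redo the parity count with whichever sign the convention actually yields.
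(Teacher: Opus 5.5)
Your computation is correct, and it is the routine verification the paper has in mind. The paper omits the proof and defers to the characteristic-two calculations of Lipshitz, Ozsv\'ath and Thurston. The commutation sign \((-1)^{j|g_k|}\) holds for any grading on \(A\), since \(\delta^j_X\) has degree \(j\) as a map, so the hedging in your last paragraph is unnecessary: with \(\delta^{j+k}_X=(\delta^j_X\otimes 1)\circ\delta^k_X\), your parity check gives strict equality.
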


\begin{definition}
	Let 
	\(f\in \C^{A}(X_1,X_2)\)
	and 
	\(g\in \CAB(Y_1,Y_2)\)
	be two morphisms, 
	in the kernels of 
	\(\partial_{\C^{A}}\)
	and 
	\(\ddCAB\), 
	respectively. 
	Suppose that either \(\delta_{X_1}\) and \(\delta_{X_2}\) are bounded or
	\(\delta^{Y_1}\), \(\delta^{Y_2}\), and \(g\) are bounded. 
	We then define
	\[
	f\boxtimes g
	=
	(1_{X_2}\boxtimes g)
	\circ
	(f\boxtimes 1_{Y_1})
	\in 
	\C^{B}(X_1\boxtimes Y_1,X_2\boxtimes Y_2).
	\]
\end{definition}

\begin{proposition}\label{prop:boxtensoring-morphisms}
	Let 
	\(f\in \C^{A}(X_1,X_2)\),
	\(f'\in \C^{A}(X_2,X_3)\)
	and 
	\(g\in \CAB(Y_1,Y_2)\),
	\(g'\in \CAB(Y_2,Y_3)\)
	be two pairs of morphisms, 
	in the kernels of 
	\(\partial_{\C^{A}}\)
	and 
	\(\ddCAB\), 
	respectively. 
	Suppose that either \(\delta_{X_1}\), \(\delta_{X_2}\), and \(\delta_{X_3}\) are bounded or
	\(\delta^{Y_1}\), \(\delta^{Y_2}\), \(\delta^{Y_3}\), \(g\), and \(g'\) are bounded. 
	Then
	\[
	\pushQED{\qed}
	(f'\boxtimes g')\circ(f\boxtimes g)
	\simeq
	(-1)^{|g'||f|}
	(f'\circ f)\boxtimes (g'\circ g).
	\qedhere
	\popQED
	\]
\end{proposition}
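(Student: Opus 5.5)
We prove the functoriality statement in \cref{prop:boxtensoring-morphisms} by splitting it into the two factorizations already available and then commuting the middle terms. By definition,
\[
(f'\boxtimes g')\circ(f\boxtimes g)
=
(1_{X_3}\boxtimes g')\circ(f'\boxtimes 1_{Y_2})\circ(1_{X_2}\boxtimes g)\circ(f\boxtimes 1_{Y_1}).
\]
The composites we can already handle are pairs of type-D morphisms via \cref{lem:signs-boxtimes-composition-D}, and pairs of type-AD morphisms via \cref{lem:signs-boxtimes-composition-AD}. So the key step is an interchange relation
\[
(f'\boxtimes 1_{Y_2})\circ(1_{X_2}\boxtimes g)
\simeq
(-1)^{|g||f'|}\,(1_{X_3}\boxtimes g)\circ(f'\boxtimes 1_{Y_1})
\]
for cycles \(f'\) and \(g\). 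Once we have it, the composite becomes \((-1)^{|g||f'|}(1\boxtimes g')(1\boxtimes g)(f'\boxtimes1)(f\boxtimes 1)\), because composition with fixed morphisms preserves homotopy. Applying the two lemmas then gives \((1\boxtimes g'g)\circ((f'f)\boxtimes1)\), which is \((f'\circ f)\boxtimes(g'\circ g)\) by definition.

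The sign bookkeeping is where care is needed. I expect the sign in the statement, \((-1)^{|g'||f|}\), to arise from the convention \(f\boxtimes g=(1\boxtimes g)\circ(f\boxtimes 1)\) together with the Koszul rule. I would check the exponent against the simplest case \(A=B=\CoeffRing\), where everything reduces to ordinary tensor products of chain maps. If the interchange naturally produces \(|g||f'|\), I would recheck which pair actually has to be swapped and adjust conventions; the structure of the argument does not depend on this.

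For the interchange, I would write both sides explicitly. Using the recursive maps \(f'^{\,i}\) from \cref{def:boxtimes-morphisms-D} and \(\delta_X^i\), the left side is a sum over terms in which a block of \(\delta_{X_2}\)-outputs is fed into \(g_k\) and a subsequent block of \(f'^{\,i}\)-outputs is fed into \(\delta^{Y_2}_i\). The right side instead feeds \(f'^{\,i}\)-outputs into \(\delta^{Y_1}_i\) and then later outputs into \(g_k\). Both are sums over ways of cutting a chain of algebra outputs from \(X_2\to X_3\), containing exactly one \(f'\) arrow, into consecutive inputs of AD operations.

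The difference between the two sides should be \(\partial_{\C^B}\) of an explicit homotopy
\[
H=\sum (1_{X_3}\otimes g_{k})\circ (f'^{\,k}\otimes 1_{Y_1}),
\]
with appropriate signs. Here a single \(g_k\) eats a chain of algebra elements that contains the \(f'\) arrow in its middle. Verifying \(\partial H=\) LHS \(-\) RHS is the main obstacle. Expanding \(\partial H\) uses three relations:
\begin{itemize}
\item[(i)] \(\partial_{\C^A}(f')=0\), to trade \(\delta\)-compositions around \(f'\);
\item[(ii)] \(\ddCAB(g)=0\), which relates \(\delta^{Y_2}\circ g\), \(g\circ\delta^{Y_1}\) and \(\underline\partial g\);
\item[(iii)] the type D relation \(\delta_X^2=0\) in its \(\mu_A\) form.
\end{itemize}
All terms cancel except those where \(g\) consumes chains lying entirely before or entirely after the \(f'\) arrow, and these are exactly the two sides of the interchange.

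Boundedness is used only to make all sums finite (\cref{rem:bounded}). It is the same term-by-term cancellation as in the \(\CoeffFieldTwo\) version of \cite{LOTBimodules}; the only new work is tracking the Koszul signs from \cref{def:koszul-rule}, which I would do by assigning each arrow its homological degree and checking one representative term of each type.
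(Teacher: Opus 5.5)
Your argument is correct and is the standard one; the paper gives no proof of its own and defers to the routine \(\CoeffFieldTwo\) computations from bordered Floer theory. You factor both sides through \(f\boxtimes 1\) and \(1\boxtimes g\), and interchange \(f'\boxtimes 1_{Y_2}\) and \(1_{X_2}\boxtimes g\) via the homotopy in which a single \(g_k\) consumes a chain \(f'^{\,k}\) containing the \(f'\)-arrow. Splitting chains in \(\ddCAB(g)=0\) then produces the two sides and \(\partial H\), and the \(\underline{\partial}g\) terms cancel by \(\delta^2=0\) and \(\partial f'=0\). You finish with \cref{lem:signs-boxtimes-composition-D,lem:signs-boxtimes-composition-AD}. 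When you compose a homotopy with fixed maps, say explicitly that those maps are cycles, which is \cref{lem:signs-boxtimes-diff-D,lem:signs-boxtimes-diff-AD}.

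Do not leave the sign open. With the paper's definition \(f\boxtimes g=(1_{X_2}\boxtimes g)\circ(f\boxtimes 1_{Y_1})\) there is no convention left to adjust. The pair that must be swapped is \(f'\boxtimes 1_{Y_2}\) and \(1_{X_2}\boxtimes g\), so your argument gives \((-1)^{|f'||g|}\), not the displayed \((-1)^{|g'||f|}\). Your own test case shows the displayed sign is actually false. Take \(A=B=\Z\), all differentials zero, and \(\delta^{Y_j}_1(1\otimes y)=y\otimes 1\). Let \(X_1=X_2=Y_1=\Z\) sit in degree \(0\) and \(X_3=Y_2=Y_3=\Z\) in degree \(1\), with \(f,f',g,g'\) the obvious isomorphisms on generators. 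Then \(|f|=|g'|=0\), \(|f'|=|g|=1\), and \(g_i=g'_i=0\) for \(i>0\). The composite \((f'\boxtimes g')\circ(f\boxtimes g)\) carries no sign. By contrast, \((f'\circ f)\boxtimes(g'\circ g)\) picks up the Koszul sign \((-1)^{1\cdot 1}\) from the term \(1_{X_3}\otimes(g'g)_0\) in \cref{def:boxtimes-morphisms-AD}. Since all differentials vanish, homotopy means equality, so the displayed identity fails over \(\Z\), while the version with \((-1)^{|f'||g|}\) holds. Your expectation was backwards: the displayed sign is correct only for the opposite convention \(f\boxtimes g:=(f\boxtimes 1)\circ(1\boxtimes g)\), where the interchange is applied to \(f\) and \(g'\). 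So state the result with \((-1)^{|f'||g|}\), or switch the definition. This is harmless for the paper, which only applies the proposition to morphisms of degree \(0\) (homotopy equivalences and \(\varG^k\cdot 1_X\)).
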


\begin{definition}\label{def:shifted-complexes}
	For any \(r,s\in\Z\), \(X\in\C^A\), and \(Y\in\CAB\) define
	\begin{align*}
		h^r X
		&=
		(h^r X,(-1)^r\delta_X)
		\\
		h^sY
		&=
		(h^s Y,((-1)^{s(i+1)}\delta^Y_i)_{i\geq0})
	\end{align*}
\end{definition}

\begin{lemma}\label{lem:signs-shifting-complexes}
	With notation as in \cref{def:shifted-complexes}, \(h^r X\) and \(h^sY\) are well-defined objects of \(\C^A\) and \(\CAB\), respectively, and 
	\(
	(h^r X)
	\boxtimes
	Y
	=
	h^{r}
	\left(
	X\boxtimes Y
	\right).
	\)
	Moreover, if \(f\co X_1\rightarrow X_1\) is a homomorphism in \(\C^A\), respectively \(\CAB\), with \(|f|=0\) 
	its mapping cone 
	\[
	\Cone{f}
	=
	\left[
	\begin{tikzcd}
		h^{-1}X_1
		\arrow{r}{f}
		&
		X_2
	\end{tikzcd}
	\right]
	=
	\left(
	h^{-1}X_1\oplus X_2,
	\begin{pmatrix}
		\delta_{h^{-1}X_1} & 0
		\\
		f & \delta_{X_2}
	\end{pmatrix}
	\right)
	\]
	is a well-defined object in \(\C^A\), respectively \(\CAB\).
	\qed
\end{lemma}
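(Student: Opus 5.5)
The statement is a bookkeeping lemma about signs. I will verify it directly from the definitions by comparing structure maps componentwise.

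\emph{Well-definedness of \(h^rX\).} The new differential is \((-1)^r\delta_X\), and \(((-1)^r\delta_X)^2=\delta_X^2=0\). For a grading shift we only need the grading condition, which is unchanged because \(\delta_X\) still has degree \(h^1\).

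\emph{Well-definedness of \(h^sY\).} Here the relation \(\delta^2+\underline{\partial}(\delta)=0\) has to be checked componentwise. Write \(\delta'_i=(-1)^{s(i+1)}\delta^Y_i\). The composition formula gives \((\delta'\circ\delta')_i\) as a sum of terms \((-1)^{k|\delta_j|}(\cdots)\circ(\delta'_j\otimes1)\circ(1\otimes\delta'_k)\) with \(j+k=i\). These terms pick up the sign \((-1)^{s(j+1)+s(k+1)}=(-1)^{s(i+2)}=(-1)^{si}\). The term \(\underline{\partial}(\delta')_i\) involves \(\delta'_{i-1}\) and so picks up \((-1)^{si}\) as well. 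Hence the whole relation at level \(i\) is multiplied by the uniform sign \((-1)^{si}\) and still holds. Strict unitality survives because \(\delta'_1=(-1)^{2s}\delta^Y_1=\delta^Y_1\), so \(\delta'_1(1\otimes y)=y\); the higher vanishing conditions are clearly preserved.

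\emph{The identity \((h^rX)\boxtimes Y=h^r(X\boxtimes Y)\).} The underlying modules agree. On the left, the \(i\)-th summand is \((1\otimes\delta^Y_i)\circ(((-1)^r\delta_X)^i\otimes1)\), which carries the sign \((-1)^{ri}\). By the Koszul rule of \cref{def:koszul-rule}, applying \(1_X\otimes\delta^Y_i\) to an element \(x\otimes a\otimes y\) whose \(X\)-factor has been shifted by \(r\) produces an extra \((-1)^{r|\delta^Y_i|}=(-1)^{r(1-i)}\), since \(|\delta^Y_i|=1-i\). The total sign is therefore \((-1)^{ri+r-ri}=(-1)^r\), which is exactly the sign prescribed for \(h^r(X\boxtimes Y)\). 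This check depends on the convention that the Koszul sign is computed using the shifted grading of \(x\). It is the step where the sign convention bites, so it is the main place to be careful; a mismatch here would instead force a correction to \cref{def:shifted-complexes}.

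\emph{The mapping cone.} We need the square of the lower-triangular matrix to vanish (plus \(\underline{\partial}\) in the AD case). The diagonal entries vanish because \(h^{-1}X_1\) and \(X_2\) are objects. The off-diagonal entry is \(\delta_{X_2}\circ f+f\circ\delta_{h^{-1}X_1}\), together with \(\underline{\partial}(f)\) in the AD case. Since \(\delta_{h^{-1}X_1}=-\delta_{X_1}\), this equals \(\delta_{X_2}f-f\delta_{X_1}+\underline{\partial}f=\partial(f)\) because \(|f|=0\). It therefore vanishes provided \(f\) is a cycle, which is the implicit hypothesis on a homomorphism.

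In the AD case the sign rule \((-1)^{s(i+1)}\) with \(s=-1\) changes \(\delta_{X_1,i}\) by \((-1)^{i+1}\). The argument then needs \(f\) to be mixed with the components \(\delta_{i}\) in the composition formula, where the sign \((-1)^{k|f_j|}\) with \(|f_j|=-j\) compensates exactly. Verifying this compensation is the only mildly fiddly step, and I would do it by comparing the components of the square at each level \(i\).
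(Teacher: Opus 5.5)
Your direct componentwise verification is the natural route; the paper states this lemma without proof, as a routine sign check. Your arguments for \(h^rX\), for \(h^sY\), for \((h^rX)\boxtimes Y=h^r(X\boxtimes Y)\), and for the type~D mapping cone are correct.

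\textbf{The gap: the type~AD mapping cone.} Here the one nontrivial computation is deferred, and the mechanism you indicate for it fails. Take \(\delta^{h^{-1}X_1}_k=(-1)^{k+1}\delta^{X_1}_k\) and evaluate the composition sign \((-1)^{k|f_j|}\) at \(|f_j|=-j\), as you propose. Then the summands of \((f\circ\delta^{h^{-1}X_1})_i\) carry \((-1)^{kj+k+1}\). This is not a uniform multiple of the sign \((-1)^{kj}\) in \((f\circ\delta^{X_1})_i\). Already at level \(1\) you would get \(-f_1(1\otimes\delta^{X_1}_0)+f_0\delta^{X_1}_1\), instead of \(-(f\circ\delta^{X_1})_1=-f_1(1\otimes\delta^{X_1}_0)-f_0\delta^{X_1}_1\).

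What is missing is the following. Inside the cone, \(f\) is a component of the differential with source \(h^{-1}X_1\), so it has degree \(1\) and \(|f_j|=1-j\) (compare \(\sigma=1-j\) in \cref{rem:typeAD:graphical-notation}). The sign then becomes \((-1)^{k(1-j)+k+1}=-(-1)^{kj}\), so \(f\circ\delta^{h^{-1}X_1}=-f\circ\delta^{X_1}\) exactly.

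The other two terms do not see the shift:
\begin{itemize}
\item \(\delta^{X_2}\circ f\) uses the sign \((-1)^{k|\delta^{X_2}_j|}\).
\item \(\underline{\partial}(f)\) involves no degree of \(f\) at all.
\item The Koszul sign \((-1)^{|\mathbf{a}|\cdot|f_k|}\) from \(1_{A^{\otimes j}}\otimes f_k\) is trivial only because \(A\) sits in homological degree \(0\); for odd-degree algebra elements it would change by \((-1)^{|\mathbf{a}|}\).
\end{itemize}
With these observations the off-diagonal entry of the structure relation becomes \(\ddCAB(f)\), and only then does \(\ddCAB(f)=0\) give the relation.

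\textbf{Strict unitality of the cone.} Objects of \(\CAB\) are strictly unital. You check this for \(h^sY\) but not for the cone. The cone's structure map sends \(1\otimes x_1\) to \(x_1+f_1(1\otimes x_1)\), and it contains the components \(f_k\) for \(k\geq 2\). So you need \(f_1(1\otimes -)=0\), and \(f_k\) must vanish whenever an input equals \(1\).

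This does not follow from \(\ddCAB(f)=0\). At level \(1\), on inputs \(1\otimes x\), that condition only says that \(\phi=f_1(1\otimes -)\) satisfies \(\delta^{X_2}_0\circ\phi+\phi\circ\delta^{X_1}_0=0\). So unitality of \(f\) must be made part of what ``homomorphism'' means, alongside \(f\) being a cycle. It holds, for instance, when \(f_0\) is the only nonzero component of \(f\).
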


\begin{lemma}\label{lem:signs-boxtimes-mapping-cone}
	Let 
	\(f\in \C^{A}(X_1,X_2)\)
	be a homomorphisms with \(|f|=0\)
	for some 
	\(
	X_1,
	X_2
	\in
	\C^{A}
	\). 
	Let
	\(Y\in\CAB\)
	and suppose that either \(\delta^Y\) is bounded or 
	\(\delta_{X_1}\) and \(\delta_{X_2}\) are bounded.
	Then 
	\[
	\pushQED{\qed} 
	\Cone{f}
	\boxtimes Y
	=
	\Cone{f\boxtimes 1_Y}.
	\qedhere
	\popQED
	\] 
\end{lemma}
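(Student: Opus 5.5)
We verify the identity directly from the definitions, comparing both sides as objects of \(\C^B\). First, the underlying modules agree: since \(\Cone{f}=h^{-1}X_1\oplus X_2\), and \(\boxtimes\) is just the tensor product \(\otimes\) over the idempotent ring at the level of modules, we have \(\Cone{f}\boxtimes Y=(h^{-1}X_1\otimes Y)\oplus(X_2\otimes Y)\). By \cref{lem:signs-shifting-complexes}, \((h^{-1}X_1)\boxtimes Y=h^{-1}(X_1\boxtimes Y)\) as objects, including their differentials. So the underlying module and the diagonal blocks of the two differentials agree. It remains to compare the off-diagonal block \(X_1\otimes Y\to X_2\otimes Y\), and to check that no block from \(X_2\otimes Y\) to \(X_1\otimes Y\) appears on either side.

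For this, we expand \(\delta^i_{\Cone{f}}\) from \cref{def:boxtensoring}. Writing \(\delta_{\Cone f}\) as the lower-triangular matrix, its \(i\)-fold iterate \(\delta^i_{\Cone f}\) is again lower-triangular. The diagonal blocks are \(\delta^i_{h^{-1}X_1}=(-1)^i\delta^i_{X_1}\) and \(\delta^i_{X_2}\). The off-diagonal block is a sum over positions of a single occurrence of \(f\):
\[
\sum_{i=j+1+k}(\delta^j_{X_2}\otimes 1_{A^{\otimes(k+1)}})\circ(f\otimes 1_{A^{\otimes k}})\circ\delta^k_{h^{-1}X_1},
\]
taken with the appropriate Koszul signs. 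Substituting \(\delta^k_{h^{-1}X_1}=(-1)^k\delta^k_{X_1}\) and using \(|f|=0\), this block becomes exactly \(f^i\) from \cref{def:boxtimes-morphisms-D}, whose sign is \((-1)^{k|f|+k}=(-1)^k\). Summing \((1\otimes\delta^Y_i)\circ(\,\cdot\,\otimes 1_Y)\) over \(i\) then gives, in the off-diagonal block, \(\sum_{i\ge1}(1_{X_2}\otimes\delta^Y_i)\circ(f^i\otimes 1_Y)=f\boxtimes 1_Y\). This is precisely the off-diagonal entry of \(\Cone{f\boxtimes 1_Y}\). Boundedness, either of \(\delta^Y\) or of \(\delta_{X_1},\delta_{X_2}\), guarantees that all sums involved are finite.

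The main obstacle is sign bookkeeping. One has to confirm that the Koszul rule of \cref{def:koszul-rule}, when applied to \(h^{-1}X_1\) (where grading is shifted by one), produces exactly \((-1)^k\) and no additional factor from \(|x|\). The plan is to check this first for \(i=1,2\) and then argue by induction on \(i\) using the recursion \(\delta^{i+1}=(\delta^i\otimes1)\circ\delta\).
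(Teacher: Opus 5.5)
Your proposal is correct and is exactly the routine direct verification the paper has in mind (the paper states the lemma without proof, calling it routine): the block of \(\delta^i_{\Cone{f}}\) from \(h^{-1}X_1\) to \(X_2\) is \(\sum_{j+1+k=i}(-1)^k(\delta^j_{X_2}\otimes 1)\circ(f\otimes 1)\circ\delta^k_{X_1}=f^i\), and the diagonal blocks are handled by the shift lemma, as you say. The sign bookkeeping is lighter than you expect, so the induction on \(i\) is unnecessary: no Koszul signs arise in forming \(\delta^i_{\Cone{f}}\), since only identities on \(A\)-factors are tensored on, and when \(1\otimes\delta^Y_i\) is applied to the off-diagonal block, the relevant grading is that of the output element in \(X_2\), which is unshifted.
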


The following is an improved version of \cite[Lemma~2.17]{KWZ}.

\begin{lemma}[Clean-up Lemma]\label{lem:clean-up:improved}
	Let \(\underline{\C}\) be a differential bigraded category with differential \(\underline{\partial}\) and let \(\C\) the associated category of complexes over \(\underline{\C}\). Let \((X,\delta)\in\C\) and \(a\in\C(X,X)\) with \(\gr(a)=h^0q^0\) such that \(a^2=0\) and \(\underline{\partial}(a)a=0\). Then \((X,\delta)\) is isomorphic to \((X,\delta')\), where 
	\(
	\delta'
	=
	\delta+\partial_{\C}(a)-a\delta a
	=
	\delta+\delta a- a\delta -a\delta a+\underline{\partial}(a)
	=
	(1-a)\delta(1+a)+\underline{\partial}(a).
	\)\qed
\end{lemma}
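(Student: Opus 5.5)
This is the Clean-up Lemma. The plan is to exhibit the isomorphism explicitly as \(1+a\), with inverse \(1-a\), and verify that it intertwines \(\delta'\) and \(\delta\).

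\emph{Step 1 (invertibility).} Since \(\gr(a)=h^0q^0\), the map \(1+a\) is a degree-zero morphism in \(\underline{\C}(X,X)\). From \(a^2=0\) we get \((1+a)(1-a)=1-a^2=1\), and similarly \((1-a)(1+a)=1\). So \(1\pm a\) are mutually inverse isomorphisms of the underlying objects.

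\emph{Step 2 (conjugation formula).} For a complex over a differential category, transporting \(\delta\) along an invertible degree-zero map \(\phi\) gives the differential \(\phi^{-1}\delta\phi+\phi^{-1}\underline{\partial}(\phi)\). I would choose signs so that this matches the convention \(\partial_\C(f)=\delta f-(-1)^{|f|}f\delta+\underline{\partial}(f)\), and take \(\phi=1+a\), \(\phi^{-1}=1-a\). Then \(\underline{\partial}(\phi)=\underline{\partial}(a)\), because \(\underline{\partial}(1)=0\). Using the hypothesis \(\underline{\partial}(a)a=0\), we get
\[
(1-a)\underline{\partial}(a)=\underline{\partial}(a)-a\underline{\partial}(a).
\]
Applying the Leibniz rule to \(a^2=0\) gives \(\underline{\partial}(a)a+a\underline{\partial}(a)=0\), where the sign is \(+\) because \(|a|=0\). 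Hence \(a\underline{\partial}(a)=0\) as well. The transported differential is therefore \((1-a)\delta(1+a)+\underline{\partial}(a)=\delta'\). Expanding the product yields the other stated forms of \(\delta'\), and \(\partial_\C(a)=\delta a-a\delta+\underline{\partial}(a)\) since \(|a|=0\).

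\emph{Step 3 (verification).} It remains to check that \((X,\delta')\) is an object of \(\C\) and that \(1+a\) is a closed morphism from \((X,\delta')\) to \((X,\delta)\). The object condition is \(\delta'^2+\underline{\partial}(\delta')=0\), or \(\delta'^2=0\) in the type D setting. Concretely, I would verify \(\delta(1+a)-(1+a)\delta'+\underline{\partial}(1+a)=0\). Substituting \(\delta'\), this reduces to \(a\delta a\) terms and \(a\underline{\partial}(a)\) terms, which cancel by \(a^2=0\) and \(a\underline{\partial}(a)=0\). A closed morphism with closed two-sided inverse \(1-a\) is an isomorphism. The object condition for \(\delta'\) then follows formally from the conjugation, or can be checked directly.

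The main obstacle is only sign bookkeeping under the paper's convention for \(\partial_\C\) and the Leibniz rule. In particular one must confirm that the correct hypothesis is \(\underline{\partial}(a)a=0\) rather than \(a\underline{\partial}(a)=0\); the Leibniz identity from \(a^2=0\) shows the two are equivalent.
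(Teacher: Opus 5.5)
Your proof is correct and is the routine verification the paper leaves implicit, since it states the lemma without proof. The map $1+a\colon(X,\delta')\to(X,\delta)$ is closed precisely because $a\underline{\partial}(a)=0$, which you correctly derive from $\underline{\partial}(a)a=0$ via the Leibniz rule applied to $a^2=0$; its inverse $1-a\colon(X,\delta)\to(X,\delta')$ is closed directly by $\underline{\partial}(a)a=0$.

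One expository slip: the identity $(1-a)\underline{\partial}(a)=\underline{\partial}(a)-a\underline{\partial}(a)$ is just distributivity and does not use the hypothesis.
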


\subsection{Bimodules and mapping classes}

Let \(M_+(\TwistB)\) and \(M'_+(\TwistB)\)
be the type AD structures defined in 
\cref{fig:Twisting:typeAD:equivariant,fig:Twisting:typeAD:standard}. 
Let \(M_-(\TwistB)\) and \(M'_-(\TwistB)\) 
be the type AD structures obtained from 
\(M_+(\TwistB)\) and \(M'_+(\TwistB)\), respectively,
by shifting the gradings of all three generators by \(q^{-3}h^{-1}\).
Recall the \(\CoeffRing\)-linear isomorphism
\(\rho\co\B\rightarrow\B\)
that switches \(\bullet\) and \(\circ\) 
in the indices of all elements of the standard basis of \(\B\). 
This induces a functor on type D structures over \(\B\), 
or equivalently, a type AD structure that we denote by \(\typeAD{\B}{R}{\B}\). 
We define 
\[
M_\pm(\TwistW)
=
\typeAD{\B}{R}{\B}
\boxtimes 
M_\pm(\TwistB)
\boxtimes 
\typeAD{\B}{R}{\B}
\quad\text{and}\quad
M'_\pm(\TwistW)
=
\typeAD{\B}{R}{\B}
\boxtimes 
M'_\pm(\TwistB)
\boxtimes 
\typeAD{\B}{R}{\B}.
\]

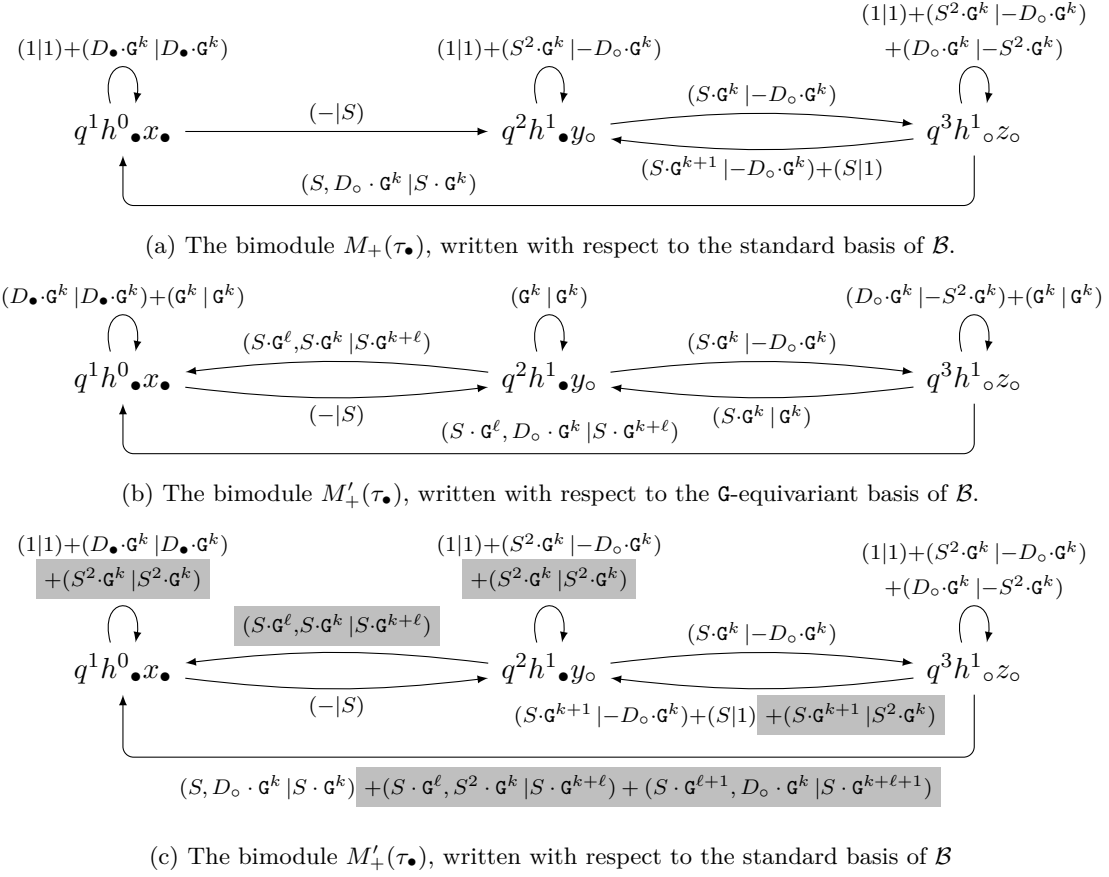
\begin{figure}
	\centering
	\begin{subfigure}{\textwidth}\centering
	\[ 
	\begin{tikzcd}[column sep=4cm]
		q^1h^0
		{}_\bullet x_\bullet
		\arrow{r}{(-| S)}
		\arrow[
		loop above,
		distance=1.75em, 
		start anchor={[xshift=-1ex]north}, 
		end anchor={[xshift=1ex]north}]
		{}{(1|1)+(\Db \cdot \varG^k | \Db \cdot \varG^k)}
		&
		q^2h^1
		{}_\bullet y_\circ
		\arrow[
		loop above,
		distance=1.75em, 
		start anchor={[xshift=-1ex]north}, 
		end anchor={[xshift=1ex]north}]
		{}{(1|1)+(S^{2} \cdot\varG^k | -\Dw\cdot \varG^k)}
		\arrow[bend left=7]{r}[pos=0.5]{(S \cdot \varG^k|-\Dw\cdot \varG^k)}
		&
		q^3h^1
		{}_\circ z_\circ
		\arrow[%
		rounded corners,
		to path={%
			|- ([yshift=-4ex,xshift=3.5cm]\tikztotarget.south) 
			node[above,font=\scriptsize] (x){\((S,\Dw \cdot \varG^k| S\cdot\varG^{k})\)}
			-| (\tikztotarget)}
		]{ll}
		\arrow[%
		loop above,
		distance=1.75em, 
		start anchor={[xshift=-1ex]north}, 
		end anchor={[xshift=1ex]north}]
		{}[align=center]{%
			$(1|1)+(S^2 \cdot \varG^k | -\Dw \cdot \varG^k)$
			\\
			$+(\Dw\cdot \varG^k|-S^2\cdot \varG^k)$
		}
		\arrow[bend left=7]{l}{(S \cdot \varG^{k+1}| -\Dw\cdot\varG^k)+(S|1)}
	\end{tikzcd}
	\]
	\caption{%
		The bimodule \(M_+(\TwistB)\), written with respect to the 
		standard basis of \(\B\). 
	}
	\label{fig:Twisting:typeAD:standard}
	\end{subfigure}%
	
	\begin{subfigure}{\textwidth}\centering
	\centering
	\[
	\begin{tikzcd}[column sep=4cm]
		q^1h^0
		{}_\bullet x_\bullet
		\arrow[bend right=7]{r}[swap]{(-| S)}
		\arrow[
		loop above,
		distance=1.75em, 
		start anchor={[xshift=-1ex]north}, 
		end anchor={[xshift=1ex]north}]
		{}{(\Db \cdot \varG^k | \Db \cdot \varG^k)+(\varG^k | \varG^k)}
		&
		q^2h^1
		{}_\bullet y_\circ
		\arrow[
		loop above,
		distance=1.75em, 
		start anchor={[xshift=-1ex]north}, 
		end anchor={[xshift=1ex]north}]
		{}{(\varG^k | \varG^k)}
		\arrow[bend right=7]{l}[swap]{(S\cdot\varG^\ell,S \cdot \varG^k| S\cdot \varG^{k+\ell})}
		\arrow[bend left=7]{r}[pos=0.5]{(S \cdot \varG^k|-\Dw\cdot \varG^k)}
		&
		q^3h^1
		{}_\circ z_\circ
		\arrow[%
		rounded corners,
		to path={%
			|- ([yshift=-4ex,xshift=5.75cm]\tikztotarget.south) 
			node[above,font=\scriptsize] (x){%
				$(S\cdot \varG^\ell,D_\circ \cdot \varG^k| S\cdot\varG^{k+\ell})$%
			}
			-| (\tikztotarget)}
		]{ll}
		\arrow[%
		loop above,
		distance=1.75em, 
		start anchor={[xshift=-1ex]north}, 
		end anchor={[xshift=1ex]north}]
		{}{(D_\circ\cdot \varG^k|-S^2\cdot \varG^k)+(\varG^k | \varG^k)}
		\arrow[bend left=7]{l}{(S \cdot \varG^k| \varG^k)}
	\end{tikzcd}
	\]
	\caption{%
		The bimodule \(M'_+(\TwistB)\), written with respect to the 
		\(\varG\)-equivariant basis of \(\B\). 
	}
	\label{fig:Twisting:typeAD:equivariant}
\end{subfigure}%

\begin{subfigure}{\textwidth}\centering
	\centering
	\[
	\begin{tikzcd}[column sep=4cm]
		q^1h^0
		{}_\bullet x_\bullet
		\arrow[bend right=7]{r}[swap]{(-| S)}
		\arrow[
		loop above,
		distance=1.75em, 
		start anchor={[xshift=-1ex]north}, 
		end anchor={[xshift=1ex]north}]
		{}[align=center]{%
			$(1|1)+(\Db \cdot \varG^k | \Db \cdot \varG^k)$
			\\ 
			\DWo{$+(S^2\cdot\varG^k | S^2\cdot \varG^k)$}}
		&
		q^2h^1
		{}_\bullet y_\circ
		\arrow[
		loop above,
		distance=1.75em, 
		start anchor={[xshift=-1ex]north}, 
		end anchor={[xshift=1ex]north}]
		{}[align=center]{
			$(1|1)+(S^2\cdot \varG^k | -\Dw\cdot\varG^k)$
			\\
			\DWo{$+(S^2\cdot \varG^k | S^2\cdot\varG^k)$}}
		\arrow[bend right=7]{l}[swap]{%
			\DWo{$(S\cdot\varG^\ell,S \cdot \varG^k| S\cdot \varG^{k+\ell})$}
		}
		\arrow[bend left=7]{r}[pos=0.5]{(S \cdot \varG^k|-\Dw\cdot \varG^k)}
		&
		q^3h^1
		{}_\circ z_\circ
		\arrow[%
		rounded corners,
		to path={%
			|- ([yshift=-5ex,xshift=5.75cm]\tikztotarget.south) 
			node[below,font=\scriptsize] (x){
				$(S,\Dw \cdot \varG^k| S\cdot\varG^{k})$%
				\DWo{$+(S\cdot \varG^{\ell},S^2 \cdot \varG^k| S\cdot\varG^{k+\ell})+(S\cdot \varG^{\ell+1},\Dw \cdot \varG^k| S\cdot\varG^{k+\ell+1})$}%
			}
			-| (\tikztotarget)}
		]{ll}
		\arrow[%
		loop above,
		distance=1.75em, 
		start anchor={[xshift=-1ex]north}, 
		end anchor={[xshift=1ex]north}]
		{}[align=center]{%
			$(1|1)+(S^2 \cdot \varG^k | -\Dw \cdot \varG^k)$
			\\
			$+(\Dw\cdot \varG^k|-S^2\cdot \varG^k)$
		}
		\arrow[bend left=7]{l}[align=center,pos=0.6]{%
			$(S \cdot \varG^{k+1}| -\Dw\cdot\varG^k)+(S|1)$%
			\DWo{$+(S \cdot \varG^{k+1}| S^2\cdot\varG^k)$}
		}
	\end{tikzcd}
	\]
	\caption{%
		The bimodule \(M'_+(\TwistB)\),
		written with respect to the 
		standard basis of \(\B\)
	}
	\label{fig:Twisting:typeAD-SB-raw}
\end{subfigure}
\caption{Definition of the bimodules \(M_+(\TwistB)\) and \(M'_+(\TwistB)\). Labels containing the indices \(k\) or \(\ell\) should be understood as the infinite sum of such labels where \( k,\ell\geq0 \). 	The highlighted labels in \cref{fig:Twisting:typeAD-SB-raw} show the difference between \(M'_+(\TwistB)\) and \(M_+(\TwistB)\).}
\label{fig:Twisting}
\end{figure}

\begin{lemma}\label{lem:iso-twisting-bimodules}
	 The type AD structures \(M'_\pm(\TwistL)\) and \(M_\pm(\TwistL)\) are bigraded isomorphic for \(\halfbullet\in\{\bullet,\circ\}\).
\end{lemma}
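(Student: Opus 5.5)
The plan is to reduce to the single case \(M'_+(\TwistB)\cong M_+(\TwistB)\) and then find the isomorphism with the Clean-up Lemma (\cref{lem:clean-up:improved}).

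First the reductions. \(M_-\) and \(M'_-\) are obtained from \(M_+\) and \(M'_+\) by the same overall shift \(q^{-3}h^{-1}\) of all generators. By \cref{def:shifted-complexes}, such a shift only twists the structure maps by signs that depend on \(i\) and not on the particular arrow. So any bigraded isomorphism \(M'_+(\TwistB)\to M_+(\TwistB)\), suitably re-signed, gives one between the shifted objects.

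For \(\TwistW\), both bimodules are defined as \(\typeAD{\B}{R}{\B}\boxtimes(\cdot)\boxtimes\typeAD{\B}{R}{\B}\). Box tensoring with the bounded bimodule \(R\) is functorial on morphisms by \cref{lem:signs-boxtimes-composition-AD} and its one-sided analogue. Hence it carries a strict isomorphism and its inverse to mutually inverse isomorphisms.

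It therefore suffices to treat \(M'_+(\TwistB)\) and \(M_+(\TwistB)\).

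Next, rewrite \(M'_+(\TwistB)\) in the standard basis. One expands each equivariant-basis label \(\varG^k\) as \((S^2-D)^k\) and collects terms; since \(SD=DS=0\), this should reproduce exactly \cref{fig:Twisting:typeAD-SB-raw}. It then remains to remove the highlighted arrows. These are:
\begin{itemize}
\item the loops \((S^2\varG^k|S^2\varG^k)\) at \(x\) and \(y\);
\item the \(\delta_2\)-arrows \((S\varG^\ell,S\varG^k|S\varG^{k+\ell})\) from \(y\) to \(x\);
\item the extra \(\delta_2\)-terms from \(z\) to \(x\);
\item the extra term \((S\varG^{k+1}|S^2\varG^k)\) from \(z\) to \(y\).
\end{itemize}

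To do this, apply \cref{lem:clean-up:improved} with a single degree-zero morphism \(a\), supported on the arrow \(y\to x\), of the form
\[
a_1 = \sum_{k\geq0} \bigl(S\cdot\varG^k\,\big|\,\pm\varG^k\bigr), \qquad a_i=0 \text{ for } i\neq1.
\]
Possibly a companion component \(z\to x\) of the same shape is also needed. The grading check is consistent: \(y\) sits in \(q^2h^1\) and \(x\) in \(q^1h^0\), so an \(a_1\) with input \(S\) and output of \(q\)-degree \(0\) has total grading \(q^0h^0\), as the lemma requires. The two hypotheses should come for free from the idempotents. Both \(a\) and \(\underline\partial(a)\) only map out of \(y\) into \(x\), and \(x\ne y\), so \(a^2=0\) and \(\underline{\partial}(a)a=0\).

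The new differential is \(\delta'=(1-a)\delta(1+a)+\underline\partial(a)\). Here \(\underline\partial(a)\) produces \(\delta_2\)-arrows \(y\to x\) from factorizations \(S\varG^{k+\ell+1}=(S\varG^\ell)(S\varG^k)\cdots\), which is exactly the shape of the highlighted \(\delta_2\)-labels. The compositions \(\delta a\) and \(a\delta\) produce corrections to the loops: at \(x\) they arise through the arrow \((-|S)\) from \(x\) to \(y\), and at \(y\) through the same arrow composed with \(a\). They also modify the arrows \(z\to y\) and \(z\to x\). Finally, \(a\delta a\) should vanish or contribute only \(y\to x\) terms.

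The main obstacle is choosing \(a\) (and its signs) so that all highlighted labels cancel simultaneously with no new terms left over, particularly the loop \((S^2\varG^k|S^2\varG^k)\) at \(x\), which must come from composing \(a\) with the \((-|S)\) arrow. I would first try the single \(y\to x\) component above and compute \(\delta'\) arrow by arrow, tracking Koszul signs via \cref{rem:typeAD:graphical-notation}. If the \(z\)-terms do not cancel, I would add a \(z\to x\) component \((\Dw\varG^k|\cdots)\) and apply the Clean-up Lemma a second time. Strict unitality is preserved because \(a\) has no \((1|\cdot)\) inputs, and the resulting isomorphism is bigraded because \(a\) has grading \(q^0h^0\).
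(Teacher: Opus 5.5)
Your reductions to \(M'_+(\TwistB)\) are fine. Your overall plan is also the paper's: rewrite \(M'_+(\TwistB)\) in the standard basis (\cref{fig:Twisting:typeAD-SB-raw}), then remove the highlighted labels with one application of \cref{lem:clean-up:improved}, using a morphism \(a\) of grading \(q^0h^0\) that lands in \(x\). The gap is that the \(a\) you propose is identically zero.

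The generators are \(x={}_\bullet x_\bullet\), \(y={}_\bullet y_\circ\) and \(z={}_\circ z_\circ\). Since \(x\) and \(y\) share the left idempotent \(\bullet\), a one-input arrow from \(y\) to \(x\) must have its input in \(\iota_\bullet\B\iota_\bullet\), so the input is \(\Db\varG^k\) or \(S^2\varG^k\). Since their right idempotents differ, its output must be a multiple of \(S\varG^k\). Your \((S\varG^k|\pm\varG^k)\) has its input in the wrong idempotent block and its output a sum of loops, so it vanishes as a bimodule map. Checking the \(q\)- and \(h\)-gradings is necessary but not sufficient; the idempotents also have to match. The same objection kills your fallback \(z\to x\) component with input \(\Dw\varG^k\), because \(z\) has left idempotent \(\circ\) while \(x\) has \(\bullet\). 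In particular, nothing of your shape composed with \((-|S)\) can produce the loop \((S^2\varG^k|S^2\varG^k)\) at \(x\), whose input is a loop at \(\bullet\).

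The choice that works, and the one used in the paper, is
\[
a=\sum_{k\geq0}\bigl(S^2\cdot\varG^k\,\big|\,S\cdot\varG^k\bigr)+\sum_{k\geq0}\bigl(S\cdot\varG^{k+1}\,\big|\,S\cdot\varG^k\bigr).
\]
Here the first sum runs from \(y\) to \(x\) and the second from \(z\) to \(x\). This \(a\) produces the highlighted terms as follows.
\begin{itemize}
\item Composing with the arrow \((-|S)\) on either side gives the loops \((S^2\varG^k|S^2\varG^k)\) at \(x\) and at \(y\).
\item The composite \((-|S)\circ a\) on the \(z\)-component gives \((S\varG^{k+1}|S^2\varG^k)\) from \(z\) to \(y\).
\item The term \(\underline\partial(a)\) splits the inputs, e.g.\ \(S^2\varG^{k+\ell}=(S\varG^\ell)(S\varG^k)\) and similarly \(S\varG^{k+\ell+1}\), and this yields the highlighted \(\delta_2\)-arrows.
\end{itemize}
The remaining step is the routine sign check that \(\partial(a)-a\delta a\) cancels exactly the highlighted labels. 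The hypotheses \(a^2=0\) and \(\underline\partial(a)a=0\) hold because every component of \(a\) ends at \(x\) and none starts there.
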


\begin{proof}%
	It suffices to show the statement for \(\halfbullet=\bullet\) and \(\pm=+\). 
	We first write \(M'_+(\TwistB)\) in terms of the standard basis. 
	This is shown in \cref{fig:Twisting:typeAD-SB-raw}. 
	We now apply \cref{lem:clean-up:improved} 
	to the endomorphism \(a\) given by
	\[ 
	\begin{tikzcd}[column sep=4cm]
		q^1h^0
		{}_\bullet x_\bullet
		&
		q^2h^1
		{}_\bullet y_\circ
		\arrow{l}[swap]{(S^2\cdot\varG^k|S\cdot\varG^k)}
		&
		q^3h^1
		{}_\circ z_\circ
		\arrow[bend left=5,end anchor={[yshift=-5pt]east}]
		{ll}[pos=0.1,swap]{(S\cdot\varG^{k+1}|S\cdot\varG^k)}
	\end{tikzcd}
	\]
	Making use of \cref{rem:typeAD:graphical-notation}, 
	a routine computation
	shows that the sum of 
	\(\partial_{\typeAD{\B}{\C}{\B}}(a)-a\delta a\) 
	with the endomorphism represented by the highlighted labels 
	in \cref{fig:Twisting:typeAD-SB-raw} 
	is indeed 0. 
\end{proof}

\begin{proposition}\label{prop:AddingASingleCrossing}
	For any pointed oriented Conway tangle \(T\) and \(\halfbullet\in\{\bullet,\circ\}\) 
	\[
	\DD(\TwistL(T))^{\B}
	\simeq
	\DD(T)^{\B}\boxtimes \BB{M'_\pm(\TwistL)}
	\cong
	\DD(T)^{\B}\boxtimes \BB{M_\pm(\TwistL)}
	\]  
	where \(\pm\) is the sign of the added crossing in \(\TwistL(T)\). 
\end{proposition}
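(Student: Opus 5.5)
The second isomorphism follows at once from \cref{lem:iso-twisting-bimodules}: an isomorphism of type AD structures \(M'_\pm(\TwistL)\cong M_\pm(\TwistL)\) induces an isomorphism after boxtensoring with any type D structure, since \(1\boxtimes g\) is functorial by \cref{lem:signs-boxtimes-composition-AD}. So the content of the proposition is the homotopy equivalence with \(M'_\pm(\TwistL)\). Moreover, it suffices to treat \(\halfbullet=\bullet\). For \(\TwistW\), the tangle \(\TwistW(T)\) is \(\rho\) applied to \(\TwistB(\rho(T))\), up to the rotation realizing \(\rho\). Using \cref{prop:rho*Khr} together with \(M_\pm(\TwistW)=R\boxtimes M_\pm(\TwistB)\boxtimes R\) and associativity of boxtensor products (\cref{rem:boxtensoring-AD-AD}), the \(\TwistW\) case then reduces to the \(\TwistB\) case. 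By \cref{prop:boxtensoring-and-homotopies} we may replace \(\DD(T)\) by \(\DD(\Diag)\) for any diagram \(\Diag\) of \(T\).

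For \(\TwistB\), I would work at the level of Bar-Natan's cube of resolutions. Let \(\Diag'\) be the diagram for \(\TwistB(T)\) obtained from \(\Diag\) by adding a single crossing \(c\) near the ends \(\Endii,\Endiii\). Ordering \(c\) last, \(\KhT{\Diag'}\) is the mapping cone of the saddle map at \(c\) between the two resolutions of \(c\). One resolution is \(\KhT{\Diag}\) itself, with the crossing region replaced by a trivial tangle. The other is \(\KhT{\Diag}\) with the arcs at the two ends swapped, which amounts to a planar composition with a crossingless 4-ended tangle. In the delooped picture, \(\KhTs{\Diag}\) is built from \(\Li\) and \(\Lo\). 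Composing \(\Li\) and \(\Lo\) with the resolutions of \(c\) yields either \(\Li\), \(\Lo\), or \(\Li\) with an extra closed circle. This gives, for each generator \(\bullet\) of \(\DD(\Diag)\), one generator \(x_\bullet\), and for each generator \(\circ\), two generators \(y_\circ\) and \(z_\circ\) (a cone between one \(\circ\) and a delooped pair). That matches the three generators of \(M'_+(\TwistB)\) and the grading shifts \(q^1h^0,q^2h^1,q^3h^1\); the \(q^{-3}h^{-1}\) shift for \(M_-\) accounts for the change of \(n_\pm\) in the normalisation.

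The next step is to compute the differential of this delooped cone. Differentials of \(\DD(\Diag)\) between \(\Li,\Lo\) (saddles, dots, \(\varG\)-multiples) are transported through the composition with each resolution of \(c\). Delooping the closed circle via \cref{lem:delooping} then produces exactly the arrows of \(M'_+(\TwistB)\), now read as \(\DD(\Diag)\boxtimes M'_+(\TwistB)\) using the graphical description of the boxtensor product. Because the labels of \(M'_+\) are written in the \(\varG\)-equivariant basis, each input \(\varG^k a\) contributes output \(\varG^k\omega^{-1}(\text{transported }a)\). Composable pairs \((S\varG^\ell,S\varG^k)\) arise from Gaussian elimination of the delooped circle: a zigzag through the cancelled summand contributes a length-two input. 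I would therefore organise the computation as follows: first form the unsimplified cone as a type D structure, then cancel isomorphism components coming from the delooping, and read off the resulting higher \(\delta_2\) terms.

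The main obstacle is signs over \(\CoeffRing\). This means checking that the cube sign assignment, the Koszul signs in \cref{rem:typeAD:graphical-notation}, the signs \((-1)^{|x|(k+1)}\) in boxtensoring, and the delooping maps (with \(\DiscLdot+\varG\DiscL\)) together produce exactly the signs in \cref{fig:Twisting:typeAD:equivariant}, such as \(-\Dw\varG^k\) and \(-S^2\varG^k\). I would first handle this by verifying the identity for \(T=\Li,\Lo\) and for single arrows \(\circ\to\bullet\), \(\bullet\to\bullet\), and so on, which is local in nature. Any remaining global sign discrepancy can be absorbed by rescaling generators by \(\pm1\) according to homological parity, i.e.\ an isomorphism of type D structures. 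Finally, one must confirm that the result is independent of how crossings are ordered, which holds up to isomorphism by standard cube arguments.
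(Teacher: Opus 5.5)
Your route is the paper's: glue the delooped complex of \(T\) to the two-term complex of the added crossing using Bar-Natan's planar gluing theorem, and deloop the single circle that appears. This gives a type~AD structure \(M''_+(\TwistB)\) with only one-input arrows (\cref{fig:Twisting:typeAD-raw}). Cancelling its identity arrow produces \(M'_+(\TwistB)\), and the two-input arrows \((S\varG^\ell,S\varG^k|\cdot)\) and \((S\varG^\ell,\Dw\varG^k|\cdot)\) are exactly the zigzags through the cancelled pair that you describe. The reduction of \(\TwistW\) to \(\TwistB\) via \(\rho\), and the second isomorphism via \cref{lem:iso-twisting-bimodules}, also match the paper. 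Two parts of your sketch need correcting, however.

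\textbf{The generator bookkeeping is reversed.} A generator in idempotent \(\bullet\) corresponds to \(\omega(\bullet)\). Its composites with the 0- and 1-resolutions of the crossing are \(\omega(\bullet)\) and \(\omega(\circ)\). So it produces \emph{two} generators, \({}_\bullet x_\bullet\) at \(q^1h^0\) and \({}_\bullet y_\circ\) at \(q^2h^1\); the left subscript records the input idempotent. A generator in idempotent \(\circ\) produces \emph{three} generators before simplification: one at \(q^1h^0\) from the 0-resolution, and the delooped pair at \(q^3h^1\) and \(q^1h^1\) from the 1-resolution, which contains the extra circle. The saddle has an identity component from the \(q^1h^0\) generator to the \(q^1h^1\) generator. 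Cancelling it leaves only \({}_\circ z_\circ\) at \(q^3h^1\). With your assignment (one generator from \(\bullet\), two from \(\circ\)), neither the idempotents nor the gradings \(q^1h^0,q^2h^1,q^3h^1\) come out right.

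\textbf{The proposed sign fix is not available.} Rescaling generators by \(\pm1\) according to homological parity merely negates every arrow at once. Sign rescalings cannot remove sign discrepancies in general, so this does not rescue a computation with wrong signs. Nor is it needed. As the paper notes, the signs induced by the grading shifts of type~D structures (\cref{def:shifted-complexes}) agree with the Koszul signs of the planar tensor product \(\Diag(-,\KhT{Q_{-1}})\). Computing the images of the objects and of the morphisms \(S\) and \(D\), using \(\varG\)-linearity, therefore yields \(M''_+(\TwistB)\) with exactly the stated signs.
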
	

\begin{proof}
	Since \(\DD(T)\) is bounded by \cref{rem:bounded}, the type~D structure \(\DD(T)\boxtimes Y\) is well-defined for any type~AD structure \(Y\).
	We show the homotopy equivalence
	\(
	\DD(\TwistL(T))^{\B}
	\simeq
	\DD(T)^{\B}\boxtimes M'_\pm(\TwistL)
	\)
	for the case \(\halfbullet=\bullet\) first. 
	Making use of Bar-Natan's gluing formalism \cite[Section~5]{BarNatanKhT},
	consider the following planar arc diagram:
	\[
		\Diag=\Diag(T_1,T_2)
		=
		\tanglesum
	\]
	Then \(\TwistB(T)=\Diag(T,Q_{-1})\) 
	and the orientation of \(T\) induces an orientation on 
	\(\Diag\) and \(Q_{-1}\). 
	By \cite[Theorem~2]{BarNatanKhT}
	\[
	\KhTs{\TwistB(T)}
	\simeq
	\Diag(\KhTs{T},\KhT{Q_{-1}})
	\]
	where the right-hand side is interpreted as a tensor product of \(\KhTs{T}\) and \(\KhT{Q_{-1}}\). 
	We first compute this tensor product in the case that \(Q_{-1}\) is a positive crossing. 
	Then 
	\[
	\KhT{Q_{-1}}
	=
	\left[
	\begin{tikzcd}
	q^1h^0
	\No
	\arrow{r}{S}
	&
	q^2h^1
	\Ni
	\end{tikzcd}
	\right]
	\]
	and it suffices to understand the action of \(\Diag(-,\KhT{Q_{-1}})\) on 
	\(\End_{\Cobb}(\Li\oplus\Lo)\). 
	The diagram
	\(\Diag(\Li,\Li)\) contains a single circle, which we deloop using \cref{lem:delooping}.
	Thus, \(\Diag(-,\KhT{Q_{-1}})\) acts as follows:
	\begin{align*}
		q^ih^j
		\Li
		&\mapsto 
		q^{i}h^{j}
		\left[
		\begin{tikzcd}[ampersand replacement =\&]
			q^3h^1
			\Li
			\&
			q^1h^0
			\Li
			\arrow{r}{1}
			\arrow[swap]{l}{S^2}
			\&
			q^1h^1
			\Li
		\end{tikzcd}\right]
		\\
		q^ih^j
		\Lo
		&\mapsto 
		q^{i}h^{j}
		\left[
		\begin{tikzcd}[ampersand replacement =\&]
			q^1h^0
			\Lo
			\arrow{r}{S}
			\&
			q^2h^1
			\Li
		\end{tikzcd}\right]
	\end{align*} 
	Here, we identify the morphisms 
	in \(\End_{\Cobb}(\Li\oplus\Lo)\) and \(\B\)
	according to \cref{thm:OmegaFullyFaithful}. 
	Note that the signs induced by the grading shifts (\cref{def:shifted-complexes}) on these type D structures agree with the Koszul sign rule (\cref{def:koszul-rule}) for the tensor product \(\Diag(-,\KhT{Q_{-1}})\).

	\begin{figure}[t]
		\centering
		\newlength{\braceflex}
		\setlength{\braceflex}{-4pt}
		\newlength{\midflex}
		\setlength{\midflex}{-4pt}
		\begin{align*}
			%
			\left(\hspace*{\braceflex}
			\begin{tikzcd}[ampersand replacement = \&]
				\Li
				\arrow{d}{D}
				\\
				\Li
			\end{tikzcd}\hspace*{\braceflex}\right)
			\mapsto 
			&
			\left(\hspace*{\braceflex}
			\begin{tikzcd}[ampersand replacement = \&,column sep=20pt]
				\Li
				\arrow[out=-90,in=135,pos=0.8]{drr}{1}
				\&
				\Li
				\arrow{r}{1}
				\arrow[swap]{l}{S^2}
				\arrow[swap,pos=0.1]{d}{D}
				\&
				\Li
				\arrow{d}{- \varG}
				\\
				\Li
				\&
				\Li
				\arrow{r}{1}
				\arrow[swap]{l}{S^2}
				\&
				\Li
			\end{tikzcd}\hspace*{\braceflex}\right)\hspace*{\midflex}
			&
			\hspace*{\midflex}\left(\hspace*{\braceflex}
			\begin{tikzcd}[ampersand replacement = \&]
				\Lo
				\arrow{d}{D}
				\\
				\Lo
			\end{tikzcd}\hspace*{\braceflex}\right)
			\mapsto 
			&
			\left(\hspace*{\braceflex}
			\begin{tikzcd}[ampersand replacement = \&,column sep=20pt]
				\Lo
				\arrow[swap]{d}{D}
				\arrow{r}{S}
				\&
				\Li
				\\
				\Lo
				\arrow{r}{S}
				\&
				\Li
			\end{tikzcd}\hspace*{\braceflex}\right)
			\\
			%
			\left(\hspace*{\braceflex}
			\begin{tikzcd}[ampersand replacement = \&]
				\Li
				\arrow{d}{S}
				\\
				\Lo
			\end{tikzcd}\hspace*{\braceflex}\right)
			\mapsto 
			&
			\left(\hspace*{\braceflex}
			\begin{tikzcd}[ampersand replacement = \&,column sep=20pt]
				\Li
				\arrow[out=-90,in=135,pos=0.8]{drr}{1}
				\&
				\Li
				\arrow[pos=0.1,swap]{d}{S}
				\arrow{r}{1}
				\arrow[swap]{l}{S^2}
				\&
				\Li
				\\
				\&
				\Lo
				\arrow{r}{S}
				\&
				\Li
			\end{tikzcd}\hspace*{\braceflex}\right)\hspace*{\midflex}
			&
			\hspace*{\midflex}\left(\hspace*{\braceflex}
			\begin{tikzcd}[ampersand replacement = \&]
				\Lo
				\arrow{d}{S}
				\\
				\Li
			\end{tikzcd}\hspace*{\braceflex}\right)
			\mapsto 
			&
			\left(\hspace*{\braceflex}
			\begin{tikzcd}[ampersand replacement = \&,column sep=20pt]
				\&
				\Lo
				\arrow{r}{S}
				\arrow[pos=0.7]{d}{S}
				\&
				\Li
				\arrow{d}{1}
				\arrow[out=-145,in=90,pos=0.6,swap]{dll}{\varG}
				\\
				\Li
				\&
				\Li
				\arrow{r}{1}
				\arrow[swap]{l}{S^2}
				\&
				\Li		
			\end{tikzcd}\hspace*{\braceflex}\right)
		\end{align*}
		\caption{%
			The action of \(\Diag(-,\KhT{Q_{-1}})\) 
			on the morphisms \(D\) and \(S\) 
			for the proof of \cref{prop:AddingASingleCrossing}
		}
		\label{fig:Twisting:morphisms}
	\end{figure}

	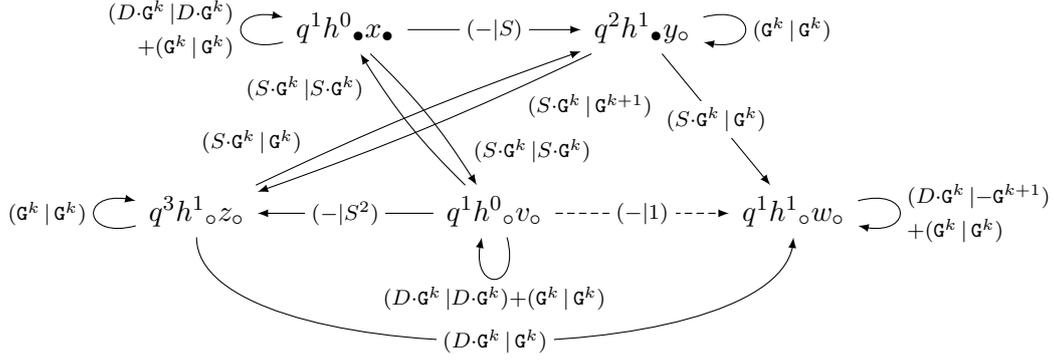
\begin{figure}[t]
		\[ 
		\begin{tikzcd}[column sep=10pt,row sep=50pt,scale=2]
			&
			q^1h^0
			{}_\bullet x_\bullet
			\arrow{rr}[description]{(-|S)}
			\arrow[bend left=7]{dr}[pos=0.9]{(S\cdot \varG^k| S\cdot \varG^k)}
			\arrow[
			loop left,
			distance=1em, 
			start anchor={[yshift=-1ex]west}, 
			end anchor={[yshift=1ex]west}]
			{}[align=right]{%
				$(D \cdot \varG^k | D \cdot \varG^k)$
				\\ 
				$+(\varG^k | \varG^k)$}
			&&
			q^2h^1
			{}_\bullet y_\circ
			\arrow{dr}[description]{(S \cdot \varG^k| \varG^k)}
			\arrow[
			loop right,
			distance=1em, 
			start anchor={[yshift=1ex]east}, 
			end anchor={[yshift=-1ex]east}]
			{}{(\varG^k | \varG^k)}
			\arrow[bend left=3]{dlll}[pos=0.2]{(S \cdot \varG^k| \varG^{k+1})}
			\\
			q^3h^1
			{}_\circ z_\circ
			\arrow[in=-90,out=-90,looseness=0.6]{rrrr}[description]{(D \cdot \varG^k| \varG^k)}
			\arrow[%
			loop left,
			distance=1em, 
			start anchor={[yshift=-1ex]west}, 
			end anchor={[yshift=1ex]west}]
			{}{(\varG^k | \varG^k)}
			\arrow[bend left=3]{urrr}[pos=0.15]{(S \cdot \varG^k| \varG^k)}
			&&
			q^1h^0
			{}_\circ v_\circ
			\arrow[dashed]{rr}[description]{(-| 1)}
			\arrow{ll}[description]{(-| S^2)}
			\arrow[pos=0.7,bend left=7]{ul}[pos=0.9]{(S \cdot \varG^k| S \cdot \varG^k)}
			\arrow[%
			loop below,
			distance=1em, 
			start anchor={[xshift=1ex]south}, 
			end anchor={[xshift=-1ex]south}]
			{}{\substack{(D \cdot \varG^k| D \cdot \varG^k) + (\varG^k | \varG^k)}}
			&&
			q^1h^1
			{}_\circ w_\circ
			\arrow[
			loop right,
			distance=1em, 
			start anchor={[yshift=1ex]east}, 
			end anchor={[yshift=-1ex]east}]
			{}[align=left]{%
				$(D\cdot \varG^k|-\varG^{k+1})$
				\\
				$+(\varG^k | \varG^k)$
			}
		\end{tikzcd}
		\]
		\vspace{-0.5cm}
		\caption{%
			The bimodule \(M''_+(\TwistB)\) from the proof of \cref{prop:AddingASingleCrossing}.
			Labels containing the index \(k\) should be understood as the infinite sum of such labels where \( k\geq0 \).
		}\label{fig:Twisting:typeAD-raw}
	\end{figure}
	
	Similarly, we can compute the action on morphisms; this is done in \cref{fig:Twisting:morphisms}. 
	These actions on objects and morphisms are combined
	in the type AD structure \(M''_+(\TwistB)\) 
	shown in \cref{fig:Twisting:typeAD-raw}, 
	where we have now also identified the objects 
	in \(\End_{\Cobb}(\Li\oplus\Lo)\) and \(\B\). 
	Interpreting 
	\(\KhT{\TwistB(T)}\) and \(\KhT{T}\) 
	as the type~D structures 
	\(\DD(\TwistB(T))\) and \(\DD(T)\), respectively, we obtain
	\[
	\DD(\TwistB(T))^{\B}
	\simeq
	\DD(T)^{\B}
	\boxtimes
	\BB{M''_+(\TwistB)}.
	\]
	Note that, by \cref{prop:boxtensoring-and-homotopies},
	the chain homotopy class of the right hand side only depends on 
	the chain homotopy classes of \(\DD(T)^{\B}\) and \(\BB{M''_+(\TwistB)}\). 
	In particular, we may replace \(M''_+(\TwistB)\) 
	by the homotopy equivalent type AD structure \(M'_+(\TwistB)\);
	the homotopy equivalence is obtained by applying the Cancellation Lemma 
	\cite[Lemma 2.16]{KWZ} 
	to the dashed arrow in \cref{fig:Twisting:typeAD-raw}.
	This concludes the proof for the case that 
	\(Q_{-1}\) is a positive crossing. 
	
	If \(Q_{-1}\) is a negative crossing, then the bigradings of the generators in \(\KhT{Q_{-1}}\) are simply shifted by \(q^{-3}h^{-1}\). 
	The rest of the argument stays the same.
The homotopy equivalence for the case \(\halfbullet=\circ\) can be found similarly. 
	However, there is also the following shortcut:
	\begin{align*}
		\DD(T)^{\B}\boxtimes \BB{M'_\pm(\TwistW)}
		&=
		\DD(T)^{\B}\boxtimes\rho
		\left(
		M'_\pm(\TwistB)
		\right)
		=
		\rho
		\left(
		\rho
		\left(
		\DD(T)^{\B}
		\right)
		\boxtimes M'_\pm(\TwistB)	
		\right)
		\\
		&=
		\rho
		\left(
		\DD(\rho(T))^{\B}\boxtimes M'_\pm(\TwistB)
		\right)
		\simeq
		\rho
		\left(
		\DD((\TwistB\rho)(T))^{\B}
		\right)
		\\
		&=
		\DD((\rho\TwistB\rho)(T))^{\B}
		=
		\DD(\TwistW(T))^{\B}
	\end{align*}
	The third and fifth identity uses \cref{prop:rho*Khr}. 
	The fourth identity uses the assumption that the statement is true for the case \(\halfbullet=\bullet\) and the fact that the signs of the added crossings in \(\TwistW(T)\) and \(\TwistB(\rho(T))\) agree.  
	The last identity uses the fact that 
	\(\rho=\TwistW\TwistB\TwistW\) 
	and 
	\(1=\TwistW\TwistB\TwistW\TwistB\TwistW\TwistB\).
The isomorphism 
	\(
	\DD(T)^{\B}\boxtimes M'_\pm(\TwistL)
	\cong
	\DD(T)^{\B}\boxtimes M_\pm(\TwistL)
	\)
	follows from \cref{lem:iso-twisting-bimodules} combined with \cref{lem:signs-boxtimes-composition-AD,rem:typeD:identity-boxtensor}.
\end{proof}

\begin{table}
	\centering
	\begin{tabular}{ccc}
		\(i\)
		&
		\( X_i \) 
		&
		\( Y_i\)
		\\
		\hline
		(\( \SaddleBC \)a)&
		\( 
		\begin{tikzcd}[column sep=30pt]
			\phantom{\circ}
			\arrow[leftrightarrow,"\times" anchor=center]{r}{}
			&
			\circ
			\arrow{r}{S}
			&
			\bullet
		\end{tikzcd}
		\)
		&
		\(
		\begin{tikzcd}[column sep=30pt]
			\bullet
		\end{tikzcd}
		\)
		\\
		(\( \SaddleBC \)b)&
		\( 
		\begin{tikzcd}[column sep=30pt]
			\bullet
			&
			\circ
			\arrow{r}{D\cdot\varG^k}
			\arrow{l}[swap]{S}
			&
			\circ
			\arrow{r}{S}
			&
			\bullet
		\end{tikzcd}
		\)
		&
		\(
		\begin{tikzcd}[column sep=30pt]
			\bullet
			\arrow{r}{S^2\cdot\varG^k}
			&
			\bullet
		\end{tikzcd}
		\)
		\\
		(\( \SaddleBC \)c)&
		\( 
		\begin{tikzcd}[column sep=30pt]
			\circ
			&
			\circ
			\arrow{r}{S}
			\arrow[swap]{l}{D\cdot\varG^k}
			&
			\bullet
		\end{tikzcd}
		\)
		&
		\( 
		\begin{tikzcd}[column sep=30pt]
			\circ
			&
			\bullet
			\arrow{l}[swap]{\pm S\cdot\varG^{k+1}}
		\end{tikzcd}
		\)
		\\
		(\( \SaddleBC \)d)&
		\( 
		\begin{tikzcd}[column sep=30pt]
			\circ
			\arrow{r}{D\cdot\varG^k}
			&
			\circ
			\arrow{r}{S}
			&
			\bullet
		\end{tikzcd}
		\)
		&
		\(
		\begin{tikzcd}[column sep=30pt]
			\circ
			\arrow{r}{\pm S\cdot\varG^k}
			&
			\bullet
		\end{tikzcd}
		\)
		\\
		(\( \SaddleBC \)e)&
		\( 
		\begin{tikzcd}[column sep=30pt]
			\circ
			\arrow{r}{S\cdot\varG^{k+1}}
			&
			\bullet
		\end{tikzcd}
		\)
		&
		\( 
		\begin{tikzcd}[column sep=30pt]
			\circ
			\arrow{r}{-D\cdot\varG^k}
			&
			\circ
			&
			\bullet
			\arrow[swap]{l}{\pm S}
		\end{tikzcd}
		\)
		\\
		(\( \SaddleCB \))&
		\( 
		\begin{tikzcd}[column sep=30pt]
			\bullet
			\arrow{r}{S\cdot\varG^k}
			&
			\circ
		\end{tikzcd}
		\)
		&
		\( 
		\begin{tikzcd}[column sep=30pt]
			\bullet
			\arrow{r}{\pm S}
			&
			\circ
			\arrow{r}{-D\cdot\varG^k}
			&
			\circ
		\end{tikzcd}
		\)
		\\
		(\( S\SaddleBC \))&
		\( 
		\begin{tikzcd}[column sep=30pt]
			\circ
			\arrow{r}{S^2\cdot\varG^k}
			&
			\circ
		\end{tikzcd}
		\)
		&
		\( 
		\begin{tikzcd}[column sep=30pt]
			\circ
			\arrow{r}{-D\cdot\varG^k}
			&
			\circ
		\end{tikzcd}
		\)
		\\
		(\( S\SaddleCB \))&
		\( 
		\begin{tikzcd}[column sep=30pt]
		\bullet
		\arrow{r}{S^2\cdot\varG^k}
		&
		\bullet
		\end{tikzcd}
		 \)
		&
		\( 
		\begin{tikzcd}[column sep=30pt]
		\bullet
		\arrow{r}{\pm S}
		&
		\circ
		\arrow{r}{-D\cdot\varG^{k}}
		&
		\circ
		&
		\bullet
		\arrow[swap]{l}{\mp S}
		\end{tikzcd}
		 \)
		\\
		(\( \Dw \))&
		\( 
		\begin{tikzcd}[column sep=30pt]
		\circ
		\arrow{r}{D\cdot\varG^k}
		&
		\circ
		\end{tikzcd}
		 \)
		&
		\( 
		\begin{tikzcd}[column sep=30pt]
		\circ
		\arrow{r}{-S^2\cdot\varG^k}
		&
		\circ
		\end{tikzcd}
		 \)
		 \\
		 (\( \Db \))&
		 \( 
		 \begin{tikzcd}[column sep=30pt]
		 	\bullet
		 	\arrow{r}{D\cdot\varG^k}
		 	&
		 	\bullet
		 \end{tikzcd}
		 \)
		 &
		 \( 
		 \begin{tikzcd}[column sep=30pt]
		 	\circ
		 	&
		 	\bullet
		 	\arrow{r}{D\cdot\varG^k}
		 	\arrow[dashed,swap]{l}{\pm S}
		 	&
		 	\bullet
		 	\arrow[dashed]{r}{\mp S}
		 	&
		 	\circ
		 \end{tikzcd}
		 \)
	\end{tabular}
	\caption{Computations for the proof of \cref{prop:twisting:complexes}}
	\label{tab:MCGbimodule}
\end{table}

\begin{definition}
	Given any type D structure \(X\) over \(\B\),
	\(\mathtt{e}\in\EndsThree\),
	and \(\halfbullet\in\{\bullet,\circ\}\)
	define 
	\[
	\Phi_{\mathtt{e}}(\TwistL)(X)
	=
	\begin{cases*}
		X\boxtimes M_+(\TwistL)
		&
		if (\(\halfbullet=\bullet\) and \(\mathtt{e}=\Endi\)) or (\(\halfbullet=\circ\) and \(\mathtt{e}=\Endiii\))
		\\
		X\boxtimes M_-(\TwistL)
		&
		otherwise.
	\end{cases*}
	\]
\end{definition}

\begin{proposition}\label{prop:twisting:complexes}
	Let \(\Gamma\) be a graded multicurve with local system,
	\(\mathtt{e}\in\EndsThree\), 
	and \(\halfbullet\in\{\bullet,\circ\}\).
	Then
	\[
	C(\Phi_{\mathtt{e}}(\TwistL)(\Gamma))
	\simeq
	\Phi_{\mathtt{e}}(\TwistL)(C(\Gamma)).
	\]
\end{proposition}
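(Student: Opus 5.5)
We reduce to a single component \(\gamma=(F,g,X,e)\) of \(\Gamma\): both sides commute with direct sums, since \(\boxtimes\) distributes over \(\oplus\) and \(\Phi_{\mathtt{e}}(\TwistL)\) is defined componentwise on multicurves. It is also enough to treat \(\halfbullet=\bullet\). The case \(\halfbullet=\circ\) follows by conjugating with \(\typeAD{\B}{R}{\B}\), using \(M_\pm(\TwistW)=R\boxtimes M_\pm(\TwistB)\boxtimes R\), associativity of \(\boxtimes\) (\cref{rem:boxtensoring-AD-AD}), and \cref{lem:rho*DD}, since the combinatorial description of \(\Phi_{\mathtt{e}}(\TwistW)\) is obtained from that of \(\Phi_{\mathtt{e}}(\TwistB)\) by swapping \(\bullet\leftrightarrow\circ\) and \(\Endi\leftrightarrow\Endiii\). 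The choice of \(\mathtt{e}\) only decides between \(M_+\) and \(M_-\), and these differ by the overall shift \(q^{-3}h^{-1}\). So once the grading bookkeeping matches the two cases \((1,0)\) versus \((-2,-1)\), resp.\ \((3,1)\) versus \((0,0)\), we may work with \(M_+(\TwistB)\), in the standard-basis form of \cref{fig:Twisting:typeAD:standard}.

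We then compute \(C(\gamma)\boxtimes M_+(\TwistB)\) directly, using the graphical rule for boxtensoring. Each generator \(x_\bullet\) of \(C(\gamma)\) contributes one generator \(x_\bullet\boxtimes x\), and each generator \(x_\circ\) contributes two, \(x_\circ\boxtimes y\) and \(x_\circ\boxtimes z\). The differential arrows come from chains of arrows in \(C(\gamma)\), of length at most two here, matched with operations of \(M_+(\TwistB)\). Because the labels along \(\gamma\) alternate between \(D\)-type and \(S\)-type elements, these chains are local. This lets us cut \(C(\gamma)\) at the \(\bullet\)-generators into the four piece types from the proof of the proposition on \(\TwistB(F)\), and compute the tensor product of each elementary arrow separately. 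The resulting local pictures are the \(Y_i\) in \cref{tab:MCGbimodule}, one for each case \(X_i\) of \cref{tab:MCG:TwistB}. The row \((\Db)\) produces extra generators \(x_\circ\boxtimes z\) joined by the dashed identity-type arrows \(\pm S\). In case \((\SaddleBC\text{a})\), the generator \(\circ\) produces a pair \(x_\circ\boxtimes y \to x_\circ\boxtimes z\) related by an identity component \((1|1)\)-type arrow \((S|1)\).

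Next we simplify. Apply the Cancellation Lemma \cite[Lemma 2.16]{KWZ} to every arrow labelled by an identity (invertible) coefficient; these are the \((S|1)\) arrows and the loops with label \(1\). After cancellation the remaining graph has exactly the shape \(\TwistB(F)\), with labels as in \cref{tab:MCG:TwistB} up to the signs recorded in \cref{tab:MCGbimodule}. Because cancellation creates zigzag compositions, we check locally that no new long arrows survive. The alternating structure ensures every zigzag has a \(D\cdot S=0\) product, except the ones already accounted for in cases \((\SaddleBC\text{b})\)–\((\SaddleBC\text{d})\). For the gradings, \(x_\bullet\boxtimes x\) sits in bigrading \(q^1h^0\) relative to \(x_\bullet\), and in the all-\(\circ\) case \(x_\circ\boxtimes z\) sits in \(q^3h^1\). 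This matches the definition of \(g'\) for \(M_+\), and the \(M_-\) shift gives the other cases.

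The main obstacle is the signs. The result is the correct graph, but with edges carrying signs \(\pm\) and \(-\) relative to \(C(\Phi_{\mathtt{e}}(\TwistB)(\gamma))\). We would remove them by rescaling generators by \(\pm1\), walking along the graph. For non-compact curves and for every edge other than a distinguished one, this is always possible. For compact curves the only invariant is the product of all signs around the cycle, which modifies the local system \(X\) by a global sign. So we must show that the total sign around any cycle is \(+1\). We expect this to follow from a parity count: the \(-\) signs attach to \(D_\circ\) and \(S^2\) edges in rows \((\SaddleBC\text{e})\), \((\SaddleCB)\), \((S\SaddleBC)\), \((S\SaddleCB)\) and \((\Dw)\), and the Koszul signs \((-1)^{|x|(k+1)}\) come from boxtensoring. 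We would verify this by tracking the homological grading along the cycle, using that \(h\) increases by one along each edge, so that the contributions cancel in pairs. If this global count fails, we would instead try redistributing signs edge by edge using the orientation of \(e\) relative to \(e'\), as in the definition of \(\Phi_{\mathtt{e}}(\TwistL)\).
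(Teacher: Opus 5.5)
Your outline is the paper's: reduce to one component and to \(\TwistB\), compute \(C(\gamma)\boxtimes M_\pm(\TwistB)\) piece by piece against \cref{tab:MCGbimodule}, cancel the identity arrows, then control the signs. However, the local bookkeeping is wrong as you describe it. The bimodule generators are \({}_\bullet x_\bullet\), \({}_\bullet y_\circ\), \({}_\circ z_\circ\). So a generator \(x_\bullet\) of \(C(\gamma)\) produces two generators, \(x_\bullet\boxtimes x\) (idempotent \(\bullet\)) and \(x_\bullet\boxtimes y\) (idempotent \(\circ\)), while \(x_\circ\) produces only \(x_\circ\boxtimes z\). The \((1|1)\) loops never contribute, since \(\delta_{C(\gamma)}\) has no identity components. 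The only identity arrows are \(x_\circ\boxtimes z\xrightarrow{1}x_\bullet\boxtimes y\), coming from \((S|1)\) along an arrow \(\circ\xrightarrow{S}\bullet\) of \(C(\gamma)\); this is exactly why cases \((\SaddleBC\text{a})\)--\((\SaddleBC\text{d})\) are the ones needing cancellation. The dashed arrows in row \((\Db)\) are not identity arrows. They are the \((-|S)\) arrows \(x_\bullet\boxtimes x\to x_\bullet\boxtimes y\), whose targets are shared with neighbouring pieces, which is why they are dropped when gluing. The Koszul sign \((-1)^{|x_\bullet|}\) on these arrows is the \(\pm\) in the table.

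The genuine gap is the sign step, which is the only new content compared with the \(\CoeffFieldTwo\) argument. You only say you expect a parity count to work and offer a fallback, and ``contributions cancel in pairs'' along the whole cycle is not the mechanism. The count is local to each segment between consecutive \(\bullet\)-generators.

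\textbf{Special pieces.} Cut a compact curve at its \(\bullet\)-generators. Pieces of type \((\Db)\), \((S\SaddleCB)\) and \((\SaddleBC\text{b})\) visibly carry an even number of signs.

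\textbf{General pieces.} Every other piece has the form \(\bullet\leftrightarrow\circ\leftrightarrow\cdots\leftrightarrow\circ\leftrightarrow\bullet\). Since \(S\)- and \(D\)-labels alternate, it contains an even number of \(\circ\)'s. Hence its two \(\bullet\)-ends have different homological parity, and the end arrows of the transformed piece are \(\pm S\cdot\varG^k\) and \(\mp S\cdot\varG^\ell\), contributing exactly one minus sign. The transformed piece has the same alternating shape, so it has an odd number of \(\circ\leftrightarrow\circ\) arrows, and by the table each of these carries a minus sign. So every piece contributes an even number of signs.

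\textbf{Remaining cases.} A cycle with no \(\bullet\)'s has an even number of edges, all negated, so its sign count is also even. You also need to say where the local system sits. Put the distinguished edge on a \((\Db)\) edge, or on a \((\Dw)\) edge if there are no \(\bullet\)'s, so that none of the cancellations involve it.
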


\begin{proof}
	The proof follows the same line of argument as \cite[Proof of Theorem~8.1]{KWZ}; compare also with \cite{HanselmanWatson}. 
	We first consider the case \(\halfbullet=\bullet\)
	and that \(\Gamma\) consists of a single graded curve \((F,g)\) without a local system. 
	We split \(C(F,g)\) into the basic pieces \(X_i\) shown in the second column of \cref{tab:MCGbimodule}.
	Let \(Y_i\) be the type~D structures shown in the third column of \cref{tab:MCGbimodule}.
	We claim that \(Y_i\simeq X_i\boxtimes M_\pm(\TwistB)\), 
	where the signs are so that 
	\(\pm=(-1)^{|x_\bullet|}=-(\mp)\) where \(x_\bullet\) is 
	the first (and often only) generator in idempotent \(\iota_\bullet\) in \(X_i\). 
	There are four cases, namely \(i=(\SaddleBC\text{a})\) to \(i=(\SaddleBC\text{d})\), 
	in which \(Y_i\neq X_i\boxtimes M_\pm(\TwistB)\).
	In these cases,
	\begin{align*}
		X_{(\SaddleBC\text{a})}\boxtimes M_\pm(\TwistB)
		&=
		\begin{tikzcd}[column sep=30pt,ampersand replacement=\&]
			\circ
			\arrow{r}{1}
			\&
			\circ
			\&
			\bullet
			\arrow[swap]{l}{\pm S}
		\end{tikzcd}
		\\
		X_{(\SaddleBC\text{b})}\boxtimes M_\pm(\TwistB)
		&=
		\begin{tikzcd}[column sep=30pt,ampersand replacement=\&]
			\bullet
			\arrow{r}{\pm S}
			\&
			\circ
			\&
			\circ
			\arrow[swap]{l}{1}
			\arrow{r}{-S^2\cdot\varG^k}
			\arrow[bend right=20]{rrr}{\mp S\cdot\varG^k}
			\&
			\circ
			\arrow{r}{1}
			\&
			\circ
			\&
			\bullet
			\arrow[swap]{l}{\mp S}
		\end{tikzcd}
		\\
		X_{(\SaddleBC\text{c})}\boxtimes M_\pm(\TwistB)
		&=
		\begin{tikzcd}[column sep=30pt,ampersand replacement=\&]
			\circ
			\&
			\circ
			\arrow[swap]{l}{-S^2\cdot\varG^k}
			\arrow{r}{1}
			\&
			\circ
			\&
			\bullet
			\arrow[swap]{l}{\pm S}
		\end{tikzcd}
		\\
		X_{(\SaddleBC\text{d})}\boxtimes M_\pm(\TwistB)
		&=
		\begin{tikzcd}[column sep=30pt,ampersand replacement=\&]
			\circ
			\arrow{r}{-S^2\cdot\varG^k}
			\arrow[bend right=20]{rrr}{\pm S\cdot\varG^k}
			\&
			\circ
			\arrow{r}{1}
			\&
			\circ
			\&
			\bullet
			\arrow[swap]{l}{\pm S}
		\end{tikzcd}
	\end{align*}
	In all cases, however, the cancellation lemma provides the chain homotopy \(Y_i\simeq X_i\boxtimes M_\pm(\TwistB)\).%
	\footnote{%
		Note that the corresponding computation in \cite[Proof of Theorem~8.1]{KWZ} 
		is done with a \(\CoeffFieldTwo\) version of the bimodule \(M'_\pm(\TwistB)\). 
		Since we are using \(M_\pm(\TwistB)\) instead, 
		most of the pieces \(X_i\boxtimes M_\pm(\TwistB)\) are already simplified. 
		Also, our case analysis of arrows labeled by \(\SaddleBC\) 
		is different.
	} 
	Up to signs, \cref{tab:MCGbimodule} agrees with \cref{tab:MCG:TwistB}. 
	Therefore, we can assemble the pieces 
	in the last column of \cref{tab:MCGbimodule} 
	to the complex \(C(\Phi_\mathtt{e}(\TwistB)(F,g))\) 
	along the generators \(\bullet\), 
	ignoring the two dashed arrows in \(X_{(\Db)}\). 
	If the curve is non-compact, 
	all signs can be removed without changing the isomorphism class. 
	If the curve is compact, 
	it remains to check that the total number of signs is even. 
	We claim that in fact the following stronger statement is true:
	Let us cut the graded curve \((F,g)\) into pieces along the generators \(\bullet\). 
	The type~D structure \(X\) corresponding to any one such piece can be built out of copies of the pieces \(X_i\), so \(X\boxtimes M_\pm(\TwistB)\) is homotopy equivalent to a type~D structure \(Y\) built out of copies of the pieces \(Y_i\). 
	Still ignoring the two dashed arrows in \(X_{(\Db)}\), 
	we claim that the number of signs in \(X\boxtimes M_\pm(\TwistB)\) is even. 
	For \(X=X_{(\Db)}\), \(X_{(SS_\bullet)}\), and \(X_\text{(\(\SaddleBC \)b)}\), this is obviously true. 
	So we just need to consider the case 
	when \(X\) is built out of the remaining pieces.
	As the curve is compact, 
	we may ignore the piece \(X_\text{(\(\SaddleBC \)a)}\).
	Thus, \(X\) is of the following form:
	\[
	X=
	\left(
	\begin{tikzcd}[column sep=30pt]
		\bullet
		\arrow[leftrightarrow]{r}{}
		&
		\circ
		\arrow[leftrightarrow]{r}{}
		&
		\cdots
		\arrow[leftrightarrow]{r}{}
		&
		\circ
		\arrow[leftrightarrow]{r}{}
		&
		\bullet
	\end{tikzcd}
	\right)
	\]
	Total number of generators \(\circ\) in \(X\) is always even, because the labels \(S\) and \(D\) alternate. 
	Therefore, the homological gradings of the two generators \(\bullet\) are different modulo 2. 
	This justifies the signs on the first and last arrows in \(Y\):
	\[
	Y=
	\left(
	\begin{tikzcd}[column sep=30pt]
		\bullet
		\arrow[leftrightarrow]{r}{\pm S\cdot\varG^k}
		&
		\circ
		\arrow[leftrightarrow]{r}{}
		&
		\cdots
		\arrow[leftrightarrow]{r}{}
		&
		\circ
		\arrow[leftrightarrow]{r}{\mp S\cdot\varG^\ell}
		&
		\bullet
	\end{tikzcd}
	\right)
	\]
	There is an odd number of arrows \(\circ\leftrightarrow\circ\) and each of them carries a sign. 
	So the total number of signs in \(Y\) is indeed even. 
  
	If the formal curve \(F\) carries a local system \((X,e)\) 
	we may assume, without loss of generality, 
	that the arrow \(e\) either belongs to the case \(\Dw\) 
	if there are no generators \(\bullet\) 
	or \(\Db\) otherwise. 
	The statement for the case \(\halfbullet=\circ\) follows from formal arguments 
	similar to the proof of \cref{prop:AddingASingleCrossing}. 
\end{proof}

Let \(H\C^\B\) denote the set of type D structures over \(\B\) up to chain homotopy. 

\begin{proposition}\label{prop:twisting-complexes}
	There is a left group action of 
	\(\Mod(\FourPuncturedSphere,*)\)
	on
	\(
	H\C^\B\times\EndsThree
	\)
	determined by
	\(\TwistL(X,\mathtt{e})=(\Phi_\mathtt{e}(\TwistL)(X),\TwistL(\mathtt{e}))\) 
	for \(\halfbullet\in\{\bullet,\circ\}\) and 
	\(\mathtt{e}\in\EndsThree\).
\end{proposition}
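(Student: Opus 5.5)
To establish the group action, I will use the presentation of \(\Mod(\FourPuncturedSphere,*)\) from \cref{lem:MCG:presentation}. It therefore suffices to do three things. First, show that each generator \(\TwistL\) acts by a bijection on \(H\C^\B\times\EndsThree\). Second, verify the braid relation \(\TwistB\TwistW\TwistB=\TwistW\TwistB\TwistW\). Third, verify that \((\TwistW\TwistB)^3\) acts trivially. Well-definedness on homotopy classes follows from \cref{prop:boxtensoring-and-homotopies}, since \(M_\pm(\TwistL)\) is bounded by \cref{rem:bounded}.

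For invertibility, I would construct explicit inverse bimodules \(M_\pm(\TwistL^{-1})\) as follows. Apply the argument of \cref{prop:AddingASingleCrossing} with the crossing \(Q_{+1}\) in place of \(Q_{-1}\), and then check that \(M_\pm(\TwistB)\boxtimes M_\mp(\TwistB^{-1})\) is homotopy equivalent to the identity bimodule, with the signs matched to the endpoint bookkeeping. This check can be done directly via \cref{rem:boxtensoring-AD-AD} and the Cancellation Lemma. Alternatively, and more cheaply, it suffices to check the statement on a set of objects that detects homotopy equivalence.

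The key idea for the relations is to avoid computing with bimodules and use topology instead. Consider the free \(\Z\)-modules, in particular \(\DD(T)\) for tangles \(T\), and track the endpoint \(\mathtt{e}\) via orientations. \cref{prop:AddingASingleCrossing} says precisely that \(\Phi_{\mathtt{e}}(\TwistL)(\DD(T))\simeq\DD(\TwistL(T))\) when \(\mathtt{e}\) is the oppositely oriented end of \(T\). The sign of the added crossing is \(+\) exactly in the cases listed in the definition of \(\Phi_{\mathtt{e}}\), and the oppositely oriented end of \(\TwistL(T)\) is \(\TwistL(\mathtt{e})\). Hence any relator word \(w\) acts on \((\DD(T),\mathtt{e})\) as \((\DD(w(T)),w(\mathtt{e}))=(\DD(T),\mathtt{e})\), because \(w(T)\) is isotopic to \(T\): for the braid relation this is the Reidemeister III move, and for the full twist it is an isotopy rotating the boundary.

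To upgrade this from tangle complexes to all of \(H\C^\B\), I would argue at the bimodule level. The composite bimodule \(N_w=M\boxtimes\cdots\boxtimes M\) for a relator is built by Bar-Natan's planar gluing of \(\KhT{}\) of a braid-like tangle into the planar arc diagram \(\Diag\). Since \(w\) is isotopic to a trivial tangle (full twist case) or to the corresponding braid (braid relation case), Bar-Natan's invariance for tangles in planar algebras gives homotopy equivalences of the tangle complexes inserted into the disk. These induce homotopy equivalences of the type AD bimodules, because the bimodule is functorially constructed from the inserted complex as in the proof of \cref{prop:AddingASingleCrossing}. The gradings work out once the orientation-dependent shifts \(q^{-3}h^{-1}\) are accounted for.

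I expect the full twist to be the main obstacle. Its isotopy to the trivial tangle moves the boundary, so the bimodule for the full twist is only equivalent to the identity after a grading shift. I would verify that this shift vanishes by testing on \(\DD(Q_0)=C(\aKh(0))\) together with one compact curve, or more directly by comparing with the corresponding grading rule in \cref{prop:twisting-curves}. A fallback is to prove the relations on curves via \cref{prop:twisting:complexes} and \cref{prop:twisting-curves}, and then transport them to \(H\C^\B\) over fields by \cref{thm:classification:complexes_over_B:simplified}; this would not cover arbitrary rings, however.
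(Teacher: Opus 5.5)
Your braid-relation step is sound in principle, and it is a different route from the paper's, which uses no topology. Since $\BraidB\BraidW\BraidB=\BraidW\BraidB\BraidW$ already holds in $B_3$, the two inserted six-ended braids differ by a local Reidemeister~III move. Naturality of Bar-Natan's planar algebra then gives a natural equivalence of the glued functors, hence of bimodules. This still needs bookkeeping you leave implicit. You must identify $M_\pm(\TwistW)=\typeAD{\B}{R}{\B}\boxtimes M_\pm(\TwistB)\boxtimes\typeAD{\B}{R}{\B}$ with the bimodule of a crossing glued at $\mathtt{e}_1,\mathtt{e}_2$. You must also match $\boxtimes$ of delooped bimodules with planar composition, Koszul signs included.

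The full-twist relation has a genuine gap. The word $(\TwistW\TwistB)^3$ inserts $(\BraidW\BraidB)^3=\Delta^2$, a nontrivial central element of $B_3$. As a six-ended tangle it is not isotopic rel boundary to the trivial braid. The isotopy $T\cup\Delta^2\cong T$ exists only because the boundary Dehn twist of $\ThreePuncturedDisk$ dies in $\Mod(\FourPuncturedSphere,*)$. Realizing it means dragging the strand at $*$ around the back of the ball containing $T$, across every strand of $T$. That move is not supported in the inserted disk, so Bar-Natan's invariance says nothing about $X\boxtimes N$ for an arbitrary $X\in H\C^\B$, and your upgrade step has no input here.

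The obstacle is therefore not a grading shift. Your tangle argument only yields $\DD(T)\boxtimes N\simeq\DD(T)$ for tangle complexes. A priori $N$ could differ from the identity by a sign twist like the automorphisms $\varphi_i$ of \cref{sec:mutation}. Such twists fix $\DD(Q_0)$ and many curves, but change homotopy types when $2\neq0$. So testing on $\DD(Q_0)$ and one compact curve cannot certify a bimodule equivalence. The curve-based fallback covers only fields, whereas \cref{thm:twisting:general-coeff} uses this proposition over arbitrary $\CoeffRing$.

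The missing idea is to identify the half twist rather than the full twist. A direct computation with \cref{rem:boxtensoring-AD-AD} and cancellation gives $M_-(\TwistB)\boxtimes M_-(\TwistW)\boxtimes M_+(\TwistB)\simeq\typeAD{\B}{R}{\B}$ for $\mathtt{e}=\Endi$. For the other endpoints two opposite $\pm$ indices change, so the shifts cancel. Conjugating by $\typeAD{\B}{R}{\B}$ gives the same identity for $t_\circ t_\bullet t_\circ$. Hence both words act by $(X,\mathtt{e})\mapsto(X\boxtimes\typeAD{\B}{R}{\B},\rho(\mathtt{e}))$, which is the braid relation. Then $(t_\bullet t_\circ)^3=(t_\bullet t_\circ t_\bullet)(t_\circ t_\bullet t_\circ)$ acts by $\typeAD{\B}{R}{\B}\boxtimes\typeAD{\B}{R}{\B}=1$ and $\rho^2=1$.

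This half-twist identity is also not accessible by local topology, since it amounts to rotating $T$ by $\pi$; that is why it must be computed directly. Once you have it, the generators are automatically bijective, so your inverse bimodules are unnecessary.
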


\begin{proof}
	Define a left group action 
	of the free group generated by 
	two elements
	\(t_\bullet\) and \(t_\circ\) 
	on 
	\(
	H\C^\B\times\EndsThree
	\)
	by
	\(t_{\halfbullet}(X,\mathtt{e})=(\Phi_\mathtt{e}(\TwistL)(X),\TwistL(\mathtt{e}))\) 
	for \(\halfbullet\in\{\bullet,\circ\}\) and 
	\(\mathtt{e}\in\EndsThree\).
	We claim that this group action 
	factors through \(\Mod(\FourPuncturedSphere,*)\), 
	i.e. that 
	\(t_\bullet t_\circ t_\bullet\) and \(t_\circ t_\bullet t_\circ\) 
	act identically
	and that their product acts as the identity. 
	Let us consider the case \(\mathtt{e}=\Endi\) first. 
	Then
	\begin{align*}
		t_\bullet t_\circ t_\bullet(X,\mathtt{e})
		&=
		(\Phi_{\TwistW\TwistB(\mathtt{e})}(\TwistB)
		\Phi_{\TwistB(\mathtt{e})}(\TwistW)
		\Phi_\mathtt{e}(\TwistB)
		(X),
		\TwistB\TwistW\TwistB(\mathtt{e}))
		\\
		&=
		(X
		\boxtimes M_-(\TwistB)
		\boxtimes M_-(\TwistW)
		\boxtimes M_+(\TwistB),
		\rho(\mathtt{e}))
	\end{align*}
	A routine calculation (see \cref{rem:boxtensoring-AD-AD})
	shows that 
	\[
	M_-(\TwistB)\boxtimes M_-(\TwistW)\boxtimes M_+(\TwistB)
	\simeq
	\typeAD{\B}{R}{\B}.
	\]
	Observe that for the other two choices of \(\mathtt{e}\), 
	two opposite signs in the indices of the bimodules change, 
	so that the grading shifts cancel. 
	In summary, 
	\(t_\bullet t_\circ t_\bullet(X,\mathtt{e})=(X\boxtimes\typeAD{\B}{R}{\B},\rho(\mathtt{e}))\)
	for any \(\mathtt{e}\in\EndsThree\).
	Similarly, one can show
	\(t_\circ t_\bullet t_\circ(X,\mathtt{e})=(X\boxtimes\typeAD{\B}{R}{\B},\rho(\mathtt{e}))\). 
	Note that the identity
	\[
	M_-(\TwistW)\boxtimes M_-(\TwistB)\boxtimes M_+(\TwistW)
	\simeq
	\typeAD{\B}{R}{\B}
	\]
	required for this case
	follows from the one above by tensoring with \(\typeAD{\B}{R}{\B}\) on the left and the right. We see that 
	\(t_\bullet t_\circ t_\bullet\) and \(t_\circ t_\bullet t_\circ\) 
	act identically. 
	Since 
	\(\typeAD{\B}{R}{\B}\boxtimes\typeAD{\B}{R}{\B}=1\) 
	and
	\(\rho^2=1\in\Mod(\FourPuncturedSphere,*)\)
	\(t_\bullet t_\circ t_\bullet t_\circ t_\bullet t_\circ\) 
	acts as the identity. 
\end{proof}

\begin{proof}[Proof of \cref{prop:twisting-curves}]
	We follow the same strategy 
	as in the proof of \cref{prop:twisting-complexes}. 
	We define a left group action 
	of the free group generated by 
	two elements
	\(t_\bullet\) and \(t_\circ\) 
	on 
	\(
	\mathcal{C}\times\EndsThree
	\)
	by
	\(t_{\halfbullet}(\Gamma,\mathtt{e})=(\Phi_\mathtt{e}(\TwistL)(\Gamma),\TwistL(\mathtt{e}))\) 
	for \(\halfbullet\in\{\bullet,\circ\}\) and 
	\(\mathtt{e}\in\EndsThree\).
	As in the proof of \cref{prop:twisting-complexes}
	it suffices to show that 
	\(t_\bullet t_\circ t_\bullet\) and \(t_\circ t_\bullet t_\circ\) 
	act on any
	\((\Gamma,\mathtt{e})\in\mathcal{C}\times\EndsThree\)
	as
	\((\rho(\Gamma),\rho(\mathtt{e}))\). 
	The actions on the second component certainly agree,
	and so do the actions on the underlying curves of the first components. 
	We need to check that the local systems and gradings also coincide. 
	Without loss of generality, we may assume that \(\Gamma\) consists of 
	a single graded formal curve with local system \((X,e)\). 
	Then the local systems of \(\rho(\Gamma)\) and the first components 
	of 
	\(t_\bullet t_\circ t_\bullet(\Gamma)\) 
	and 
	\(t_\circ t_\bullet t_\circ(\Gamma)\)
	are all equal to \((X,e')\), where \(e'\) is some arrow 
	that induces the same orientation on the curve as \(e\).
	The gradings of these curves are uniquely determined by the grading of the associated chain complex. 
	By \cref{prop:twisting:complexes},
	\(
	C(t_\bullet t_\circ t_\bullet(\Gamma))
	\simeq 
	\TwistB\TwistW\TwistB(C(\Gamma))
	=
	\rho(C(\Gamma))
	\)
	and similarly
	\(
	C(t_\circ t_\bullet t_\circ(\Gamma))
	\simeq 
	\TwistW\TwistB\TwistW(C(\Gamma))
	=
	\rho(C(\Gamma)).
	\)
	Therefore, the gradings on the three curves agree. 
\end{proof}

\begin{remark}
	Over field coefficients \(\CoeffRing=\CoeffField\), 
	\cref{prop:twisting-curves}
	can be seen as an immediate corollary 
	of \cref{prop:twisting-complexes}
	and \cref{thm:classification:complexes_over_B:simplified}. 
	However, over arbitrary coefficients \(\CoeffRing\), 
	the classification from \cref{thm:classification:complexes_over_B:simplified}
	no longer holds. 
	For the proof of \cref{prop:twisting-curves},
	it would be sufficient to know
	that passing from multicurves \(\Gamma\) 
	to complexes \(C(\Gamma)\) 
	(modulo the respective notions of equivalence)
	is faithful.
	This seems plausible, 
	but the original proof relies on the Frobenius normal form for local systems, 
	so again requires field coefficients \cite[Section 4.7]{pqMod}. 
\end{remark}

\begin{lemma}\label{lem:twists-and-basepoint-action}
	For \(\halfbullet\in\{\bullet,\circ\}\)
	let \(\halfbullet[top]\in\{\bullet,\circ\}\) 
	with \(\halfbullet\neq\halfbullet[top]\) and
	for \(k\geq0\) let 
	\begin{align*}
		b_1
		&=
		\varG^k
		&
		b_2
		&=
		D_{\halfbullet}\cdot\varG^k
		&
		b_3
		&=
		S^2\cdot\varG^k
		\\
		c_1
		&=
		\varG^k
		&
		c_2
		&=
		D_{\halfbullet}\cdot\varG^k
		&
		c_3
		&=
		-D_{\halfbullet[top]}\cdot\varG^k
		\\
		Y_1
		&=
		M'_\pm(\TwistL)
		&
		Y_2
		&=
		M_\pm(\TwistL)
		&
		Y_3
		&=
		M_\pm(\TwistL)
	\end{align*}
	Then 
	\(
	(b_i\cdot 1_X)\boxtimes 1_{Y_i}
	=
	c_i\cdot (1_{X\boxtimes Y_i})
	\) for \(i=1,2,3\).
\end{lemma}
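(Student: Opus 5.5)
Since \(b_i\) is central, \(b_i\cdot 1_X\) is a cycle of degree \(0\) whose only nonzero component is \(f=b_i\cdot1_X\co x\mapsto x\otimes b_i\). I plan to compute \((b_i\cdot 1_X)\boxtimes 1_{Y_i}\) directly from \cref{def:boxtimes-morphisms-D}. It suffices to treat \(\halfbullet=\bullet\). The case \(\halfbullet=\circ\) should then follow formally, because \(M_\pm(\TwistW)\) and \(M'_\pm(\TwistW)\) are conjugates of the \(\TwistB\)-bimodules by \(\typeAD{\B}{R}{\B}\), and \(\rho\) exchanges \(D_\bullet\) and \(D_\circ\) while fixing \(\varG\) and \(S^2\). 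Concretely, I would use \cref{lem:signs-boxtimes-composition-AD} together with the identity \((\rho z)\cdot 1\) corresponding to \(z\cdot 1\) under \(\rho\).

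First I unpack the sequence \(f^i\). We have \(f^1=f\), and for \(i>1\)
\[
f^i=\sum_{j+1+k=i}(-1)^{k}(\delta^j_X\otimes 1)\circ(f\otimes 1)\circ\delta^k_X ,
\]
so \(f^i\) inserts \(b_i\) into each of the \(i\) slots of \(\delta^{i-1}_X\), with signs \((-1)^k\). The morphism \((b_i\cdot1_X)\boxtimes 1_{Y_i}\) therefore collects all operations \(\delta^{Y_i}_m\) whose input sequence contains exactly one entry \(b_i\), the remaining entries coming from \(\delta_X\). Since \(b_i\) is a single (equivariant or standard) basis element, I would check in \cref{fig:Twisting} that the only arrows of \(Y_i\) accepting such an input are the arrows with a single input. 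For \(b_1=\varG^k\) in \(M'_+(\TwistB)\), the loops \((\varG^k|\varG^k)\) at \(x,y,z\) appear precisely because \(M'\) is written in the \(\varG\)-equivariant basis. For \(b_2=D_\bullet\varG^k\) in \(M_+(\TwistB)\), the only such arrow is the loop \((D_\bullet\varG^k|D_\bullet\varG^k)\) at \(x\); the element \(D_\bullet\) acts by zero at \(y\) and \(z\) because they lie in idempotent \(\iota_\circ\) on the left. For \(b_3=S^2\varG^k\), the relevant arrows are the loops \((S^2\varG^k|-D_\circ\varG^k)\) at \(y\) and \(z\), and the loop at \(x\) sends \(S^2\) to \(0\), which is consistent since \(D_\circ\) vanishes on the \(\iota_\bullet\) generator \(x\).

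The next step is to show that arrows with two or more inputs never consume \(b_i\). The higher arrows are \((S,D_\circ\varG^k|\cdot)\) and \((S\varG^\ell,S\varG^k|\cdot)\). Since \(b_i\) is a multiple of \(\varG^k\), \(D\varG^k\) or \(S^2\varG^k\), and never of \(S\varG^k\), it could only occupy the \(D_\circ\varG^k\) slot. For \(b_2=D_\bullet\varG^k\) the idempotents forbid this. For \(b_1=\varG^k\) in \(M'\), the equivariant-basis arrow \((S\varG^\ell,D_\circ\varG^k|\cdot)\) has no \(\varG^k\) input, so again nothing contributes.

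With these facts in hand, \((b_i\cdot1_X)\boxtimes1_{Y_i}\) reduces to the \(i=1\) term \((1\otimes\delta^{Y_i}_1)\circ(f\otimes1)\). This term sends \(x\otimes y\) to \(\pm\, x\otimes y\otimes c_i\), with the output labels read off above: \(\varG^k\) everywhere, \(D_\bullet\varG^k\) at \(x\), and \(-D_\circ\varG^k\) at \(y,z\) (and \(0\) at \(x\)). This is exactly \(c_i\cdot 1_{X\boxtimes Y_i}\). The Koszul sign \((-1)^{|x|\cdot|f|}\) vanishes since \(|f|=0\).

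The main obstacle is the bookkeeping in the step ruling out multi-input contributions. This is delicate for \(b_1\), because in the standard basis \(\varG^k\) is not a basis element, which is why \(Y_1=M'\) is used. There I would expand carefully, paying attention to the basis dependence noted in the Warning, and confirm that the \(\varG\)-equivariant description of \(M'_+(\TwistB)\) is what makes the \(\varG^k\) inputs factor through the single-input loops.
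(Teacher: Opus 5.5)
Your proposal is correct and is essentially the paper's own proof: expand \((b_i\cdot 1_X)\boxtimes 1_{Y_i}\) via \cref{def:boxtimes-morphisms-D}, observe that \(b_i\) never occurs as an input of \(\delta^{Y_i}_j\) for \(j>1\), and read off \(c_i\) from the single-input arrows. The first step is exactly why \(Y_1=M'_\pm\), written in the \(\varG\)-equivariant basis, is used.

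Two small slips do not affect the conclusion. First, the generator \({}_\bullet y_\circ\) has left idempotent \(\iota_\bullet\), so \(D_\bullet\varG^k\) is not killed by idempotents at \(y\). It contributes nothing simply because there is no arrow at \(y\) with that input, and \(c_2\cdot 1\) vanishes at \(y\) because its right idempotent is \(\iota_\circ\). Second, for the \(\circ\) case, \cref{lem:signs-boxtimes-composition-AD} (which concerns composing type AD morphisms) is not the relevant tool. It is cleaner to inspect \(M_\pm(\TwistW)\) directly, since it is just \(M_\pm(\TwistB)\) with every label transformed by \(\rho\).
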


\begin{proof}
	For all \(x\in X\) and all basis elements \(y\) of \(Y_i\)
	\begin{align*}
		((b_i\cdot 1_X)\boxtimes 1_{Y_i})(x\otimes y)
		&=
		(1_X\otimes \delta_1^{Y_i})\circ
		(x\otimes b_i\otimes y)
		\\
		&=
		(x\otimes y \otimes c_i)
		=
		(c_i\cdot(1_{X\boxtimes Y_i}))(x\otimes y)
	\end{align*} 
 	where the first identity follows from the fact that 
 	\(b_i\) does not appear in \(\delta^{Y_i}_j\) for \(j>1\) 
 	and the second identity follows by inspection of the type AD structure \(Y_i\). 
\end{proof}

\begin{proof}[Proof of \cref{thm:twisting:general-coeff}]

	By \cref{lem:MCG:presentation} 
	\(\Mod(\FourPuncturedSphere,*)\) is generated 
	by \(\TwistB\) and \(\TwistW\). 
	In fact, since 
	\(\TwistB\TwistW\TwistB\TwistW\TwistB\TwistW=1\in \Mod(\FourPuncturedSphere)\), 
	any mapping class can be expressed as a word in \(\TwistB\) and \(\TwistW\)
	(without their inverses). 
	It suffices to prove the theorem 
	for \(\tau=\TwistB\) and \(\tau=\TwistW\). 
	The first statement of the theorem follows immediately from 
	\cref{prop:AddingASingleCrossing,prop:twisting:complexes}.
	For the second statement, observe that for all \(k\geq1\) and \(\halfbullet\in\{\bullet,\circ\}\)
	\begin{align*}
		\DD_k(T)^{\B}
		\boxtimes \BB{M'_\pm(\TwistL)}
		&=
		\Big[%
		\begin{tikzcd}[,ampersand replacement=\&]
			h^{-1}q^{-k}\DD(T)
			\arrow{r}{\varG^k\cdot 1_{\DD(T)}}
			\&
			q^k\DD(T)
		\end{tikzcd}
		\Big]%
		\boxtimes M'_\pm(\TwistL)
		\\
		&=
		\Big[%
		\begin{tikzcd}[column sep=2cm,ampersand replacement=\&]
			h^{-1}q^{-k}\DD(T)
			\boxtimes M'_\pm(\TwistL)
			\arrow{r}{\varG^k\cdot 1_{\DD(T)\boxtimes M'_\pm(\TwistL)}}
			\&
			q^k\DD(T)
			\boxtimes M'_\pm(\TwistL)
		\end{tikzcd}
		\Big]%
		\\
		&\simeq
		\Big[%
		\begin{tikzcd}[column sep=2cm,ampersand replacement=\&]
			h^{-1}q^{-k}\DD(\TwistL(T))
			\arrow{r}{\varG^k\cdot 1_{\DD(\TwistL(T))}}
			\&
			q^k\DD(\TwistL(T))
		\end{tikzcd}
		\Big]%
		=
		\DD_k(\TwistL(T))^{\B}
	\end{align*}
	where the second identity follows from  \cref{lem:twists-and-basepoint-action,lem:signs-boxtimes-mapping-cone}
	and the third identity follows from \cref{prop:AddingASingleCrossing,lem:homotopies-of-identity-multiples}.
	Now apply \cref{prop:twisting:complexes}. 
\end{proof}

Recall from the discussion around \cref{prop:twisting-curves}
that every element of \(\Mod(\FourPuncturedSphere,*)\) defines a permutation of the three non-special punctures \(\{\mathtt{e}_1,\mathtt{e}_2,\mathtt{e}_3\}\). 
Specifically, the twist \(\TwistW\) interchanges \(\mathtt{e}_1\) and \(\mathtt{e}_2\), and the twist \(\TwistB\) interchanges \(\mathtt{e}_2\) and \(\mathtt{e}_3\). 
We therefore denote the permutations \((1\,2)\) and \((2\,3)\) by \(\TwistW\) and \(\TwistB\), respectively.
Recalling the labelling of the non-special punctures 
by elements in \(Z(\B)\) 
from \cref{subsec:connectivity}
\[
a_1
=
\Db
\quad
a_2
=
-S^2
\quad
a_3
=
\Dw
\]
we can reinterpret \cref{lem:twists-and-basepoint-action} as follows:

\begin{proposition}\label{prop:twists-and-basepoint-action}
	For \(\halfbullet\in\{\bullet,\circ\}\), \(k\geq0\), and \(j\in\{1,2,3\}\), 
	\[
	((a_j\cdot \varG^k)\cdot 1_X)\boxtimes 1_{M_\pm(\TwistL)}
	\simeq
	(a_{\TwistL(j)}\cdot \varG^k)\cdot 1_{X\boxtimes M_\pm(\TwistL)}.
	\]
\end{proposition}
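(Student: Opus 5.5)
The plan is to reduce \cref{prop:twists-and-basepoint-action} to \cref{lem:twists-and-basepoint-action}, case by case in \(j\) and \(\halfbullet\). We treat \(\halfbullet=\bullet\) first, where \(\TwistB\) fixes \(1\) and swaps \(2\leftrightarrow3\). We also write \(\halfbullet[top]\) for the other idempotent, as in \cref{lem:twists-and-basepoint-action}.

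\emph{Case \(j=1\).} Here \(a_1=\Db\), so \(a_1\varG^k=D_{\halfbullet}\varG^k\) is exactly \(b_2\). Case \(i=2\) of the lemma then gives the equality
\[(a_1\varG^k)\cdot1_X\boxtimes1_{M_\pm}=D_\bullet\varG^k\cdot1_{X\boxtimes M_\pm}.\]
This is the claim, since \(\TwistB(1)=1\).

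\emph{Case \(j=2\).} Here \(a_2\varG^k=-S^2\varG^k=-b_3\). Case \(i=3\), together with \(\R\)-linearity of \(f\mapsto f\boxtimes1\), gives
\[(a_2\varG^k)\cdot1_X\boxtimes1_{M_\pm}=D_{\circ}\varG^k\cdot1_{X\boxtimes M_\pm}=a_3\varG^k\cdot1_{X\boxtimes M_\pm},\]
as required, since \(\TwistB(2)=3\).

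\emph{Case \(j=3\).} Here \(a_3=D_\circ=D_{\halfbullet[top]}\), which is not directly listed in the lemma. We use the identity \(\varG=S^2-\Db-\Dw\) in \(\B\) to write
\[D_\circ\varG^k=S^2\varG^k-\Db\varG^k-\varG^{k+1}=b_3-b_2-b_1',\]
where \(b_1'=\varG^{k+1}\) is a \(b_1\)-type element. The \(b_1\) case of the lemma is stated for \(M'_\pm\), not \(M_\pm\). For \(\varG^{k+1}\) on \(M_\pm\) we therefore transport the equality through the bigraded isomorphism \(M'_\pm\cong M_\pm\) of \cref{lem:iso-twisting-bimodules}. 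By \cref{lem:signs-boxtimes-composition-AD}, boxing with that isomorphism intertwines \(\varG^{k+1}\cdot1\) on both sides, because \(\varG\) is central and the isomorphism is \(\varG\)-linear on the output. This turns the equality into a homotopy/isomorphism statement, which is why the proposition uses \(\simeq\).

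Combining the three pieces linearly gives
\[D_\circ\varG^k\cdot1_X\boxtimes1\simeq\bigl(-S^2\varG^k-D_\bullet\varG^k-\varG^{k+1}+\dots\bigr)\cdot1.\]
Precisely, using the lemma outputs: \(b_3\mapsto -D_\circ\varG^k\), \(b_2\mapsto D_\bullet\varG^k\), and \(\varG^{k+1}\mapsto\varG^{k+1}=(S^2-D_\bullet-D_\circ)\varG^k\). The total is \(-D_\circ\varG^k-D_\bullet\varG^k-S^2\varG^k+D_\bullet\varG^k+D_\circ\varG^k=-S^2\varG^k=a_2\varG^k\). This is the claim, since \(\TwistB(3)=2\).

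For \(\halfbullet=\circ\), we conjugate by \(\typeAD{\B}{R}{\B}\) using the definition \(M_\pm(\TwistW)=R\boxtimes M_\pm(\TwistB)\boxtimes R\). Since \(\rho\) swaps \(D_\bullet\leftrightarrow D_\circ\) and fixes \(S^2\), it exchanges \(a_1\leftrightarrow a_3\) and fixes \(a_2\). This matches the conjugation \((1\,3)(2\,3)(1\,3)=(1\,2)\). Alternatively, one can run the same three-case argument directly with the roles of \(\bullet\) and \(\circ\) swapped, which the lemma already allows.

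The main obstacle is the \(j=3\) case: justifying that the \(M'\)-version of the \(b_1\) identity transfers to \(M_\pm\). We handle this via the isomorphism of \cref{lem:iso-twisting-bimodules} and the fact that \(z\cdot f\) commutes with composition for central \(z\). We also need to check that the morphism boxtimes is additive in \(f\), which is immediate from \cref{def:boxtimes-morphisms-D}.
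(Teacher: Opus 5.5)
Your proposal is correct and is essentially the paper's own proof. The cases \(a_j=D_{\halfbullet}\) and \(a_j=-S^2\) come directly from \cref{lem:twists-and-basepoint-action} with \(i=2,3\). The remaining case \(a_j=D_{\halfbullet[top]}\) is handled exactly as you do: write \(D_{\halfbullet[top]}\varG^k=S^2\varG^k-D_{\halfbullet}\varG^k-\varG^{k+1}\) and transport the \(i=1\) identity from \(M'_\pm(\TwistL)\) to \(M_\pm(\TwistL)\) via \cref{lem:iso-twisting-bimodules}; your direct treatment of \(\halfbullet=\circ\) is what the paper does, uniformly in \(\halfbullet\).

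One small correction to the citations: the interchange \((f\boxtimes 1_{M_\pm})\circ(1_X\boxtimes g)\simeq(1_X\boxtimes g)\circ(f\boxtimes 1_{M'_\pm})\) used in that transport comes from \cref{prop:boxtensoring-morphisms}, not from \cref{lem:signs-boxtimes-composition-AD}, which only shows that \(1_X\boxtimes g\) is invertible. This interchange is where the homotopy enters.
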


\begin{proof}
	In view of \cref{lem:twists-and-basepoint-action} for \(i=2\) and \(i=3\), 
	it only remains to show that	
	\[
	((D_{\halfbullet[top]}\cdot \varG^k)\cdot 1_X)\boxtimes 1_{M_\pm(\TwistL)}
	\simeq
	(-S^2\cdot \varG^k)\cdot 1_{X\boxtimes M_\pm(\TwistL)},
	\]
	where \(\halfbullet[top]\in\{\bullet,\circ\}\) 
	with \(\halfbullet\neq\halfbullet[top]\). 
	By \cref{lem:twists-and-basepoint-action} for \(i=1\), we have
	\[
	(\varG^{k}\cdot 1_X)\boxtimes 1_{M'_\pm(\TwistL)}
	=
	\varG^{k}\cdot 1_{X\boxtimes M'_\pm(\TwistL)}
	\]
	for all \(k>0\). 
	By \cref{lem:iso-twisting-bimodules}, \(M'_\pm(\TwistL)\simeq M_\pm(\TwistL)\). 
	In combination with \cref{rem:typeD:identity-boxtensor,lem:signs-boxtimes-composition-AD,prop:boxtensoring-morphisms},
	this implies
	\[
	(\varG^{k}\cdot 1_X)\boxtimes 1_{M_\pm(\TwistL)}
	\simeq
	\varG^{k}\cdot 1_{X\boxtimes M_\pm(\TwistL)}.
	\]
	Combining this with \cref{lem:twists-and-basepoint-action} for \(i=2,3\) we obtain
	\[
	\begin{multlined}[b]
		((D_{\halfbullet[top]}\cdot \varG^k)\cdot 1_X)\boxtimes 1_{M_\pm(\TwistL)}
		=
		((-D_{\halfbullet}\cdot \varG^k-\varG^{k+1}+S^2\cdot \varG^k)\cdot 1_X)\boxtimes 1_{M_\pm(\TwistL)}
		\\
		\simeq
		((-D_{\halfbullet}\cdot \varG^k-\varG^{k+1}-D_{\halfbullet[top]}\cdot \varG^k)\cdot 1_{X\boxtimes M_\pm(\TwistL)}
		=
		((-S^2\cdot \varG^k)\cdot 1_{X\boxtimes M_\pm(\TwistL)}
	\end{multlined}
	\qedhere
	\]
\end{proof}

\section{Connectivity detection}
\label{sec:connectivity}

In this section we prove \cref{prop:connectivity-implies-action-trivial,prop:connectivity-detected-by-rationals-of-odd-length}, which then allow us to show:

\begin{theorem}\label{thm:Khr:connectivity-detection}
	For any Conway tangle \(T\) without closed components, 
	the multicurve \(\Khr(T)\) detects the connectivity~\(\conn{T}\). 
\end{theorem}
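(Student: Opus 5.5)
Since \(T\) has no closed components, its connectivity \(\conn{T}\) is one of \(\connx,\connz,\conny\). To show that \(\Khr(T)\) detects it, the plan is to produce, for every such \(T\), a rational component of odd length in \(\Khr(T)\). Once we have one, \cref{prop:connectivity-detected-by-rationals-of-odd-length} says \(\conn{T}=\conns\), where \(\slope\) is its slope, and this slope is read off from the multicurve.

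\textbf{Geography.} By \cref{thm:geography:Khr}, every component of \(\Khr(T)\) is some \(\rKh_n(\slope)\) or \(\sKh_{2n}(\slope)\) with trivial-type local systems. So the task is to rule out that all components are special or even-length rational.

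\textbf{Parity argument via gluing.} Pair \(T\) with a rational tangle \(Q_{p'/q'}\) chosen so that \(T\cup Q_{p'/q'}\) is a knot; this is possible because the connectivity is one of the three patterns and \(Q_{p'/q'}\) can be chosen with \(\conn{Q_{p'/q'}}\neq\conn{T}\). By \cref{thm:GlueingTheorem:Kh}, \(\Khr\) of this knot equals \(\HF(-\Khr(T),\aKh(p'/q'))\) up to mirroring conventions. For a knot, \(\dim\Khr(K;\field)\) is odd: its Euler characteristic is the Jones polynomial evaluated at \(q=\pm1\) in the reduced normalization, which is \(\pm1\). Equivalently, \(\dim\Khr(K)\equiv\det K\equiv 1 \pmod 2\).

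\textbf{Counting intersections.} Compute \(\dim\HF(\gamma,\aKh(p'/q'))\) for each curve type as a minimal intersection number in the cover \(\PlanarCover\) (cf.\ \cref{lem:Khr-components:number-of-Ds}):
\[
\dim\HF\bigl(\sKh_{2n}(\slope),\aKh(p'/q')\bigr)\ \text{is even},
\]
since special curves wrap two lifts symmetrically, and
\[
\dim\HF\bigl(\rKh_n(\slope),\aKh(p'/q')\bigr)\equiv n\,\Delta \pmod 2,
\]
where \(\Delta=|pq'-p'q|\). The second formula holds because \(\rKh_n\) consists of \(n\) parallel copies of the boundary of a neighbourhood of an arc of slope \(\slope\). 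Oddness of the total then forces some \(\rKh_n(\slope)\) with \(n\) odd (and \(\Delta\) odd) to be present.

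\textbf{Main obstacle.} The delicate step is the intersection count for special curves and for the degenerate cases \(\slope=p'/q'\) or \(\sKh_2\). I would verify these by pulling back to the standard picture with \(\slope=0\) via \cref{cor:twisting:field-coeff:ungraded} and counting directly.
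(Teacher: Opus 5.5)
Your proof is correct and follows essentially the paper's route. The paper uses \cref{cor:twisting:field-coeff:ungraded} to normalize to \(\conn{T}=\connx\) and glues with \(Q_0\) to get a knot. It takes \(\dim\HF(\BNr(Q_0),\gamma)=\ell\cdot|p|\) from \cite[Lemma~2.10]{KLMWZ}, plus a direct check at slope \(0\) giving \(0\) or \(2\), and uses \cref{prop:connectivity-detected-by-rationals-of-odd-length} inside the count to show that the number of odd-length rational components is odd. Your direct mod-\(2\) count \(n\Delta\) gives existence of such a component without that intermediate step, which suffices for detection. One correction: \(\rKh_n(\slope)\) is a single primitive curve, not \(n\) parallel copies of \(\rKh_1(\slope)\), so the intersection count should be justified by the cited lemma rather than by that description.
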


	In preparation for the proof of \cref{prop:connectivity-detected-by-rationals-of-odd-length}, we introduce some notation. 
	Let \(\Diag_T\) be a tangle diagram 
	for a pointed Conway tangle \(T\)
	and let \(p\) be a point on a strand 
	of the diagram \(\Diag_T\) away from a crossing.
	Define an endomorphism
	\[
	D_p \cdot 1_{\KhT{\Diag_T}} 
	\in
	\Mor_{\Cobb}\big(\KhT{\Diag_T},\KhT{\Diag_T}\big)
	\]
	as follows: 
	The identity morphism on \(\KhT{\Diag_T}\) 
	can be written as a diagonal matrix of identity cobordisms 
	\(\Diag \times [0,1]\)
	over all resolutions \(\Diag\) of \(\Diag_T\).
	The point \(p\) distinguishes one component 
	of each resolution \(\Diag\) of \(\Diag_T\). 
	We define
	\(D_p \cdot 1_{\KhT{\Diag_T}}\)
	as the endomorphism obtained from \(1_{\KhT{\Diag_T}}\) 
	by placing a dot on the component of \(\Diag \times [0,1]\) 
	containing \(\{p\}\times [0,1]\) 
	for every resolution \(\Diag\) of \(\Diag_T\). 
The following lemma is due to Bar-Natan~\cite{BN_mutation}.

\begin{lemma}[Basepoint Moving Lemma]\label{lem:Basepoint_Moving_Lemma}
	Let \(\Diag_T\) be a diagram for a tangle \(T\) 
	with two basepoints \(p\) and \(p'\) 
	that are separated by a single crossing. 
	Then
	\[
	(D_p+D_{p'}+\varG)\cdot 1_{\KhT{\Diag_T}}
	\simeq
	0.
	\]
\end{lemma}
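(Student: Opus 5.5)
Since \(\KhT{\Diag_T}\) is assembled locally, it suffices to treat a single crossing and then tensor with the identity on the rest of the diagram. Write \(\Diag_T=\Diag(\Diag',c)\) with Bar-Natan's planar-algebra formalism \cite[Section~5]{BarNatanKhT}, where \(c\) is the crossing separating \(p\) from \(p'\) and \(\Diag'\) is the remaining diagram. Then \(\KhT{\Diag_T}=\Diag(\KhT{\Diag'},\KhT{c})\). Since \(p\) and \(p'\) sit on strands adjacent to \(c\), the endomorphisms \(D_p\cdot1\) and \(D_{p'}\cdot1\) are images under \(\Diag(1_{\KhT{\Diag'}},-)\) of the corresponding dot endomorphisms of \(\KhT{c}\), with dots placed near two ends of the crossing on different strands. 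The operation \(\Diag(-,-)\) is functorial and preserves chain homotopies. So the plan is to construct an explicit nullhomotopy \(H\) of \((D_p+D_{p'}+\varG)\cdot1_{\KhT{c}}\) in the two-term complex \(\KhT{c}\), and then take \(\Diag(1,H)\).

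For the local computation, \(\KhT{c}=[\No\xrightarrow{S}\Ni]\) up to shifts and orientation conventions. Take \(H\) to be the reverse saddle \(S'\co\Ni\to\No\) in homological degree \(-1\). The claim is then
\[
S'S=(D_p+D_{p'}+\varG)\cdot1_{\No},\qquad SS'=(D_p+D_{p'}+\varG)\cdot1_{\Ni}.
\]
In the 0-resolution, \(p\) and \(p'\) lie on the two different arcs that the saddle merges. The composite \(S'S\) is the identity cobordism with a tube attached between these two sheets. The neck-cutting relation in \eqref{eq:cobb_relations} expresses a tube as \(\DiscLdot\DiscR+\DiscL\DiscRdot+\varG\cdot\DiscL\DiscR\), which yields exactly a dot on one sheet plus a dot on the other plus \(\varG\) times the identity. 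The same argument applies in the 1-resolution, where \(p\) and \(p'\) again lie on the two sheets joined by the tube. If \(p\) and \(p'\) are diagonally opposite so that they lie on the same arc in one resolution, I will check the identity case by case. That check reduces to the relation that two dots on one sheet equal \(-\varG\) times one dot, which follows from \(\planedotdot=-\varG\cdot\planedot\). Signs should be handled by taking \(H=\pm S'\) as needed.

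I expect the main obstacle to be exactly this configuration bookkeeping. Two points separated by a crossing can sit on opposite sides of either the over- or the under-strand, and in one of the resolutions they may end up on the same component, where the tube picture fails. In that case I would instead use the equivalence between dotted cobordisms and undotted ones in \(\Cobl\), or verify directly that \((D_p+D_{p'}+\varG)\) restricts to the required composites on each resolution. Sign conventions from \(\varG=-H\) (\cref{rem:G-vs-H}) also have to be tracked carefully, since the lemma is sign-sensitive.
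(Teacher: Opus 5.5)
Your proposal is correct and is essentially the paper's proof: localize at the crossing using Bar-Natan's gluing formalism, take the reverse saddle as the null-homotopy, and verify both composites $S'S$ and $SS'$ with the neck-cutting relation. The obstacle you anticipate never arises. In both the over- and under-strand cases, $p$ and $p'$ lie on the same strand on opposite sides of the crossing, i.e.\ at diagonally opposite ends, and such ends are never joined by an arc in either resolution, so your tube computation applies verbatim to both cases. Neck-cutting is also local, so it does not matter whether the two sheets are connected elsewhere in the diagram. Finally, your fallback case check would not help anyway: if $p$ and $p'$ were at adjacent ends, the identity would already fail for a single crossing.
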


\begin{proof}
	Since all the maps are equal to the identity 
	away from the crossing that separates 
	\(p\) and \(p'\) and 
	\(\KhT{\Diag_T}\) is natural with respect to gluing 
	\cite[Section~5]{BarNatanKhT}, 
	it suffices to show the Lemma 
	locally at the crossing separating the two basepoints.
	First suppose that
	\(p\) and \(p'\) are separated by an understrand:
	\[
	\begin{tikzpicture}[scale=0.3]\scriptsize
			\draw[thick] (0,0) to (2,2);
			\draw[thick] (2,0) to (1.1,0.9);
			\draw[thick] (0.9,1.1) to (0,2);
			\draw[fill] (0.5,0.5) circle[radius=4pt];
			\draw[fill] (1.5,1.5) circle[radius=4pt];
			\node (p) at (1,2) {\(p\)};
			\node (pp) at (0,1) {\(p'\)};
	\end{tikzpicture}
	\]
	In the following diagram, 
	the morphisms 
	\(D_p\cdot 1_{\KhT{\Diag_T}}\) and 
	\(D_{p'}\cdot 1_{\KhT{\Diag_T}}\) 
	are indicated by the vertical dashed arrows:
	\[
	\begin{tikzcd}[column sep=4cm,row sep=1cm]
		\Ni
		\arrow[r,"\Nil" description]
		\arrow[dashed]{d}[left,solid]{\NiDotL=D_{p'}}
		\arrow[phantom]{d}[right,solid]{\scriptstyle\NiDotR=D_p}
		&
		\No
		\arrow[dotted]{dl}[description,solid]{\Nol}
		\arrow[dashed]{d}[left,solid]{\NoDotB=D_{p'}}
		\arrow[phantom]{d}[right,solid]{\scriptstyle\NoDotT=D_p}
		\\
		\Ni
		\arrow[r,"\Nil" description]
		&
		\No
	\end{tikzcd}
	\]
	Here, \(\Nil\) and \(\Nol\) denote saddle cobordisms and 
	\(\NoDotT\), \(\NiDotR\), \(\NoDotB\), and \(\NiDotL\) dot cobordisms. 
	The dotted arrow is a null-homotopy for 
	\((D_p+D_{p'}+\varG)\cdot 1_{\KhT{\Diag_T}}\),
	which can be checked using the relation 
	\[
	\tube
	=
	\DiscLdot\DiscR+\DiscL\DiscRdot +\varG\cdot \DiscL \DiscR
	\]
	In the case that \(p\) and \(p'\) are separated by an overstrand as in %
	\[
	\begin{tikzpicture}[scale=0.3]\scriptsize
		\draw[thick] (0,0) to (2,2);
		\draw[thick] (2,0) to (1.1,0.9);
		\draw[thick] (0.9,1.1) to (0,2);
		\draw[fill] (1.5,0.5) circle[radius=4pt];
		\draw[fill] (0.5,1.5) circle[radius=4pt];
		\node (p) at (1,2) {\(p\)};
		\node (pp) at (2,1) {~\(p'\)};
	\end{tikzpicture}
	\]
	the homotopy is the same.
\end{proof}

\begin{proof}[Proof of \cref{prop:connectivity-implies-action-trivial}]
	 We claim that it suffices to show the statement for \(\DD=\DD(T)\). 
	Indeed, recalling the notation
	\(
	a_1
	=
	\Db,
	a_2
	=
	-S^2,
	a_3
	=
	\Dw\), suppose that \(\alpha\) is a null-homotopy for \(a_i\cdot 1_{\DD(T)}\). 
	Then 
	\(
	\left(
	\begin{smallmatrix}
		-\alpha & 0
		\\
		0 & \alpha
	\end{smallmatrix}
	\right)
	\)
	is a null-homotopy for 
	\(
	a_i\cdot 1_{\DD_n(T)}
	=
	\left(
	\begin{smallmatrix}
		a_i\cdot 1_{\DD(T)} & 0
		\\
		0 & a_i\cdot 1_{\DD(T)}
	\end{smallmatrix}
	\right)
	\) 
	for any \(n\geq1\). 
	Moreover, given any direct summand \(\DD\) of a chain complex \(X\), 
	a null-homotopy of \(a_i\cdot1_{X}\) restricts to 
	a null-homotopy of \(a_i\cdot1_{\DD}\) 
	by \cref{lem:nullhomotopy-splits-under-direct-sums}.
	
	To show the statement for \(\DD=\DD(T)\) consider the case \(\conn{T}=\ConnY\) first. 
	Given a diagram \(\Diag_T\) for the tangle \(T\), 
	we place the basepoints \(p\) and \(p'\)
	on the top-right and top-left ends, respectively. 
	There is an even number of crossings separating  \(p\) and \(p'\) and	thus \cref{lem:Basepoint_Moving_Lemma} implies 
	\[
	D_p\cdot 1_{\KhT{\Diag_T}} 
	\simeq 
	D_{p'}\cdot 1_{\KhT{\Diag_T}}=0
	\]
	where the second equality is due 
	to the relation \(\planedotstar=0\). 
	After delooping \(\KhT{\Diag_T}\) and 
	recasting it as a type D structure \(\DD(T)^{\B}\), 
	the morphism \(D_p\cdot 1_{\KhT{\Diag_T}}\) 
	becomes the morphism \(\Dw\cdot 1_{\DD(T)}\), 
	which is therefore null-homotopic as well. 
	
	The case  \(\conn{T}=\ConnX\) is analogous to the previous case, 
	except that the basepoint \(p\) 
	is placed on the bottom-left end of \(\Diag_T\). In the case \(\conn{T}=\ConnZ\), we place the basepoint \(p\) 
	on the bottom-right end of \(\Diag_T\). 
	Then there is an odd number of intersections 
	between \(p\) and \(p'\), which 
	 means that 
	\[
	(D_p+\varG)\cdot 1_{\KhT{\Diag_T}} 
	\simeq
	-D_{p'}\cdot 1_{\KhT{\Diag_T}}
	=
	0.
	\]
	The morphism 
	\((D_p+\varG)\cdot 1_{\KhT{\Diag_T}}\)
	corresponds to 
	\((\Db+\Dw+\varG)\cdot 1_{\DD(T)}=S^2\cdot 1_{\DD(T)}\). 
	We now conclude as in the previous cases.
\end{proof}

Recall the following piece of notation from \cref{lem:mutation-on-Khr-special-or-rational}: 
Let 
\(
\DD^{\rKh}_{d}(\slope;X)
\)
denote the chain complex \(C(\gamma)\),
where \(\gamma\) is equal to the rational curve \(\rKh_{d}(\slope)\) 
as a graded curve and carries the local system \((X,e)\) for some fixed edge \(e\) and matrix \(X\in\GL_{m}(\CoeffRing)\).

\begin{lemma}\label{lem:curve_based_detection}
	Let \(d\) be odd. 
	Then, for any \(i\in\{1,2,3\}\), 
	\[
	\conns=\conni
	\iff
	a_i\cdot 1_{\DD^{\rKh}_{d}(\slope;X)} 
	\simeq 
	0.
	\]
\end{lemma}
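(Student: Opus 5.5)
The plan is to reduce to slope \(0\) using the mapping class group action, and then compute directly on the explicit complex \(C(\rKh_d(0))\). Choose \(\tau=\tau_{p/q}\) as in \cref{def:basic_curves}, so that \(\DD^{\rKh}_{d}(\slope;X)\) is \(\Phi_{\mathtt{e}'}(\tau)\) applied to \(\DD^{\rKh}_{d}(0;X)\), up to a grading shift. By \cref{prop:twisting:complexes} this agrees up to homotopy with \(\DD^{\rKh}_{d}(0;X)\boxtimes M\), where \(M\) is a box tensor product of the bimodules \(M_\pm(\TwistL)\).

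Iterating \cref{prop:twists-and-basepoint-action} (with \(k=0\)) then gives
\[
(a_j\cdot 1_{\DD^{\rKh}_{d}(0;X)})\boxtimes 1_M\simeq a_{\tau(j)}\cdot 1_{\DD^{\rKh}_{d}(\slope;X)},
\]
where \(\tau\) acts on \(\{1,2,3\}\) through its permutation of the punctures. Since \(\boxtimes M\) is an equivalence (its inverse comes from \cref{prop:twisting-complexes}), one side is nullhomotopic if and only if the other is. The labelling of \(\conn{\cdot}\) by ends is compatible with the same permutation, that is, \(\conn{\tau(T)}\) corresponds to \(\tau(\conn{T})\); this can be checked on generators. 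Hence it suffices to treat \(\slope=0\), where \(\conn{Q_0}=\conny\) because \(p=0\) is even and \(q=1\) is odd. So the claim reduces to: \(a_3\cdot 1\simeq 0\), while \(a_1\cdot 1\not\simeq 0\) and \(a_2\cdot1\not\simeq0\), on \(C=\DD^{\rKh}_d(0;X)\) with \(d\) odd.

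The direction \(\Rightarrow\) at slope \(0\) is immediate. All generators of \(C(\rKh_d(0))\) lie in idempotent \(\iota_\bullet\) (see \cref{def:grading}), and \(a_3=\Dw\) kills \(\iota_\bullet\), so \(a_3\cdot 1_C=0\) on the nose, with or without local system.

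For the converse I would first note that \(a_1+a_2+a_3=\Db-S^2+\Dw=-\varG\). So if \(a_1\cdot1_C\simeq0\) or \(a_2\cdot 1_C\simeq0\), then \(\varG\cdot 1_C\) is homotopic to minus the remaining \(a_i\cdot1_C\). To rule this out, I would pair with a test object and show that the induced endomorphism on homology is nonzero. Concretely, I would compute \(H_*\Mor(P,C)\) for a simple \(P\), for instance the one-generator complex \(C(\aKh(0))\) or \(C(\aKh(\infty))\) (via \cref{thm:classification-morphisms}, an \(\HF\) computation between straight lines). I would then determine the action of \(\Db\) and of \(S^2\) on it, arguing that a nullhomotopic central endomorphism of \(C\) induces the zero map on \(H_*\Mor(P,C)\).

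In the lift to \(\PuncturedPlane\), \(C(\rKh_d(0))\) is a horizontal loop made of \(d\) consecutive \(D\)-arrows and \(d\) consecutive \(S^2\)-arrows, closed by the local-system arrow \(-D\otimes X\). Killing the \(S\)-part leaves a complex over \(\CoeffRing[D]\), essentially a twisted cycle \(\bigoplus \mathrm{Cone}(D)\) whose monodromy involves \(X\). Multiplication by \(D\) should then be detectable on homology precisely because \(d\) is odd: the relevant sign \((-1)^d\) does not cancel, so \(D\) does not act by zero. For even \(d\) the analogous class cancels, consistent with \cref{lem:mutation-on-Khr-special-or-rational}. The same argument with the roles of \(D\) and \(S^2\) exchanged should handle \(a_2\).

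The main obstacle is this last step: choosing the test object \(P\) so that the computation is uniform in the local system \(X\in\GL_m(\CoeffRing)\) over an arbitrary ring, and correctly tracking the parity of \(d\). If pairing with a single non-compact curve is too weak, I would instead pair with \(C(\rKh_1(\infty))\) and use the Cone description \(C(\rKh_1)=\Cone{\varG\cdot 1}\).
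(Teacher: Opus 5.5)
Your reduction to slope \(0\) (via \cref{prop:twists-and-basepoint-action} and the matching action of the mapping class group on connectivities) is what the paper does. So is your easy direction: \(a_3=\Dw\) acts by zero because every generator of \(C(\rKh_d(0))\) lies in \(\iota_\bullet\).

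The gap is the step that carries the content of the lemma: showing \(\Db\cdot1_C\not\simeq0\) and \(S^2\cdot1_C\not\simeq0\) for \(C=\DD^{\rKh}_d(0;X)\) with \(d\) odd. You leave this open, and none of the tools you suggest can close it.

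\emph{Slope \(\infty\).} Against \(C(\aKh(\infty))\) or \(C(\rKh_1(\infty))\), every morphism label into \(C\) lies in \(\iota_\bullet\B\iota_\circ=\CoeffRing[S^2]\,\Sw\). Since \(\Db\Sw=0\), postcomposition with \(\Db\cdot1_C\) is already zero on chains. \(S^2\) also acts by zero on homology, because the morphism complex is a sum of cones of multiplication by \(S^2\) (respectively, cones of such cones).

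\emph{Slope \(0\).} Against \(P=C(\aKh(0))\), the idea works for \(d=1\), where \(H_*\Mor(P,C)\cong\CoeffRing[D]/(D^2)\) for a rank-one local system. It already fails for \(d=3\). A direct computation gives \(H_*\Mor(P,C)\) of rank \(2\), with one class at the sink of the cycle and one class two homological degrees lower. Since \(\Db\) and \(S^2\) preserve \(h\) and lower \(q\), both act by zero.

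\emph{Killing \(S\).} This heuristic points the wrong way. The labels around \(C(\rKh_d(0))\) alternate between \(D\) and \(S^2\); they are not \(d\) consecutive \(D\)'s followed by \(d\) consecutive \(S^2\)'s. Modulo \(S\) the complex therefore splits into cones \([\CoeffRing[D]\xrightarrow{D}\CoeffRing[D]]\), the local system becomes irrelevant, and \(D\cdot1\) becomes null-homotopic exactly as in \cref{lem:H_is_nullhomotopic_on_Khr}. No sign \((-1)^d\) survives.

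The missing idea is a test object adapted to the curve itself. The paper takes the half-chain \(Y\) spanned by \(\bullet_1,\dots,\bullet_d\). This runs from the sink \(\bullet_1\) back to \(\bullet_d\), the target of the arrow \(c\cdot S^2\) that carries the local system. It is a subcomplex, since all arrows out of \(\bullet_2,\dots,\bullet_d\) point towards \(\bullet_1\). Hence its inclusion \(f\) is a cycle, and \(a\cdot1_C\simeq0\) would force \(f_a=(a\cdot1_C)\circ f\simeq0\).

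The paper then shows by hand that any null-homotopy of \(f_a\) must contain the components \(\bullet^i\to\bullet_{2d-i+1}\), which run back along the other half of the cycle. Their coefficients are \((-1)^{i+1}\) for \(a=\Db\) and \((-1)^{i+1}c^{-1}\) for \(a=S^2\). What remains is the component \(-c\,S^2\colon\bullet^d\to\bullet_d\), respectively \(-c^{-1}D\colon\bullet^1\to\bullet_1\), and this cannot be homotoped away. Oddness of \(d\) enters through the labels at the two ends of the half-chain, and invertibility of the local system through \(c\); the argument extends directly to higher-rank local systems.

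Some explicit obstruction computation of this kind is needed. Equivalently, you could show directly that \(a\cdot[1_C]\neq0\) in \(H_*\End(C)\). Pairing with the straight arcs of slope \(0\) or \(\infty\) does not provide it.
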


\begin{proof}%
	The following diagram describes 
	how the two generators \(\TwistB\) and \(\TwistW\) 
	of the mapping class group 
	\(\Mod(\FourPuncturedSphere,*)\)
	act on connectivities (\cref{def:connectivity}):
	\[
	\begin{tikzcd}[column sep=40pt]
		\ConnY
		\arrow[|->,loop left,"\TwistW"]
		\arrow[|->,r,bend left=12,"\TwistB"]
		&
		\ConnZ
		\arrow[|->,l,bend left=12,"\TwistB"]
		\arrow[|->,r,bend left=12,"\TwistW"]
		&
		\ConnX
		\arrow[|->,l,bend left=12,"\TwistW"]
		\arrow[|->,loop right,"\TwistB"]
	\end{tikzcd}
	\]
	This action agrees with the action of \(M_\pm(\TwistL)\) on \(a_i\cdot 1_{\DD}\) observed in \cref{prop:twists-and-basepoint-action}:
	\[
	\begin{tikzcd}[column sep=40pt]
		\Db
		\arrow[|->,loop left,"\TwistW"]
		\arrow[|->,r,bend left=12,"\TwistB"]
		&
		-S^2
		\arrow[|->,l,bend left=12,"\TwistB"]
		\arrow[|->,r,bend left=12,"\TwistW"]
		&
		\Dw
		\arrow[|->,l,bend left=12,"\TwistW"]
		\arrow[|->,loop right,"\TwistB"]
	\end{tikzcd}
	\]
	It suffices to check the statement for \(\slope=0\). 
	We have \(\Dw\cdot 1_{\DD}\simeq 0\), since \(\DD\) does not contain any generators in idempotent \(\iota_\circ\), and \(\conns=\conny\). 
	So for \(i=3\), both statements are true.
	For \(i=1\) and \(i=2\), we need to prove that 
	\(\Db\cdot 1_{\DD} \not\simeq 0\)
	and 
	\(S^2\cdot 1_{\DD}\not\simeq 0\).
	
	First consider the case of a local system 
	\((c)\in\GL_1(\CoeffRing)\) of rank 1. 
	(Note that the element \(c\) is invertible in \(\CoeffRing\).)
	We label the generators in \(\DD\) as follows:
		\[
		\begin{tikzcd}[column sep=20pt]
				\bullet_{d+1}
				\arrow[r, "D" above]
				\arrow[%
				rounded corners,
				to path={%
						|- ([yshift=-2ex,xshift=-7cm]\tikztotarget.south) 
						node[above,font=\scriptsize] (x){$c\cdot S^2$}
						-| (\tikztotarget)}
				]{rrrrrrrr}
				& 
				\bullet_{d+2} 
				\arrow[r, "S^2" above]   
				& 
				\bullet_{d+3}  
				\arrow[r, "D" above]  
				& 
				\cdots  
				\arrow[r, "S^2" above] 
				&  
				\bullet_{2d} 
				\arrow[r, "D" above]  
				&  
				\bullet_{1}  
				&  
				\bullet_{2} 
				\arrow[l, "S^2" above] 
				&  
				\cdots 
				\arrow[l, "D" above] 
				& 
				\bullet_{d} 
				\arrow[l, "D" above]
			\end{tikzcd}
		\]
		We consider the type D structure $Y$ and the homomorphism \(f\in \Mor(Y,\DD)\) defined by
		\[
		Y=
		\left[
		\begin{tikzcd}[column sep=20pt]
				\bullet^{1}  
				&  
				\bullet^{2} 
				\arrow[l, "S^2" above] 
				&  
				\cdots 
				\arrow[l, "D" above] 
				& 
				\bullet^{d} 
				\arrow[l, "D" above] 
			\end{tikzcd}
		\right]
		\qquad
		\text{and} 
		\qquad 
		f(\bullet^i)=\bullet_i \otimes 1.
		\]
		Let \(a\in\{\Db,S^2\}\) and
		suppose for contradiction that 
		\(a\cdot 1_{\DD}\) 
		is null-homotopic. 
		Then so too is the morphism 
		\[
		f_a
		= 
		(\Db\cdot 1_{\DD} \circ f)
		\in
		\Mor(Y,\DD),
		\quad
		f_a (\bullet^i)=\bullet_i \otimes a
		\]
		which is indicated by the solid vertical arrows 
		in the following diagram:
		\[
		\begin{tikzcd}[column sep=20pt, row sep=25pt]
				&&&&
				\bullet^{1}
				\arrow[d, "a" right, near end]
				\arrow[dl, dashed,looseness=0.2, in=45,out=-135]
				&
				\bullet^{2}
				\arrow[d, "a" right, near end]
				\arrow[l, "S^2" above]
				\arrow[%
				dashed,
				rounded corners,
				to path={%
						-- ([yshift=-1ex,xshift=-1ex]\tikztostart.south west) 
						-| (\tikztotarget)}
				]{dlll}
				&
				\cdots
				\arrow[l, "D" above]
				&
				\bullet^{d-1}
				\arrow[d, "a", near end]
				\arrow[l,"S^2",swap]
				\arrow[%
				dashed,
				rounded corners,
				to path={%
						-- ([yshift=-1.5ex,xshift=-1.5ex]\tikztostart.south west) 
						-| (\tikztotarget)}
				]{dllllll}
				&
				\bullet^{d}
				\arrow[d, "a"]
				\arrow[l, "D",swap]
				\arrow[%
				dashed,
				rounded corners,
				to path={%
						-- ([yshift=-2ex,xshift=-2ex]\tikztostart.south west) 
						-| (\tikztotarget)}
				]{dllllllll}
				\\
				\bullet_{d+1} 
				\arrow[r, "D" above]
				\arrow[%
				rounded corners,
				to path={%
						|- ([yshift=-2ex,xshift=-3.25cm]\tikztotarget.south) 
						node[above,font=\scriptsize] (x){$c\cdot S^2$}
						-| (\tikztotarget)}
				]{rrrrrrrr}
				& 
				\bullet_{d+2}
				\arrow[r, "S^2" above]   
				& 
				\cdots  
				\arrow[r, "S^2" above] 
				&  
				\bullet_{2d} 
				\arrow[r, "D" above]  
				& 
				\bullet_{1}
				&
				\bullet_{2}
				\arrow[l, "S^2",swap]
				&
				\cdots
				\arrow[l, "D",swap]
				&
				\bullet_{d-1}
				\arrow[l,"S^2",swap]
				&
				\bullet_{d}
				\arrow[l, "D",swap]
			\end{tikzcd}
		\]
		Now consider the case \(a=\Db\). 
		Any null-homotopy for \(f_a\), or more specifically for the component 
		\[
		\begin{tikzcd}[column sep=25pt]
			\bullet^1
			\arrow[r,"a"]
			& 
			\bullet_1
		\end{tikzcd}
		\]
		of the homomorphism 
		\(f_a\), contains components 
		\[
		\begin{tikzcd}[column sep=25pt]
				\bullet^{i}
				\arrow[dashed,r,"(-1)^{i+1}"]
				& 
				\bullet_{2d-i+1}
			\end{tikzcd}
		\quad
		\text{for \(i=1,\dots,d\) }
		\]
		which can be seen by induction on \(i\). 
		Let \(h_1\) be the sum of these morphisms for \(i=1,\dots,d\). 
		Then \(f_a - d_{\DD}\circ h_1 - h_1\circ d_Y\) 
		contains a component 
		\[
		\begin{tikzcd}[column sep=25pt]
				\bullet^{d}
				\arrow[r,"-c\cdot S^2"]
				& 
				\bullet_{d}
			\end{tikzcd}
		\]
		which cannot be further homotoped away. 
		So $f_a \not\simeq 0$, contradicting our assumption.
		
		The argument for the case \(a=S^2\) is similar. 
		In this case, any null-homotopy for \(f_a\), or more specifically for the component
		\[
		\begin{tikzcd}[column sep=25pt]
			\bullet^{d}
			\arrow[dashed,r,"a"]
			& 
			\bullet_{d}
		\end{tikzcd}
		\]
		of the homomorphism \(f_a\), contains components 
		\[
		\begin{tikzcd}[column sep=65pt]
			\bullet^{i}
			\arrow[dashed,r,"(-1)^{i+1}c^{-1}"]
			& 
			\bullet_{2d-i+1}
		\end{tikzcd}
		\quad
		\text{for \(i=1,\dots,d\).}
		\] 
		These are the same components as above, 
		except that they are labelled by \((-1)^{i+1}c^{-1}\). 
		So let \(h_2=c^{-1}\cdot h_1\). 
		Then \(f_a - d_{\DD}\circ h_2 - h_2\circ d_Y\) 
		contains a component 
		\[
		\begin{tikzcd}[column sep=25pt]
			\bullet^{1}
			\arrow[r,"-c^{-1}\cdot D"]
			& 
			\bullet_{1}
		\end{tikzcd}
		\]
		which cannot be further homotoped away. 
		Hence $f_a \not\simeq 0$, again contradicting our assumption.
		These arguments immediately generalise to higher-dimensional local systems. 
\end{proof}

\begin{proof}[Proof of \cref{prop:connectivity-detected-by-rationals-of-odd-length}]
	By hypothesis, there exists some odd integer \(d\) and some local system \(X\) such that \(\DD=\DD^\mathbf{r}_d(\slope;X)\) is a component of \(C(\Gamma)\), up to some grading shift. 
	Let \(i\in\{1,2,3\}\) such that \(\conn{T}=\conni\).
	Then \(a_i\cdot 1_{\DD}\simeq0\)
	by \cref{prop:connectivity-implies-action-trivial}. 
	\cref{lem:curve_based_detection} implies \(\conni=\conns\). 
\end{proof}

\begin{lemma}\label{lem:odd_number_of_odd_length_rational_curves}
	Let \(T\) be a pointed Conway tangle 
	without any closed component.
	Then the total number of rational components 
	of \(\Khr(T)\) of odd length is odd. 
	In particular, it is non-zero.
\end{lemma}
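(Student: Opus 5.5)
The plan is to glue \(T\) to a rational tangle \(Q\) so that \(L=Q\cup T\) is a knot, and then compare parities of total dimensions. For a knot \(K\), \(\dim_{\CoeffField}\Khr(K;\CoeffField)\) is odd. Indeed, the graded Euler characteristic of reduced Khovanov homology is the Jones polynomial, and \(V_K(1)=1\) for knots, so the total dimension is congruent to \(1\) modulo \(2\).

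Since \(T\) has no closed components, its two strands connect the four ends according to \(\conn{T}=\conni\) for some \(i\in\{1,2,3\}\). First choose a slope \(\nicefrac{p'}{q'}\) such that \(Q=Q_{p'/q'}\) closes \(T\) up to a knot. By \cref{def:connectivity} and the table for \(\conn{Q_{p/q}}\), this amounts to a parity condition on \(p',q'\): the connectivity of \(Q\), suitably reflected, must differ from \(\conni\). Then \cref{thm:GlueingTheorem:Kh} gives
\[
\Khr(L)\cong\HF\big(-\BNr(Q_{p'/q'}),\Khr(T)\big).
\]
Here \(-\BNr(Q_{p'/q'})\) is, up to grading, the single arc \(\aKh(s')\) of some slope \(s'\). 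Hence \(\dim\Khr(L)\) is the sum over the components \(\gamma\) of \(\Khr(T)\) of \(\dim\HF(\aKh(s'),\gamma)\).

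By \cref{thm:geography:Khr}, every component is \(\rKh_n(\slope)\) or \(\sKh_{2n}(\slope)\), with trivial (sign) local system, so it suffices to compute these dimensions modulo 2. I would do this by minimal intersection counts in the planar cover \(\PuncturedPlane\), equivalently by direct computation of \(H_*\Mor\) using \cref{thm:classification-morphisms}. Using the mapping class group action (\cref{cor:twisting:field-coeff:ungraded}), I would normalize the component to slope \(0\) and then count intersections with a straight arc of slope \(s'\). The expected outcome is as follows.
\begin{itemize}
\item For \(\sKh_{2n}\), the curve wraps \(2n\) times symmetrically, so the count is even.
\item For \(\rKh_n(\slope)\), the count is \(n\) times a quantity whose parity is that of the intersection number \(|p q'-p' q|\), plus the analogous case analysis when the slopes agree.
\end{itemize}
Consequently, modulo 2 only odd-length rational components with \(|pq'-p'q|\) odd contribute.

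It remains to see that every odd-length rational component contributes \(1\) modulo 2. By \cref{prop:connectivity-detected-by-rationals-of-odd-length}, each odd-length rational component of \(\Khr(T)\) has slope with \(\conns=\conn{T}\). This fixes the parities of \((p,q)\) for all such components, and our choice of \(s'\) fixes the parity of \((p',q')\). The condition that \(Q\cup T\) is a knot should then translate exactly into \(pq'-p'q\) being odd. Summing, \(\dim\Khr(L)\) is congruent modulo 2 to the number of odd-length rational components, and this number must therefore be odd, hence non-zero.

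The main obstacle is the parity computation of \(\dim\HF(\aKh(s'),\gamma)\). One must verify precisely that special curves always contribute an even number, including when \(s'\) equals their slope, where the special curve winds around the puncture \(*\). One must also check that a rational curve of length \(n\) contributes \(n\,|pq'-p'q|\) modulo 2. I would first try the computation at slope \(0\) directly from the explicit complexes in \cref{def:grading}, pairing them with \(C(\aKh(s'))\), and then transport the result to general slopes by \cref{thm:twisting:general-coeff}.
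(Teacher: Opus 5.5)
Your proposal is correct and follows essentially the same route as the paper. You close \(T\) with a rational tangle to get a knot, reduce via the gluing theorem and the geography result to pairings of a single arc with individual components, and use \cref{prop:connectivity-detected-by-rationals-of-odd-length} to show that exactly the odd-length rational components contribute odd dimension; your mod-2 determinant observation is precisely why this works. The differences are cosmetic: the paper first normalizes to \(\conn{T}=\connx\) via \cref{cor:twisting:field-coeff:ungraded} and pairs with \(\BNr(Q_0)\), and it takes the count \(\dim\HF=\ell\cdot|p|\) for nonzero slope from \cite[Lemma~2.10]{KLMWZ} (and \(0\) or \(2\) in slope \(0\)) rather than recomputing it.
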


\begin{proof}%
	By \cref{cor:twisting:field-coeff:ungraded}
	we may assume without loss of generality 
	that \(\conn{T}=\ConnX\). 
	Then \(K=Q_0\cup T\) is a knot 
	since \(T\) does not contain a closed component. 
	By \cref{thm:GlueingTheorem:Kh,prop:mirroring:Khr}
	\[
	\Khr(Q_0\cup T)
	\cong
	\HF(\BNr(Q_0),\Khr(T))
	\cong
	\bigoplus
	\HF(\BNr(Q_0),\gamma)
	\]
	where the direct sum on the right is 
	over all components \(\gamma\) of \(\Khr(T)\). 
	By \cref{thm:geography:Khr}, 
	\(\Khr(T)\) consists of rational and special components only.  
	If the slope of a component \(\gamma\) is 0,  
	a simple calculation shows that
	\(\dim\HF(\BNr(Q_0),\gamma)\) 
	is even 
	(namely 0	if \(\gamma\) is special and 2 if \(\gamma\) is rational).
	If the slope of \(\gamma\) is non-zero, 
	we can apply \cite[Lemma~2.10]{KLMWZ}
	and obtain
	\[
	\dim\HF(\BNr(Q_0),\gamma)
	=
	\ell\cdot |p|
	\]
	where \(\ell\) is the length of \(\gamma\) 
	and \(p\) is the numerator of the slope \(\slope\) of \(\gamma\), 
	with \(p,q\) coprime. 
	
	By \cref{prop:connectivity-detected-by-rationals-of-odd-length}, 
	the slope \(\slope\) of any rational component \(\gamma\) 
	of odd length satisfies 
	\(\conns=\ConnX\). 
	In particular, \(\slope\neq0\) and \(|p|\) is odd. 
	Therefore,
	\(\dim\HF(\BNr(Q_0),\gamma)\)
	is odd for any rational component \(\gamma\) 
	of odd length. 
	Components of odd length are necessarily rational. 
	So for all other components \(\gamma\) of \(\Khr(T)\),
	\(\dim\HF(\BNr(Q_0),\gamma)\)
	is even. 
	We conclude that
	\(\dim\Khr(K)\)
	has the same parity as the number 
	of rational components of \(\Khr(T)\) 
	of odd length.
	Since \(K\) is a knot, i.e.\ a link with one component,
	\(\dim\Khr(K)\) is odd. 
\end{proof}

\begin{proof}[Proof of \cref{thm:Khr:connectivity-detection}]
	Let \(T\) be a Conway tangle without closed components. 
	By \cref{lem:odd_number_of_odd_length_rational_curves}, 
	\(\Khr(T)\) contains an odd (and hence non-zero) number of rational components of odd length. 
	By \cref{prop:connectivity-detected-by-rationals-of-odd-length}, 
	these components detect the connectivity \(\conn{T}\).  
\end{proof}

	The Heegaard Floer invariant \(\HFT\) \cite{pqMod,pqSym} also detects the connectivity of Conway tangles without closed components. 
	The only difference between 
	the connectivity detection results 
	for \(\HFT\) and \(\Khr\) 
	is that all rational components of \(\HFT\) 
	have the same length and 
	that they may carry non-trivial local systems. 
	Note, however, that at the time of writing, 
	no tangle is known whose invariant \(\Khr\) 
	contains a rational component of length \(\geq 3\) or 
	whose invariant \(\HFT\) contains 
	a rational component with non-trivial local system. 
	
	It is also interesting to compare the proofs of the detection results in these two settings. 
	Structurally, they are identical; the proof for \(\HFT\) is based upon two results that are analogous to \cref{lem:odd_number_of_odd_length_rational_curves,prop:connectivity-detected-by-rationals-of-odd-length}. 
	However, while our proof of \cref{lem:odd_number_of_odd_length_rational_curves} follows the same line of argument used for \(\HFT\) \cite[Proposition~3.13]{LMZ}, the fact that odd length rational components detect the connectivity is proved rather differently:
	Our proof of 
	\cref{prop:connectivity-detected-by-rationals-of-odd-length} for \(\Khr\) 
	relies on the basepoint action on Khovanov homology. 
	The analogous result for \(\HFT\)
	follows from a simple observation 
	about the Alexander grading on \(\HFT\) 
	\cite[Observation~6.1]{pqMod}. 
\section{%
	Proof of 
	\texorpdfstring{\cref{thm:geography:Khr:intro}}
		{Theorem \ref{thm:geography:Khr:intro}}
	(%
	Classification of components of
	\texorpdfstring{\(\Khr(T)\)}{Khr(T)}%
	)
}\label{sec:geography:Khr}

Throughout this section we assume that \(\CoeffRing=\CoeffField\) is a field. 

\subsection{Terminology}\label{subsec:geography:Khr:terminology}
The standard Riemannian metric on \(\PuncturedPlane\) 
induces a Riemannian metric on \(\FourPuncturedSphere\) 
via the covering 
\(\PuncturedPlane\rightarrow\FourPuncturedSphere\) 
from \cref{sec:newKh:mcg}.
This allows for a ``normal form''  for curves in \(\FourPuncturedSphere\), 
which is inspired by \cite[Section~7.1]{HRW}.

\begin{definition}\label{def:peg_board_rep}
	Given \(\varepsilon\in(0,\nicefrac{1}{2})\)
	an \(\varepsilon\)-peg-board representative 
	of a compact curve \(\gamma\) in \(\FourPuncturedSphere\) 
	is a representative of the homotopy class of \(\gamma\) 
	that has minimal length among all representatives of distance \(\varepsilon\) 
	to all four punctures in \(\FourPuncturedSphere\). 
	If \(\gamma\) is a non-compact curve in \(\FourPuncturedSphere\)
  its \(\varepsilon\)-peg-board representative is defined in the same way, 
	except that we consider those representatives \(\gamma\co [0,1]\rightarrow S^2\smallsetminus\{\ast\}\)
	for which the preimage
	of the \(\varepsilon\)-neighbourhoods 
	around the punctures is of the form 
	\([0,\varepsilon)\cup(\varepsilon,1]\). 
\end{definition}

The intuition behind this definition is to think 
of the four punctures of \(\FourPuncturedSphere\) 
as pegs of radii~\(\varepsilon\) and 
then to imagine pulling the curve \(\gamma\) ``tight'', 
like a rubber band (the ends of non-compact curves need to be tied 
to the pegs before pulling ``tight''). 
Lifts of \(\varepsilon\)-peg-board representatives 
to the covering space \(\PuncturedPlane\) 
are made up of linear curve segments 
in the complement of the \(\varepsilon\)-neighbourhoods 
around the punctures
and arc segments of radius \(\varepsilon\)
around the punctures. 

Up to reparametrization, 
the \(\varepsilon\)-peg-board representative 
of a curve \(\gamma\) in \(\FourPuncturedSphere\) 
is unique unless \(\gamma\) is compact and 
lifts to a straight line in \(\PuncturedPlane\) of some rational slope. 
However, such curves are not graded, 
which can be seen by inspection for slope 0 and 
then generalizing to arbitrary slopes 
using \cref{cor:twisting:field-coeff:ungraded}. 
So we note:

\begin{lemma}
	Graded curves in \(\FourPuncturedSphere\) 
	have a unique \(\varepsilon\)-peg-board representative.
	\qed
\end{lemma}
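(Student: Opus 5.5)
Both halves of the statement are stated just before it: uniqueness of the \(\varepsilon\)-peg-board representative except for compact curves lifting to straight lines of rational slope, and the claim that such curves admit no grading. The lemma follows once these two facts are justified, so the plan is to prove each in turn.

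\emph{Uniqueness.} Work in the flat metric pulled back along \(p\co\PuncturedPlane\rightarrow\FourPuncturedSphere\), with the punctures thickened to disks of radius \(\varepsilon\). This is a locally CAT(0) situation: the complement of the open \(\varepsilon\)-disks, with its induced length metric, has non-positive curvature away from the concave boundary circles. By the standard geodesic argument, a length-minimizing representative in a free homotopy class (or in a homotopy class rel ends, for non-compact curves tied to pegs) exists and is locally geodesic. It consists of straight segments and arcs hugging the pegs. Two such representatives lift to \(\pi_1\)-equivariant geodesics in the universal cover, and by convexity they bound a flat strip. A flat strip in \(\PuncturedPlane\) avoiding the \(\varepsilon\)-disks forces both curves to be parallel straight lines without peg contact. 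This can only happen for compact curves whose lift is a straight line of rational slope, which slide sideways within an annulus between rows of punctures.

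\emph{No grading.} Treat such exceptional curves for slope 0 first. The horizontal lines lying strictly between rows of lattice points project to the curve that, read through \(r\) onto \(\mathcal{Q}\), corresponds to a formal curve of the form \(\bullet\xrightarrow{D}\bullet\) closing up, or a cycle that winds once around an arc \(a_\bullet\) or \(a_\circ\). Its chain complex is a single generator (with local system) whose differential is a loop arrow labelled \(D\). The grading condition \eqref{eqn:grading_on_curves:c} along a cycle sums \(h\)-increments of \(+1\) per edge, with signs according to orientation. For this cycle all edges point the same way, so the total \(h\)-change around the loop is nonzero, which contradicts the existence of a grading. Equivalently, if one insists on working with primitive curves, the loop \(\bullet\xrightarrow{D}\bullet\) returns to the same vertex with \(h\) increased by 1.

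The general rational slope \(\slope\) reduces to slope 0 by applying \(\tau_{p/q}\). By \cref{prop:twisting-curves}, the map \(\Phi_{\mathtt{e}}(\tau)\) sends graded curves to graded curves, and since it is a group action it is invertible. It also acts on underlying curves by the homeomorphism \(\tau\), which maps lines of slope 0 to lines of slope \(\slope\) (\cref{lem:MCG:PSL}). A graded straight curve of slope \(\slope\) would therefore yield a graded straight curve of slope 0, which is impossible.

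The main obstacle is the uniqueness step. Making the CAT(0)/flat-strip argument rigorous with the pegs, and treating non-compact ends tied to pegs, takes care, although it is standard (compare \cite[Section~7.1]{HRW}).
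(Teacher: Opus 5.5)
The overall route matches the paper's, but the slope-\(0\) inspection misidentifies the curve. The route is: the peg-board representative is unique except for compact curves lifting to straight lines; those curves are not gradable; this is checked at slope \(0\) and moved to other slopes by the mapping class group action. Using \cref{prop:twisting-curves} for the transport step is fine, and your flat-strip sketch supplies detail the paper only asserts.

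The slope-\(0\) check is the one step with real content, and there you have the wrong formal curve. A single generator \(\bullet\) with a loop labelled \(D\) is the peripheral curve around \(\Endi\), and its lift is not a straight line. The horizontal line at height \(\tfrac12\) projects to the simple closed curve separating \(\{\Endi,\Endii\}\) from \(\{*,\Endiii\}\). This curve meets \(a_\bullet\) twice and misses \(a_\circ\). In the torus cover, the preimage of \(a_\bullet\) is a pair of vertical loops through the lift of \(*\) that bound a thin bigon around the lift of \(\Endi\). The curve's two segments wind once around \(\Endi\) and once around \(\Endii\). So its formal curve has two vertices \(x_1,x_2\), both labelled \(\bullet\), and two edges, labelled \(D\) and \(S^2\).

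What you then have to show is that these two edges are oriented coherently around the cycle, \(x_1\xrightarrow{S^2}x_2\xrightarrow{D}x_1\), so that \(h\) would have to rise by \(2\) around the loop. This is not automatic. The other orientation, with both arrows from \(x_1\) to \(x_2\), is exactly \(C(\rKh_1(0))=\Cone{\varG\cdot1_{C(\aKh(0))}}\), and that complex is graded. Geometrically it is the figure-eight curve around \(\Endi\) and \(\Endii\). Its lift is homotopic in \(\R^2\) to a horizontal line but passes alternately above and below the punctures of a row, so its peg-board representative touches the pegs and is unique.

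Telling the two curves apart requires tracking how the arrows of \(\mathcal{Q}\) are oriented relative to the punctures they encircle. Here \(D_\bullet\) winds around \(\Endi\) and \(S_\circ S_\bullet\) winds around \(\Endii\), both in the same rotational sense. A simple closed curve around both punctures turns the same way around each, which gives coherent arrows; the figure-eight turns in opposite senses. Your one-vertex picture never meets this issue, so as written it proves non-gradability of a curve other than the one in question.
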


\begin{definition}\label{def:RationalVsSpecialKht}
	A graded curve \(\gamma\) in \(\FourPuncturedSphere\) 
	wraps around a puncture if there exists some angle \(\alpha>0\) and  $\delta>0$ 
	such that, for all \(\varepsilon\in(0,\delta)\), 
	a lift of the \(\varepsilon\)-pegboard representative of \(\gamma\)
	to the covering space \(\PuncturedPlane\)
	changes its direction at a lift of this puncture by an angle \(\geq\alpha\). 
	A graded curve \(\gamma\) is called linear 
	if it does not wrap around any puncture.
	We call a linear curve \(\gamma\) almost special 
	if its \(\varepsilon\)-peg-board representative 
	contains points of distance \(\varepsilon\) 
	from the special puncture $\ast$ 
	for all \(\varepsilon\in(0,\nicefrac{1}{2})\), 
	and almost rational otherwise.
\end{definition}

\begin{example}
	The invariant \(\BNr(P_{2,-3})\) 
	shown in \cref{fig:Kh:example} 
	is not linear, 
	since it wraps around the non-special puncture \(\Endiii\).
	Special curves are examples of almost special curves and, similarly, rational curves are examples of almost rational curves. 
\end{example}

The most challenging step in the proof of \Cref{thm:geography:Khr} (\cref{thm:geography:Khr:intro}) is the following result.

\begin{theorem}\label{thm:no_wrapping_around_special}
	Let \(T\) be a pointed Conway tangle. 
	No component of \(\BNr(T)\) wraps around the special puncture.
	The same is true for \(\Khr(T)\) in place of \(\BNr(T)\).
\end{theorem}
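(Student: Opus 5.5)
The plan follows the outline in \cref{fig:overview_extension}. The guiding observation is that wrapping around the special puncture \(*\) is an algebraic property of the type~D structure over \(\B\). In the quiver picture, the arcs \(a_\bullet\) and \(a_\circ\) both start and end at \(*\). A curve that turns around \(*\) in its \(\varepsilon\)-peg-board representative therefore passes consecutively through these two arcs on the ``\(*\)-side''. Under the correspondence of \cref{prop:functors-curves}, this shows up in the formal curve as specific consecutive pairs of labels: a \(D\) next to an \(S\) (or \(S^2\)) where the composite would be one of the relations \(D_\bullet S_\circ=0\), \(S_\circ D_\circ=0\), and so on, which are killed in the ideal defining \(\B\). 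Concretely, I would first prove a combinatorial lemma: a graded curve wraps around \(*\) if and only if its formal curve contains a length-two zigzag \(x\to y\leftarrow z\) or \(x\leftarrow y\to z\) whose two labels, read as paths in \(\mathcal{Q}\), compose across \(*\). Equivalently, in the ``completed'' algebra where the relations are relaxed by a variable \(U\), such a pair would produce a nonzero \(U\)-term.

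The key step is to construct an extension of \(\DD(T)^{\B}\) to a type~D structure \(\DD(\Diag_T)^{\BinfU}\) over an \(A_\infty\)-algebra \(\BinfU\). This algebra is a deformation of \(\B\) over \(\CoeffField[U]\) whose \(U=0\) reduction is \(\B\), and in which the four relations of \cref{def:B} are deformed so that the products across \(*\) become \(U\) times a nonzero element. Geometrically, \(\BinfU\) should be the wrapped/partially wrapped endomorphism algebra in which the puncture \(*\) is filled in with weight \(U\). To build it, I would first define, for each crossingless tangle, a matrix factorization in the Khovanov--Rozansky sense over a polynomial ring in edge variables, with potential determined by the boundary points. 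Bar-Natan's cube \(\KhT{\Diag_T}\) then lifts to a twisted complex \(M(\Diag_T)\) of matrix factorizations, with saddles and dots lifting to the standard Khovanov--Rozansky maps. Homological mirror symmetry for the pair of pants supplies a quasi-isomorphism between the endomorphism dg algebra of the matrix factorizations for \(\Li\oplus\Lo\) and the Fukaya-type \(A_\infty\)-algebra \(\BinfU\). Transferring \(M(\Diag_T)\) across this quasi-isomorphism, after delooping (the technical lemma), gives \(\DD(\Diag_T)^{\BinfU}\) whose reduction at \(U=0\) is homotopic to \(\DD(T)^{\B}\).

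The final step is to deduce the geometric statement. Over a field, I would put \(\DD(T)\) into curve form \(\DD^c(T)\) and use homological perturbation to transport the extension to one of \(\DD^c(T)\) with \(\delta_U=\delta_0+U\delta_1+\dots\). The \(A_\infty\)-condition \(\sum\mu_k(\delta_U,\dots,\delta_U)=0\) then has \(U^1\)-coefficient
\[
\mu_2^{(1)}(\delta_0,\delta_0)+\partial\delta_1=0.
\]
Here \(\mu_2^{(1)}\) is the \(U\)-linear correction to multiplication. By the combinatorial lemma, \(\mu_2^{(1)}(\delta_0,\delta_0)\) is nonzero exactly at the wrap-around corners. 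I would then show that it cannot be cancelled by \(\partial\delta_1\) together with the higher \(\mu_k\)-contributions, by a filtration or length argument on the peg-board representatives; this mirrors Zibrowius's argument for \(\HFT\) using \(\CFTminus\). This establishes the claim for \(\BNr(T)\). For \(\Khr(T)\), I would note that \(\varG\) lifts to the centre of \(\BinfU\) (up to homotopy), so the mapping cone \(\DD_1=\Cone{\varG\cdot 1}\) also extends, and the same argument applies.

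I expect the main obstacle to be the middle step. It requires writing down \(\BinfU\) explicitly with signs valid over every field, and checking that the mirror-symmetry quasi-isomorphism intertwines the Khovanov--Rozansky saddle and dot maps with \(S\) and \(D\) after delooping, including the identification of \(\varG\). Choosing the potential correctly, so that filling \(*\) corresponds to the boundary term of the matrix factorization, is where I would start.
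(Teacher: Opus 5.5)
Your architecture matches the paper's. \cref{thm:extension} extends $\DD^c(T)$ and $\DD^c_1(T)$ over $\BinfU$ using matrix factorizations, delooping and the mirror-symmetry quasi-isomorphism, and the theorem then follows from a non-cancellable $U$-term in the structure relation.

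The gap is in that final deduction, which rests on a wrong description of $\BinfU$. Its product is \emph{not} deformed. As an algebra $\BinfU=\B[U]$, the four relations of \cref{def:B} still hold, and $U$ enters only through the higher operations $\mu_{2m}$, $m\geq 2$, which carry a factor $U^{m-1}$; for instance $\mu_4(S,D,S,D)=U$. Gradings force this: $\mu_2$ has degree $h^0q^0$, $\B$ sits in $h$-degree $0$, and $\gr(U)=h^{-2}q^{-6}$, so $\mu_2$ cannot have a $U$-linear part. Consequently your term $\mu_2^{(1)}(\delta_0,\delta_0)$ vanishes identically, and your order-$U$ equation detects nothing. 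For the same reason, a criterion phrased via pairs of consecutive labels is not what the obstruction sees. The correct order-$U$ equation is $\mu_4(\delta_0,\delta_0,\delta_0,\delta_0)+\mu_2(\delta_0,\delta_1)+\mu_2(\delta_1,\delta_0)=0$, and the obstruction comes from four consecutive arrows forming a disk sequence around $*$. Such $\mu_4$-terms also occur for curves that do not wrap around $*$; for special curves they cancel in pairs, which is how the proof of \cref{thm:geography:special_curves} forces the local system $-1$. So an ``iff'' lemma is neither available nor needed; what is needed is a normalization that rules out cancellation.

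The paper proceeds as follows. By \cref{cor:twisting:field-coeff:ungraded} you may assume one of the two segments meeting at the turn is horizontal. After twisting the bottom two ends, you may also assume the turn at $*$ exceeds $90^\circ$. Then $\DD^c(T)$ contains $\bullet\xrightarrow{D}\bullet\xrightarrow{S}\circ\xrightarrow{D}\circ\xrightarrow{S}\bullet\xrightarrow{D}\bullet$, and $\mu_4$ of the first four arrows gives $U\cdot\iota_\bullet$ from the first to the fifth generator. This term cannot be cancelled, for two reasons:
\begin{itemize}
\item A $\mu_2$-term can produce an idempotent coefficient only if $\delta_0$ contains an identity arrow, which a curve complex never does.
\item The only other disk sequence of this shape, $\bullet\xrightarrow{S}\circ\xrightarrow{D}\circ\xrightarrow{S}\bullet\xrightarrow{D}\bullet$, would need a $D$-arrow \emph{into} the fifth generator, whereas the $D$-arrow there points out.
\end{itemize}
No filtration argument on peg-board representatives is involved.

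Two smaller points. First, transporting the extension from $\DD(\Diag_T)$ to the curve complex is done in the paper by showing that each cancellation and clean-up in the arrow-sliding algorithm preserves extendability (\cref{prop:cancel_n_clean}); a general homological-perturbation transfer would need justification. Second, for $\Khr$ you cannot assume $\varG$ is central in $\BinfU$ up to homotopy: the paper notes that $\varG\cdot 1_C$ need not even be a morphism of twisted complexes over $\BinfU$. Instead it runs the mapping cone of $\varG$ on $M(\Diag_T)$, where $\varG\in\Rd$ acts honestly, through the same pipeline.
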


The proof of \cref{thm:no_wrapping_around_special} is defered
to \cref{sec:no_wrapping_around_special}. This is the result that requires 
the appeal to the homological mirror symmetry of the three-punctured sphere 
as alluded to in the introduction.

\subsection{Components are linear}\label{subsec:geography:Khr:linearity}

\begin{theorem}\label{thm:Khr:linearity}
	\(\Khr(T)\) consists of only linear components for any pointed Conway tangle \(T\).
\end{theorem}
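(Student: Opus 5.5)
We have to show that no component of \(\Khr(T)\) wraps around any puncture. By \cref{thm:no_wrapping_around_special}, no component wraps around the special puncture \(*\), so it remains to rule out wrapping around the three non-special punctures \(\Endi,\Endii,\Endiii\). The plan is to transfer the statement about \(*\) to the other punctures using the mapping class group action. Conceptually, a component \(\gamma\) of \(\Khr(T)=\Khr(\Cone{\varG\cdot 1_{\DD(T)}})\) arises as a "doubled" version of curves in \(\BNr(T)\). Wrapping around a non-special puncture \(\mathtt{e}_i\) should be obstructed by the fact that \(\varG\) acts, up to the basepoint action of \cref{lem:Basepoint_Moving_Lemma}, like \(\pm(a_i+a_j)\). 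After rotating the picture, this behaves like the special puncture for a different choice of basepoint.

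The concrete steps are as follows.

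\textbf{Step 1: reduction to one non-special puncture.} The mutation-type symmetry \(\rho\) (\cref{prop:rho*Khr}) and more generally \cref{cor:twisting:field-coeff:ungraded} let us move punctures. However, these maps fix \(*\) and only permute \(\Endi,\Endii,\Endiii\). So it suffices to show that components do not wrap around, say, \(\Endii\), for all tangles \(T\).

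\textbf{Step 2: change of basepoint.} Consider the tangle \(T\) with the distinguished end moved to \(\Endii\). Call this pointed tangle \(T'\); it is obtained by rotating \(T\) by \(\pi\) about the axis pointing out of the plane, i.e. \(T'=\mutz(T)\) with its original basepoint.

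Then \(\Khr(T')\) has no component wrapping around its special puncture. This puncture corresponds to \(\Endii\) of \(T\) under the homeomorphism of \(\FourPuncturedSphere\) that swaps \(*\leftrightarrow\Endii\) and \(\Endi\leftrightarrow\Endiii\). What we need is that the underlying ungraded multicurves satisfy \(\underline{\Khr}(T')=\mutz(\underline{\Khr}(T))\), where \(\mutz\) now denotes this geometric rotation of \(\FourPuncturedSphere\).

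I would derive this from the fact that reduced Khovanov homology is independent of which strand component carries the basepoint, up to the basepoint-moving homotopy. Explicitly, \(\DD_1\) is the cone of \(\varG=-(D_p+D_{p'})\) on the unreduced-type complex by \cref{lem:Basepoint_Moving_Lemma}. The curve \(\Khr(T)\) is determined by pairing against all \(\BNr\) of rational tangles through \cref{thm:GlueingTheorem:Kh}, i.e. by \(\Khr\) of the links \(Q_{p/q}\cup T\) (together with their \(\HF\)-detection of curves). Reduced Khovanov homology of the link \(Q\cup T\) with basepoint on the strand through \(*\) agrees with the one with basepoint through \(\Endii\) whenever both lie on the same component. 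This holds over fields up to the known isomorphism (Shumakovitch-type basepoint independence).

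\textbf{Step 3: detection.} Use that a wrapping curve is detected by such pairings. Linear curves differ from wrapping curves in their \(\HF\) with rational arcs \(\aKh(\slope)\) (computed with peg-board representatives). Combining this with Step 2, wrapping around \(\Endii\) for \(T\) would force wrapping around \(*\) for \(T'\), contradicting \cref{thm:no_wrapping_around_special}.

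The main obstacle is Step 2. Basepoint independence of \(\Khr\) over arbitrary fields is delicate; in odd characteristic it holds for knots but can fail for links with the basepoint on different components. If the pairing approach is too lossy, I would instead argue directly at the level of complexes. The idea is to show that \(\Cone{\varG}\) is homotopy equivalent to \(\Cone{-(\Db+\Dw)\cdot1}\)-type expressions, which are symmetric under exchanging the roles of \(*\) and \(\Endii\). Then I would apply \cref{thm:no_wrapping_around_special} to the rotated tangle, exploiting that the statement there for \(\BNr\) holds for every pointed tangle.
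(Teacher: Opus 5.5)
Your Step~1 is fine, but Step~2 is false, and the transfer strategy fails with it. Write $R_2$ for the rotation of $\FourPuncturedSphere$ exchanging $*\leftrightarrow\Endii$ and $\Endi\leftrightarrow\Endiii$. Write $\underline{\Khr}_{\Endii}(T)$ for the ungraded multicurve obtained by marking $\Endii$ instead of $*$, so that $\underline{\Khr}_{\Endii}(T)=R_2(\underline{\Khr}(\mutz(T)))$.

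The identity you need, $\underline{\Khr}(\mutz(T))=R_2(\underline{\Khr}(T))$, is then equivalent to $\underline{\Khr}_{\Endii}(T)=\underline{\Khr}(T)$. That is, it asserts that the \emph{curve} invariant is independent of the marked end. This already fails for $T=Q_0$. Since $\mutz(Q_0)=Q_0$, both sides would have to equal $\rKh_1(0)$, a rational curve supported near the arc $\aKh(0)$ from $\Endi$ to $\Endii$. But $R_2(\rKh_1(0))$ is supported near an arc from $\Endiii$ to $*$ and is almost special. This happens over every field, including $\CoeffFieldTwo$, where reduced Khovanov homology of links is fully basepoint independent.

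Basepoint independence only holds after closing up to a link. What the gluing theorem then yields is an equality between two different pairings: $\Khr(T)$ against the arcs $\BNr(Q_{p/q})$ avoiding $*$, and $\Khr_{\Endii}(T)$ against arcs avoiding $\Endii$. This neither identifies the two multicurves nor detects wrapping, and no detection result of the kind Step~3 assumes is available. Applying \cref{thm:no_wrapping_around_special} to $\mutz(T)$ only shows that $\Khr_{\Endii}(T)$ does not wrap around $\Endii$, which is a statement about a different multicurve.

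A sanity check confirms that something specific to $\Khr$ is needed. Nothing in your Steps~2--3 distinguishes $\Khr$ from $\BNr$, and \cref{thm:no_wrapping_around_special} holds for $\BNr$ too. Yet $\BNr(P_{2,-3})$ wraps around $\Endiii$; see \cref{exa:Khr:2m3pt} and the example after \cref{def:RationalVsSpecialKht}.

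Your fallback does not supply the missing ingredient either. The equivalence $\Cone{\varG\cdot 1}\simeq\Cone{-(\Db+\Dw)\cdot 1}$ requires $S^2\cdot 1_{\DD(T)}\simeq 0$, which \cref{prop:connectivity-implies-action-trivial} provides only when $\conn{T}=\connz$. Moreover, $\Cobb$ has no symmetry exchanging $*$ and $\Endii$, since its defining relations single out $*$.

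The missing idea is intrinsic to the cone structure $\DD_1(T)=\Cone{\varG\cdot 1_{\DD(T)}}$. Because $\varG$ acts null-homotopically on it (\cref{lem:H_is_nullhomotopic_on_Khr}), the Clean-Up Lemma removes every component $a\cdot\varG^k$ with $k\geq1$. A simply-faced precurve can then be built by hand, so a multicurve representative $\DD^c_1(T)$ has differential labelled only by linear combinations of $D$, $S$, $S^2$ (\cref{lem:higher_powers_when_wrapping}). Conversely, suppose a curve wraps around a non-special puncture. Twisting via \cref{cor:twisting:field-coeff:ungraded} makes it turn there by more than $135^\circ$, which forces a label $D^n$ with $n>1$ or $S^m$ with $m>2$ (\cref{prop:almost_no_wrapping}). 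Together with \cref{thm:no_wrapping_around_special} for $*$, this gives linearity. The special and non-special punctures are handled by genuinely different mechanisms, not related by any symmetry.
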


\begin{lemma}\label{lem:H_is_nullhomotopic_on_Khr}
	Let \(\DD_1\simeq\Cone{\varG\cdot 1_{\DD}}\)
	for some \(\DD\in\C^\B\). 
	Then
	\(\varG\cdot 1_{\DD'}\simeq0\) 
	for any direct summand \(\DD'\) of \(\DD_1\). 
\end{lemma}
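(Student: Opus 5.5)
First show that \(\varG\cdot1_{\DD_1}\simeq0\) on all of \(\DD_1\); then restrict to a direct summand \(\DD'\) using \cref{lem:nullhomotopy-splits-under-direct-sums}. By \cref{lem:homotopies-of-identity-multiples} it is enough to prove this for the explicit model \(\Cone{\varG\cdot 1_{\DD}}\). That lemma applies because \(\varG\in Z(\B)\) and \(\DD_1\simeq\Cone{\varG\cdot1_{\DD}}\).

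Write \(\Cone{\varG\cdot1_{\DD}}=\big(h^{-1}q^{-1}\DD\oplus q\DD,\ \delta\big)\) with
\[
\delta=\begin{pmatrix}-\delta_{\DD} & 0\\ \varG\cdot 1_{\DD} & \delta_{\DD}\end{pmatrix},
\]
where the sign on the first block comes from \cref{def:shifted-complexes}. For the null-homotopy, take the ``inverse'' of the cone arrow:
\[
H=\begin{pmatrix}0 & 1_{\DD}\\ 0&0\end{pmatrix}\co q\DD\rightarrow h^{-1}q^{-1}\DD.
\]
This map has homological degree \(-1\), and its quantum grading is matched up to the \(q^{2}\) shift carried by \(\varG\).

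Next compute \(\partial(H)=\delta\circ H+H\circ\delta\), up to the sign convention \(-(-1)^{|H|}=+1\). The top-right entries give \(-\delta_{\DD}\cdot1+1\cdot\delta_{\DD}=0\). The diagonal entries give \(1_{\DD}\circ(\varG\cdot1_{\DD})=\varG\cdot1\) in the top-left and \((\varG\cdot1_{\DD})\circ1_{\DD}=\varG\cdot1\) in the bottom-right. Hence \(\partial(H)=\varG\cdot 1_{\Cone{\varG\cdot1_{\DD}}}\), as required.

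The only possible obstacle is sign bookkeeping under the shift convention. If the diagonal entries come out with opposite signs, I will instead use \(H\) with entry \(\pm1_{\DD}\), or absorb the sign into \(\varG\) versus \(-\varG\); only nullhomotopy of \(\varG\cdot1\) matters, and that is insensitive to an overall sign.
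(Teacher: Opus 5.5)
Your proposal is correct and is essentially the paper's proof. You use \cref{lem:homotopies-of-identity-multiples} and \cref{lem:nullhomotopy-splits-under-direct-sums} to reduce to the explicit cone, and then take the diagonal arrow \(1_{\DD}\co q\DD\rightarrow h^{-1}q^{-1}\DD\) as the null-homotopy of \(\varG\cdot 1_{\Cone{\varG\cdot 1_{\DD}}}\); your sign computation already comes out right, so the fallback in your last paragraph is unnecessary.
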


\begin{proof}
	By
	\cref{lem:homotopies-of-identity-multiples,lem:nullhomotopy-splits-under-direct-sums},
	it suffices to see that 
	\(\varG\cdot 1_{\Cone{\varG\cdot 1_{\DD}}}\), given by the following vertical solid arrows, is null-homotopic: 
	\[
	\begin{tikzcd}[column sep=1.5cm]
		\Big[
		h^{-1}q^{-1}
		\DD
		\arrow{r}{\varG\cdot 1_{\DD}}
		\arrow{d}[swap]{\varG\cdot 1_{\DD}}
		&
		h^{0}q^{1}
		\DD
		\Big]
		\arrow{d}{\varG\cdot 1_{\DD}}
		\arrow[dashed, ld, "1_{\DD}",swap]
		\\
		\Big[
		h^{-1}q^{-1}
		\DD
		\arrow{r}{\varG\cdot 1_{\DD}}
		&
		h^{0}q^{1}
		\DD
		\Big]
	\end{tikzcd}
	\]
	The diagonal dashed arrow indicates such a null-homotopy.
\end{proof}

\begin{lemma}\label{lem:higher_powers_when_wrapping}
	Let \(\DD\in\C^\B\) and let \(\DD^c_1\simeq\Cone{\varG\cdot 1_{\DD}}\) correspond to a multicurve.
	Then the differential of \(\DD^c_1\) only contains linear combinations of \(D,S,S^2\in\B\). 
\end{lemma}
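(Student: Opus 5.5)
The plan is to combine \cref{lem:H_is_nullhomotopic_on_Khr} with the rigidity of multicurve complexes under homotopy. Suppose some component \(\gamma\) of the multicurve underlying \(\DD^c_1\) has a differential with a label that is not among \(D,S,S^2\). By \cref{def:functor-to-complex} and the definition of formal curves, such a label is a standard basis element \(D\cdot\varG^k\), \(S\cdot\varG^k\) or \(S^2\cdot\varG^k\) with \(k\geq1\). The complex \(C(\gamma)\) is a direct summand of \(\DD^c_1\), so \cref{lem:H_is_nullhomotopic_on_Khr} gives \(\varG\cdot 1_{C(\gamma)}\simeq 0\). The aim is therefore to show that \(\varG\cdot1_{C(\gamma)}\not\simeq0\) whenever some edge of \(\gamma\) carries a factor \(\varG^k\) with \(k\geq1\).

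To prove this I would argue directly with the graph representation, in the same way as the proof of \cref{lem:curve_based_detection}. First rewrite \(\varG\cdot 1_{C(\gamma)}\) in the standard basis, using \(\varG=S^2-D\) idempotent by idempotent. Then consider a general homotopy \(h\), a sum of arrows between generators of \(C(\gamma)\) labelled by standard basis elements. Choose a generator \(x\) at the end of an edge \(e\) whose label \(a\cdot\varG^k\) has maximal exponent \(k\geq1\) among all edges. The component of \(\varG\cdot 1\) at \(x\) is \(x\otimes(S^2-D)\). Since the differential has a single \(D\)-type and a single \(S\)-type edge at each vertex, the only way \(d\circ h+h\circ d\) can produce terms \(x\to x\) of \(\varG\)-degree zero is through edges of \(\varG\)-degree zero composed with components of \(h\) of \(\varG\)-degree zero. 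Composites involving \(e\) pick up an extra factor \(\varG^{k}\).

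The argument then becomes bookkeeping: track the coefficients of \(x\to x\otimes D\) and \(x\to x\otimes S^2\). Because a product of a \(D\) and an \(S\) is zero in \(\B\), only one neighbour of \(x\) can cancel the \(D\) term and only the other can cancel the \(S^2\) term. The edge \(e\) is divisible by \(\varG\), so at least one of these two terms cannot be cancelled, and we get a contradiction. If \(x\) is attached to only one edge through a non-compact endpoint, the argument is easier still. A cleaner alternative is to compute \(\HF(\gamma,\gamma)\) via \cref{thm:classification-morphisms} and identify the class of \(\varG\cdot1\) there. The first step I would attempt, though, is the direct homotopy analysis, using an inductive propagation along the curve like the one in \cref{lem:curve_based_detection}.

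The main obstacle is this propagation. A homotopy can move terms along the whole curve, and for compact curves it can wrap around the curve and pick up the local system. The key point is that every propagation step preserves \(\varG\)-degree, while closing up across \(e\) forces a factor \(\varG^k\) with \(k\geq1\). That mismatch is what leaves a residual term which cannot be cancelled. Keeping track of signs and local-system matrices, as in \cref{lem:curve_based_detection}, should complete the proof.
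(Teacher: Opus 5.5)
Your proposal is correct, and it takes a genuinely different route from the paper.

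\textbf{The paper's route.} The paper argues constructively. It reduces to \(\DD=C(\gamma)\) and uses the Clean-Up Lemma inside \(\Cone{\varG\cdot 1_{C(\gamma)}}\), along the diagonal arrows \(-a\cdot\varG^{k-1}\), to delete every pair of arrows \(\pm a\cdot\varG^k\) with \(k\geq1\). It then builds a simply-faced precurve for the result by hand, because the general simply-faced algorithm does not preserve the property. Finally it notes that arrow pushing only modifies the curve near the two arcs. \cref{lem:H_is_nullhomotopic_on_Khr} is never used.

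\textbf{Your route.} You split \(\DD^c_1\) into its curve summands, get \(\varG\cdot 1_{C(\gamma)}\simeq 0\) from \cref{lem:H_is_nullhomotopic_on_Khr}, and obstruct this at a single vertex.

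\textbf{Your third paragraph already completes the proof.} The propagation you call the main obstacle is not needed, and neither is the choice of maximal \(k\) or the detour through \(\HF(\gamma,\gamma)\). At an endpoint \(x\) of the offending edge \(e\), the \(x\to x\) component of \(\delta h\pm h\delta\) consists of one product with the \(D\)-type edge at \(x\) and one with the \(S\)-type edge.
\begin{itemize}
\item Products with a label \(a\cdot\varG^k\), \(k\geq1\), lie in the span of the \(D^n\) with \(n\geq2\) or of the \(S^n\) with \(n\geq3\).
\item Products with an \(S\)-type label never contain \(D\), and products with a \(D\)-type label never contain \(S^2\).
\end{itemize}
So if \(e\) is \(D\)-type, the coefficient \(-1\) of \(D\) in \(\varG\iota_{F(x)}\) cannot be matched. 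If \(e\) is \(S\)-type, the coefficient \(+1\) of \(S^2\) cannot be matched.

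State it this way rather than via divisibility by \(\varG\). The element \(\varG\iota\) is itself divisible by \(\varG\) yet has non-zero \(D\)- and \(S^2\)-coefficients, so divisibility alone is not the reason. With a local system the coefficients simply become matrices: \(\pm1\) times the identity on one side versus \(0\) on the other.

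\textbf{What each route buys.} Yours is much shorter and avoids precurves and the arrow-pushing algorithm entirely. It also shows, in passing, that \(\DD^c_1\) has no non-compact components, since a vertex of valence at most one cannot produce both the \(D\)- and the \(S^2\)-term. The paper's route does not need the null-homotopy lemma and gives an explicit simplification of the cone itself rather than an obstruction.
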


\begin{proof}
	Let \(\DD^c\simeq \DD\) correspond to a multicurve. 
	Then
	\[
	\DD^c_1
	\simeq 
	\Cone{\varG\cdot 1_{\DD}}
	\simeq
	\Cone{\varG\cdot 1_{\DD^c}}
	\]
	since the action by 
	the central element \(\varG\) of \(\B\) 
	commutes with morphisms in \(\C^\B\),
	as described in \cref{sub:typeD}. 
	So, without loss of generality, 
	we may assume that \(\DD\) itself corresponds 
	to a multicurve and that,
	moreover, 
	this multicurve consists 
	of a single component \(\gamma\). 
	
	Consider \(\DD_1=\Cone{\varG\cdot 1_{\DD}}\). 
	We first assume that the local system on \(\gamma\) 
	is 1-dimensional and equal to \((1)\in\GL_1(\field)\).
	If the differential of \(\DD\) contains any component 
	\(a\cdot \varG^k\) for \(a\in\{D,S,S^2\}\) and \(k\geq1\), 
	then the corresponding portions of the mapping cone 
	look as follows (without the dotted arrows)
	\[
	\begin{tikzcd}
		h^{-1}q^{-1}\DD
		\arrow{d}[swap]{\varG\cdot 1_{\DD}}
		&
		\cdots
		\arrow[leftrightarrow,dashed]{r}{}
		&
		\halfbullet[left]
		\arrow{r}{-a\cdot \varG^k}
		\arrow{d}[swap]{\varG}
		&
		\halfbullet[right]
		\arrow[leftrightarrow,dashed]{r}{}
		\arrow{d}{\varG}
		&
		\cdots
		\\
		h^{0}q^{1}\DD
		&
		\cdots
		\arrow[leftrightarrow,dashed]{r}{}
		&
		\halfbullet[left]
		\arrow{r}{a\cdot \varG^k}
		\arrow[dotted]{ru}[description]{-a\cdot\varG^{k-1}}
		&
		\halfbullet[right]
		\arrow[leftrightarrow,dashed]{r}{}
		&
		\cdots
	\end{tikzcd}
	\] 	where \(\halfbullet[left],\halfbullet[right]\in\{\bullet,\circ\}\).
	(The two pairs of dashed arrows 
	on the left and the right
	need not be there; 
	if a pair of arrows exists, 
	the arrows are labelled by elements 
	that compose with \(a\) to 0.) 
	By applying the Clean-Up Lemma (\cref{lem:clean-up:improved})
	to the dotted arrows, 
	we see that \(\DD_1\) is isomorphic 
	to the type D structure obtained 
	by a local change of the above to the following: 
	\[
	\begin{tikzcd}
		h^{-1}q^{-1}\DD
		\arrow{d}[swap]{\varG\cdot 1_{\DD}}
		&
		\cdots
		\arrow[leftrightarrow,dashed]{r}{}
		&
		\halfbullet[left]
		\arrow{d}{\varG}
		&
		\halfbullet[right]
		\arrow[leftrightarrow,dashed]{r}{}
		\arrow{d}[swap]{\varG}
		&
		\cdots
		\\
		h^{0}q^{1}\DD
		&
		\cdots
		\arrow[leftrightarrow,dashed]{r}{}
		&
		\halfbullet[left]
		&
		\halfbullet[right]
		\arrow[leftrightarrow,dashed]{r}{}
		&
		\cdots
	\end{tikzcd}
	\]
	Denote by \(\DD'_1\) the type~D structure 
	obtained from \(\DD_1\) by applying the above procedure 
	to all such components of the differential of \(\DD\).
	The differential of \(\DD'_1\) 
	has the special property that all its components
	are linear combinations of \(D,S,S^2\in\B\). 
	It remains to see that this property is preserved 
	under the algorithm that turns \(\DD'_1\) into \(\DD_1^c\). 
	The arrow-pushing algorithm from~\cite{HRW} 
	that is applied for this purpose 
	in~\cite[Section~5]{KWZ} 
	only modifies the curve in the neighbourhoods of the arcs 
	corresponding to the two objects \(\bullet\) and \(\circ\), 
	and thus preserves the property. 
	Therefore, it suffices to see that the simply-faced precurve 
	associated with \(\DD'_1\)
	also has this special property. 
	
	As a brief reminder, a precurve is 
	the auxiliary object introduced in~\cite[Section~5.3]{KWZ} 
	to intermediate between type D structures and multicurves.
	Essentially, precurves are type D structures
	with two particular choices of bases \(B_D\) and \(B_S\).
	A precurve is called fully cancelled if 
	the differential does not contain any identity components. 
	In this case, the differential can be written as a sum \(d_D+d_S\), 
	where \(d_D\) and \(d_S\) consist of those components 
	labelled by linear combinations of non-trivial powers 
	of \(D\) and \(S\), respectively. 
	A precurve is simply-faced if it is fully cancelled and
	any basis element of \(B_D\) is connected to 
	at most one other basis element via the differential \(d_D\), 
	and the same for \(B_S\) and \(d_S\). 
	(Readers familiar with knot Floer homology may recognize 
	\(B_D\) and \(B_S\) as the analogues 
	of vertically and horizontally simplified bases.)
	
	The algorithm in the proof of \cite[Proposition 5.10]{KWZ} 
	unfortunately does not, in general, preserve the property 
	that the components of the differential
	are linear combinations of \(D,S,S^2\in\B\). 
	For example, here are the same type~D structures written with respect to two different bases \(B_S\):
	\[
	\Bigg[
	\begin{tikzcd}
		\bullet
		\arrow{r}{S^2}
		&
		\bullet
		\arrow[leftarrow]{r}{S}
		&
		\circ
		\arrow{r}{S^2}
		&
		\circ
	\end{tikzcd}
	\Bigg]
	\cong
	\Bigg[
	\begin{tikzcd}
		\bullet
		\arrow[bend right=10,swap]{rrr}{S^3}
		&
		\bullet
		\arrow[leftarrow]{r}{S}
		&
		\circ
		&
		\circ
	\end{tikzcd}
	\Bigg]
	\] 
	However, for the type D structure \(\DD'_1\), 
	one can easily construct a corresponding simply-faced precurve by hand 
	and verify that it has the desired property: 
	First, observe that the type D structure \(\DD'_1\) can be built out of the following three pieces (without the dotted arrows)
	\[
	\begin{gathered}
		\begin{tikzcd}[row sep=35pt,column sep=45pt]
			\cdots
			\arrow[leftrightarrow,dashed]{r}{-S^a}
			&
			\halfbullet[left]
			\arrow{r}{-D}
			\arrow[bend left=15]{d}{-D}
			\arrow[bend right=15]{d}[swap]{S^2}
			&
			\halfbullet[left]
			\arrow[leftrightarrow,dashed]{r}{-S^b}
			\arrow[bend right=15]{d}[swap]{-D}
			\arrow[bend left=15]{d}{S^2}
			&
			\cdots
			\\
			\cdots
			\arrow[leftrightarrow,dashed]{r}{S^a}
			&
			\halfbullet[left]
			\arrow{r}{D}
			\arrow[dotted,leftarrow]{ru}[description]{-1}
			&
			\halfbullet[left]
			\arrow[leftrightarrow,dashed]{r}{S^b}
			&
			\cdots
		\end{tikzcd}
		\quad
		\begin{tikzcd}[row sep=35pt,column sep=45pt]
			\cdots
			\arrow[leftrightarrow,dashed]{r}{-D}
			&
			\halfbullet[left]
			\arrow{r}{-S^2}
			\arrow[bend left=15]{d}{S^2}
			\arrow[bend right=15]{d}[swap]{-D}
			&
			\halfbullet[left]
			\arrow[leftrightarrow,dashed]{r}{-D}
			\arrow[bend right=15]{d}[swap]{S^2}
			\arrow[bend left=15]{d}{-D}
			&
			\cdots
			\\
			\cdots
			\arrow[leftrightarrow,dashed]{r}{D}
			&
			\halfbullet[left]
			\arrow{r}{S^2}
			\arrow[dotted,leftarrow]{ru}[description]{1}
			&
			\halfbullet[left]
			\arrow[leftrightarrow,dashed]{r}{D}
			&
			\cdots
		\end{tikzcd}
		\\
		\begin{tikzcd}[row sep=35pt,column sep=45pt]
			\cdots
			\arrow[leftrightarrow,dashed]{r}{-D}
			&
			\halfbullet[left]
			\arrow{r}{-S}
			\arrow[bend left=15]{d}{S^2}
			\arrow[bend right=15]{d}[swap]{-D}
			&
			\halfbullet[right]
			\arrow[leftrightarrow,dashed]{r}{-D}
			\arrow[bend right=15]{d}[swap]{S^2}
			\arrow[bend left=15]{d}{-D}
			&
			\cdots
			\\
			\cdots
			\arrow[leftrightarrow,dashed]{r}{D}
			&
			\halfbullet[left]
			\arrow{r}{S}
			\arrow[leftarrow,dotted]{ru}[description]{S}
			&
			\halfbullet[right]
			\arrow[leftrightarrow,dashed]{r}{D}
			&
			\cdots
		\end{tikzcd}
	\end{gathered}
	\] 
	where \(\halfbullet[left],\halfbullet[right]\in\{\bullet,\circ\}\) 
	and \(a,b\in\{1,2\}\). 
	(As above, the two pairs of dashed arrows 
	on the left and the right
	need not be present.) 
	In the case of the third piece, 
	we can apply the Clean-Up Lemma to the dotted arrow 
	to remove the vertical components \(S^2\) of the differential. 
	In the case of the first and second piece, 
	we perform a base change along the dotted arrow, 
	thereby simultaneously adding a crossover arrow to the precurve and 
	removing the vertical arrows \(D\) and \(S^2\), respectively. 
	The result is a simply-faced precurve, 
	and its differential only consists of components 
	that are linear combinations of \(D,S,S^2\in\B\). 
	
	It remains to adapt the above argument to the case 
	that \(\gamma\) carries a local system.
	We represent the local system by a matrix 
	\(X\in\GL_n(\field)\), where \(n\geq1\). 
	If the differential of \(\DD\) contains any component 
	\(a\cdot \varG^k\cdot 1_{\field^n}\) for \(a\in\{D,S,S^2\}\) and \(k\geq1\), 
	we place \(X\) on such a component. 
	In this case \(\DD'_1\) is equal to the
	direct sum of \(n\) identical type~D structures and 
	we can argue as before. 
	If \(\DD_1=\DD'_1\) it can be built from the following four pieces:
	The first three pieces are obtained from the three pieces above by 
	tensoring them by \(\field^n\).  
	The fourth piece is identical to one of the previous three pieces, 
	except that \(1_{\field^n}\) on the horizontal arrows is replaced by \(X\). 
	In this case, we can apply the Clean-Up Lemma and 
	base changes along the same dotted arrows,  
	after tensoring their labels by \(1_{\field^n}\) or \(X\).
\end{proof}

\begin{figure}[t]
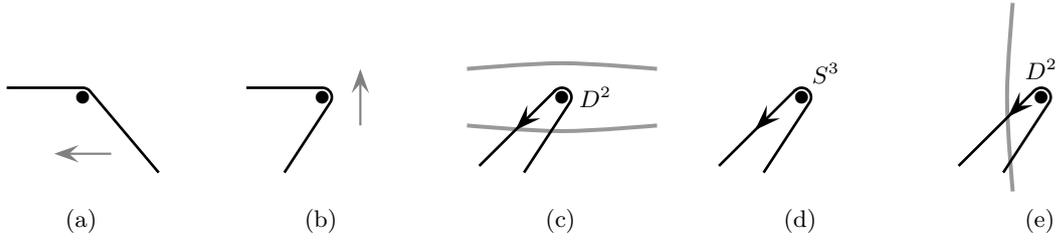

	\centering
	\begin{subfigure}{0.19\textwidth}
		\centering
		\(\WindingNonSpecialA\)
		\caption{}\label{fig:almost_no_wrapping:a}
	\end{subfigure}
	\begin{subfigure}{0.19\textwidth}
		\centering
		\(\WindingNonSpecialB\)
		\caption{}\label{fig:almost_no_wrapping:b}
	\end{subfigure}
	\begin{subfigure}{0.19\textwidth}
		\centering
		\(\WindingNonSpecialC\)
		\caption{}\label{fig:almost_no_wrapping:c}
	\end{subfigure}
	\begin{subfigure}{0.19\textwidth}
		\centering
		\(\WindingNonSpecialD\)
		\caption{}\label{fig:almost_no_wrapping:d}
	\end{subfigure}
	\begin{subfigure}{0.19\textwidth}
		\centering
		\(\WindingNonSpecialE\)
		\caption{}\label{fig:almost_no_wrapping:e}
	\end{subfigure}
	\caption{%
		An illustration for the proof of 
		\cref{prop:almost_no_wrapping}. 
		The arrows in (a) and (b) indicate the directions 
		of the two shearing transformations to get 
		from (a) to (b), and 
		from (b) to (c), (d), or (e).
	}\label{fig:almost_no_wrapping}
\end{figure}

\begin{proposition}\label{prop:almost_no_wrapping}
	A curve that wraps around a non-special puncture 
	cannot be a component of \(\Khr(T)\) 
	for any pointed Conway tangle \(T\). 
\end{proposition}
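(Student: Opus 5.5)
The plan is to reduce the statement to \cref{lem:higher_powers_when_wrapping} using the mapping class group action. Suppose for contradiction that \(\gamma\) is a component of \(\Khr(T)\) that wraps around a non-special puncture, say (after relabelling) at a lift of \(\mathtt{e}_j\) in \(\PuncturedPlane\). By \cref{cor:twisting:field-coeff:ungraded} and \cref{rmk:twisting:field-coeff:ungraded}, for every \(\tau\in\Mod(\FourPuncturedSphere,*)\) the curve \(\tau(\gamma)\) is a component of \(\Khr(\tau(T))\). Wrapping around a non-special puncture is preserved by \(\tau\), since \(\tau\) permutes \(\EndsThree\) and fixes \(*\). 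We may therefore replace \((T,\gamma)\) by \((\tau(T),\tau(\gamma))\) for a suitably chosen \(\tau\). By \cref{thm:no_wrapping_around_special}, we may also use that \(\gamma\) does not wrap around \(*\).

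Next, I would choose \(\tau\) so that the local picture of the \(\varepsilon\)-peg-board representative near the wrapping point is in a normal form; this is the shearing procedure in \cref{fig:almost_no_wrapping}. Lift to \(\PuncturedPlane\) and consider the two straight segments of \(\tilde\gamma\) entering and leaving the peg at the lift of \(\mathtt{e}_j\). Their slopes differ, because the direction changes by an angle bounded below.

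Using \(\psi\) from \cref{lem:MCG:PSL}, I would first shear so that the incoming segment becomes horizontal or nearly so (from (a) to (b) in the figure). A second shear along the other axis then puts the outgoing segment into one of the three configurations (c), (d), (e). In each configuration the turn happens at a puncture lying at a corner of the square fundamental domain adjacent to the arcs \(a_\bullet, a_\circ\). The preliminary reduction \(\tau_{p/q}\) lets us also assume that the wrapped puncture is \(\mathtt{e}_1\) or \(\mathtt{e}_3\), which is where \(\Db\) and \(\Dw\) live.

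Then I read off the formal curve of the normalized \(\gamma\) near the wrap. A curve turning around \(\mathtt{e}_1\) or \(\mathtt{e}_3\) by at least a fixed angle, without being pinned at \(*\), crosses the arcs \(a_\bullet\) and \(a_\circ\) so that consecutive intersection points are joined by a path in \(\mathcal{Q}\) going more than once around the loop \(D\) together with the \(S\)-edges. Since \(\varG = S^2 - D\), in the standard basis this yields an edge labelled \(\varG^k D\), \(\varG^k S\) or \(\varG^k SS\) with \(k\geq 1\). Equivalently, it yields a label that is not one of \(D, S, S^2\). In the \(\CoeffFieldTwo\) version this is exactly how wrapping around \(\mathtt{e}_3\) shows up, e.g.\ in \(\BNr(P_{2,-3})\).

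However, \(\DD^c_1(\tau(T))\) is a multicurve representative of \(\DD_1(\tau(T))=\Cone{\varG\cdot1_{\DD(\tau(T))}}\). By \cref{lem:higher_powers_when_wrapping}, its differential only involves \(D,S,S^2\). Since the formal curve, and hence its labels, is determined by the curve up to equivalence (\cref{prop:functors-curves}), this is a contradiction.

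The main obstacle is the middle step: checking carefully, in the three cases (c)--(e), that a genuine wrap at a non-special puncture forces a \(\varG\)-power label. This must hold rather than, say, the arrow being recorded as a plain \(D\), or as a crossing that the arrow-pushing normal form absorbs differently. I would first handle case (c) by explicitly drawing the arcs \(a_\bullet, a_\circ\) in the covering and counting how many times the turning segment crosses them between two \(S\)-type crossings. I would then use the symmetry \(\rho\) (\cref{prop:rho*Khr}) to swap \(\bullet\) and \(\circ\) and cover the remaining configurations.
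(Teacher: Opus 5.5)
Your architecture is the paper's. You normalize the wrap with \cref{cor:twisting:field-coeff:ungraded}, exhibit a label \(D^n\) with \(n>1\) or \(S^m\) with \(m>2\) (equivalently a \(\varG\)-multiple such as \(D^2=-\varG D\) or \(S^3=\varG S\)), and contradict \cref{lem:higher_powers_when_wrapping}. Your appeal to \cref{thm:no_wrapping_around_special} is harmless but not needed. The content lies in the middle step, which you flag yourself, and your sketch goes wrong there in two concrete ways.

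First, a turn by some positive angle is invisible in the labels. Between two consecutive crossings of \(a_\bullet\cup a_\circ\), the exponent on an edge only records how many lifts of \(*\) the segment sweeps past, as seen from the wrapped puncture. The example you cite shows the opposite of what you claim: \(\BNr(P_{2,-3})\) wraps around \(\Endiii\), yet its complex in \cref{exa:Khr:2m3pt} has only the labels \(D,S,S^2\). This is why the paper normalizes precisely:
\begin{itemize}
\item make the incoming slope \(0\);
\item apply \(\left[\begin{smallmatrix}1&n\\0&1\end{smallmatrix}\right]\) so the turn exceeds \(90^\circ\);
\item apply \(\left[\begin{smallmatrix}1&0\\1&1\end{smallmatrix}\right]\) so the incoming slope is \(1\) and the turn exceeds \(135^\circ\).
\end{itemize}
The \(135^\circ\) threshold is what forces the segment to pass two \(*\)-directions at \(\Endi\) or \(\Endiii\), giving label \(D^2\), and three \(*\)-corners at \(\Endii\), giving label \(S^3\).

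Second, reducing to \(\Endi\) or \(\Endiii\) and then using \(\rho\) cannot work. Whether the wrapped puncture is collinear with lifts of \(*\) along the incoming direction is invariant under the mapping class group. At normal slope \(1\) this is exactly the puncture \(\Endii\). Both the stabilizer of slope \(1\), generated by \(\TwistB^{-1}\TwistW\TwistB\), and \(\rho\) act on \(\{\Endi,\Endii,\Endiii\}\) as the transposition \((1\,3)\), so both fix \(\Endii\). Hence a wrap landing at \(\Endii\) after your second shear cannot be moved to \(\Endi\) or \(\Endiii\), and no symmetry relates it to the \(D\)-cases. It must be checked on its own, and there the forbidden label is \(S^3=\varG S\), not a power of \(D\).

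Stopping at slope \(0\) does not rescue the reduction. There \(\TwistW\) does trade \(\Endii\) for \(\Endi\), but the unremovable puncture becomes \(\Endiii\). With a horizontal incoming segment and a turn between \(90^\circ\) and \(180^\circ\), a wrap at \(\Endiii\) passes only one lift of \(*\) and yields a plain \(\Dw\), so no contradiction arises. This is why the paper treats the three configurations (c), (d), (e) separately at slope \(1\).
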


\begin{proof}
	Suppose a curve changes its direction at a non-special puncture. 
	Then, by \cref{cor:twisting:field-coeff:ungraded}, 
	we can assume that for sufficiently small \(\varepsilon>0\), 
	the \(\varepsilon\)-peg board representative 
	has a linear curve segment of slope 0 
	adjacent to the non-special puncture, 
	as shown in \cref{fig:almost_no_wrapping:a}. 
	Another application of \cref{cor:twisting:field-coeff:ungraded} shows that, 
	by adding twists corresponding to
	\[
	\begin{bmatrix}
		1 & n \\
		0 & 1
	\end{bmatrix}\in\PSL_2(\Z)
	\]
	for sufficiently large \(n\), 
	the curve can be made to change its direction 
	by more than $90^\circ$, as shown in 
	\cref{fig:almost_no_wrapping:b}. 
	By adding one additional twist corresponding to
	\[
	\begin{bmatrix}
		1 & 0 \\
		1 & 1
	\end{bmatrix}\in\PSL_2(\Z)
	\] 
	we can make the first curve segment have slope 1 and 
	the angle of the change of direction be more than $135^\circ$.
	\cref{fig:almost_no_wrapping:c,fig:almost_no_wrapping:d,fig:almost_no_wrapping:e} 
	show the three cases depending on 
	which non-special puncture the curve wraps around. 
	In all three cases,
	the differential of the corresponding type~D structure
	either contains a component \(D^n\) for \(n>1\) 
	or a component \(S^m\) for \(m>2\). 
	This contradicts \cref{lem:higher_powers_when_wrapping}.
\end{proof}

\begin{proof}[Proof of \cref{thm:Khr:linearity}]
	By \cref{thm:no_wrapping_around_special,prop:almost_no_wrapping}, 
	no component of \(\Khr(T)\) can wrap around any puncture.
	Therefore, every component of \(\Khr(T)\) is linear.
\end{proof}

\subsection{%
	The proof of 
	\texorpdfstring{\cref{thm:geography:Khr:intro}}
	{Theorem \ref{thm:geography:Khr:intro}}
}

\Cref{thm:geography:Khr} (\cref{thm:geography:Khr:intro}) 
is immediate from \cref{thm:Khr:linearity} in combination with the following two statements. 

\begin{theorem}\label{thm:geography:special_curves}
	Every almost special component of \(\Khr(T)\) is special.
\end{theorem}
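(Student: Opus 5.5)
Throughout, \(\gamma\) denotes an almost special component of \(\Khr(T)\). By \cref{thm:Khr:linearity} it is linear, and by \cref{thm:no_wrapping_around_special} it does not wrap around \(*\). Since it is almost special, its \(\varepsilon\)-peg-board representative comes within distance \(\varepsilon\) of \(*\) for every small \(\varepsilon\), without changing direction there.

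The first step is normalisation by the mapping class group. Lift \(\gamma\) to \(\PuncturedPlane\). Because \(\gamma\) wraps around no puncture, each lift is, as \(\varepsilon\to0\), homotopic to a straight line of some rational slope \(\slope\) that passes through lifts of \(*\). By \cref{cor:twisting:field-coeff:ungraded}, applying \(\tau_{p/q}^{-1}\) we may assume \(\slope=0\). Then \(\gamma\) is a compact curve whose lift runs along a horizontal line through lattice points that are lifts of \(*\) (together with lifts of \(\Endi\)). At each such lattice point the lift passes either just above or just below the puncture.

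The second step reads off the formal curve. A curve of this shape is encoded by a finite cyclic sequence of choices "above/below" at the successive lifts of \(*\) and of the other puncture on the line. Translating into the quiver description via \cref{prop:functors-curves}, the complex \(C(\gamma)\) is a cyclic chain of generators \(\bullet\) joined by \(D\) and \(S^2\), interspersed with pairs \(\circ\) created each time the curve switches sides at a lift of \(*\). The key constraint is that \(\gamma\) must be primitive and must be a component of \(\Khr(T)\). Hence \(\varG\cdot 1_{C(\gamma)}\simeq 0\) by \cref{lem:H_is_nullhomotopic_on_Khr}, and only \(D,S,S^2\) occur by \cref{lem:higher_powers_when_wrapping}.

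The heart of the proof is to show that these constraints force exactly the pattern \(\sKh_{2n}(0)\): the curve passes on one side of \(*\) for one full period and on the other side for the return, so that it switches side exactly twice. The local system must also be the \(1\)-dimensional \((-1)\). The plan is to compute \(\Mor(C(\gamma),C(\gamma))\) explicitly for a general above/below pattern with local system \(X\), and to show that a nullhomotopy of \(\varG\cdot 1\) exists only in this case. This mirrors the argument in \cref{lem:curve_based_detection}: one builds the nullhomotopy inductively along the chain. The components \(\circ\to\bullet\) are forced step by step, and after going once around the cycle one is left with an obstruction of the form \((X+1)\) times a nonzero algebra element. For a pattern with more than two side switches, the leftover term sits at a switch point and cannot be cancelled.

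I expect this nullhomotopy computation, uniform over all patterns and all local systems \(X\in\GL_m(\field)\), to be the main obstacle. I would first reduce \(X\) to Frobenius normal form to handle the local systems, and then treat the "two switches" and "at least four switches" cases separately. The grading statement, namely that the bigrading matches \cref{def:grading} up to an overall shift, then follows because the grading of a curve is determined by the grading of one generator.
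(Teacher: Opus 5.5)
\paragraph{The gap.} Your central claim is false once there are more than two switches. The claim is that \(\varG\cdot 1_{C(\gamma)}\simeq 0\), together with linearity and the restriction to labels \(D,S,S^2\), forces the pattern \(\sKh_{2n}(0)\) with local system \(-1\).

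\paragraph{Why the nullhomotopy computation does not obstruct.} Split \(C(\gamma)\) into runs from sources to sinks. Each run is a leg \(\bullet\xrightarrow{D}\bullet\xrightarrow{S^2}\cdots\xrightarrow{D}\bullet\) of \(2m\) generators, preceded by \(\xrightarrow{S}\) or \(\xrightarrow{D_\circ}\circ\xrightarrow{S}\), and followed by \(\xrightarrow{S}\) or \(\xrightarrow{S}\circ\xrightarrow{D_\circ}\).
\begin{itemize}
\item The diagonal of \(\partial H=\varG\cdot 1\) forces \(H\) to contain the reversed arrows: label \(-1\) on \(D\)-arrows, \(+1\) on \(S^2\)-arrows, \(S\) on \(S\)-arrows.
\item The only off-diagonal errors then sit at sources and sinks. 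Each error forces ``zipper'' components joining corresponding generators of the two runs meeting there, with alternating signs \(\pm1\).
\item Suppose that at every source the run beginning with \(S\) is the longer one, and at every sink the run ending in \(S\) is the longer one. Then each zipper ends in a single component labelled \(S\). Its further compositions vanish because \(D_\bullet S_\circ=S_\circ D_\circ=D_\circ S_\bullet=S_\bullet D_\bullet=0\).
\end{itemize}

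\paragraph{Counterexample.} Take sources \(\sigma_1,\sigma_2\), sinks \(\tau_1,\tau_2\), and four runs, written as (start; \(m\); end):
\begin{itemize}
\item \(\sigma_1\to\tau_1\) of shape \((S;3;S)\);
\item \(\sigma_2\to\tau_1\) of shape \((S;2;S D_\circ)\);
\item \(\sigma_2\to\tau_2\) of shape \((D_\circ S;1;SD_\circ)\);
\item \(\sigma_1\to\tau_2\) of shape \((D_\circ S;2;S)\).
\end{itemize}
This is a graded, primitive formal curve with only \(D,S,S^2\) labels; it is the slope-\(0\) almost special curve with legs of \(6,4,2,4\) generators \(\bullet\). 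A direct check shows that the reversed arrows plus the four zippers give \(\partial H=\varG\cdot 1\), for every one-dimensional local system. So no analysis of \(\Mor(C(\gamma),C(\gamma))\) can produce your obstruction. The uncancellable leftover at a switch appears only when the \(S\)-run is the shorter one, which is the paper's case \(L>R\).

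\paragraph{What is missing.} You need the additional input of \cref{thm:extension}.
\begin{itemize}
\item The paper uses \cref{lem:H_is_nullhomotopic_on_Khr} only for \(L>R\): there the Clean-Up Lemma applied to the mapping cone along a shortest leg splits off a copy of \(\sKh_R(0)\), a contradiction.
\item The case \(L\le R\) is handled with the \(A_\infty\)-extension \(\DD^c_1(T)^{\BinfU}\). The disk sequence \(D_\circ,S,D,S\) at a switch contributes a \(\mu_4\)-term \(c\cdot U\) to the structure relation. It cannot be cancelled by compositions with ordinary arrows, none of which are idempotents. It therefore forces \(U\)-differentials along the shorter leg. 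These can be cancelled at the far end only if \(L=R\), \(c=-1\), and the curve closes up to \(\sKh_L(0)\); the same argument gives \(X=-1\) for higher-dimensional local systems.
\item In the example above, the \(\mu_4\)-term from \(\sigma_2\) along \(D_\circ,S,D,S\) to the end of the length-one run has nothing to cancel against.
\end{itemize}

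\paragraph{A minor geometric point.} On the horizontal line through lifts of \(*\), the other punctures are lifts of \(\Endiii\), not \(\Endi\). Side switches happen there, through the \(D_\circ\)-loop, not at lifts of \(*\).
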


The proof of \cref{thm:geography:special_curves} is given at the end of \cref{sec:no_wrapping_around_special:extension_prop}. 

\begin{theorem}\label{thm:geography:rational_curves}
  Every almost rational component of \(\Khr(T)\) is rational.
\end{theorem}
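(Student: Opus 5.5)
The plan is to reduce to slope \(0\), read the component off as an explicit type~D structure, and then use gradings and the null-homotopy of \(\varG\) (\cref{lem:H_is_nullhomotopic_on_Khr}) to force it to be \(\rKh_n(0)\).

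\emph{Reduction to slope \(0\).} Let \(\gamma\) be an almost rational component of \(\Khr(T)\). By \cref{thm:Khr:linearity}, \(\gamma\) is linear, so the lift of its \(\varepsilon\)-peg-board representative to \(\PuncturedPlane\) is a line of some rational slope \(\slope\) (compact and linear, hence periodic). It passes at distance \(\varepsilon\) only from lifts of non-special punctures, without changing direction. By \cref{cor:twisting:field-coeff:ungraded} (equivalently \cref{thm:twisting:general-coeff}) applied with \(\tau_{p/q}^{-1}\), we may assume \(\slope=0\), replacing \(T\) by \(\tau_{p/q}^{-1}(T)\). This preserves local systems and commutes with the definition of \(\rKh_n\) in \cref{def:basic_curves}.

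\emph{The slope-\(0\) structure.} In slope \(0\), a lift that avoids the \(\varepsilon\)-discs around lifts of \(*\) must run along a horizontal line of non-special punctures, i.e.\ at odd height in \cref{fig:covering:conventions}. It passes each such puncture either above or below. Translating this through the arcs \(a_\bullet,a_\circ\) (\cref{prop:functors-curves}), the associated complex has all generators in idempotent \(\iota_\bullet\). Every arrow is labelled \(D\) or \(S^2\), alternating, recording which side of consecutive punctures the curve passes on. By \cref{lem:higher_powers_when_wrapping} (choosing \(C(\gamma)\) as a summand of a multicurve representative \(\DD_1^c(T)\)), no powers of \(\varG\) occur, so the labels are exactly \(D\) and \(S^2\).

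\emph{Gradings and side changes.} The existence of a bigrading adapted to the formal curve, via \eqref{eqn:grading_on_curves:c}, forces the number of arrows oriented one way around the cycle to equal the number oriented the other way. Combining this with the alternation of \(D\) and \(S^2\), I expect that the orientation of the arrows flips exactly at the places where the lifted line switches between passing above and below a row of punctures. Primitivity of \(\gamma\) and the fact that its lift is a straight line of slope \(0\) should then force exactly two such switches. This yields the shape of \(C(\rKh_n(0))\) in \cref{def:grading}: two parallel rows of length \(n\) joined by two vertical arrows, for some \(n\geq1\), up to an overall grading shift. The curve \(\sKh_{2n}(0)\) is excluded because it has \(\circ\) generators, i.e.\ it comes close to \(*\).

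\emph{Main obstacle: the local system.} It remains to show that the local system is \((-1)\in\GL_1(\field)\). By \cref{lem:H_is_nullhomotopic_on_Khr} together with \cref{lem:nullhomotopy-splits-under-direct-sums}, \(\varG\cdot1_{C(\gamma)}\simeq0\) for \(\gamma\) carrying an arbitrary local system \(X\in\GL_m(\field)\). I expect this step to be the hardest. I would mimic the explicit null-homotopy analysis in the proof of \cref{lem:curve_based_detection}: write \(\varG=S^2-D\) on the \(\bullet\)-generators and try to build a null-homotopy inductively along the two rows. The components are forced step by step, with alternating signs. After going once around the cycle, the obstruction that remains should be a component of the form \(\bullet\xrightarrow{(X+1)\cdot a}\bullet\) with \(a\in\{D,S^2\}\), which cannot be homotoped away. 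Vanishing of the obstruction then forces \(X=-1_{\field^m}\). Since \(\gamma\) is primitive and the components of a multicurve are listed with reduced local systems, this gives \(m=1\) and \(\gamma=\rKh_n(0)\) up to bigrading shift. As a consistency check, \(\rKh_1(0)=\Cone{\varG\cdot 1_{C(\aKh(0))}}\) satisfies this condition, as the definition with local system \(-1\) requires.
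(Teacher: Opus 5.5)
\paragraph{A gap in the shape step.} The step that produces the shape of \(C(\gamma)\) does not follow from the inputs you list.

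Everything before that step is sound. After reducing to slope \(0\), \(C(\gamma)\) is a cycle of \(\bullet\)-generators whose labels alternate between \(D\) and \(S^2\). The orientation of each arrow, relative to the direction of travel, records on which side of the corresponding puncture the lift passes. The bigrading forces equally many arrows of each orientation.

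However, linearity, gradings and primitivity do \emph{not} force exactly two switches. Take the cyclic above/below pattern \((\mathrm{A},\mathrm{A},\mathrm{B},\mathrm{A},\mathrm{B},\mathrm{B})\) along a row of non-special punctures. The resulting compact curve has the following properties:
\begin{enumerate}
\item it is linear, since its \(\varepsilon\)-peg-board representative turns by only \(O(\varepsilon)\) at each peg;
\item it is almost rational;
\item it is primitive, since the pattern has minimal period \(6\);
\item it is graded, with three arrows in each direction.
\end{enumerate}
Yet its complex has two sources and two sinks, so it is not \(\rKh_3(0)\) for any local system or grading. Also, a lift that is a literal straight line passes every puncture on the same side and has no switches at all; that is the ungraded simple closed curve. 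So ``the lift is a straight line'' is not what singles out \(\rKh_n(0)\). Excluding curves like the one above needs input specific to \(\Khr\), already for the shape and not only for the local system.

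\paragraph{The missing idea.} Apply the null-homotopy of \(\varG\cdot 1_{C(\gamma)}\) from \cref{lem:H_is_nullhomotopic_on_Khr} \emph{locally}, without assuming any global shape.
\begin{enumerate}
\item The quantum grading rises by \(2\) along every arrow, so \(C(\gamma)\) has a source and a sink.
\item Choose a shortest run \(\bullet_{n+1}\to\bullet_n\to\cdots\to\bullet_1\) of identically oriented arrows from a source to a sink. By minimality, the run leaving \(\bullet_{n+1}\) in the other direction has at least \(n\) arrows.
\item Precompose \(\varG\cdot 1_{C(\gamma)}\) with the inclusion of the subcomplex \(Y\) spanned by \(\bullet_1,\dots,\bullet_n\). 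Any null-homotopy is forced component by component: first the components \(\bullet^i\to\bullet_{i+1}\), then the components \(\bullet^i\to\bullet_{2n+1-i}\).
\item Two terms remain: \(\bullet^1\to\bullet_1\), and \(\bullet^1\to\bullet_{2n+1}\), which carries the local system \(X\). They cancel only if \(\bullet_{2n+1}=\bullet_1\), \(\bullet_{2n}=\bullet_0\), and \(X=-1\).
\end{enumerate}
This single computation yields both the shape (two runs of length exactly \(n\)) and the local system. Your proposed obstruction \((X+1)\cdot a\) is essentially its final step. As written, though, you set it up ``along the two rows'', which presupposes the shape that your earlier step does not establish.
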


\begin{proof}%
By \cref{cor:twisting:field-coeff:ungraded} we may assume, without loss of generality, 
that the slope of the almost rational component \(\gamma\) is 0. Therefore, the type~D structure \(C(\gamma)\) only contains generators 
in the idempotent \(\Ib\) and each component of the differential is a linear combination of \(\Db\) and \(S^2_\bullet\). 
It suffices to show that \(C(\gamma)\cong q^rh^sC(\rKh_n(0))\) for some $n\geq 1$ and \(r,s\in\Z\).

	Assume first that the local system is 1-dimensional so the type D structure $C(\gamma)$ may be represented as a graph with two-valent vertices $\bullet$ and edges whose labels alternate between non-zero multiples of $D$ and $S^2$. 
	In the following, we will use super- and subscripts to distinguish vertices of this graph. 
	Every edge 
	\(\!\!
	\begin{tikzcd}[column sep=12pt]
	\bullet_1 
	\arrow[r]
	& 
	\bullet_2
	\end{tikzcd}
	\!\!\)
	implies the quantum grading relation $q(\bullet_2)=q(\bullet_1)+2$. 
	Consequently, there has to be a pair of vertices of the form 
	\[
	\begin{tikzcd}[column sep=12pt]
	\phantom{}
	& 
	\bullet
	\arrow[l]
	\arrow[r]
	&
	\phantom{}
	\end{tikzcd}
	\quad
	\text{and}
	\quad
	\begin{tikzcd}[column sep=12pt]
	\phantom{}
	& 
	\bullet
	\arrow[leftarrow,l]
	\arrow[leftarrow,r]
	&
	\phantom{}
	\end{tikzcd}
	\] 
	Choose a shortest sequence of identically oriented consecutive arrows that connect two such vertices:
	\[
	\begin{tikzcd}[column sep=12pt, row sep =1.3cm]
	\bullet_0 
	\arrow[r]
	& 
	\bullet_1   
	& 
	\bullet_2  
	\arrow[l]
	& 
	\cdots  
	\arrow[l] 
	&  
	\bullet_{n} 
	\arrow[l]  
	&  
	\bullet_{n+1} 
	\arrow[l] 
	\arrow[r] 
	&  
	\bullet_{n+2}
	\end{tikzcd}
	\]
	First assume that the arrow 
	\begin{tikzcd}[column sep=12pt]
	\bullet_1 
	& 
	\bullet_2
	\arrow[l]
	\end{tikzcd}
	is labelled by \(D\).   
	By the minimality hypothesis, the generator $\bullet_{n+1}$ is followed by (at least) $n$ arrows pointing to the right: 
	\[
	\begin{tikzcd}[column sep=12pt, row sep =1.3cm]
	\bullet_0 
	\arrow[r]
	& 
	\bullet_1   
	& 
	\bullet_2  
	\arrow{l}[swap]{D}
	& 
	\cdots  
	\arrow[l] 
	&  
	\bullet_{n} 
	\arrow[l]  
	&  
	\bullet_{n+1} 
	\arrow[l] 
	\arrow[r] 
	&  
	\bullet_{n+2} 
	\arrow[r] 
	&  
	\cdots 
	\arrow[r] 
	& 
	\bullet_{2n}
	\arrow{r}{}
	& 
	\bullet_{2n+1}
	\end{tikzcd}
	\]
	Up to isomorphism, we may assume that all arrows except the last are labelled by \(D\) or \(S^2\).
	Let the last arrow be labelled by \(c\cdot S^2\) for some non-zero element \(c\in\field\). 
	We claim that $\bullet_{2n}=\bullet_{0}$ and $\bullet_{2n+1}=\bullet_{1}$. 
	To show this, let $Y$ be the type D structure defined by the full subgraph consisting of the vertices \(\bullet_{i}\) for \(i=1,\dots,n\), which we relabel \(\bullet^{i}\):
	\[
	Y=
	\left[
	\begin{tikzcd}[column sep=12pt, row sep =1.3cm]
	\bullet^1   
	& 
	\bullet^2  
	\arrow{l}[swap]{D}
	& 
	\bullet^3 
	\arrow[l]   
	& 
	\cdots  
	\arrow[l] 
	&  
	\bullet^{n} 
	\arrow[l]
	\end{tikzcd}
	\right]
	\]
	There is an obvious inclusion map 
	$f\co Y\hookrightarrow C(\gamma)$ 
	given by 
	\(f(\bullet^i)=\bullet_i \otimes 1\). 
	By \cref{lem:H_is_nullhomotopic_on_Khr},
	\(\varG\cdot 1_{C(\gamma)}\) is null-homotopic, 
	and hence so too is the morphism 
	$$
	f_{\varG}
	= 
	(\varG\cdot 1_{C(\gamma)} \circ f)
	\in
	\Mor(Y,C(\gamma)),
	\quad
	f_{\varG} (\bullet^i)
	=
	\bullet_i \otimes \varG
	$$
	Any null-homotopy for \(f_{\varG}\) necessarily contains components 
	\(\!\!
	\begin{tikzcd}[column sep=25pt]
	\bullet^i 
	\arrow[dashed]{r}{(-1)^{i}}
	& 
	\bullet_{i+1}
	\end{tikzcd}
	\!\!\)
	for \(i=1,\dots, n-1\):
	\[
	\begin{tikzcd}[column sep=20pt, row sep=25pt]
	&
	\bullet^1 
	\arrow[d,"\varG"]
	\arrow[rd, dashed]
	&
	\bullet^2 
	\arrow[d,"\varG"]
	\arrow[rd, dashed]  
	\arrow{l}[swap]{D}
	&
	\cdots  
	\arrow[l]
	\arrow[d,"\varG"]
	\arrow[rd, dashed ]
	&
	\bullet^{n} 
	\arrow[l] 
	\arrow[d,"\varG"]   
	\\
	\bullet_0 
	\arrow[r]
	&
	\bullet_1   
	&
	\bullet_2  
	\arrow{l}[swap]{D} 
	&
	\cdots  
	\arrow[l] 
	&
	\bullet_{n} 
	\arrow[l]  
	&
	\bullet_{n+1} 
	\arrow[l] 
	\arrow[r] 
	&
	\bullet_{n+2} 
	\arrow[r] 
	&
	\cdots 
	\arrow[r] 
	&
	\bullet_{2n}
	\arrow{r}{c\cdot S^2} 
	&
	\bullet_{2n+1}
	\end{tikzcd}
	\]
	If \(h_1\) denotes the morphism given by all dashed arrows above, the morphism $f_{\varG}-d_{C(\gamma)}\circ h_1 - h_1 \circ d_Y$ consists of the two solid vertical arrows below
	\[
	\begin{tikzcd}[column sep=20pt, row sep=25pt]
	&
	\bullet^1 
	\arrow[d,"S^2" swap] 
	\arrow[%
	dashed,
	rounded corners,
	to path={%
		-- ([yshift=-2ex,xshift=2ex]\tikztostart.south east) 
		-| (\tikztotarget)}
	]{drrrrrrr}
	&
	\bullet^2  
	\arrow{l}[swap]{D}
	\arrow[%
	dashed,
	rounded corners,
	to path={%
		-- ([yshift=-1.5ex,xshift=1.5ex]\tikztostart.south east) 
		-| (\tikztotarget)}
	]{drrrrr}
	&
	\cdots  
	\arrow[l] 
	\arrow[%
	dashed,
	rounded corners,
	to path={%
		-- ([yshift=-1ex,xshift=1ex]\tikztostart.south east) 
		-| (\tikztotarget)}
	]{drrr}
	&
	\bullet^{n} 
	\arrow[l] 
	\arrow[d,"a" left,pos=0.7] 
	\arrow[dr, dashed,looseness=0.2, in=135,out=-45] 
	\\
	\bullet_0 
	\arrow[r]
	&
	\bullet_1   
	&
	\bullet_2  
	\arrow{l}[swap]{D}  
	&
	\cdots  
	\arrow[l] 
	&
	\bullet_{n} 
	\arrow[l]  
	&
	\bullet_{n+1} 
	\arrow[l] 
	\arrow[r] 
	&
	\bullet_{n+2} 
	\arrow[r] 
	&
	\cdots 
	\arrow[r] 
	&
	\bullet_{2n} 
	\arrow{r}{c\cdot S^2} 
	&
	\bullet_{2n+1}
	\end{tikzcd}
	\]
	where \(a=-D\) if \(n\) is odd and \(a=S^2\) if \(n\) is even. 
	In order to eliminate the component
	\(\!\! 
	\begin{tikzcd}[column sep=25pt]
	\bullet^n
	\arrow[r,"a"]
	& 
	\bullet_{n}
	\end{tikzcd}
	\!\!\),
	the null-homotopy also needs to include the components 
	\(\!\!
	\begin{tikzcd}[column sep=25pt]
	\bullet^i 
	\arrow[dashed]{r}{(-1)^{i}}
	& 
	\bullet_{2n+1-i}
	\end{tikzcd}
	\!\!\)
	for \(i=1,\dots, n\). 
	Let $h_2$ denote the sum of all these components with \(h_1\). 
	The morphism 
	$f_{\varG}-d_{C(\gamma)}\circ h_2 - h_2 \circ d_Y$ 
	consists of two arrows below:
	$$
	\begin{tikzcd}[column sep=20pt, row sep=25pt]
	&
	\bullet^1 
	\arrow[d,"S^2" left] 
	\arrow[%
	rounded corners,
	to path={%
		-- ([yshift=-10pt,xshift=10pt]\tikztostart.south east) 
		-- ([yshift=-10pt,xshift=7cm]\tikztostart.south east) 
		node [above](x) {\scriptsize\(c\cdot S^2\)}
		-| (\tikztotarget)}
	]{drrrrrrrr}
	&
	\bullet^2    
	\arrow{l}[swap]{D}
	&
	\cdots  
	\arrow[l]  
	&
	\bullet^{n} 
	\arrow[l]   
	\\
	\bullet_0 
	\arrow[r]
	&
	\bullet_1   
	&
	\bullet_2  
	\arrow{l}[swap]{D}
	&
	\cdots  
	\arrow[l] 
	&
	\bullet_{n} 
	\arrow[l]  
	&
	\bullet_{n+1} 
	\arrow[l] 
	\arrow[r] 
	&
	\bullet_{n+2} 
	\arrow[r] 
	&
	\cdots 
	\arrow[r] 
	&
	\bullet_{2n} 
	\arrow{r}{c\cdot S^2} 
	&
	\bullet_{2n+1}
	\end{tikzcd}
	$$ 
	The arrow 
	\(\!\!
	\begin{tikzcd}[column sep=25pt]
	\bullet^1 
	\arrow{r}{c\cdot S^2}
	& 
	\bullet_{2n+1}
	\end{tikzcd}
	\!\!\)
	cannot be homotoped away, since the label to the right of $\bullet_{2n+1}$ is labelled by $D$. 
	Thus the sum of the arrows
	\(\!\!
	\begin{tikzcd}[column sep=25pt]
	\bullet^1 
	\arrow{r}{c\cdot S^2}
	& 
	\bullet_{2n+1}
	\end{tikzcd}
	\!\!\)
 	and
	\(\!\!
	\begin{tikzcd}[column sep=25pt]
	\bullet^1 
	\arrow{r}{S^2}
	& 
	\bullet_{1}
	\end{tikzcd}
	\!\!\)
	must be zero. 
	In other words, \(c=-1\), \(h_2\) is a null-homotopy for \(f_{\varG}\) and \(\bullet_0=\bullet_{2n}\) and \(\bullet_1=\bullet_{2n+1}\). 
	Hence $C(\gamma)=C(\rKh_{n})$. 
	
	For indecomposable local systems of dimension \(m>1\), the above argument can be adapted as follows. 
	We replace each vertex \(\bullet_i\) of the graph representing \(C(\gamma)\) by \(\field^m\otimes\bullet_i\) and each arrow labelled \(a\in\{D,S^2\}\) by $1_{\field^m}\otimes\,a$, except for the arrow 
	\(\!\!
	\begin{tikzcd}[column sep=15pt]
	\bullet_{2n}
	\arrow{r}{}
	& 
	\bullet_{2n+1}
	\end{tikzcd}
	\!\!\)
	which we replace by
	\[
	\begin{tikzcd}[column sep=30pt]
	\field^m \otimes \bullet_{2n}
	\arrow{r}{X\otimes\,a}
	& 
	\field^m \otimes \bullet_{2n+1}
	\end{tikzcd}
	\]
	where \(X\in \GL_m(\field)\) corresponds to the local system of the curve. 
	With these changes, the above proof goes through: 
	In the end, the sum of the two arrows 
	\[
	\begin{tikzcd}[column sep=40pt]
	\field^m \otimes \bullet^1
	\arrow{r}{1_{\field^m}\otimes S^2}
	& 
	\field^m \otimes \bullet_{1}
	\end{tikzcd}
	\quad
	\text{and}
	\quad
	\begin{tikzcd}[column sep=40pt]
	\field^m \otimes \bullet^1
	\arrow{r}{X\otimes S^2}
	& 
	\field^m \otimes \bullet_{2n+1}
	\end{tikzcd}
	\]
	needs to be zero, so $X=-1_{\field^m}$.
	This contradicts the assumption that the local system was indecomposable. 
	
	If \(m\geq 1\) and the arrow 
	\begin{tikzcd}[column sep=12pt]
		\bullet_1 
		& 
		\bullet_2
		\arrow[l]
	\end{tikzcd}
	is labelled by \(S^2\), the above argument can be adapted as follows: 
	We replace \(h_1\) by \(-h_1\) and \(h_2\) by \(-h_2\). 
	Then the morphism 
	$f_{\varG}-d_{C(\gamma)}\circ h_2 - h_2 \circ d_Y$ 
	consist of two arrows 
	\(\!\!
	\begin{tikzcd}[column sep=25pt]
		\field^m \otimes\bullet^1 
		\arrow{r}{-X\otimes D}
		& 
		\field^m \otimes\bullet_{2n+1}
	\end{tikzcd}
	\!\!\)
	and
	\(\!\!
	\begin{tikzcd}[column sep=25pt]
		\field^m \otimes\bullet^1 
		\arrow{r}{-1_\field^m \otimes D}
		& 
		\field^m \otimes\bullet_{1}
	\end{tikzcd}
	\!\!\)
	whose sum must be zero. 
	So \(m=1\) and \(X=(-1)\in\GL_1(\field)\). 
\end{proof}

\section{Matrix factorizations for tangles and their delooping}
\label{sec:mfs}

Matrix factorizations were first introduced 
in the context of link homology theories 
in the works of Khovanov and Rozansky~\cite{KR_mf_I,KR_mf_II}.
Rasmussen and Ballinger used this framework
to reveal additional structure on Khovanov homology \cite{Some_diff,Ballinger}.
In our story, matrix factorizations serve a similar purpose, 
and much of what follows is inspired 
by the works of these authors. 
However, there are also important differences. 
For example, Ballinger uses the Frobenius algebra 
$\mathcal{F}_3=\CoeffRing[\varG,X]/(X^2-t)$ in the sense of \cite{Kh_frob}, 
whereas our setup corresponds to the Frobenius algebra 
$\mathcal{F}_7=\CoeffRing[\varG,X]/(X^2+\varG\cdot X)$.
We compare our setup to Rasmussen's works 
\cite{Some_diff,JakeSInvariant} 
in \cref{rem:mfs-well-defined,rmk:regions}. 

\subsection{Preliminaries on matrix factorizations}
\label{subsec:conventionsMF}

Following \cite[Section 2.1]{KR_convolutions}, 
we define the category of matrix factorizations as 
the category of \(\ZmodTwo\)-graded curved chain complexes. 
More explicitly:

\begin{definition}\label{def:mf}
	Let \(R\) be a commutative algebra over \(\CoeffRing\) and let \(w\in R\). 
	A matrix factorization over \(R\) with potential \(w\) is a pair \((M, d)\), 
	where \(M\) is a free \(R\)-module equipped with a \(\ZmodTwo\)-grading 
	and \(d\) is a degree one \(R\)-module endomorphism of \(M\) 
	satisfying \(d^2=w\cdot 1_{M}\).
\end{definition}

\begin{notation}
	In this and the following sections \(\hslash\) denotes a \(\ZmodTwo\)-grading.
\end{notation}

Note that the module $M$ in the above definition need not be finitely generated. 
Given two matrix factorizations \((M,d)\) and \((M',d')\) over \(R\) with the same potential, 
the space of morphisms from \((M,d)\) to \((M',d')\) 
is the space of \(R\)-module morphisms from \(M\) to \(M'\). 
We equip this morphism space with a differential 
defined by \(\partial(f)=d' \circ f - (-1)^{\hslash(f)} f\circ d\). 
This promotes the category of matrix factorizations 
(over $R$ and with potential $w$) to a dg category, 
which we denote by \(\MF_w(R)\). 

\begin{remark}
	We have the usual notions of homotopy equivalence for matrix factorizations: 
	Two maps are called \emph{homotopic} $f\simeq g$ if $f-g=\partial(h)$. 
	Two matrix factorizations are called \emph{homotopy equivalent} $M\simeq M'$ 
	if there exist homomorphisms $f\co M\to M'$ and $g\co M'\to M$ 
	such that $f\circ g\simeq 1_{M'}$ and $g\circ f\simeq 1_{M}$. 

\end{remark}

\begin{definition}\label{def:bifiltration}
	Suppose \(R\) carries a bigrading with \(\gr(w)=q^{-6}h^0\). 
	A bifiltration of a matrix factorization \((C,d)\) over \(R\) 
	is a bigrading on \(C\) such that 
	\(d=\sum_{i=0}^\infty d_i\), where \(\gr(d_i)=\hslash^1q^{3(i-1)}h^i\).
	A bifiltered matrix factorization is a matrix factorization together with a bifiltration.
	A morphism \(f\) between two bifiltered matrix factorizations is a morphism of the underlying matrix factorizations.
\end{definition}

\begin{definition}\label{def:shifted-mfs}
	For \((M,d)\in\MF_w(R)\)
	we define
	\[
	\hslash^ih^j(M,d)
	=
	(\hslash^ih^j M, (-1)^i d)
	\in\MF_w(R).
	\] 
	Given a homomorphism $f\co (M,d)\rightarrow (M',d')$ 
	of matrix factorizations,
	we define
	\[ 
	\Cone{f}
	=
	\left( 
	\hslash^1h^{-1} M\oplus M', 
	\left(\begin{smallmatrix}-d&0\\f&d'\end{smallmatrix}\right)
	\right)
	\in\MF_w(R).
	\]
\end{definition}

The following %
is inspired by \cite[Definition 2.6 and Proposition 2.7]{Ballinger}.

\begin{definition}
	A special deformation retract of a matrix factorization \((M,d)\) over \(R\) is a matrix factorization \((M',d')\) together with three maps
	\[
	\begin{tikzcd}
		(M,d)
		\arrow[loop left,dashed]{lr}{\varepsilon}
		\arrow[bend left=10]{r}{\rho}
		&
		(M',d')
		\arrow[bend left=10]{l}{\iota}
	\end{tikzcd}
	\]
	of gradings 
	\(\hslash(\rho)=0=\hslash(\iota)\) 
	and 
	\(\hslash(\varepsilon)=1\) 
	satisfying 
	\(\partial(\iota)=0=\partial(\rho)\), 
	\(\rho \iota=1_{M'}\), 
	\(\iota\rho= 1_{M}+\partial(\varepsilon)\), and 
	\(\varepsilon^2=0\). 
	If we further assume that 
	\((M,d)\) is bifiltered, we will ask that 
	\((M',d')\)
	is also bifiltered, that
	\(\gr(\varepsilon)=q^3\hslash^1h^0\) 
	and that 
	\(\rho=\sum_{i=0}^\infty\rho_i\) and \(\iota=\sum_{i=0}^\infty\iota_i\) 
	with
	\(\gr(\rho_i)=\gr(\iota_i)=q^{3i}\hslash^0h^i\). 
\end{definition}

We can regard special deformation retracts 
as ``local'' homotopy equivalences 
that extend to ``global'' homotopy equivalences, 
via the following result. 

\begin{lemma}\label{lem:sdr-new-from-old}
	Let \((M,d)\in \MF_w(R)\) and 
	\(
	M\cong N\oplus M'
	\)
	a direct sum of bigraded \(R\)-modules
	such that 
	\((M',d')\in   \MF_w(R)\), 
	where \(d'\) is the restriction of \(d\) to \(M'\).
	Express the other components of the differential \(d\) as follows:
	\[
	(M,d)
	=
	\left(
	\begin{tikzcd}
		N
		\arrow[bend left=10]{r}{c}
		\arrow[loop left]{lr}{a}
		&
		M'
		\arrow[bend left=10]{l}{b}
		\arrow[loop right]{lr}{d'}
	\end{tikzcd}
	\right)
	\]
	Suppose further that there is a special deformation retraction from \(M'\) to \(M''\)
	\[
	\begin{tikzcd}
		(M',d')
		\arrow[loop left,dashed]{lr}{\varepsilon}
		\arrow[bend left=10]{r}{\rho}
		&
		(M'',d'')
		\arrow[bend left=10]{l}{\iota}
	\end{tikzcd}
	\]
	Then there is a special deformation retraction from \((M,d)\) to
	\[
	\left(
	\begin{tikzcd}
		N
		\arrow[bend left=10]{r}{\rho c}
		\arrow[loop left]{lr}{a+b\varepsilon c}
		&
		M''
		\arrow[bend left=10]{l}{b\iota}
		\arrow[loop right]{lr}{d''}
	\end{tikzcd}
	\right)
	\]
	The same is true in the bifiltered setting.
\end{lemma}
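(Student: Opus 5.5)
The plan is to prove \cref{lem:sdr-new-from-old} as a terminating instance of the homological perturbation lemma. Regard
\(\left(N\oplus M',\left(\begin{smallmatrix}a&0\\0&d'\end{smallmatrix}\right)\right)\)
as the unperturbed object. Extending \(\rho,\iota,\varepsilon\) by the identity (respectively \(0\)) on \(N\) gives a special deformation retraction to \(N\oplus M''\) with differential \(a\oplus d''\). The off-diagonal part \(\delta=b+c\) is then the perturbation.

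The perturbation series contains words of the form \(\varepsilon\delta\varepsilon\cdots\). Since \(\varepsilon\) is supported on \(M'\) and \(\delta\) swaps the summands \(N\) and \(M'\), we have \(\varepsilon\delta\varepsilon=0\). So every series terminates after its first correction term, and the formulas are explicit. I propose to write down
\[
\iota'=\begin{pmatrix}1&0\\ \varepsilon c&\iota\end{pmatrix},\qquad
\rho'=\begin{pmatrix}1&b\varepsilon\\ 0&\rho\end{pmatrix},\qquad
\varepsilon'=\begin{pmatrix}0&0\\0&\varepsilon\end{pmatrix},
\]
with \(\iota'\co N\oplus M''\to N\oplus M'\) and \(\rho'\co N\oplus M'\to N\oplus M''\). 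The new differential is \(d'''=\rho' d\,\iota'\). Expanding, and using \(\rho\iota=1\), \(\varepsilon\iota=0\), \(\rho\varepsilon=0\) and \(\varepsilon^2=0\) (the side conditions below), gives exactly the matrix in the statement: \(a+b\varepsilon c\) on \(N\), \(\rho c\), \(b\iota\) and \(d''\). Signs may need adjusting according to the \(\hslash\)-parity of \(\varepsilon\) and the sign convention \(\iota\rho=1+\partial(\varepsilon)\); I would fix them so that the checks below close up, for instance \(\pm\varepsilon c\) and \(\pm b\varepsilon\).

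The key steps, in order, are as follows.
\begin{enumerate}
\item Reduce to the side conditions \(\varepsilon\iota=0\), \(\rho\varepsilon=0\), \(\varepsilon^2=0\). If the first two are not given, replace \(\varepsilon\) by the standard correction \(\varepsilon''=\partial(\varepsilon)\varepsilon\partial(\varepsilon)\)-type modification, which is routine.
\item Verify \(\rho'\iota'=1\) directly from \(\rho\iota=1\) and the side conditions.
\item Verify \(\iota'\rho'=1+\partial(\varepsilon')\) componentwise using \(\iota\rho=1+d'\varepsilon+\varepsilon d'\). The new off-diagonal pieces \(\varepsilon c\) and \(b\varepsilon\) are exactly the terms \(\varepsilon c\) and \(b\varepsilon\) appearing in \(d\varepsilon'+\varepsilon' d\).
\item Show \(\varepsilon'^2=0\).
\item Show that \(\iota'\) and \(\rho'\) commute with the differentials.
\end{enumerate}

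For step 5, the identity \(d^2=w\) gives the relations \(a^2+bc=w\), \(ab+bd'=0\), \(ca+d'c=0\) and \(cb+d'^2=w\), hence \(cb=0\) because \(d'^2=w\). Together with \(\partial(\iota)=0=\partial(\rho)\), these relations reduce the chain-map identities to the formulas above.

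Once \(\iota'\) is a chain map and \(\rho'\iota'=1\), the identity \(d'''^2=w\) follows formally: \(d'''^2=\rho' d\,\iota'\rho' d\,\iota'=\rho' d^2\iota'+\rho' d\,\partial(\varepsilon')d\,\iota'=w\), since \(d\,\partial(\varepsilon')d=d^2\varepsilon' d-d\varepsilon' d^2\) has its terms cancelling as \(d^2=w\) is central. So \(N\oplus M''\) is indeed a matrix factorization.

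For the bifiltered version, I would check that each new map is a composite of pieces with the allowed gradings. Since \(\gr(\varepsilon)=q^3\hslash^1h^0\) and the components of \(b,c\) lie in gradings \(\hslash^1q^{3(i-1)}h^i\), the products \(b\varepsilon c\), \(\varepsilon c\) and \(b\varepsilon\) have the required shapes \(q^{3(i-1)}h^i\), respectively \(q^{3i}h^i\). The main obstacle I expect is purely bookkeeping: getting the Koszul-type signs consistent with the conventions \(\partial(f)=d'f-(-1)^{\hslash(f)}fd\) and \(d^2=w\), which affects step 3 and the sign of \(b\varepsilon c\).
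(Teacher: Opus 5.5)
Your maps $\iota'$, $\rho'$, $\varepsilon'$ are exactly the ones in the paper's diagram: the identity on $N$, the diagonal components $\varepsilon c$ and $b\varepsilon$, and $\varepsilon$ on $M'$. The paper likewise just verifies the required identities directly using $cb=0$, so this is essentially the paper's proof. With the paper's conventions, where $\partial(\varepsilon')=d\varepsilon'+\varepsilon' d$, all signs come out as $+$, so no adjustment is needed. The perturbation-lemma framing is only a heuristic: since $a^2=w-bc$, the map $a\oplus d'$ does not square to $w$. Your argument never actually relies on it, so this does no harm.

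The step that does not work as written is Step~1. You are right that $\rho'\iota'=1$ needs $b\varepsilon\iota=0$ and $\rho\varepsilon c=0$, i.e.\ the side conditions $\varepsilon\iota=0=\rho\varepsilon$. The paper's definition omits these, and its ``straightforward to check'' tacitly uses them too. However, replacing $\varepsilon$ by $\partial(\varepsilon)\varepsilon\partial(\varepsilon)$ is not a reduction of the stated lemma, even though it preserves $\partial(\varepsilon)$ and $\varepsilon^2=0$. It changes the entry $a+b\varepsilon c$ of the target: for instance, if $\iota=\rho=1$ and $\partial(\varepsilon)=0$, the corrected homotopy is $0$ while $b\varepsilon c$ need not be. You would therefore be proving the lemma for a different target.

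The clean resolution is to make $\varepsilon\iota=0=\rho\varepsilon$ part of the definition, as is standard in homological perturbation theory. Every special deformation retract the paper feeds into this lemma ultimately comes from the case $M''=0$, where these conditions are automatic. Your formulas show they are inherited, since the only possibly nonzero blocks of $\varepsilon'\iota'$ and $\rho'\varepsilon'$ are $\varepsilon\iota$ and $\rho\varepsilon$. They are also what makes composites of special deformation retracts have square-zero homotopy.

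Finally, in your formal argument for $(d''')^2=w$, the term $d\,\partial(\varepsilon')\,d$ does not vanish; it equals $w\,\partial(\varepsilon')$. It drops out only after sandwiching, because $\rho'\partial(\varepsilon')\iota'=\rho'\iota'\rho'\iota'-\rho'\iota'=0$. Simpler options are available:
\begin{itemize}
\item use $\iota'(d''')^2=d^2\iota'=w\iota'$ and then apply $\rho'$; or
\item check $(d''')^2=w$ entrywise from $a^2+bc=w$, $ab+bd'=0$, $ca+d'c=0$, $cb=0$, $\partial(\iota)=0=\partial(\rho)$ and $\iota\rho=1+\partial(\varepsilon)$, which needs no side conditions at all.
\end{itemize}
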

\begin{proof}
	Observe that \(cb=0\) since \(d^2=w\cdot 1_M\) and \(d'^2=w\cdot 1_{M'}\). 
	It is now straightforward to check that the new matrix factorization is well-defined and that together with the morphisms
	\[
	\left(
	\begin{tikzcd}[row sep=40pt,column sep=60pt]
		(N,a)
		\arrow[yshift=2pt]{r}{1}
		\arrow[bend left=10]{d}{c}
		&
		(N,a+b\varepsilon c)
		\arrow[bend left=10]{d}{\rho c}
		\arrow[yshift=-2pt]{l}{1}
		\arrow[yshift=-1.4pt,xshift=1.4pt]{dl}{\varepsilon c}
		\\
		(M',d')
		\arrow[loop left,dashed]{lr}{\varepsilon}
		\arrow[bend left=10]{u}{b}
		\arrow[yshift=2pt]{r}{\rho}
		\arrow[yshift=1.4pt,xshift=-1.4pt]{ru}{b\varepsilon}
		&
		(M'',d'')
		\arrow[bend left=10]{u}{b\iota}
		\arrow[yshift=-2pt]{l}{\iota}
	\end{tikzcd}
	\right)
	\]
	it constitutes the desired special deformation retraction.
\end{proof}

Observe that if a matrix factorization \((M,d)\) is homotopy equivalent to the zero matrix factorization \(0\in \MF_w(R)\), then 0 is a special deformation retract of \((M,d)\).
Therefore, \cref{lem:sdr-new-from-old} can be regarded as a generalization of the cancellation lemma \cite[Lemma~2.16]{KWZ}.

\begin{definition}\label{def:mf:tensorproduct}
	Given two matrix factorizations \((M,d)\in\MF_w(R)\) and \((M',d')\in\MF_{w'}(R)\) for some \(w,w'\in R\), we define their tensor product
	\[
	(M,d)
	\otimes
	(M',d')
	=
	(M\otimes M',d\otimes 1+1\otimes d')
	\]
	which is an object of \(\MF_{w+w'}(R)\).
\end{definition}

Note that the Kozsul sign rule ensures that \(d\otimes 1+1\otimes d'\) is indeed a differential on \(M\otimes M'\) with curvature \(w+w'\). 

\begin{definition}\label{def:mf:Koszul}
	Given \(a,b\in R\), we define a matrix factorization  
	\[
	K(a;b)
	=
	\left[
	\begin{tikzcd}[column sep=1cm, ampersand replacement=\&]
		\hslash^1 R 
		\arrow[bend left=10]{d}[right]{a}
		\\
		\hslash^0 R
		\arrow[bend left=10]{u}[left]{b}
	\end{tikzcd}
	\right]
	\]
	over \(R\) with potential \(ab\).
	More generally, given two vectors \(\mathbf{a},\mathbf{b}\in R^n\) for some positive integer \(n\), we define a matrix factorization 
	\[
	K(\mathbf{a};\mathbf{b})
	=
	\bigotimes_{i=1}^{n}
	K(a_i;b_i)
	\]
	over \(R\) with potential \(\mathbf{a}.\mathbf{b}\).
\end{definition}

\begin{remark}
	If \(R\) carries a bigrading and \(a,b\in R\) are homogeneous 
	with \(\gr(a)=q^{i-3}h^0\) and \(\gr(b)=q^{-i-3}h^0\),
	we equip \(K(a;b)\) with the following bifiltration
	\[
	K(a;b)
	=
	\left[
	\begin{tikzcd}[column sep=1cm, ampersand replacement=\&]
		h^0q^i\hslash^1 R 
		\arrow[bend left=10]{d}[right]{a}
		\\
		h^0q^0\hslash^0 R
		\arrow[bend left=10]{u}[left]{b}
	\end{tikzcd}
	\right].
	\]
	Observe that 
	\(
	h^0\hslash^1q^{-i} K(a;b)
	\cong
	K(-b;-a)
	\).
\end{remark}

The following is a bigraded version of \cite[Theorem~2.2]{KR_convolutions}.

\begin{theorem}\label{thm:mfs-special deformation retraction}
	Let \(R\) be a free polynomial ring over \(\CoeffRing\) in finitely many variables and \(w\in R\). 
	Let \(p,q\in R[x]\) and \(n\) an integer strictly bigger 
	than the degree of the polynomial \(p=p(x)\). 
	Let \((M,d)\) be a matrix factorization over \(R[x]\) with potential \(w-(x^n-p)q\). 
	Then 
	\[
	R[x]/(x^n-p)\otimes_{R[x]} (M,d)
	\]
	is a matrix factorization over \(R\) with potential \(w\) and, 
	in fact, a special deformation retract of
	\[
	K(x^n-p;q)\otimes_{R[x]} (M,d).
	\]
	The same is true in the bifiltered setting.
\end{theorem}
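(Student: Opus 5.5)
The plan is to follow the Khovanov–Rozansky argument for \cite[Theorem~2.2]{KR_convolutions}, keeping track of the bigrading throughout. Set \(f=x^n-p\in R[x]\). Because \(n>\deg p\), \(f\) is monic in \(x\). Hence \(R[x]/(f)\) is a free \(R\)-module with basis \(1,x,\dots,x^{n-1}\). In particular \(R[x]/(f)\otimes_{R[x]}M\) is a free \(R\)-module.

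The first step is to check that its differential squares to \(w\). The differential is the reduction \(\bar d\) of \(d\), and \(\bar d^2\) is multiplication by \(w-fq\). Since \(f\) acts as \(0\) on the quotient, \(\bar d^2=w\). Note that \(w\in R\) acts \(R\)-linearly, so \(R[x]/(f)\otimes_{R[x]}M\) is a matrix factorization over \(R\) with potential \(w\). Similarly, \(K(f;q)\otimes_{R[x]}M\) has potential \(fq+(w-fq)=w\). Regarded over \(R\), it is a free \(R\)-module since \(R[x]\) is free over \(R\).

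Next I write out the two-step structure of the tensor product. As a module,
\[
K(f;q)\otimes_{R[x]}M=\hslash^1 M\oplus M,
\]
with differential
\[
\left(\begin{smallmatrix}-d & f\\ q & d\end{smallmatrix}\right),
\]
up to the Koszul signs and the precise placement of \(f\) and \(q\). The key input is the short exact sequence of \(R\)-modules
\[
0\to M\xrightarrow{f} M\to M/fM\to 0,
\]
where multiplication by \(f\) is injective because \(M\) is free over \(R[x]\) and \(f\) is a nonzerodivisor. This sequence splits \(R\)-linearly, and the splitting is made explicit by division with remainder by the monic polynomial \(f\). Every element of \(M\cong R[x]\otimes_R V\) (with \(V\) a free \(R\)-basis module) can be written uniquely as \(f\cdot u+r\), where \(r\) has \(x\)-degree less than \(n\) in each coordinate. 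This gives \(R\)-linear maps
\[
\sigma\co M/fM\to M \quad(\text{remainder representative}),\qquad \pi_f\co M\to M \quad(\text{quotient by } f).
\]

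I then construct the retraction data. Take
\[
\rho=(0,\ \mathrm{proj})\co \hslash^1M\oplus M\to M/fM,
\]
and define \(\iota\) on \(M/fM\) by \(\iota(\bar m)=(-\pi_f d\sigma(\bar m),\ \sigma(\bar m))\), or the analogous formula with signs corrected. The homotopy \(\varepsilon\) maps the \(M\)-summand to the \(\hslash^1M\)-summand via \(\pi_f\).

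The things to verify are \(\partial\iota=0\), \(\partial\rho=0\), \(\rho\iota=1\), \(\iota\rho=1+\partial\varepsilon\), and \(\varepsilon^2=0\). The identity \(\varepsilon^2=0\) is automatic, since \(\varepsilon\) maps \(M\to\hslash^1M\) only. The identities \(\rho\iota=1\) and \(\partial\rho=0\) follow from \(\mathrm{proj}\circ\sigma=1\) and from \(\mathrm{proj}\) intertwining \(d\) with \(\bar d\).

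I expect the main obstacle to be \(\partial\iota=0\) together with the homotopy identity. The difficulty is that \(\sigma\) does not commute with \(d\): the defect \(d\sigma-\sigma\bar d\) lands in \(fM\). The correction term \(-\pi_f d\sigma\) is exactly what absorbs this defect. Checking the remaining identity then requires the relation \(d^2=w-fq\) to cancel the \(fq\) terms against the \(K(f;q)\) part. If this first choice of \(\iota\) and \(\varepsilon\) does not close up, I would instead obtain the retraction by a homological perturbation argument. The unperturbed differential is \(\left(\begin{smallmatrix}0&f\\0&0\end{smallmatrix}\right)\), which has the obvious strong deformation retraction onto \(M/fM\) with \(\varepsilon=\pi_f\). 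Adding the perturbation \(d,q\) and using the perturbation lemma gives the perturbed \(\iota,\rho,\varepsilon\) directly. The side conditions \(\varepsilon^2=0\), \(\rho\varepsilon=0\), and \(\varepsilon\iota=0\) persist under perturbation. Convergence is not an issue because the perturbation series terminates or is filtered.

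For the bifiltered statement, I would check gradings term by term. The maps \(\sigma\), \(\pi_f\), and multiplication by \(f\) and \(q\) are homogeneous when \(x\) and \(p\) are graded compatibly with the bifiltration. Since \(d=\sum d_i\) with \(\gr(d_i)=\hslash^1q^{3(i-1)}h^i\), expanding the perturbation formulas expresses \(\rho\) and \(\iota\) as sums of pieces of grading \(q^{3i}h^i\), and \(\varepsilon\) as a piece of grading \(q^3\hslash^1h^0\). This uses the bifiltration on \(K(f;q)\) from the remark after \cref{def:mf:Koszul}.
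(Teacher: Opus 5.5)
Your proof is correct, and it reaches the same retraction data as the paper by a more hands-on route. The paper never writes \(\iota\) or \(\varepsilon\) down. It embeds \(K(1;fq)\otimes_{R[x]}(M,d)\) into \(K(f;q)\otimes_{R[x]}(M,d)\) (identity on the \(\hslash^1\)-summand, multiplication by \(f\) on the \(\hslash^0\)-summand) and notes that this subobject is contractible. It then applies \cref{lem:sdr-new-from-old} with retraction to \(0\). Because the subobject is closed under the differential, the component \(b\) vanishes. The complement, which is \(R\)-free and identified with \(R[x]/(f)\otimes M\) because \(f\) is monic, therefore carries the quotient differential.

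If you unwind that lemma using the natural contraction \((u,fy)\mapsto(-y,0)\) of the subobject, you recover exactly your \(\iota(\bar m)=(-\pi_f d\sigma\bar m,\ \sigma\bar m)\) and the homotopy \((u,v)\mapsto(-\pi_f v,0)\). So the difference is packaging. The paper outsources all identity checks to a lemma it reuses elsewhere (e.g.\ in \cref{thm:delooping:mf}). Your closed formulas instead make the bifiltered bookkeeping immediate: the first component of \(\iota_j\) is \(-\pi_f d_j\sigma\), of grading \(q^{3j}h^j\), and \(\varepsilon\) has grading \(q^3\hslash^1h^0\), given the shift \(q^{\deg_q f+3}\) on the \(\hslash^1\)-summand. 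They also give the extra side conditions \(\rho\varepsilon=0=\varepsilon\iota\) for free.

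The step you flagged as the main obstacle does close up with your first choice of \(\iota\). So the perturbation fallback is unnecessary; it would in any case need a curved version, since the total differential squares to \(w\neq0\). With \(D(u,v)=(-du+qv,\ fu+dv)\):
\begin{itemize}
\item The second component of \(D\iota=\iota\bar d\) is just the decomposition \(d\sigma\bar m=f\pi_f(d\sigma\bar m)+\sigma(\bar d\bar m)\).
\item The first component follows by applying \(\pi_f\) to \(d^2\sigma\bar m=(w-fq)\sigma\bar m\). This uses \(\pi_f(fy)=y\), \(\pi_f\sigma=0\), and \(R\)-linearity of \(\pi_f\), which gives \(\pi_f(w\sigma\bar m)=0\).
\end{itemize}
The only correction needed is the sign of the homotopy. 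The map \((u,v)\mapsto(\pi_f v,0)\) gives \(\iota\rho=1-\partial\varepsilon\), so you should take \(\varepsilon(u,v)=(-\pi_f v,0)\) instead.
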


\begin{proof}
	We recall the proof from \cite{KR_convolutions}, 
	which generalizes to the bifiltered setting. 
	The \(R\)-module 
	\(R[x]/(x^n-p)\) 
	is freely generated 
	by monomials \(\{x^m\}_{m=0}^{n-1}\) 
	because of the choice of \(n\).  
	Thus, 
	\(R[x]/(x^n-p)\otimes_{R[x]} (M,d)\)
	is a matrix factorization. 
	
	Moreover, there is an inclusion
	\[
	\begin{tikzcd}[column sep=1cm, ampersand replacement=\&]
		K(1;(x^n-p)q)\otimes_{R[x]} (M,d)
		\arrow[hookrightarrow]{r}{\iota}
		\&
		K(x^n-p;q)\otimes_{R[x]} (M,d)
	\end{tikzcd}
	\]
	given by
	\[
	\begin{tikzcd}[column sep=3cm, ampersand replacement=\&]
		\hslash^1 R[x]\otimes_{R[x]} (M,d)
		\arrow[bend left=10]{d}[right]{1\otimes 1}
		\arrow{r}{1\otimes 1}
		\&
		\hslash^1 R[x]\otimes_{R[x]} (M,d)
		\arrow[bend left=10]{d}[right]{(x^n-p)\otimes 1}
		\\
		\hslash^0 R[x]\otimes_{R[x]} (M,d)
		\arrow[bend left=10]{u}[left]{(x^n-p)q\otimes 1}
		\arrow{r}{(x^n-p)\otimes 1}
		\&
		\hslash^0 R[x]\otimes_{R[x]} (M,d)
		\arrow[bend left=10]{u}[left]{q\otimes 1}
	\end{tikzcd}
	\]
	The subcomplex 
	\(K(1;(x^n-p)q)\otimes_{R[x]} (M,d)\) 
	is null-homotopic. 
	We apply \cref{lem:sdr-new-from-old} and 
	obtain a new matrix factorization \(Q\)  
	that is a special deformation retract of 
	\(K(x^n-p;q)\otimes_{R[x]} (M,d)\). 
	Its underlying \(R\)-module 
	agrees with the quotient of 
	\(K(x^n-p;q)\otimes_{R[x]} (M,d)\) 
	by the image of~\(\iota\). 
	Moreover, 
	the differential is the induced differential, 
	since the map \(b\) in 
	\cref{lem:sdr-new-from-old} 
	is zero in this case. 
	Finally observe that 
	\(Q\) is equal to 
	\(R[x]/(x^n-p)\otimes_{R[x]} M\)
	with the induced differential. 
\end{proof}

The following lemmas, 
both of which are specializations of 
\cref{thm:mfs-special deformation retraction}, 
will be our sources of special deformation retracts. 
See also~\cite[Section~2.2]{Ballinger}. 

\begin{lemma}\label{lem:lin_sp_def_retr}
	Let \(n\) and \(\mathbf{a},\mathbf{b}\in R^n\) be as in \cref{def:mf:Koszul}.
	Suppose that \(R=R_0[a_n]\) for some freely generated \(\field\)-algebra \(R_0\) and that the potential \(w\) of \(K(\mathbf{a},\mathbf{b})\) lies in \(R_0\). 
	Then there is a special deformation retract of matrix factorizations over \(R_0\) with potential \(w\):
	\[
	K(\mathbf{a},\mathbf{b}) 
	\rightarrow
	K\left(
	(a_1\big|_{a_n=0},\ldots,a_{n-1}\big|_{a_n=0}),
	(b_1\big|_{a_n=0},\ldots,b_{n-1}\big|_{a_n=0})
	\right)
	\]
	In fact, the map is given by the horizontal quotient homomorphism below
	\[
	\begin{tikzcd}
		\hslash^1  K(\mathbf{a}',\mathbf{b}')
		\arrow[bend left=10]{d}{a_n}
		\\
		\hslash^0 K(\mathbf{a}',\mathbf{b}')
		\arrow[in=180,out=0]{r}{}
		\arrow[bend left=10]{u}{b_n}
		&
		K\left(
		(a_1\big|_{a_n=0},\ldots,a_{n-1}\big|_{a_n=0}),
		(b_1\big|_{a_n=0},\ldots,b_{n-1}\big|_{a_n=0})
		\right)
	\end{tikzcd}
	\]
	where \(\mathbf{a}'= (a_1,\dots,a_{n-1})\) and \(\mathbf{b}'=(b_1,\dots,b_{n-1})\).
\end{lemma}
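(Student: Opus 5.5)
The plan is to derive this lemma as a direct specialization of \cref{thm:mfs-special deformation retraction}, applied with base ring \(R_0\) and variable \(x=a_n\). First rewrite
\[
K(\mathbf{a},\mathbf{b})=K(a_n;b_n)\otimes_{R} K(\mathbf{a}',\mathbf{b}'),
\]
using \cref{def:mf:Koszul} and the ordering of the tensor factors. Reordering Koszul factors only introduces signs from the Koszul rule, which give an isomorphism. Here \(K(\mathbf{a}',\mathbf{b}')\) is a matrix factorization over \(R=R_0[a_n]\) with potential \(w-a_nb_n\).

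Next apply \cref{thm:mfs-special deformation retraction} with \(x=a_n\), \(n=1\), \(p=0\), \(q=b_n\) and \((M,d)=K(\mathbf{a}',\mathbf{b}')\). The degree condition holds trivially, since \(\deg p<1\). The theorem then says that
\[
R_0[a_n]/(a_n)\otimes_{R} K(\mathbf{a}',\mathbf{b}')
\]
is a matrix factorization over \(R_0\) with potential \(w\), and that it is a special deformation retract of \(K(a_n;b_n)\otimes K(\mathbf{a}',\mathbf{b}')\). Tensoring with \(R/(a_n)\cong R_0\) simply sets \(a_n=0\) in every entry of the differential. Since the Koszul differential is built from the entries \(a_i\) and \(b_i\), this gives exactly
\[
K\bigl((a_1|_{a_n=0},\dots,a_{n-1}|_{a_n=0}),(b_1|_{a_n=0},\dots,b_{n-1}|_{a_n=0})\bigr).
\]
This identifies the target.

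To describe the map \(\rho\) explicitly, trace through the proof of \cref{thm:mfs-special deformation retraction}. The retract \(Q\) is the quotient of \(K(a_n;b_n)\otimes M\) by the image of \(\iota\). That image is all of the \(\hslash^1\) summand \(\hslash^1 K(\mathbf{a}',\mathbf{b}')\), because the top horizontal map there is \(1\otimes1\), together with \(a_n\cdot \hslash^0 K(\mathbf{a}',\mathbf{b}')\). Hence \(\rho\) kills the \(\hslash^1\) part and is reduction mod \(a_n\) on the \(\hslash^0\) part, which is the horizontal quotient homomorphism shown in the statement.

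The only point needing care is the Koszul-sign bookkeeping. Moving the factor \(K(a_n;b_n)\) to the front must not change the differentials up to anything beyond an isomorphism, and the \(\hslash^1\) summand should carry \(-d\) in accordance with \cref{def:shifted-mfs}. Both conventions match the displayed diagram, so I expect this to be routine rather than a genuine obstacle.
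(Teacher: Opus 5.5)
Your proposal is correct and is the paper's own argument. Both specialize \cref{thm:mfs-special deformation retraction} to \(x=a_n\), \(n=1\), \(p=0\), \(q=b_n\), \(M=K(\mathbf{a}',\mathbf{b}')\), and you additionally read off the explicit quotient map from the image of \(\iota\) in that theorem's proof. The paper's one-line proof actually cites \cref{lem:sdr-new-from-old}, but the parameters \(x,p,q\) it names belong to the theorem you used, so your citation is the intended one.
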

\begin{proof}
	Apply \cref{lem:sdr-new-from-old} to \(x=a_n\), \(p=0\), and \(q=b_n\). 
\end{proof}

\begin{lemma}\label{lem:quad_sp_def_retr}
	Let \(n\) and \(\mathbf{a},\mathbf{b}\in R^n\) be as in \cref{def:mf:Koszul}.
	Suppose that \(R=R_0[x]\) for some freely generated \(\KK\)-algebra \(R_0\) and some element \(x\in R\). 
	Suppose further that the potential \(w\) of \(K(\mathbf{a},\mathbf{b})\) lies in \(R_0\). 
	Let \(p(x)\in R_0[x]\) be a monic polynomial in \(x\) over \(R_0\) of degree 2, i.e.\ \(p(x)=x^2 + C x + D\) for some \(C,D\in R_0\). 
	Then there is a special deformation retract 
	\[
	K((a_1,\ldots,a_n,0),(b_1,\ldots,b_n,p(x))) \to
	K(\mathbf{a},\mathbf{b}) \otimes_{R_0[x]} \hslash^1 R_0[x]/(p(x))
	\]
	where the matrix factorizations are considered over \(R_0\) with potential \(w\).  
	In fact, this map is given by the horizontal quotient homomorphism below
	\[
	\begin{tikzcd}
		\hslash^1 K(\mathbf{a},\mathbf{b})
		\arrow[in=180,out=0]{r}{}
		&
		K(\mathbf{a},\mathbf{b}) \otimes_{R_0[x]} \hslash^1 R_0[x]/(p(x))
		\\
		\hslash^0 K(\mathbf{a},\mathbf{b})
		\arrow{u}{p(x)}
	\end{tikzcd}
	\]
	In the bifiltered setting, if \(\gr(x)=q^{-2}h^0\), the gradings in the special deformation retract are as follows:
	\[
	K((a_1,\ldots,a_n,0),(b_1,\ldots,b_n,p(x))) \to
	K(\mathbf{a},\mathbf{b}) \otimes_{R_0[x]} h^0q^1\hslash^1 R_0[x]/(p(x)).
	\]
\end{lemma}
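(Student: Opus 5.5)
This is a direct application of \cref{thm:mfs-special deformation retraction}, together with the observation that tensoring with \(K(0;p(x))\) simply adjoins one more Koszul factor. I take \(R\) there to be \(R_0\), the variable to be \(x\), and \(n=2\). I write \(p(x)=x^2-p'\) with \(p'=-Cx-D\), which has degree at most \(1<2\) in \(x\), and I take the second polynomial of the theorem to be \(q=1\).

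Since the last entry of \(\mathbf{a}\) in the source is \(0\), the source is
\[
K((a_1,\ldots,a_n,0),(b_1,\ldots,b_n,p(x)))=K(0;p(x))\otimes_{R_0[x]}K(\mathbf{a},\mathbf{b}).
\]
The factor \(K(0;p(x))\) has a \(\hslash^1\) copy and a \(\hslash^0\) copy of \(R_0[x]\), joined by \(p(x)\) in one direction and \(0\) in the other. Applying the shift identity \(h^0\hslash^1q^{-i}K(a;b)\cong K(-b;-a)\) from the preceding remark, this factor is identified, up to the shift \(\hslash^1\) (and \(q^{\pm1}\) in the bifiltered case), with \(K(-p(x);0)\cong K(p(x);0)\), after a sign change of basis. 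So the source is the shifted form of \(K(p(x);0)\otimes_{R_0[x]}(M,d)\) with \(M=K(\mathbf{a},\mathbf{b})\).

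I then check the potentials. The matrix factorization \(M\) has potential \(w=\mathbf{a}.\mathbf{b}\). In the theorem's notation the required potential of \(M\) is \(w-(x^2-p')\cdot 0=w\), and by hypothesis \(w\in R_0\). \cref{thm:mfs-special deformation retraction} therefore gives that \(R_0[x]/(p(x))\otimes_{R_0[x]}M\) is a special deformation retract of \(K(p(x);0)\otimes M\) over \(R_0\), with potential \(w\). The retraction \(\rho\) constructed in that proof is the quotient by the image of \(\iota\). Here \(\iota\) includes the null-homotopic subcomplex \(K(1;0)\otimes M\). The quotient kills the whole summand that is the target of \(p(x)\) and reduces the other summand modulo \(p(x)\). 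This is exactly the horizontal quotient map in the statement. After undoing the shift, the surviving summand is the \(\hslash^1\) copy, which is why \(\hslash^1R_0[x]/(p(x))\) appears. The map \(b\) of \cref{lem:sdr-new-from-old} vanishes, so the induced differential is just \(d\otimes 1\).

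The main obstacle is bookkeeping: getting the \(\hslash\)- and \(q\)-shifts and the signs of the Koszul identification right. In the bifiltered case I would compute with \(\gr(x)=q^{-2}\), so \(\gr(p)=q^{-4}\). For the factor \(K(0;p(x))\) this forces \(i-3=\gr_q(0)\) to be free and \(-i-3=-4\), so \(i=1\). The surviving generator therefore sits in \(h^0q^1\hslash^1\), matching the claimed grading. I would also verify that \(\rho\) decomposes as \(\sum_i\rho_i\) with \(\gr(\rho_i)=q^{3i}h^i\), and that \(\gr(\varepsilon)=q^3\hslash^1\). Both should follow because the homotopy contracting \(K(1;0)\otimes M\) is the identity arrow, which has degree \(q^3\hslash^1\) after the shift.
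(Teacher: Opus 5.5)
Your proposal is the paper's proof. The paper rewrites the source, via the shift identity, as \(q^1\hslash^1K((a_1,\ldots,a_n,p(x)),(b_1,\ldots,b_n,0))\), i.e.\ a shifted \(K(p(x);0)\otimes_{R_0[x]}K(\mathbf{a},\mathbf{b})\), and applies \cref{thm:mfs-special deformation retraction} with \(n=2\), \(x^2-p(x)=-Cx-D\) and second polynomial \(0\).

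Two slips need fixing. First, that second polynomial must be \(q=0\), not \(q=1\); your later steps, \(K(1;0)\otimes M\) and the potential \(w-(x^2-p')\cdot 0\), already use \(q=0\). Second, the quotient by the image of \(\iota\) kills the \emph{source} of the arrow labelled \(p(x)\) and reduces its \emph{target} modulo \(p(x)\), not the other way round. This is precisely why the \(h^0q^1\hslash^1\) copy survives, as you conclude.
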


\begin{proof}
	Observe that 
	\[
	K((a_1,\ldots,a_n,0),(b_1,\ldots,b_n,p(x)))
	\cong 
	q^{1}\hslash^1K((a_1,\ldots,a_n,p(x)),(b_1,\ldots,b_n,0))
	\]
	and apply \cref{lem:sdr-new-from-old} to \(p=x^2-p(x)=-C x-D\) and \(q=0\). 
\end{proof}

\subsection{Matrix factorizations for labelled diagrams}\label{sub:lab-diag} 

In this subsection matrix factorizations will be assigned 
to a particular class of planar diagrams. 
While we specialize to the case relevant 
to the study of 4-ended tangles at the outset, 
much of the initial setup works for general tangles 
with minor notational changes. 

A singular diagram \(D\) is 
an embedded, connected, oriented graph in the 2-dimensional unit disk,
satisfying the following conditions, 
illustrated in \cref{fig:singular-diagram:singular-diagram}:
The graph meets the boundary of the disk in precisely 
the set \(\EndsAll\) from \cref{def:Cob}, 
which are all valence~1 vertices.
The remaining edges have valence~4. 
We call them the nodes of \(D\)
and write \(\nodes(D)\) for the set of all nodes.
Since we assume that \(D\) is connected,
$\nodes(D)\ne\varnothing$.
We require the edges adjacent to any given node \(a\) 
to be oriented as in the following diagram, 
which also introduces the notation for these edges:
\[
\vc{
	\begin{tikzpicture}[scale=0.4]
		\node(x) at (0,0){$\vc{\includegraphics[scale=0.5]{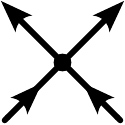}}$};
		\node(x0) at (-1.8, 1.8){\(e^a_0\)};
		\node(x1) at ( 1.8, 1.8){\(e^a_3\)};
		\node(x2) at (-1.8,-1.8){\(e^a_1\)};
		\node(x3) at ( 1.8,-1.8){\(e^a_2\)};
	\end{tikzpicture}
}
\]
Additionally, we fix an ordering of the set $\nodes(D)$ 
and an ordering of the set of edges 
\(\{e_0, e_1,\dots,e_N\}\). 
By convention, 
the edge \(e_0\) is adjacent to the distinguished end \(*\)
and 
the edges \(e_i\) are adjacent to the ends \(\mathtt{e}_i\)
for \(i=1,2,3\). 
We call these the boundary edges of \(D\) and call the remaining edges the interior edges of \(D\). We leave the proof of the following simple observation to the reader:

\begin{figure}[b]
	\centering
	\begin{subfigure}{0.3\textwidth}
		\centering
		\labellist \footnotesize
		\pinlabel $\Endi$ at 25 15
		\pinlabel $\Endii$ at 190 15
		\pinlabel $\Endiii$ at 190 180
		\endlabellist
		\includegraphics[scale=0.5]{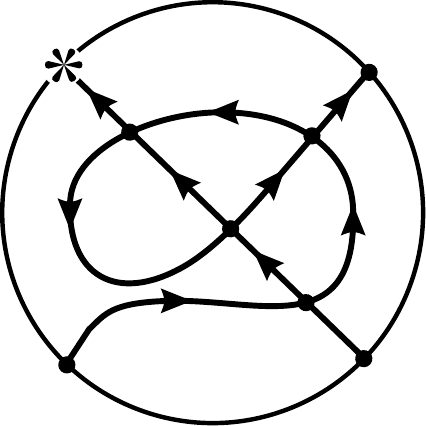}
		\caption{}
		\label{fig:singular-diagram:singular-diagram}
	\end{subfigure}
	\begin{subfigure}{0.3\textwidth}
		\centering
		\labellist \footnotesize
		\pinlabel $+$ at 62.5 140.8
		\pinlabel $-$ at 150.5 138.5
		\pinlabel $+$ at 110.5 93.8
		\pinlabel $\arcD$ at 147.2 58.5
		\endlabellist
		\includegraphics[scale=0.5]{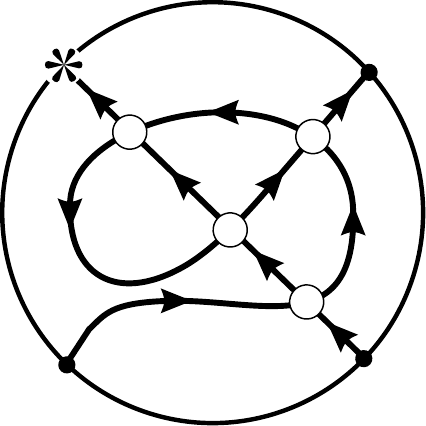}
		\caption{}
		\label{fig:singular-diagram:with-labelling}
	\end{subfigure}
	\begin{subfigure}{0.3\textwidth}
		\centering
		\includegraphics[scale=0.5]{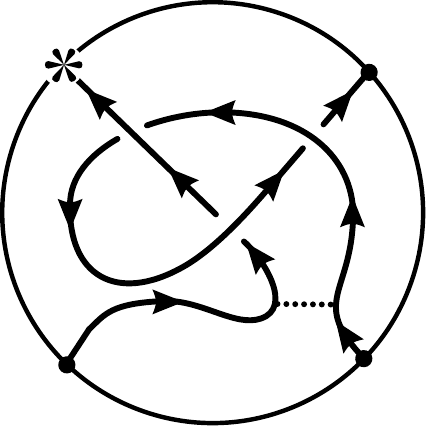}
		\caption{}
		\label{fig:singular-diagram:realized-labelling}
	\end{subfigure}
	\caption{%
		A singular diagram (a), 
		a labelling of this diagram (b), 
		and the pictorial realization of the labelling (c).
		The underlying tangle diagram is obtained by deleting the dashed arc.
	}
\label{fig:singular-diagram}
\end{figure} 

\begin{lemma}\label{lem:edge-signs}
	For any singular diagram \(D\)
	there exists a unique map
	\[
	\sigma\co \{\text{edges of }D\}\to\{\pm1\},  
	\quad
	e\mapsto\sigma^e
	\]
	such that
	\(\sigma^{e_0}=+1\)
	and 
	\(\sigma^{e^a_0}=\sigma^{e^a_1}=-\sigma^{e^a_2}=-\sigma^{e^a_3}\)
	for all \(a\in\nodes(D)\). 
	\qed
\end{lemma}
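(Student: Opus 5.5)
The idea is to propagate a sign constraint along the connected graph \(D\), starting from the distinguished boundary edge \(e_0\). Each node \(a\) imposes the relations \(\sigma^{e^a_0}=\sigma^{e^a_1}\), \(\sigma^{e^a_2}=\sigma^{e^a_3}\) and \(\sigma^{e^a_1}=-\sigma^{e^a_2}\). In other words, at every node the two edges \(e^a_0,e^a_1\) carry the same sign, and the other two edges carry the opposite sign.

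I would reformulate the problem as a two-colouring. Form an auxiliary graph \(G\) whose vertices are the edges of \(D\). For each node \(a\), join \(e^a_0\) to \(e^a_1\) and \(e^a_2\) to \(e^a_3\) by ``equal'' edges, and join \(e^a_1\) to \(e^a_2\) by a ``flip'' edge. A map \(\sigma\) satisfying the node conditions is then exactly a \(\{\pm1\}\)-labelling of \(G\) compatible with these edge types.

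Uniqueness follows from connectivity. Since \(D\) is connected, \(G\) is connected: every edge of \(D\) is adjacent to some node (\(D\) has no nodes-free edge, because \(\nodes(D)\neq\varnothing\) and the graph is connected), and at each node \(a\) all four of \(e^a_0,\dots,e^a_3\) lie in one component of \(G\). Hence any compatible labelling is determined by its value on \(e_0\).

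Existence is the main obstacle. Compatible labellings exist if and only if every cycle of \(G\) contains an even number of flip edges. This is where the orientations enter. The conventions say that at each node the edges \(e^a_0,e^a_1\) are incoming and \(e^a_2,e^a_3\) are outgoing, or at least that the picture fixes a consistent in/out pattern. I would therefore try the explicit candidate
\[
\sigma^e = \pm \text{(orientation-based sign)},
\]
namely \(\sigma^e=+1\) if \(e\) points into its terminal node (equivalently away from its initial node) in a fixed sense. Concretely, I would take \(\sigma^e\) to record whether the edge is traversed ``incoming at its head''. Each interior edge is outgoing at one node and incoming at the other. If incoming edges at a node are the \(e_0,e_1\) slots and outgoing edges are the \(e_2,e_3\) slots, then a single global rule assigns each edge one well-defined value. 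This is a contradiction-free assignment, because each edge plays slot role ``in'' at its head and ``out'' at its tail, and the rule \(\sigma=\) constant on edges satisfies \(\sigma^{e^a_0}=\sigma^{e^a_1}\) but would violate the flip condition.

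So instead I would set \(\sigma^e=+1\) or \(-1\) according to a checkerboard-type parity: orient and colour the regions of the disk complement of \(D\), and let \(\sigma^e\) record which side of \(e\) carries the shaded region. Since \(D\) is 4-valent and planar, the regions of \(D\) admit a checkerboard colouring. At each node, the pattern with \(e^a_0,e^a_1\) on one side and \(e^a_2,e^a_3\) on the other (as dictated by the orientation convention) then gives exactly the required relations. Checking this local compatibility against the node picture is the step I expect to require care. Finally, I would multiply by a global sign to arrange \(\sigma^{e_0}=+1\).
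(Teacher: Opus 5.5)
Your uniqueness argument is fine, and a checkerboard colouring is the right tool for existence. But existence is exactly the step you leave unverified, and the one concrete input you give for it is wrong.

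You assert that \(e^a_0,e^a_1\) point into \(a\) and \(e^a_2,e^a_3\) point out. In the paper's convention it is \(e^a_1,e^a_2\) that point into \(a\), and \(e^a_0,e^a_3\) that point out; the strands through \(a\) run \(e^a_1\to e^a_3\) and \(e^a_2\to e^a_0\). You can read this off from the identity \(\sigma^a\rho^a=\sum_i\epsilon^a_i\sigma^{e^a_i}X^a_i\) in the proof of \cref{prop:boundary-edge-ring}, or from \cref{tab:def:tau_a_star}.

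This is not cosmetic. Under your convention, every interior edge from a node \(a\) to a node \(b\) would force \(\sigma^{e^b_0}=-\sigma^{e^a_0}\). Any directed cycle of odd length among the nodes (as in a trefoil projection cut open at one edge) would then make the statement false. So no colouring rule could be checked successfully against your stated picture.

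With the correct convention your rule goes through:
\begin{enumerate}
\item Place \(a\) at the origin with \(e^a_0,e^a_3,e^a_1,e^a_2\) along the NW, NE, SW, SE rays, and call the four adjacent regions \(N,W,S,E\).
\item Set \(\sigma^e=+1\) if and only if the shaded region lies to the left of \(e\) with respect to its orientation. This is well defined, since the left side of an edge lies in a single region.
\item The region to the left of \(e^a_0\) and of \(e^a_1\) is \(W\), to the left of \(e^a_2\) it is \(S\), and to the left of \(e^a_3\) it is \(N\).
\item Since \(N\) and \(S\) carry one colour and \(W\) the other, this gives exactly \(\sigma^{e^a_0}=\sigma^{e^a_1}=-\sigma^{e^a_2}=-\sigma^{e^a_3}\). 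Swapping the colours if necessary arranges \(\sigma^{e_0}=+1\).
\end{enumerate}

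Finally, ``4-valent and planar'' is not quite the reason the colouring exists, since the four boundary vertices are univalent. The actual reason is that all interior vertices have even valence and the disk is simply connected. Hence every closed curve in the open disk transverse to \(D\) meets \(D\) an even number of times. Colouring a region by the parity of crossings along a path from a fixed base region is therefore well defined.
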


\cref{lem:edge-signs} allows us to define 
\(\sigma^a=\sigma^{e^a_0}\) for any \(a\in\nodes(D)\).
The role of \(\sigma\) will become clearer 
in the proof of \cref{prop:mfs-well-defined}; 
see \cref{rem:mfs-well-defined}.
Pictorially, 
the second condition on \(\sigma\) means that 
the signs at each node are equal to 
\[
\vc{
	\begin{tikzpicture}[scale=0.4]
		\node(x) at (0,0){$\vc{\includegraphics[scale=0.5]{figures/inkscape/Label-singular.pdf}}$};
		\node(x0) at (-1.8, 1.8){\(+\)};
		\node(x1) at ( 1.8, 1.8){\(-\)};
		\node(x2) at (-1.8,-1.8){\(+\)};
		\node(x3) at ( 1.8,-1.8){\(-\)};
	\end{tikzpicture}
}
\quad 
\text{or}
\quad
\vc{
	\begin{tikzpicture}[scale=0.4]
		\node(x) at (0,0){$\vc{\includegraphics[scale=0.5]{figures/inkscape/Label-singular.pdf}}$};
		\node(x0) at (-1.8, 1.8){\(-\)};
		\node(x1) at ( 1.8, 1.8){\(+\)};
		\node(x2) at (-1.8,-1.8){\(-\)};
		\node(x3) at ( 1.8,-1.8){\(+\)};
	\end{tikzpicture}
}
\]

We assign variables $X_i$ to edges of \(D\) 
according to their ordering.  
In particular,
$X_0,X_1,X_2,X_3$ are the variables assigned to the four boundary edges, 
with $X_0$ assigned to the edge adjacent to the distinguished end \(*\). 
It will often be useful to write $X^e$ 
for the variable associated with an edge $e$, 
so that $X_i=X^{e_i}$. 
For any \(a\in\nodes(D)\), 
we write \(X^a_i=X^{e^a_i}\) 
for the label associated with the edge \(e^a_i\). 
These variables will be considered 
as elements of the polynomial ring 
\(\CoeffRing[\varG,X_0,\dots,X_N]\)
which we equip with a bigrading by setting 
\[
\text{gr}(\varG)=\text{gr}(X_i)=h^0q^{-2}
\]
for all $0\le i\le N$. 
For a node $a$ define the homogeneous element 
\[
\rho^a=X^a_0-X^a_1+X^a_2-X^a_3
\]
and let
\(I(D)\subset\CoeffRing[\varG,X_0,\dots,X_N]\) 
be the ideal generated by \(X_0\) and the \(\rho^a\)
for each \(a\in\nodes(D)\).

\begin{definition}\label{def:edge_ring}
	Given a singular diagram \(D\) as above
	define the edge ring \(R(D)\) associated with \(D\) as the quotient  
	\[
	R(D)
	=
	\CoeffRing[\varG,X_0,\dots,X_N]/I(D),
	\]
	which is bigraded.
	The boundary edge ring \(\Rd(D)\) is the subring of \(R(D)\) 
	generated by \(\varG\) and \(X_0\), \(X_1\), \(X_2\), \(X_3\).
\end{definition}

\begin{proposition}\label{prop:boundary-edge-ring}
	For any singular diagram \(D\) 
	\[
	\Rd(D)=\CoeffRing[\varG,X_0,X_1,X_2,X_3]/(X_0,\rho^\partial)\subseteq R(D)
	\]
	where 
	\[
	\rho^\partial
	=
	\smashoperator[r]{\sum_{a\in\nodes(D)}}
	\sigma^a\rho^a
	=
	\sum_{i=0}^3
	\epsilon^{e_i}\sigma^{e_i} X_i
	\]
	and \(\epsilon^e=+1\) if the edge \(e\) points out of the diagram 
	and \(\epsilon^e=-1\) if \(e\) points into the diagram.
\end{proposition}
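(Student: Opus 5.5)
The plan is to show first that \(\rho^\partial\) lies in \(I(D)\), which gives a well-defined surjection onto \(\Rd(D)\), and then to show this surjection is injective by a linear-algebra argument, since \(I(D)\) is generated by linear forms.

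\emph{Step 1: the two expressions for \(\rho^\partial\) agree.} Expand \(\sum_{a}\sigma^a\rho^a\) edge by edge. By \cref{lem:edge-signs}, the variable \(X^a_i\) appears in \(\sigma^a\rho^a\) with coefficient
\[
\sigma^{e^a_0}=\sigma^{e^a_0},\quad -\sigma^{e^a_1},\quad -\sigma^{e^a_2},\quad \sigma^{e^a_3}
\qquad\text{for } i=0,1,2,3 \text{ respectively.}
\]
So the coefficient is \(\pm\sigma^{e}\), and the sign depends only on whether \(e\) is incoming or outgoing at \(a\), by the fixed orientation convention at nodes. An interior edge is outgoing at exactly one node and incoming at exactly one node, so its two contributions cancel. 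A boundary edge meets exactly one node, and contributes \(\epsilon^{e}\sigma^{e}X^e\). The only real work here is pinning down that the in/out sign convention matches \(\epsilon\); I would check this on a single node.

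It follows that \(\rho^\partial\in I(D)\). Hence the natural map \(\CoeffRing[\varG,X_0,\dots,X_3]/(X_0,\rho^\partial)\to R(D)\) is well defined, and its image is \(\Rd(D)\) by definition.

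\emph{Step 2: injectivity.} The ideal \(I(D)\) is generated by the \(\CoeffRing\)-span \(V\) of the linear forms \(X_0\) and \(\rho^a\), for \(a\in\nodes(D)\). Let \(W\) be the span of \(X_0,\dots,X_3\). I would show:

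(i) the forms \(\rho^a\) can be completed, together with \(X_0\), \(\varG\) and some of the variables, to a \(\CoeffRing\)-basis of the degree-one part of the polynomial ring;

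(ii) \(V\cap W\) is spanned by \(X_0\) and \(\rho^\partial\).

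Given (i), the quotient \(R(D)\) is a polynomial ring in the complementary basis elements. The kernel of \(\CoeffRing[\varG,X_0,\dots,X_3]\to R(D)\) is then the ideal generated by \(V\cap W\), so (ii) gives the statement.

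For (i), use that the nodes together with the interior edges form a connected graph. This holds because \(D\) is connected and the boundary edges are leaves. Choose a spanning tree, root it at the node adjacent to \(e_0\), and assign to each non-root node \(a\) the tree edge towards its parent. Each \(\rho^a\) has coefficient \(\pm1\) on its assigned variable and involves no other assigned variable except along the tree. Ordering the nodes from the leaves of the tree downward then gives a unitriangular change of coordinates. Over an arbitrary ring \(\CoeffRing\) this is the step to get right; the key point is that all coefficients are \(\pm1\).

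For (ii), suppose \(\sum_a c_a\rho^a+cX_0\in W\). Then for every interior edge the coefficient vanishes, which forces \(c_a\sigma^a=c_b\sigma^b\) for adjacent nodes \(a,b\) (the same sign computation as in Step 1). By connectivity, \(c_a=c\,\sigma^a\) for a single scalar \(c\), so the combination is \(c\rho^\partial+cX_0\).

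I expect the main obstacle to be the sign bookkeeping in Steps 1 and (ii), together with making the basis argument work over a general ring \(\CoeffRing\) rather than a field.
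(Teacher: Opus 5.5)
Your Step~1 is exactly the paper's argument: the identity \(\sigma^a\rho^a=\sum_i\epsilon^a_i\sigma^{e^a_i}X^a_i\) and cancellation along interior edges. The paper then asserts injectivity in a single sentence, so your Step~2 goes beyond it. As written, however, part (i) does not work.

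Rooting the spanning tree \(T\) at the node \(r\) adjacent to \(e_0\) leaves \(\rho^r\) without a pivot variable, because \(X_0\) is already in your basis as a generator of \(I(D)\). Count: with \(n\) nodes there are \(2n+2\) edges, so the degree-one part is free of rank \(2n+3\). Your set \(\{\varG,X_0\}\cup\{\rho^a\}\cup\{\text{unassigned }X^e\}\) has \(2+n+(n+2)=2n+4\) elements, and indeed \(\rho^r\) lies in the span of the rest.

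Nor can you always choose some other pivot at \(r\). If the three other edges at \(r\) are bridges of the node graph, they are forced into \(T\). For example, take \(r\) joined by single edges to nodes \(b_1,b_2,b_3\), each carrying a small loop and the boundary edge \(e_i\).

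The repair is to root \(T\) at the node adjacent to \(e_2\) and pivot its \(\rho\) on \(X_2\). Alternatively, keep your root but replace \(\rho^r\) by \(\rho^\partial\), which has unit coefficient on \(X_2\) and involves only boundary variables. Your leaves-first triangularity then shows that \(\{\varG,X_0,X_1,X_3\}\cup\{\rho^a\}\cup\{X^e : e\text{ interior},\ e\notin T\}\) is a basis. Hence \(R(D)\) is the polynomial ring over \(\CoeffRing\) on \(\varG,X_1,X_3\) and the non-tree interior edge variables.

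The second weak point is the inference that the kernel is the ideal generated by \(V\cap W\). Over an arbitrary commutative ring this is not a formal consequence of (i) and (ii). It requires the image of \(W\) in \(L/V\) (with \(L\) the degree-one part) to be a direct summand, since in general the kernel of a linear substitution of polynomial rings need not be generated by its linear part. For instance, over \(\Z/4\) the substitution \(x\mapsto 2c\) kills \(x^2\), which is not in \((2x)\). This is harmless over a field, but the proposition is stated for every \(\CoeffRing\).

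With the repaired basis the issue disappears. The elements \(\varG,X_1,X_3\) are among the free polynomial generators of \(R(D)\), so \(\Rd(D)=\CoeffRing[\varG,X_1,X_3]\). This is precisely \(\CoeffRing[\varG,X_0,\dots,X_3]/(X_0,\rho^\partial)\), because \(\rho^\partial\in I(D)\) by Step~1 and has unit coefficient on \(X_2\). In particular, your (ii) is then no longer needed.
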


\begin{proof}
	For every node \(a\) in \(D\) it is easy to check that
	\[
	\sigma^a\rho^a
	=
	\sum_{i=0}^3 \epsilon^a_{i}\sigma^{e^a_i} X^a_i 
	\]
	where \(\epsilon^a_{i}=+1\) if the edge \(e^a_i\) points away from \(a\) 
	and \(\epsilon^a_{i}=-1\) if \(e^a_i\) points into \(a\).
	The identity for \(\rho^\partial\) can now be seen as follows:
	Every variable corresponding to an interior edge of \(D\) 
	appears either in one or in two summands \(\sigma^a\rho^a\). 
	In the first case, it appears twice, with opposite signs, 
	and in the second case, it appears once in each summand and again with opposite signs. 
	This observation also implies 
	that \((X_0,\rho^\partial)\) is the restriction of the ideal \(I(D)\) 
	to \(\CoeffRing[\varG,X_0,X_1,X_2,X_3]\).
\end{proof}	

\begin{definition}\label{def:diagram}
	A label of node in a singular diagram 
	is an element of \(\{\arcD,\arcT,+,-\}\). 
	A labelled diagram \(\Diag\) is a pair \((D,\lab)\), 
	where \(D\) is a singular diagram and	
	\begin{align*}
	\lab\co \nodes(D)&\to \{\arcD,\arcT,+,-\}\\
	a&\mapsto \lab(a)
	\end{align*} 
	is a labelling of its nodes. 
\end{definition}

There is a special set of labelled diagrams 
associated with singular diagrams having exactly one node. 
We call them elementary diagrams and represent them pictorially as follows:
\begin{align*}
	\DiagD
	&=
	\vc{\includegraphics[scale=0.5]{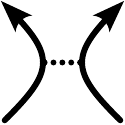}}
	&
	\DiagT
	&=
	\vc{\includegraphics[scale=0.5]{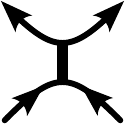}}
	&
	\Diag_+
	&=
	\vc{\includegraphics[scale=0.5]{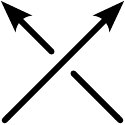}}
	&
	\Diag_-
	&=
	\vc{\includegraphics[scale=0.5]{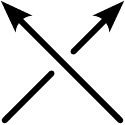}}
\end{align*}
For a given labelled diagram $\mathcal{D}$, 
in pictures, we record the labels 
by changing a small neighbourhood of each \(a\in\nodes(D)\) 
to the diagram \(\mathcal{D}_{\lab(a)}\) 
agreeing with the corresponding elementary diagram, 
such that the orientations of the adjacent edges match up. 
Based on these conventions, 
given a labelled diagram, 
notions of positive and negative crossings 
(nodes carrying labels $+$ and $-$ respectively) 
as well as dotted and thick arcs 
(nodes carrying labels $\arcD$ and $\arcT$ respectively) 
match the terminology found in other treatments 
on link invariants derived from matrix factorizations. 

Finally, the tangle diagram \(|\Diag|\) 
associated with a labelled diagram \(\Diag=(D,\lab)\) 
is the unoriented pointed tangle diagram 
obtained by removing all dotted and thick arcs in \(\Diag\) 
pictured according to the formalism above.
A component of $\Diag$ is a component of the tangle diagram $|\Diag|$.
This makes it possible to distinguish between 
the two open components of $\Diag$ and any closed components. 
For any oriented, pointed tangle \(T\) in the 3-dimensional ball,
 we can always find a labelled diagram \(\Diag_T\) 
 such that the associated tangle diagram \(|\Diag_T|\) 
 is a tangle diagram for \(T\). 
An example of \(\Diag_T\) 
for the rational tangle \(T=Q_{-3/2}\) 
is shown in \cref{fig:singular-diagram:realized-labelling,fig:singular-diagram:with-labelling}. 
It will later be convenient to assume 
that there are additional arcs in \(\Diag_T\); 
see \cref{assums:tangle_diagram}.

\begin{remark}\label{exa:diagrams:basic}
	In the case of elementary diagrams,
	the edge ring is isomorphic to the boundary edge ring, 
	which is expressed as 
	\[
	\CoeffRing[\varG,X_0,X_1,X_2,X_3]/(X_0,X_0-X_1+X_2-X_3)
	\]
	This can be identified with
	\begin{equation}\label{eq:base_ring}
		\Rd
		=
		\CoeffRing[x,y,z]
	\end{equation}
	by setting 
	\begin{equation}\label{eq:xyz}
		x=X_1
		\qquad
		y=X_3
		\qquad
		z=
		x+y+\varG
	\end{equation}
	For future reference: 
	\[
	\text{gr}(x)=\text{gr}(y)=\text{gr}(z)=\text{gr}(\varG)=h^0q^{-2}
	\]
\end{remark}

\begin{definition}\label{def:mf_of_tangle}
	For any given labelled tangle diagram \(\Diag=(D,\lab)\) 
	and any $a\in\nodes(D)$ define the following elements 
	of the edge ring \(R=R(D)\):
	\begin{align*}
		x^a_\arcD
		=
		y^a_\arcT
		&=
		\sigma^a(X^a_1-X^a_0)
		\\
		y^a_\arcD
		=
		x^a_\arcT
		&=
		(X^a_3-X^a_0)
		\\
		z^a
		&=
		X^a_1+X^a_3+\varG
	\end{align*}
	For every label \(\lab\in\{\arcD,\arcT,+,-\}\) 
	define a matrix factorization \(M^a_\lab\) over \(R\):
	\begin{align*}
		M^a_\arcD
		&=
		\left[
		\begin{tikzcd}[column sep=1cm, ampersand replacement=\&]
			h^0 \hslash^1 q^1 R
			\arrow[bend left=10]{d}[right]{x^a_\arcD}
			\\
			h^0 \hslash^0 q^0 R
			\arrow[bend left=10]{u}[left]{y^a_\arcD z^a}
		\end{tikzcd}
		\right]
		&
		M^a_+
		&=
		\left[
		\begin{tikzcd}[column sep=1.9cm, ampersand replacement=\&]
			h^0 \hslash^0 q^2 R
			\arrow[bend left=10]{d}[right]{-x^a_\arcD}
			\arrow[out=0,in=180]{dr}[description,pos=0.2,tight]{1}
			\&
			h^1 \hslash^0 q^3 R
			\arrow[bend left=10]{d}[right]{x^a_\arcT}
			\\
			h^0 \hslash^1 q^1 R
			\arrow[bend left=10]{u}[left]{-y^a_\arcD z^a}
			\arrow[out=0,in=180]{ur}[description,pos=0.2,tight]{z^a}
			\&
			h^1 \hslash^1 q^2 R
			\arrow[bend left=10]{u}[left]{y^a_\arcT z^a}
		\end{tikzcd}
		\right]
		\\
		M^a_\arcT
		&=
		\left[
		\begin{tikzcd}[column sep=1cm, ampersand replacement=\&]
			h^0 \hslash^0 q^1 R
			\arrow[bend left=10]{d}[right]{x^a_\arcT}
			\\
			h^0 \hslash^1 q^0 R
			\arrow[bend left=10]{u}[left]{y^a_\arcT z^a}
		\end{tikzcd}
		\right]
		&
		M^a_-
		&=
		\left[
		\begin{tikzcd}[column sep=1.5cm, ampersand replacement=\&]
			h^{-1} \hslash^0 q^{-1} R
			\arrow[bend left=10]{d}[right]{x^a_\arcT}
			\arrow[out=0,in=180]{dr}[description,pos=0.2,tight]{1}
			\&
			h^0 \hslash^0 q^0 R
			\arrow[bend left=10]{d}[right]{-x^a_\arcD}
			\\
			h^{-1} \hslash^1 q^{-2} R
			\arrow[bend left=10]{u}[left]{y^a_\arcT z^a}
			\arrow[out=0,in=180]{ur}[description,pos=0.2,tight]{z^a}
			\&
			h^0 \hslash^1 q^{-1} R
			\arrow[bend left=10]{u}[left]{-y^a_\arcD z^a}
		\end{tikzcd}
		\right]
	\end{align*}
	Define 
	the matrix factorization \(M(\Diag)\) 
	over \(R\) by
	\[
	M(\Diag) = \bigotimes_{a\in\nodes(D)} M^a_{\lab(a)}
	\]
\end{definition}

\begin{remark}
	\Cref{tab:elementary_mf} shows the matrix factorizations 
	associated with elementary diagrams, 
	regarded as labelled diagrams, 
	as in \cref{exa:diagrams:basic}. 
	The potential is equal to \(xyz\) in all cases, 
	giving the prototypical examples 
	of elements in the category 
	$\MF_{xyz}(\Rd)$
	of matrix factorizations over \(\Rd\)
	with potential \(xyz\).
	Note that, using notation from \cref{def:mf:Koszul},
	$
	M^a_\arcD
	=
	K(x^a_\arcD;y^a_\arcD z^a)
	$ 
	and 
	$
	M^a_\arcT
	=
	\hslash^1 K(-x^a_\arcT;-y^a_\arcT z^a)
	$.
\end{remark}

\begin{proposition}\label{prop:mfs-well-defined}
For any labelled diagram \(\Diag\), 
\(M(\Diag)\in\MF_{xyz}(\Rd)\). 
\end{proposition}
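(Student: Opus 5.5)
The plan is to check three things: each local factor squares to its own local potential; the sum of the local potentials equals \(xyz\) in \(R(D)\); and \(R(D)\) is a free module over \(\Rd(D)\cong\Rd\). Together these show that \(M(\Diag)\) is a free \(\Rd\)-module whose differential squares to \(xyz\). The first step is a direct local computation. For \(M^a_\arcD=K(x^a_\arcD;y^a_\arcD z^a)\), the square of the differential is \(w^a:=x^a_\arcD y^a_\arcD z^a\). For \(M^a_\arcT\), the relations \(x^a_\arcT=y^a_\arcD\) and \(y^a_\arcT=x^a_\arcD\) give the same product. For \(M^a_\pm\), I would expand \(d^2\) on the four generators. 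The diagonal terms give \(x^a_\arcD y^a_\arcD z^a\) and \(x^a_\arcT y^a_\arcT z^a\), which are equal. The off-diagonal terms cancel: for example, \(x^a_\arcT\cdot 1-1\cdot x^a_\arcD\) has to be paired with the \(z^a\) arrows, and I expect the chosen signs to make all mixed terms vanish. By Definition~\ref{def:mf:tensorproduct}, \(M(\Diag)\) is then a matrix factorization over \(R(D)\) with potential \(\sum_a w^a\), where
\[
w^a=\sigma^a(X^a_1-X^a_0)(X^a_3-X^a_0)(X^a_1+X^a_3+\varG).
\]

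The second step is a telescoping identity. At a node \(a\) we have \(X^a_0+X^a_2=X^a_1+X^a_3=:s\). Setting \(\{a,b\}=\{X^a_1,X^a_3\}\) and \(\{c,d\}=\{X^a_0,X^a_2\}\), one computes \(a^3+b^3-c^3-d^3=-3s(X^a_1-X^a_0)(X^a_3-X^a_0)\) and \(a^2+b^2-c^2-d^2=-2(X^a_1-X^a_0)(X^a_3-X^a_0)\). Hence, over \(\Q\),
\[
w^a=-\sigma^a\bigl(F(X^a_1)+F(X^a_3)-F(X^a_0)-F(X^a_2)\bigr),\qquad F(t)=\tfrac13t^3+\tfrac12\varG t^2.
\]
The term \(F(X^a_i)\) enters with sign \(\epsilon^a_i\sigma^{e^a_i}\), exactly as in the proof of Proposition~\ref{prop:boundary-edge-ring}. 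Summing over all nodes, every interior edge therefore cancels. What remains is \(-\sum_{i=0}^3\epsilon^{e_i}\sigma^{e_i}F(X_i)\), reduced modulo \(X_0=0\) and \(\rho^\partial\). Substituting \(X_2\) from \(\rho^\partial\) (equivalently, from the elementary-diagram computation in Remark~\ref{exa:diagrams:basic}) should give exactly \(x\,y\,(x+y+\varG)=xyz\); the elementary diagrams serve as a check on the sign conventions. This identity between polynomials with integer coefficients holds in \(\Q[\varG,X_i]\) modulo the ideal. Since the ideal is generated by linear forms with coefficients \(\pm1\), reduction commutes with the inclusion \(\Z[\ldots]\subset\Q[\ldots]\), so the identity holds over \(\Z\) and hence over any \(\CoeffRing\).

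The third step is freeness. There are \(2n+2\) edges for \(n=|\nodes(D)|\), and \(I(D)\) is generated by \(n+1\) linear forms with coefficients \(\pm1\). Since \(D\) is connected, I would process the nodes in an order in which each successive node has an edge whose variable has not yet been eliminated. This lets me solve each \(\rho^a\) for a new interior variable, except that one relation is consumed as \(\rho^\partial\) among the boundary variables, using Proposition~\ref{prop:boundary-edge-ring}. The result is \(R(D)\cong\Rd[\text{remaining interior variables}]\), a polynomial ring over \(\Rd\) and in particular free. Consequently \(M(\Diag)\), which is free over \(R(D)\), is free over \(\Rd\). The hardest parts will be the sign bookkeeping in the off-diagonal terms of \(M^a_\pm\) and in the reduction of the boundary telescoping sum to \(xyz\), which depend on the conventions for \(\sigma\) and \(\epsilon\); and choosing the elimination order so that each pivot really has coefficient \(\pm1\). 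I would handle the latter by induction on the number of nodes, deleting a node adjacent to the boundary.
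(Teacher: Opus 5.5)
Your proposal is correct in substance and follows the paper's own argument. The paper uses \(p(X)=3\varG X^2+2X^3=6F(X)\) and the same nodewise identity \(6\,x^ay^az^a=\sigma^a\bigl(p(X^a_0)-p(X^a_1)+p(X^a_2)-p(X^a_3)\bigr)\), with interior edges cancelling in pairs. It then works over \(\Z\), divides by the non-zero-divisor \(6\), and cites freeness of \(R(D)\) over \(\Rd(D)\) as well known (a consequence of \cref{thm:edge-ring-generator-criterion-alt-all}), where you instead sketch an elimination.

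Two small repairs are needed. First, \(F(X^a_i)\) enters \(w^a\) with coefficient \(+\epsilon^a_i\sigma^{e^a_i}\), so what survives is \(+\sum_i\epsilon^{e_i}\sigma^{e_i}F(X_i)\); your minus sign would produce \(-xyz\). Even with the correct sign, you get \(xyz\) only when the strand at \(*\) points out of the diagram: in general the boundary signs are \(\epsilon^{e_0}\cdot(+,-,+,-)\). The paper's proof uses this assumption tacitly and imposes it later in \cref{assums:tangle_diagram}. Second, the passage from \(\Q\) to \(\Z\) does not follow merely from the generators having coefficients \(\pm1\) (consider \((X+Y,X-Y)\), whose quotient \(\Z[X]/(2X)\) has torsion). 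It follows instead from \(\Z\)-torsion-freeness of \(\Z[\varG,X_i]/I(D)\), which your unit-pivot elimination in the third step supplies; this is also exactly what makes \(6\) a non-zero-divisor in the paper's version.
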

\begin{proof}
	We first verify that \(M(\Diag)\) 
	satisfies the conditions 
	in \cref{def:mf}. 
	It suffices to check that
	\(R(D)\) is free as an 
	\(\Rd(D)\)-module. 
	This is a well-known fact and 
	may be seen as a corollary of the much stronger statement 
	of \cref{thm:edge-ring-generator-criterion-alt-all}.
	The potential of \(M(\Diag)\) 
	is computed as the sum of the potentials of the individual factors 
	(\cref{def:mf:tensorproduct}).
	First consider the case \(\CoeffRing=\Z\). 
	
	Write $x^a=x^a_{\lab(a)}$ and $y^a=y^a_{\lab(a)}$ 
	for each $a\in\nodes(D)$ 
	so that the matrix factorization $M^a_{\lab(a)}$ has potential $x^ay^az^a$. 
	Now consider the expression 
	\[
	E^a=p(X_0^a)-p(X_1^a)+p(X_2^a)-p(X_3^a)
	\]
	where $p(X)=3\!\varG\cdot X^2 + 2 X^3$. 
	Substituting $X_2^a=X_1^a+X_3^a-X_0^a$ 
	and expanding terms shows that 
	\[
	E^a
	=
	6\cdot
	(X_1^a-X_0^a)(X_3^a-X_0^a)(X_1^a+X_3^a+\varG)
	\] 
	In particular, 
	\begin{equation}\label{eq:explain_some_mf_signs}
	6\cdot\sigma^ax^ay^az^a
	=
	\sigma^a\left(p(X_0^a)-p(X_1^a)+p(X_2^a)-p(X_3^a)\right)
	\end{equation}
	so the potential \(P\) of $M(\Diag)$ satisfies
	\[
	6P
	=
	6\cdot
	\smashoperator{\sum_{a\in\nodes(D)}}
	\sigma^ax^ay^az^a 
	= 
	\smashoperator[r]{\sum_{a\in\nodes(D)}}
	\sigma^a E^a  
	= 
	\sum_{a\in\nodes(D)}
	\smashoperator[r]{\sum_{\substack{\text{edges }e\\\text{\ incident\ to\ }a}}}
	\epsilon^e\sigma^e p(X^e)
	\]
	where \(\epsilon^e=+1\) if the edge \(e\) points away from the node $a$ 
	and \(\epsilon^e=-1\) if \(e\) points towards the node $a$. 
	Note that the last of these expressions gives rise a term $p(X^e)-p(X^e)$ 
	for every internal edge $e$ in the diagram. 
	Hence 
	\[
	6P
	= 
	p(X_0)-p(X_1)+p(X_2)-p(X_3) 
	= 
	6\cdot
	X_1X_3(X_1+X_3+\varG)
	=
	6\cdot
	xyz.
	\] 
	Since 6 is not a zero-divisor in \(\Rd(D)\), 
	we conclude \(P=xyz\). 
	Since the matrix factorization $M(\Diag)$ 
	over arbitrary coefficients \(\CoeffRing\) 
	can be obtained from the matrix factorization 
	with integer coefficients by simply tensoring with \(\CoeffRing\), 
	its potential is also \(xyz\). 
\end{proof}

\begin{table}[t]
	\centering
	\begin{tabular}{c|c|c|c|c}
		\toprule
		\(\lab\)
		&
		\(\arcD\)
		&
		\(\arcT\)
		&
		\(+\)
		&
		\(-\)
		\\
		\midrule
		\(\Diag_{\lab}\)
				&
		\(\vc{\includegraphics[scale=0.5]{figures/inkscape/Label-dotted.pdf}}\)
		&
		\(\vc{\includegraphics[scale=0.5]{figures/inkscape/Label-thick.pdf}}\)
		&
		\(\vc{\includegraphics[scale=0.5]{figures/inkscape/Label-plus.pdf}}\)
		&
		\(\vc{\includegraphics[scale=0.5]{figures/inkscape/Label-minus.pdf}}\)
		\\
		\midrule
		\raisebox{30pt}{\(M(\Diag_{\lab})\)}
		&
		\begin{tikzpicture}[scale=0.8]
			\node[](R1) at (0,1){\(h^0 \hslash^1 q^1 \Rd\)};
			\node[](R2) at (0,-1){\(h^0 \hslash^0 q^0 \Rd\)};
			\draw[->] (R1) to[bend left=7,right] node[tight]{\(x\)} (R2);
			\draw[->] (R2) to[bend left=7,left] node[tight]{\(yz\)} (R1);
		\end{tikzpicture}
		&
		\begin{tikzpicture}[scale=0.8]
			\node[](R1) at (0,1){\(h^0 \hslash^0 q^1 \Rd\)};
			\node[](R2) at (0,-1){\(h^0 \hslash^1 q^0 \Rd\)};
			\draw[->] (R1) to[bend left=7,right] node[tight]{\(y\)} (R2);
			\draw[->] (R2) to[bend left=7,left] node[tight]{\(xz\)} (R1);
		\end{tikzpicture} 
		&
		\begin{tikzpicture}[scale=0.8]
			\node(R1) at (-1.5,1)	{\(h^0 \hslash^0 q^2 \Rd\)};
			\node(R2) at (-1.5,-1)	{\(h^0 \hslash^1 q^1 \Rd\)};
			\node(R3) at (1.7,1)	{\(h^1 \hslash^0 q^3 \Rd\)};
			\node(R4) at (1.7,-1)	{\(h^1 \hslash^1 q^2 \Rd\)};
			\draw[->] (R3) to[bend left=7,right] node[tight]{\(y\)} (R4);
			\draw[->] (R4) to[bend left=7,left] node[tight]{\(xz\)} (R3);
			\draw[->] (R1) to node[tight,pos=0.2,fill=white]{\(1\)} (R4);
			\draw[->] (R2) to node[tight,pos=0.2,fill=white]{\(z\)} (R3);
			\draw[->] (R1) to[bend left=7,right] node[tight]{\(-x\)} (R2);
			\draw[->] (R2) to[bend left=7,left] node[tight]{\(-yz\)} (R1);
		\end{tikzpicture} 
		&
		\begin{tikzpicture}[scale=0.8]
			\node(R1) at (-1.5,1)  {\(h^{-1} \hslash^0 q^{-1} \Rd\)};
			\node(R2) at (-1.5,-1) {\(h^{-1} \hslash^1 q^{-2} \Rd\)};
			\node(R3) at (1.7,1)   {\(h^0    \hslash^0 q^0    \Rd\)};
			\node(R4) at (1.7,-1)  {\(h^0    \hslash^1 q^{-1} \Rd\)};
			\draw[->] (R3) to[bend left=7,right] node[tight]{\(-x\)} (R4);
			\draw[->] (R4) to[bend left=7,left] node[tight]{\(-yz\)} (R3);
			\draw[->] (R1) to node[tight,fill=white,pos=0.2]{\(1\)} (R4);
			\draw[->] (R2) to node[tight,fill=white,pos=0.2]{\(z\)} (R3);
			\draw[->] (R1) to[bend left=7,right] node[tight]{\(y\)} (R2);
			\draw[->] (R2) to[bend left=7,left] node[tight]{\(xz\)} (R1);
		\end{tikzpicture}  
		\\
		\bottomrule
	\end{tabular}
	\medskip
	\caption{%
		Matrix factorizations associated with the elementary diagrams. 
}\label{tab:elementary_mf}
\end{table}

\begin{remark}\label{rem:mfs-well-defined}
	In \cref{def:edge_ring}, 
	we chose the relation between edge variables at each node \(a\) 
	such that adjacent edge variables are identified 
	as soon as the opposite adjacent edge variables are identified. 
	This is a subtle, but important difference to the setup 
	in \cite{Some_diff}. 
	The signs in \eqref{eq:explain_some_mf_signs} 
	are forced (up to an overall sign) 
	by the condition that it be divisible 
	by \(X^a_1-X^a_0\) and \(X^a_3-X^a_0\). 
	These signs are precisely \(\epsilon^e\sigma^e\). 
	For the contributions from interior edges to cancel, 
	the factor \(\sigma^a\) is needed. 
	This accounts for the appearance of the sign \(\sigma^a\) 
	in the variables \(x^a_\arcD = y^a_\arcT\). 
\end{remark}

\begin{remark}\label{rmk:regions}
	It is possible to set up matrix factorizations 
	by working with variables associated 
	with regions rather than with edges. 
	The edge ring would then be defined 
	as the ring generated by the difference 
	between variables \(x_i\) labelling all regions of \(D\) 
	(modulo identifying the regions 
	on either side of the edge near the distinguished end). 
	One can translate between these two perspectives 
	by setting, for any edge \(e\)
	\[
	X^e=
	\begin{cases*}
		x^e_\ell-x^e_r & if \(\sigma^e=+1\)
		\\
		-(x^e_\ell-x^e_r)-\varG & if \(\sigma^e=-1\)
	\end{cases*}
	\]
	where \(x^e_r\) and \(x^e_\ell\) 
	are the labels of the two regions 
	on the right and left of the edge \(e\), 
	respectively. 
	This identity is inspired by the definition 
	of canonical generators in~\cite{JakeSInvariant}. 
	Note that Rasmussen's definition 
	uses a certain mod 2 count of nesting circles and their orientation 
	which is equivalent to our sign \(\sigma^e\). 
\end{remark} 

\subsection{Cube of resolutions of matrix factorizations}
\label{sec:no_wrapping_around_special:mfs_cubical} 

Ultimately, 
we will relate the matrix factorizations 
introduced in \cref{def:mf_of_tangle} 
to the tangle invariants \(\DD(T)\) 
from \cref{sec:review:Kh}. 
In this subsection, 
we take the first step in this identification process
by reinterpreting these matrix factorizations 
as cubes of resolutions. 

\begin{definition}\label{def:cubical}
	Let \(n\) be a non-negative integer. 
	A chain complex or, more generally, a matrix factorization \((M,d)\) 
	is \(n\)-cubical
	if \(M\) admits a decomposition 
	\[
	M=\bigoplus_{v\in\{0,1\}^n}M_v
	\]
	such that the restriction of the differential to 
	\(d^v_{v'}\co M_v\rightarrow M_{v'}\)
	is zero unless \(v\leq v'\). 
	The matrix factorization \((M,d)\) is strictly \(n\)-cubical if, in addition, \(d^v_{v'}=0\) unless \(v\rightarrow v'\) is an edge or \(v=v'\). 
\end{definition}

Let \(\Diag=(D,\lab)\) be a labelled diagram with \(n\) crossings (recall that the crossings correspond to nodes labelled with $+$ or $-$).
To describe the $n$-cubical structure on \(M(\Diag)\) some further notation is required. 
Given a label \(\lab\in\{+,-\}\) define
\[
\lab^{0}
=
\begin{cases*}
	\arcD & if \(\lab=+\)
	\\
	\arcT & if \(\lab=-\)
\end{cases*}
\quad
\text{and}
\quad
\lab^{1}
=
\begin{cases*}
	\arcT & if \(\lab=+\)
	\\
	\arcD & if \(\lab=-\)
\end{cases*}
\]
This has a familiar graphical representation in terms of $0$- and $1$-resolutions 
(compare \cref{sub:complexes}): 
\[
\begin{tikzcd}
	\vc{\includegraphics[scale=0.5]{figures/inkscape/Label-dotted.pdf}}
	&
	\vc{\includegraphics[scale=0.5]{figures/inkscape/Label-plus.pdf}}
	\arrow{l}[swap]{0}
	\arrow{r}{1}
	&
	\vc{\includegraphics[scale=0.5]{figures/inkscape/Label-thick.pdf}}
	&
	\vc{\includegraphics[scale=0.5]{figures/inkscape/Label-thick.pdf}}
	&
	\vc{\includegraphics[scale=0.5]{figures/inkscape/Label-minus.pdf}}
	\arrow{l}[swap]{0}
	\arrow{r}{1}
	&
	\vc{\includegraphics[scale=0.5]{figures/inkscape/Label-dotted.pdf}}
\end{tikzcd}
\]
Given a decorated diagram $(D,\lab)$ and a set of values $v_i\in\{0,1\}$ we can define a new labeling $\lab^v\co \nodes(D)\to \{\arcD,\arcT\}$ using \[
\lab^v(a)
=
\begin{cases*}
	\lab(a)
	&
	if \(\lab(a)\in\{\arcD,\arcT\}\)
	\\
	\lab^{v_i}(a)
	&
	if \(a\) is the \(i^{\text{th}}\) crossing in \(\Diag\)
\end{cases*}
\] where $1\le i \le n$ indexes crossings in the diagram. (Note that crossings inherit an order from the ordering on $\nodes(D)$ that was fixed at the outset as part of the singular diagram $D$.) In particular, for any choice of vertex $v\in\{0,1\}^n$ we obtain a labelling $\lab^v$ from the given $\lab$.

\begin{definition}
For \(\Diag=(D,\lab)\) a labelled diagram with \(n\) crossings, 
the complete resolution \(\Diag(v)\) of \(\Diag\) 
with respect to a vertex \(v\in\{0,1\}^n\) 
is the labelled tangle diagram \((D,\lab^v)\).
\end{definition}

A complete resolution of the diagram 
in \cref{fig:singular-diagram:realized-labelling}
is shown in \cref{fig:arc-system} on page \pageref{fig:arc-system}.

Every $v,v'\in\{0,1\}^n$ that differ in exactly one coordinate determine an edge \(v\rightarrow v'\) in the hypercube \([0,1]^n\) from $v$ (with \(j^\text{th}\) coordinate $0$) to $v'$ (with \(j^\text{th}\) coordinate $1$). For any such pair $v,v'\in\{0,1\}^n$ write \(a^v_{v'}\) for the node in \(D\) corresponding to the \(j^{\text{th}}\) crossing of \(\Diag\).

To describe \(M(\Diag)\) 
as an \(n\)-dimensional cube of resolutions
let \(n_-\) be the number of negative crossings of $\Diag$
and \(n_+\) the number of positive crossings $\Diag$, so that $n=n_++n_-$.
For each vertex \(v\in\{0,1\}^n\) consider the 
matrix factorization
\[
M_v(\Diag)
=
h^{|v|-n_-}\hslash^{|v|+n_+}q^{|v|+n_+-2n_-}
M(\Diag(v))
\] where $\Diag(v)$ is the associated complete resolution of $\Diag$ and $|v|=\sum v_i$ is the height of the vertex. 
Observe that the matrix factorizations 
\(M^a_+\) and \(M^a_-\) from \cref{def:mf_of_tangle}
can be viewed as mapping cones
\begin{equation}\label{eq:mcs}
	M^a_+=
	\left[
	\begin{tikzcd}[column sep=1cm]
		h^0\hslash^1q^1 M^a_\arcD 
		\arrow{r}{d^a_+}
		&
		h^{1}\hslash^0 q^{2} M^a_\arcT
	\end{tikzcd}
	\right]
	\qquad 
	M^a_-=
	\left[
	\begin{tikzcd}[column sep=1cm]
		h^{-1} \hslash^0 q^{-2} M^a_\arcT
		\arrow{r}{d^a_-}
		&
		 h^{0}\hslash^1q^{-1} M^a_\arcD
	\end{tikzcd}
	\right]
\end{equation} 
More precisely, following \cref{def:shifted-mfs},
\begin{equation}
	\label{eq:mcs-precise-maps}
	M^a_+
	= 
	\Cone{f^a_+\co h^1q^1M^a_\arcD \to h^1q^2 M^a_\arcT}
	\qquad 
	M^a_-
	=
	\Cone{f^a_-\co \hslash^1q^{-2}M^a_\arcT \to \hslash^1q^{-1} M^a_\arcD}
\end{equation}
With every edge \(v\rightarrow v'\) we associate the map
\[
d^{v}_{v'}(\Diag)
\co
M_v(\Diag)
\rightarrow
M_{v'}(\Diag)
\]
given by \(d^a_{\lab(a)}\) from 
\eqref{eq:mcs} 
in the component corresponding to the node \(a=a^v_{v'}\) 
tensored with the identity maps in all other components.

\begin{proposition}\label{prop:mf_is_cubical}
	For any labelled diagram \(\Diag\) with \(n\) crossings, 
	\(M(\Diag)\) is strictly \(n\)-cubical
	with \(((M(\Diag))_v,d^v_v)=M_v(\Diag)\) and 
	differentials along edges \(v\rightarrow v'\) equal to \(\pm d^{v}_{v'}(\Diag)\). 
\end{proposition}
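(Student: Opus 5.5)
The plan is to induct on the number of crossings, using only the tensor product structure \(M(\Diag)=\bigotimes_a M^a_{\lab(a)}\) and the cone descriptions \eqref{eq:mcs}. Write the crossings as \(a_1,\dots,a_n\) in the fixed order. Collect all nodes labelled \(\arcD\) or \(\arcT\) into a single factor \(N=\bigotimes_{\lab(a)\in\{\arcD,\arcT\}}M^a_{\lab(a)}\). Then
\[
M(\Diag)=N\otimes M^{a_1}_{\lab(a_1)}\otimes\cdots\otimes M^{a_n}_{\lab(a_n)},
\]
up to the canonical reordering isomorphism of tensor factors. With the Koszul rule this reordering introduces only signs, which are exactly the \(\pm\) allowed in the statement.

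Each crossing factor \(M^{a_j}_{\pm}\) decomposes, by \eqref{eq:mcs}, as a direct sum of its \(0\)-part (\(M^{a_j}_{\lab^0}\), suitably shifted) and its \(1\)-part (\(M^{a_j}_{\lab^1}\), suitably shifted). With respect to this splitting the differential is upper triangular. Its diagonal entries are the internal differentials of the two Koszul factors, read off from \cref{tab:elementary_mf}. Its single off-diagonal entry is \(d^{a_j}_\pm\), and there is no component going from the \(1\)-part back to the \(0\)-part. I will verify this by inspecting the two-by-two pictures in \cref{def:mf_of_tangle}: the diagonal maps \(1\) and \(z^a\) only go left to right.

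Expanding the tensor product gives \(M=\bigoplus_{v\in\{0,1\}^n}M_v\), where \(M_v\) is \(N\) tensored with the \(v_j\)-part of each crossing factor. This is exactly \(M(\Diag(v))\), because the \(v_j\)-part of \(M^{a_j}_\pm\) is \(M^{a_j}_{\lab^{v_j}(a_j)}\). The differential \(d\otimes1+1\otimes d'+\cdots\) is a sum of terms, each acting on a single tensor factor. A term acting on \(N\), or on the diagonal part of some crossing factor, preserves \(v\). The term \(d^{a_j}_\pm\) changes only the \(j\)th coordinate, from \(0\) to \(1\). Consequently \(d^v_{v'}=0\) unless \(v=v'\) or \(v\to v'\) is an edge, which is strict \(n\)-cubicality. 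The edge component is \(d^{a}_{\lab(a)}\) tensored with identities, up to the Koszul sign from moving past the earlier factors; that sign is \((-1)^{\hslash}\) of those factors.

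What remains is to check that the grading shifts agree: the summand indexed by \(v\) should equal \(h^{|v|-n_-}\hslash^{|v|+n_+}q^{|v|+n_+-2n_-}M(\Diag(v))\). Since \(h\), \(\hslash\) and \(q\) shifts are additive under \(\otimes\), it suffices to check this for one crossing. From \eqref{eq:mcs}, for a positive crossing the \(0\)-part carries \(h^0\hslash^1q^1\) and the \(1\)-part carries \(h^1\hslash^0q^2=h^{1}\hslash^{1+1}q^{1+1}\) modulo 2. For a negative crossing the \(0\)-part carries \(h^{-1}\hslash^0q^{-2}\) and the \(1\)-part carries \(h^0\hslash^1q^{-1}\). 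These match the formula with \(n_\pm\) equal to \(1\) or \(0\).

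The only delicate point is the sign bookkeeping. Shifting by \(\hslash^i\) multiplies the differential by \((-1)^i\) (\cref{def:shifted-mfs}), so \(d^v_v\) may differ from the differential of the shifted \(M(\Diag(v))\) by a sign. I will absorb this into an isomorphism that rescales each summand by \(\pm1\). I expect this to be the main obstacle: the rescaling must be chosen consistently, for instance by the parity of \(\hslash\)-degrees of the preceding factors, so that the diagonal parts agree on the nose while the edge maps change only by \(\pm\).
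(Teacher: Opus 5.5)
Your proposal is correct and is the paper's argument written out in detail: the paper's whole proof is that the claim is immediate from the definitions, with the edge signs coming from the Koszul rule for the tensor product. One point needs to be phrased precisely. The identification of each summand with \(M_v(\Diag)\) must use the degree-dependent shift isomorphism \(\hslash^sP\otimes\hslash^tQ\cong\hslash^{s+t}(P\otimes Q)\), \(p\otimes q\mapsto(-1)^{t\,\hslash(p)}p\otimes q\), rather than a constant sign per summand, which would leave \(d^v_v\) unchanged; under this isomorphism the element-dependent Koszul signs on each edge component cancel, leaving one global sign per edge, because the cone maps \(f^a_\pm\) are even as maps between the unshifted factors.
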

\begin{proof}
	Immediate from the definitions. 
	Note that the signs along the edge maps 
	come from the Koszul sign rule for the tensor product. 
\end{proof}

\subsection{Delooping the cube of resolutions of matrix factorizations}
\label{sec:no_wrapping_around_special:delooping_mfs}

From now, we let \(T\) denote a pointed Conway tangle 
and \(\Diag_T=(D_T,\lab_T)\) a labelled diagram associated with it. 

\newcommand{\AssumptionT}{\(\mathrm{(A}T\mathrm{)}\)}
\newcommand{\AssumptionD}{\(\mathrm{(A}\Diag\mathrm{)}\)}
\begin{assumptions}\label{assums:tangle_diagram}
	We will make the following assumptions about \(T\) and \(\Diag_T\): 
\begin{enumerate}
	\item[\AssumptionT]%
	The tangle ends \(\Endiii\) and \(\ast\) of \(T\) point outwards.
	\item[\AssumptionD] For any complete resolution of \(\Diag_T\), there exists some thick or dotted arc connecting the open components.
\end{enumerate}
\end{assumptions}

Any tangle can be made to satisfy condition \AssumptionT\
by potentially reversing the orientation on some components
and/or applying the rotation \(\rho\) from \cref{subsec:naturalityMCG}.
Moreover, we note:

\begin{lemma}\label{lem:many-good-diagrams}
	For every tangle \(T\) satisfying \AssumptionT,
	there is a diagram \(\Diag_T\) for \(T\) satisfying \AssumptionD. 
\end{lemma}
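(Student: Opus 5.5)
The plan is to start from an arbitrary diagram of \(T\) and modify it by a few Reidemeister~II moves near the boundary of the disk. The aim is that, in every complete resolution, the nodes created by these moves carry a dotted or thick arc whose two endpoints are forced to lie on the two different open components. Reidemeister moves do not change the tangle, so the new diagram is still a diagram for \(T\). Condition \AssumptionD\ is only about complete resolutions, and these are obtained by relabelling nodes without changing the singular diagram \(D_T\). So it suffices to find a local configuration near \(\partial D^2\) whose nodes have this property for every choice of labels \(\arcD,\arcT\).

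The key local observation concerns a node whose two boundary-side edges both point outward. By \AssumptionT, the ends \(\ast\) and \(\Endiii\) are such a pair. In the singular diagram, the edges at a node come in two incoming and two outgoing edges, and both the \(\arcD\)- and the \(\arcT\)-resolution pair each outgoing edge with an incoming one. Hence neither resolution can join the two outgoing boundary edges to each other. The dotted or thick arc at that node therefore connects the resolved strand through the first end to the resolved strand through the second end.

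I will push a short outward-pointing segment of the strand ending at \(\ast\) across the strand ending at \(\Endiii\), passing close to the boundary arc between them. This creates a pair of nodes, and the node nearest the boundary has \(\ast\) and \(\Endiii\) as its two boundary-side ends, both outgoing. If in a given resolution the strands through \(\ast\) and \(\Endiii\) lie on different open components, this node already supplies the required arc.

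The remaining case is a resolution in which the open component through \(\ast\) ends at \(\Endiii\). Then the other open component joins \(\Endi\) and \(\Endii\). To cover it, I will add a second, similar finger move between the strand at \(\ast\) and a strand at \(\Endi\) or \(\Endii\), reversing orientations via the rotation \(\rho\) if necessary. I then have to check, using the orientation constraints at \(\ast\) and at \(\Endi,\Endii\) (the latter two point inward), that in every resolution at least one of the finitely many new nodes has its arc joining the two open components. This reduces to a finite case analysis over the resolutions of the new nodes. The resolution of the rest of the diagram only affects which ends are paired, and there are just three possible pairings.

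I expect this case analysis to be the main obstacle. The difficulty is choosing the local configuration so that no resolution of the inserted crossings joins the two ends near a new node into the same open component while simultaneously avoiding the other open component. If two finger moves are not enough, I would first try inserting a full twist of the two strands at \(\ast\) and \(\Endiii\), which gives more nodes and more freedom.
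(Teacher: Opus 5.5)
Your key local observation is false, and the configuration built on it cannot work. At any node the two outgoing edges \(e^a_0,e^a_3\) are cyclically adjacent. The \(\arcT\)-resolution joins them to each other (and \(e^a_1\) to \(e^a_2\)); this is what \(x^a_\arcT=X^a_3-X^a_0\) encodes, cf.\ \cref{lem:x-a-identifies}. Only the \(\arcD\)-resolution pairs incoming with outgoing edges.

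Your first case still comes out right: if \(*\) and \(\Endiii\) lie on different open components, the outer crossing \(c_1\) of your finger cannot join them, so its arc is bonding. The remaining case, however, cannot be repaired. Resolve \(c_1\) so that its edges to \(*\) and \(\Endiii\) are joined. Resolve the inner crossing \(c_2\) so that its two bigon edges are joined; they are cyclically adjacent, so this is one of its two resolutions. Then the open component through \(*\) is the short strand \(*\to c_1\to\Endiii\), whose only node is \(c_1\), and the arc at \(c_1\) ends on the circle formed by the bigon. So no arc meets both open components, and nothing added elsewhere can change this. The same obstruction applies to any finger whose outer crossing is adjacent to two tangle ends, in particular to your proposed second finger.

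More generally, closing up every bigon gives a complete resolution in which each arc created by disjoint Reidemeister~II fingers has an endpoint on a circle. Take the crossingless diagram with strands joining \(*\) to \(\Endi\) and \(\Endii\) to \(\Endiii\), which satisfies \AssumptionT\ for a suitable orientation. Starting from it, finger moves alone never produce \AssumptionD, so adding fingers to an arbitrary diagram cannot prove the lemma. Also, \(\rho\) and a full twist change the tangle, so neither is available here. Nor are they needed for orientations, since \AssumptionT\ already forces \(\Endi\) and \(\Endii\) to point inwards.

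The missing idea is that a labelled diagram may contain nodes labelled \(\arcD\) or \(\arcT\) that are not crossings. These are erased in \(|\Diag|\), so inserting one does not change the tangle, and they keep their label in every complete resolution.

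The paper performs one Reidemeister~II move between the strands at \(\Endii\) and \(\Endiii\). It then inserts a dotted arc next to the boundary between the strands at \(\Endi\) and \(\Endii\). Both of these strands point inwards by \AssumptionT, which is exactly what makes a \(\arcD\)-node there compatible with the orientation convention at nodes.

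In every complete resolution the dotted arc joins the components through \(\Endi\) and \(\Endii\), so it is bonding unless \(\Endi\) and \(\Endii\) are paired. In that case \(\Endii\) and \(\Endiii\) lie on different open components. The outer Reidemeister~II crossing, whose boundary-side edges lead to \(\Endiii\) and (through the dotted node) to \(\Endii\), then cannot join them, so its arc is bonding. In the bad resolution described above, the short strand from \(\Endii\) to \(\Endiii\) now passes through the dotted node, which is what rescues it.
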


\begin{proof}
	Take any connected diagram for \(T\). 
	We perform a Reidemeister~II move between the edges \(e_2\) and \(e_3\)
	and then add a dotted arc parallel to the boundary between the edges \(e_1\) and \(e_2\).
	The new diagram satisfies condition \AssumptionD.
\end{proof}

Note that the example from \cref{fig:singular-diagram} satisfies \AssumptionT\ and \AssumptionD.
We expect that these assumptions are not essential for what is to follow.
However, they simplify the proof of \cref{lem:delooping:mf} below. 

\begin{definition}\label{def:diagram:components}
	Given a complete resolution \(\Diag\) of \(\Diag_T\),
	denote the open component 
	connected to the distinguished end by \(o^*\) 
	and the other open component by \(o\). 
	Furthermore, in analogy with \cref{def:connectivity}, let 
	\[\begin{aligned}
		\conn{\Diag}&= \begin{cases}
			\arcD\text{ if the open components in \(\Diag\) connect the ends as in \(\DiagD\) }\\
			\arcT\text{ if the open components in \(\Diag\) connect the ends as in \(\DiagT\) }
		\end{cases} %
	\end{aligned}
	\] 
	We define $o(\Diag)= \Diag_{\conn{\Diag}}$.
\end{definition}

Let \(\diamond\) denote a copy of the boundary edge ring \(\Rd\cong\CoeffRing[x,y,z]\) from \cref{exa:diagrams:basic}.
\begin{definition}\label{def:mf_algebra}
	 Consider the full subcategory of the dg category of matrix factorizations $\MF_{xyz}(\CoeffRing[x,y,z])$ 	generated by the matrix factorizations
	\[
	M_\arcD
	=
	\left[
	\vc{\begin{tikzcd}[column sep=1cm, ampersand replacement=\&]
			\hslash^1q^1\diamond
			\arrow[d,bend left=10, "x"right]
			\\
			\hslash^0q^0\diamond
			\arrow[u,bend left=10, "yz"left]
	\end{tikzcd}}
	\right]
	\quad
	M_\arcT
	=
	\left[
	\vc{\begin{tikzcd}[column sep=1cm, ampersand replacement=\&]
			\hslash^0q^1\diamond
			\arrow[d,bend left=10, "y"right]
			\\
			\hslash^1q^0\diamond
			\arrow[u,bend left=10, "xz"left]
	\end{tikzcd}}
	\right]
	\] 
	and let $\mathcal{A}$ denote the dg algebra of endomorphisms $\operatorname{End}(M_\arcD\oplus M_\arcT)$. 
	Note that $M_\arcD=M(\Diag_\arcD)$ and $M_\arcT=M(\Diag_\arcT)$, which are supported in \(h\)-grading 0.
	\cref{fig:saddles_and_dots_in_A} 
	defines four distinguished homomorphisms 
	\(\sW,\sB,\dW,\dB \in \mathcal{A}\).
	The notation is informed by the isomorphism 
	between the algebras
	$H_0\mathcal{A}$ and $\mathcal{B}$,
	which we make explicit in \cref{sec:quasi-iso}; 
	see in particular \cref{thm:quasi-iso}.
\end{definition}

\begin{figure}[t]
	\centering
	\begin{subfigure}{0.25\textwidth}
		\centering
		\(
		\begin{tikzcd}[ampersand replacement=\&]
			\diamond
			\arrow[d,bend left=10, "y"right]
			\arrow[in=180,out=0,looseness=0.7,leftarrow]{rd}[inner sep=1pt, outer sep=1pt, description,fill=white, pos=0.3]{z}
			\&
			\diamond
			\arrow[d,bend left=10, "x"right]
			\\
			\diamond
			\arrow[u,bend left=10, "xz"left]
			\arrow[in=180,out=0,looseness=0.7,leftarrow]{ru}[inner sep=1pt, outer sep=1pt, description,fill=white, pos=0.3]{1}
			\&
			\diamond
			\arrow[u,bend left=10, "yz"left]
		\end{tikzcd}
		\)
		\caption{\(\sW\)}
	\end{subfigure}%
	\begin{subfigure}{0.25\textwidth}
		\centering
		\(
		\begin{tikzcd}[ampersand replacement=\&]
			\diamond
			\arrow[d,bend left=10, "x"right]
			\arrow[in=180,out=0,looseness=0.7,leftarrow]{rd}[inner sep=1pt, outer sep=1pt, description,fill=white, pos=0.3]{z}
			\&
			\diamond
			\arrow[d,bend left=10, "y"right]
			\\
			\diamond
			\arrow[u,bend left=10, "yz"left]
			\arrow[in=180,out=0,looseness=0.7,leftarrow]{ru}[inner sep=1pt, outer sep=1pt, description,fill=white, pos=0.3]{1}
			\&
			\diamond
			\arrow[u,bend left=10, "xz"left]
		\end{tikzcd}
		\)
		\caption{\(\sB\)}
	\end{subfigure}%
	\begin{subfigure}{0.25\textwidth}
		\centering
		\(
		\begin{tikzcd}[ampersand replacement=\&]
			\diamond
			\arrow[d,bend left=10, "x"right]
			\arrow[leftarrow]{r}[inner sep=1pt, outer sep=1pt, description,fill=white]{y}
			\&
			\diamond
			\arrow[d,bend left=10, "x"right]
			\\
			\diamond
			\arrow[u,bend left=10, "yz"left]
			\arrow[leftarrow]{r}[inner sep=1pt, outer sep=1pt, description,fill=white]{y}
			\&
			\diamond
			\arrow[u,bend left=10, "yz"left]
		\end{tikzcd}
		\)
		\caption{\(\dW\)}
	\end{subfigure}%
	\begin{subfigure}{0.25\textwidth}
		\centering
		\(
		\begin{tikzcd}[ampersand replacement=\&]
			\diamond
			\arrow[d,bend left=10, "y"right]
			\arrow[leftarrow]{r}[inner sep=1pt, outer sep=1pt, description,fill=white]{x}
			\&
			\diamond
			\arrow[d,bend left=10, "y"right]
			\\
			\diamond
			\arrow[u,bend left=10, "xz"left]
			\arrow[leftarrow]{r}[inner sep=1pt, outer sep=1pt, description,fill=white]{x}
			\&
			\diamond
			\arrow[u,bend left=10, "xz"left]
		\end{tikzcd}
		\)
		\caption{\(\dB\)}
	\end{subfigure}%
	\caption{Four elements of \(\A\) generating the algebra \(H_0\mathcal{A}\), where each $\diamond$ is a copy of $\Rd\cong\CoeffRing[x,y,z]$. Morphisms are written right-to-left; compare \cref{rmk:right-to-left}}\label{fig:saddles_and_dots_in_A}
\end{figure}
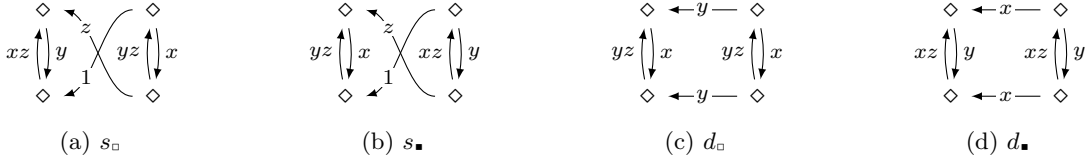

Let $c$ be a closed component of a complete resolution $\Diag_T(v)$ associated with a  tangle diagram $\Diag_T$. Consider the $\CoeffRing$-algebra
\[
	V(c)= q^1 h^0 \hslash^0\CoeffRing[\varG,\mu]/(\mu^2 + \varG \mu)
	\]
where \(\text{gr}(\mu)=h^0\hslash^0q^{-2}\), and notice that $V(c)\cong q^1 \CoeffRing[\varG]\oplus q^{-1}\CoeffRing[\varG]$ supported in $h=\hslash=0$, which arises from the basis $V(c)= 
		\CoeffRing[\varG]
		\big\langle
		1,\mu
		\big\rangle$. 
		These objects are the algebraic ingredients necessary for delooping a matrix factorization $M_v(\Diag_T)$, in the sense of Bar-Natan; see \cref{lem:delooping}.

	\begin{definition}\label{def:delooping-algebra}
	If \(|\Diag_T(v)|\) has \(m\) closed components labelled \(c_1, \dots, c_m\) write 	\[
	\bigotimes_{i=1}^m V(c_i)
	=
	q^{m}h^0\hslash^0\CoeffRing[\varG,\mu_1,\ldots,\mu_m]/(\mu_i^2 + \varG \mu_i)_{i=1,\dots,m}
	\]
	and set $V(\Diag)
	=
	\bigotimes_{i=1}^m V(c_i)$
	for $\Diag=\Diag_T(v)$, where the tensor product is taken over \(\CoeffRing[\varG]\). 
	\end{definition}
	
	If \(\Diag'=\Diag_T(v')\) is a second complete resolution of \(\Diag_T\) with closed components \(c'_1,\dots, c'_{m'}\) for some integer \(m'\), equip
	\begin{equation}\label{eqn:mf:only_circles}
		\Mor_{\CoeffRing[\varG]}\big(V(\Diag), V(\Diag')\big)
		= 
		\Big(
		\bigotimes_{i=1}^{m}
		V^*(c_i)
		\Big)
		\otimes_{\CoeffRing[\varG]}
		\Big(
		\bigotimes_{j=1}^{m'}
		V(c'_j)
		\Big)
	\end{equation}
	with the basis induced by the following bases on the tensor factors:
	\begin{align*}
		V^*(c_i)
		= 
		\CoeffRing[\varG]
		\big\langle
		1^*_i,\mu^*_i
		\big\rangle 
		\qquad \quad
		V(c'_j)
		= 
		\CoeffRing[\varG]
		\big\langle
		1_j,\mu_j
		\big\rangle
	\end{align*}
	Given an edge \(v\rightarrow v'\) in \(\{0,1\}^n\),
	 the diagrams \(\Diag\) and \(\Diag'\) 
	 away from the node \(a^v_{v'}\) 
	 gives rise to a identification 
	 of the closed components $c_i=c'_{\sigma(i)}$ 
	 that avoid \(a^v_{v'}\), 
	 where $\sigma$ is some bijection 
	 between subsets of \(\{1,\dots,m\}\) and \(\{1,\dots,m'\}\). 
	 To ease notation, 
	 we will write
	\[
 I
	=\bigotimes_{\ell} 
	(1_{\sigma(\ell)}1^*_\ell + \mu_{\sigma(\ell)} \mu^*_\ell)
	\in
	\Big(
	\bigotimes_{\ell}
	V^*(c_\ell)
	\Big)
	\otimes_{\CoeffRing[\varG]}
	\Big(
	\bigotimes_{\ell}
	V(c'_{\sigma(\ell)})
	\Big)
	\]

\begin{definition}\label{def:induced_saddle_maps:mf} For every edge \(v\rightarrow v'\) in \(\{0,1\}^n\), we define a map of matrix factorizations over~$\Rd$ 
	\[
	\mathcal{S}^{v}_{v'}
	\co
	V(\Diag)
	\otimes_{\CoeffRing[\varG]}
	M(o(\Diag))
	\rightarrow
	V(\Diag')
	\otimes_{\CoeffRing[\varG]}
	M(o(\Diag'))
	\] according to three cases: 
	If  \(\conn{\Diag}\neq\conn{\Diag'}\) set
	\[
	\mathcal{S}^{v}_{v'}=
	\begin{cases*}
		I\otimes \sW
		&
		if \(\conn{\Diag}=\arcD\) and \(\conn{\Diag'}=\arcT\)
		\\
		I\otimes \sB
		&
		if \(\conn{\Diag}=\arcT\) and \(\conn{\Diag'}=\arcD\)
	\end{cases*}
	\]
	On the other hand, when \(\conn{\Diag}=\conn{\Diag'}=\operatorname{x}\), if \(\Diag'\) is obtained from \(\Diag\) by merging two components set
	\[
	\mathcal{S}^{v}_{v'}=
	\begin{cases*}
		\big(
		1_k 1_i^*1_j^* + \mu_k \mu_i^*1_j^* + \mu_k 1_i^*\mu_j^* -\varG\cdot \mu_k \mu_i^*\mu_j^*
		\big) 
		\otimes 
		I
		\otimes 
		1
		&
		if \(c_i\) and \(c_j\) merge to \(c_k\)
		\\
		1_i^* 
		\otimes
		I
		\otimes 
		\iW
		+ 
		\mu_i^*
		\otimes
		I
		\otimes 
		\dW
		&
		if \(c_i\) merges with \(o\)  and \(\operatorname{x}=\arcD\)
		\\
		1_i^* 
		\otimes
		I
		\otimes 
		\iB
		+ 
		\mu_i^*
		\otimes
		I
		\otimes 
		\dB
		&
		if \(c_i\) merges with \(o\)  and \(\operatorname{x}=\arcT\)
		\\
		1_i^*
		\otimes 
		I
		\otimes
		1 
		&
		if \(c_i\) merges with \(o^*\)
	\end{cases*}
	\]
	and if \(\Diag'\) is obtained from \(\Diag\) by splitting two components set
	\[
	\mathcal{S}^{v}_{v'} =
	\begin{cases*}
		\big(
		\mu_i1_j1_k^* 
		+ 
		1_i\mu_j 1_k^*
		+
		\varG\cdot 1_i1_j1_k^*
		+
		\mu_i\mu_j\mu_k^*
		\big)
		\otimes
		I
		\otimes 
		1
		&
		if \(c_k\) splits into \(c_i\) and \(c_j\)
		\\
		\mu_i
		\otimes 
		I
		\otimes
		\iW
		+
		1_i
		\otimes
		I 
		\otimes
		\sB\sW
		&
		if \(c_i\) splits from \(o\) and \(\operatorname{x}=\arcD\)
		\\
		\mu_i
		\otimes 
		I
		\otimes
		\iB
		+
		1_i
		\otimes
		I 
		\otimes
		\sW\sB
		&
		if \(c_i\) splits from \(o\) and \(\operatorname{x}=\arcT\)
		\\
		\mu_i
		\otimes 
		I
		\otimes
		1 
		+
		\varG
		\cdot
		1_i
		\otimes 
		I
		\otimes
		1 
		&
		if \(c_i\) splits from \(o^*\)
	\end{cases*}
	\]
\end{definition}

\begin{lemma}\label{lem:delooping:mf}
	For every \(v\in\{0,1\}^n\)
	there exists a special deformation retract
	\[
	\varphi_v
	\co
	M(\Diag_T(v))
	\rightarrow
	\hslash^{\mathtt{i}_v}
	V(\Diag_T(v))
	\otimes_{\CoeffRing[\varG]}
	M(o(\Diag_T(v)))
	\]
	of matrix factorizations over \(\Rd\) 
	with right inverse \(\psi_v\) 
	where 
	\[
	\mathtt{i}_v
	= 
	\#\{\text{thick arcs in \(\Diag_T(v)\)}\}
	+
	\#\{\text{closed components of \(\Diag_T(v)\)}\}
	+ 
	\begin{cases*}
		1
		&
		if \(\conn{\Diag_T(v)}=\arcT\)
		\\
		0
		&
		if \(\conn{\Diag_T(v)}=\arcD\)
	\end{cases*}
	\]
	such that for every edge \(v\rightarrow v'\) in \(\{0,1\}^n\) 
	the composition 
	\(\varphi_{v'}\circ \boldnowarningf \circ \psi_v\) 
	is chain homotopic to \(\mathcal{S}^{v}_{v'}\) (up to sign), where $\boldnowarningf$ is the  map $f_\pm^a$ from \cref{eq:mcs-precise-maps} tensored appropriately with the requisite identity factors and $a=a^{v}_{v'}$ is the node corresponding to the entry where $v$ and $v'$ differ.
\end{lemma}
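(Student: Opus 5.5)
We construct \(\varphi_v\) by repeated application of the two special deformation retracts from \cref{lem:lin_sp_def_retr,lem:quad_sp_def_retr}, and then glue them together using \cref{lem:sdr-new-from-old}. Fix a complete resolution \(\Diag=\Diag_T(v)\). By definition, \(M(\Diag)\) is the tensor product of Koszul factorizations \(K(x^a;y^az^a)\) over the edge ring \(R(D)\), one for each node. We regard this as a matrix factorization over a polynomial ring in all edge variables, and eliminate the interior edge variables one arc at a time.

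The first step treats an arc whose two endpoints lie on different components of \(|\Diag|\), or which joins an open component to itself. Here the linear factor \(x^a\) (either \(\pm(X^a_1-X^a_0)\) or \(X^a_3-X^a_0\)) is, after a change of polynomial generators, a free variable. \cref{lem:lin_sp_def_retr} removes that Koszul factor and identifies the two corresponding edge variables. Assumption \AssumptionD\ lets us organise the elimination along a spanning tree of arcs, so that every interior variable eventually becomes equal, up to the signs \(\sigma^e\) and the affine shift by \(\varG\) from \cref{rmk:regions}, to one of the component variables. What remains is one variable \(\mu_i\) per closed component \(c_i\), and boundary variables for \(o\) and \(o^*\). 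The thick arcs count toward \(\mathtt{i}_v\) because each of them contributes a \(\hslash^1\) shift in \(M_\arcT\).

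The second step handles each closed component. Some arc on \(c_i\) closes up a cycle, and the corresponding factor becomes \(K(0;p(\mu_i))\) with \(p(\mu)=\mu^2+\varG\mu\). This is the point where the choice of \(z^a=X^a_1+X^a_3+\varG\) and of the signs \(\sigma\) has to be checked. \cref{lem:quad_sp_def_retr} then replaces this factor by \(\hslash^1q^1\CoeffRing[\varG,\mu_i]/(\mu_i^2+\varG\mu_i)=\hslash^1V(c_i)\), which accounts for one \(\hslash\) per closed component. The open components leave exactly one surviving factor, namely \(M_\arcD\) or \(M_\arcT\) according to \(\conn{\Diag}\). When that factor is \(M_\arcT\) we pick up the extra \(\hslash^1\), by the remark \(M_\arcT=\hslash^1K(-y;-xz)\). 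Composing the retracts gives \(\varphi_v\) and \(\psi_v\), and the bigradings agree because each lemma is stated in the bifiltered setting.

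The third step, which is the main obstacle, is the edge compatibility: we must show \(\varphi_{v'}\circ\boldnowarningf\circ\psi_v\simeq\pm\mathcal{S}^v_{v'}\). Because all the retracts are local, we reduce to a neighbourhood of the node \(a=a^v_{v'}\), choosing the eliminations for \(v\) and \(v'\) to agree away from \(a\). The map \(f^a_\pm\) is the matrix \((1,z)\) or its analogue between \(M^a_\arcD\) and \(M^a_\arcT\). The eight cases (merge or split, involving two circles, a circle with \(o\), a circle with \(o^*\), or a change of connectivity) are then checked by computing the induced map on the quotients \(R/(\ldots)\). For example, in a merge with \(o\), the element \(\mu_i\) maps to multiplication by the edge variable of \(o\), and this is \(\dW\) or \(\dB\). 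In a merge with \(o^*\), \(\mu_i\) maps to \(X_0=0\), which gives the \(1^*_i\) term only. The splitting cases use the inverse identification \(1\mapsto\mu_i+\varG\) together with the factorisation through \(\sB\sW\). Only the degree-zero part of each chain map needs to be determined, since any discrepancy lies in the image of \(\partial\) through the homotopies \(\varepsilon\). I would carry out this computation first for the connectivity-changing case, where \(\sW\) and \(\sB\) appear directly as the restriction of \(f^a_\pm\).
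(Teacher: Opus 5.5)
Your three-stage outline has the right shape, but the first two stages assign the arcs to the wrong roles, and this carries into the third. The \(x^a\) that are free over \(\Rd[Y_1,\dots,Y_m]\) are those of the arcs \emph{outside} an arc system \(A\), that is, a spanning tree of the component graph containing exactly one bonding arc \(a^*\). This includes every arc joining a closed component to itself. For a tree arc \(a\in A\smallsetminus\{a^*\}\), which joins two \emph{different} components, \(x^a\) is not free: it lies in the ideal generated by the \(x^{a'}\) with \(a'\notin A\) (\cref{lem:criterion-x-in-mathfrak-a}). The most basic delooping already contradicts your first step. A circle meeting a single arc \(a\) consists of one edge of \(D\), so \(x^a=0\) holds in \(R\) itself, and there is no ``cycle-closing'' arc on that circle. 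It is the tree arcs that become \(K(0;y^az^a)\) and receive \cref{lem:quad_sp_def_retr}.

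Moreover, the relation these arcs impose is not \(\mu_i^2+\varG\mu_i=0\). Combining the tree arcs on the path from \(c_i\) to the open segment \(c_\star\) it reaches gives \(Y_i^2+\varG Y_i=\Delta_\star\), where \(\Delta_\star=Y_\star(Y_\star+\varG)\in\{0,\,x(z-y),\,(x+y)z,\,y(z-x)\}\). Hence \(R/\mathfrak{A}\cong V(\Diag)\otimes\Rd\) holds only as \(\Rd\)-modules, and the induced edge maps are not \(\mathcal{S}^v_{v'}\). Circle merges and splits acquire extra terms \(\Delta_\star\), and a circle merging with an open segment picks up \(Y_\star\). For instance, when \(\conn{\Diag}=\arcD\), merging into the segment of \(o^*\) ending at \(\Endi\) gives \(\mu_i\mapsto x\), not \(0\). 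These discrepancies disappear only because \(x\cdot\iW\), \(yz\cdot\iW\), \(y\cdot\iB\), \(xz\cdot\iB\), and hence \(\Delta_\star\cdot\iota_\arcD\) and \(\Delta_\star\cdot\iota_\arcT\), are null-homotopic on \(M_\arcD\) and \(M_\arcT\). This is a fact about the targets. It does not come from the homotopies \(\varepsilon\) of the retracts, nor from passing to a degree-zero part.

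The more serious gap is in the third stage. The lemma needs one \(\varphi_v\) per vertex that satisfies the edge condition for \emph{every} edge at \(v\), but you choose the eliminations at \(v\) and \(v'\) separately for each edge. The retracts are not local: \(\varphi_v\) depends globally on the tree and on \(a^*\), which determine the values \(Y_\star\) and the identification \(f_A\).

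The choices that make the edge computations work are as follows. When the changed node \(\hat a\) is a bonding arc, one needs \(a^*=\hat a\). For merges and splits, \(\hat a\) must belong to the arc system on the side with one more circle and not on the other side. Different edges at one vertex can demand conflicting choices. For example, \(\Diag_T(v)\) may contain several bonding arcs while an arc system contains only one, or two arcs may join the same circle to \(o\).

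The missing ingredient is \cref{prop:retracts-independent-of-arc-system}: \(\varphi_A\) is independent of the arc system up to homotopy and sign. After reducing to arc systems differing in one arc, this needs an explicit homotopy in two cases. Exchanging the bonding arc uses \([\tau^{a_1}_{\Diag}x^{a_1}]+[\tau^{a_2}_{\Diag}x^{a_2}]=\lambda(\Diag)\) in \(R/\mathfrak{a}(A_1\cup A_2)\). Exchanging a non-bonding tree arc uses that \(y^{a_1}z^{a_1}\mp y^{a_2}z^{a_2}\) reduces to an element of \(\Rd\) once \(x^{a_1}\) and the common ideal are killed. Only with this independence result can you pick convenient arc systems edge by edge.
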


This lemma is well-known in the setting of closed link diagrams. 
However we require a version for tangles as well as some control over morphisms induced by this process. 
The latter represents the bulk of the work. 
We defer the proof of \cref{lem:delooping:mf} to \cref{sec:delooping:mf}, as it is technically involved and the details are irrelevant to the rest of this section. 

Observe that, for any fixed diagram \(\Diag_T\), 
the quantity \(\mathtt{i}=\mathtt{i}_v\) 
is constant modulo 2.  

\begin{theorem}\label{thm:delooping:mf}
The matrix factorization \(M(\Diag_T)\) 
is chain homotopy equivalent, 
as a matrix factorization over \(\Rd\), 
to an \(n\)-cubical matrix factorization \((M^s(\Diag_T),d^s)\). 
Moreover, 
writing \((M^s(\Diag_T))_v= M^s_v(\Diag_T)\) 
for every vertex \(v\in\{0,1\}^n\), 
we have 
\[
M^s_v(\Diag_T) 
= 
h^{|v|-n_-}
\hslash^{|v|+n_++\mathtt{i}}
q^{|v|+n_+-2n_-}
V(\Diag_T(v))
\otimes_{\CoeffRing[\varG]}
M(o(\Diag_T(v))) 
\] 
and, 
if \(v\) and \(v'\) are related by an edge \(v\rightarrow v'\), 
the component \((d^s)^v_{v'}\) of the differential 
is chain homotopic 
to the homomorphism \(\pm\mathcal{S}^{v}_{v'}\) 
(considered as a homomorphism 
between the appropriately shifted matrix factorizations).
\end{theorem}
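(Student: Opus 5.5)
The plan is to combine the cubical structure of \(M(\Diag_T)\) from \cref{prop:mf_is_cubical} with the vertex-wise special deformation retracts of \cref{lem:delooping:mf}, and to glue these into a global homotopy equivalence by a homological perturbation argument. This is the cubical analogue of \cref{lem:sdr-new-from-old}, iterated over the vertices of the cube.

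First, by \cref{prop:mf_is_cubical}, \(M(\Diag_T)=\bigoplus_v M_v(\Diag_T)\) with differential \(d=\sum_v d^v_v+\sum_{v\to v'}\pm d^v_{v'}\). Each vertex summand \(M_v(\Diag_T)\) is a shifted copy of \(M(\Diag_T(v))\). For each such copy, \cref{lem:delooping:mf} supplies a special deformation retract
\[\varphi_v\colon M(\Diag_T(v))\to \hslash^{\mathtt{i}_v}V(\Diag_T(v))\otimes_{\CoeffRing[\varG]}M(o(\Diag_T(v))),\]
with inclusion \(\psi_v\) and homotopy \(\varepsilon_v\). The grading shifts \(h^{|v|-n_-}\hslash^{|v|+n_+}q^{|v|+n_+-2n_-}\) simply carry over, once we use that \(\mathtt{i}_v\equiv\mathtt{i}\) modulo~2. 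Taking direct sums gives a special deformation retract \((\rho,\iota,\varepsilon)=(\bigoplus\varphi_v,\bigoplus\psi_v,\bigoplus\varepsilon_v)\) for the diagonal part \(d_0=\bigoplus_v d^v_v\).

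Next we treat the remaining part \(\delta=\sum_{v\to v'}\pm d^v_{v'}\) as a perturbation. It strictly raises the cube height, and \(\varepsilon\) preserves the vertex decomposition, so \((\varepsilon\delta)^{n+1}=0\); hence the perturbation series terminate. The perturbed data are
\[d^s=d_0^s+\rho\,\delta\sum_{k\ge0}(\varepsilon\delta)^k\,\iota,\qquad \iota'=\sum_{k\ge0}(\varepsilon\delta)^k\iota,\qquad \rho'=\rho\sum_{k\ge0}(\delta\varepsilon)^k.\]
Because the vertex retracts are special (\(\varepsilon^2=0\), \(\rho\iota=1\), \(\iota\rho=1+\partial\varepsilon\)), we must check that \((d^s)^2=xyz\) and that \(\rho',\iota'\) are mutually inverse homotopy equivalences. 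The standard proofs of the perturbation lemma carry over verbatim to curved complexes, since the curvature \(xyz\cdot 1\) is central and commutes with all maps involved. Alternatively, one can apply \cref{lem:sdr-new-from-old} inductively, peeling off one vertex at a time in an order compatible with the partial order on \(\{0,1\}^n\); this avoids having to state a general perturbation lemma.

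The resulting \(d^s\) is \(n\)-cubical, since each term \(\rho\delta(\varepsilon\delta)^k\iota\) maps vertex \(v\) to vertices of height \(|v|+k+1\) above it. Its edge components are exactly the \(k=0\) terms \(\pm\varphi_{v'}\circ\boldnowarningf\circ\psi_v\), and by \cref{lem:delooping:mf} these are chain homotopic to \(\pm\mathcal{S}^v_{v'}\).

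The main obstacle is the bookkeeping: signs, and making sure the perturbation respects the cube structure, including the Koszul signs coming from the \(\hslash\)-shifts by \(\mathtt{i}_v\). The substantive input is entirely \cref{lem:delooping:mf}, which we are allowed to assume here.
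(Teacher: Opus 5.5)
Your proposal is correct and essentially matches the paper's proof: the paper applies \cref{lem:sdr-new-from-old} inductively, replacing all vertices of height \(k\) at once by \(\sum_{|v|=k}\varphi_v\) for \(k=0,\dots,n\) (your suggested alternative), then identifies the edge components \(\varphi_{v'}\circ d^v_{v'}\circ\psi_v\) with \(\pm\mathcal{S}^v_{v'}\) via \cref{lem:delooping:mf}. Your closed-form perturbation formula is exactly what that iteration produces, and the iterative route avoids needing a curved perturbation lemma without side conditions, because \(cb=0\) makes the check in \cref{lem:sdr-new-from-old} immediate.
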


\begin{proof}
	We iteratively construct 
	filtered matrix factorizations \(M_k\) 
	for \(k=-1,0,\dots,n\), 
	where 
	\(M_{-1}=M(\Diag_T)\) and 
	\(M_{n}=M^s(\Diag_T)\), 
	with the following properties:
	\begin{itemize}
		\item 
		For each \(k=-1,0,\dots,n\),
		\(M_k\) is \(n\)-cubical with 
		\[
		(M_k)_v
		=
		\begin{cases}
			M^s_v(\Diag_T)
			& |v|\leq k
			\\
			M_v(\Diag_T)
			& \text{otherwise}.
		\end{cases}
		\]
		\item If \(v\rightarrow v'\) is an edge of the cube, 
		the restriction of the differential of \(M_k\) 
		to \((M_k)_v\rightarrow (M_k)_{v'}\) 
		is equal to
		\[
		\begin{cases}
			\varphi_{v'}\circ d^v_{v'}\circ \psi_v
			& |v|<k
			\\
			d^v_{v'}\circ \psi_v
			& |v|=k
			\\
			d^v_{v'}
			& |v|>k
		\end{cases}
		\]
		where \(\varphi_v\) and \(\psi_v\) are the maps 
		from \cref{lem:delooping:mf} and \(d^v_{v'}\) is the restriction of the differential of \(M(\Diag_T)\) 
		to \(M_v(\Diag_T)\rightarrow M_{v'}(\Diag_T)\).
		\item \(M_k\) is a special deformation retract of \(M_{k-1}\) 
		for each \(k=0,\dots,n\).
	\end{itemize} 
First note that the restriction of the differential of \(M(\Diag_T)\) to \(M_v(\Diag_T)\rightarrow M_{v'}(\Diag_T)\) is zero unless \(v\rightarrow v'\) is an edge of the cube or \(v=v'\) since \(M(\Diag_T)\) is $n$-cubical (compare \cref{prop:mf_is_cubical}).
	In particular, \(M_{-1}=M(\Diag_T)\) satisfies the above properties. Now suppose we have constructed \(M_{k-1}\) for a fixed \(k\leq n\). 
	We then apply \cref{lem:sdr-new-from-old} to \(M_{k-1}\) 
	and the special deformation retraction
	\[
	\sum_{k=|v|}\varphi_v
	\co
	\bigoplus_{k=|v|}
	M_v(\Diag_T)
	\longrightarrow
	\bigoplus_{k=|v|}
	M^s_v(\Diag_T)
	\]
	to obtain \(M_{k}\) 
	with the above properties. %
	In particular, the matrix factorization \(M_n\) 
	is chain homotopy equivalent to \(M(\Diag_T)\). 
	It is \(n\)-cubical with \((M_n)_v=M^s_v(\Diag_T)\) 
	for all \(v\in\{0,1\}^n\) and 
	moreover, the differentials 
	along edges \(v\rightarrow v'\) 
	are given by
	\(\varphi_{v'}\circ d^v_{v'}\circ \psi_v\). 
	In the notation of \cref{lem:delooping:mf},
	these maps are equal to 
	\(\varphi_{v'}\circ \boldnowarningf \circ \psi_v\)
	up to grading shifts of the source and target.
	The claim then follows from that lemma.
\end{proof}

\begin{remark}
	The representative \(M^s(\Diag_T)\) 
	of the matrix factorization \(M(\Diag_T)\) 
	associated with the tangle diagram $\Diag_T$ 
	plays the role of a {\it fully delooped} complex in \(\Cobb\);
	compare \cref{lem:delooping}.
	We will return to it in \cref{sec:no_wrapping_around_special:mf2twcx}.
\end{remark}
\section{A quasi-isomorphism of algebras from homological mirror symmetry}
\label{sec:quasi-iso}

In this section, 
we recall a quasi-isomorphism 
that is well-known to experts 
and has been studied in the context of homological mirror symmetry 
\cite{AAEKO,Bocklandt,HKK}. 
It relates an \(A_\infty\)-structure on the algebra \(\B\), 
denoted \(\B^\infty\) and described presently, 
to the endomorphism algebra \(\A\) 
of matrix factorizations over \(\field[x,y,z]\) with potential \(xyz\) 
described in \cref{def:mf_algebra}. 
This quasi-isomorphism underpins 
the homological mirror symmetry 
of the sphere with three punctures 
in the following sense: 

\begin{theorem}[Abouzaid {\it et al} \cite{AAEKO}]\label{thm:hms}
	There is a quasi-isomorphism between the $A_\infty$-algebras \(\A\) and $\B^\infty$, described in \Cref{thm:quasi-iso}, inducing an equivalence of triangulated categories 
	\[
	\HMS
	\co 
	\operatorname{Tw}\A
	\to 
	\operatorname{Tw}\Binf
	\] 
\end{theorem}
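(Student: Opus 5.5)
The plan is to construct an explicit \(A_\infty\)-quasi-isomorphism \(\Binf\to\A\) (or its inverse) and then pass to twisted complexes. The first step is to compute \(H_*\A\). Since \(M_\arcD=K(x;yz)\) and \(M_\arcT=\hslash^1K(-y;-xz)\) are Koszul matrix factorizations over \(\CoeffField[x,y,z]\) with potential \(xyz\), each morphism complex \(\Mor(M_\lab,M_{\lab'})\) is a Koszul-type complex. Its homology can be computed by the standard trick that multiplication by the Koszul variables acts nullhomotopically. This gives \(\End(M_\arcD)\simeq\CoeffField[x,y,z]/(x,yz)\) and similar expressions. For the cross terms, \(\Mor(M_\arcD,M_\arcT)\) is generated by the class of \(\sW\), with \(\sB\) playing the same role in the other direction. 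Matching generators with \(\iota_\circ,\iota_\bullet,\dW=y,\dB=x\) (and \(z\) acting as \(S^2\)-type terms via \(\sB\sW,\sW\sB\)) should identify \(H_0\A\) with \(\B\). The relations \(D S=0=SD\) correspond to \(xy=0\)-type vanishing in homology, and \(\varG=z-x-y\) corresponds to \(S^2-D\). Gradings need to be checked against \(q(S)=-1\), \(q(D)=-2\).

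The second step is to fix the \(A_\infty\)-structure \(\Binf\) as the minimal model of \(\A\). Choose a homotopy-transfer datum, meaning a splitting \(\A\simeq H_*\A\oplus(\text{acyclic})\) together with a contraction, which is possible over a field. Apply homological perturbation (Kadeishvili / Merkulov) to obtain higher products \(\mu_n\) on \(H_*\A\) and an \(A_\infty\)-quasi-isomorphism \(H_*\A\to\A\). The key point is then to show that the transferred structure agrees with the \(\Binf\) described in \cref{thm:quasi-iso}, up to \(A_\infty\)-isomorphism. To do this, I would compare the nonvanishing higher operations \(\mu_n\) on words of saddles and dots with the known mirror-symmetry answer for the pair of pants, namely the Fukaya category of three arcs in \cite{AAEKO}. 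Because \(\A\) is \(q\)-graded and \(\CoeffField[x,y,z]\) is finitely generated in each \(q\)-degree, only finitely many \(\mu_n\) contribute in each degree, so the comparison reduces to a degree-by-degree check.

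The final step passes to twisted complexes. An \(A_\infty\)-quasi-isomorphism induces a quasi-equivalence \(\Tw\A\to\Tw\Binf\) (see standard references, e.g.\ Seidel), hence an equivalence of the triangulated homotopy categories \(\HMS\).

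I expect the main obstacle to be the identification of the transferred higher products with the specific \(\Binf\), including the signs over general fields. The sensible strategy is not to recompute everything but to invoke \cite{AAEKO,LekPol,Orlov}, where the mirror equivalence \(\MF_{xyz}\simeq\) (wrapped) Fukaya category of the pair of pants is established. From that equivalence I would extract the algebra-level statement by checking that the chosen generators \(M_\arcD,M_\arcT\) correspond to the two arcs, which amounts to matching their endomorphism cohomology as computed in step one.
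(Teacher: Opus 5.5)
Your first and last steps are fine: the cohomology computation is essentially what the paper records in its bases for the morphism spaces of \(\Acone\), and the passage to twisted complexes is standard. The middle step, however, carries all the content, and it is not actually supplied.

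Two \(A_\infty\)-structures on \(\B\) obtained by different means do not agree on the nose. Relating them requires constructing an \(A_\infty\)-isomorphism, which is an obstruction problem you do not address. There is also no finiteness to exploit: in \(\Binf\), \(\mu_{2m}\) of every disk sequence of length \(2m\) is already \(\pm\) an idempotent, so infinitely many arities contribute.

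Your fallback fails as stated. Matching the endomorphism cohomology of \(M_\arcD\oplus M_\arcT\) with that of two arcs only recovers the ordinary algebra \(\B\). That is equally the cohomology of the formal algebra \(\B\), which models \(S^2_4\) rather than \(S^2_3\), so it says nothing about the higher products or about which objects correspond. A route through the Fukaya category is possible in principle, but it would need HKK's disk-sequence description of the arcs' \(A_\infty\)-endomorphisms and careful tracking of the functors. In particular it would need the control over the induced map on homology (\([\sW]\mapsto\Sw\), \([\dW]\mapsto\Dw\), etc.) and the \(q\)-filtration that \cref{thm:quasi-iso} asserts and \cref{sec:no_wrapping_around_special} relies on. A statement ``up to \(A_\infty\)-isomorphism'' does not give this.

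The missing idea is a rigidity statement that reduces everything to a finite computation. The paper obtains it by enlarging both sides by a third object: the cone \(X_3=[\circ\xrightarrow{S}\bullet]\) in \(\Tw\Binf\), and \(M_3=K(z;xy)\) in \(\MF_{xyz}\). Both enlarged cohomology categories are then the algebra \(\Ccone\) of the three-vertex quiver with relations \(uu=0=vv\). For \(\Ccone\), \cite[Proposition~3.1]{AAEKO} says an \(A_\infty\)-structure is determined up to strict homotopy by the coefficients of \(1_{L_1}\) in \(\mu_3(u_{31},u_{23},u_{12})\) and \(\mu_3(v_{21},v_{32},v_{13})\). This applies after the regrading of \cref{lem:a-infty:shift-grading-so-f-objects}, which is needed to meet their parity hypothesis. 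Homological perturbation on each side (\cref{prop:quasi-iso:BinfCone,prop:quasi-iso:Acone}) gives both coefficients equal to \(1\). Hence the two induced structures on \(\Ccone\) are strictly homotopic via some \(\mathcal{H}\), which is the identity on homology. The composite \(\Phi\circ\mathcal{H}\circ\Psi^{-1}\co\Acone\to\Bcone\) then restricts to the desired \(q\)-filtered quasi-isomorphism \(\A\to\Binf\), with exactly the prescribed map on homology.
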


Here,
\(\operatorname{Tw}\A\) 
and 
\(\operatorname{Tw}\Binf\)
denote  
the categories of twisted complexes 
over \(\A\) and \(\Binf\), respectively;
see \cref{def:twisted_complex}.

\subsection{Preliminaries on \texorpdfstring{\(A_\infty\)}{A\_∞}-structures}\label{subsec:conventions:A-infty}

An \(A_\infty\)-category \(\C\) over \(\CoeffRing\) 
is a category in which composition is associative 
only up to a system of higher homotopies. 
More precisely,
for each pair of objects \(X,Y\) 
we have a \(\Z\)-graded \(\CoeffRing\)-module \(\C(X,Y)\), 
and the structure maps
\[
\mu_n\colon 
\C(X_{n-1},X_n)\otimes\cdots\otimes\C(X_0,X_1)
\longrightarrow \C(X_0,X_n) %
\]
have degree \(2-n\) for \(n\ge1\). 
Here \(\mu_1\) is a differential, 
\(\mu_2\) is the (not strictly associative) composition, 
and units \(1_X\in\C(X,X)\) 
satisfy the usual identities with respect to~\(\mu_2\).
The higher compositions satisfy the \(A_\infty\)-relations
\begin{equation*}\label{eq:a-infty-relations-Seidel}
	0
	=
	\sum_{\substack{i,k\geq0,j\geq1:\\ i+j+k=n}}
	(-1)^{\maltese}
	\mu_{i+1+k} (a_n,\dots,a_{n-i+1},\mu_j(a_{n-i},\dots,a_{k+1}),a_{k},\dots,a_1)
\end{equation*}
for all composable sequences \(a_1,\dots,a_n\), 
where \(\maltese=h(a_k)+\dots+h(a_1)+k\) and \(h\) denotes the \(\Z\)-grading.
An \(A_\infty\)-algebra is the special case of a single object.  

\begin{remark}
	Several sign conventions appear in the literature.  
	We follow the sign conventions in \cite{Seidel}.
	Other works, including
	\cite{Keller,Huebschmann,AAEKO}, 
	use %
	\begin{equation*}\label{eq:a-infty-relations}
		0
		=
		\sum_{\substack{i,k\geq0,j\geq1:\\ i+j+k=n}}
		(-1)^{i+jk}
		\mu_{i+1+k}\circ (1^{\otimes i}\otimes \mu_j\otimes 1^{\otimes k}).
	\end{equation*}
	The two conventions are different but give rise to equivalent notions of \(A_\infty\)-categories. 
	More explicitly, expanding the Koszul signs and replacing
	\[
	\mu_n(a_n,\dots,a_1)
	\longmapsto
	(-1)^{n\cdot h(a_n)+(n-1)\cdot h(a_{n-1})+\cdots+h(a_1)}\,\mu_n(a_n,\dots,a_1)
	\]
	transforms one sign convention into the other, up to a global factor common to all summands; 
	see also \cite{Polishchuk:fieldguide}.  
\end{remark}

An \(A_\infty\)-functor \(\F\colon\C\to\C'\) 
consists of a map on objects together
with multilinear maps
\[
\F^n\colon
\C(X_{n-1},X_n)\otimes\dots\otimes\C(X_0,X_1)
\longrightarrow
\C'(\F(X_0),\F(X_n))
\]
of degree \((1-n)\), 
satisfying the following identity for every integer \(n\geq1\):
\begin{align*}
	&
	\sum_{\substack{r,i_1,\dots,i_r\geq1:\\ i_1+\dots+i_r=n}}
	\mu_{r}\big(\F^{i_r}(a_n,\dots,a_{n-i_r+1}),\dots,\F^{i_1}(a_{i_1},\dots,a_1)\big)
	\\
	=
	&
	\sum_{\substack{i,k\geq0,j\geq1:\\ i+j+k=n}}
	(-1)^{\maltese}
	\mathcal{F}_{i+1+k} (a_n,\dots,a_{n-i+1},\mu_j(a_{n-i},\dots,a_{k+1}),a_{k},\dots,a_1)%
\end{align*}
A morphism of $A_\infty$-algebras is a functor between $A_\infty$-algebras.
Given an \(A_\infty\)-category \(\C\) over \(\CoeffRing\), 
its homology \(H\C\) is the graded \(\CoeffRing\)-linear category
with the same objects and
\[
H\C(X,Y)=H(\C(X,Y),\mu_1)%
\]
with composition induced by \(\mu_2\).
Every \(A_\infty\)-functor \(\mathcal{F}\co\C\rightarrow\C'\)
induces a functor \(H\mathcal{F}\co H\C\rightarrow H\C'\); 
the functor \(\F\) is a quasi-isomorphism if
\(H\F\) is an isomorphism of categories.

If \(\C\) is a graded \(\CoeffRing\)-linear category, an
\(A_\infty\)-structure on \(\C\) is a system \((\mu_k)_{k\ge2}\) as above such
that \(H(\C,\mu)=\C\).
Following \cite[Section~2]{AAEKO}, two \(A_\infty\)-structures on \(\C\) are
strictly homotopic if there exists an \(A_\infty\)-functor between the
associated \(A_\infty\)-categories that acts as the identity on objects and on
individual morphisms; such a functor is necessarily a quasi-isomorphism
inducing the identity on homology.

\begin{definition}\label{def:additive_enlargement_Seidel}
	Given an $A_\infty$-category~\(\mathcal{A}\)
	over a commutative ring \(\CoeffRing\), 
	the additive enlargement \(\Sigma\mathcal{A}\) 
	is the $A_\infty$-category defined as follows. 
	Its objects are formal direct sums
	\[ \bigoplus_{i\in I}h^{r_i}X_i \]
	where 
	\( I \) is a finite index set and 
	\( h^{r_i}X_i \) denotes the object 
	\(X_i\) shifted by 
	\( r_i \) in homological grading. 
	The morphism spaces of degree \(n\) are
	\begin{equation*}\label{eq:morphism_spaces_in_Mat}
		\Mor\bigg(\bigoplus_{i\in I}h^{r_i}X_i,\bigoplus_{j\in J}h^{r_j}X_j\bigg)
		=
		\bigoplus_{(i,j) \in I\times J} 
		h^{r_j-r_i}\Mor(X_i,X_j)
	\end{equation*}
	as \(\CoeffRing\)-modules. 
	The structure maps \(\mu_d^{\Sigma\mathcal{A}}\)
	are given by 
	\begin{align*}
		\Big(
		\mu_d^{\Sigma\mathcal{A}}
		(a_d,\dots,a_p=
		(
		h^{r_\ell}X_\ell
		\xrightarrow{a_p^{k,\ell}}
		h^{r_k}X_k
		)_{k,\ell},\dots,a_1)
		\Big)_{i_d,i_0}
		&
		=
		\smashoperator[r]{\sum_{i_1,\dots,i_{d-1}}}
		(-1)^{\triangleleft}
		\mu_d^{\mathcal{A}}(a_d^{i_d,i_{d-1}},\dots, a_1^{i_1,i_{0}})
	\end{align*}
	where 
	\(
	\triangleleft
	=
	\sum_{p<q}
	(r_{i_p}-r_{i_{p-1}})
	\cdot
	(h(a_q^{i_q,i_{q-1}})-1)
	\)
	\cite[Equation (3.17)]{Seidel}.
\end{definition}

\begin{definition}\label{def:twisted_complex}
	A twisted complex 
	over an $A_\infty$-category~\(\mathcal{A}\) 
	is a pair \((X,\delta)\), 
	where \(X\in\Sigma\A\)
	and \(\delta\) is a strictly lower-triangular endomorphism of \(X\) of degree 1
	satisfying
	\[
	\sum_{i=1}^\infty\mu^{\Sigma\A}_i(\delta,\dots,\delta)=0.
	\]
\end{definition}

\begin{remark}\label{rem:twisted_complex}
	Note that in the special case 
	that the algebra \(\A\) is supported in even homological gradings only 
	\(\mu^\A_d=0\) for odd \(d\). 
	Moreover, 
	the formula for \(\triangleleft\) simplifies to 
	\[
	\triangleleft
	=
	\sum_{p=1}^d p\cdot (r_{i_p}-r_{i_{p-1}}).
	\]
	Thus, the structure relation for a twisted complex \((X,\delta)\) translates to checking that 
	\[
	\sum(-1)^i\mu^{\A}_{2i}(\delta_{2i},\dots,\delta_1)=0
	\]
	for all pairs of objects \(X_j,X_k\in\A\) in \(X\)
	where the sum is over all possible paths \(\delta_{2i},\dots,\delta_1\) in the differential \(\delta\) from \(X_j\) to \(X_k\). 
\end{remark}

Twisted complexes over \(\mathcal{A}\) form an \(A_\infty\)-category,
denoted \(\Tw\mathcal{A}\);
for details, see \cite[Section~I.3l]{Seidel}.
We follow Seidel's conventions 
for grading shifts of twisted complexes 
\cite[Section~I.3o]{Seidel}. 
Specialized to $A_\infty$-categories
supported in even homological grading, 
these read 
\[
h^i(X,\delta)=(h^i X, (-1)^i\delta)
\]
for any twisted complex \((X,\delta)\) and \(i\in\Z\); 
compare with \cref{def:shifted-mfs}.

\begin{remark}
	\(A_\infty\)-categories in this section  may 	carry one or several of the following three gradings:
	a \(\Z\)-grading \(h\), 
	a \(\ZmodTwo\)-grading \(\hslash\),
	and a \(\Z\)-grading \(q\). 
	The gradings \(h\) and \(\hslash\) serve as homological gradings,
	as in the definitions above. 
	If both are present, 
	\(\hslash\) is the modulo 2 reduction of \(h\).
	The \(h\)-grading used in this section 
	should be regarded as internal to this section 
	and unrelated to the \(h\)-grading appearing in 
	\cref{sec:mfs,sec:no_wrapping_around_special}. 
	Only the \(\hslash\)- and \(q\)-gradings will be used elsewhere.
	The quantum grading \(q\) decreases by 3 
	along the differential of any twisted complex. 
	More generally, 
	the structure maps \(\mu_d\) have quantum degree \(3(d-2)\).
	A functor \(\F\) 
	between quantum-graded \(A_\infty\)-categories is called 
	\(q\)-filtered if
	\(q(\F^d)=3(d-1)\). 
\end{remark}

\subsection{An 
	\texorpdfstring{\(A_\infty\)}{A\_∞} 
	structure on the algebra 
	\texorpdfstring{\(\B\)}{B}
	and the main result
}
\label{sec:quasi-iso:Binf}

\begin{definition}\label{def:Binf_algebra} Disk sequences \(\{E_{2m}\}_{m>1}\) are sequences of pure algebra elements in \(\B\), defined inductively. Let
	\[
	E_4=
	\{({}_{\circ}\Sb,{}_{\bullet}\Db,{}_{\bullet}\Sw,{}_{\circ}\Dw),~ ({}_{\bullet}\Db,{}_{\bullet}\Sw,{}_{\circ}\Dw,{}_{\circ}\Sb), ~ ({}_{\bullet}\Sw,{}_{\circ}\Dw,{}_{\circ}\Sb,{}_{\bullet}\Db), ~({}_{\circ}\Dw,{}_{\circ}\Sb,{}_{\bullet}\Db,{}_{\bullet}\Sw) \}
	\]
	so that, given a set  \(E_{2m}\) of disk sequences of length \(2m\geq4\), the elements of the set \(E_{2m+2}\) are constructed by interposing sequences from $E_{4}$ into any disk sequence in \(E_{2m}\) as follows:
	\begin{align*}
		(\ldots,D^k_\sol,{}_\sol S^\ell,\ldots) &\mapsto (\ldots,D^{k+1}_\sol,{}_\sol S_\hol,{}_\hol D_\hol,{}_\hol S^{\ell+1},\ldots) \\  (\ldots,D^k_\hol,{}_\hol S^\ell,\ldots) &\mapsto (\ldots,D^{k+1}_\hol,{}_\hol S_\sol,{}_\sol D_\sol,{}_\sol S^{\ell+1},\ldots)\\ 
		(\ldots,S^k_\sol,{}_\sol D^\ell,\ldots) &\mapsto (\ldots,S^{k+1}_\hol,{}_\hol D_\hol,{}_\hol S_\sol,{}_\sol D^{\ell+1},\ldots)  \\ (\ldots,S^k_\hol,{}_\hol D^\ell,\ldots) &\mapsto (\ldots,S^{k+1}_\sol,{}_\sol D_\sol,{}_\sol S_\hol,{}_\hol D^{\ell+1},\ldots) 
	\end{align*}
\end{definition}
These disk sequences correspond to the geometric disk sequences from~\cite[Section~3.3]{HKK}. 
The same disk sequences can arise in different ways.
The following is essentially a reformulation of \cite[Lemma~3.1]{HKK}.

\begin{lemma}\label{lem:disk-sequences-unique-factorization}
	Suppose a sequence of pure algebra elements fits into one of the three patterns
	\[
	(a_\ell,\dots,a_1b),
	\quad
	(ba_\ell,\dots,a_1),
	\quad
	\text{and}
	\quad
	(a_\ell,\dots,a_1)
	\]
	with \((a_\ell,\dots,a_1)\) a disk sequence and \(b\) a pure algebra element that is not an idempotent. 
	Then the sequence does not fit into either of the other two patterns. 
	Moreover, the disk sequence \((a_\ell,\dots,a_1)\) and the algebra element \(b\) are unique. 
	\qed
\end{lemma}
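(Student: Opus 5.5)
The plan is to work combinatorially with words in the quiver. A disk sequence is a cyclic pattern: in every element of $E_{2m}$, consecutive entries alternate between powers of $D$ and powers of $S$. Consecutive entries are also composable, so the outgoing idempotent of each entry is the incoming idempotent of the next. I will first prove, by induction on $m$ using the interposition rules of \cref{def:Binf_algebra}, a structural description of $E_{2m}$. The description has three parts.
\begin{enumerate}
\item Every disk sequence has length $2m$ and alternates $D$-type and $S$-type entries.
\item Its two end entries $a_\ell$ and $a_1$ are exactly the entries that are ``unsaturated'': each is a single letter ($D$ or $S$, not $D^k$ or $S^k$ with $k\ge 2$) at the boundary of the word. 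Interior entries may be higher powers, coming from the rules $D^k\mapsto D^{k+1}$ and $S^\ell\mapsto S^{\ell+1}$.
\item The sequence is determined by its first entry, its last entry, and the exponent data.
\end{enumerate}
More concretely, I expect to show that a disk sequence corresponds to a closed path winding once around a puncture in the $\varepsilon$-peg-board picture (the geometric disk sequences of \cite{HKK}). I will then read off an invariant: the total exponent count of $D$'s and $S$'s, together with the alternation pattern.

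Given this description, I separate the three patterns using the first and last entries. In pattern $(a_\ell,\dots,a_1b)$ the last entry $a_1b$ is a product with a non-idempotent pure element $b$. By the alternation property, $b$ must be a power of the same letter as $a_1$, since otherwise $a_1b=0$ in $\B$: the relations kill $DS$ and $SD$ products. So $a_1b$ has exponent at least $2$ at the end slot. The end slots of a genuine disk sequence carry exponent exactly $1$, so a sequence of pattern $(a_\ell,\dots,a_1b)$ is not itself a disk sequence. The same argument at the other end rules out pattern $(ba_\ell,\dots,a_1)$ as a disk sequence.

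To distinguish the first two patterns from each other, I will use the parity or length-balance invariant. Removing the extra factor from one end must produce a disk sequence, and I will show that disk sequences are balanced: the number of letters between matching $S$ and $D$ blocks is fixed by the inductive construction. Deleting exponent from the front and deleting it from the back cannot both yield balanced words, unless $b$ is an idempotent.

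Uniqueness of the disk sequence and of $b$ then follows. Once the pattern is fixed, $b$ is the excess power at the distinguished end, namely the exponent minus one in the end slot, and the disk sequence is what remains. The main obstacle is establishing the invariants of $E_{2m}$ cleanly through the induction, in particular the claim that end entries always have exponent one and the balance condition. If the induction becomes messy, I will instead pass to the geometric model and cite \cite[Lemma~3.1]{HKK} directly. There disk sequences are the boundary words of immersed polygons, and the uniqueness statement is the uniqueness of the corner at which a polygon can be cut.
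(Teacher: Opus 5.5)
The structural claim your separation argument rests on is false: the end entries of a disk sequence need not be single letters. Nothing in the interposition rules of \cref{def:Binf_algebra} protects the ends. For example, interposing at the first pair of $({}_\bullet D_\bullet,{}_\bullet S_\circ,{}_\circ D_\circ,{}_\circ S_\bullet)\in E_4$ gives $(D^2_\bullet,{}_\bullet S_\circ,{}_\circ D_\circ,{}_\circ S^2_\circ,{}_\circ D_\circ,{}_\circ S_\bullet)\in E_6$. Similarly, the sample product $\mu_6(S,D,S^2,D,S,D^{k+1})=U^2\cdot D^{k-1}$ listed for $\BinfU$ uses the disk sequence $(S,D,S^2,D,S,D^2)$, whose last entry is $D^2$. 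Consequently, ``$a_1b$ has exponent at least $2$ in the end slot'' does not show that a sequence of the first pattern is not itself a disk sequence. Your recipe ``$b$ is the end exponent minus one'' is also wrong: $(S,D,S^2,D,S,D^3)$ fits the first pattern with $b=D$, not $D^2$. Finally, the step separating $(a_\ell,\dots,a_1b)$ from $(ba_\ell,\dots,a_1)$ rests on a ``balance'' condition that you never define. So, as written, none of the three separations is actually established.

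The invariant that works is the pair of letter totals. Let $e_D$ and $e_S$ be the sums of the exponents of the $D$-entries and of the $S$-entries. Both equal $2$ on $E_4$. Each interposition raises each total by $2$, since one existing exponent of that letter goes up by one and one new such letter is inserted. Hence $e_D=e_S=2(m-1)$ on $E_{2m}$, which refines the count in \cref{rem:quantum_grading_on_Binf}. A disk sequence alternates and has even length, so $a_\ell$ and $a_1$ carry different letters. A nonzero product $a_1b$ or $ba_\ell$ forces $b$ to be a positive power of the letter it multiplies. Therefore:
\begin{itemize}
\item the first pattern raises only the total of the letter of the last entry;
\item the second pattern raises only the total of the letter of the first entry;
\item the third pattern raises neither.
\end{itemize}
This makes the three patterns mutually exclusive. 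Within a given pattern, the excess over $2(m-1)$ is the exponent of $b$, which determines $b$ and hence the disk sequence. With this repair your combinatorial route is self-contained, and you need neither the end-exponent claim nor the geometric picture. The paper gives no argument beyond citing \cite[Lemma~3.1]{HKK}, which is your fallback. That route would additionally require identifying $E_{2m}$ with the geometric disk sequences there.
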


For each disk sequence \((a_{2m},\ldots,a_{1}) \in E_{2m}\) define 
\begin{equation}\label{eq:Binf:higher-multiplications}
\mu^{\Binf}_{2m}(a_{2m},\ldots,a_{1}b)=(-1)^{m-1} \cdot b
\qquad\text{and}\qquad 
\mu^{\Binf}_{2m}(b a_{2m},\ldots,a_{1})=(-1)^{m-1} \cdot b
\end{equation}
for all \(b\in\{1,S^r,D^r\mid r>1\}\) such that \(a_{1}b \neq 0\) and \(ba_{2m}\neq 0\), respectively.
\Cref{lem:disk-sequences-unique-factorization} ensures that these expressions are well-defined.
We extend them multilinearly to maps 
\[
\mu^{\Binf}_{k}
\co
\B^{\otimes k}
\rightarrow
\B
\]
where \(\mu^{\Binf}_{k}=0\) for odd integers \(k\). 

\begin{lemma}\label{lem:a-infty-reltations-for-Binf}
	The set of maps \(\mu^{\Binf}_{k}\) satisfy the \(A_\infty\)-relations. 
\end{lemma}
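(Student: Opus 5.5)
We verify the \(A_\infty\)-relations directly from the definition of \(\mu^{\Binf}\), organized by which terms can be nonzero.

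Since \(\mu^{\Binf}_1=0\) and \(\mu^{\Binf}_k=0\) for odd \(k\), and \(\B\) sits in homological degree zero, the relation of arity \(n\) has two kinds of terms. Terms with \(j=2\) or \(i+1+k=2\) mix one higher product with the ordinary multiplication \(\mu_2\). Terms with both \(j\ge4\) and \(i+1+k\ge4\) nest two higher products. For \(n=3\) the relation is associativity of \(\B\). For odd \(n\) every term has an odd index, so all terms vanish. So fix even \(n\ge4\) and a sequence of pure basis elements \((a_n,\dots,a_1)\); by multilinearity this suffices. The signs \((-1)^{\maltese}\) reduce to \((-1)^k\), since all \(h(a_i)=0\).

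\emph{Mixed terms.} I would show that these cancel in pairs. Suppose \(\mu^{\Binf}_{n}\) is applied to a sequence in which one entry is a product \(a_{p+1}a_p\) of adjacent inputs. By \cref{lem:disk-sequences-unique-factorization}, nonvanishing forces the sequence to be a disk sequence whose first or last entry absorbs a pure element \(b\). If \(p\) is interior, the product would have to be a disk-sequence entry (a single \(S\) or \(D\)) that factors nontrivially. I would check from the inductive construction of \(E_{2m}\) that disk sequences only contain entries \(S_{\halfbullet}\), \(D_{\halfbullet}\), and that these do not factor as products of two non-idempotent pure elements with nonzero product, except in the cases involving the \(E_4\) relations, which the relation \(DS=0\) kills. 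Hence only the boundary positions contribute. These are
\begin{align*}
&\mu_{n-1}(a_n,\dots,a_2a_1) \text{ with } a_2a_1 = a_1'b,\\
&\mu_2(\mu_{n-1}(\dots),a_1),\\
&\text{and the mirror-image terms on the left.}
\end{align*}
For a disk sequence \(\mathbf a\) and elements \(b,c\), the terms \(\mu_n(\mathbf a\text{ with last entry } a_1b)\cdot c\) and \(\mu_n(\dots,a_1(bc))\) correspond, as do the symmetric pairs on the left. The subtle case is a term \(b\,\mu(\mathbf a)\,c\), where \(b\) sits on one end and \(c\) on the other. Here \(\mu(ba_n,\dots,a_1c)\) is zero by uniqueness in the lemma, while \(\mu_2(b,\mu(\dots,a_1c))=bc\) and \(\mu_2(\mu(ba_n,\dots),c)=bc\). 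I would check that these carry opposite Seidel signs: one has \(k=0\) and the other has \(k=1\) after \(\mu_2\) is placed outermost. This is the usual cancellation in Fukaya-type algebras.

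\emph{Nested terms.} I expect this to be the main obstacle. The inner \(\mu_j\) outputs \(\pm b\), where \(b\) is an element stripped from an end of an inner disk sequence. The outer \(\mu\) then needs this \(b\) to be part of an outer disk sequence. The case to handle is \(b=1\), which arises when \(a_{k+1}\) is itself the first entry of a disk sequence with \(a_{k+1}\cdot 1\). Strict unitality for higher products excludes an idempotent input to the outer product, so these terms vanish whenever \(b\) is an idempotent. When \(b\) is a power \(S^r\) or \(D^r\) with \(r>1\), the outer sequence has an end entry of the form \(b\) or \(\dots b\). I would match each such term with the term obtained by taking the complementary nesting: split the concatenated sequence at the other overlap. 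Then I would show that the two signs \((-1)^{m-1}(-1)^{m'-1}(-1)^k\) differ. The inductive description of \(E_{2m}\), namely insertion of \(E_4\) patterns at the points where a power of \(D\) meets a power of \(S\), should make the corners of the disk polygons explicit. Concatenations of two disks along a shared corner then appear in exactly two ways. Checking this bijection and the parity of \(k\), as in \cite[Section~3]{HKK}, is the combinatorial heart of the proof. I would do it by viewing disk sequences as immersed polygons in the pair of pants, as in \cite{HKK}, and reducing the relation to the boundary of one-dimensional moduli of broken polygons.
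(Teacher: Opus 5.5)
Your argument breaks at the claim that disk sequences contain only the letters \(S_\halfbullet\), \(D_\halfbullet\), so that a \(\mu_2\) feeding an interior entry always gives zero. The inductive step defining \(E_{2m}\) raises the powers at the insertion point, so composite entries appear from \(E_6\) on. The third rule applied to \((S_\circ,D_\circ,S_\bullet,D_\bullet)\in E_4\) gives \((S_\circ,D_\circ,SS_\circ,D_\circ,S_\bullet,D^2_\bullet)\in E_6\); this is the sequence behind \(\mu_6(S,D,S^2,D,S,D^{k+1})\) in the remark following the definition of \(\BinfU\). Mixed terms that re-form such a composite corner are nonzero, and they are exactly what cancels the nested terms. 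For the input \((S_\circ,D_\circ,S_\bullet,S_\circ,D_\circ,S_\bullet,D^2_\bullet)\), the only nonzero terms of the arity-\(7\) relation are
\[
\mu^{\Binf}_6\bigl(S_\circ,D_\circ,\mu_2(S_\bullet,S_\circ),D_\circ,S_\bullet,D^2_\bullet\bigr)=\iota_\bullet
\quad\text{and}\quad
\mu^{\Binf}_4\bigl(S_\circ,D_\circ,S_\bullet,\mu^{\Binf}_4(S_\circ,D_\circ,S_\bullet,D^2_\bullet)\bigr)=\iota_\bullet,
\]
entering with signs \((-1)^3\) and \((-1)^0\). Likewise, for the input \((S_\circ,D_\circ,SS_\circ,D_\circ,S_\bullet,D_\bullet,D_\bullet)\), the end term \(\mu^{\Binf}_6(\dots,\mu_2(D_\bullet,D_\bullet))\) is cancelled by the nested term \(\mu^{\Binf}_4\bigl(S_\circ,D_\circ,\mu^{\Binf}_4(SS_\circ,D_\circ,S_\bullet,D_\bullet),D_\bullet\bigr)\). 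It is not cancelled by \(\mu_2(\mu^{\Binf}_6(\dots),D_\bullet)\), as your boundary analysis predicts; that term vanishes because \((S,D,S^2,D,S,D)\) is not a disk sequence.

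So the cancellation scheme is not ``mixed with mixed, nested with complementary nested''. A nested term \(\mu_{2r}(\dots,\mu_{2s}(\dots),\dots)\) cancels against the mixed term in which \(\mu_2\) re-forms the split composite corner of the corresponding disk sequence in \(E_{2(r+s-1)}\). Your complementary nesting would pair the two nested terms above, but they belong to different input sequences, so they cannot cancel within any single relation.

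Two further points. First, the parity is reversed: inner and outer arities add to \(n+1\), so every term vanishes for even \(n\), and the nontrivial relations are the odd ones (as your own use of \(\mu_{n-1}\) presupposes). Second, the part you defer to \cite{HKK} is the whole content of the lemma. You also never address the sign \((-1)^{m-1}\) in the definition, by which \(\mu^{\Binf}\) differs from the operations in the literature.

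The paper's proof does this reduction correctly. The maps \(\nu_{2m}=(-1)^{m-1}\mu^{\Binf}_{2m}\) are Bocklandt's operations for \(\mathcal{Q}\) embedded in \(S^2_3\), which satisfy the relations by \cite[Theorem~5.3]{Bocklandt}. Each term of an arity-\((2r+2s-1)\) relation equals \((-1)^{r+s}\nu_{2r}(\dots,\nu_{2s}(\dots),\dots)\) with \(r+s\) fixed, so the relations transfer.
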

\begin{proof}
	Define maps \(\nu_n\) for \(n>0\) by
	\(
	\nu_n 
	=
	(-1)^{m-1}\cdot\mu^{\Binf}_{n}
	\).
	These maps
	satisfy the \(A_\infty\)-relations by
	\cite[Theorem~5.3, Part (1)]{Bocklandt}. 
	Indeed, 
	the maps \((\nu_n)_{n>0}\) agree with the \(A_\infty\)-operations considered in 
	\cite{Bocklandt}
	when \(Q\) is the quiver from the definition of the algebra \(\B\) embedded into the three-punctured sphere. 
	Since \(\mu^{\Binf}_{n}=0\) for odd integers \(n\),
	the \(A_\infty\)-relations for \((\mu^{\Binf}_{n})_{n>0}\)
	are trivially satisfied on even length sequences of algebra elements. 
	Each term of a given odd length sequence is of the form 
	\[
	\mu^{\Binf}_{2r}(\dots,\mu^{\Binf}_{2s}(\dots),\dots)
	=
	(-1)^{r+s}\cdot\nu_{2r}(\dots,\nu_{2s}(\dots),\dots).
	\]
	The length of the input sequence is \(2r+2s-1\), 
	hence \(r+s\) is constant. 
	Thus,
	the maps \(\mu^{\Binf}_{k}\) also satisfy the \(A_\infty\)-relations. 

	For an alternative proof strategy that is more geometric, see \cite[Proposition~3.1]{HKK}.
\end{proof}

\begin{definition}
	We define the \(\hslash\)-graded \(A_\infty\)-algebra \(\Binf\) 
	to be the algebra \(\B\) 
	together with the higher multiplications \(\mu^{\Binf}_{k}\)
	and the trivial \(\hslash\)-grading
	i.e.\ the \(\ZmodTwo\)-grading which is identically equal to 0. 
	We also define a \(h\)-grading by 
	\(
	h({}_{\circ}\Sb)
	=
	h({}_{\bullet}\Db)
	=
	h({}_{\bullet}\Sw)
	=0
	\)
	and \(h({}_{\circ}\Dw)=2\).
\end{definition}

\begin{remark}
	The grading \(\hslash\) is equal to the grading \(h\) modulo 2. 
	The grading \(h\) on \(\Binf\) will only be needed within this section. 
	The choice of this lift of \(\hslash\) is somewhat arbitrary:
	Any \(\Z\)-grading on \(\B\) 
	for which the sum of the gradings 
	of the four algebra generators 
	\({}_{\circ}\Sb\), 
	\({}_{\bullet}\Db\), 
	\({}_{\bullet}\Sw\), and 
	\({}_{\circ}\Dw\) 
	is equal to \(2\) 
	turns \(\B\) into a \(\Z\)-graded \(A_\infty\)-algebra. 
\end{remark}

\begin{remark}
	\label{rem:quantum_grading_on_Binf}
	The \(A_\infty\)-algebra \(\Binf\) also inherits 
	a quantum grading from the algebra \(\B\).  
	The sum of the quantum gradings 
	of all elements in any given disk sequence of length \(2m>0\) 
	is \(6(1-m)\), 
	which can be seen by induction on \(m\). 
	Therefore, 
	\(q(\mu^{\Binf}_{2m})=6(m-1)\).
\end{remark}

\begin{theorem}\label{thm:quasi-iso}
	Let \(\A\) be the algebra from \cref{def:mf_algebra}. There is a \(q\)-filtered quasi-isomorphism
	\(
	\A
	\rightarrow
	\Binf
	\)
	with  
	\[
	M_\arcD\mapsto\circ
	\quad\text{and}\quad
	M_\arcT\mapsto\bullet
	\]
	inducing the following isomorphism on homology:
	\begin{align*}
		[\sW]
		&
		\mapsto
		\Sw
		&
		[\sB]
		&
		\mapsto
		\Sb
		&
		[\dW]
		&
		\mapsto
		\Dw
		&
		[\dB]
		&
		\mapsto
		\Db
	\end{align*}
\end{theorem}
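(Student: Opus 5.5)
Our plan is to compute $H_*\A$ directly, identify it with $\B$, and then invoke a uniqueness result for $A_\infty$-structures in the style of \cite{AAEKO}. The two steps are: show that the transferred $A_\infty$-structure on $H_*\A\cong\B$ agrees with the one defining $\Binf$, then lift this agreement to a $q$-filtered quasi-isomorphism.

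First we compute homology. The four morphism spaces between $M_\arcD$ and $M_\arcT$ are $2\times2$ matrices over $\field[x,y,z]$ with differential $\partial(f)=d'f\mp fd$, i.e.\ Koszul-type complexes. For instance, $\End(M_\arcD)$ has homology $\field[x,y,z]/(x,yz)$ in even degree, shifted appropriately, and vanishing odd homology. The $\hslash$-even part is spanned by the identity, powers of $y$ (the dot $\dW$), and powers of $z$. Modulo homotopy, $z$ is identified with $y+\varG$-type combinations, since $z=x+y+\varG$ and $x\simeq 0$ on $M_\arcD$. In the same way, $\Mor(M_\arcD,M_\arcT)$ and $\Mor(M_\arcT,M_\arcD)$ have homology $\field[x,y,z]/(x,y)$ generated by $\sW$ and $\sB$ respectively. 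We then check directly that $[\sW][\sB]$ and $[\sB][\sW]$ give $z$-multiplication, which matches $SS$. We also check that $[\dW][\sB]=0$ in homology, and similarly for the other three products: this is the relation $D\cdot S=0$. Since $\A$ is concentrated in even $\hslash$-degree homology, this yields an algebra isomorphism $H_0\A\cong\B$ sending the classes to $\Sw,\Sb,\Dw,\Db$, with quantum gradings matching by inspection of \cref{fig:saddles_and_dots_in_A}.

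Next we use homological perturbation: choose a $q$-homogeneous splitting (inclusion $\iota$, projection, and homotopy $h$ with $h$ of quantum degree $3$, as in the special deformation retracts of \cref{sec:mfs}). Kadeishvili transfer then produces a minimal $A_\infty$-structure on $\B$ with $q(\mu_d)=3(d-2)$, together with a $q$-filtered quasi-isomorphism from $\A$ to it. Because homology is concentrated in even degree, all odd $\mu_d$ vanish for degree reasons.

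The main obstacle is showing that this transferred structure is strictly homotopic to $\Binf$. Here we follow \cite{AAEKO} and \cite[Section~3]{HKK}. Hochschild cohomology considerations together with the quantum grading (compare \cref{rem:quantum_grading_on_Binf}) leave only the first possible deformation class, namely $\mu_4$ on the $E_4$ disk sequences. Concretely, we compute a single value, such as $\mu_4(\Sb,\Db,\Sw,\Dw)$ in the transferred structure, by composing along $h$. We expect it to be a nonzero multiple of the idempotent; $\pm1$ can be arranged by rescaling generators. This is the potential $xyz$ being detected. We also check the signs against \eqref{eq:Binf:higher-multiplications}. The classification in \cite{AAEKO} says that $A_\infty$-structures on $\B$ that are compatible with the gradings and have nonzero $\mu_4$ on disk sequences are all strictly homotopic to $\Binf$; the same uniqueness should follow from the geometric description in \cite{HKK}. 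Composing that strict homotopy with the transfer map gives the desired quasi-isomorphism, which is $q$-filtered because each map in the construction is.
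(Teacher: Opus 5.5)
Your identification $H\A\cong\B$ and the $q$-filtered homological-perturbation transfer are fine. The gap is in the uniqueness step. \cite{AAEKO} does not classify $A_\infty$-structures on the two-object algebra $\B$. Its Proposition~3.1(2) (case $n=3$) is about the three-object cyclic quiver algebra, which is $\Ccone$ in this paper. There the invariants are the coefficients of $1_{L_1}$ in $\mu_3(u_{31},u_{23},u_{12})$ and $\mu_3(v_{21},v_{32},v_{13})$, and the result needs a $\Z$-grading satisfying their condition (2.4). You have neither that algebra nor such a grading: you only have $\hslash$, which is trivial on $\B$, and $q$. \cite{HKK} does not contain a uniqueness theorem of the kind you quote either. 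So your sentence about ``Hochschild cohomology considerations'' carries the whole proof. To make it work you would have to compute the relevant Hochschild cohomology of $\B$ and show, order by order in $\mu_4,\mu_6,\dots$, that every term permitted by $q(\mu_d)=3(d-2)$ is gauge-equivalent to the corresponding term of $\Binf$. Nothing in the proposal does this.

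The classification as you state it is also false, and your rescaling fallback contradicts the theorem. Take a strict homotopy, so $\mathcal{F}^1=\mathrm{id}$, and a disk sequence $(a_4,a_3,a_2,a_1)$. Then the coefficient of the idempotent in $\mu_4(a_4,a_3,a_2,a_1)$ does not change. Consecutive products in a disk sequence vanish, and the remaining correction terms, such as $\mu_2(a_4,\mathcal{F}^3(a_3,a_2,a_1))$ or $\mu_2(\mathcal{F}^2(a_4,a_3),\mathcal{F}^2(a_2,a_1))$, lie in the span of non-idempotent paths. On the other hand, rescaling $\Sw,\Sb,\Dw,\Db$ independently realizes every nonzero value of this coefficient. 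So ``nonzero $\mu_4$ on disk sequences'' gives a quasi-isomorphism to $\Binf$ only after a rescaling, and a rescaling changes the induced map on homology. The theorem demands exactly $[\sW]\mapsto\Sw$, $[\sB]\mapsto\Sb$, $[\dW]\mapsto\Dw$, $[\dB]\mapsto\Db$. You would therefore have to show the transferred coefficient is exactly $-1$, the value fixed by \eqref{eq:Binf:higher-multiplications} with $m=2$.

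The paper avoids both problems by adjoining a cone object on each side: $X_3=[\circ\xrightarrow{S}\bullet]$ over $\Binf$ and $M_3=K(z;xy)$ over $\A$. A single perturbation step then shows that both transferred structures on $\Ccone$ satisfy $\mu_3(u_{31},u_{23},u_{12})=\mu_3(v_{21},v_{32},v_{13})=1$, with $\Phi^1$ and $\Psi^1$ matching on the saddles and dots (\cref{prop:quasi-iso:BinfCone,prop:quasi-iso:Acone}). The auxiliary $h$-gradings and \cref{lem:a-infty:shift-grading-so-f-objects} are used to meet (2.4), \cref{prop:AAEKO} gives the strict homotopy, and the result is restricted to $\A\subset\Acone$ and $\Binf\subset\Bcone$.
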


\begin{remark}\label{rem:quasi-iso}
	In \Cref{thm:quasi-iso} both \(\A\) and \(\Binf\) may be viewed
	as \(h\)-graded categories
	or as \(\hslash\)-graded categories.
	For the proof the \(h\)-grading on the algebras is required. 
	However, 
	when we apply \Cref{thm:quasi-iso} in \cref{sec:no_wrapping_around_special:mf2twcx},
	only the \(\hslash\)-grading is needed.
\end{remark}

Two auxiliary algebras play an essential role in the proof of \cref{thm:quasi-iso}.

\begin{definition}
	Let \(\Ccone\) be the path algebra over the quiver
	in \cref{fig:wrapped_fukaya_category}
	modulo the relations
	\begin{equation}\label{eq:relations:C}
		u_{i(i+1)}u_{(i-1)i}=0
		\quad
		\text{and}
		\quad
		v_{i(i-1)}v_{(i+1)i}=0
	\end{equation}
	where \(i=1,2,3\) and indices are taken modulo 3. 
	Equip \(\Ccone\) with an \(h\)-grading determined by
	\begin{align*}
		h(u_{12})
		&
		= 
		0
		&
		h(u_{23})
		&
		= 
		0
		&
		h(u_{31})
		&
		= 
		1
		\\
		h(v_{21})
		&
		= 
		0
		&
		h(v_{32})
		&
		= 
		0
		&
		h(v_{13})
		&
		= 
		1
	\end{align*}
	and define a quantum grading by setting 
	\(q(u_{i(i+1)})=q(v_{i(i-1)})=-1\). 
	Finally, let \(\C\) be the subalgebra of \(\Ccone\) 
	consisting of linear combinations of paths 
	that neither start nor end at \(L_3\).
\end{definition}

\begin{figure}[t]
	\[
	\vc{%
	\begin{tikzpicture}
		\footnotesize
		\draw[ultra thick,dashed] ($(210:2)+(120:0.5)$) .. controls ($(210:2)+(120:0.5)+(30:1)$) and (-0.5,1) .. (-0.5,2);
		\draw[ultra thick] ($(-30:2)+(60:0.5)$) .. controls ($(-30:2)+(60:0.5)+(150:1)$) and (0.5,1) .. (0.5,2);
		\draw[ultra thick] ($(-30:2)+(60:-0.5)$) .. controls ($(-30:2)+(60:-0.5)+(150:1)$) and ($(210:2)+(120:-0.5)+(30:1)$) .. ($(210:2)+(120:-0.5)$);
		
		\node (L1) at (30:0.8) {$L_1$};
		\node (L1) at (-90:0.8) {$L_2$};
		\node (L1) at (150:0.8) {$L_3$};

		\draw[thick,rotate around={60:(-30:2)},fill = white] (-30:2) ellipse (1 and 0.4);
		\draw[thick,rotate around={-60:(210:2)},fill = white] (210:2) ellipse (1 and 0.4);
		\draw[thick,fill = white] (90:2) ellipse (1 and 0.4);

		\draw[rotate around={60:(-30:1.8)},->] (-30:1.8) ++(62:1 and 0.4) arc [start angle=62, end angle=120, x radius=1, y radius=0.4];
		\draw[rotate around={60:(-30:1.8)},->] (-30:1.8) ++(0:1 and 0.4) arc [start angle=0, end angle=50, x radius=1, y radius=0.4];
		\draw[rotate around={60:(-30:1.8)}] (-30:1.8) ++(132:1 and 0.4) arc [start angle=132, end angle=180, x radius=1, y radius=0.4];
		\draw[rotate around={60:(-30:1.8)}] (-30:1.8) ++(200:1 and 0.4) arc [start angle=200, end angle=340, x radius=1, y radius=0.4];

		\draw[rotate around={-60:(210:1.8)},->] (210:1.8) ++(62:1 and 0.4) arc [start angle=62, end angle=120, x radius=1, y radius=0.4];
		\draw[rotate around={-60:(210:1.8)},->] (210:1.8) ++(0:1 and 0.4) arc [start angle=0, end angle=50, x radius=1, y radius=0.4];
		\draw[rotate around={-60:(210:1.8)}] (210:1.8) ++(132:1 and 0.4) arc [start angle=132, end angle=180, x radius=1, y radius=0.4];
		\draw[rotate around={-60:(210:1.8)}] (210:1.8) ++(200:1 and 0.4) arc [start angle=200, end angle=340, x radius=1, y radius=0.4];

		\draw[rotate around={180:(90:1.8)},->] (90:1.8) ++(62:1 and 0.4) arc [start angle=62, end angle=120, x radius=1, y radius=0.4];
		\draw[rotate around={180:(90:1.8)},->] (90:1.8) ++(0:1 and 0.4) arc [start angle=0, end angle=50, x radius=1, y radius=0.4];
		\draw[rotate around={180:(90:1.8)}] (90:1.8) ++(132:1 and 0.4) arc [start angle=132, end angle=180, x radius=1, y radius=0.4];
		\draw[rotate around={180:(90:1.8)}] (90:1.8) ++(200:1 and 0.4) arc [start angle=200, end angle=340, x radius=1, y radius=0.4];
		
		\node[label] (v21) at (-30:2) {$v_{21}$};
		\node (u12) at (-30:1.1) {$u_{12}$};
		\node[label] (v32) at (-150:2) {$v_{32}$};
		\node (u23) at (-150:1.1) {$u_{23}$};
		\node (v13) at (90:2) {$v_{13}$};
		\node (u31) at (90:1.2) {$u_{31}$};

		\draw[thick] ($(210:2)+(120:1)$) .. controls ($(210:2)+(120:1)+(30:1)$) and (-1,1) .. (-1,2);
		\draw[thick] ($(-30:2)+(60:1)$) .. controls ($(-30:2)+(60:1)+(150:1)$) and (1,1) .. (1,2);
		\draw[thick] ($(-30:2)+(60:-1)$) .. controls ($(-30:2)+(60:-1)+(150:1)$) and ($(210:2)+(120:-1)+(30:1)$) .. ($(210:2)+(120:-1)$);
		
	\end{tikzpicture}%
	}
	\hspace{1cm}
	\vc{%
	\begin{tikzpicture}[scale=0.7]
		\draw (  30:2) node (L1) {$L_1$};
		\draw (-90:2) node (L2) {$L_2$};
		\draw ( 150:2) node (L3) {$L_3$};
		
		\draw[->] (40:2.1) arc [start angle=40, end angle=140,radius=2.1];
		\draw[<-] (40:1.9) arc [start angle=40, end angle=140,radius=1.9];
		
		\draw[->] (160:2.1) arc [start angle=160, end angle=260,radius=2.1];
		\draw[<-] (160:1.9) arc [start angle=160, end angle=260,radius=1.9];
		
		\draw[->] (-80:2.1) arc [start angle=-80, end angle=20,radius=2.1];
		\draw[<-] (-80:1.9) arc [start angle=-80, end angle=20,radius=1.9];
		
		\small
		\draw (-30:2.6) node (L3) {$v_{21}$};
		\draw (-30:1.4) node (L3) {$u_{12}$};
		\draw (-150:2.6) node (L3) {$v_{32}$};
		\draw (-150:1.4) node (L3) {$u_{23}$};
		\draw (90:2.4) node (L3) {$v_{13}$};
		\draw (90:1.5) node (L3) {$u_{31}$};
	\end{tikzpicture}%
	}
	\]
	\caption{%
		Three Lagrangians in the wrapped Fukaya category \(\W(S^2_3)\)
		generating the algebra \(\C\) and the corresponding quiver.
	}
	\label{fig:wrapped_fukaya_category}
\end{figure}
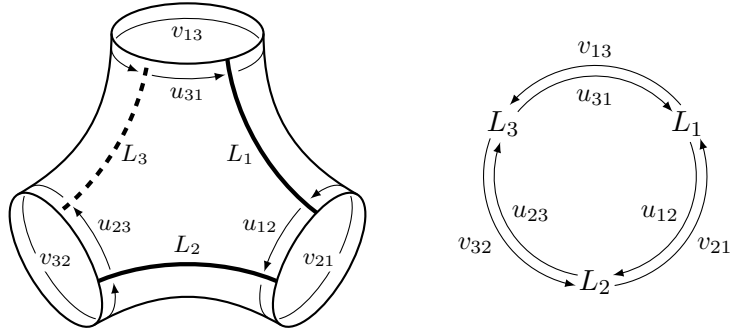

\subsection{The strategy for the proof of \texorpdfstring{\cref{thm:quasi-iso}}{Theorem \ref{thm:quasi-iso}}}
\label{sec:quasi-iso:pair-of-pants}
Denote the wrapped Fukaya category of the three-punctured sphere \(S^2_3\) by \(\W(S^2_3)\).
The (ordinary) algebra \(\Ccone\) is a model for the full subcategory of \(\W(S^2_3)\) 
consisting of the three Lagrangians \(L_1\), \(L_2\), and \(L_3\)
in \cref{fig:wrapped_fukaya_category}; 
see for example \cite{Bocklandt}.
The algebra \(\Ccone\) and its subalgebra \(\C\) inherit \(A_\infty\)-structures from \(\W(S^2_3)\). 

Observe that \(\C\) and \(\B\) are isomorphic as ordinary algebras. 
In fact, the particular \(A_\infty\)-structure on \(\C\) inherited from \(\W(S^2_3)\) makes \(\C\) quasi-isomorphic to \(\Binf\), which can be seen using results from \cite{HKK}. 
This is, essentially, because \(\{L_1,L_2\}\) constitutes a full arc system for \(S^2_3\) and \(L_3\) can be viewed as a mapping cone of \(L_1\) and \(L_2\). 
By \cite[Proposition 7.12]{AAEKO} there is a quasi-isomorphism between the \(A_\infty\)-algebra \(\Ccone\) and a dg algebra containing the algebra \(\A\) of matrix factorizations from \cref{def:mf_algebra}. 
Moreover, this quasi-isomorphism restricts to a quasi-isomorphism between \(\C\) and~\(\A\). 
Combining these two results gives rise to a quasi-isomorphism between \(\A\) and \(\Binf\). 

Note, however, that \cref{thm:quasi-iso} makes a stronger claim 
than simply the existence of some such quasi-isomorphism;
we also need control over the induced map on homology. 
In principle, \cref{thm:quasi-iso} could be deduced from a careful analysis of the quasi-isomorphisms from \cite{HKK,AAEKO}; we proceed with the following direct, perhaps elementary, strategy that makes the required  quasi-isomorphism explicit.

First, we enlarge \(\Binf\) to an \(A_\infty\)-algebra \(\Bcone\)
using the mapping cone construction. 
Second, we apply homological perturbation to show that 
\(\Ccone\) can be equipped with an \(A_\infty\)-structure 
such that it becomes quasi-isomorphic to \(\Bcone\). 
We then repeat the process with the dg algebra~\(\A\):
We enlarge it to a dg algebra \(\Acone\) 
and show that \(\Ccone\) can be equipped with an \(A_\infty\)-structure 
such that it becomes quasi-isomorphic to \(\Acone\). 
Next, we show that those two \(A_\infty\)-structures on \(\Ccone\) are strictly homotopic. 
Here, we rely on the main algebraic result from \cite{AAEKO} 
underlying \cite[Proposition 7.12]{AAEKO}, 
namely, the classification of \(A_\infty\)-structures on \(\Ccone\) 
up to strict homotopy equivalence.
This gives a quasi-isomorphism
between \(\Acone\) and \(\Bcone\) 
which we understand explicitly.
Finally, this quasi-isomorphism is restricted 
to a quasi-isomorphism between \(\A\) and \(\Binf\).

The proof of \cref{thm:quasi-iso} concludes this section in \cref{sec:quasi-iso:identification}. This relies on the aforementioned homological perturbation calculations in \cref{sec:quasi-iso:tw} and \cref{sec:quasi-iso:mfs}, the details of which are included for completeness and in the interest of carefully tracking signs, but which may be skipped and taken as a black-box by the impatient reader.

\subsection{A quasi-isomorphism between \texorpdfstring{\(\Bcone\)}{B\^{}∞\_cone} and \texorpdfstring{\(\Ccone\)}{C\_cone}}
\label{sec:quasi-iso:tw}

{%
	\tikzcdset{diagrams={nodes={inner sep=1pt},row sep=2pt, column sep=10pt}}
	\begin{definition}
		Consider the category of \((h,q)\)-graded twisted complexes over \(\Binf\) whose differential has quantum grading \(-3\). 
		Denote by \(\Binf_{\operatorname{cone}}\) 
		its full \(A_\infty\)-subcategory generated by the following three objects:
		\[
		X_1
		=
		\left[h^0q^0\circ\right]
		\qquad
		X_2
		=
		\left[h^0q^0\bullet\right]
		\qquad
		X_3
		=
		\left[\vc{%
			\begin{tikzcd}
				h^{-1}q^{1}\circ
				\arrow{dd}{S}
				\\
				\phantom{X}
				\\
				h^0q^{-1}\bullet
		\end{tikzcd}}\right]
		\]
	\end{definition}
	
	\begin{remark}\label{rmk:right-to-left}%
		The differential in the twisted complex \(X_3\) will be written as a vertical arrow, while
		arrows with a non-zero horizontal component represent morphisms in \(\Binf_{\operatorname{cone}}\). 
		For example, in the proof of \cref{prop:quasi-iso:BinfCone} below, we will encounter expressions of the form 
		\[\mu_3 \Bigg(
		\begin{tikzcd}
			\phantom{X}
			&
			\circ
			\arrow{dd}[swap]{S}
			&
			&
			\\
			\circ
			\arrow[leftarrow]{ru}{1}
			&
			&
			\bullet
			\arrow[leftarrow]{r}{S}
			\arrow{dl}{1}
			&
			\circ
			\\
			&
			\bullet
			&
			&
			\phantom{X}
		\end{tikzcd}
		\Bigg)
		\] 
		These can be converted into generic symbols (encoding objects and morphisms as usual) via $\mu_3(X_1\xleftarrow{a_3} X_3\xleftarrow{a_2} X_2\xleftarrow{a_1} X_1)$, which can be rewritten $\mu_3(a_3,a_2,a_1)$ as is more common (see Seidel's conventions, for instance) but perhaps less informative.  
	\end{remark}

	\begin{proposition}\label{prop:quasi-iso:BinfCone}
		There exists an \(A_\infty\)-structure on \(\Ccone\) 
		and
		a \(q\)-filtered quasi-isomorphism
		\(\Phi\co\Ccone\rightarrow\Bcone\)
		such that
		\begin{enumerate}
			\item \label{prop:quasi-iso:BinfCone:mu-iii}
			\(\mu_{3}(u_{31},u_{23},u_{12})=1\) 
			and 
			\(\mu_{3}(v_{21},v_{32},v_{13})=1\),
			\item 
			\(\Phi(L_i)=X_i\) for \(i=1,2,3\), and 
			\item \label{prop:quasi-iso:BinfCone:morphisms}
			the following identities hold:
			\begin{align*}
				\Phi^1(u_{12})
				&=
				S_\circ
				&
				\Phi^1(u_{31}v_{13})
				&=
				D_\circ
				&
				\Phi^1(v_{21})
				&=
				S_\bullet
				&
				\Phi^1(u_{32}u_{23})
				&=
				D_\bullet
			\end{align*}
		\end{enumerate}	
	\end{proposition}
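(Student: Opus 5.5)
The plan is to obtain the \(A_\infty\)-structure on \(\Ccone\) by homological perturbation: compute the homology of the endomorphism algebra of \(X_1\oplus X_2\oplus X_3\) in \(\Bcone\), identify it with \(\Ccone\) as an ordinary algebra, and transfer the structure. Throughout I will work with the \((h,q)\)-gradings and the conventions of \cref{def:additive_enlargement_Seidel} and \cref{rem:twisted_complex}.

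\textbf{Step 1: compute \(\mu_1\) on the cone.} The morphism spaces between \(X_1,X_2\) are the corresponding idempotent pieces of \(\B\) with \(\mu_1=0\). The spaces involving \(X_3\) are \(2\times 1\), \(1\times 2\) or \(2\times 2\) matrices, and \(\mu_1\) is computed by pre- and post-composing with the differential \(S\) via \(\mu_2\). The higher \(\mu^{\Binf}_{2m}\) enter \(\mu_1\) of the cone only when a disk sequence contains two consecutive copies of \(S\) from the twisted differential, which cannot happen, since consecutive entries of a disk sequence alternate between \(S\) and \(D\).

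Using the standard basis (\cref{def:standard_basis}) and the relations \(DS=SD=0\), I will find explicit cycles and boundaries. The goal is to show that the homology has a basis given by:
\begin{itemize}
\item the idempotents;
\item \(S_\circ\) and \(S_\bullet\), corresponding to \(u_{12}\) and \(v_{21}\);
\item the classes \(u_{23}\), \(v_{32}\), \(u_{31}\), \(v_{13}\), represented by the ``inclusion'' and ``projection'' components (for example, \(u_{31}\colon X_3\to X_1\) is projection onto the \(\circ\) summand, and \(v_{13}\colon X_1\to X_3\) is \(D_\circ\) into \(\circ\));
\item the paths built from these classes.
\end{itemize}
From this I will check that the relations \eqref{eq:relations:C} hold, so that \(H(\End(X_1\oplus X_2\oplus X_3))\cong\Ccone\). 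Grading checks against the \(h\)- and \(q\)-gradings assigned to the \(u\)'s and \(v\)'s then fix the shifts \(h^{-1}q^{1}\) in \(X_3\).

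\textbf{Step 2: transfer.} I will choose a splitting, namely an inclusion \(\iota\) of homology representatives, a projection \(\pi\) and a homotopy \(H\). Since \(\CoeffRing\) is arbitrary, these must be chosen explicitly on the free basis rather than via a field argument. Kadeishvili/Merkulov homological perturbation then gives an \(A_\infty\)-structure on \(\Ccone\) together with an \(A_\infty\)-quasi-isomorphism \(\Phi\) with \(\Phi^1=\iota\). This yields (2) at once, and (3) follows from the choice of representatives: \(u_{31}v_{13}\mapsto D_\circ\), and the product \(u_{32}u_{23}\) is represented by \(D_\bullet\). Here I will interpret \(u_{32}u_{23}\) as the path through \(L_3\) that returns to \(L_2\); I will need to verify that this composite really corresponds to \(D_\bullet\) up to a boundary, and I will adjust the choice of representatives if it does not. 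The \(q\)-filtered property follows because \(H\) has quantum degree \(3\) while \(\mu_d\) has quantum degree \(3(d-2)\), so the \(d\)-th transferred component has quantum degree \(3(d-1)\) as required.

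\textbf{Step 3: the main obstacle, checking (1).} I expect the hardest step to be the computation \(\mu_3(u_{31},u_{23},u_{12})=1\) and its \(v\)-analogue, including the signs. The transferred \(\mu_3\) equals \(\pi\mu_2(H\mu_2(\iota,\iota),\iota)\pm\pi\mu_2(\iota,H\mu_2(\iota,\iota))\) plus \(\pi\mu_3^{\Bcone}(\iota,\iota,\iota)\). Since \(\mu_3^{\Binf}=0\), the last term arises only from \(\mu_4^{\Binf}\) with one insertion of the twisted differential \(S\). For the loop \(L_1\to L_2\to L_3\to L_1\), the composite \(u_{23}u_{12}\) is either zero or exact, and I expect the contribution to come from the disk sequence \((S_\circ,D_\bullet,S_\bullet,D_\circ)\) completed by the cone's \(S\), giving \(\mu_4=(-1)^{1}\cdot 1\) by \eqref{eq:Binf:higher-multiplications}. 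This sign is then corrected by the sign \(\triangleleft\) coming from the shift \(h^{-1}\).

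My plan is to carry out this computation in the right-to-left pictorial notation of \cref{rmk:right-to-left}, tracking each Koszul sign. If the result is \(-1\), I will rescale the representatives \(u_{31}\) and \(v_{13}\) by \(-1\); this is compatible with (3) because it changes \(u_{31}v_{13}\) by \((-1)^2=1\). I will treat the \(v\)-cycle symmetrically, using the \(\bullet\leftrightarrow\circ\) symmetry \(\rho\).
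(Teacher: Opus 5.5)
The overall strategy is the paper's: homological perturbation on the endomorphism algebra of \(X_1\oplus X_2\oplus X_3\), explicit cycle representatives, then evaluation of the transferred \(\mu_3\). Steps~1--2 are fine, including your reading of \(u_{32}u_{23}\) as the loop \(L_2\to L_3\to L_2\), i.e.\ \(v_{32}u_{23}\). The gap is in Step~3, which misplaces where the value \(1\) comes from.

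Use the paper's representatives: \(u_{12}=S_\circ\); \(u_{23}\) is the arrow labelled \(1\) from \(\bullet\) to the bottom \(\bullet\) of \(X_3\); \(u_{31}\) is the arrow labelled \(-1\) from the top \(\circ\) of \(X_3\) to \(\circ\). The \(-1\) is needed for \(u_{31}v_{13}\mapsto +D_\circ\), because the shift \(h^{-1}\) contributes a sign \(\triangleleft\) to that product. None of these representatives involves a \(D\). Moreover, \(\delta\) cannot be inserted anywhere: after \(u_{23}\) you sit at the bottom \(\bullet\), while \(\delta\) and \(u_{31}\) both start at the top \(\circ\). Hence \(\mu_3^{\Bcone}(u_{31},u_{23},u_{12})=0\) and no disk sequence contributes.

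The \(1\) comes from the homotopy term. The product \(\mu_2(u_{23},u_{12})\) is the arrow \(\circ\xrightarrow{S}\bullet\) into the bottom of \(X_3\), and this equals \(\mu_1\) of the arrow \(\circ\xrightarrow{-1}\circ\) into the top. So \(\mathcal{F}^2(u_{23},u_{12})=T^1\mu_2(u_{23},u_{12})\) is the arrow \(\circ\xrightarrow{1}\circ\) into the top, and its composite with \(u_{31}\) is \(1_{X_1}\). The other homotopy term vanishes: \(\mathcal{F}^2(u_{31},u_{23})=0\) because \(\mu_2(u_{31},u_{23})=0\) on the chain level. The \(\mu_4\)-mechanism you describe belongs to the \emph{other} cycle. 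There the chain-level products \(\mu_2(v_{32},v_{13})\) and \(\mu_2(v_{21},v_{32})\) vanish, so the homotopy terms drop out. What remains is \(\mu_3^{\Bcone}(v_{21},v_{32},v_{13})=-\mu_4^{\Binf}(S_\bullet,D_\bullet,S_\circ,D_\circ)=1\), with the cone's \(S_\circ\) inserted between \(v_{32}=D_\bullet\) and \(v_{13}=D_\circ\).

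In particular, you cannot obtain the \(v\)-identity from the \(u\)-identity via \(\rho\). Although \(\rho\) is an automorphism of \(\Binf\), it does not preserve \(\Bcone\): it sends \(X_3=\Cone{S_\circ}\) to a cone of \(S_\bullet\), which over the ordinary algebra \(\B\) is a genuinely different object. A quick test shows the shortcut must fail. It would apply verbatim to \(\B\) itself (all \(\mu_{\geq3}=0\), still \(\rho\)-symmetric). There the \(u\)-computation above is unchanged, since it never uses \(\mu_{\geq3}\). Yet the transferred \(\mu_3(v_{21},v_{32},v_{13})\) is \(0\), because its only possible source is the \(\mu_4\)-term. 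So the second identity needs its own computation, and it is exactly where \(\Binf\) differs from \(\B\).

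Your sign fallback cannot rescue a mismatch either. The rescalings compatible with (3) are \(u_{31}\mapsto\alpha u_{31}\), \(v_{13}\mapsto\alpha^{-1}v_{13}\), \(u_{23}\mapsto\beta u_{23}\), \(v_{32}\mapsto\beta^{-1}v_{32}\). These multiply the two \(\mu_3\)-values by \(\alpha\beta\) and \((\alpha\beta)^{-1}\), so their product is invariant and neither value can be corrected independently of the other. Both computations therefore have to be carried out. With the representatives above (\(v_{32}=D\) out of the bottom \(\bullet\), \(v_{13}=D\) into the top \(\circ\)), both give \(+1\).
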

	
	\newcommand{\phantomheight}[1]{\vphantom{\parbox[c]{1cm}{\rule{1cm}{#1}}}}
	\newcommand{\cxbox}[2][lightgray]{\colorbox{#1}{$\vc{%
				\begin{tikzcd}[cxbases]#2\end{tikzcd}}\phantomheight{38px}$}}
	\newcommand{\cxboxS}[2][lightgray]{\colorbox{#1}{$\vc{%
				\begin{tikzcd}[cxbases]#2\end{tikzcd}}\phantomheight{43px}$}}	
			
	\begin{figure}[t]
		\centering
		\tikzstyle{cxbases} = [ampersand replacement=\&,column sep=15pt]
		\begin{subfigure}{\textwidth}
			\centering
			\begin{gather*}
				%
				\cxboxS{
					\circ
					\arrow[swap]{dd}{S}
					\arrow[leftarrow]{rdd}[sloped,below,pos=0.9]{S^{2n+1}}
					\arrow[phantom]{r}{\phantom{S^n}}
					\&
					\circ
					\arrow{dd}{S}
					\\
					\phantom{\circ}
					\\
					\bullet
					\arrow[phantom]{r}[below]{\phantom{S^n}}
					\&
					\bullet
				}
				\mapsto
				\cxboxS{
						\circ
						\arrow[leftarrow]{r}{-S^{2n+2}}
						\arrow[swap]{dd}{S}
						\&
						\circ
						\arrow{dd}{S}
						\\
						\phantom{\circ}
						\\
						\bullet
						\arrow[leftarrow]{r}[below,pos=0.55]{-S^{2n+2}}
						\&
						\bullet
				}
				\phantom{\mapsto}
				%
				\cxboxS{
						\circ
						\arrow[leftarrow]{r}{S^{2n}}
						\arrow[swap]{dd}{S}
						\arrow[phantom]{r}[below]{\phantom{S^n}}
						\&
						\circ
						\arrow{dd}{S}
						\\
						\phantom{\circ}
						\\
						\bullet
						\arrow[phantom]{r}[below]{\phantom{S^n}}
						\&
						\bullet
				}
				\mapsto
				\cxboxS{
						\circ
						\arrow[swap]{dd}{S}
						\arrow[phantom]{r}[above]{\phantom{S^n}}
						\&
						\circ
						\arrow{dd}{S}
						\\
						\phantom{\circ}
						\\
						\bullet
						\arrow[phantom]{r}[below]{\phantom{S^n}}
						\arrow[leftarrow]{ruu}[sloped,pos=0.8]{S^{2n+1}}
						\&
						\bullet
				}
%
				%
				\cxbox[white]{
						\circ
						\arrow[leftarrow]{r}{1}
						\arrow[swap]{dd}{S}
						\arrow[phantom]{r}[above]{\phantom{S^n}}
						\&
						\circ
						\arrow{dd}{S}
						\\
						\phantom{\circ}
						\\
						\bullet
						\arrow[phantom]{r}[below]{\phantom{S^n}}
						\arrow[leftarrow]{r}[below]{1}
						\&
						\bullet
				}
				\cxbox[white]{
						\circ
						\arrow[leftarrow]{r}{D^{n+1}}
						\arrow[swap]{dd}{S}
						\arrow[phantom]{r}[above]{\phantom{S^n}}
						\&
						\circ
						\arrow{dd}{S}
						\\
						\phantom{\circ}
						\\
						\bullet
						\arrow[phantom]{r}[below]{\phantom{S^n}}
						\&
						\bullet
				}
				\cxbox[white]{
						\circ
						\arrow[swap]{dd}{S}
						\arrow[phantom]{r}[above]{\phantom{S^n}}
						\&
						\circ
						\arrow{dd}{S}
						\\
						\phantom{\circ}
						\\
						\bullet
						\arrow[phantom]{r}[below]{\phantom{S^n}}
						\arrow[leftarrow]{r}[below]{D^{n+1}}
						\&
						\bullet
				}
			\end{gather*}
			\caption{%
				A basis of the space of morphisms \(X_3\rightarrow X_3\). 
			}
			\label{fig:Bcone:morphisms:33}
		\end{subfigure}
		\\
		\begin{subfigure}{0.5\textwidth}
		\centering
		\begin{gather*}
			%
			\cxbox{
					\circ
					\arrow{dd}[swap]{S}
					\&
					\phantom{X}
					\\
					\&
					\circ
					\arrow{ul}[sloped]{S^{2n}}
					\\
					\bullet
					\&
					\phantom{X}
			}
			\mapsto
			\cxbox{
					\circ
					\arrow{dd}[swap]{S}
					\&
					\phantom{X}
					\\
					\&
					\circ
					\arrow{dl}[sloped,swap,pos=0.3]{-S^{2n+1}}
					\\
					\bullet
					\&
					\phantom{X}
			}
			\phantom{\mapsto}
			%
			\cxbox[white]{
					\circ
					\arrow{dd}[swap]{S}
					\&
					\phantom{X}
					\\
					\&
					\circ
					\arrow{ul}[sloped]{D^{n+1}}
					\\
					\bullet
					\&
					\phantom{X}
			}
		\end{gather*}
		\caption{%
			A basis of the space of morphisms \(X_1\rightarrow X_3\). 
		}
		\label{fig:Bcone:morphisms:13}
	\end{subfigure}%
	\begin{subfigure}{0.5\textwidth}
		\centering
		\begin{gather*}
			%
			\cxbox{
					\phantom{X}
					\&
					\circ
					\arrow{dd}{S}
					\\
					\circ
					\&
					\\
					\phantom{X}
					\&
					\bullet
					\arrow{ul}[swap,sloped]{S^{2n+1}}
			}
			\mapsto
			\cxbox{
					\phantom{X}
					\&
					\circ
					\arrow{dd}{S}
					\arrow{dl}[sloped]{-S^{2n+2}}
					\\
					\circ
					\&
					\\
					\phantom{X}
					\&
					\bullet
			}
			\phantom{\mapsto}
			%
			\cxbox[white]{
					\phantom{X}
					\&
					\circ
					\arrow{dd}{S}
					\arrow{dl}[sloped]{-D^{n}}
					\\
					\circ
					\&
					\\
					\phantom{X}
					\&
					\bullet
			}
		\end{gather*}
		\caption{%
			A basis of the space of morphisms \(X_3\rightarrow X_1\). 
		}
		\label{fig:Bcone:morphisms:31}
		\end{subfigure}
		\\
		\begin{subfigure}{0.5\textwidth}
		\centering
		\begin{gather*}
			%
			\cxbox{
					\circ
					\arrow{dd}[swap]{S}
					\&
					\phantom{X}
					\\
					\&
					\bullet
					\arrow{ul}[sloped]{S^{2n+1}}
					\\
					\bullet
					\&
					\phantom{X}
			}
			\mapsto
			\cxbox{
					\circ
					\arrow{dd}[swap]{S}
					\&
					\phantom{X}
					\\
					\&
					\bullet
					\arrow{dl}[sloped,swap,pos=0.3]{-S^{2n+2}}
					\\
					\bullet
					\&
					\phantom{X}
			}
			\phantom{\mapsto}
			%
			\cxbox[white]{
					\circ
					\arrow{dd}[swap]{S}
					\&
					\phantom{X}
					\\
					\&
					\bullet
					\arrow{dl}[sloped,swap]{D^{n}}
					\\
					\bullet
					\&
					\phantom{X}
			}
		\end{gather*}
		\caption{%
			A basis of the space of morphisms \(X_2\rightarrow X_3\). 
		}
		\label{fig:Bcone:morphisms:23}
		\end{subfigure}%
		\begin{subfigure}{0.5\textwidth}
			\centering
			\begin{gather*}
				%
				\cxbox{
						\phantom{X}
						\&
						\circ
						\arrow{dd}{S}
						\\
						\bullet
						\&
						\\
						\phantom{X}
						\&
						\bullet
						\arrow{ul}[sloped,swap]{S^{2n}}
				}
				\mapsto
				\cxbox{
					\phantom{X}
					\&
					\circ
					\arrow{dl}[sloped]{-S^{2n+1}}
					\arrow{dd}{S}
					\\
					\bullet
					\&
					\\
					\phantom{X}
					\&
					\bullet
				}
				\phantom{\mapsto}
				%
				\cxbox[white]{
					\phantom{X}
					\&
					\circ
					\arrow{dd}{S}
					\\
					\bullet
					\&
					\\
					\phantom{X}
					\&
					\bullet
					\arrow{ul}[sloped,swap]{D^{n+1}}
				}
			\end{gather*}
			\caption{%
				A basis of the space of morphisms \(X_3\rightarrow X_2\). 
			}
			\label{fig:Bcone:morphisms:32}
		\end{subfigure}
		\caption{%
			A basis of the morphism spaces in \(\Bcone\) and the action of the differential (\(n\geq0\)).
		}
		\label{fig:Bcone:morphisms}
	\end{figure}

	\begin{proof}
		We first compute bases of the morphism spaces as vector spaces.  
		We choose these bases such that 
		the differential induces a partial matching of basis elements 
		that cancel each other.
		For some morphism spaces 
		such bases are shown in \cref{fig:Bcone:morphisms}. 
		(The signs are computed using the sign conventions 
		from \cref{def:additive_enlargement_Seidel}.) 
		For the remaining morphism spaces, 
		we take the standard basis of \(\B\subset\Bcone\) from 
		\cref{def:standard_basis}.
		Thus, for any two objects \(X,Y\) of \(\Bcone\),
		we can write \(\Bcone(X,Y)\) as a direct sum of 
		\(H\Bcone(X,Y)\) and some contractible chain complex. 
		Consider the following diagram
		\[
		\begin{tikzcd}[column sep=3cm]
			H\Bcone(X,Y)
			\arrow[bend left=5]{r}{\mathcal{F}^1}
			&
			\Bcone(X,Y)
			\arrow[bend left=5]{l}{\mathcal{G}^1}
			\arrow[loop right, distance=3em, start anchor={[yshift=1ex]east}, end anchor={[yshift=-1ex]east}]{rl}{T^1}
		\end{tikzcd}
		\]
		where 
		\(\mathcal{F}^1\) %
		and 
		\(\mathcal{G}^1\) %
		are the inclusion and projection map, respectively, 
		and
		\(T^1\) %
		is the chain homotopy between \(\mathcal{F}^1\mathcal{G}^1\) and the identity:
		\begin{equation}\label{eq:mfs:indirect:def:T1}
			T^1\mu_1^{\Bcone}
			+
			\mu_1^{\Bcone}T^1
			=
			\mathcal{F}^1\mathcal{G}^1-1
		\end{equation}
		More explicitly, 
		\(T^1\)	is the negative of the inverse of the differential, 
		i.e.\ it is defined by reversing all arrows \(\mapsto\) 
		in \cref{fig:Bcone:morphisms}
		and multiplying the new map by \(-1\). 

		We apply 
		Seidel's homological perturbation lemma 
		to this diagram 
		\cite[Section~I~(1i)]{Seidel}
		and obtain an \(A_\infty\)-structure on \(H\Bcone\) 
		together with a quasi-isomorphism to \(\Bcone\) 
		extending the maps \(\mathcal{F}^1\) and \(\mathcal{G}^1\). 
		Observe that
		\(q(\mathcal{F}^1)=0\) 
		and 
		\(q(T^1)=3\). 
		By induction, it follows 
		from Seidel's formula for \(\mathcal{F}^d\) 
		that 
		\(q(\mathcal{F}^d)=3(d-1)\).
		Together with 
		\(q(\mathcal{G}^1)=0\),
		Seidel's formula for \(\mu^{H\Bcone}_d\) gives
		\(q(\mu^{H\Bcone}_d)=3(d-2)\).
		
		To determine the induced multiplication on \(H\Bcone\), define 
		\begin{align*}
		\Phi^1(u_{12})
		&=
		\vc{%
			\begin{tikzcd}[ampersand replacement=\&]
				\phantom{X}
				\&
				\\
				\bullet
				\arrow[leftarrow]{r}{S}
				\&
				\circ
				\\
				\&
				\phantom{X}
		\end{tikzcd}}
		&
		\Phi^1(u_{23})
		&=
		\vc{%
			\begin{tikzcd}[ampersand replacement=\&]
				\circ
				\arrow{dd}[swap]{S}
				\&
				\phantom{X}
				\\
				\&
				\bullet
				\arrow{dl}{1}
				\\
				\bullet
				\&
				\phantom{X}
		\end{tikzcd}}
		&
		\Phi^1(u_{31})
		&=
		\vc{%
			\begin{tikzcd}[ampersand replacement=\&]
				\phantom{X}
				\&
				\circ
				\arrow[swap]{dl}{-1}
				\arrow{dd}{S}
				\\
				\circ
				\&
				\\
				\phantom{X}
				\&
				\bullet
		\end{tikzcd}}
		\\
		\Phi^1(v_{21})
		&=
		\vc{%
			\begin{tikzcd}[ampersand replacement=\&]
				\phantom{X}
				\&
				\\
				\circ
				\arrow[leftarrow]{r}{S}
				\&
				\bullet
				\\
				\&
				\phantom{X}
		\end{tikzcd}}
		&
		\Phi^1(v_{32})
		&=
		\vc{%
			\begin{tikzcd}[ampersand replacement=\&]
				\phantom{X}
				\&
				\circ
				\arrow{dd}{S}
				\\
				\bullet
				\&
				\\
				\phantom{X}
				\&
				\bullet
				\arrow{ul}{D}
		\end{tikzcd}}
		&
		\Phi^1(v_{13})
		&=
		\vc{%
			\begin{tikzcd}[ampersand replacement=\&]
				\circ
				\arrow{dd}[swap]{S}
				\&
				\phantom{X}
				\\
				\&
				\circ
				\arrow[swap]{ul}{D}
				\\
				\bullet
				\&
				\phantom{X}
		\end{tikzcd}}
		\end{align*}
		Applying \cite[Formula~(1.18)]{Seidel},
		one checks that the morphisms
		\(\mu_{2}(\Phi^1(u_{i,i+1}),\Phi^1(u_{i-1,i}))\)
		and 
		\(\mu_{2}(\Phi^1(v_{i,i-1}),\Phi^1(v_{i+1,i}))\)
		are null-homotopic
		for all \(i=1,2,3\), 
		and that 
		\(\Phi^1\) determines an algebra isomorphism 
		between \(\Ccone\) and \(H\Bcone\).
		In particular, 
		the identities 
		from \eqref{prop:quasi-iso:BinfCone:morphisms} 
		hold. 
		
		Finally, the map \(\Phi^1\) induces an \(A_\infty\)-structure on \(\Ccone\). 
		Using \cite[Formula~(1.18)]{Seidel}
		we verify the identities 
		from \eqref{prop:quasi-iso:BinfCone:mu-iii}:
		\begin{multline*}
			\mu^{\Ccone}_{3}(u_{31},u_{23},u_{12})
			=
			\mu^{H_*\Bcone}_{3}(\Phi^1(u_{31}),\Phi^1(u_{23}),\Phi^1(u_{12}))
			=
			\mu_3^{H_*\Bcone}\Bigg(
			\vc{%
				\begin{tikzcd}[ampersand replacement=\&]
					\phantom{X}
					\&
					\circ
					\arrow{dd}[swap]{S}
					\&
					\&
					\\
					\circ
					\arrow[leftarrow]{ru}{-1}
					\&
					\&
					\bullet
					\arrow[leftarrow]{r}{S}
					\arrow{dl}{1}
					\&
					\circ
					\\
					\&
					\bullet
					\&
					\&
					\phantom{X}
			\end{tikzcd}}
			\Bigg)
			\\
			=
			\mathcal{G}^1\Bigg(\mu_3^{\mathcal{B}^\infty_{\operatorname{cone}}}\Bigg(
			\vc{%
				\begin{tikzcd}[ampersand replacement=\&]
					\phantom{X}
					\&
					\circ
					\arrow{dd}[swap]{S}
					\&
					\&
					\\
					\circ
					\arrow[leftarrow]{ru}{-1}
					\&
					\&
					\bullet
					\arrow[leftarrow]{r}{S}
					\arrow{dl}{1}
					\&
					\circ
					\\
					\&
					\bullet
					\&
					\&
					\phantom{X}
			\end{tikzcd}}
			\Bigg)\Bigg)
			+\mathcal{G}^1\Bigg(
			\mu_2^{\mathcal{B}^\infty_{\operatorname{cone}}}
			\Bigg(
			\mathcal{F}^2\Bigg(
			\vc{%
				\begin{tikzcd}[ampersand replacement=\&]
					\phantom{X}
					\&
					\circ
					\arrow{dd}[swap]{S}
					\&
					\\
					\circ
					\arrow[leftarrow]{ru}{-1}
					\&
					\&
					\bullet
					\arrow{dl}{1}
					\\
					\&
					\bullet
					\&
					\phantom{X}
			\end{tikzcd}}
			\Bigg)
			,
			\vc{%
				\begin{tikzcd}[ampersand replacement=\&]
					\phantom{X}
					\&
					\\
					\bullet
					\arrow[leftarrow]{r}{S}
					\&
					\circ
					\\
					\&
					\phantom{X}
			\end{tikzcd}}
			\Bigg)
			\Bigg)
			\\
			+\mathcal{G}^1\Bigg(
			\mu_2^{\mathcal{B}^\infty_{\operatorname{cone}}}
			\Bigg(
			\vc{%
				\begin{tikzcd}[ampersand replacement=\&]
					\phantom{X}
					\&
					\circ
					\arrow{dd}[swap]{S}
					\\
					\circ
					\arrow[leftarrow]{ru}{-1}
					\&
					\\
					\phantom{X}
					\&
					\bullet
			\end{tikzcd}}
			,
			\mathcal{F}^2\Bigg(
			\vc{%
				\begin{tikzcd}[ampersand replacement=\&]
					\circ
					\arrow{dd}[swap]{S}
					\&
					\phantom{X}
					\&
					\\
					\&
					\bullet
					\arrow[leftarrow]{r}{S}
					\arrow{dl}{1}
					\&
					\circ
					\\
					\bullet
					\&
					\&
					\phantom{X}
			\end{tikzcd}}
			\Bigg)
			\Bigg)
			\Bigg)
			\end{multline*}
			Note that the symbols \(\mathcal{F}^1\) 
			have been dropped 
			to lighten the expression above, 
			since the map \(\mathcal{F}^1\) 
			is simply the inclusion of representatives of homology classes. 
			Observe that
			\[
			\mu_3^{\mathcal{B}^\infty_{\operatorname{cone}}} 
			\Bigg(
			\vc{%
				\begin{tikzcd}[ampersand replacement=\&]
					\phantom{X}
					\&
					\circ
					\arrow{dd}[swap]{S}
					\&
					\&
					\\
					\circ
					\arrow[leftarrow]{ru}{-1}
					\&
					\&
					\bullet
					\arrow[leftarrow]{r}{S}
					\arrow{dl}{1}
					\&
					\circ
					\\
					\&
					\bullet
					\&
					\&
					\phantom{X}
			\end{tikzcd}}
			\Bigg)=0
			\] since %
			there is no 
			composable sequence of arrows. 
			Therefore, the first term in the sum above vanishes. 
			The second term vanishes because
			\begin{align*}
				\mathcal{F}^2\Bigg(
				\vc{%
					\begin{tikzcd}[ampersand replacement=\&]
						\phantom{X}
						\&
						\circ
						\arrow{dd}[swap]{S}
						\&
						\\
						\circ
						\arrow[leftarrow]{ru}{-1}
						\&
						\&
						\bullet
						\arrow{dl}{1}
						\\
						\&
						\bullet
						\&
						\phantom{X}
				\end{tikzcd}}
				\Bigg)
				&=
				T^1
				\Bigg(
				\mu_2^{\mathcal{B}^\infty_{\operatorname{cone}}}
				\Bigg(
				\vc{%
					\begin{tikzcd}[ampersand replacement=\&]
						\phantom{X}
						\&
						\circ
						\arrow{dd}[swap]{S}
						\\
						\circ
						\arrow[leftarrow]{ru}{-1}
						\\
						\phantom{X}
						\&
						\bullet
				\end{tikzcd}}
				,
				\vc{%
					\begin{tikzcd}[ampersand replacement=\&]
						\circ
						\arrow{dd}[swap]{S}
						\&
						\phantom{X}
						\\
						\&
						\bullet
						\arrow{dl}{1}
						\\
						\bullet
						\&
						\phantom{X}
				\end{tikzcd}}
				\Bigg)
				\Bigg)
				=
				T^1(0)
				=
				0.
			\end{align*}
			We obtain
			\[
			\mu^{\Ccone}_{3}(u_{31},u_{23},u_{12})
			= 
			\mathcal{G}^1\Bigg(
			\mu_2^{\mathcal{B}^\infty_{\operatorname{cone}}}
			\Bigg(
			\vc{%
				\begin{tikzcd}[ampersand replacement=\&]
					\phantom{X}
					\&
					\circ
					\arrow{dd}[swap]{S}
					\\
					\circ
					\arrow[leftarrow]{ru}{-1}
					\&
					\\
					\phantom{X}
					\&
					\bullet
			\end{tikzcd}}
			,
			\mathcal{F}^2\Bigg(
			\vc{%
				\begin{tikzcd}[ampersand replacement=\&]
					\circ
					\arrow{dd}[swap]{S}
					\&
					\phantom{X}
					\&
					\\
					\&
					\bullet
					\arrow[leftarrow]{r}{S}
					\arrow{dl}{1}
					\&
					\circ
					\\
					\bullet
					\&
					\&
					\phantom{X}
			\end{tikzcd}}
			\Bigg)
			\Bigg)
			\Bigg)
			\]
			and compute 
			\begin{align*}
				\mathcal{F}^2\Bigg(
				\vc{%
					\begin{tikzcd}[ampersand replacement=\&]
						\circ
						\arrow{dd}[swap]{S}
						\&
						\&
						\phantom{X}
						\\
						\&
						\bullet
						\arrow[leftarrow]{r}{S}
						\arrow{dl}{1}
						\&
						\circ
						\\
						\bullet
						\&
						\&
						\phantom{X}
				\end{tikzcd}}
				\Bigg)
				=
				T^1
				\Bigg(
				\vc{%
					\begin{tikzcd}[ampersand replacement=\&]
						\circ	
						\arrow{dd}[swap]{S}
						\&
						\phantom{X}
						\\
						\&
						\circ
						\arrow{dl}{S}
						\\
						\bullet
						\&
						\phantom{X}
				\end{tikzcd}}
				\Bigg)
				=T^1\mu^{\mathcal{B}^\infty_{\operatorname{cone}}}_1
				\Bigg(
				\vc{%
					\begin{tikzcd}[ampersand replacement=\&]
						\circ	
						\arrow{dd}[swap]{S}
						\&
						\phantom{X}
						\\
						\&
						\circ
						\arrow[swap]{ul}{-1}
						\\
						\bullet
						\&
						\phantom{X}
				\end{tikzcd}}
				\Bigg)
				=
				\vc{%
					\begin{tikzcd}[ampersand replacement=\&]
						\circ	
						\arrow[leftarrow]{rd}{1}
						\arrow{dd}[swap]{S}
						\&
						\phantom{X}
						\\
						\&
						\circ
						\\
						\bullet
						\&
						\phantom{X}
				\end{tikzcd}}
			\end{align*}
			In summary, the above shows that
			\[
			\mu^{\Ccone}_{3}(u_{31},u_{23},u_{12})
			= 
			\mathcal{G}^1\Bigg(
			\mu_2^{\mathcal{B}^\infty_{\operatorname{cone}}}
			\Bigg(
			\vc{%
				\begin{tikzcd}[ampersand replacement=\&]
					\phantom{X}
					\&
					\circ
					\arrow{dd}[swap]{S}
					\\
					\circ
					\arrow[leftarrow]{ru}{-1}
					\&
					\\
					\phantom{X}
					\&
					\bullet
			\end{tikzcd}}
			,
			\vc{%
				\begin{tikzcd}[ampersand replacement=\&]
					\circ	
					\arrow[leftarrow]{rd}{1}
					\arrow{dd}[swap]{S}
					\&
					\phantom{X}
					\\
					\&
					\circ
					\\
					\bullet
					\&
					\phantom{X}
			\end{tikzcd}}
			\Bigg)
			\Bigg)\\
			=\mathcal{G}^1\Bigg(
			\vc{%
				\begin{tikzcd}[ampersand replacement=\&]
					\phantom{X}
					\&
					\\
					\circ
					\arrow[leftarrow]{r}{1}
					\&
					\circ
					\\
					\&
					\phantom{X}
			\end{tikzcd}}
			\Bigg)
			=1
			\]
			Similarly,
			\begin{multline*}
				\mu^{\Ccone}_{3}(v_{21},v_{32},v_{13})
				=
				\mu_3^{H_*\mathcal{B}^\infty_{\operatorname{cone}}}\Bigg(
				\vc{%
					\begin{tikzcd}[ampersand replacement=\&]
						\phantom{X}
						\&
						\&
						\circ
						\arrow{dd}[swap]{S}
						\arrow[leftarrow]{dr}{D}
						\&
						\\
						\circ
						\arrow[leftarrow]{r}{S}
						\&
						\bullet
						\&
						\&
						\circ
						\\
						\&
						\&
						\bullet
						\arrow{ul}{D}
						\&
						\phantom{X}
				\end{tikzcd}}
				\Bigg)
				=
				\mathcal{G}^1\Bigg(\mu_3^{\mathcal{B}^\infty_{\operatorname{cone}}}\Bigg(
				\vc{%
					\begin{tikzcd}[ampersand replacement=\&]
						\phantom{X}
						\&
						\&
						\circ
						\arrow{dd}[swap]{S}
						\arrow[leftarrow]{dr}{D}
						\&
						\\
						\circ
						\arrow[leftarrow]{r}{S}
						\&
						\bullet
						\&
						\&
						\circ
						\\
						\&
						\&
						\bullet
						\arrow{ul}{D}
						\&
						\phantom{X}
				\end{tikzcd}}
				\Bigg)\Bigg)
				\\ %
				+\mathcal{G}^1\Bigg(
				\mu_2^{\mathcal{B}^\infty_{\operatorname{cone}}}
				\Bigg(
				\mathcal{F}^2\Bigg(
				\vc{%
					\begin{tikzcd}[ampersand replacement=\&]
						\phantom{X}
						\&
						\&
						\circ
						\arrow{dd}[swap]{S}
						\\
						\circ
						\arrow[leftarrow]{r}{S}
						\&
						\bullet
						\&
						\\
						\&
						\phantom{X}
						\&
						\bullet
						\arrow{ul}{D}
				\end{tikzcd}}
				\Bigg)
				,
				\vc{%
					\begin{tikzcd}[ampersand replacement=\&]
						\circ
						\arrow{dd}[swap]{S}
						\arrow[leftarrow]{dr}{D}
						\&
						\\
						\&
						\circ
						\\
						\bullet
						\&
						\phantom{X}
				\end{tikzcd}}
				\Bigg)
				\Bigg)
				+\mathcal{G}^1\Bigg(
				\mu_2^{\mathcal{B}^\infty_{\operatorname{cone}}}
				\Bigg(
				\vc{%
					\begin{tikzcd}[ampersand replacement=\&]
						\phantom{X}
						\&
						\\
						\circ
						\arrow[leftarrow]{r}{S}
						\&
						\bullet
						\\
						\&
						\phantom{X}
				\end{tikzcd}}
				,
				\mathcal{F}^2\Bigg(
				\vc{%
					\begin{tikzcd}[ampersand replacement=\&]
						\phantom{X}
						\&
						\circ
						\arrow{dd}[swap]{S}
						\arrow[leftarrow]{dr}{D}
						\&
						\\
						\bullet
						\&
						\&
						\circ
						\\
						\&
						\bullet
						\arrow{ul}{D}
						\&
						\phantom{X}
				\end{tikzcd}}
				\Bigg)
				\Bigg)
				\Bigg)
				\end{multline*}
				In this case, the second and third terms vanish because in each case, $\mathcal{F}^2$ is evaluated on a composition that vanishes. 
				This leaves 
				\begin{align*}
					\mu^{\Ccone}_{3}(v_{21},v_{32},v_{13})
					&= 
					\mathcal{G}^1\Bigg(\mu_3^{\mathcal{B}^\infty_{\operatorname{cone}}}\Bigg(
					\vc{%
						\begin{tikzcd}[ampersand replacement=\&]
							\phantom{X}
							\&
							\&
							\circ
							\arrow{dd}[swap]{S}
							\arrow[leftarrow]{dr}{D}
							\&
							\\
							\circ
							\arrow[leftarrow]{r}{S}
							\&
							\bullet
							\&
							\&
							\circ
							\\
							\&
							\&
							\bullet
							\arrow{ul}{D}
							\&
							\phantom{X}
					\end{tikzcd}}
					\Bigg)\Bigg)
					=
					\mathcal{G}^1\Big(-\mu_4^{\mathcal{B}^\infty}\Big(
					\vc{%
						\begin{tikzcd}[ampersand replacement=\&]
							\circ
							\arrow[leftarrow]{r}{S}
							\&
							\bullet
							\arrow[leftarrow]{r}{D}
							\&
							\bullet
							\arrow[leftarrow]{r}{S}
							\&
							\circ
							\arrow[white,swap]{r}{\phantom{D}}
							\arrow[leftarrow]{r}{D}
							\&
							\circ
					\end{tikzcd}}
					\Big)\Big)
					=1
				\end{align*}
				as desired. 
			\end{proof}
			
		}%
		
		%
		%
		%
		%
		%
		%
		%
		%
		%
		%
		%
		%
		%
		%
		%
		%
		%
		%
		%
		%
		%
		%
		%
		%
		%
		%
		%
		%
		%
		%
		%
		%
		
		\subsection{A quasi-isomorphism between \texorpdfstring{\(\Acone\)}{A\_cone} and \texorpdfstring{\(\Ccone\)}{C\_cone}}
		\label{sec:quasi-iso:mfs} 
				Denote by $R$ the boundary edge ring $\Rd=\field[x,y,z]$ from \cref{exa:diagrams:basic}, 
			and consider the dg category $\mathit{MF}_{xyz}(R)$ of matrix factorizations over \(R\). 
			We define a \(h\)-grading on \(R\) by \(h(y)=2\) and \(h(x)=0=h(z)\).

		\begin{definition}
			Let \(\Acone\) be %
			the full subcategory of $\mathit{MF}_{xyz}(R)$
			generated by the %
			matrix factorizations
			\[
			M_1
			=
			\left[
			\vc{\begin{tikzcd}[column sep=1cm, ampersand replacement=\&]
					h^{-1}q^1R
					\arrow[d,bend left=10, "x"right]
					\\
					h^0q^0R
					\arrow[u,bend left=10, "yz"left]
			\end{tikzcd}}
			\right]
			\quad
			M_2
			=
			\left[
			\vc{\begin{tikzcd}[column sep=1cm, ampersand replacement=\&]
					h^0q^1R
					\arrow[d,bend left=10, "y"right]
					\\
					h^{-1}q^0R
					\arrow[u,bend left=10, "xz"left]
			\end{tikzcd}}
			\right]
			\quad
			M_3
			=%
			\left[
			\vc{\begin{tikzcd}[column sep=1cm, ampersand replacement=\&]
					h^{-1}q^{1}R
					\arrow[d,bend left=10, "z"right]
					\\
					h^0q^{0}R
					\arrow[u,bend left=10, "xy"left]
			\end{tikzcd}}
			\right]
			\]
			noting that $M_1 = M_\arcD$ and $M_2=M_\arcT$. 
			We will often write \(\diamond\) for \(R\) and 
			drop the gradings from the notation. 
		\end{definition}
		
		{%
			\tikzcdset{diagrams={nodes={inner sep=2pt},row sep=12pt, column sep=24pt}}
			
			\begin{proposition}\label{prop:quasi-iso:Acone}
				There exists an \(A_\infty\)-structure on \(\Ccone\) 
				and
				a \(q\)-filtered quasi-isomorphism
				\(\Psi\co\Ccone\rightarrow\Acone\)
				such that
				\begin{enumerate}
					\item \label{prop:quasi-iso:Acone:mu-iii}
					\(\mu_{3}(u_{31},u_{23},u_{12})=1\) 
					and 
					\(\mu_{3}(v_{21},v_{32},v_{13})=1\),
					\item 
					\(\Psi(L_i)=M_i\) for \(i=1,2,3\), and 
					\item \label{prop:quasi-iso:Acone:morphisms}
					the following identities hold:
					\begin{align*}
						\Psi^1(u_{12})
						&=
						\sW
						&
						\Psi^1(u_{31}v_{13})
						&=
						\dW
						&
						\Psi^1(v_{21})
						&=
						\sB
						&
						\Psi^1(u_{32}u_{23})
						&=
						\dB
					\end{align*}
				\end{enumerate}
			\end{proposition}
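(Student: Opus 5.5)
I plan to mirror the proof of \cref{prop:quasi-iso:BinfCone}, with \(\Bcone\) replaced by the dg category \(\Acone\). Since \(\Acone\) is a genuine dg category, only \(\mu_1\) and \(\mu_2\) are nonzero, so the perturbation calculations should be simpler.

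\textbf{Step 1: compute the morphism complexes.} For each pair \(M_i,M_j\), the morphism space in \(\MF_{xyz}(R)\) is a \(2\times 2\) matrix of copies of \(R=\field[x,y,z]\), with differential \(\partial(f)=d'f\mp fd\). The homology is \(\operatorname{Hom}\) over the hypersurface ring. For example, \(\End(M_i)\) has homology \(R/(a_i,b_i)\), where \((a_i,b_i)\) are the two factors: this gives \(\field[y,z]/(yz)\) for \(i=1\), \(\field[x,z]/(xz)\) for \(i=2\), and \(\field[x,y]/(xy)\) for \(i=3\). Off-diagonal spaces are cyclic, generated by the saddle-type maps in \cref{fig:saddles_and_dots_in_A} (a \(1\) and a variable). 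I will choose explicit bases of each complex in which the differential is a partial matching of basis elements plus a complement of representatives of homology. This gives inclusion \(\mathcal F^1\), projection \(\mathcal G^1\), and homotopy \(T^1\) satisfying \eqref{eq:mfs:indirect:def:T1}, with \(q(T^1)=3\) as in the \(\Bcone\) case.

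\textbf{Step 2: transfer the structure.} I will apply Seidel's homological perturbation lemma to obtain an \(A_\infty\)-structure on \(H\Acone\) and a \(q\)-filtered quasi-isomorphism \(\mathcal F\colon H\Acone\to\Acone\). The filtration degrees follow by induction exactly as before.

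\textbf{Step 3: identify \(H\Acone\) with \(\Ccone\).} I define \(\Psi^1\) on generators: \(u_{12}\mapsto\sW\), \(v_{21}\mapsto\sB\), and \(u_{23},u_{31},v_{32},v_{13}\) mapped to the analogous saddle maps \(M_2\leftrightarrow M_3\) and \(M_3\leftrightarrow M_1\), with entries \(1\) and the appropriate variable, signs chosen so that these are cycles. I then check the following.
\begin{itemize}
\item The relations \eqref{eq:relations:C} hold in homology. Each composite \(u_{i(i+1)}u_{(i-1)i}\) multiplies out to a map null-homotopic via an explicit homotopy, since the variable in it annihilates the relevant Koszul factor.
\item \(u_{31}v_{13}\) and \(v_{32}u_{23}\) represent \(\dW\) (multiplication by \(y\) on \(M_1\)) and \(\dB\) (multiplication by \(x\) on \(M_2\)), up to sign adjusted by the choice of \(\Psi^1\).
\item Products of paths give a basis of \(H\Acone\), matching the known Hom computations. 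This shows \(\Psi^1\) is an algebra isomorphism \(\Ccone\cong H\Acone\), and transporting the structure gives the \(A_\infty\)-structure on \(\Ccone\).
\end{itemize}

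\textbf{Step 4: compute \(\mu_3\), the main obstacle.} Because \(\mu_3^{\Acone}=0\), Seidel's formula reduces \(\mu_3(u_{31},u_{23},u_{12})\) to
\[
\mathcal G^1\bigl(\mu_2(\mathcal F^2(u_{31},u_{23}),u_{12})+\mu_2(u_{31},\mathcal F^2(u_{23},u_{12}))\bigr),
\qquad \mathcal F^2(a,b)=T^1(\mu_2(a,b)).
\]
Since \(u_{23}u_{12}\) composes to an explicit exact map such as \(z\cdot(\text{saddle})\), applying \(T^1\) yields a component \(1\) (or \(\pm1\)). Composing that with \(u_{31}\) should leave the identity of \(M_1\) modulo exact terms. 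The difficulty is not conceptual: the sign depends on Koszul conventions and on the choice of \(T^1\). Any wrong sign can be absorbed by rescaling \(\Psi^1(u_{31})\) or \(\Psi^1(v_{13})\) by \(-1\). Such a rescaling changes \(\dW\) by a sign, so I will fix the signs simultaneously to get both \(\mu_3=1\) and \(\Psi^1(u_{31}v_{13})=\dW\). The computation for \((v_{21},v_{32},v_{13})\) is symmetric.
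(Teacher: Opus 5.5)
Your outline follows the paper's proof step for step. It splits each morphism complex of \(\Acone\) using \(T^1\), the negative inverse of the differential, then applies Seidel's perturbation lemma and writes down an explicit \(\Psi^1\) on the six arrows. It then reads off \(\mu_3\) from \(\mathcal G^1\bigl(\mu_2(\mathcal F^2(\cdot,\cdot),\cdot)+\mu_2(\cdot,\mathcal F^2(\cdot,\cdot))\bigr)\). The gap is how you dispose of signs in Step~4. \(\Psi^1(u_{12})=\sW\) and \(\Psi^1(v_{21})=\sB\) are prescribed, and each of \(u_{23},u_{31},v_{32},v_{13}\) spans a one-dimensional piece of homology in its bidegree. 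So your only freedom is scalars \(\lambda_{23},\lambda_{31},\lambda_{32},\lambda_{13}\) on their images. These rescale \(\mu_3(u_{31},u_{23},u_{12})\), \(\mu_3(v_{21},v_{32},v_{13})\), \([\mu_2(\Psi^1u_{31},\Psi^1v_{13})]\) and \([\mu_2(\Psi^1v_{32},\Psi^1u_{23})]\) by \(\lambda_{31}\lambda_{23}\), \(\lambda_{32}\lambda_{13}\), \(\lambda_{31}\lambda_{13}\) and \(\lambda_{32}\lambda_{23}\) respectively. The product of the first two factors equals the product of the last two. Hence you can impose any three of the four normalisations (\(\mu_3=1\) twice, \(\dW\), \(\dB\)), but the fourth is then forced.

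Re-choosing \(T^1\) does not help, contrary to your remark. On these triples \(\mu_3\) is a Massey product whose indeterminacy consists of products with homology classes of \(q\)-degree \(1\), and \(H\Acone\) has none: everything except the identities has negative \(q\)-degree. So the proposition contains one choice-independent sign, and the proof must compute it rather than normalise it away.

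The paper does this computation with explicit representatives: \(\Psi^1(u_{31})\) and \(\Psi^1(v_{13})\) have entries \(y\) and \(-1\), while \(\Psi^1(u_{23})\) and \(\Psi^1(v_{32})\) have entries \(x\) and \(1\). Using \(\mu_2(a_2,a_1)=(-1)^{|a_1|}a_2a_1\), with \(v_{13}\) odd, it checks that \(\mu_2(\Psi^1u_{31},\Psi^1v_{13})=y\cdot 1_{M_1}=\dW\) and \(\mu_2(\Psi^1v_{32},\Psi^1u_{23})=x\cdot 1_{M_2}=\dB\). It then finds both \(\mu_3\)'s equal to \(1\) on the nose.

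When you carry this out, note that it is not a mirror of the \(\Bcone\) case. There, \(\mu_2(\Phi^1u_{31},\Phi^1u_{23})=0\) at chain level. Here \(\mu_2(\Psi^1u_{31},\Psi^1u_{23})\) is a nonzero exact map, so both \(\mathcal F^2\)-terms contribute. Set \(f_1=\mathcal F^2(\Psi^1u_{31},\Psi^1u_{23})\) and \(f_2=\mathcal F^2(\Psi^1u_{23},\Psi^1u_{12})\). Then \(\mu_2(f_1,\Psi^1u_{12})\) is the projection onto the odd summand of \(M_1\), and \(\mu_2(\Psi^1u_{31},f_2)\) is the projection onto the even summand. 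Neither is a cycle, and only their sum is \(1_{M_1}\). So your expectation that composing \(T^1\mu_2(u_{23},u_{12})\) with \(u_{31}\) alone gives the identity modulo exact terms is also incorrect.
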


\newcommand{\phantomheight}[1]{\vphantom{\parbox[c]{1cm}{\rule{1cm}{#1}}}}
\tikzstyle{mfbases} = [ampersand replacement=\&,column sep=45pt]
\tikzstyle{mfdes} = [inner sep=1pt, outer sep=1pt,fill=white]
\tikzstyle{mfarrow} = [in=180,out=0,looseness=0.7]
\tikzstyle{mftop} = [pos=0.2,outer sep=-1.5pt]
\tikzstyle{mfbot} = [pos=0.2,outer sep=-1.5pt,swap]
\newcommand{\mfbox}[2][lightgray]{\colorbox{#1}{$\vc{%
						\begin{tikzcd}[mfbases]#2\end{tikzcd}}\phantomheight{40px}$}}
\begin{figure}[t]
	\centering
	\begin{subfigure}{\textwidth}
		\centering
		\begin{gather*}
			%
			\mfbox{
				\diamond
				\arrow[d,bend left=10, "x"right]
				\arrow[mfarrow,leftarrow]{rd}[pos=0,above right,xshift=-5pt]{x^{a}y^{b}z^{c}}
				\&
				\diamond
				\arrow[d,bend left=10, "x"right]
				\\
				\diamond
				\arrow[u,bend left=10, "yz"left]
				\arrow[phantom]{r}[below]{\phantom{XXX}}
				\&
				\diamond
				\arrow[u,bend left=10, "yz"left]
			}
			\mapsto 
			\mfbox{
				\diamond
				\arrow[d,bend left=10, "x"right]
				\arrow[mfarrow,leftarrow]{r}[above]{-x^{a+1}y^{b}z^{c}}
				\&
				\diamond
				\arrow[d,bend left=10, "x"right]
				\\
				\diamond
				\arrow[u,bend left=10, "yz"left]
				\arrow[mfarrow,leftarrow]{r}[below]{-x^{a+1}y^{b}z^{c}}
				\&
				\diamond
				\arrow[u,bend left=10, "yz"left]
			}
			\phantom{\mapsto}
			%
			\mfbox{
				\diamond
				\arrow[d,bend left=10, "x"right]
				\arrow[phantom]{r}[above]{\phantom{XXX}}
				\&
				\diamond
				\arrow[d,bend left=10, "x"right]
				\\
				\diamond
				\arrow[u,bend left=10, "yz"left]
				\arrow[mfarrow,leftarrow]{ru}[pos=0,below right,xshift=-5pt]{y^{b}z^{c}}
				\&
				\diamond
				\arrow[u,bend left=10, "yz"left]
			}
			\mapsto
			\mfbox{
				\diamond
				\arrow[d,bend left=10, "x"right]
				\arrow[mfarrow,leftarrow]{r}[above]{-y^{b+1}z^{c+1}}
				\&
				\diamond
				\arrow[d,bend left=10, "x"right]
				\\
				\diamond
				\arrow[u,bend left=10, "yz"left]
				\arrow[mfarrow,leftarrow]{r}[below]{-y^{b+1}z^{c+1}}
				\&
				\diamond
				\arrow[u,bend left=10, "yz"left]
			}
			\\
			%
			\mfbox{
				\diamond
				\arrow[d,bend left=10, "x"right]
				\arrow[mfarrow,leftarrow]{r}[above]{x^{a}y^{b}z^{c}}
				\&
				\diamond
				\arrow[d,bend left=10, "x"right]
				\\
				\diamond
				\arrow[u,bend left=10, "yz"left]
				\arrow[phantom]{r}[below]{\phantom{XXX}}
				\&
				\diamond
				\arrow[u,bend left=10, "yz"left]
			}
			\mapsto 
			\mfbox{
				\diamond
				\arrow[d,bend left=10, "x"right]
				\arrow[mfarrow,leftarrow]{rd}[pos=0,above right,xshift=-5pt]{-x^{a}y^{b+1}z^{c+1}}
				\&
				\diamond
				\arrow[d,bend left=10, "x"right]
				\\
				\diamond
				\arrow[u,bend left=10, "yz"left]
				\arrow[mfarrow,leftarrow]{ru}[pos=0,below right,xshift=-5pt]{x^{a+1}y^{b}z^{c}}
				\&
				\diamond
				\arrow[u,bend left=10, "yz"left]
			}
			\phantom{\mapsto}
			%
			\mfbox[white]{
				\diamond
				\arrow[d,bend left=10, "x"right]
				\arrow[mfarrow,leftarrow]{r}[above]{y^{b}}
				\&
				\diamond
				\arrow[d,bend left=10, "x"right]
				\\
				\diamond
				\arrow[u,bend left=10, "yz"left]
				\arrow[mfarrow,leftarrow]{r}[below]{y^{b}}
				\&
				\diamond
				\arrow[u,bend left=10, "yz"left]
			}
			\phantom{\mapsto}
			\mfbox[white]{
				\diamond
				\arrow[d,bend left=10, "x"right]
				\arrow[mfarrow,leftarrow]{r}[above]{z^{c+1}}
				\&
				\diamond
				\arrow[d,bend left=10, "x"right]
				\\
				\diamond
				\arrow[u,bend left=10, "yz"left]
				\arrow[mfarrow,leftarrow]{r}[below]{z^{c+1}}
				\&
				\diamond
				\arrow[u,bend left=10, "yz"left]
			}
		\end{gather*}
		\caption{%
			A basis of the space of morphisms \(M_1\rightarrow M_1\). 
		}
		\label{fig:Acone:morphisms:11}
	\end{subfigure}
	\begin{subfigure}{\textwidth}
		\centering
		\begin{gather*}
			%
			\mfbox{
				\diamond
				\arrow[d,bend left=10, "y"right]
				\arrow[mfarrow,leftarrow]{r}[above]{x^{a}y^{b}z^{c}}
				\&
				\diamond
				\arrow[d,bend left=10, "x"right]
				\\
				\diamond
				\arrow[u,bend left=10, "xz"left]
				\arrow[phantom]{r}[below]{\phantom{XXX}}
				\&
				\diamond
				\arrow[u,bend left=10, "yz"left]
			}
			\mapsto 
			\mfbox{
				\diamond
				\arrow[d,bend left=10, "y"right]
				\arrow[mfarrow,leftarrow]{rd}[pos=0,above right,xshift=-5pt]{-x^{a}y^{b+1}z^{c+1}}
				\&
				\diamond
				\arrow[d,bend left=10, "x"right]
				\\
				\diamond
				\arrow[u,bend left=10, "xz"left]
				\arrow[mfarrow,leftarrow]{ru}[pos=0,below right,xshift=-5pt]{-x^{a}y^{b+1}z^{c}}
				\&
				\diamond
				\arrow[u,bend left=10, "yz"left]
			}
			\phantom{\mapsto}
			%
			\mfbox{
				\diamond
				\arrow[d,bend left=10, "y"right]
				\arrow[phantom]{r}[above]{\phantom{XXX}}
				\&
				\diamond
				\arrow[d,bend left=10, "x"right]
				\\
				\diamond
				\arrow[u,bend left=10, "xz"left]
				\arrow[mfarrow,leftarrow]{r}[below]{x^{a}z^{c}}
				\&
				\diamond
				\arrow[u,bend left=10, "yz"left]
			}
			\mapsto
			\mfbox{
				\diamond
				\arrow[d,bend left=10, "y"right]
				\arrow[mfarrow,leftarrow]{rd}[pos=0,above right,xshift=-5pt]{-x^{a+1}z^{c+1}}
				\&
				\diamond
				\arrow[d,bend left=10, "x"right]
				\\
				\diamond
				\arrow[u,bend left=10, "xz"left]
				\arrow[mfarrow,leftarrow]{ru}[pos=0,below right,xshift=-5pt]{-x^{a+1}z^{c}}
				\&
				\diamond
				\arrow[u,bend left=10, "yz"left]
			}
			\\
			%
			\mfbox{
				\diamond
				\arrow[d,bend left=10, "y"right]
				\arrow[mfarrow,leftarrow]{rd}[pos=0,above right,xshift=-5pt]{x^{a}y^{b}z^{c}}
				\&
				\diamond
				\arrow[d,bend left=10, "x"right]
				\\
				\diamond
				\arrow[u,bend left=10, "xz"left]
				\arrow[phantom]{r}[below]{\phantom{XXX}}
				\&
				\diamond
				\arrow[u,bend left=10, "yz"left]
			}
			\mapsto
			\mfbox{
				\diamond
				\arrow[d,bend left=10, "y"right]
				\arrow[mfarrow,leftarrow]{r}[above]{-x^{a+1}y^{b}z^{c}}
				\&
				\diamond
				\arrow[d,bend left=10, "x"right]
				\\
				\diamond
				\arrow[u,bend left=10, "xz"left]
				\arrow[mfarrow,leftarrow]{r}[below]{x^{a}y^{b+1}z^{c}}
				\&
				\diamond
				\arrow[u,bend left=10, "yz"left]
			}
			%
			\phantom{\mapsto}
			\mfbox[white]{
				\diamond
				\arrow[d,bend left=10, "y"right]
				\arrow[mfarrow,leftarrow]{rd}[pos=0.2,above right,xshift=-5pt]{z^{c+1}}
				\&
				\diamond
				\arrow[d,bend left=10, "x"right]
				\\
				\diamond
				\arrow[u,bend left=10, "xz"left]
				\arrow[mfarrow,leftarrow]{ru}[pos=0.2,below right,xshift=-5pt]{z^{c}}
				\&
				\diamond
				\arrow[u,bend left=10, "yz"left]
			}
		\end{gather*}
		\caption{%
			A basis of the space of morphisms \(M_1\rightarrow M_2\). 
		}
		\label{fig:Acone:morphisms:12}
	\end{subfigure}
	\caption{%
		A basis of some of the morphism spaces in \(\Acone\) and the action of the differential
		(\(a,b,c\geq0\))
	}
	\label{fig:Acone:morphisms}
\end{figure}

			With the exception of 
			the identities \eqref{prop:quasi-iso:Acone:morphisms} and a different choice of grading on the algebra 
			(see \cite[paragraph after Definition~7.2]{AAEKO}), 
			this is essentially \cite[Proposition~7.9]{AAEKO}. 
			
			\begin{proof}[Proof of \cref{prop:quasi-iso:Acone}]
				We follow the same strategy 
				as in the proof of \cref{prop:quasi-iso:BinfCone}. 
				We first compute bases and differentials of the morphism spaces
				\(\Acone(M_1,M_1)\) and \(\Acone(M_1,M_2)\)
				in \cref{fig:Acone:morphisms}. 
				(The signs are computed using the sign conventions 
				from \cref{def:additive_enlargement_Seidel}.) 
				These bases give rise to similar bases 
				on the morphism spaces \(\Acone(M_i,M_j)\) 
				for all \(i,j\in\{1,2,3\}\), 
				by simultaneous permutation 
				of the indices \(1,2,3\) 
				and the variables \(x,y,z\), 
				up to signs on some arrows that stem from the fact 
				that these permutations are not compatible 
				with the homological grading on the three matrix factorizations. 
				
				We now apply Seidel's homological perturbation lemma 
				\cite[Section~I~(1i)]{Seidel} 
				to construct an \(A_\infty\)-structure on \(H\Acone\) 
				together with a quasi-isomorphism to \(\Acone\). 
				For any two objects \(X,Y\) of \(\Acone\), let
				\[
				\begin{tikzcd}[column sep=3cm]
					H\Acone(X,Y)
					\arrow[bend left=5]{r}{\mathcal{F}^1}
					&
					\Acone(X,Y)
					\arrow[bend left=5]{l}{\mathcal{G}^1}
					\arrow[loop right, distance=3em, start anchor={[yshift=1ex]east}, end anchor={[yshift=-1ex]east}]{rl}{T^1}
				\end{tikzcd}
				\]
				be the homomorphisms constructed in the first part of the proof, i.e.\ \(\mathcal{F}^1\) and \(\mathcal{G}^1\) are chain maps and \(T^1\) is the chain homotopy between \(\mathcal{F}^1\mathcal{G}^1\) and the identity:
				\begin{equation}
					T^1\mu_1^{\Acone}
					+
					\mu_1^{\Acone}T^1
					=
					\mathcal{F}^1\mathcal{G}^1-1
				\end{equation}
				As in the proof of \cref{prop:quasi-iso:BinfCone}, 
				\(\mathcal{F}^1\) and \(\mathcal{G}^1\) are the inclusion and projection map, respectively, 
				and
				\(T^1\) is the negative of the inverse of the differential.
				We can choose all maps to preserve the gradings. 
				To determine the induced multiplication on \(H\Acone\), define			
				\begin{align*}
					\Psi^1(u_{12})
					&=
					\vc{%
						\begin{tikzcd}[ampersand replacement=\&]
							\diamond
							\arrow[d,bend left=10, "y"right]
							\arrow[mfarrow,leftarrow]{rd}[mftop]{z}
							\&
							\diamond
							\arrow[d,bend left=10, "x"right]
							\\
							\diamond
							\arrow[u,bend left=10, "xz"left]
							\arrow[mfarrow,leftarrow]{ru}[mfbot]{1}
							\&
							\diamond
							\arrow[u,bend left=10, "yz"left]
					\end{tikzcd}}
					&
					\Psi^1(u_{23})
					&=
					\vc{%
						\begin{tikzcd}[ampersand replacement=\&]
							\diamond
							\arrow[d,bend left=10, "z"right]
							\arrow[mfarrow,leftarrow]{rd}[mftop]{x}
							\&
							\diamond
							\arrow[d,bend left=10, "y"right]
							\\
							\diamond
							\arrow[u,bend left=10, "xy"left]
							\arrow[mfarrow,leftarrow]{ru}[mfbot]{1}
							\&
							\diamond
							\arrow[u,bend left=10, "xz"left]
					\end{tikzcd}}
					&
					\Psi^1(u_{31})
					&=
					\vc{%
						\begin{tikzcd}[ampersand replacement=\&]
							\diamond
							\arrow[d,bend left=10, "x"right]
							\arrow[mfarrow,leftarrow]{rd}[mftop]{y}
							\&
							\diamond
							\arrow[d,bend left=10, "z"right]
							\\
							\diamond
							\arrow[u,bend left=10, "yz"left]
							\arrow[mfarrow,leftarrow]{ru}[mfbot]{\!\!\!-1}
							\&
							\diamond
							\arrow[u,bend left=10, "xy"left]
					\end{tikzcd}}
					\\
					\Psi^1(v_{21})
					&=
					\vc{%
						\begin{tikzcd}[ampersand replacement=\&]
							\diamond
							\arrow[d,bend left=10, "x"right]
							\arrow[mfarrow,leftarrow]{rd}[mftop]{z}
							\&
							\diamond
							\arrow[d,bend left=10, "y"right]
							\\
							\diamond
							\arrow[u,bend left=10, "yz"left]
							\arrow[mfarrow,leftarrow]{ru}[mfbot]{1}
							\&
							\diamond
							\arrow[u,bend left=10, "yz"left]
					\end{tikzcd}}
					&
					\Psi^1(v_{32})
					&=
					\vc{%
						\begin{tikzcd}[ampersand replacement=\&]
							\diamond
							\arrow[d,bend left=10, "y"right]
							\arrow[mfarrow,leftarrow]{rd}[mftop]{x}
							\&
							\diamond
							\arrow[d,bend left=10, "z"right]
							\\
							\diamond
							\arrow[u,bend left=10, "xz"left]
							\arrow[mfarrow,leftarrow]{ru}[mfbot]{1}
							\&
							\diamond
							\arrow[u,bend left=10, "xy"left]
					\end{tikzcd}}
					&
					\Psi^1(v_{13})
					&=
					\vc{%
						\begin{tikzcd}[ampersand replacement=\&]
							\diamond
							\arrow[d,bend left=10, "z"right]
							\arrow[mfarrow,leftarrow]{rd}[mftop]{y}
							\&
							\diamond
							\arrow[d,bend left=10, "x"right]
							\\
							\diamond
							\arrow[u,bend left=10, "xy"left]
							\arrow[mfarrow,leftarrow]{ru}[mfbot]{\!\!\!-1}
							\&
							\diamond
							\arrow[u,bend left=10, "yz"left]
					\end{tikzcd}}
				\end{align*}
				Applying \cite[Formula~(1.18)]{Seidel}, 
				one checks that the morphisms
				\(\mu_{2}(\Psi^1(u_{i,i+1}),\Psi^1(u_{i-1,i}))\)
				and 
				\(\mu_{2}(\Psi^1(v_{i,i-1}),\Psi^1(v_{i+1,i}))\)
				are null-homotopic
				for all \(i=1,2,3\), 
				and that \(\Psi^1\) determines an algebra isomorphism between \(\Ccone\) and \(H\Acone\).
				In particular, 
				the identities 
				from \eqref{prop:quasi-iso:Acone:morphisms} 
				hold. 
				 
				Finally, the map \(\Psi^1\) induces an \(A_\infty\)-structure on \(\Ccone\). 
				Using \cite[Formula~(1.18)]{Seidel},
				we verify the identities 
				from \eqref{prop:quasi-iso:BinfCone:mu-iii}:
				\begin{multline*}
					\mu_{3}(u_{31},u_{23},u_{12})
					=
					\mu_{3}(\Psi^1(u_{31}),\Psi^1(u_{23}),\Psi^1(u_{12}))
					=
					\mu_3^{H_*\Acone}\Bigg(
					\vc{%
						\begin{tikzcd}[ampersand replacement=\&]
							\diamond
							\arrow[d,bend left=10, "x"right]
							\arrow[mfarrow,leftarrow]{rd}[mftop]{y}
							\&
							\diamond
							\arrow[d,bend left=10, "z"right]
							\arrow[mfarrow,leftarrow]{rd}[mftop]{x}
							\&
							\diamond
							\arrow[d,bend left=10, "y"right]
							\arrow[mfarrow,leftarrow]{rd}[mftop]{z}
							\&
							\diamond
							\arrow[d,bend left=10, "x"right]
							\\
							\diamond
							\arrow[u,bend left=10, "yz"left]
							\arrow[mfarrow,leftarrow]{ru}[mfbot]{\!\!\!-1}
							\&
							\diamond
							\arrow[u,bend left=10, "xy"left]
							\arrow[mfarrow,leftarrow]{ru}[mfbot]{1}
							\&
							\diamond
							\arrow[u,bend left=10, "xz"left]
							\arrow[mfarrow,leftarrow]{ru}[mfbot]{1}
							\&
							\diamond
							\arrow[u,bend left=10, "yz"left]
					\end{tikzcd}}
					\Bigg)
					\\
					=\mathcal{G}^1\Bigg(
					\mu_2^{\Acone}
					\Bigg(
					\underbrace{
						\mathcal{F}^2\Bigg(
						\vc{%
							\begin{tikzcd}[ampersand replacement=\&]
								\diamond
								\arrow[d,bend left=10, "x"right]
								\arrow[mfarrow,leftarrow]{rd}[mftop]{y}
								\&
								\diamond
								\arrow[d,bend left=10, "z"right]
								\arrow[mfarrow,leftarrow]{rd}[mftop]{x}
								\&
								\diamond
								\arrow[d,bend left=10, "y"right]
								\\
								\diamond
								\arrow[u,bend left=10, "yz"left]
								\arrow[mfarrow,leftarrow]{ru}[mfbot]{\!\!\!-1}
								\&
								\diamond
								\arrow[u,bend left=10, "xy"left]
								\arrow[mfarrow,leftarrow]{ru}[mfbot]{1}
								\&
								\diamond
								\arrow[u,bend left=10, "xz"left]
						\end{tikzcd}}
						\Bigg)}_{= f_1}
					,
					\vc{%
						\begin{tikzcd}[ampersand replacement=\&]
							\diamond
							\arrow[d,bend left=10, "y"right]
							\arrow[mfarrow,leftarrow]{rd}[mftop]{z}
							\&
							\diamond
							\arrow[d,bend left=10, "x"right]
							\\
							\diamond
							\arrow[u,bend left=10, "xz"left]
							\arrow[mfarrow,leftarrow]{ru}[mfbot]{1}
							\&
							\diamond
							\arrow[u,bend left=10, "yz"left]
					\end{tikzcd}}
					\Bigg)
					\Bigg)
					\\
					+\mathcal{G}^1\Bigg(
					\mu_2^{\Acone}
					\Bigg(
					\vc{%
						\begin{tikzcd}[ampersand replacement=\&]
							\diamond
							\arrow[d,bend left=10, "x"right]
							\arrow[mfarrow,leftarrow]{rd}[mftop]{y}
							\&
							\diamond
							\arrow[d,bend left=10, "z"right]
							\\
							\diamond
							\arrow[u,bend left=10, "yz"left]
							\arrow[mfarrow,leftarrow]{ru}[mfbot]{\!\!\!-1}
							\&
							\diamond
							\arrow[u,bend left=10, "xy"left]
					\end{tikzcd}}
					,
					\underbrace{
						\mathcal{F}^2\Bigg(
						\vc{%
							\begin{tikzcd}[ampersand replacement=\&]
								\diamond
								\arrow[d,bend left=10, "z"right]
								\arrow[mfarrow,leftarrow]{rd}[mftop]{x}
								\&
								\diamond
								\arrow[d,bend left=10, "y"right]
								\arrow[mfarrow,leftarrow]{rd}[mftop]{z}
								\&
								\diamond
								\arrow[d,bend left=10, "x"right]
								\\
								\diamond
								\arrow[u,bend left=10, "xy"left]
								\arrow[mfarrow,leftarrow]{ru}[mfbot]{1}
								\&
								\diamond
								\arrow[u,bend left=10, "xz"left]
								\arrow[mfarrow,leftarrow]{ru}[mfbot]{1}
								\&
								\diamond
								\arrow[u,bend left=10, "yz"left]
						\end{tikzcd}}
						\Bigg)}_{= f_2}
					\Bigg)
					\Bigg)
				\end{multline*}
				As in the proof of \cref{prop:quasi-iso:BinfCone}, 
				the symbols $\mathcal{F}^1$ 
				have been dropped to lighten the expression above. 
				Also note that \(\Acone\) is a dg algebra 
				so \(\mu_d^{\Acone}=0\) for \(d>2\). 
				We calculate
				\begin{align*}
					f_1
					&=
					T^1
					\Bigg(
					\mu_2^{\Acone}
					\Bigg(
					\vc{%
						\begin{tikzcd}[ampersand replacement=\&]
							\diamond
							\arrow[d,bend left=10, "x"right]
							\arrow[mfarrow,leftarrow]{rd}[mftop]{y}
							\&
							\diamond
							\arrow[d,bend left=10, "z"right]
							\\
							\diamond
							\arrow[u,bend left=10, "yz"left]
							\arrow[mfarrow,leftarrow]{ru}[mfbot]{\!\!\!-1}
							\&
							\diamond
							\arrow[u,bend left=10, "xy"left]
					\end{tikzcd}}
					,
					\vc{%
						\begin{tikzcd}[ampersand replacement=\&]
							\diamond
							\arrow[d,bend left=10, "z"right]
							\arrow[mfarrow,leftarrow]{rd}[mftop]{x}
							\&
							\diamond
							\arrow[d,bend left=10, "y"right]
							\\
							\diamond
							\arrow[u,bend left=10, "xy"left]
							\arrow[mfarrow,leftarrow]{ru}[mfbot]{1}
							\&
							\diamond
							\arrow[u,bend left=10, "xz"left]
					\end{tikzcd}}
					\Bigg)
					\Bigg)
					=
					T^1
					\Bigg(
					\vc{%
						\begin{tikzcd}[ampersand replacement=\&]
							\diamond
							\arrow[d,bend left=10, "x"right]
							\arrow[mfarrow,leftarrow]{r}[mfdes,description]{y}
							\&
							\diamond
							\arrow[d,bend left=10, "y"right]
							\\
							\diamond
							\arrow[u,bend left=10, "yz"left]
							\arrow[mfarrow,leftarrow]{r}[mfdes,description]{-x}
							\&
							\diamond
							\arrow[u,bend left=10, "xz"left]
					\end{tikzcd}}
					\Bigg)
					=
					\vc{%
						\begin{tikzcd}[ampersand replacement=\&]
							\diamond
							\arrow[d,bend left=10, "x"right]
							\arrow[mfarrow,leftarrow]{rd}[mftop]{1}
							\&
							\diamond
							\arrow[d,bend left=10, "y"right]
							\\
							\diamond
							\arrow[u,bend left=10, "yz"left]
							\&
							\diamond
							\arrow[u,bend left=10, "xz"left]
					\end{tikzcd}}
				\end{align*}
				and similarly
				\begin{align*}
					f_2
					&=
					T^1
					\Bigg(
					\mu_2^{\Acone}
					\Bigg(
					\vc{%
						\begin{tikzcd}[ampersand replacement=\&]
							\diamond
							\arrow[d,bend left=10, "z"right]
							\arrow[mfarrow,leftarrow]{rd}[mftop]{x}
							\&
							\diamond
							\arrow[d,bend left=10, "y"right]
							\\
							\diamond
							\arrow[u,bend left=10, "xy"left]
							\arrow[mfarrow,leftarrow]{ru}[mfbot]{1}
							\&
							\diamond
							\arrow[u,bend left=10, "xz"left]
					\end{tikzcd}}
					,
					\vc{%
						\begin{tikzcd}[ampersand replacement=\&]
							\diamond
							\arrow[d,bend left=10, "y"right]
							\arrow[mfarrow,leftarrow]{rd}[mftop]{z}
							\&
							\diamond
							\arrow[d,bend left=10, "x"right]
							\\
							\diamond
							\arrow[u,bend left=10, "xz"left]
							\arrow[mfarrow,leftarrow]{ru}[mfbot]{1}
							\&
							\diamond
							\arrow[u,bend left=10, "yz"left]
					\end{tikzcd}}
					\Bigg)
					\Bigg)
					=
					T^1
					\Bigg(
					\vc{%
						\begin{tikzcd}[ampersand replacement=\&]
							\diamond
							\arrow[d,bend left=10, "z"right]
							\arrow[mfarrow,leftarrow]{r}[mfdes,description]{x}
							\&
							\diamond
							\arrow[d,bend left=10, "x"right]
							\\
							\diamond
							\arrow[u,bend left=10, "xy"left]
							\arrow[mfarrow,leftarrow]{r}[mfdes,description]{z}
							\&
							\diamond
							\arrow[u,bend left=10, "yz"left]
					\end{tikzcd}}
					\Bigg)
					=
					\vc{%
						\begin{tikzcd}[ampersand replacement=\&]
							\diamond
							\arrow[d,bend left=10, "z"right]
							\arrow[mfarrow,leftarrow]{rd}[mftop]{1}
							\&
							\diamond
							\arrow[d,bend left=10, "x"right]
							\\
							\diamond
							\arrow[u,bend left=10, "xy"left]
							\&
							\diamond
							\arrow[u,bend left=10, "yz"left]
					\end{tikzcd}}
				\end{align*}
				We obtain
				\begin{align*}
					\mu_{3}(u_{31},u_{23},u_{12})
					&= 
					\mathcal{G}^1\Bigg(
					\vc{%
						\begin{tikzcd}[ampersand replacement=\&]
							\diamond
							\arrow[d,bend left=10, "x"right]
							\arrow[mfarrow,leftarrow]{r}[mfdes,description]{1}
							\&
							\diamond
							\arrow[d,bend left=10, "x"right]
							\\
							\diamond
							\arrow[u,bend left=10, "yz"left]
							\&
							\diamond
							\arrow[u,bend left=10, "yz"left]
					\end{tikzcd}}
					\Bigg)
					+
					\mathcal{G}^1\Bigg(
					\vc{%
						\begin{tikzcd}[ampersand replacement=\&]
							\diamond
							\arrow[d,bend left=10, "x"right]
							\&
							\diamond
							\arrow[d,bend left=10, "x"right]
							\\
							\diamond
							\arrow[u,bend left=10, "yz"left]
							\arrow[mfarrow,leftarrow]{r}[mfdes,description]{1}
							\&
							\diamond
							\arrow[u,bend left=10, "yz"left]
					\end{tikzcd}}
					\Bigg)
					=
					1
				\end{align*}
				Similarly,
				\[
				\begin{multlined}[b]
					\mu_{3}(v_{21},v_{32},v_{13})
					=\mathcal{G}^1\Bigg(
					\mu_2^{\Acone}
					\Bigg(
					\mathcal{F}^2\Bigg(
					\vc{%
						\begin{tikzcd}[ampersand replacement=\&]
							\diamond
							\arrow[d,bend left=10, "x"right]
							\arrow[mfarrow,leftarrow]{rd}[mftop]{z}
							\&
							\diamond
							\arrow[d,bend left=10, "y"right]
							\arrow[mfarrow,leftarrow]{rd}[mftop]{x}
							\&
							\diamond
							\arrow[d,bend left=10, "z"right]
							\\
							\diamond
							\arrow[u,bend left=10, "yz"left]
							\arrow[mfarrow,leftarrow]{ru}[mfbot]{1}
							\&
							\diamond
							\arrow[u,bend left=10, "xz"left]
							\arrow[mfarrow,leftarrow]{ru}[mfbot]{1}
							\&
							\diamond
							\arrow[u,bend left=10, "xy"left]
					\end{tikzcd}}
					\Bigg)
					,
					\vc{%
						\begin{tikzcd}[ampersand replacement=\&]
							\diamond
							\arrow[d,bend left=10, "z"right]
							\arrow[mfarrow,leftarrow]{rd}[mftop]{y}
							\&
							\diamond
							\arrow[d,bend left=10, "x"right]
							\\
							\diamond
							\arrow[u,bend left=10, "xy"left]
							\arrow[mfarrow,leftarrow]{ru}[mfbot]{\!\!\!-1}
							\&
							\diamond
							\arrow[u,bend left=10, "yz"left]
					\end{tikzcd}}
					\Bigg)
					\Bigg)
					\\
					+\mathcal{G}^1\Bigg(
					\mu_2^{\Acone}
					\Bigg(
					\vc{%
						\begin{tikzcd}[ampersand replacement=\&]
							\diamond
							\arrow[d,bend left=10, "x"right]
							\arrow[mfarrow,leftarrow]{rd}[mftop]{z}
							\&
							\diamond
							\arrow[d,bend left=10, "y"right]
							\\
							\diamond
							\arrow[u,bend left=10, "yz"left]
							\arrow[mfarrow,leftarrow]{ru}[mfbot]{1}
							\&
							\diamond
							\arrow[u,bend left=10, "xz"left]
					\end{tikzcd}}
					,
					\mathcal{F}^2\Bigg(
					\vc{%
						\begin{tikzcd}[ampersand replacement=\&]
							\diamond
							\arrow[d,bend left=10, "y"right]
							\arrow[mfarrow,leftarrow]{rd}[mftop]{x}
							\&
							\diamond
							\arrow[d,bend left=10, "z"right]
							\arrow[mfarrow,leftarrow]{rd}[mftop]{y}
							\&
							\diamond
							\arrow[d,bend left=10, "x"right]
							\\
							\diamond
							\arrow[u,bend left=10, "xz"left]
							\arrow[mfarrow,leftarrow]{ru}[mfbot]{1}
							\&
							\diamond
							\arrow[u,bend left=10, "xy"left]
							\arrow[mfarrow,leftarrow]{ru}[mfbot]{\!\!\!-1}
							\&
							\diamond
							\arrow[u,bend left=10, "yz"left]
					\end{tikzcd}}
					\Bigg)
					\Bigg)
					\Bigg)
					\\
					=\mathcal{G}^1\Bigg(
					\mu_2^{\Acone}
					\Bigg(
					\vc{%
						\begin{tikzcd}[ampersand replacement=\&]
							\diamond
							\arrow[d,bend left=10, "x"right]
							\arrow[mfarrow,leftarrow]{rd}[mftop]{1}
							\&
							\diamond
							\arrow[d,bend left=10, "z"right]
							\\
							\diamond
							\arrow[u,bend left=10, "yz"left]
							\&
							\diamond
							\arrow[u,bend left=10, "xy"left]
					\end{tikzcd}}
					,
					\vc{%
						\begin{tikzcd}[ampersand replacement=\&]
							\diamond
							\arrow[d,bend left=10, "z"right]
							\arrow[mfarrow,leftarrow]{rd}[mftop]{y}
							\&
							\diamond
							\arrow[d,bend left=10, "x"right]
							\\
							\diamond
							\arrow[u,bend left=10, "xy"left]
							\arrow[mfarrow,leftarrow]{ru}[mfbot]{\!\!\!-1}
							\&
							\diamond
							\arrow[u,bend left=10, "yz"left]
					\end{tikzcd}}
					\Bigg)
					\Bigg)
					+
					\mathcal{G}^1\Bigg(
					\mu_2^{\Acone}
					\Bigg(
					\vc{%
						\begin{tikzcd}[ampersand replacement=\&]
							\diamond
							\arrow[d,bend left=10, "x"right]
							\arrow[mfarrow,leftarrow]{rd}[mftop]{z}
							\&
							\diamond
							\arrow[d,bend left=10, "y"right]
							\\
							\diamond
							\arrow[u,bend left=10, "yz"left]
							\arrow[mfarrow,leftarrow]{ru}[mfbot]{1}
							\&
							\diamond
							\arrow[u,bend left=10, "xz"left]
					\end{tikzcd}}
					,
					\vc{%
						\begin{tikzcd}[ampersand replacement=\&]
							\diamond
							\arrow[d,bend left=10, "y"right]	
							\arrow[mfarrow,leftarrow]{rd}[mftop]{1}
							\&
							\diamond
							\arrow[d,bend left=10, "x"right]
							\\
							\diamond
							\arrow[u,bend left=10, "xz"left]
							\&
							\diamond
							\arrow[u,bend left=10, "yz"left]
					\end{tikzcd}}
					\Bigg)
					\Bigg)
					\\
					=
					\mathcal{G}^1\Bigg(
					\vc{%
						\begin{tikzcd}[ampersand replacement=\&]
							\diamond
							\arrow[d,bend left=10, "x"right]
							\arrow[mfarrow,leftarrow]{r}[mfdes,description]{1}
							\&
							\diamond
							\arrow[d,bend left=10, "x"right]
							\\
							\diamond
							\arrow[u,bend left=10, "yz"left]
							\arrow[mfarrow,leftarrow]{r}[mfdes,description]{1}
							\&
							\diamond
							\arrow[u,bend left=10, "yz"left]
					\end{tikzcd}}
					\Bigg)
					=
					1
				\end{multlined}
			\qedhere
			\]
			\end{proof}
		}

\subsection{The quasi-isomorphism between \texorpdfstring{\(\Binf\)}{B\^{}∞} and \texorpdfstring{\(\A\)}{A}}
\label{sec:quasi-iso:identification}
		\cref{thm:quasi-iso} now follows from the algebraic setup in \cite[Section 3]{AAEKO}, having first made an adjustment for gradings.

	\begin{lemma}
		\label{lem:a-infty:shift-grading-so-f-objects}
		Given
		an \(A_\infty\)-category \(\C\) 
		and 
		an object \(X_0\in\C\), 
		define a \(\{0,1\}\)-valued map 
		\(\varepsilon\) on objects of \(\C\)
		by setting 
		\(\varepsilon(X)=1\) 
		if and only if 
		\(X=X_0\). 
		There exists an \(A_\infty\)-category \(\C'\) 
		whose objects are the same as the objects in \(\C\), 
		whose morphism spaces are defined by
		\[
		\C'(X,Y)
		=
		h^{\varepsilon(Y)-\varepsilon(X)}\C(X,Y)
		\]
		for any two objects \(X,Y\in\C\), 
		and for which 
		\(\mu^\C_1=0\) 
		if and only if 
		\(\mu^
		{\C'}_1=0\). 
		Furthermore, 
		suppose \(\mu^\C_1=0\) and 
		that \(\mu\) and \(\nu\) are two \(A_\infty\)-structures 
		on \(H\C\). 
		Let 
		\(\mu'\) and \(\nu'\) 
		be the induced \(A_\infty\)-structures 
		on \(H\C'\). 
		Then \(\mu\) and \(\nu\) are strictly homotopic 
		if and only if 
		\(\mu'\) and \(\nu'\) are strictly homotopic.
	\end{lemma}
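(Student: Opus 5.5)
The plan is to construct \(\C'\) explicitly by conjugating all structure maps with a sign that depends only on how often \(X_0\) appears among the objects of a composable sequence. For objects \(X,Y\), set \(\C'(X,Y)=h^{\varepsilon(Y)-\varepsilon(X)}\C(X,Y)\), so an element \(a\) of \(\C(X,Y)\) of degree \(h(a)\) acquires degree \(h'(a)=h(a)+\varepsilon(Y)-\varepsilon(X)\). For a composable sequence \(X_0'\xrightarrow{a_1}X_1'\to\cdots\xrightarrow{a_n}X_n'\) (primes to avoid clash with the distinguished object), define
\[
\mu^{\C'}_n(a_n,\dots,a_1)=(-1)^{s(a_n,\dots,a_1)}\mu^{\C}_n(a_n,\dots,a_1),
\]
where \(s\) is a sign to be chosen. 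First I check that degrees are consistent: \(\mu_n^{\C}\) has degree \(2-n\), and \(\sum_i h'(a_i)=\sum_i h(a_i)+\varepsilon(X_n')-\varepsilon(X_0')\), which matches the shift of the target space. So \(\mu^{\C'}_n\) has degree \(2-n\) regardless of \(s\).

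Next I choose \(s\) so that the \(A_\infty\)-relations transfer. The only discrepancy lies in the Koszul exponent \(\maltese=h(a_k)+\dots+h(a_1)+k\): in \(\C'\) it becomes \(\maltese'=\maltese+\varepsilon(X_k')-\varepsilon(X_0')\). I would take
\[
s(a_n,\dots,a_1)=\sum_{i=1}^{n}(i-1)\,\varepsilon(X_i') \quad\text{(or a similar weighting by position of intermediate objects)},
\]
and verify that for each term \(\mu_{i+1+k}(\dots,\mu_j(\dots),\dots)\), the total sign \(s_{\text{outer}}+s_{\text{inner}}+\maltese'\) differs from \(\maltese\) by an amount depending only on the full sequence \((a_n,\dots,a_1)\), not on \((i,j,k)\). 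Then every relation in \(\C'\) equals the corresponding relation in \(\C\) up to a global sign, and so holds. This bookkeeping is the main obstacle. If the positional weighting fails, the fallback is the standard trick of viewing \(\C'\) as the full subcategory of \(\Tw\C\) (or of the additive enlargement \(\Sigma\C\) from \cref{def:additive_enlargement_Seidel}) on the objects \(h^{\varepsilon(X)}X\). Seidel's sign formula there supplies exactly such a consistent \(\triangleleft\), so the \(A_\infty\)-relations hold automatically, and the morphism spaces are \(h^{\varepsilon(Y)-\varepsilon(X)}\C(X,Y)\) as required. I would in fact use this shift-object description as the definition, since it avoids the sign search entirely. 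Since \(\mu_1^{\C'}=\pm\mu_1^{\C}\) componentwise, the statement about \(\mu_1=0\) is immediate.

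For the final claim, \(\mu\mapsto\mu'\) is a bijection between \(A_\infty\)-structures on \(H\C\) and on \(H\C'\), given by the same construction, whose inverse is the shift by \(-\varepsilon\). I apply the same construction to functors: given a strict homotopy \(\F\) from \(\mu\) to \(\nu\), meaning \(\F^1=\mathrm{id}\) and \(\F\) is the identity on objects, I define \(\F'^d=\pm\F^d\) with the analogous sign. Conceptually, this is the induced functor on the full subcategories of shifted objects in the additive enlargements, so the \(A_\infty\)-functor equations hold. It is still the identity on objects, and \(\F'^1=\mathrm{id}\). The converse follows by symmetry using the inverse shift.
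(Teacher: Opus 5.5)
Your committed construction is correct: take \(\C'\) to be the full subcategory of \(\Sigma\C\) on the objects \(h^{\varepsilon(X)}X\), and let \(\F'\) be the restriction of the functor induced on additive enlargements. This is a different route from the paper's.

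Your first explicit guess should be discarded, though. With \(s=\sum_i(i-1)\varepsilon(X'_i)\), look at the \(n=2\) relation. The terms \(\mu_1\mu_2(a_2,a_1)\) and \(\mu_2(a_2,\mu_1a_1)\) acquire \((-1)^{\varepsilon(X'_2)}\), while \(\mu_2(\mu_1a_2,a_1)\) acquires \((-1)^{\varepsilon(X'_2)+\varepsilon(X'_1)+\varepsilon(X'_0)}\). Likewise, the two associativity terms for \(n=3\) differ by \((-1)^{\varepsilon(X'_0)+\dots+\varepsilon(X'_3)}\). So this is not a global sign, even when \(\mu_1=0\).

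The paper twists by the source object only, \(\mu'_n(a'_n,\dots,a'_1)=(-1)^{\varepsilon(X'_0)}(\mu_n(a_n,\dots,a_1))'\). The summand indexed by \((i,j,k)\) then picks up \(\varepsilon(X'_0)+\varepsilon(X'_k)\) from the two structure maps and \(\varepsilon(X'_k)-\varepsilon(X'_0)\) from \(\maltese'\), which sums to an even number. For functors the paper sets \(\F'=\F\) with no sign at all. Both sides of the functor equation then pick up the same factor \((-1)^{\varepsilon(X'_0)}\). So \(\F\mapsto\F'\) is visibly a bijection preserving ``identity on objects and on morphisms'', and both directions of the equivalence are immediate.

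Your route replaces this one-line computation with an appeal to Seidel's general sign theory for additive enlargements, and it works for arbitrary integer shifts. The cost is a less transparent sign and one inaccuracy in the converse. Shifting by \(\varepsilon\) and then by \(-\varepsilon\) does not return \(\mu\). It returns \(\phi^{-1}\mu(\phi\,\cdot,\dots,\phi\,\cdot)\) for the gauge \(\phi(a)=(-1)^{\binom{\varepsilon(Y)-\varepsilon(X)}{2}}a\), \(a\in\C(X,Y)\). For example, for composable \(X_0\to Y\to X_0\) with \(Y\neq X_0\), the round trip negates \(\mu_2\).

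This is harmless. The map \(\phi\) is a strict isomorphism depending only on objects, so conjugating the round-tripped strict homotopy by \(\phi\) keeps its first component equal to the identity. Alternatively, note that the sign twist is invertible on multilinear maps. Seidel's sign identity then shows that the functor equations for \(\F\) and for its twist are equivalent, not merely that one implies the other.
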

	
	\begin{proof}
		The proof is based on a construction 
		that we learned from 
		\cite[Section 3.1]{Kajiura}. Write \(a'\in\C'(X,Y)\) 
		for the morphism corresponding to a morphism 
		\(a\in\C(X,Y)\).
		Let \((\mu_k)_{k>0}\) be the structure maps on \(\C\). 
		Then we define the structure maps 
		\((\mu'_k)_{k>0}\) on \(\C'\) 
		as follows:
		Given 
		a sequence of objects \(Y_0,\dots,Y_k\)
		and 
		morphisms \(a_i\in\C(Y_{i-1},Y_i)\) set 
		\[
		\mu'_n(a'_n,\dots,a'_1)
		=
		(-1)^{\varepsilon(Y_0)}
		(\mu_k(a_n,\dots,a_1))'
		\]
		We check that 
		the maps \((\mu'_n)_{n>0}\) satisfy the \(A_\infty\)-relations,
		which are equivalent to
		\[
		0
		=
		\sum_{\substack{i,k\geq0,j\geq1:\\ i+j+k=n}}
		(-1)^{h(a'_k)+\dots+h(a'_1) + k+\varepsilon(Y_0)+\varepsilon(Y_k)}
		(\mu_{i+1+k} (a_n,\dots,\mu_j(a_{n-i},\dots,a_{k+1}),\dots,a_1))'
		\]
		Observe that \(h(a'_i)=h(a_i)+\varepsilon(Y_i)-\varepsilon(Y_{i-1})\) 
		for all \(i=1,\dots,k\). 
		Simplifying the telescoping sum, 
		the sign in each summand is 
		\(
		(-1)^{h(a_k)+\dots+h(a_1)+k}
		\)
		and we recover the structure relations for \((\mu_n)_{n>0}\). 
		By construction,
		\(\mu^\C_1=0\) if and only if \(\mu^
		{\C'}_1=0\). 
		
		For the second part of the lemma,
		denote the \(A_\infty\)-categories
		\((H\C,\mu)\), \((H\C',\mu')\),
		\((H\C,\nu)\) and \((H\C',\nu')\) 
		by 
		\(\C\), \(\C'\),
		\(\D\) and \(\D'\), 
		respectively. 
		Suppose \(\mathcal{F}\co \C\rightarrow\D\) 
		is an \(A_\infty\)-functor
		acting as the identity on objects. 
		Define 
		\(\mathcal{F}'\co \C'\rightarrow\D'\)
		by 
		\(
		\mathcal{F}'_k(a'_k,\dots,a'_1)
		=
		(\mathcal{F}_k(a_k,\dots,a_1))'.
		\)
		A computation very similar to the one above shows that 
		\(\mathcal{F}'\) is indeed an \(A_\infty\)-functor; 
		the only difference is %
		an overall sign 
		\((-1)^{\varepsilon(Y_0)}\).
		Moreover, 
		any functor 
		\(\C'\rightarrow\D'\) 
		is equal to \(\mathcal{F}'\) 
		for some functor 
		\(\mathcal{F}\co \C\rightarrow\D\). 
		Hence %
		\(\mathcal{F}\) is a strict homotopy 
		if and only if 
		\(\mathcal{F}'\) is a strict homotopy. 
	\end{proof}

	\begin{proposition}\label{prop:AAEKO}
			Any \(A_{\infty}\)-structure \(\mu\) on \(\Ccone\) is uniquely determined up to strict homotopy by the coefficient of the identity map \(1_{L_1}\) of 
			\[
			\mu_{3}(u_{31},u_{23},u_{12})
			\quad
			\text{and}
			\quad
			\mu_{3}(v_{21},v_{32},v_{13}).
			\]
		\end{proposition}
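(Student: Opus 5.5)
The plan is to reduce \cref{prop:AAEKO} to the classification of \(A_\infty\)-structures on \(\Ccone\) in \cite[Section~3]{AAEKO}, after a grading adjustment. In \cite{AAEKO} the relevant algebra is graded so that it is concentrated in degree zero with respect to their convention, with the three Lagrangians on an equal footing. Our \(h\)-grading has \(h(u_{31})=h(v_{13})=1\), which is not of that form. First, I will apply \cref{lem:a-infty:shift-grading-so-f-objects} with \(X_0=L_1\) (or \(L_3\), whichever matches). This shifts the degrees of the morphisms into and out of that object by \(\pm1\), so that the arrows \(u_{31}\), \(v_{13}\) acquire the same degree as the other generators. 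The lemma also guarantees that strict homotopy classes of \(A_\infty\)-structures on \(H\C\) correspond bijectively under this shift, because \(\mu_1=0\) on \(\Ccone\). The structure constants \(\mu_3(u_{31},u_{23},u_{12})\) and \(\mu_3(v_{21},v_{32},v_{13})\) change at most by the sign \((-1)^{\varepsilon(Y_0)}\), which is uniform, so "determined by these coefficients" transfers in both directions.

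Second, on the shifted algebra, I will invoke the AAEKO classification. They compute the Hochschild cohomology of the path algebra with relations \eqref{eq:relations:C} via a Koszul/Bardzell-type resolution (the algebra is Koszul with Koszul dual a Jacobi-type algebra). They show that the space of minimal \(A_\infty\)-structures up to strict homotopy is parametrised by the classes of the two length-three cyclic products. More precisely, the relevant \(HH^2\) in internal degree corresponding to \(\mu_3\) is spanned by the two cocycles dual to the cyclic words \(u_{31}u_{23}u_{12}\) and \(v_{21}v_{32}v_{13}\) landing in idempotents. Obstruction and deformation arguments (the higher \(HH^2\) in the weights of \(\mu_k\), \(k\ge4\), vanish, or are killed by strict homotopies) then show that \(\mu_3\) modulo coboundaries determines the whole structure. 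One also uses that the value of \(\mu_3\) on the cyclic words may a priori involve the identity of either endpoint object, hence "the coefficient of \(1_{L_1}\)". Cyclic symmetry or \(A_\infty\)-relations relate the coefficients at \(L_2\) and \(L_3\) to that at \(L_1\).

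The main obstacle is matching conventions precisely. Our sign convention follows \cite{Seidel}, whereas \cite{AAEKO} uses the \(\pm(-1)^{i+jk}\) convention, and our quantum and \(h\) gradings differ from theirs (cf.\ the remark after Definition~7.2 there). I would handle the sign convention via the explicit transformation \(\mu_n\mapsto(-1)^{\sum i\,h(a_i)}\mu_n\) recorded in the remark on sign conventions. This rescales the two coefficients only by signs, independent of the structure, and maps strict homotopies to strict homotopies. Hence it preserves the statement that these coefficients determine the class. Checking that AAEKO's parametrisation is genuinely by these two coefficients, and not by some combination, is the step I would verify most carefully, by comparing with their explicit computation of \(HH^*\) in the relevant bigrading.
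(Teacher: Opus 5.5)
Your overall route is the paper's: transport \(\mu\) along \cref{lem:a-infty:shift-grading-so-f-objects} to a regraded category, then quote Proposition~3.1(2) of \cite{AAEKO} with \(n=3\) as a black box. The Hochschild-cohomology sketch of how AAEKO prove it is not needed. The problem is the grading adjustment: as you set it up, it does not put you in a situation where that citation applies.

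AAEKO's hypothesis (their condition (2.4)) is not that the algebra sits in degree zero, nor that all arrows have equal degree. It requires two things: each cyclic degree sum equals \(n-2=1\), which is what lets \(\mu_3\) (of degree \(-1\)) hit an identity at all; and every individual arrow has odd degree.

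Your stated goal is in fact impossible. Under the shift \(h(a')=h(a)+\varepsilon(\text{target})-\varepsilon(\text{source})\) the cyclic sums telescope and stay equal to \(1\), so equal integer degrees would require \(3d=1\). Your specific choices also fail the actual hypothesis:
\begin{itemize}
\item Shifting at \(L_1\) turns \((h(u_{31}),h(u_{23}),h(u_{12}))\) into \((2,0,-1)\) and \((h(v_{21}),h(v_{32}),h(v_{13}))\) into \((1,0,0)\).
\item Shifting at \(L_3\) gives \((0,1,0)\) and \((0,-1,2)\).
\end{itemize}
In neither case are all degrees odd, so \cite[Proposition~3.1(2)]{AAEKO} cannot be invoked. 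The parity argument alone settles this, independently of the sign convention of the shift.

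The fix is to shift at \(L_2\), the object incident to all four even-degree arrows \(u_{12},u_{23},v_{21},v_{32}\). This gives degrees \((1,-1,1)\) and \((-1,1,1)\), which satisfy (2.4); this is exactly what the paper does. Both cyclic words start at \(L_1\) and \(\varepsilon(L_1)=0\), so the formula for \(\mu'\) in \cref{lem:a-infty:shift-grading-so-f-objects} leaves the two structure constants and their \(1_{L_1}\)-coefficients unchanged. Then \(\mu\) and \(\mu'\) determine each other, and the second part of that lemma transfers the strict-homotopy classification back to \(\Ccone\).

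With this correction, your extra bookkeeping for the Seidel-versus-AAEKO sign conventions is harmless: it only rescales the two coefficients by structure-independent signs. The paper does not make that step explicit.
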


	\begin{proof}%
		Let \(\mu\) be an \(A_\infty\)-structure on \(\Ccone\). 
		We would like to apply 
		part (2) of Proposition~3.1 in \cite{AAEKO} 
		for the case \(n=3\).
		However, 
		the hypotheses of this result require 
		a \(\Z\)-grading on \(\Ccone\) satisfying 
		\cite[condition (2.4)]{AAEKO}. 
		This condition consists of two parts, namely
		\[
		h(u_{31})+h(u_{23})+h(u_{12})=h(v_{21})+h(v_{32})+h(v_{13})=1
		\]
		and all terms in this expressions must be odd. 
		While the first part is satisfied 
		the second part is clearly not.  
		To fix this, apply
		\cref{lem:a-infty:shift-grading-so-f-objects}
		to the category \(\Ccone\) with its \(A_\infty\)-structure \(\mu\) and the object \(L_2\), %
		resulting in %
		a %
		category \(\Ccone'\) with %
		\(A_\infty\)-structure \(\mu'\)
		for which \cite[condition (2.4)]{AAEKO} is %
		satisfied. 
		Moreover, 
		the \(A_\infty\)-structures 
		\(\mu\) and \(\mu'\) 
		determine each other, 
		as is clear from the formula for \(\mu'\) 
		in the proof of \cref{lem:a-infty:shift-grading-so-f-objects}. 
		So we can now apply 
		part (2) of Proposition~3.1 in \cite{AAEKO} 
		for the case \(n=3\), 
		combined with the second statement of 
		\cref{lem:a-infty:shift-grading-so-f-objects}.
	\end{proof}		
		
		\begin{proof}[Proof of \cref{thm:quasi-iso}]
			\cref{prop:AAEKO} gives a strict homotopy 
			\(\mathcal{H}\co\Ccone\rightarrow\Ccone\) 
			between the two \(A_\infty\)-structures on \(\Ccone\) defined 
			in \cref{prop:quasi-iso:BinfCone,prop:quasi-iso:Acone}.
			Let \(\Psi^{-1}\) be the inverse of the quasi-isomorphism \(\Psi\) 
			constructed in the proof of \cref{prop:quasi-iso:Acone} 
			and consider the functor
			\(
			\Phi\circ\mathcal{H}\circ\Psi^{-1}
			\co
			\Acone\rightarrow\Bcone
			\).
			As a composition of \(q\)-filtered quasi-isomorphisms, 
			this is again a \(q\)-filtered quasi-isomorphism. 
			Its restriction to the full subcategories 
			\(\A\subset\Acone\)
			and 
			\(\Binf\subset\Bcone\)
			gives the desired quasi-isomorphism
			\(
			\A
			\rightarrow
			\Binf
			\).
			Indeed, by construction, it sends
			\(\sW\mapsto\Sw\), 
			\(\dW\mapsto\Dw\), 
			\(\sB\mapsto\Sb\), and
			\(\dB\mapsto\Db\).
		\end{proof}

\section{%
Extensions and \texorpdfstring{\(\boldsymbol{*}\)}{*}
}
\label{sec:no_wrapping_around_special}

In this section we restrict coefficients to fields
\(\CoeffRing=\CoeffField\). 
This is because multicurve invariants are, in general, only defined over fields
(\cref{thm:classification:complexes_over_B:simplified}),
and because 
\cref{thm:quasi-iso} 
has so far only been established over 
\(\CoeffField\). 
We expect that the latter result can be extended to arbitrary coefficients,
in which case most results in this section
would generalize accordingly. 

\subsection{%
	The extension property}
\label{sec:no_wrapping_around_special:extension_prop}
With the \(A_\infty\)-algebra \(\Binf\) from \cref{sec:quasi-iso:Binf} as a starting point,
we introduce an \(A_\infty\)-algebra~\(\BinfU\) that comes with an \(A_\infty\)-epimorphism \(\BinfU\rightarrow\B\).
The difference between \(\Binf\) and \(\BinfU\) is cosmetic:

\begin{definition}\label{def:BinfU}
	An \(A_\infty\)-algebra is bigraded if 
	it carries a bigrading and 
	its higher multiplication maps \(\mu_d\) 
	have grading \(h^{2-d}q^0\) 
	for all \(d\geq1\).
	Consider the bigraded ring
	\(\field[U]\)
	where 
	\(\text{gr}(U)=h^{-2}q^{-6}\).
	Let \(\BinfU\) be the bigraded \(A_\infty\)-algebra that agrees with \(\B[U]\) as a \(\field[U]\)-algebra and with (higher) products defined as follows.
	For any positive integer \(m>0\), 
	any sequence \((n_{2m},\dots,n_1)\) of non-negative numbers, and 
	any sequence \((a_{2m},\ldots,a_{1})\) of algebra elements in \(\B\), set
	\begin{equation}\label{eq:def:BinfU}
		\mu^{\BinfU}_{2m}(U^{n_{2m}}\cdot a_{2m},\ldots,U^{n_{1}}\cdot a_{1})
		=
		(-1)^{m-1}U^{(m-1)+(n_{2m}+\dots+n_{1})} 
		\cdot 
		\mu^{\Binf}_{2m}(a_{2m},\ldots,a_{1})
	\end{equation}
	and set \(\mu^{\BinfU}_{k}\equiv 0\) for any odd integer \(k\).
	Define the homological grading on the restriction of \(\BinfU\) to \(\B\) to be identically 0. 
\end{definition}

\begin{lemma}
	\(\BinfU\) is a bigraded \(A_\infty\)-algebra. 
\end{lemma}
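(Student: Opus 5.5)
We check two things separately: that every structure map of \(\BinfU\) has the required bigrading, and that the \(A_\infty\)-relations hold. The relations are the substantive part; the gradings follow from bookkeeping already in the paper.

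\emph{Gradings.} Each element of \(\B\subset\BinfU\) sits in homological degree \(0\), and \(\mu^{\Binf}_{2m}\) preserves this. By \cref{rem:quantum_grading_on_Binf}, \(\mu^{\Binf}_{2m}\) has quantum degree \(6(m-1)\). The extra factor \(U^{m-1}\) in \eqref{eq:def:BinfU} has grading \(h^{-2(m-1)}q^{-6(m-1)}\). Multiplying by it, \(\mu^{\BinfU}_{2m}\) acquires grading \(h^{2-2m}q^{0}\), which is exactly \(h^{2-d}q^0\) for \(d=2m\).

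Powers of \(U\) carried by the inputs are reproduced in the output, so homogeneity holds on all of \(\B[U]\). For \(m=1\), the map \(\mu_2\) is ordinary multiplication in \(\B[U]\) with grading \(h^0q^0\). The odd maps vanish identically, so there is nothing to check for them.

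\emph{\(A_\infty\)-relations.} Since \(\mu^{\BinfU}\) is \(\field[U]\)-multilinear by construction, it suffices to verify the relations on inputs \(a_n,\dots,a_1\in\B\). Each term has the form \(\mu_{2r}(\dots,\mu_{2s}(\dots),\dots)\) with \(2r+2s-1=n\), so \(r+s=(n+1)/2\) is the same for every term. Such a term equals
\[
(-1)^{(r-1)+(s-1)}\,U^{(r-1)+(s-1)}\,\mu^{\Binf}_{2r}(\dots,\mu^{\Binf}_{2s}(\dots),\dots).
\]
Both the prefactor \((-1)^{r+s}\) and the power \(U^{r+s-2}\) are therefore common to all terms of the relation.

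The Koszul signs \(\maltese\) agree with those for \(\Binf\), because all inputs have homological degree \(0\) in both algebras. (On a general \(U\)-multiple the degrees of the \(a_i\) shift by even amounts, which does not change parity.) So the relation for \(\BinfU\) is a global unit times the relation for \(\Binf\), and the latter is \cref{lem:a-infty-reltations-for-Binf}. This is the same trick used in the proof of that lemma.

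The only point needing care is that the sign \((-1)^{m-1}\) in \eqref{eq:def:BinfU} is compatible with the sign conventions from \cite{Seidel}. It is, because the sign factors out globally: it is a function of \(r+s\) alone, which is fixed within each relation. Strict unitality and the well-definedness of \(\mu^{\BinfU}\) are inherited directly from \(\Binf\), using \cref{lem:disk-sequences-unique-factorization}.
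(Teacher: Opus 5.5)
Your proposal is correct and follows essentially the same route as the paper. Both arguments reduce by \(U\)-linearity to inputs in \(\B\), note that each term \(\mu_{2r}(\dots,\mu_{2s}(\dots),\dots)\) of an odd-length relation picks up a common factor depending only on \(r+s\), deduce the relations from \cref{lem:a-infty-reltations-for-Binf}, and read off the gradings from \cref{rem:quantum_grading_on_Binf}; your factor \((-1)^{r+s}U^{r+s-2}\) is the accurate one, whereas the paper's displayed \((-U)^{r+s}\) is off by a harmless factor \(U^2\).
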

\begin{proof}
	By linearity in \(U\) of \eqref{eq:def:BinfU},
	it suffices to check the \(A_\infty\)-relations
	for the maps \(\mu^{\BinfU}_m\) 
	on sequences of elements in \(\B\). 
	Doing this follows the same argument 
	as in the proof of 
	\cref{lem:a-infty-reltations-for-Binf}. 
	Indeed, 
	since \(\mu^{\BinfU}_{k}=0\) for any odd integer \(k\),
	the \(A_\infty\)-relations for \((\mu^{\BinfU}_{k})_{k>0}\)
	are trivially satisfied on even length sequences of algebra elements. 
	Each term of the \(A_\infty\)-relation 
	for any odd length sequence of algebra elements in \(\B\) 
	is of the form 
	\[
	\mu^{\BinfU}_{2r}(\dots,\mu^{\BinfU}_{2s}(\dots),\dots)
	=
	(-U)^{r+s}%
	\mu^{\Binf}_{2r}(\dots,\mu^{\Binf}_{2s}(\dots),\dots)
	\]
	where \(r+s\) is constant. 
	Thus
	the maps \(\mu^{\BinfU}_{k}\) indeed satisfy the \(A_\infty\)-relations and,
	moreover, their gradings are equal to \(h^{2-k}q^0\),
	which can be seen using \cref{rem:quantum_grading_on_Binf}.
\end{proof}

\begin{remark}
	Some examples for non-zero higher products in \(\BinfU\), where \(k\geq1\) (idempotents are suppressed):
	\begin{align*}
		\mu_4(D,S,D,S^k)
		&=
		U\cdot S^{k-1}
		&
		\mu_4(D^k,S,D,S)
		&=
		U\cdot D^{k-1}
		\\
		\mu_4(S,D,S,D^k)
		&=
		U\cdot D^{k-1}
		&
		\mu_4(S^k,D,S,D)
		&=
		U\cdot S^{k-1}
		\\
		\mu_6(S,D,S^2,D,S,D^{k+1})
		&=
		U^2\cdot D^{k-1}
		&
		\mu_6(S^k,D,S^2,D,S,D^2 )
		&=
		U^2\cdot S^{k-1} 
	\end{align*}
	In fact, there are no other sequences of basic algebra elements for which \(\mu_4\) is non-vanishing. 
\end{remark}

There is a quotient map  \(\BinfU \rightarrow \B\) sending \(U\) to zero that ``forgets'' the \(A_\infty\)-operations on \(\Binf\),
inducing a functor between the corresponding categories of type D structures over the respective algebras.

\begin{theorem}[Extension property]\label{thm:extension}
	For any oriented pointed Conway tangle \(T\)
	there exist bigraded twisted complexes \(\DD^c(T)^{\BinfU}\) and \(\DD^c_1(T)^{\BinfU}\) such that
	\[
	\DD^c(T)^{\BinfU}\Big|_{U=0} 
	= 
	\DD^c(T)^{\B} 
	\quad 
	\text{and} 
	\quad 
	\DD^c_1(T)^{\BinfU}\Big|_{U=0} 
	= 
	\DD^c_1(T)^{\B}
	\]
\end{theorem}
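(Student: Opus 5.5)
The plan follows the outline of \cref{fig:overview_extension}. Fix a labelled diagram \(\Diag_T\) satisfying \cref{assums:tangle_diagram}; this is possible by \cref{lem:many-good-diagrams}, after reorienting and applying \(\rho\) if necessary. Both operations are compatible with the invariants by \cref{prop:rho*Khr}, and the statement is insensitive to orientation up to grading shift.

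\emph{Step 1: a twisted complex over \(\A\).} By \cref{thm:delooping:mf}, \(M(\Diag_T)\) is homotopy equivalent, over \(\Rd\), to the \(n\)-cubical matrix factorization \(M^s(\Diag_T)\). The vertices of \(M^s(\Diag_T)\) are free \(\CoeffField[\varG]\)-multiples of \(M_\arcD\) and \(M_\arcT\), and its edge maps are homotopic to \(\pm\mathcal{S}^v_{v'}\). Writing each \(V(\Diag_T(v))\) in its basis of monomials in the \(\mu_i\), I would read \(M^s(\Diag_T)\) as a twisted complex over \(\A\). Here the \(\Rd\)-linear action of \(\varG=z-x-y\) is absorbed into morphisms of \(\A\). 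The differential of this twisted complex has two parts: the components of the cube along edges, and longer components \(v<v'\) coming from the iterated special deformation retracts of \cref{lem:sdr-new-from-old}. The twisted complex relation holds because \(M^s(\Diag_T)\) is a matrix factorization with potential \(xyz\), which is exactly the curvature of \(\MF_{xyz}\). The relevant grading is the \(\hslash\)-grading together with the \(h\)-filtration of \cref{def:bifiltration}. The parts of the differential of filtration degree \(h^i\) and quantum degree \(q^{3(i-1)}\) will become the coefficients of \(U^{i-1}\).

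\emph{Step 2: transport along \(\HMS\).} Apply the \(q\)-filtered quasi-isomorphism \(\A\to\Binf\) of \cref{thm:quasi-iso} to obtain a twisted complex over \(\Binf\). Then insert powers of \(U\) to promote it to a bigraded twisted complex over \(\BinfU\). A term of the differential of bidegree \(h^iq^{3(i-1)}\), produced by \(\mu_{2m}\) or by \(\F^d\), receives the power \(U^{i-1}\). This is consistent with \(\gr(U)=h^{-2}q^{-6}\), with \eqref{eq:def:BinfU}, and with the filtration degrees \(q(\F^d)=3(d-1)\). This rescaling is the bookkeeping that turns the \(\hslash\)-graded, \(q\)-filtered structure into an honestly bigraded one. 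I would then set \(U=0\). What survives is the part of the differential of \(h\)-degree one, namely the images of the edge maps under \(H\F\). By the explicit identification \(\sW\mapsto\Sw\), \(\sB\mapsto\Sb\), \(\dW\mapsto\Dw\), \(\dB\mapsto\Db\), the maps \(\mathcal{S}^v_{v'}\) of \cref{def:induced_saddle_maps:mf} are sent to the delooped Bar-Natan edge maps. The result is precisely \(\DD(\Diag_T)^{\B}\), up to signs that I would fix by a diagonal change of basis, using that each face of the cube carries an odd number of signs.

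\emph{Step 3: passing to the multicurve representatives.} The homotopy equivalence \(\DD(\Diag_T)\simeq\DD^c(T)\) over \(\B\) is obtained from the cancellation and clean-up lemmas (\cref{lem:clean-up:improved}) and the arrow-pushing algorithm. I would lift each such move to \(\BinfU\) by homological perturbation. Working over a field and with \(U\)-adic filtration, every cancellation or change of basis over \(\B\) extends to one over \(\BinfU\) whose \(U=0\) reduction is the original move. Boundedness of \(q\)-degrees in each bigrading ensures that the perturbation series terminate. This yields \(\DD^c(T)^{\BinfU}\). For \(\DD^c_1\), take the cone of \(\varG\cdot 1\): the element \(\varG\) is central in \(\B\), and I would check that it remains a strict morphism over \(\BinfU\), perhaps after adding higher \(U\)-corrections. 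Then run the same lifting argument.

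The main obstacle I expect is Step 1, together with the sign and compatibility check in Step 2. One must show that the higher, non-edge components of \(M^s(\Diag_T)\) really assemble into a twisted complex over the small algebra \(\A\), rather than over all of \(\MF_{xyz}(\Rd)\). This is where \cref{lem:delooping:mf} controls only the edge maps up to homotopy. I would first try to absorb those homotopies, again via \cref{lem:sdr-new-from-old}, into the longer components, so that the edge maps are exactly \(\pm\mathcal{S}^v_{v'}\).
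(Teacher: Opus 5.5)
Your pipeline is the paper's: deloop to \(M^s(\Diag_T)\), read it as a twisted complex over \(\A\), transport along \cref{thm:quasi-iso}, add \(U\), reduce mod \(U\), and lift the arrow-sliding moves. The step that turns the \(\hslash\)-graded, \(q\)-filtered twisted complex over \(\Binf\) into a bigraded one over \(\BinfU\) is wrong as stated, however, and it is missing an ingredient.

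A component \(d_i\) of degree \(\hslash^1q^{3(i-1)}h^i\), multiplied by \(U^{i-1}\), has bidegree \(h^{2-i}q^{-3(i-1)}\). This equals \(h^1q^0\) only for \(i=1\), so your weighting is not consistent with \(\gr(U)=h^{-2}q^{-6}\). The weight that works is \(U^{(i-1)/2}\). That weight only makes sense if \(d_i=0\) for every even \(i\), and you never establish this.

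The paper proves it in \cref{lem:complex_over_Binf_has_no_even_diffs}, using the cube of resolutions. Two generators have quantum gradings of equal parity if and only if they lie in the same idempotent. A pure element of \(\B\) has even \(q\)-degree if and only if it starts and ends at the same idempotent. Hence \(3(i-1)\) must be even, i.e.\ \(i\) is odd.

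You also need a sign: \(d_U=\sum_j(-U)^j d_{2j+1}\). Combined with the factor \((-1)^{m-1}\) in \eqref{eq:def:BinfU}, this makes each term of the structure relation for a path of length \(2n\) equal to \((-U)^N\) times the corresponding \(\Binf\)-term, where \(N\) depends only on the endpoints (\cref{prop:AddingU}). Without the sign, paths of lengths \(2n\) and \(2n'\) between the same endpoints pick up relative signs \((-1)^{n-n'}\). Then the \(\Binf\)-relations no longer imply the \(\BinfU\)-relations outside characteristic \(2\).

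The second gap concerns \(\DD^c_1\). You propose to cone \(\varG\cdot 1\) over \(\BinfU\) after checking that it ``remains a strict morphism \dots perhaps after adding higher \(U\)-corrections''. That check is exactly what needs proof. Because of the higher products \(\mu_{2m}\), \(\varG\cdot 1_C\) is in general not a morphism of twisted complexes over \(\BinfU\); the paper gives an example right after its proof. You give no argument that suitable corrections always exist. This is unlike the clean-ups and cancellations, which only modify the differential and so face no lifting obstruction; here you need an actual closed morphism.

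The paper avoids the issue by forming the cone at the matrix-factorization level, where \(\varG=z-x-y\in\Rd\) acts strictly. The cone \([q^{-2}h^{-1}\hslash^1M(\Diag_T)\xrightarrow{\varG}M(\Diag_T)]\) deloops to an \((n+1)\)-cubical matrix factorization whose edge maps are homotopic to \(\pm\mathcal{S}^{\hat v}_{\hat v'}\) or \(\pm\varG\). This is pushed through the same pipeline, and the mod-\(U\) identification via \cref{lem:identify_cubes} then uses that \(\varG\) is not a zero divisor.

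Conversely, what you flag as the main obstacle is not one. \(\A\) is the full endomorphism algebra of \(M_\arcD\oplus M_\arcT\), so \(M^s(\Diag_T)\) is tautologically a twisted complex over \(\A\) (\cref{lem:criterion_is_in_image_of_H}). Since \(\mu_1=0\) on \(\Binf\), the closed edge components enter only through their homology classes, so no homotopies need to be absorbed.
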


A bigraded twisted complex is a twisted complex whose differential has grading \(h^1q^0\). 

\begin{remark}
	Although \cref{thm:extension} is purely algebraic (in both statement and proof) its  origin lies in symplectic geometry. 
	As discussed in \cref{sec:quasi-iso:pair-of-pants},
	the algebras~\(\B\) and \(\Binf\) encode the Fukaya categories 
	of the punctured spheres~\(\FourPuncturedSphere\) and \(S^2_3\),
	so that passing from \(\B\) to \(\Binf\) 
	corresponds to filling in the distinguished puncture~\(\ast\) 
	of~\(\FourPuncturedSphere\). 
	The algebra~\(\BinfU\) can then be viewed 
	as a geometric deformation of~\(\Binf\), 
	where each polygon 
	that is counted in the definition of the higher operations
	contributes a factor of~\(U^\lambda\)
	according to its multiplicity~\(\lambda\) at the special puncture. 
	In this interpretation, 
	\cref{thm:extension} asserts 
	that the curves~\(\BNr(T)\) and~\(\Khr(T)\), 
	corresponding  
	to the type~D structures~\(\DD^c(T)\) and~\(\DD^c_1(T)\), respectively,
	define objects in the wrapped Fukaya category of~\(S^2_3\).
	In particular, 
	fishtails (in the sense of~\cite[Lemma~2.2]{AbouzaidSurfaces}) 
	enclosing the special puncture must cancel in pairs.
\end{remark}

Before proceeding to the lengthy proof of the extension property (\cref{thm:extension})
we first explain how this result implies 
\cref{thm:no_wrapping_around_special,thm:geography:special_curves}.

\begin{proof}[%
	Proof of 
	\texorpdfstring{\cref{thm:no_wrapping_around_special}}
	{Theorem \ref{thm:no_wrapping_around_special}}
	(%
	No wrapping around 
	\texorpdfstring{\(\ast\)}{*}%
	)]

	Suppose the multicurve \(\BNr(T)\) has a component that wraps around the special puncture. For simplicity assume that the local system is trivial; the general case follows similarly. We will prove that an extension $\DD^c(T)^{\BinfU}$ of $\DD^c(T)^{\B}$ cannot exist, contradicting \cref{thm:extension}. 
	
	By \cref{cor:twisting:field-coeff:ungraded}, we may assume that one of the two segments where the curve changes direction is horizontal. If the direction changes by less than \(90^\circ\) we can add some twists to the bottom two tangle ends such that the angle becomes greater than \(90^\circ\), as illustrated on the left of \cref{fig:GeographyForSpecials}. In the corresponding type~D structure \(\DD^c(T)\), we then see a sequence of differentials
	\begin{equation}\label{eq:sequence}
		\begin{tikzcd}[column sep=15pt]
			\bullet
			\arrow{r}{D}
			&
			\bullet
			\arrow{r}{S}
			&
			\circ
			\arrow{r}{D}
			&
			\circ
			\arrow{r}{S}
			&
			\bullet
			\arrow{r}{D}
			&
			\bullet
		\end{tikzcd}
	\end{equation}
	Since \(\mu_4(S,D,S,D)=U\), the first four arrows contribute \(U\) to the structure relation of any extension \(\DD^c(T)^{\BinfU}\) of \(\DD^c(T)^\B\). %
	There are only three other sequences of differentials that may contribute \(U\), namely 
	\[
	\begin{tikzcd}[column sep=15pt]
		\bullet
		\arrow{r}{S}
		&
		\circ
		\arrow{r}{D}
		&
		\circ
		\arrow{r}{S}
		&
		\bullet
		\arrow{r}{D}
		&
		\bullet
	\end{tikzcd}
	,\quad
	\begin{tikzcd}[column sep=15pt]
		\bullet
		\arrow{r}{1}
		&
		\bullet
		\arrow{r}{U}
		&
		\bullet
	\end{tikzcd},
	\quad\text{and}\quad
	\begin{tikzcd}[column sep=15pt]
		\bullet
		\arrow{r}{U}
		&
		\bullet
		\arrow{r}{1}
		&
		\bullet
	\end{tikzcd}
	\]
	The first sequence cannot contribute the same \(U\) term, since the last arrow in the sequence~\eqref{eq:sequence} points out of the penultimate generator. 
	The other two sequences do not appear, since the differential does not contain any identity arrows. 
Since the $U$ term in the structure relation cannot be cancelled, the extension \(\DD^c(T)^{\BinfU}\) cannot exist. The same argument works for components of \(\Khr(T)\), since \(\DD_1^c(T)^{\B}\) also extends to some type~D structure over \(\BinfU\) by \cref{thm:extension}.
\end{proof}

\begin{figure}[t]
	\centering
	\(
	\vc{$\LittleSpecialWrapping$}
	\vc{$\GeographyForSpecials$}
	\)
	\caption{%
		(left) 
		The shearing transformation, viewed in \(\PuncturedPlane\), appearing in the proof of 
		\cref{thm:no_wrapping_around_special};
		(right)
		Some curve segments of a special component of slope 0
		appearing in the proof of
		\cref{thm:geography:special_curves}
	}\label{fig:GeographyForSpecials}
\end{figure}

\begin{proof}[%
	Proof of 
	\texorpdfstring{\cref{thm:geography:special_curves}}
	{Theorem \ref{thm:geography:special_curves}}
	(Almost special components are special.)]
	
	Let \(\gamma\) be an almost special component of \(\Khr(T)\) or \(\BNr(T)\). 
	By \cref{cor:twisting:field-coeff:ungraded}, 
	we may assume without loss of generality 
	that the slope of \(\gamma\) is zero. 
	It suffices to show that
	\(C(\gamma)\cong q^{r}h^s C(\sKh_{2n}(0))\) 
	for some \(r,s\in\Z\) 
	and \(n\in\Z_{>0}\). 
	
	First, suppose that the local system on \(\gamma\)
	is 1-dimensional. 
	Some generators of \(C(\gamma)\) belong 
	to the idempotent \(\iota_{\circ}\); 
	otherwise, \(\gamma\) would be 
	an almost rational component of slope \(0\). 
	By \cref{lem:higher_powers_when_wrapping}, 
	the differential of \(C(\gamma)\) only contains 
	linear combinations of \(S\), \(S^2\), and \(D\). 
	Therefore, it contains some differential 
	\(\circ\xrightarrow{c\cdot D}\circ\) 
	for some non-zero \(c\in\CoeffField\). 
	Near this differential,
	\(C(\gamma)\) looks either like
	\begin{equation}\label{eq:geography:special_curves}
		\begin{tikzcd}
			\circ
			\arrow{r}{S}
			\arrow{d}[swap]{c\cdot D}
			&
			\bullet
			\arrow{r}{D}
			&
			\bullet
			\arrow{r}{}
			&
			\cdots
			\\
			\circ
			\arrow{r}{S}
			&
			\bullet
			\arrow{r}{D}
			&
			\bullet
			\arrow{r}{}
			&
			\cdots
		\end{tikzcd}
		\quad
		\text{or}
		\quad
		\begin{tikzcd}
			\circ
			\arrow[leftarrow]{r}{S}
			\arrow[leftarrow]{d}[swap]{c\cdot D}
			&
			\bullet
			\arrow[leftarrow]{r}{D}
			&
			\bullet
			\arrow[leftarrow]{r}{}
			&
			\cdots
			\\
			\circ
			\arrow[leftarrow]{r}{S}
			&
			\bullet
			\arrow[leftarrow]{r}{D}
			&
			\bullet
			\arrow[leftarrow]{r}{}
			&
			\cdots
		\end{tikzcd}
	\end{equation}
	since \(\gamma\) does not wrap around any tangle end. 
	Consider the first case of \eqref{eq:geography:special_curves}. 
	A lift of the corresponding portion of the curve \(\gamma\) 
	to the covering space \(\PuncturedPlane\) 
	is shown on the right of \cref{fig:GeographyForSpecials}. 
	Without loss of generality, 
	we may assume that the labels 
	of all other components of the differential of \(C(\gamma)\) 
	are equal to \(S\), \(S^2\), or \(D\). 
	Let \(R\) and \(L\) be 
	the number of consecutive generators 
	in idempotent \(\iota_{\bullet}\) 
	on the upper and lower legs of this type~D structure, 
	respectively. 
	These integers are equal to 
	the number of intersection points 
	of the vertical parametrizing arcs 
	with the right and left segments of \(\gamma\),
	up to the point where they intersect 
	the horizontal parametrizing arcs again. 
	In particular, this implies that 
	\(R\) and \(L\) are even integers. 
	Suppose \(L\leq R\). 
	If \(L=2\), there is a non-trivial \(\mu_4\)-action 
	contributing a term \(c\cdot U\) 
	from the generator on the top left 
	to the generator on the bottom right 
	in the following diagram:
	\[
	\begin{tikzcd}
		\circ
		\arrow{r}{S}
		\arrow{d}[swap]{c\cdot D}
		&
		\bullet
		\arrow{r}{D}
		&
		\bullet
		\arrow{r}{S}
		&
		\circ
		\arrow[dashed]{d}{D}
		\\
		\circ
		\arrow{r}{S}
		&
		\bullet
		\arrow{r}{D}
		&
		\bullet
		\arrow{r}{S}
		&
		\circ
	\end{tikzcd}
	\]
	This action can only be cancelled 
	if \(c=-1\) and the dashed arrow is part of \(C(\gamma)\), 
	i.e.\ if \(C(\gamma)\) and \(C(\sKh_{2}(0))\)
	agree up to grading shift. 
	If \(L>2\), 
	the type~D structure \(C(\gamma)\) 
	looks as follows 
	(without the dotted and dashed arrows): 
	\[
	\begin{tikzcd}
		\circ
		\arrow{r}{S}
		\arrow{d}[swap]{c\cdot D}
		&
		\bullet
		\arrow{r}{D}
		\arrow[dotted,in=135,out=-45]{drr}[description]{c\cdot U}
		&
		\bullet
		\arrow{r}{S^2}
		\arrow[dotted,in=135,out=-45]{drr}[description]{-c\cdot U}
		&
		\cdots
		&
		\cdots
		\arrow{r}{S^2}
		\arrow[dotted,in=135,out=-45]{drr}[description]{-c\cdot U}
		&
		\bullet
		\arrow{r}{D}
		&
		\bullet
		\arrow[dashed]{r}{S}
		&
		\circ
		\arrow[dashed]{d}{D}
		\\
		\circ
		\arrow{r}{S}
		&
		\bullet
		\arrow{r}{D}
		&
		\bullet
		\arrow{r}{S^2}
		&
		\cdots
		&
		\cdots
		\arrow{r}{S^2}
		&
		\bullet
		\arrow{r}{D}
		&
		\bullet
		\arrow{r}{S}
		&
		\circ
	\end{tikzcd}
	\]
	In this case, the \(\mu_4\)-action 
	forces the existence of a \(c\cdot U\)-differential 
	in the extended twisted complex from \cref{thm:extension}, 
	namely the one indicated in the complex above 
	by the first dotted arrow on the left. 
	This component of the differential 
	forces the existence of a \(-c\cdot U\)-differential
	indicated by the second dotted arrow, and so on.
	The signs on these differentials alternate,
	and there is an even number of them.
	The contribution to the structure relation 
	of the composition of the last \(-c\cdot U\)-differential 
	with the differential \(\bullet\xrightarrow{S}\circ\) 
	on the lower leg of the complex can be cancelled 
	if and only if \(R=L\), \(c=-1\),
	and the dashed arrows are part of the differential, 
	i.e.\ if \(C(\gamma)=C(\sKh_L(0))\). 
	
	If the curve carries an indecomposable \(n\)-dimensional local system for \(n>1\), 
	we may choose the corresponding complex to be the same as before, 
	except that we tensor each generator 
	by an \(n\)-dimensional vector space \(W\), 
	replace the differential \(\circ\xrightarrow{c\cdot D}\circ\) on the left 
	by \(\circ\otimes W\xrightarrow{D\otimes X}\circ\otimes W\), 
	where \(X\in\GL_n(\field)\), and 
	tensor all other differentials of \(C(\gamma)\) by \(1_{W}\). 
	Then the \(U\)-differentials in the extended type~D structure 
	need to be tensored by \(X\). 
	We then deduce that \(X=-1_W\), 
	contradicting the assumption 
	that the local system was indecomposable. 
	
	The second case in \eqref{eq:geography:special_curves} 
	with \(L\leq R\) follows from reversing all arrows 
	in the arguments above. 
	So it remains to consider those two cases for \(L>R\). 
	We claim that a curve containing a portion of this kind 
	cannot be a component of \(\Khr(T)\). 
	To see this, consider a shortest leg of length \(R\) of such a curve. 
	By the previous arguments, 
	the corresponding portion of the type~D structure 
	looks as follows: 
	\[
	\begin{tikzcd}
		\cdots
		&
		\circ
		\arrow{l}[swap]{S}
		&
		\circ
		\arrow[dashed]{l}[swap]{D}
		\arrow{r}{S}
		&
		\bullet
		\arrow{r}{D}
		&
		\bullet
		\arrow{r}{}
		&
		\cdots
		\arrow{r}{}
		&
		\bullet
		\arrow{r}{D}
		&
		\bullet
		\arrow{r}{S}
		&
		\circ
		&
		\circ
		\arrow[dashed]{l}[swap]{D}
		&
		\cdots
		\arrow{l}[swap]{S}
	\end{tikzcd}
	\]
	(note the direction of the two dashed arrows).
	A simple application of the Clean-Up Lemma 
	\cite[Lemma~2.17]{KWZ} 
	shows that the mapping cone of such a complex 
	is chain isomorphic to the direct sum of 
	\(q^rh^sC(\sKh_R(0))\) for some \(r,s\in\Z\)
	and a fully reduced type~D structure \(X\)
	with strictly more generators than \(q^rh^sC(\sKh_R(0))\).
	Thus \(X\) is not chain homotopy equivalent to 
	\(q^rh^sC(\sKh_R(0))\) for any \(r,s\in\Z\). 
	However, this is not possible, 
	since by \cref{lem:H_is_nullhomotopic_on_Khr}, 
	the mapping cone of \(C(\gamma)\) is isomorphic to 
	\(q^{-1}h^{-1}C(\gamma)\oplus q^{+1}h^{0}C(\gamma)\). 
	Any local system may be pushed outside 
	of the relevant region of the type~D structure 
	in which these isotopies are non-trivial, 
	so this argument generalizes to arbitrary local systems. 
	\end{proof}

\subsection{From bifiltered matrix factorizations to bigraded twisted complexes}
\label{sec:no_wrapping_around_special:mf2twcx}
We now construct a bigraded twisted complex \(\DD(\Diag_T)^{\BinfU}\) from the bifiltered matrix factorization \(M(\Diag_T)\); \Cref{fig:overview_mf2twcx} gives an overview. Interpolating between bifiltered matrix factorizations and bigraded twisted complexes uses the notion of a bifiltered twisted complex.
\begin{definition}
	A bifiltered twisted complex over a bigraded \(A_\infty\)-algebra \(\A\) is 
	a \(\hslash\)-graded twisted complex \((C,d)\) over \(\A\)
	together with a bigrading such that 
	\(d=\sum_{i=1}^\infty d_i\), 
	where \(\gr(d_i)=\hslash^1q^{3(i-1)}h^i\).
\end{definition}

\cref{thm:delooping:mf} constructs a bifiltered matrix factorization \(M^s(\Diag_T)\) 
that is bifiltered homotopy equivalent 
to the original matrix factorization \(M(\Diag_T)\)
from \cref{sec:mfs}.

Next, we construct a bifiltered twisted complex \(C(\Diag_T)^{\A}\)
over the endomorphism algebra \(\A\) 
from \cref{def:mf_algebra}. 
This construction is rather tautological and goes as follows:
The inclusion 
\(
\A
\hookrightarrow
\MF_{xyz}(\Rd)
\) 
induces a functor 
\(
\Tw\A
\hookrightarrow
\Tw \MF_{xyz}(\Rd)
\) 
between the associated categories of twisted complexes
\cite[Section~I.3m]{Seidel}. 
Since
the category
\(
\Tw \MF_{xyz}(\Rd)
\) 
is equivalent to 
\(
\MF_{xyz}(\Rd)
\)
we obtain a full and faithful functor
\(
\mathcal{F}\co
\Tw \A
\hookrightarrow
\MF_{xyz}(\Rd)
\).
The following lemma is immediate from the definition of \(\mathcal{F}\).
\begin{lemma}\label{lem:criterion_is_in_image_of_H}
	A matrix factorization  
	\((Z,d_Z)\in \MF_w(\Rd)\)
	is in the essential image of \(\mathcal{F}\)
	if and only if there exists a decomposition
	\(Z\cong \bigoplus_i Z_i\)
	with the following property:
	For each \(i\),
	if \(d_i\co Z_i\rightarrow Z_i\) denotes the restriction of \(d_Z\),
	\((Z_i,d_i)\) is equal to \(M_\arcD\) or \(M_\arcT\) up to a grading shift.
	Moreover, 
	a chain complex over \(\A\) is bifiltered 
	if and only if 
	its image under \(\mathcal{F}\) is bifiltered. 
	\qed
\end{lemma}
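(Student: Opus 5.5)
The plan is to unwind the construction of \(\mathcal{F}\) and compare twisted complexes over \(\A\) with matrix factorizations directly, in three steps.

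First I would make the functor explicit. An object of \(\Tw\A\) is a pair \((X,\delta)\), where \(X=\bigoplus_i h^{r_i}X_i\) is a formal direct sum with each \(X_i\in\{M_\arcD,M_\arcT\}\). The twist \(\delta\) is strictly lower-triangular and satisfies the Maurer--Cartan equation. Since \(\A\) is a dg algebra, \(\mu_k=0\) for \(k>2\), so this equation reduces to \(\mu_1(\delta)+\mu_2(\delta,\delta)=0\). The composite \(\Tw\A\hookrightarrow\Tw\MF_{xyz}(\Rd)\simeq\MF_{xyz}(\Rd)\) sends \((X,\delta)\) to the matrix factorization \(Z=\bigoplus_i Z_i\), where \(Z_i\) is the shifted underlying module of \(X_i\). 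Its differential is \(d_Z=\bigoplus_i d_{X_i}+\delta\), possibly with Seidel's sign twist from \cref{def:additive_enlargement_Seidel}. One checks that \(d_Z^2=xyz\) is exactly the Maurer--Cartan equation, so \(d_Z\) is a curved differential.

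For the ``only if'' direction, suppose \(Z\cong\mathcal{F}(X,\delta)\). Then the decomposition \(Z\cong\bigoplus_i Z_i\) from the first step works. The restriction of \(d_Z\) to \(Z_i\) is \(d_{X_i}\) plus the diagonal part of \(\delta\), and that diagonal part vanishes because \(\delta\) is strictly lower-triangular. So \((Z_i,d_i)\) is a shifted copy of \(M_\arcD\) or \(M_\arcT\).

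For the ``if'' direction, take a decomposition with \((Z_i,d_i)\) equal to shifted copies of \(M_\arcD\) or \(M_\arcT\). Set \(X=\bigoplus_i Z_i\) in \(\Sigma\A\) and \(\delta=d_Z-\bigoplus_i d_i\). Then \(\delta\) is an endomorphism of \(\Sigma\A\) of odd \(\hslash\)-degree, and \(d_Z^2=xyz\) gives the Maurer--Cartan equation, so \(\mathcal{F}(X,\delta)=Z\).

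The main obstacle is that \(\delta\) need not be strictly lower-triangular. For a \(\ZmodTwo\)-graded object there is no homological filtration forcing this, and the off-diagonal components could form cycles. I would handle this by reading ``twisted complex'' in the \(\hslash\)-graded, one-sided twisted complex sense that Seidel permits, where \(\delta\) is only required to have vanishing diagonal. Under that convention the Maurer--Cartan sum \(\mu_1(\delta)+\mu_2(\delta,\delta)\) is finite, since \(\A\) is dg, so nothing else changes. In the applications, \(Z=M^s(\Diag_T)\) is \(n\)-cubical by \cref{thm:delooping:mf}. Ordering the summands by the cube preorder \(\leq\) on vertices makes the off-diagonal part lower-triangular, and ordering within each vertex arbitrarily suffices because each vertex summand is a direct sum of copies of \(M_\arcD\) or \(M_\arcT\) with no differential between them.

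For the bifiltration statement, \(\mathcal{F}\) is the identity on underlying bigraded modules and on differentials. The condition \(\gr(d_i)=\hslash^1q^{3(i-1)}h^i\) is therefore literally the same condition on both sides, except for the internal differentials \(d_{X_i}\). Those have grading \(\hslash^1q^{-3}h^0\), which is the \(i=0\) term allowed in \cref{def:bifiltration}, so bifilteredness is preserved and reflected.
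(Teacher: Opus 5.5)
Your unwinding of \(\mathcal{F}\) is the argument the paper has in mind when it calls the lemma immediate from the definition. Your ``only if'' direction and the ``preserved'' half of the bifiltration claim are fine.

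You are also right that the ``if'' direction does not follow from the hypothesis as literally stated. A decomposition whose diagonal blocks are shifted copies of \(M_\arcD\) and \(M_\arcT\) only yields a twist \(\delta=d_Z-\bigoplus_i d_i\) with vanishing diagonal, whereas \cref{def:twisted_complex} requires \(\delta\) to be strictly lower-triangular. However, your first remedy is not available. Seidel does not allow twists that are merely off-diagonal: his twisted complexes must be strictly lower-triangular with respect to some filtration. ``One-sided'' in the sense of Bondal--Kapranov means precisely the triangular ones. Enlarging \(\Tw\A\) this way would also change the category to which the equivalence of \cref{thm:hms} is applied. The transfer to \(\Tw\Binf\), where the higher products \(\mu_{2m}\) and the higher components of the \(A_\infty\)-functor enter the Maurer--Cartan equation, is only set up for triangular twists. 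The correct repair is your second one, written into the statement: additionally require that the off-diagonal part of \(d_Z\) be strictly triangular for some ordering of the summands. For \(M^s(\Diag_T)\), the only place the lemma is used, the cube preorder provides such an ordering, as you say.

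The same kind of extra input is needed for the ``reflected'' half of the bifiltration claim, which your last paragraph treats as automatic. The \(i=0\) term of a bifiltration of \(\mathcal{F}(X,\delta)\) is the whole \(\hslash^1q^{-3}h^0\)-part of \(d_Z\). It may contain off-diagonal components of \(\delta\), not only the internal differentials. For example, let \(\theta\) be the odd endomorphism of \(M_\arcD\) that is \(x\) on \(\hslash^1q^1\Rd\to\hslash^0q^0\Rd\) and \(-yz\) in the other direction. It is closed and has bidegree \(\hslash^1q^{-3}h^0\). So \(M_\arcD\oplus M_\arcD\), with both summands unshifted and twist \(\theta\) from the first copy to the second, is a twisted complex over \(\A\) that is not bifiltered, although its image under \(\mathcal{F}\) is. What you need is that \(\delta\) has no \(h^0\)-component. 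For \(M^s(\Diag_T)\) this holds. In an \(n\)-cubical factorization, a component from \(v\) to \(v'\neq v\) has \(h\)-degree \(|v'|-|v|\geq1\). Each vertex differential is the direct sum of the internal differentials of the copies of \(M_\arcD\) or \(M_\arcT\) that make up \(V(\Diag_T(v))\otimes M(o(\Diag_T(v)))\).
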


\begin{figure}[t]
	\centering
	\tikzstyle{mypath}=[decorate,decoration={snake,post length=3pt,segment length=3pt,amplitude=1pt,post length=5pt,pre length=5pt}]
	\begin{tikzpicture}[xscale=3]
		\node (M) at (0,0) {\(M(\Diag_T)\)};
		\node (Ms) at (1,0) {\(M^s(\Diag_T)\)};
		\node (CA) at (2,0) {\(C(\Diag_T)^{\A}\)};
		\node (CBinf) at (3.1,0) {\(C(\Diag_T)^{\Binf}\)};
		\node (DBinfU) at (4,0) {\(\DD(\Diag_T)^{\BinfU}\)};
		
		\draw[thick,decorate,decoration=brace] (Ms.south east) -- node (y)[yshift=-8pt,below, draw,thick, rounded corners]{%
			\begin{minipage}{5.2cm}
				\centering
				bifiltered matrix factorizations\\
				\((C,d=d_0+d_1+\dots)\)\\
				\(\gr(d_i)=q^{3(i-1)}\hslash^1 h^i\)\\
				\(d^2=xyz\cdot 1_C\)
			\end{minipage}
		} (M.south west);
		\draw[thick,decorate,decoration=brace] (CBinf.south east) -- node (y)[yshift=-8pt,below, draw,thick, rounded corners]{%
			\begin{minipage}{4.7cm}
				\centering
				bifiltered twisted complexes\\
				\((C,d=d_1+d_2+\dots)\)\\
				\(\gr(d_i)=q^{3(i-1)}\hslash^1 h^i\)\\
				\(\phantom{1_C}d^2=0\phantom{1_C}\)
			\end{minipage}
		} (CA.south west);
		\draw[thick,decorate,decoration=brace] (DBinfU.south east) -- node (y)[yshift=-8pt,below, draw,thick, rounded corners]{%
			\begin{minipage}{2.9cm}
				\centering
				bigraded twisted\\
				complex \((C,d)\)\\
				\(\gr(d)=q^0h^1\)\\
				\(\phantom{1_C}d^2=0\phantom{1_C}\)
			\end{minipage}
		} (DBinfU.south west);
		\footnotesize
		\draw[mypath,->] (M) -- node (x)[above]{delooping} (Ms);
		\draw[mypath,->] (CA) -- node (x)[above]{\phantom{q}functor \(\mathcal{F}\)\phantom{q}} (Ms);
		\draw[mypath,->] (CA) -- node (x)[above]{\(\HMS\)} (CBinf);
		\draw[mypath,->] (CBinf) -- node (x)[above]{\(\operatorname{Add}_U\)} (DBinfU);
	\end{tikzpicture}
	\caption{%
		The construction 
		of the twisted complex \(\DD(\Diag_T)^{\BinfU}\) 
		from the matrix factorization \(M(\Diag_T)\)
	}\label{fig:overview_mf2twcx}
\end{figure}

\Cref{thm:delooping:mf} says that at each vertex of the cube of resolutions associated with the matrix factorization \(M^s(\Diag_T)\) is equal to a direct sum of copies of either \(M_\arcD\) or \(M_\arcT\), possibly shifted in bigrading.
So, by \cref{lem:criterion_is_in_image_of_H}, there exists a (up to isomorphism unique) bifiltered complex \(C(\Diag_T)^{\A}\) over \(\A\) with the property that 
\(\mathcal{F}(C(\Diag_T)^{\A})=M^s(\Diag_T)\). 

Next we use the quasi-isomorphism 
between the algebras \(\A\) and \(\Binf\) 
from \cref{thm:quasi-iso}, regarding both algebras as 
\(\hslash\)-graded and \(q\)-filtered \(A_\infty\)-algebras. 
(In particular, the \(h\)-grading is identically zero on both algebras.)
The \(q\)-filtered quasi-isomorphism between \(\A\) and \(\Binf\) 
induces the equivalence of categories
\(\HMS\co\Tw \A\rightarrow \Tw \Binf\) 
from \cref{thm:hms}, which maps bifiltered twisted complexes to bifiltered twisted complexes.
Define
\(C(\Diag_T)^{\Binf}\)
as the image of \(C(\Diag_T)^{\A}\)
under
\(\HMS\). 

The final step 
depends on the following property of \(C(\Diag_T)^{\Binf}\).

\begin{lemma}\label{lem:complex_over_Binf_has_no_even_diffs}
	Let \(d=\sum_i d_i\)
	be the decomposition of the differential of 
	\(C(\Diag_T)^{\Binf}\)
	with \(\gr(d_i)=q^{3(i-1)}\hslash^1 h^i\).
	Then \(d_i=0\) for even \(i\). 
\end{lemma}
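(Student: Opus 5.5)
The plan is a grading parity argument. The idea is to relate the \(q\)- and \(h\)-gradings of each component \(d_i\) to the \(\hslash\)-grading, and to show that these two constraints are incompatible when \(i\) is even.

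First I record what the gradings look like on the objects of \(C(\Diag_T)^{\Binf}\). By \cref{thm:delooping:mf}, each summand sits at a vertex \(v\) of the cube. Its grading shift is \(h^{|v|-n_-}\hslash^{|v|+n_++\mathtt{i}}q^{|v|+n_+-2n_-}\) times a copy of \(M_\arcD\) or \(M_\arcT\), further shifted by the factors of \(V(\Diag_T(v))\), which lie in \(q\)-degrees \(m\pm\) even numbers and in \(h=\hslash=0\). I then aim to prove one parity identity, valid for every generator \(g\) of the twisted complex over \(\Binf\) after identifying objects via \(\HMS\) (\(M_\arcD\mapsto\circ\), \(M_\arcT\mapsto\bullet\)). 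The identity to establish is
\[
q(g)+h(g)\equiv \hslash(g)+c \pmod 2
\]
for a constant \(c\) depending only on \(\Diag_T\). The ingredients are as follows. The quantity \(\mathtt{i}_v\) counts thick arcs plus closed components plus the \(\arcT\)-connectivity bit. The number of closed components matches the \(q\)-shift \(q^m\) modulo 2. The thick-arc count and the \(\arcT\) bit are accounted for by the \(\hslash\)-parity of the internal generators of \(M_\arcT\) versus \(M_\arcD\). Since \(\mathtt{i}_v\) is constant modulo 2, as noted after \cref{lem:delooping:mf}, the identity should reduce to a check on a single vertex.

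Next I compare this with the grading of a component \(d_i\). Its grading is \(\hslash^1q^{3(i-1)}h^i\), and it is labelled by an element \(b\in\Binf\). Elements of \(\B\) are homogeneous with \(\hslash(b)=0\) and \(q(b)\) equal to \(-2\) (for \(D\)) or \(-1\) per \(S\) (for \(S\)). Writing the arrow \(g\to g'\) with label \(b\), I get \(q(g')-q(g)+q(b)=3(i-1)\) and \(h(g')-h(g)=i\), together with \(\hslash(g')-\hslash(g)=1\). Applying the parity identity to both \(g\) and \(g'\) gives
\[
3(i-1)-q(b)+i\equiv 1 \pmod 2,
\]
that is, \(q(b)\equiv 0\) exactly when \(i\) is odd. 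It remains to check the parity of \(q(b)\) against the idempotents. The parity of \(q(b)\) is odd exactly when \(b\) is an odd power of \(S\), i.e.\ when \(b\) changes idempotent. Hence the relation also has to be tracked through the \(\hslash\)-parity difference of \(\circ\) and \(\bullet\) generators. So I refine the parity identity to include an idempotent correction: \(\hslash\) of the \(\bullet\)-summand is offset by \(1\) relative to \(\circ\), reflecting the fact that \(M_\arcD\) and \(M_\arcT\) have opposite \(\hslash\)-placement of their \(q^0\) generators.

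The main obstacle is pinning down this refined parity identity, with its correction term, exactly right. I would first verify it on the elementary diagrams, for example using \(C(\Diag)\) for a single crossing through the mapping cones \eqref{eq:mcs}. I would then propagate it along edges using the fact that the edge maps \(\mathcal{S}^v_{v'}\) and their \(\Binf\)-images \(S,D\) have the predicted parities. If the identity holds, \(i\) even forces \(q(b)\) to have the wrong parity for its idempotent type, and therefore \(d_i=0\).
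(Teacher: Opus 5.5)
\textbf{There is a genuine gap, in the congruence \(3(i-1)-q(b)+i\equiv 1\).} For an arrow \(g\to g'\) of \(d_i\) labelled \(b\), you correctly record \(q(g')-q(g)+q(b)=3(i-1)\), \(h(g')-h(g)=i\) and \(\hslash(g')-\hslash(g)=1\).

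But \(3(i-1)+i=4i-3\) is odd for \emph{every} \(i\). So your identity \(q+h\equiv\hslash+c\) gives \(q(b)\equiv 0\) for all arrows, regardless of \(i\), not ``exactly when \(i\) is odd''. That conclusion would rule out arrows labelled by odd powers of \(S\), so the unrefined identity is false whenever both idempotents occur.

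Adding the idempotent correction does not rescue it. (That correction belongs with \(q\), not with \(\hslash\): by \cref{thm:delooping:mf} the \(\hslash\)-shift at a vertex is the same whether the summand is \(M_\arcD\) or \(M_\arcT\).) The identity \(q+h\equiv\hslash+\varepsilon+c\) is then true, but it only yields \(q(b)\equiv\varepsilon(g')-\varepsilon(g)\), which every pure element of \(\B\) satisfies automatically. Since the degree \(\hslash^1q^{3(i-1)}h^i\) always has \(q+h\equiv\hslash\), no parity identity for the combination \(q+h\) can detect the parity of \(i\). The argument cannot close as set up.

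\textbf{What is missing is to use one grading at a time.} Your refined identity is exactly the sum of two correct identities, \(q\equiv\varepsilon+c_1\) and \(h\equiv\hslash+c_2\), and summing them cancels the information you need.

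The paper uses \(q\) alone.
\begin{itemize}
\item By \cref{thm:delooping:mf}, the \(q\)-parity of a generator at \(v\) is \(|v|+m_v\) up to a global constant, where \(m_v\) is the number of circles in \(\Diag_T(v)\).
\item Along an edge of the cube, \(m_v\) changes parity if and only if \(o(\Diag_T(v))\), i.e.\ the idempotent, is unchanged.
\item The cube is connected, so \(q\)-parity of generators is a function of the idempotent.
\item A pure element of \(\B\) has even \(q\)-grading if and only if it preserves the idempotent.
\end{itemize}
Hence every nonzero component of the differential has even \(q\)-degree, which forces \(3(i-1)\) to be even.

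Alternatively, you can keep \(h\) and \(\hslash\) and drop \(q\). By \cref{thm:delooping:mf}, the \(\hslash\)-shift minus the \(h\)-shift at every vertex is \(n+\mathtt{i}\), which is constant modulo 2. Since \(\Binf\) is concentrated in \(\hslash=h=0\), every arrow then has \(h\)-degree congruent to its \(\hslash\)-degree \(1\), so \(i\) is odd.
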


\begin{proof}
	The quantum gradings of generators in \(C(\Diag_T)^{\Binf}\) have the same parity if and only if they belong to the same idempotent. 
	This follows from the construction as a cube of resolutions: 
	By \cref{thm:delooping:mf}, the modulo 2 quantum grading of a generator at a vertex \(v\in\{0,1\}^n\) is equal to \(q_v= |v|-n_-+m_v\), 
	where \(m_v\) is the number of circles in the diagram \(\Diag_T(v)\). 
	If two vertices \(v,w\in\{0,1\}^n\) are related by a single edge, 
	the difference \(m_v-m_w\) is odd
	if and only if 
	\(o(\Diag_T(v))=o(\Diag_T(w))\).
	Thus the difference 
	\(q_v-q_w\equiv 1+m_v-m_w \mod 2\)
	between the modulo 2 quantum gradings 
	of two generators at the vertices \(v\) and \(w\), respectively, 
	is even
	if and only if their idempotents agree.
	Since the cube of resolutions is connected, the claim follows.
	Moreover, the quantum grading of a pure algebra element in \(\Binf\) is even if and only if it starts and ends on the same idempotent. 
	Thus the quantum grading of any (non-zero) homogeneous endomorphism of \(C(\Diag_T)^{\Binf}\) must be even. 
	The quantum grading of \(d_i\) is \(3(i-1)\). So if \(d_i\neq0\), \(3(i-1)\) is even, and thus \(i\) must be odd.
\end{proof}

\begin{proposition}\label{prop:AddingU}
	Let \((C,d)\) be 
	a bifiltered complex over \(\Binf\) with \(d_i=0\) for even \(n\). 
	Then there exists a bigraded twisted complex
	\(\operatorname{Add}_U(C,d)=(C_U,d_U)\) 
	over the category \(\BinfU\) 
	such that
	\(C=C_U\) 
	and 
	\(d_1=d_U \mod U\). 
\end{proposition}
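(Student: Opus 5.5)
The plan is to define \(d_U\) by inserting powers of \(U\) that exactly compensate the grading defect of each filtered piece. Concretely, I set \(C_U=C\) and
\[
d_U=\sum_{k\geq0}U^{k}\,d_{2k+1},
\]
regarded as an endomorphism of \(C\otimes\field[U]\) over \(\BinfU\). Since \(\gr(d_{2k+1})=\hslash^1q^{6k}h^{2k+1}\) and \(\gr(U)=h^{-2}q^{-6}\), each summand \(U^kd_{2k+1}\) has grading \(q^0h^1\). Hence \(d_U\) is homogeneous of the required bidegree, and \(d_U\equiv d_1 \bmod U\) holds by construction. The even pieces vanish by hypothesis, so nothing else needs to be placed.

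The substantive step is to verify the twisted complex relation \(\sum_{m}\mu^{\BinfU}_{m}(d_U,\dots,d_U)=0\). By \cref{rem:twisted_complex}, only even \(m=2j\) contribute, since \(\mu_{\mathrm{odd}}=0\) for both \(\Binf\) and \(\BinfU\). Expanding multilinearly, a term is \(\mu^{\BinfU}_{2j}(U^{k_{2j}}d_{2k_{2j}+1},\dots,U^{k_1}d_{2k_1+1})\). By \eqref{eq:def:BinfU} this equals
\[
(-1)^{j-1}U^{(j-1)+\sum k_i}\,\mu^{\Binf}_{2j}(d_{2k_{2j}+1},\dots,d_{2k_1+1}).
\]

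I then compare this with the relation for \((C,d)\) over \(\Binf\), split by the bigrading. The \(\Binf\)-relation \(\sum\mu^{\Binf}_{2j}(d,\dots,d)=0\) decomposes into homogeneous components. Since \(\mu^{\Binf}_{2j}\) has \(q\)-degree \(6(j-1)\) (by \cref{rem:quantum_grading_on_Binf}) and \(h\)-degree \(2-2j\) in the lifted \(h\)-grading on \(C\) provided by the bifiltration, the terms with a fixed value \(N=(j-1)+\sum k_i\) all land in the same homogeneous piece. Therefore, for each \(N\), the sum of \(\mu^{\Binf}_{2j}(d_{2k_{2j}+1},\dots)\) over tuples with \((j-1)+\sum k_i=N\) vanishes. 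The \(U\)-twisted relation is \(\sum_N U^N(\pm\text{ that sum})\), so it vanishes as well, provided the sign \((-1)^{j-1}\) is constant on each homogeneous piece.

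The sign is the main obstacle. The factor \((-1)^{j-1}\) depends on \(j\), not only on \(N\), so the grading argument alone does not suffice. I would first try to absorb the sign by rescaling, replacing \(d_{2k+1}\) by \(\epsilon_k d_{2k+1}\) with \(\epsilon_k=(-1)^{k}\) or a similar choice. With this choice the product \(\prod\epsilon_{k_i}\cdot(-1)^{j-1}=(-1)^{\sum k_i+j-1}=(-1)^N\) depends only on \(N\), which restores the cancellation. The remaining checks are the following. First, the Seidel sign \(\triangleleft\) from \cref{def:additive_enlargement_Seidel} must agree between \(\Sigma\Binf\) and \(\Sigma\BinfU\); this holds because \(U\) is even and shifts \(h\) by \(-2\). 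Second, the convergence and boundedness of the sums must be verified, which is automatic since \(C\) is finitely generated and \(q\)-bounded.
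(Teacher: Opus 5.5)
Your final choice \(d_U=\sum_{k}(-U)^k d_{2k+1}\) is exactly the paper's, and your verification follows the same logic: the paper observes that along any path from \(x_0\) to \(x_{2j}\) the exponent \(N=\tfrac12\bigl(h(x_{2j})-h(x_0)\bigr)-1\) depends only on the endpoints, while you isolate the same separately vanishing pieces of the \(\Binf\)-relation by \(q\)-degree, which equals \(6N\). One small correction: in this section \(\Binf\) carries the trivial \(h\)-grading, so \(\mu^{\Binf}_{2j}\) does not have \(h\)-degree \(2-2j\) (a term \(\mu^{\Binf}_{2j}(d_{2k_{2j}+1},\dots,d_{2k_1+1})\) has \(h\)-degree \(2N+2\)); your argument does not need that claim, because the \(q\)-grading alone suffices.
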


\begin{proof}
	Set
	\(d_U=\sum_{i=0}^\infty (-U)^i \cdot d_{2i+1}\) 
	so that \(\gr(d_U)=q^0\hslash^1 h^1\).
	It remains to check the structure relation.
	Note that since \(d_i=0\) for even indices \(i\) 
	the \(h\)-grading of \(d\) modulo 2 
	is equal to the \(\hslash\)-grading of \(d\).  
	Also, 
	the \(\hslash\)-grading of \(\Binf\) is identically zero 
	and 
	\(\BinfU\) is supported only in even degrees.
	So by \cref{rem:twisted_complex} the signs \(\triangleleft\) in the formula for \(\mu^{\Sigma\Binf}_k\) and \(\mu^{\Sigma\BinfU}_k\) are the same.
	We now check the formula in said remark. 
	Consider a sequence of arrows 
	\[
	\begin{tikzcd}[column sep=large]
		x_0
		\arrow{r}{a_1}
		&
		x_1
		\arrow{r}%
		&
		\cdots
		\arrow{r}%
		&
		x_{2n-1}
		\arrow{r}{a_{2n}}
		&
		x_{2n}
	\end{tikzcd}
	\]
	in \((C,d)\). 
	The corresponding sequence in \((C_U,d_U)\) is 
	\[
	\begin{tikzcd}[column sep=large]
		x_0
		\arrow{r}{(-U)^{h_1} a_1}
		&
		x_1
		\arrow{r}%
		&
		\cdots
		\arrow{r}%
		&
		x_{2n-1}
		\arrow{r}{(-U)^{h_{2n}} a_{2n}}
		&
		x_{2n}
	\end{tikzcd}
	\]
	where the integers \(h_i\) are determined by
	\(
	1+2h_i
	=
	h(
	\begin{tikzcd}
		x_{i-1}
		\arrow{r}{a_i}
		&
		x_i
	\end{tikzcd}
	)
	=
	h(x_i)-h(x_{i-1})\)
	for all \(i=1,\dots,2n\).
	Therefore
	\(\mu^{\BinfU}_{2n}\) 
	applied to the second sequence gives
	\((-U)^N\mu^{\Binf}_{2n}\)
	applied to the first sequence, where
	\[
	N
	=
	\sum_{i=1}^{2n}h_i+(n-1)
	=
	\tfrac{1}{2}(h(x_{2n})-h(x_0)-n)+(2n-1)
	=
	\tfrac{1}{2}(h(x_{2n})-h(x_0))-1
	\]
	In particular, the exponent \(N\) only depends on the start and endpoint of the sequence. 
	Thus the structure relation for \((C,d)\) implies the structure relation for \((C_U,d_U)\). 
\end{proof}

By \cref{lem:complex_over_Binf_has_no_even_diffs}, 
the complex \(C(\Diag_T)^{\Binf}\) satisfies the hypotheses of \cref{prop:AddingU}, and we  
define a twisted complex 
\(
\DD(\Diag_T)^{\BinfU}
=
\operatorname{Add}_U(C(\Diag_T)^{\Binf})
\).

\begin{remark}\label{prop:matrix_factorization_is_an_invariant}
	While we only need the existence of extensions, it seems reasonable to expect that the chain homotopy type of 
	\(
	\DD(\Diag_T)^{\BinfU}
	\)
	is a tangle invariant. The question of invariance is pursued in \cite{TomasInvariance}.
\end{remark}

\subsection{Delooping the cube of resolutions in the cobordism category}
\label{sec:no_wrapping_around_special:delooping_cobs}

	We continue to view 
	any tangle diagram of a 4-ended tangle 
	as the tangle diagram \(|\Diag|\) 
	associated with labelled diagram \(\Diag\), 
	as in \cref{sec:mfs}. 
	Thus, the objects of \(\Cobb\)
	are precisely those tangle diagrams \(|\Diag|\) 
	associated with labelled diagram \(\Diag\)
	whose labelling does not contain any element \(+\) or \(-\). 
	This perspective allows us to reuse some of the notation 
	from \cref{def:induced_saddle_maps:mf}.

\begin{definition}\label{def:induced_saddle_maps:B}
	Define the morphism 
	\[
	\mathcal{S}^{v}_{v'}\co
	V(\Diag_T(v))
	\otimes_{\field[\varG]}
	\omega(|o(\Diag_T(v))|)
	\rightarrow
	V(\Diag_T(v'))
	\otimes_{\field[\varG]}
	\omega(|o(\Diag_T(v'))|)
	\]
	in the additive enlargement \(\Sigma\B\) of \(\B\)
	as the image of the map 
	\(\mathcal{S}^{v}_{v'}\) 
	from \cref{def:induced_saddle_maps:mf}
	under the functor \(\HMS\), 
	where \(\omega\) is the isomorphism 
	between \(\B\) and \(\End_{\Cobb}(\Li\oplus\Lo)\)
	from \cref{thm:OmegaFullyFaithful}. 
	Overloading notation further, 
	define the morphism 
	\[
	\mathcal{S}^{v}_{v'}\co
	V(\Diag_T(v))
	\otimes_{\field[\varG]}
	|o(\Diag_T(v))|
	\rightarrow
	V(\Diag_T(v'))
	\otimes_{\field[\varG]}
	|o(\Diag_T(v'))|
	\]
	in the additive enlargement \(\Sigma\Cobb\) of \(\Cobb\)
	as the image of the above map 
	\(\mathcal{S}^{v}_{v'}\) 
	under the functor induced
	by the inverse of the isomorphism \(\omega\).
\end{definition}

In other words,
the morphism \(\mathcal{S}^{v}_{v'}\) 
in \(\Sigma\B\)
is obtained from the rules 
for the morphism \(\mathcal{S}^{v}_{v'}\) 
between matrix factorizations
in \cref{def:induced_saddle_maps:mf} 
by replacing \(\sB\), \(\sW\), \(\dB\), and \(\dW\) 
by \(\Sb\), \(\Sw\), \(\Db\), and \(\Dw\), 
respectively. 
For the morphism \(\mathcal{S}^{v}_{v'}\) 
in \(\Sigma\Cobb\), 
these algebra elements are simply converted 
into cobordisms using \(\omega\).

\begin{corollary}\label{cor:delooping:mf}
	The complex \(\DD(\Diag_T)^{\BinfU}\) 
	is \(n\)-cubical
	with cubical structure
	\[
	(\DD(\Diag_T))_v
	=
	h^{|v|-n_-}
	q^{|v|+n_+-2n_-}
	V(\Diag_T(v))
	\otimes_{\field[\varG]}
	\omega(|o(\Diag_T(v'))|)
	\]
		for every vertex \(v\in\{0,1\}^n\).
	Moreover, 
	if \(v\) and \(v'\) are related by an edge \(v\rightarrow v'\), 
	then the differential is equal to \(\pm\mathcal{S}^{v}_{v'}\).
\end{corollary}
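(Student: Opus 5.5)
The plan is to trace the cubical structure through each step of the construction in \cref{fig:overview_mf2twcx}, since each step either preserves it or acts vertexwise.

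\textbf{Step 1: the cubical structure on \(M^s(\Diag_T)\).} By \cref{thm:delooping:mf}, \(M^s(\Diag_T)\) is \(n\)-cubical. The summand at vertex \(v\) is
\[
h^{|v|-n_-}\hslash^{|v|+n_++\mathtt{i}}q^{|v|+n_+-2n_-}\,V(\Diag_T(v))\otimes_{\field[\varG]} M(o(\Diag_T(v))).
\]
As a matrix factorization, this is a direct sum of shifted copies of \(M_\arcD\) or \(M_\arcT\), one for each standard basis element of \(V(\Diag_T(v))\). The edge components are homotopic to \(\pm\mathcal{S}^v_{v'}\).

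\textbf{Step 2: pulling back to \(\A\).} Pull back along the functor \(\mathcal{F}\) of \cref{lem:criterion_is_in_image_of_H}. The twisted complex \(C(\Diag_T)^{\A}\) has one generator per summand, and its differential is literally the collection of components of \(d^s\). The cubical decomposition is therefore inherited. Each component \(v\to v'\) with \(v\le v'\) is an element of \(\A\), and on edges it is homotopic to \(\pm\mathcal{S}^v_{v'}\) written in terms of \(\sW,\sB,\dW,\dB\) and identities. At this point one has to check whether the edge components are equal to \(\pm\mathcal{S}^v_{v'}\) on the nose or only up to homotopy. If only up to homotopy, I would modify \(C(\Diag_T)^{\A}\) by a twisted-complex isomorphism. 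This is a cubical clean-up: conjugate by \(1+h\) with \(h\) strictly cubical, proceeding by height as in \cref{lem:clean-up:improved}. This replaces each edge map by its chosen representative while preserving the cubical form. Higher components along longer diagonals \(v<v'\) may change, but the statement only concerns edges.

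\textbf{Step 3: applying \(\HMS\).} Apply \(\HMS\), which is induced by the \(q\)-filtered quasi-isomorphism \(\Phi\) of \cref{thm:quasi-iso}. On objects it sends \(M_\arcD\mapsto\circ\) and \(M_\arcT\mapsto\bullet\), so generators and their vertex decomposition are unchanged. The differential of the image is \(\sum_k \Phi^k(\delta,\dots,\delta)\). Since \(\delta\) is cubical (strictly increasing in \(v\)), every term \(\Phi^k(\delta,\dots,\delta)\) is again cubical. Its edge components come only from \(k=1\), because a product of \(k\ge2\) cubical components raises height by at least \(k\). So the edge components are \(\Phi^1(\pm\mathcal{S}^v_{v'})\). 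By \cref{thm:quasi-iso}, \(\Phi^1\) sends the classes of \(\sW,\sB,\dW,\dB\) to \(\Sw,\Sb,\Dw,\Db\). Here I must ensure that \(\Phi^1\) maps the specific representatives to the pure elements exactly, not merely up to \(\mu_1\)-boundaries. Since \(\Binf\) has \(\mu_1=0\), \(\Phi^1\) factors through homology, so \(\Phi^1(\sW)=\Sw\) and similarly for the others. Products such as \(\sB\sW\) go to \(\Phi^1(\sB\sW)=[\sB][\sW]=\Sb\Sw\), using that \(\Phi^1\) is multiplicative on homology and that \(\mu_1=0\) in the target. The identity maps \(\iota\) go to idempotents, by strict unitality, which I would need to assume or arrange.

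\textbf{Step 4: \(\operatorname{Add}_U\) and gradings.} \(\operatorname{Add}_U\) keeps generators and replaces \(d_{2i+1}\) by \((-U)^i d_{2i+1}\). Edge components have \(h\)-grading \(1\), so they are \(d_1\) terms and are unchanged. The decomposition stays cubical. Finally, the bigradings follow by discarding \(\hslash\) and reading off the \(h,q\) shifts from Step 1.

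\textbf{Main obstacle.} I expect the hardest step to be Step 3: justifying that \(\Phi^1\) sends \(\mathcal{S}^v_{v'}\) exactly to the corresponding element of \(\Sigma\B\), including the composite \(\sB\sW\) and the unit. This is the step I would verify most carefully.
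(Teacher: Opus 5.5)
Your proposal is correct and follows the paper's route: the paper's proof simply reads off the cubical structure of \(M^s(\Diag_T)\) from \cref{thm:delooping:mf} through \(\mathcal{F}\), \(\HMS\) and \(\operatorname{Add}_U\). There the edge components are \(d_1\)-terms, which are unchanged modulo \(U\) by \cref{prop:AddingU}. You can drop both the clean-up in your Step~2 and the appeal to strict unitality: since \(\Phi^1\circ\mu_1^{\A}=\pm\mu_1^{\Binf}\circ\Phi^1=0\), edge components that are only homotopic to \(\pm\mathcal{S}^v_{v'}\) are already sent to \(\pm\mathcal{S}^v_{v'}\), and \(H\Phi\), being an isomorphism of unital algebras, preserves units. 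This lets you work with the complex exactly as constructed, and you never have to check that a modification preserves the bifiltration needed for \(\operatorname{Add}_U\).
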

\begin{proof}
	This is simply a reformulation of \cref{thm:delooping:mf} 
	in terms of \(\DD(\Diag_T)^{\BinfU}\) 
	instead of \(M^s(\Diag_T)\). 
	Note that the second part of the statement uses the fact that
	the differential of
	\(
	\DD(\Diag_T)^{\BinfU}
	\)
	modulo \(U\) coincides with the \(d_1\)-differential of \(C(\Diag_T)^{\Binf}\) (\cref{prop:AddingU}). 
\end{proof}

The following is the analogue of \cref{lem:delooping:mf} in the setting of Bar-Natan's cobordism category.%

\begin{lemma}\label{lem:delooping:cob}
	For every \(v\in\{0,1\}^n\) there exists an isomorphism
	\[
	\psi_v
	\co 
	|\Diag_T(v)|
	\rightarrow
	V(\Diag_T(v))
	\otimes_{\field[\varG]}
	|o(\Diag_T(v))|
	\]
	of objects in the additive enlargement of \(\Cobb\) 
	such that, for every edge \(v\rightarrow v'\) in \(\{0,1\}^n\), 
	the diagram
	\[
	\begin{tikzcd}
		{|\Diag_T(v)|}
		\arrow{d}{\psi_v}
		\arrow{r}{d^{v}_{v'}}
		&
		{|\Diag_T(v')|}
		\arrow{d}{\psi_{v'}}
		\\
		V(\Diag_T(v))
		\otimes_{\field[\varG]}
		|o(\Diag_T(v))|
		\arrow{r}{\mathcal{S}^{v}_{v'}}
		&
		V(\Diag_T(v'))
		\otimes_{\field[\varG]}
		|o(\Diag_T(v'))|
	\end{tikzcd}
	\]
	commutes.
\end{lemma}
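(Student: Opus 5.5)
The plan is to build \(\psi_v\) by iterating the Delooping Lemma (\cref{lem:delooping}) over the closed components of \(|\Diag_T(v)|\), and then to check the square case by case, matching the cases of \cref{def:induced_saddle_maps:B}.

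First, define \(\psi_v\). Order the closed components \(c_1,\dots,c_m\) of \(|\Diag_T(v)|\). Deloop each \(c_i\) using \cref{lem:delooping}. The summand \(q^{-1}\varnothing\), reached by the undotted cap \(\DiscL\), corresponds to the basis element \(\mu_i\). The summand \(q^{+1}\varnothing\), reached by \(\DiscLdot+\varG\cdot\DiscL\), corresponds to \(1_i\). With this convention, \(V(c_i)=\field[\varG]\langle 1_i,\mu_i\rangle\) is identified with the delooped pair of summands, and the gradings match \(q(\mu)=-2\). Tensoring these identifications over all components gives the isomorphism \(\psi_v\) onto \(V(\Diag_T(v))\otimes_{\field[\varG]}|o(\Diag_T(v))|\). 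The inverse sends \(1_i\) to the dotted cup and \(\mu_i\) to the undotted cup; up to the sign and normalisation convention, to be fixed once and for all, this is read off from the lemma. Because the delooping maps are product cobordisms away from \(c_i\), \(\psi_v\) is compatible with the identification \(I\) of components not meeting the node \(a^v_{v'}\). So it suffices to compute \(\psi_{v'}\circ d^v_{v'}\circ\psi_v^{-1}\) locally, in the part of the picture involving the components touched by the saddle at \(a=a^v_{v'}\).

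Second, carry out the local case analysis. For each case, express the saddle cobordism composed with cups and caps in \(\Cobb\), reduce using the relations \eqref{eq:cobb_relations} (neck-cutting, the sphere relations, \(\planedotdot=-\varG\cdot\planedot\), and the vanishing of a dot next to \(\ast\)), and read the result back through \(\omega\) (\cref{thm:OmegaFullyFaithful}).
\begin{enumerate}
\item Circle--circle merge and split. This is the standard Frobenius algebra \(\mathcal F_7=\field[\varG,\mu]/(\mu^2+\varG\mu)\) computation, and it yields the stated multiplication and comultiplication formulas.
\item Merge of \(c_i\) with \(o^*\). The dot on \(o^*\) vanishes, so only \(1_i^*\) survives.
\item Merge of \(c_i\) with \(o\). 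Capping with the undotted cap gives the identity, \(\iota\). Capping with the dotted cap gives the dot morphism on the \(o\)-component, which \(\omega\) sends to \(\Dw\) or \(\Db\) according to \(\conn{\Diag}\).
\item Splits. These are dual to the merges. The new feature is the split of \(c_i\) from \(o\): there, neck-cutting the tube between \(o\) and \(c_i\) produces the term \(1_i\otimes S_\bullet S_\circ\) (respectively \(S_\circ S_\bullet\)). To see this, observe that the saddle followed by cupping \(c_i\) off is a composition of two saddles through the other crossingless tangle.
\item Change of connectivity. No closed component is involved, \(d^v_{v'}\) is a single saddle, and \(\omega\) sends it to \(\Sw\) or \(\Sb\).
\end{enumerate}

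I expect the main obstacle to be sign and convention bookkeeping rather than any conceptual difficulty. Two issues need care. The first is that the choice between dotted and undotted caps in \(\psi_v\) must be made so that, simultaneously, the merge with \(o^*\) gives \(1_i^*\) and the split from \(o^*\) gives \(\mu_i+\varG 1_i\). The second is the identification \(\varG=S^2-D\) in \(\B\), which is needed to match the split-from-\(o\) case.

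I would first fix conventions using the split and merge with \(o^*\), since these are the cases forced by the relation \(\planedotstar=0\). I would then verify that the remaining cases come out exactly as in \cref{def:induced_saddle_maps:mf}, with \(\sB,\sW,\dB,\dW\) replaced by \(\Sb,\Sw,\Db,\Dw\). Commutativity holds on the nose, so the possible overall signs on edges are absorbed by the sign convention on \(d^v_{v'}\).
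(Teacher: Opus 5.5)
Your proposal is correct and follows the paper's proof: \(\psi_v\) is the tensor product over closed components of the left-hand delooping maps of \cref{lem:delooping}, and commutativity is a local check. The paper organises that check via the basis of simple cobordisms and the dictionary \(\DiscL\leftrightarrow\mu_i^*\), \(\DiscLdot+\varG\cdot\DiscL\leftrightarrow 1_i^*\), \(\DiscR\leftrightarrow 1_j\), \(\DiscRdot\leftrightarrow\mu_j\), rather than an explicit case list. One slip: by that lemma the inverse sends \(\mu_i\) to the dotted cup \(\DiscRdot\) and \(1_i\) to the undotted cup \(\DiscR\), not the other way round; this is the convention your merge-with-\(o^*\) computation (only \(1_i^*\) survives) actually uses.
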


\begin{proof}
	By \cref{lem:delooping},
	we have inverse isomorphisms
	locally defined by 
	\[
	\begin{tikzcd}[row sep=-0.2cm, column sep=2cm]%
		&
		q^{1}h^0 (\mu_i\otimes\varnothing)
		\arrow{dr} [near start] {\DiscRdot}%
		\\
		\Circle_i
		\arrow{ur} [near end] {\DiscL}%
		\arrow{dr} [swap, near end] {\DiscLdot+\varG\cdot\DiscL}%
		&&
		\Circle_i
		\\
		&
		q^{1}h^0(1_i\otimes\varnothing)
		\arrow{ur} [swap, near start] {\DiscR\phantom{+\varG\cdot\DiscL}}%
	\end{tikzcd}
	\]
	for each closed component \(c_i\) of \(|\Diag_T(v)|\).
	The isomorphism \(\psi_v\) is defined as 
	the tensor product of the left morphism 
	over all \(c_i\).
	
	To prove the second part of the statement recall the following terminology
	from \cite{KWZ}: A cobordism representing an element 
	in a morphism space in \(\Cobb\) is simple
	if all its components are disks, 
	the distinguished component (marked by \(\ast\)) 
	carries no dot, and 
	any other component carries at most one dot. 
	By \cite[Proposition~4.15]{KWZ}
	simple cobordisms form a 
	\(\field[\varG]\)-linear basis 
	of any morphism space in \(\Cobb\).
	Therefore, 
	the morphism space
	\(\Cobb(|\Diag_T(v)|,|\Diag_T(v')|)\) 
	can be written as the tensor product of 
	\(\Cobb(|o(\Diag_T(v))|,|o(\Diag_T(v'))|)\) 
	over \(\field[\varG]\) with
	\begin{equation}\label{eqn:cob:only_circles}
		\Big(
		\bigotimes_{i=1}^{n}
		\Cobb\big(\Circle_i,\varnothing\big)
		\Big)
		\otimes_{\field[\varG]}
		\Big(
		\bigotimes_{j=1}^{m}
		\Cobb\big(\varnothing,\Circle_j\big)
		\Big)
	\end{equation}
	On each of these tensor factors, 
	we modify the basis given by simple cobordisms as follows:
	\begin{align*}
		\Cobb\big(\Circle_i,\varnothing\big)
		&= 
		\field[\varG]
		\Big\langle
		\prescript{}{i}{\DiscLdot}+\varG\cdot\prescript{}{i}{\DiscL}, \prescript{}{i}{\DiscL}
		\Big\rangle 
		\quad\text{for }i=1,\dots,n
		\\
		\Cobb\big(\varnothing,\Circle_j\big)
		&= 
		\field[\varG]
		\Big\langle
		\DiscR_j, \DiscRdot_j
		\Big\rangle 
		\quad\text{for }j=1,\dots,m
	\end{align*}
	This new basis has the advantage 
	that it is compatible with the isomorphism \(\psi_v\) 
	in the sense that the basis on the space of \(\field[\varG]\)-linear maps
	\[
	V(\Diag_T(v))
	\rightarrow
	V(\Diag_T(v'))
	\]
	given by tensor products of 
	\(\mu_i^*\), \(1_i^*\), \(\mu_j\), and \(1_j\)
	corresponds to the new basis on \eqref{eqn:cob:only_circles}
	via the following dictionary:
	\[
	\prescript{}{i}{\DiscL} \leftrightarrow \mu_i^*
	\qquad
	\prescript{}{i}{\DiscLdot} +\varG\cdot\prescript{}{i}{\DiscL} \leftrightarrow 1_i^*
	\qquad
	\DiscR_j \leftrightarrow 1_j
	\qquad
	\DiscRdot_j \leftrightarrow \mu_j
	\]
	It remains to verify that 
	when
	\(\Diag_T(v)\) and \(\Diag_T(v')\) 
	are related by a single saddle cobordism \(C\), 
	the map \(\psi_{v'}^{-1}\circ C\circ \psi_{v}\) 
	agrees with \(\mathcal{S}^{v}_{v'}\) in \(\Sigma\Cobb\). 
	Given the dictionary above 
	this computation is straightforward and 
	similar to the one carried out 
	in \cite[Proposition~4.31]{KWZ}.
\end{proof}

\begin{theorem}\label{thm:delooping:cob}
	The chain complex \(\KhT{\Diag_T}\) is chain homotopy equivalent to a strictly \(n\)-cubical chain complex \(\KhT{\Diag_T}^s\) with cubical structure
	\[
	(\KhT{\Diag_T}^s)_v
	=
	\KhT{\Diag_T}^s_v
	=
	h^{|v|-n_-}q^{|v|+n_+-2n_-}
	V(\Diag_T(v))
	\otimes
	|o(\Diag_T(v))|
	\]
for each \(v\in\{0,1\}^n\).	The differential along each edge \(v\rightarrow v'\) is equal to \(\pm\mathcal{S}^{v}_{v'}\).
\end{theorem}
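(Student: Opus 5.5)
The plan is to mimic the proof of \cref{thm:delooping:mf}, but now in the additive enlargement of \(\Cobb\), using \cref{lem:delooping:cob} in place of \cref{lem:delooping:mf}. Since the isomorphisms \(\psi_v\) of \cref{lem:delooping:cob} are genuine isomorphisms rather than special deformation retracts, no homotopy corrections are needed. We expect to obtain an isomorphism of chain complexes, which is stronger than a homotopy equivalence.

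Concretely, define \(\KhT{\Diag_T}^s\) to have underlying object \(\bigoplus_v h^{|v|-n_-}q^{|v|+n_+-2n_-}V(\Diag_T(v))\otimes|o(\Diag_T(v))|\). Take as differential the conjugate \(\Psi\circ d\circ\Psi^{-1}\), where \(\Psi=\bigoplus_v\psi_v\) and \(d\) is the differential of \(\KhT{\Diag_T}\). Because \(\KhT{\Diag_T}\) is strictly \(n\)-cubical, with components \(d^v_{v'}\) only along edges (and \(d^v_v=0\)), and because \(\Psi\) is diagonal with respect to the cube decomposition, the conjugated differential is again strictly \(n\)-cubical. Its component along an edge \(v\to v'\) is \(\psi_{v'}\circ d^v_{v'}\circ\psi_v^{-1}\).

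Next I would identify this component. By the commutative square in \cref{lem:delooping:cob}, applied to the saddle morphism underlying \(d^v_{v'}\), it equals \(\mathcal{S}^v_{v'}\). The sign convention making each face of the cube anticommute multiplies \(d^v_{v'}\) by \(\pm1\), which is why the statement reads \(\pm\mathcal{S}^v_{v'}\).

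I also need to check gradings: the delooping isomorphism in \cref{lem:delooping} preserves \(q\), with the shifts \(q^{\pm1}\) absorbed into \(V(c)=q^1\CoeffRing[\varG]\langle1,\mu\rangle\). The cube shifts are then inherited verbatim from \(\KhT{\Diag_T}_v\).

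The only real content is therefore \cref{lem:delooping:cob}, which is assumed. The main point to watch is that the differential \(d^v_{v'}\) of \(\KhT{\Diag_T}\) is a product cobordism outside the crossing, tensored with a saddle, so that the lemma's square applies on the nose. I would also make sure that the chosen bases and the signs \(\pm\) coming from the ordering of crossings are compatible, which only affects the unspecified sign.
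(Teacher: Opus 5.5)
Your proposal is correct and matches the paper's argument, which simply applies the isomorphisms \(\psi_v\) of \cref{lem:delooping:cob} at every vertex of the cube; this is exactly your conjugation by \(\Psi=\bigoplus_v\psi_v\), with the edge components identified as \(\pm\mathcal{S}^v_{v'}\) via the lemma's commuting square. As you note, this in fact gives an isomorphism of complexes, not merely a chain homotopy equivalence.
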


\begin{proof}
	Apply \cref{lem:delooping:cob} to every vertex of the cube for \(\KhT{\Diag_T}\). 
\end{proof}

\begin{corollary}\label{cor:identification_of_type_D_structures}
	Let \(\BinfU\rightarrow \B\) be the quotient homomorphism setting \(U=0\). 
	Then \(\DD(\Diag_T)^{\BinfU}|_{U=0}\) is isomorphic to \(\KhT{\Diag_T}^s\).
\end{corollary}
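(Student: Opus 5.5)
The plan is to compare the two cubical structures vertex by vertex and edge by edge, and then to argue that the result of setting \(U=0\) is literally the delooped Bar-Natan complex under \(\omega\).

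First, I will reduce \(\DD(\Diag_T)^{\BinfU}|_{U=0}\) to the linear part of the differential. By construction, \(\DD(\Diag_T)^{\BinfU}=\operatorname{Add}_U(C(\Diag_T)^{\Binf})\), and its differential is \(d_U=\sum_i(-U)^i d_{2i+1}\). Setting \(U=0\) therefore leaves exactly \(d_1\), as \cref{prop:AddingU} records. The underlying generators are unchanged. Moreover, the higher operations of \(\BinfU\) all carry a positive power of \(U\), so the quotient \(\BinfU\to\B\) turns the twisted complex into an honest type~D structure over \(\B\) with differential \(d_1\). Taking \(h\)- and \(q\)-gradings into account, \(d_1\) is precisely the part of the differential that raises \(h\) by one and preserves \(q\).

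Second, I will identify the objects. By \cref{cor:delooping:mf}, the vertex \(v\) of \(\DD(\Diag_T)^{\BinfU}\) is
\(h^{|v|-n_-}q^{|v|+n_+-2n_-}V(\Diag_T(v))\otimes_{\field[\varG]}\omega(|o(\Diag_T(v))|)\).
By \cref{thm:delooping:cob}, the vertex \(v\) of \(\KhT{\Diag_T}^s\) is \(h^{|v|-n_-}q^{|v|+n_+-2n_-}V(\Diag_T(v))\otimes|o(\Diag_T(v))|\). Applying \(\omega\) from \cref{thm:OmegaFullyFaithful} identifies these, and the bigradings agree. One point needs care: in \cref{thm:delooping:mf} the \(\hslash\)-shift \(\mathtt{i}_v\) is constant mod 2, so it only contributes a global shift. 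It is discarded because only the \(h,q\) gradings survive. I would check that this global shift is indeed absorbed into the convention.

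Third, I will compare the edge maps. \Cref{cor:delooping:mf} says the component of \(d_1\) along an edge \(v\to v'\) is \(\pm\mathcal{S}^v_{v'}\) in \(\Sigma\B\). Here \(\mathcal{S}^v_{v'}\) is the image under \(\HMS\) of the matrix-factorization map, and by \cref{thm:quasi-iso} this is obtained by replacing \(\sW,\sB,\dW,\dB\) with \(\Sw,\Sb,\Dw,\Db\). By \cref{def:induced_saddle_maps:B}, this is \(\omega\) applied to the cobordism-level \(\mathcal{S}^v_{v'}\), which by \cref{thm:delooping:cob} is the edge differential of \(\KhT{\Diag_T}^s\) up to sign. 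There are no diagonal components to handle: \(d_1\) has \(h\)-degree exactly \(1\), so within the \(n\)-cubical structure it only connects vertices with \(|v'|=|v|+1\), that is, edges.

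The main obstacle is the signs. Both complexes are strictly cubical with edge maps \(\pm\mathcal{S}^v_{v'}\), but the signs may differ. To handle this, I would show that both sign assignments make every square face anticommute. This holds for \(\KhT{\Diag_T}^s\) because its differential squares to zero, and for the \(U=0\) complex because \(d_1^2=0\), which is the \(U^0\) part of the structure relation. Next, I would argue that on each square face the compositions \(\mathcal{S}\circ\mathcal{S}\) along the two paths agree, by locality of the saddle maps, so the product of signs around each face must be odd in both cases. Two such sign assignments on the cube differ by a coboundary, since \(H^1\) of the cube vanishes. Rescaling generators at each vertex by \(\pm1\) then yields the isomorphism.

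A gap remains when a composite \(\mathcal{S}\circ\mathcal{S}\) vanishes, because then the face condition imposes nothing. In that case I would fall back on fixing signs face by face, using the Koszul sign rule that underlies both constructions, as in \cref{prop:mf_is_cubical}.
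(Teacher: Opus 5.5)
Your outline is the paper's argument. You reduce to the \(U^0\)-part \(d_1\), use the homological grading to kill non-edge components, and identify vertices and edge maps with those of \(\KhT{\Diag_T}^s\) up to sign via \cref{cor:delooping:mf,thm:delooping:cob}. You then remove the sign discrepancy by the cocycle/coboundary argument on the cube, which is \cref{lem:identify_cubes}.

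\textbf{The gap.} That sign argument needs the composite of two consecutive edge maps to be non-zero, and you leave exactly this case open. Your fallback, matching signs face by face through Koszul bookkeeping, would not close it, for two reasons:
\begin{itemize}
\item On the matrix-factorization side, \cref{lem:delooping:mf} identifies the edge maps with \(\mathcal{S}^v_{v'}\) only up to an uncontrolled sign. This sign involves \(\tau^{a^*}_{\Diag}\) and the sign \(\tau\) relating different arc systems in \cref{prop:retracts-independent-of-arc-system}.
\item The signs in \(\KhT{\Diag_T}\) come from an arbitrary sign assignment on the cube.
\end{itemize}
So there is no common sign convention to compare; the only available constraint is \(d^2=0\), which is vacuous on a face whose composites vanish.

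\textbf{The missing observation.} The bad case never occurs. In \(\KhT{\Diag_T}\), the composite of two consecutive edge maps is, up to sign, a composite of two saddle cobordisms at distinct crossings. Such a surface has no closed components and is a non-zero morphism in \(\Cobb\): neck-cutting writes it as a non-zero \(\field[\varG]\)-combination of simple cobordisms, which form a basis. By \cref{lem:delooping:cob}, \(\KhT{\Diag_T}^s\) is obtained from \(\KhT{\Diag_T}\) by conjugating with the vertex isomorphisms \(\psi_v\), so its composites of consecutive edge maps are non-zero as well.

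Non-vanishing in one complex suffices, so you do not need your separate claim that the \(U=0\) complex has odd faces. If the two complexes differ by edge signs \(\varepsilon\), comparing \(d^2=0\) on a face in both forces the product of the four signs to be \(+1\). Hence \(\varepsilon\) is a cocycle, therefore a coboundary, and rescaling by \(\pm1\) at each vertex gives the isomorphism.
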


\begin{proof}
	As a twisted complex over \(\BinfU\), 
	the differential of the complex
	\(\DD(\Diag_T)^{\BinfU}\)
	increases homological grading by \(1\). 
	Thus, for any two vertices \(v,v'\in\{0,1\}^n\), 
	the component \(d^v_{v'}\) 
	of the differential of \(\DD(\Diag_T)^{\BinfU}\)
	from \(\DD(\Diag_T)^{\BinfU}_v\)
	to \(\DD(\Diag_T)^{\BinfU}_{v'}\)
	has homological degree \(|v|-|v'|+1\). 
	If \(|v|>|v'|-1\), this degree is positive, 
	but \(\BinfU\) is supported in non-positive degrees only, 
	hence, \(d^v_{v'}=0\). 
	If \(|v|<|v'|-1\), the degree is negative, 
	so \(d^v_{v'}\) is a multiple of \(U\). 
	If \(|v|=|v'|-1\), the degree is zero, 
	so \(d^v_{v'}\) has entries in \(\B\). 
	We conclude that the only non-zero components 
	\(d^v_{v'}|_{U=0}\) 
	of the differential 
	of \(M(\Diag_T)^{\BinfU}|_{U=0}\)
	are those for which \(v\rightarrow v'\) is an edge, 
	and \(d^v_{v'}|_{U=0}=d^v_{v'}\). 
	So by \cref{cor:delooping:mf,thm:delooping:cob}, 
	the two complexes agree up to signs along the edges. 
	
	Observe that the composition of two consecutive edge maps in \(\KhT{\Diag_T}\) is a composition of two saddles, which is a non-zero cobordism. 
	Since \(\KhT{\Diag_T}^s\) is obtained from \(\KhT{\Diag_T}\) by composition with isomorphisms at the vertices, the composition of two consecutive edge maps in \(\KhT{\Diag_T}^s\) is also non-zero. 
	We now conclude with \cref{lem:identify_cubes}.
\end{proof}

\begin{lemma}\label{lem:identify_cubes}
	Any two strictly \(n\)-cubical complexes 
	whose cubical structures agree up to signs along the edge maps 
	are isomorphic 
	if the composition of any two consecutive edge maps in one complex is non-zero. 
\end{lemma}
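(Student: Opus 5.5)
The plan is to rescale generators by signs, choosing them inductively over the cube. Let \((C,d)\) and \((C',d')\) be the two strictly \(n\)-cubical complexes. They have the same underlying objects \(C_v=C'_v\). Their edge maps satisfy \(d'^v_{v'}=\epsilon(v\rightarrow v')\,d^v_{v'}\) for signs \(\epsilon(v\rightarrow v')\in\{\pm1\}\). The diagonal components \(d^v_v\) and \(d'^v_v\) agree; if necessary, I would first absorb a sign discrepancy there into the identification of the vertex objects. I seek an isomorphism of the form \(\Phi=\bigoplus_v \lambda_v 1_{C_v}\) with \(\lambda_v\in\{\pm1\}\). This \(\Phi\) is a chain map exactly when \(\lambda_{v'}\epsilon(v\rightarrow v')=\lambda_v\) for every edge \(v\rightarrow v'\). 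In other words, \(\epsilon\), viewed as a \(1\)-cochain on the cube with coefficients in \(\{\pm1\}\), must be the coboundary of the \(0\)-cochain \(\lambda\).

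Since the cube \([0,1]^n\) is contractible, its first cohomology with \(\{\pm1\}\)-coefficients vanishes. So it suffices to show that \(\epsilon\) is a cocycle, namely that for each \(2\)-dimensional face with vertices \(v\rightarrow v_1\rightarrow w\) and \(v\rightarrow v_2\rightarrow w\) we have
\[
\epsilon(v\rightarrow v_1)\,\epsilon(v_1\rightarrow w)=\epsilon(v\rightarrow v_2)\,\epsilon(v_2\rightarrow w).
\]

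To prove this, I would use that both differentials square to zero. In a strictly cubical complex, the component of \(d^2\) from \(C_v\) to \(C_w\), for \(w\) two steps above \(v\), equals \(d^{v_1}_w d^v_{v_1}+d^{v_2}_w d^v_{v_2}\). The diagonal terms do not contribute, because they would require a length-two path through a non-edge. Hence
\[
d^{v_1}_w d^v_{v_1}=-d^{v_2}_w d^v_{v_2},
\]
and the same identity holds for \(d'\). Substituting \(d'=\epsilon d\) gives
\[
\bigl(\epsilon(v\rightarrow v_1)\epsilon(v_1\rightarrow w)-\epsilon(v\rightarrow v_2)\epsilon(v_2\rightarrow w)\bigr)\,d^{v_1}_w d^v_{v_1}=0.
\]
By hypothesis the composite \(d^{v_1}_w d^v_{v_1}\) is non-zero. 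The coefficient in front is \(0\) or \(\pm2\).

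This last point is the main obstacle: in characteristic \(2\) (or over a ring with \(2\)-torsion) the identity does not force the sign difference to vanish. I would handle it as follows. Over such rings \(-1=1\), so all the signs coincide and the two complexes are literally equal. Otherwise \(2\) is invertible over the field \(\field\), so the coefficient must be \(0\).

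Once the cocycle condition holds, I would define \(\lambda\) explicitly: set \(\lambda_0=1\), and let \(\lambda_v\) be the product of the \(\epsilon\) along any monotone path from \(0\) to \(v\). The face condition makes this independent of the path, because any two monotone paths are related by swapping adjacent coordinate steps, and each such swap is exactly one square face. The map \(\Phi\) is then its own inverse, hence an isomorphism.
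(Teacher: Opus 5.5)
Your argument is correct and is essentially the paper's proof. On each square face, $d^2=0$ together with the non-vanishing composites makes the sign $1$-cochain a cocycle; $H^1([0,1]^n)=0$ then writes it as $\varepsilon(v\rightarrow v')=\eta(v)\eta(v')$, and the diagonal matrix of signs $\eta(v)$ is the isomorphism.

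Your explicit treatment of characteristic $2$ addresses a point the paper leaves implicit. One correction: the parenthetical about rings with $2$-torsion is off, because $-1=1$ holds only when $2=0$ (for example, $\Z/4$ has $2$-torsion but $-1\neq 1$). Since the lemma is applied over a field $\field$, the dichotomy ``$2=0$ or $2$ invertible'' is all you need.
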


\begin{proof}
	The lemma follows from the same argument as 
	\cite[Lemma 2.2]{OSR}. 
	More explicitly, 
	suppose \(d=(d^v_{v'})_{v,v'}\) 
	is the differential for the first complex and 
	the differential of the second complex 
	is given by 
	\((\varepsilon(v\rightarrow v')\cdot d^v_{v'})_{v,v'}\), 
	where we regard 
	\(\varepsilon\) as a 1-cochain 
	\(C_1([0,1]^n)\rightarrow \{\pm 1\}\) 
	of the space \([0,1]^n\) 
	equipped with a cell complex structure
	whose 0-cells are the vertices and 
	whose 1-cells are the edges of the cube.
	Consider a face of the cube \(\{0,1\}^n\), 
	that is four vertices 
	\(v_{00},v_{10},v_{01},v_{11}\in\{0,1\}^n\)
	related by edges as follows
	\[
	\begin{tikzcd}[column sep=20pt,row sep=1pt]
		&
		v_{10}
		\arrow{dr}
		\\
		v_{00}
		\arrow{dr}
		\arrow{ur}
		&&
		v_{11}
		\\
		&
		v_{01}
		\arrow{ur}
	\end{tikzcd}
	\]
	The \(d^2\) relations show that the compositions of the two pairs of consecutive differentials are identical in both complexes. 
	By hypothesis they are non-zero. 
	This implies that the product of the signs 
	\(\varepsilon(v\rightarrow v')\)
	over the four edges is 1. 
	Thus, the map 
	\(\varepsilon\co C_1([0,1]^n)\rightarrow \{\pm 1\}\) 
	defined by mapping each edge \(v\rightarrow v'\) 
	to \(\varepsilon(v\rightarrow v')\)
	is a 1-cocycle. 
	Since \(H^1([0,1]^n)=0\), 
	there exists a 0-cochain \(\eta\co C_0([0,1]^n)\rightarrow \{\pm 1\}\)
	such that \(\varepsilon = d^*(\eta)\), i.e.\
	\(\varepsilon(v\rightarrow v')=\eta(v)\eta(v')\). 
	Then the diagonal matrix with entries \(\eta(v)\) 
	for each vertex \(v\) is a chain isomorphism between the two complexes. 
\end{proof}

\subsection{Proof of the extension property}
\label{sec:no_wrapping_around_special:proof_extension_prop}

\begin{proof}[Proof of \cref{thm:extension}]
	The tangle operator \(\rho\in \Mod(\FourPuncturedSphere,*)\) from \cref{prop:rho*Khr} 
	does not affect the extendibility of the invariants, 
	since the corresponding automorphism \(\rho\) of the algebra \(\B\) 
	extends to an automorphism of the \(A_\infty\)-algebra \(\BinfU\). 
	Therefore, we may assume without loss of generality 
	that the connectivity of \(T\) is either \(\ConnX\) or \(\ConnZ\). 
	Moreover, changing the orientation of a component of a tangle \(T\) 
	only affects the invariants \(\DD(T)\) and \(\DD_1(T)\) 
	by some overall shift in bigrading \cite[Proposition~4.8]{KWZ}.
	Thus we may assume without loss of generality that \(T\) satisfies condition \AssumptionT\ from \cref{assums:tangle_diagram}. 
	By \cref{lem:many-good-diagrams}, we can find a diagram \(\Diag_T\) for \(T\) that satisfies \AssumptionD. 
	\cref{cor:identification_of_type_D_structures}
	shows that
	\(
	\DD(\Diag_T)^{\BinfU}\big|_{U=0} 
	= 
	\DD(\Diag_T)^{\B}
	\). 
	The statement about \(\DD_1(\Diag_T)^\B\) is established by
	modifying the construction in \cref{sec:mfs} by
	replacing \(M(\Diag_T)\)
	with the matrix factorization 
	\[
	\Big[
	q^{-2}h^{-1}\hslash^1 M(\Diag_T)
	\xrightarrow{\varG} 
	M(\Diag_T)
	\Big]
	\]
	which is chain homotopy equivalent, as a matrix factorization over \(\Rd\),
	to an \((n+1)\)-cubical matrix factorization \(M^s_{\varG}(\Diag_T)\)
	with the following cubical structure:
	For every \(v\in\{0,1\}^{n+1}\)	\[
	(M^s_{\varG}(\Diag_T))_v
	=
	\begin{cases*}
		q^{-2}h^{-1}\hslash^1 M^s_{\hat{v}}(\Diag_T)
		&
		if \(v_{n+1}=0\)
		\\
		M^s_{\hat{v}}(\Diag_T)
		&
		if \(v_{n+1}=1\)
	\end{cases*}
	\]
	where \(\hat{v}\in\{0,1\}^{n}\) 
	is obtained from \(v\) 
	by dropping the last entry.
	Moreover, 
	if \(v\) and \(v'\) are related by an edge \(v\rightarrow v'\), 
	then the differential is chain homotopic to
	\[
	\begin{cases*}
		\pm\mathcal{S}^{\hat{v}}_{\hat{v}'}
		&
		if \(v_{n+1}=v'_{n+1}\)
		\\
		\pm\varG\cdot 1_{M^s_{\hat{v}}(\Diag_T)}
		&
		otherwise.
	\end{cases*}
	\]
	This follows from the same arguments as the proof of \cref{thm:delooping:mf}.
	Next, we carry out the construction in \cref{sec:no_wrapping_around_special:mf2twcx}
	to obtain a twisted complex \(\DD_1(\Diag_T)^{\BinfU}\)
	for which the analogue of \cref{cor:delooping:mf} then also holds. 
	Finally, \(\DD_1(\Diag_T)^{\BinfU}\big|_{U=0} 
	= 
	\DD_1(\Diag_T)^{\B}\)
	is established in the same way as \cref{cor:identification_of_type_D_structures},
	observing that the hypothesis of \cref{lem:identify_cubes} is still satisfied
	since \(\varG\) is not a zero-divisor.
	
	It remains to show that the type~D structure \(\DD^c(T)^{\B}\) representing the curve \(\BNr(T)\) is extendable. Such a type~D structure is obtained from \(\DD(\Diag_T)\) via the so-called arrow sliding algorithm. This algorithm, which is described in~\cite[Section~5]{KWZ}, consists of a sequence of basic homotopies, namely cancellations and clean-ups. 
	\cref{prop:cancel_n_clean} below implies that extendability is preserved under such homotopies.
\end{proof}

\begin{remark}
	It is not the case that, for any twisted complex \((C,d)\) over \(\BinfU\), the map \(\varG\cdot 1_{C}\) is a homomorphism. 
	For example,  \(\varG\cdot 1_{C}\) is not a homomorphism of twisted complexes over \(\BinfU\) when
	\begin{equation*}
		(C,d)
		=
		\Big[
		\begin{tikzcd}[column sep=15pt]
			\circ
			\arrow{r}{S}
			&
			\bullet
			\arrow{r}{D}
			&
			\bullet
			\arrow{r}{S^3}
			&
			\circ
		\end{tikzcd}
		\Big]
	\end{equation*} 
	It is, however, extendable:
	\begin{equation*}
		\begin{tikzcd}[column sep=20pt,row sep=20pt]
			\circ
			\arrow{d}{\varG}
			\arrow{r}{S}
			&
			\bullet
			\arrow{d}{\varG}
			\arrow{r}{D}
			\arrow[in=120,out=-30,dashed]{drr}{S\cdot U}
			&
			\bullet
			\arrow{d}{\varG}
			\arrow{r}{S^3}
			&
			\circ
			\arrow{d}{\varG}
			\\
			\circ
			\arrow{r}{S}
			&
			\bullet
			\arrow{r}{D}
			&
			\bullet
			\arrow{r}{S^3}
			&
			\circ
		\end{tikzcd}
	\end{equation*}
\end{remark}

The following results are straightforward generalizations 
of \cite[Lemmas~2.16 and~2.17]{KWZ} 
to the setting of twisted complexes. 

\begin{lemma}[Clean-Up Lemma for twisted complexes]
	\label{lem:clean-up:ainfty}
	Let \((X,\delta)\) be a twisted complex over \(\BinfU\) and 
	suppose \(a\) is an endomorphism of \(X\) 
	with \(\gr(a)=h^0q^0\) and \(a^2=0\). 
	Furthermore, we assume that
	either \(h(x)\) or \(h(y)\) 
	is the same for any arrow \(x\rightarrow y\) in \(a\). 
	Then \((X,\delta)\) is isomorphic to \((X,\delta')\), 
	where 
	\[
	\pushQED{\qed}
	\delta'
	=
	\delta
	+
	\sum_i\mu^{\Sigma\BinfU}_{i}(\delta,\dots,\delta,a,\delta,\dots,\delta).
	\qedhere
	\popQED
	\]
\end{lemma}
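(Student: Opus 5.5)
The plan is to prove the Clean-Up Lemma for twisted complexes by writing down an explicit isomorphism, as in the proof of \cref{lem:clean-up:improved}, but now inside the \(A_\infty\)-category \(\Tw\BinfU\). Take the candidate morphism \(f=1_X+a\in\Tw\BinfU((X,\delta),(X,\delta'))\) with inverse candidate \(g=1_X-a\). The morphisms \(1\pm a\) have no higher components, so the condition \(a^2=0\) should give \(\mu_2(g,f)=1-a^2=1\) and \(\mu_2(f,g)=1\) on the nose. The subtlety is that in \(\Tw\BinfU\) the composition \(\mu_2^{\Tw}\) also involves insertions of \(\delta\) and \(\delta'\) together with higher \(\mu_k^{\Sigma\BinfU}\). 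I will therefore argue that the extra terms vanish or cancel because identity components are strictly unital in \(\BinfU\): every \(\mu_k\) with \(k\geq 3\) and an identity input is zero.

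The first step is to check that \(\delta'\) is a twisted differential, meaning \(\sum_k \mu_k^{\Sigma\BinfU}(\delta',\dots,\delta')=0\). My approach is to recognise \(\delta'\) as the transport of \(\delta\) along the strict automorphism \(1+a\) of the underlying object in \(\Sigma\BinfU\). The formula \(\delta'=\delta+\sum_i\mu_i(\delta,\dots,a,\dots,\delta)\) is the first-order expression of the twisted differential \(\mu^{\Tw}_1\) applied to \(a\), added to \(\delta\). Expanding \(\sum_k\mu_k(\delta',\dots,\delta')\), the terms that are linear in \(a\) combine via the \(A_\infty\)-relations into \(\mu^{\Tw}_1\bigl(\sum_k\mu_k(\delta,\dots,\delta)\bigr)=0\).

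The main obstacle is the higher-order terms, those containing two or more copies of \(a\), possibly nested, such as \(\mu_j(\dots,\mu_i(\dots,a,\dots),\dots,a,\dots)\). This is exactly where the grading hypothesis on \(a\) is used. Because every arrow of \(a\) has either constant source or constant target homological grading, and \(\gr(a)=h^0\), \(a\) is built from pure algebra elements between generators of a single homological level. Since \(\BinfU\) is concentrated in nonpositive even \(h\)-degrees and \(\mu_k\) has degree \(2-k\), I expect a degree count along any chain \(\delta,\dots,a,\dots,a,\dots\) to force the output degree to be \(>1\). Such terms should therefore vanish, as do terms with \(a\) composed with \(a\) (using \(a^2=0\)). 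I would verify this degree count carefully, since it is the crux of the argument.

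With these vanishing statements established, \(f\) and \(g\) are closed morphisms satisfying \(\mu^{\Tw}_2(g,f)=1\) and \(\mu^{\Tw}_2(f,g)=1\). This gives the isomorphism, and it respects the bigrading because \(\gr(a)=h^0q^0\).
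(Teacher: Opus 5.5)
Your strategy (exhibit \(1\pm a\) as an explicit isomorphism \((X,\delta)\to(X,\delta')\) in \(\Tw\BinfU\), with strict unitality discarding identity insertions) is the natural one. It is presumably what the paper means by a straightforward generalization of \cite[Lemma~2.17]{KWZ}; the paper gives no further proof.

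There is a sign slip. Since \(\mu_2(1,\delta)=-\delta\) in Seidel's conventions, the map whose cycle condition produces the stated formula is \(1-a\colon(X,\delta)\to(X,\delta')\). That cycle condition reads \(\delta'=\delta+\sum\mu(\delta',\dots,\delta',a,\delta,\dots,\delta)\). This is an implicit equation, and you still have to reduce it to the explicit one.

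More seriously, the step you single out as the crux does not work as proposed. No degree count on outputs can kill the terms containing two copies of \(a\). Since \(\mu_k\) has degree \(2-k\) and the inputs have degree \(0\) or \(1\), every such term in \(\sum\mu_k(\delta',\dots,\delta')\), \(\mu^{\Tw}_1(1\pm a)\) or \(\mu^{\Tw}_2(1\mp a,1\pm a)\) automatically has the correct degree \(2\), \(1\) or \(0\).

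What the hypothesis actually controls is the homological grading of the \emph{generators} along a chain of arrows. An arrow labelled \(U^n b\), with \(b\in\B\), raises this grading by \(2n+1\geq1\) if it belongs to \(\delta\), and by \(2n\geq 0\) if it belongs to \(a\). So, contrary to what you write, \(a\) need not live in a single homological level. Hence, if all arrows of \(a\) start (or all end) in the same grading, no composable chain contains two arrows of \(a\) separated by an arrow of \(\delta\). The same holds for an arrow of \(\delta'-\delta\), whose underlying chains contain a \(\delta\)-arrow because \(\mu_1=0\).

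This leaves chains in which two arrows of \(a\) are adjacent, and \(a^2=0\) does not dispose of them. That identity only says \(\mu_2(a,a)=0\), while the higher operations of \(\BinfU\) are non-zero exactly on sequences whose consecutive products vanish; for example \(\mu_4(S,D,S,D)=U\) although \(DS=0\).

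Here is a concrete example. Take \(g_{-1},g_0,g_3\) in idempotent \(\iota_\circ\) and \(g_1,g_2,g_4\) in \(\iota_\bullet\), with \(h(g_{-1})=r-2\), \(h(g_0)=r-1\), \(h(g_1)=h(g_2)=h(g_3)=r\), \(h(g_4)=r+1\) and suitable quantum gradings. Let \(\delta=\{g_{-1}\xrightarrow{D}g_0\xrightarrow{S}g_1,\ g_3\xrightarrow{S}g_4\}\) and \(a=\{g_1\xrightarrow{D}g_2\xrightarrow{S}g_3\}\).
\begin{itemize}
\item All hypotheses of the lemma hold, and \(\delta'=\delta+(g_2\xrightarrow{\pm S^2}g_4)\).
\item The cycle condition for \(1\pm a\) contains the term \(\mu_4(\delta',a,\delta,\delta)=\mu_4(S^2,D,S,D)=\pm U\cdot S\) from \(g_{-1}\) to \(g_4\), and nothing cancels it.
\item So \(1\pm a\) is not closed.
\end{itemize}
The conclusion of the lemma still holds in this example, via \(1\pm a_0\) with \(a_0=(g_2\xrightarrow{S}g_3)\).

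Thus your argument goes through only under the additional assumption that no two arrows of \(a\) are composable. An example is a single arrow \(\mathbf{x}\to\mathbf{y}\) between distinct generators, which is the only case used in \cref{prop:cancel_n_clean}. In that case:
\begin{itemize}
\item every term with two copies of \(a\) vanishes by the level argument, so \(\delta'\) may be replaced by \(\delta\) in the cycle equations;
\item \(\mu^{\Tw}_2(1+a,1-a)=1=\mu^{\Tw}_2(1-a,1+a)\) on the nose;
\item the terms linear in \(a\) in the Maurer--Cartan equation for \(\delta'\) cancel via the \(A_\infty\)-relations and \(\sum\mu_k(\delta,\dots,\delta)=0\), as you describe.
\end{itemize}
For general \(a\) satisfying only \(a^2=0\), the adjacent-\(a\) terms need a separate argument, or you should add this hypothesis.
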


\begin{lemma}[Cancellation Lemma for twisted complexes]
	\label{lem:cancel:ainfty}
	Any twisted complex over \(\BinfU\) of the form
	\[
	\begin{tikzcd}[column sep=20pt,row sep=35pt]
		&
		(C,\delta)
		\arrow[bend left=10]{dl}{a}
		\arrow[bend left=10]{dr}{c}
		\\
		\halfbullet
		\arrow{rr}{\alpha\cdot \iota_{\halfbullet}}
		\arrow[bend left=10]{ur}{b}
		&
		&
		\halfbullet
		\arrow[bend left=10]{ul}{d}
	\end{tikzcd}
	\]
	with non-zero \(\alpha\in\field\)
	is homotopy equivalent to 
	the twisted complex \((C,\delta')\) where
	\[
	\delta'
	=
	\delta
	+
	\sum\mu^{\Sigma\BinfU}_i(\delta,\dots,\delta,b_\alpha,c,\delta,\dots,\delta)
	\]
	and 
	\(
	b_\alpha = \tfrac{1}{\alpha}\cdot b
	\)
	is the morphism with bigrading \(\gr(b_\alpha)=h^0q^0\). 
	\qed
\end{lemma}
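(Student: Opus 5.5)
The plan is to construct the homotopy equivalence by transporting data from the differential-graded (type D over \(\B\)) setting to the twisted-complex setting. The approach mirrors the proof of \cite[Lemma~2.16]{KWZ}, with every composition of arrows replaced by a higher product \(\mu^{\Sigma\BinfU}_i\). Write the twisted complex as \(X = C \oplus \halfbullet \oplus \halfbullet\), with differential \(\delta_X\) given by the matrix of components \(\delta, a, b, c, d\) and \(\alpha\cdot\iota_{\halfbullet}\). We will exhibit morphisms of twisted complexes
\[
f\co X\rightarrow (C,\delta'),\qquad g\co (C,\delta')\rightarrow X,
\]
together with a homotopy \(H\) on \(X\), such that \(f\circ g = 1\) and \(g\circ f \simeq 1\). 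The homotopy is essentially \(H = \alpha^{-1}\) on the arrow reversing \(\alpha\cdot\iota_{\halfbullet}\); this is where the normalization \(b_\alpha=\tfrac1\alpha b\) of grading \(h^0q^0\) enters.

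\textbf{Step 1: apply homological perturbation.} Rather than guessing \(f,g\) term by term, we apply Seidel's homological perturbation lemma \cite[Section~I~(1i)]{Seidel} to the twisted complex, in the form of its twisted-complex version.

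\begin{itemize}
\item Split the differential \(\delta_X = \delta_0 + \delta_{\mathrm{pert}}\), where \(\delta_0\) consists only of the isomorphism arrow \(\alpha\cdot\iota_{\halfbullet}\) and all remaining arrows form the perturbation.
\item Relative to \(\delta_0\), the summand \(\halfbullet\xrightarrow{\alpha}\halfbullet\) is contractible, with contraction \(H=\alpha^{-1}\) on the reversed arrow. Hence \(C\) with zero differential is a strong deformation retract.
\item The perturbation lemma then produces a new twisted differential on \(C\) equal to \(\delta\) plus the sum of all terms \(\mu^{\Sigma\BinfU}_i\) evaluated on paths that leave \(C\) via \(c\), traverse \(H\) once, and return via \(b\). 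Because \(H\) reverses the single arrow, it can appear only once. The path cannot be longer: any continuation would use \(d\) or \(a\) after \(b\) or before \(c\), which would require passing through the same \(\halfbullet\) again, and it cannot run into \(H\) twice since \(H^2=0\).
\end{itemize}

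These terms give exactly the stated formula \(\delta' = \delta + \sum\mu_i(\delta,\dots,\delta,b_\alpha,c,\delta,\dots,\delta)\).

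\textbf{Step 2: check convergence and gradings.} Boundedness of the sums follows from the finiteness of the twisted complex and the strictly lower-triangular filtration on \(X\). The grading check is routine: \(\gr(b_\alpha)=h^0q^0\), and each \(\mu_i\) has grading \(h^{2-i}q^0\), so \(\delta'\) has grading \(h^1q^0\).

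\textbf{Main obstacle.} The hard step is verifying the twisted-complex structure relation for \((C,\delta')\), together with the signs \(\triangleleft\) from \cref{def:additive_enlargement_Seidel}. Two points need care.

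\begin{itemize}
\item The perturbed twisted complex must remain lower-triangular with respect to a suitable filtration. This holds because \(C\) can be refiltered with the cancelled pair inserted between the target of \(c\) and the source of \(b\).
\item The signs must be tracked. By \cref{rem:twisted_complex}, since \(\BinfU\) is supported in even degrees, the signs reduce to \((-1)^i\) on \(\mu_{2i}\). Inserting the degree-zero element \(b_\alpha\) shifts path length parity consistently, so the relation for \(\delta'\) follows from the relation for \(\delta_X\) on paths that pass through the cancelled pair exactly once.
\end{itemize}

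If invoking the perturbation lemma directly for twisted complexes proves awkward, we would instead verify \(\sum\mu(\delta',\dots,\delta')=0\) by hand. The method is to expand it and group terms according to how many occurrences of \(b_\alpha c\) they contain, then match each group against the components of the relation \(\sum\mu(\delta_X,\dots,\delta_X)=0\) that go \(C\to\halfbullet\to\halfbullet\to C\).
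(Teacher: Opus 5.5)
Your proposal has a genuine gap: the step that should carry the proof is asserted rather than derived. Seidel's perturbation lemma \cite[Section~I~(1i)]{Seidel} transfers an \(A_\infty\)-structure along a deformation retraction of the underlying chain complexes; this is how the paper uses it in \cref{prop:quasi-iso:BinfCone}. It is not a statement about perturbing the differential of a twisted complex, and you never say what its ``twisted-complex version'' is or what formula it outputs.

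Because the structure relation \(\sum_i\mu_i(\delta,\dots,\delta)=0\) is not quadratic, the classical series \(p\,t\sum_n(Ht)^n\,i\) does not compute the new differential. That series is what your bookkeeping (``traverse \(H\) once'', ``\(H^2=0\)'') is modelled on. Consequently:
\begin{itemize}
\item the claim that the output is exactly the stated \(\delta'\) is unsupported;
\item the maps \(f\) and \(g\) are never written down;
\item the structure relation for \((C,\delta')\), which is the real content of the lemma, is left as your ``main obstacle''.
\end{itemize}

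Your exclusion of other terms is also not a valid argument. Write \(E_1\xrightarrow{\alpha}E_2\) for the cancelled pair. An input sequence \((\dots,b_\alpha,c,\dots)\) runs backwards along \(\alpha\), so it is not a directed path in \(X\). Strict lower-triangularity of \(X\) therefore does not stop the same \(\mu_i\) from also receiving an excursion \((\dots,b,a,\dots)\) or \((\dots,d,c,\dots)\), or a second zigzag. For example, the ordering \(E_1<y<x<E_2\) with \(b\) ending at \(y\), an arrow \(\delta\colon y\to x\), and \(c\) starting at \(x\) is compatible with triangularity. The same example shows that your refiltering claim does not follow from triangularity of \(X\).

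The missing ingredient is the bigrading. Every homogeneous element of \(\BinfU\) other than a multiple of an idempotent has negative quantum grading, and the twisted differential has grading \(h^1q^0\). So \(q\) never decreases along an arrow, and an arrow preserving \(q\) carries a multiple of an idempotent and raises \(h\) by exactly \(1\). Sources of \(a\) and \(c\) therefore have \(q\leq q(E_1)=q(E_2)\), and targets of \(b\) and \(d\) have \(q\geq q(E_1)\). A composable sequence meeting \(\{E_1,E_2\}\) twice would then force an idempotent-labelled directed path of length \(\geq 2\) from some \(E_i\) to some \(E_j\). That is impossible, since \(h(E_2)=h(E_1)+1\). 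So every input sequence passes through the cancelled pair at most once.

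With this single-passage property and strict unitality (\(\mu_{\geq3}\) vanishes whenever an input is a multiple of an identity), the direct generalization of \cite[Lemma~2.16]{KWZ} goes through without higher corrections. This is all the paper claims. Take
\[
f=1_C\pm b_\alpha,\qquad g=1_C\pm\tfrac1\alpha c,\qquad \text{homotopy } \pm\alpha^{-1}\iota\colon E_2\to E_1 .
\]
Closedness of \(f\) and \(g\) uses only the definition of \(\delta'\) and two relations of \(X\):
\[
\sum_{k\geq1}\mu(\delta,\dots,\delta,b)=-\mu_2(d,\alpha\iota),\qquad \sum_{l\geq1}\mu(c,\delta,\dots,\delta)=-\mu_2(\alpha\iota,a),
\]
which are the components from \(E_1\) to \(C\) and from \(C\) to \(E_2\).

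The structure relation for \(\delta'\) follows by summing the \(A_\infty\)-relations over all sequences \((\delta,\dots,\delta,b_\alpha,c,\delta,\dots,\delta)\):
\begin{itemize}
\item inner \(\mu\)'s containing both \(b_\alpha\) and \(c\) give the terms of \(\sum\mu(\delta',\dots,\delta')\) with exactly one copy of \(\delta'-\delta\); none have two, by single passage;
\item inner \(\mu\)'s containing only \(b_\alpha\) or only \(c\) become, via the two relations above, the \((d,c)\)- and \((b,a)\)-excursion terms of the \(C\to C\) relation of \(X\), and so account for \(\sum\mu(\delta,\dots,\delta)\);
\item inner \(\mu\)'s containing only \(\delta\)'s assemble into components of \(\sum\mu(\delta,\dots,\delta)\) that vanish in those positions, again by single passage.
\end{itemize}
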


	Using the sign formula in \cref{def:additive_enlargement_Seidel}, 
	one can see that
	\[
	\delta'
	=
	\delta
	-
	\sum\mu^{\Sigma\BinfU}_i(\delta,\dots,\delta,b,\prescript{}{\alpha}{c},\delta,\dots,\delta)
	\]
	where 
	\(\prescript{}{\alpha}{c}=\tfrac{1}{\alpha}\cdot c
	\)
	is the morphism with bigrading \(\gr(\prescript{}{\alpha}{c})=h^0q^0\). 

\begin{proposition}\label{prop:cancel_n_clean}
	Suppose a type~D structure \(X^\B\) over \(\B\) 
	is extendable to a twisted complex over \(\BinfU\), 
	and \(Y^\B\) is obtained from \(X^\B\) 
	by a single application of the Cancellation or Clean-Up Lemmas 
	\cite[Lemmas~2.16 and~2.17]{KWZ} 
	in the arrow sliding algorithm~\cite[Section~5]{KWZ}. 
	Then \(Y^\B\) is also extendable 
	to a twisted complex over \(\BinfU\).
\end{proposition}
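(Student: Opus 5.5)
The plan is to transport the relevant homotopy from \(\B\) to \(\BinfU\) using the two lemmas for twisted complexes, \cref{lem:clean-up:ainfty,lem:cancel:ainfty}. Fix an extension \((X,\delta_U)\) of \(X^\B\) over \(\BinfU\), so that \(\delta_U|_{U=0}=\delta\). In each case we apply the \(A_\infty\)-version of the move to \((X,\delta_U)\) and check two things: that the hypotheses of the \(A_\infty\)-lemma hold, and that the resulting differential reduces modulo \(U\) to the differential of \(Y^\B\).

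\emph{Cancellation.} The arrow being cancelled is \(\alpha\cdot\iota_{\halfbullet}\) with \(\alpha\neq0\). Since \(\gr(U)=h^{-2}q^{-6}\), a component of \(\delta_U\) between the same two generators has bigrading \(h^1q^0\). Because \(\iota_\halfbullet\) already has this bigrading, the \(U\)-part of that component vanishes, so \(\delta_U\) contains exactly the arrow \(\alpha\cdot\iota_\halfbullet\). We may therefore apply \cref{lem:cancel:ainfty} and obtain a twisted complex \((C,\delta')\) homotopy equivalent to the original one. In the formula for \(\delta'\), the terms \(\mu_i^{\Sigma\BinfU}\) with \(i\geq3\) are nonzero only for even arity \(i\geq4\), where \(\mu^{\BinfU}_{2m}\) carries a factor \(U^{m-1}\); these terms therefore vanish modulo \(U\). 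The term \(\mu_2\) is ordinary multiplication. Hence \(\delta'|_{U=0}=\delta+\mu_2(b_\alpha,c)\) up to sign, which is exactly the differential produced by \cite[Lemma~2.16]{KWZ}. The sign needs to be checked against the convention in \cref{def:additive_enlargement_Seidel}; the remark after \cref{lem:cancel:ainfty} addresses it. This gives the extension of \(Y^\B\).

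\emph{Clean-up.} The clean-up step in the arrow-sliding algorithm of \cite[Section~5]{KWZ} uses an endomorphism \(a\) consisting of a single arrow labelled by a pure algebra element of bigrading \(h^0q^0\) between two generators, and satisfying \(a^2=0\). We apply \cref{lem:clean-up:ainfty} to \((X,\delta_U)\) with this \(a\). Its hypothesis is that \(h(x)\) or \(h(y)\) is the same for every arrow \(x\to y\) in \(a\); this holds trivially for a single arrow, and more generally whenever all arrows of \(a\) share a source, or all share a target. The new differential is \(\delta_U+\sum_i\mu_i(\delta_U,\dots,a,\dots,\delta_U)\). The terms with \(i\geq4\) again vanish modulo \(U\), and \(\mu_1=0\) in \(\BinfU\), so modulo \(U\) only the \(\mu_2\)-terms \(\delta a\pm a\delta\) survive. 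Since \(\delta a\delta\)-type terms need \(\mu_3=0\), we must check that the \(\B\)-level formula of \cite[Lemma~2.17]{KWZ}, namely \((1-a)\delta(1+a)\), differs from this by \(a\delta a\). That term vanishes in the algorithm's clean-ups because \(a\) is a single arrow whose endpoints are not joined back through \(\delta\). Alternatively, we may iterate clean-ups to absorb it.

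The main obstacle is reconciling the precise clean-up moves used in the arrow-sliding algorithm with the restrictive hypothesis of \cref{lem:clean-up:ainfty}. In particular, we have to verify that each clean-up can be decomposed into steps where \(a\) has arrows sharing a source or a target and where \(a\delta a=0\) modulo \(U\). We also have to track signs, so that the reduction modulo \(U\) matches \(Y^\B\) on the nose rather than only up to isomorphism. Finally, the proposition is applied iteratively along the algorithm, so the argument must be uniform over all steps.
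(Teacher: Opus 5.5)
Your proposal is correct and follows the paper's own proof: you fix an extension over \(\BinfU\), apply the twisted-complex versions of the Clean-Up and Cancellation Lemmas, and reduce modulo \(U\). Two observations carry the argument. First, every higher operation \(\mu^{\BinfU}_{2m}\) with \(m\geq2\) carries a factor \(U^{m-1}\). Second, homogeneity forces the extended component on the cancelled arrow to have no \(U\)-part.

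Your remaining worry about the \(a\delta a\) term is settled by grading, and you can replace your vaguer justification with this. For a single arrow \(x\to y\) of \(h\)-degree \(0\) with label in \(\B\), one has \(h(x)=h(y)\), whereas every arrow of the differential over \(\B\) raises \(h\) by exactly \(1\). Hence no differential arrow can run from \(y\) back to \(x\), and \(a\delta a=0\). The paper likewise takes each clean-up in the arrow-sliding algorithm to be a single arrow between distinct generators.
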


\begin{proof}
	Let \((X,\delta)\) be a twisted complex over \(\BinfU\)
	with the property that
	\(X^\B=(X,\delta|_{U=0})\).
	Consider applications of the two lemmas in the arrow sliding algorithm separately.

	The Clean-Up Lemma is applied to some \(a=(\mathbf{x}\xrightarrow{a_{\B}}\mathbf{y}) \in \End(X^{\B})\), 
	where \(\mathbf{x}\) and \(\mathbf{y}\) 
	are distinct homogeneous generators of \(X^\B\) 
	and \(\text{gr}(a)=q^0h^0\).
	Let \((X,\delta')\) be the twisted complex
	obtained by applying the generalized version of the Clean-Up Lemma,  \cref{lem:clean-up:ainfty},
	to the twisted complex \((X,\delta)\).
	Then 
	\begin{align*}	
		\delta'|_{U=0}
		&
		=
		\delta|_{U=0}
		+
		\sum_i\mu_{i}(\delta,\dots,\delta,a,\delta,\dots,\delta)|_{U=0}
		=
		\delta|_{U=0}
		+
		\mu_2(\delta,a)|_{U=0}
		+
		\mu_2(a,\delta)|_{U=0}
		\\
		&
		=
		\delta|_{U=0}
		+
		\mu_2(\delta|_{U=0},a)
		+
		\mu_2(a,\delta|_{U=0}).
	\end{align*}
	We observe that 
	\(Y^{\B}=(X,\delta'|_{U=0})\).
	
	The Cancellation Lemma is applied to some component 
	\((\mathbf{x}\xrightarrow{\alpha}\mathbf{y})\) 
	of the differential of \(X^{\B}\), 
	where \(\alpha\) is some invertible algebra element. 
	The component of the extended differential \(\delta\)
	from \(\mathbf{x}\) to \(\mathbf{y}\)
	is also equal to \(\alpha\), 
	since the differentials are homogeneous and 
	the homological grading of \(U^n\) is zero 
	if and only if \(n=0\). 
	So we can apply \cref{lem:cancel:ainfty}
	and obtain a twisted complex \((X,\delta')\)
	that is homotopy equivalent to \((X,\delta)\). 
	Again, one checks that \(\delta'|_{U=0}\) 
	agrees with the differential of \(Y^{\B}\).	
\end{proof}
\section{%
	Proof of 
	\texorpdfstring{\cref{lem:delooping:mf}}{Lemma \ref*{lem:delooping:mf}}
}\label{sec:delooping:mf}

\newcommand{\myitheading}[1]{\medskip\noindent\textit{#1:}}%

	The proof of \cref{lem:delooping:mf} proceeds in three steps: 
	First, a candidate \(\varphi_{A}\) 
	for the special deformation retract \(\varphi_v\) is constructed, 
	which depends on some additional data \(A\) on \(\Diag_T(v)\) 
	 called an arc system. 
	While the maps \(\varphi_{A_1}\) and \(\varphi_{A_2}\) 
	may be distinct for different arc systems 
	\(A_1\) and \(A_2\) on \(\Diag_T(v)\), the second step shows
	their homotopy classes agree (up to signs). 
	In the last step, we show the second part of the lemma
	via choices of arc systems for the diagrams 
	\(\Diag_T(v)\) and \(\Diag_T(v')\) 
	that are adapted to the edge map \(d^{v}_{v'}\). 
	To prepare for this, 
	the first subsection introduces notation and terminology 
	and collects several preliminary results needed for the proof.
	Throughout this entire section let \(D\) be the singular diagram for \(\Diag_T\). 
	We write \(R=R(D)\). 
	
	\subsection{Notation and preliminary observations}
	Throughout this subsection 
	 fix a labelled diagram \(\Diag=(D,\lab)\) 
	with labels taking values in \(\{\arcD,\arcT\}\). 
	We call nodes \(a\) of \(D\) 
	together with their labels \(\lab(a)\) 
	the arcs of \(\Diag\) and write \(\arcs(\Diag)\) for the set of arcs of \(\Diag\). 
	Given any arc \(a\in\arcs(\Diag)\) write \(x^a=x^{a}_{\lab(a)}\) and \(y^a=y^{a}_{\lab(a)}\).
	Finally, we enumerate the closed components of \(\Diag\) and 
	denote them by \(c_1,\dots,c_m\).
	
	\subsubsection{Edge ring generators}
	
	\begin{definition}
		The component graph \(G_{\Diag}\) 
		of the labelled diagram \(\Diag\) 
		is the graph with vertices in one-to-one correspondence 
		with the components of \(\Diag\) and 
		with edges in one-to-one correspondence 
		with the arcs in \(\Diag\); 
		each edge corresponding to an arc \(a\) 
		connects the two vertices corresponding to the components 
		that the endpoints of the arcs lie on. 
		Given a subset \(A\subseteq \arcs(\Diag)\), 
		we denote by \(G_A\) 
		the subgraph of \(G_\Diag\) 
		consisting of all edges in \(A\) and 
		vertices incident to those edges. 
	\end{definition} 

	\begin{definition}
		An arc \(a^*\in \arcs(\Diag)\) 
		is a bonding arc
		if it connects the two open components of \(\Diag\). 
		An arc system of \(\Diag\) 
		is a subset \(A\subseteq \arcs(\Diag)\) 
		containing a bonding arc and 
		for which \(G_A\) is a maximal tree in \(G_{\Diag}\).
	\end{definition}

Assumption~\ref{assums:tangle_diagram}~\AssumptionD\
guarantees that \(\Diag\) has a bonding arc.

\begin{remark}\label{rem:size-of-arc-system-fixed}
		The number %
		of arcs in any arc system \(A\) for \(\Diag\)
		is equal to the number of closed components plus 1. 
		Moreover, any arc system contains exactly one bonding arc, 
		since any two distinct bonding arcs in \(\Diag\) 
		define a cycle in \(G_\Diag\).
	\end{remark}

	\begin{example}
		The labelled diagram shown in \cref{fig:arc-system} 
		contains just one bonding arc labelled \(a^*\). 
		The arc \(a_3\) does not belong to any arc system 
		as it connects an open component with itself, 
		and so \(\{a^*,a_1\}\) and \(\{a^*,a_2\}\) 
		are the only arc systems for this diagram.
	\end{example}
	
	\begin{figure}[t]
	\labellist 
	\tiny
	\pinlabel $a_3$ at 141 51
	\pinlabel $a_2$ at 110 105
	\pinlabel $a_1$ at 74 137
	\pinlabel $a^{\! *}$ at 162 138
	\pinlabel $c_1$ at 48 87
	\endlabellist
		\centering
		\begin{subfigure}{0.333\textwidth}
			\centering
			\includegraphics[scale=0.5]{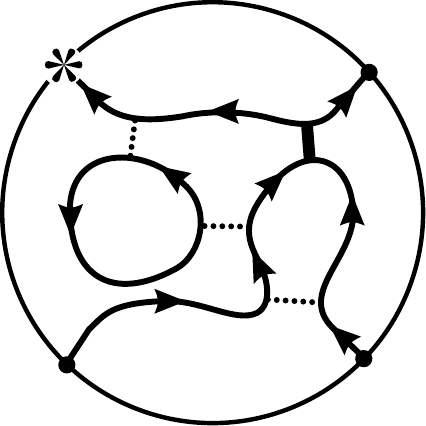}
			\caption{}
			\label{fig:arc-system}
		\end{subfigure}%
		\begin{subfigure}{0.333\textwidth}
\labellist 
	\tiny
	\pinlabel $a_3$ at 128 20
	\pinlabel $a_2$ at 25 29
	\pinlabel $a_1$ at 26 123
	\pinlabel $a^{\! *}$ at 114 77
	\pinlabel $o^{\! *}$ at 81 116
	\pinlabel $o$ at 82 40
	\pinlabel $c_1$ at 10 77
	\endlabellist		
			\centering
				\includegraphics[scale=0.5]{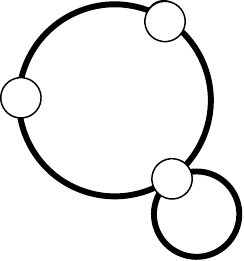}
			\caption{}
			\label{fig:component-graph}
		\end{subfigure}%
		\begin{subfigure}{0.333\textwidth}
\labellist 
	\tiny
	\pinlabel $c_0$ at 55 167
	\pinlabel $c_y$ at 157 165
	\pinlabel $c_{x+y}$ at 174.5 61
	\pinlabel $c_x$ at 51 40
	\pinlabel $a^{\! *}$ at 162 138
	\pinlabel $c_1$ at 48 87
	\endlabellist		
			\centering
			\includegraphics[scale=0.5]{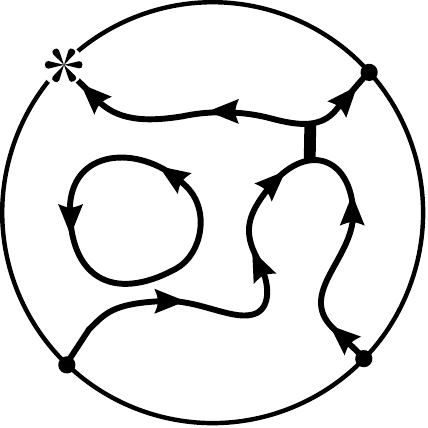}
			\caption{}
			\label{fig:diagram-segments}
		\end{subfigure}
		\caption{%
			(a) A labelled diagram \(\Diag\), 
			(b) its component graph \(G_\Diag\), and 
			(c) its closed components and diagram segments 
			    subordinate to the bonding arc \(a^*\)
		}
	\end{figure}

	\begin{proposition}\label{thm:edge-ring-generator-criterion-alt-all}
		Let \(A\) be an arc system for \(\Diag\). 
		Choose some edge \(e_i\) 
		on the closed component \(c_i\) and 
		write \(Y_{i}= X^{e_i}\) for \(i=1,\dots,m\).
		Then \(R\) is freely generated 
		as a polynomial ring over \(\Rd\) 
		by \(Y_i\) for \(i=1,\dots,m\) and 
		\(x^a\) for \(a\in \arcs(\Diag)\smallsetminus A\). 
		\qed
	\end{proposition}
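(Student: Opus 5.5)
The plan is to count variables and relations, and then show by a triangular elimination that the proposed generators together with \(\Rd\) generate \(R\). Since \(R=\CoeffRing[\varG,X_0,\dots,X_N]/I(D)\), freeness will follow once every remaining edge variable is eliminated using exactly one relation \(\rho^a\) per node \(a\). After that, the leftover relation is the one already defining \(\Rd\).

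First I set up the bookkeeping. Every node is 4-valent and there are 4 boundary endpoints, so \(2N+2 = 4|\nodes(D)|+4\) (counting edge ends), which gives \(N+1 = 2|\nodes|+2\) edge variables. Next I use Euler characteristic on the component graph. \(G_\Diag\) has \(m+2\) vertices and \(|\arcs(\Diag)|=|\nodes(D)|\) edges, and an arc system \(A\) has \(m+1\) arcs (\cref{rem:size-of-arc-system-fixed}). The proposed generating set therefore has \(m+|\nodes|-(m+1)=|\nodes|-1\) elements. Adding the four boundary variables and \(\varG\), and subtracting the two relations \(X_0=0\) and \(\rho^\partial=0\) of \(\Rd\) (\cref{prop:boundary-edge-ring}), the dimension count matches \(1+(N+1)-1-|\nodes|\). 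So no relation is wasted, and it suffices to show the generators and \(\Rd\) generate \(R\) as an algebra.

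For generation, I would use that at a node \(a\), \(\rho^a=0\) expresses each of the four adjacent edge variables in terms of the other three. Moreover, \(x^a=\pm(X^a_1-X^a_0)\) for a \(\arcD\)-arc (resp. \(X^a_3-X^a_0\) for \(\arcT\)) is the difference of the two variables on one side of the arc. Along a component of \(|\Diag|\), consecutive edges meet at arcs, and passing through an arc \(a\) relates the edge variables on the two strands crossing it. Knowing one variable on each of the two sides, together with \(x^a\), determines the rest. I would process arcs in \(\arcs\smallsetminus A\) first: each such \(x^a\) is a free generator and lets one propagate known values along a strand. Then I would process arcs in \(A\) by induction on the tree \(G_A\), rooted at \(o^*\), whose edge variable next to \(*\) is \(X_0=0\), or at the open component \(o\), which carries the boundary variables. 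Descending the tree, when a tree arc attaches a new closed component \(c_i\), that component's first variable is the generator \(Y_i\). The relation \(\rho^a\) then solves for the difference across the arc, which in turn propagates along \(c_i\). For the bonding arc \(a^*\), the relation is used to link \(o\) and \(o^*\) through the boundary variables.

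The main obstacle is making this elimination genuinely triangular: each \(\rho^a\) must be used exactly once to solve for a new variable, with no circular dependence. The non-tree arcs are where cycles in \(G_\Diag\) arise, and their relations are spent on introducing \(x^a\) as a new generator rather than on solving. I would formalize this as an ordering of edges, obtained by walking each component and attaching components in tree order, such that each relation has a "leading" variable not appearing earlier. That gives an isomorphism \(R\cong\Rd[Y_i,x^a]\) via a unitriangular change of polynomial coordinates. The last relation, at the root, reduces to \(\rho^\partial\), by the sign computation in \cref{prop:boundary-edge-ring}, so it is absorbed into \(\Rd\). The dimension count confirms that nothing is left over.
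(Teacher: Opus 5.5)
Your reduction of freeness to generation is sound. The degree-one part of \(R\) is \(V/W\), with \(V\) free of rank \(N+2\) and \(W\) generated by \(|\nodes(D)|+1\) elements, so it has rank at least \(|\nodes(D)|+2\). A spanning set of that size is therefore a basis (argue over \(\Z\), then base change). Note that ``no relation is wasted'' follows from the count \emph{plus} generation, not from the count alone.

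The gap is in the generation step, which carries all the content. You propagate top-down in \(G_A\): at a tree arc \(a\) you want the two parent edges at \(a\) to be known already, so that \(\rho^a\) produces the jump on the child's side. But the parent's edge variables away from its starting edge depend on the jumps at \emph{every} junction along it. That includes the junctions with its other children in \(G_A\) and, on \(o^*\) and \(o\), the junction at \(a^*\) --- exactly the unknowns you are trying to find. In general this is circular. A component with \(k\ge 2\) children in \(G_A\) offers only one telescoping equation (on \(o^*\): \(X_0=0\) against the boundary value at its far end) for at least \(k\) unknown jumps. Your scheme never uses the relation obtained by going once around a closed component, and that relation is what determines the tree variables.

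The missing idea is to run the induction from the leaves of \(G_A\) inward towards \(o\) and \(o^*\).
\begin{enumerate}
\item Go once around a closed component \(c\). The differences of consecutive edge variables telescope to \(0\). Rewrite each difference as \(\pm x^b\) via \(\rho^b\).
\item In the resulting relation, the arc joining \(c\) to its parent occurs exactly once, with coefficient \(\pm1\). Every other term is \(x^b\) with either \(b\notin A\) (a generator; arcs from \(c\) to itself are loops in \(G_\Diag\), hence never in \(A\)) or \(b\) joining \(c\) to one of its children (already expressed). This expresses every \(x^a\), \(a\in A\smallsetminus\{a^*\}\), through the generators.
\item Telescoping along \(o^*\) between \(X_0=0\) and its boundary end then yields \(x^{a^*}\).
\item Only now can you propagate edge values from \(Y_i\), from \(X_0\), and from a boundary edge of \(o\).
\end{enumerate}

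At the level of the ideal, this replaces individual \(\rho^a\) by subtree sums \(\sum\sigma^a\rho^a\), as in the proof of \cref{lem:criterion-x-in-mathfrak-a}. That is why the paper's sketch of base changes ends at the desired generators together with \emph{some} generating set of \(I(D)\), not necessarily \(\{X_0\}\cup\{\rho^a\}\).

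In fact, a pivot order in which each individual \(\rho^a\) solves for one new edge variable need not exist. Take \(o^*\) and \(o\), joined by \(a^*\), and join each of them to its own circle by two parallel dotted arcs whose \(x\)'s are differences of edges on the open strand. On each side, put into \(A\) the arc farther from \(a^*\). Then, even with all boundary variables and the \(Y_i\) known, every \(\rho^a\) and every non-tree \(x^b\) still contains at least two unknown edge variables, so no pivot can be chosen.
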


	The strategy for the proof of 
	\cref{thm:edge-ring-generator-criterion-alt-all} 
	is to perform a sequence of base changes in
	\(\CoeffRing[\varG,X_0,X_1,\dots,X_N]\), 
	until one arrives at a basis consisting of 
	the desired basis for \(R\) and 
	a set of generators for the ideal \(I(D)\). 
	We leave the details to the reader.

\subsubsection{Relations for \texorpdfstring{\(x^a\)}{x\^{}a}} 

	\begin{definition}
		Given a subset \(A\subseteq \arcs(\Diag)\) 
		the arc ideal is
		\[
		\mathfrak{a}
		= 
		\mathfrak{a}(A)
		= 		
		\sum_{a\in \arcs(D)\smallsetminus A} 
		(x^a)
		\] in $R$. 
		In the particular case when $A$ consists of a single bonding arc $a^*$ in \(\Diag\) 
		we will write  \(\mathfrak{a}(a^*)\) for this ideal. 
	\end{definition} 
	
	\begin{corollary}\label{cor:edge-ring-generator-criterion}
		Let \(A'\) be a subset of arcs of \(\Diag\) 
		containing an arc system \(A\). If $\mathfrak{a}'$ is the arc ideal associated with $A'$
		then \(R/\mathfrak{a}'\) 
		is freely generated as a polynomial ring over \(\Rd\) 
		by \(Y_i\) for \(i=1,\dots,m\) and 
		\(x^a\) for \(a\in A'\smallsetminus A\). 
		In particular, for any arc system \(A\) with associated arc ideal $\mathfrak{a}$, 
		\[R/\mathfrak{a}=\Rd[Y_1,\dots,Y_m].\]
	\end{corollary}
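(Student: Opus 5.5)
The plan is to deduce the corollary directly from \cref{thm:edge-ring-generator-criterion-alt-all} by a change-of-variables argument in a polynomial ring. By that proposition, \(R\) is a free polynomial ring over \(\Rd\) in the variables
\[
Y_1,\dots,Y_m
\quad\text{and}\quad
x^a\quad (a\in\arcs(\Diag)\smallsetminus A),
\]
where \(A\) is the chosen arc system. The arc ideal \(\mathfrak{a}'=\mathfrak{a}(A')\) is generated by the elements \(x^a\) for \(a\in\arcs(\Diag)\smallsetminus A'\). Since \(A\subseteq A'\), every such \(a\) lies in \(\arcs(\Diag)\smallsetminus A\). Hence each generator of \(\mathfrak{a}'\) is one of the free polynomial variables supplied by the proposition, and \(\mathfrak{a}'\) is the ideal generated by a subset of these variables.

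Quotienting a polynomial ring \(\Rd[\text{variables}]\) by the ideal generated by a subset of its variables yields the polynomial ring over \(\Rd\) in the remaining variables. In our case the remaining variables are \(Y_1,\dots,Y_m\) together with \(x^a\) for \(a\in(\arcs(\Diag)\smallsetminus A)\smallsetminus(\arcs(\Diag)\smallsetminus A')\), that is, for \(a\in A'\smallsetminus A\). This gives the first claim. The second claim is the special case \(A'=A\): then \(A'\smallsetminus A=\varnothing\), so \(R/\mathfrak{a}=\Rd[Y_1,\dots,Y_m]\).

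The only point needing care is that the \(x^a\) generating \(\mathfrak{a}'\) are literally among the free generators of \(\Rd\)-algebra \(R\), rather than merely related to them. This holds precisely because \cref{thm:edge-ring-generator-criterion-alt-all} lists \(x^a\) itself, for every \(a\notin A\), as a generator. All of the real content therefore lies in that proposition, whose proof the paper leaves to the reader.
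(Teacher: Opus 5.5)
Your proposal is correct and is the paper's argument: by \cref{thm:edge-ring-generator-criterion-alt-all}, the generators \(x^a\) (\(a\in\arcs(\Diag)\smallsetminus A'\)) of \(\mathfrak{a}'\) are among the free polynomial generators of \(R\) over \(\Rd\), because \(A\subseteq A'\). Quotienting sets exactly these variables to zero and leaves the free generators \(Y_1,\dots,Y_m\) and \(x^a\) for \(a\in A'\smallsetminus A\).
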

	
	\begin{proof}
		By definition, the ideal \(\mathfrak{a}'\) is generated 
		by \(x^a\) for \(a\in\arcs(\Diag)\smallsetminus A'\). 
		So taking the quotient of \(R\) by this ideal 
		simply amounts to setting some of the generators of \(R\) 
		from \cref{thm:edge-ring-generator-criterion-alt-all} equal to zero. 
	\end{proof}

	\begin{proposition}\label{thm:ideal-independent-of-arc-system}
		\(
		\mathfrak{a}(a^*)
		=
		\mathfrak{a}(A)
		\)
		for any bonding arc \(a^*\) contained in an arc system \(A\). 

	\end{proposition}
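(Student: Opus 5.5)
The claim is that \(\mathfrak{a}(a^*)=\mathfrak{a}(A)\), where \(\mathfrak{a}(a^*)\) is generated by \(x^a\) for all \(a\neq a^*\) and \(\mathfrak{a}(A)\) by \(x^a\) for \(a\notin A\). Since \(a^*\in A\), we have \(\arcs(\Diag)\smallsetminus A\subseteq\arcs(\Diag)\smallsetminus\{a^*\}\), so \(\mathfrak{a}(A)\subseteq\mathfrak{a}(a^*)\) immediately. The content is the reverse inclusion: we must show that \(x^a\in\mathfrak{a}(A)\) for every non-bonding arc \(a\in A\smallsetminus\{a^*\}\). I would work in the quotient \(R/\mathfrak{a}(A)\), which by \cref{cor:edge-ring-generator-criterion} is the polynomial ring \(\Rd[Y_1,\dots,Y_m]\), and show that the image of each such \(x^a\) vanishes there.

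\textbf{Step 1: effect of killing \(x^b\) at an arc.} In \(R/\mathfrak{a}(A)\), every arc \(b\notin A\) has \(x^b=0\). Up to sign, \(x^b\) is \(X^b_1-X^b_0\) or \(X^b_3-X^b_0\), and the node relation \(\rho^b=0\) then also identifies the opposite pair of edge variables. So at each such arc the edge variables are identified in two pairs, matching the resolution in which the strands pass straight through. Consequently, within each component of the diagram obtained by resolving only the arcs \(b\notin A\), edge variables agree with each other up to the sign pattern \(\sigma\) (and possibly additive \(\varG\)-shifts, to be checked against \cref{lem:edge-signs}).

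\textbf{Step 2: induction over the tree \(G_A\).} Consider a non-bonding arc \(a\in A\). Removing \(a\) from the tree \(G_A\) splits it into two subtrees, and exactly one of them contains neither open component; call this the leaf side. I would argue by induction from the leaves of \(G_A\) inward. For a leaf closed component \(c_i\), the only arc of \(A\) touching it is \(a\). Following \(c_i\) around, all of its edge variables are identified through arcs not in \(A\), using the relations from Step 1 together with the arcs of \(A\) already handled by induction. Closing up the loop, the two endpoint variables of \(a\) lying on \(c_i\) are forced to coincide, which gives \(x^a=0\). I would then remove \(c_i\) and \(a\) and continue with the remaining tree. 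The bonding arc \(a^*\) always joins \(o\) and \(o^*\), so it is never a leaf edge and is never forced to vanish. This matches the fact that \(R/\mathfrak{a}(a^*)=\Rd[Y_i]\) is nonzero.

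\textbf{Main obstacle.} The hard part is Step 2's claim that traversing a closed component returns to the same variable with consistent sign, without spurious \(\varG\)-terms or sign flips. This needs careful bookkeeping with \(\sigma^e\), and the sign in \(x^a_\arcD=\sigma^a(X^a_1-X^a_0)\) must be matched with the orientation conventions of each node. My first attempt would use the region-variable reformulation from \cref{rmk:regions}. In that picture an arc with \(x^a=0\) identifies the region variables on its two sides, a closed component bounds a region, and loop consistency becomes the statement that region labels are well defined. An alternative route is to compare ranks: both ideals give quotients that are free \(\Rd\)-polynomial rings in the same variables \(Y_i\) (by \cref{cor:edge-ring-generator-criterion}, applied with \(A'=A\), and for \(\mathfrak{a}(a^*)\) after exhibiting it as containing an arc system's ideal). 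Then the surjection \(R/\mathfrak{a}(A)\to R/\mathfrak{a}(a^*)\) would be an isomorphism by a graded-dimension count in each \(q\)-degree, which forces the two ideals to be equal.
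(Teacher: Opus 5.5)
Your leaf-peeling induction in Step~2 is a correct proof, and it takes a different route from the paper's.

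\textbf{The paper's route.} The paper first proves \cref{lem:criterion-x-in-mathfrak-a}. For a non-bonding arc \(\hat a\in A\), it takes the subtree of closed components cut off by \(\hat a\) and sums \(\sigma^a\rho^a\) over all arcs \(a\) of that subtree. This sum is \(0\) in \(R\). Modulo \(\mathfrak{a}(A)\), the edge variables cancel in pairs by the sign rule for \(\sigma\), and the sum telescopes to \(\pm x^{\hat a}\). Each arc is handled in one stroke, with no induction, at the cost of the \(\sigma\)-bookkeeping.

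\textbf{Your route.} You peel off closed-component leaves. Suppose \(c_i\) is a leaf of the current tree with edge \(a\). Then \(c_i\) passes through the node \(a\) only once, because trees have no loops. Every other node met while traversing \(c_i\) is an arc whose \(x\) already vanishes in \(R/\mathfrak{a}(A)\): either it is not in \(A\), or it is an arc of \(A\) already handled. So the two edges of \(c_i\) at \(a\) carry the same variable. Since \(x^a\) is \(\pm\) their difference (using \(\rho^a=0\)), \(x^a\in\mathfrak{a}(A)\). Then \(\mathfrak{a}(A)=\mathfrak{a}(A\smallsetminus\{a\})\), and you recurse.

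The recursion terminates as intended. A tree containing \(o\), \(o^*\), the edge \(a^*\) and at least one closed component always has a closed-component leaf. Otherwise its only leaves would be \(o\) and \(o^*\), making it the path between them, which is just the edge \(a^*\).

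\textbf{Comparison.} Your argument is more elementary, since it uses only single-node relations at arcs whose \(x\) vanishes. The paper's version gives a per-arc criterion for arbitrary arc sets containing an arc system. It reuses this for the non-tree set \(A_1\cup A_2\) in \cref{prop:retracts-independent-of-arc-system}.

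\textbf{The ``main obstacle'' is not real.} If \(x^b=0\), then together with \(\rho^b=0\) you get exact equalities:
\(X^b_1=X^b_0\) and \(X^b_2=X^b_3\) for a dotted arc, or \(X^b_3=X^b_0\) and \(X^b_1=X^b_2\) for a thick arc. These are precisely the pairs of edges joined by strands of \(|\Diag|\) at \(b\); this is \cref{lem:x-a-identifies}. Your own first sentence of Step~1 already says this, so the hedge ``up to \(\sigma\), possibly with \(\varG\)-shifts'' should be dropped. Traversing \(c_i\) gives literal equality of variables, and \(\sigma\) never enters your argument. The region-variable detour via \cref{rmk:regions}, which does introduce \(\varG\)-shifts, is unnecessary.

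\textbf{The fallback rank count is circular.} \cref{cor:edge-ring-generator-criterion} only applies to arc sets containing an arc system, and \(\{a^*\}\) is not one once closed components exist. Knowing \(R/\mathfrak{a}(a^*)\cong\Rd[Y_1,\dots,Y_m]\) is equivalent to the proposition itself.
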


	Towards proving \cref{thm:ideal-independent-of-arc-system} some preliminary observations are needed. Set \(Y_\star=\star\in \Rd\) for \(\star\in\{0,x,x+y,y\}\).

	\begin{definition}\label{def:diagram-segments}
		The endpoints of a bonding arc \(a^*\) of \(\Diag\) 
		split each of the two open components of \(\Diag\) 
		into two connected components. 
		Call these components the 
		open diagram segments subordinate to \(a^*\) and
		denote them by \(c_0\), \(c_{x}\), \(c_{x+y}\), and \(c_{y}\), 
		ordered by their ends on the boundary of \(\Diag\), 
		starting at the top left and 
		going in counter-clockwise direction, 
		as illustrated in \cref{fig:diagram-segments}. 
	\end{definition}

	The next two lemmas follow directly from definitions.

	\begin{lemma}\label{lem:edge-label-boundary}
		 Let 
		\(e_\star\) be the edge on the open diagram segment \(c_\star\)
		that meets the boundary, for \(\star\in\{0,x,x+y,y\}\).
		Then 
		\(Y_{\star}=X^{e_\star}\in\Rd\).
		\qed
	\end{lemma}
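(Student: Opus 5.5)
The plan is to compute the single relation defining \(\Rd\) explicitly. By \cref{prop:boundary-edge-ring}, \(\Rd(D)=\CoeffRing[\varG,X_0,X_1,X_2,X_3]/(X_0,\rho^\partial)\) with \(\rho^\partial=\sum_{i=0}^3\epsilon^{e_i}\sigma^{e_i}X_i\). By \cref{def:diagram-segments}, the open segments \(c_0,c_x,c_{x+y},c_y\) end at \(*,\Endi,\Endii,\Endiii\) in this order, since these ends appear counter-clockwise from the top left. Hence \(e_0,e_x,e_{x+y},e_y\) are the boundary edges \(e_0,e_1,e_2,e_3\), and the lemma asserts \(X_0=0\), \(X_1=x\), \(X_2=x+y\) and \(X_3=y\) in \(\Rd\). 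The first holds because \(X_0\in I(D)\). The second and fourth are the definitions \(x=X_1\), \(y=X_3\) from \eqref{eq:xyz}, which make sense for a general diagram once \(\Rd(D)\) is identified with \(\CoeffRing[x,y,z]\). So everything reduces to showing
\[
\rho^\partial=\pm(X_0-X_1+X_2-X_3),
\]
which then gives \(X_2=X_1+X_3-X_0=x+y\).

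To determine the signs \(\epsilon^{e_i}\sigma^{e_i}\), I will show that the product \(\epsilon^e\sigma^e\) has a checkerboard interpretation. Colour the complementary regions of the singular diagram \(D\) in the disk in checkerboard fashion; this is possible since every interior vertex is 4-valent. Orient each edge \(e\) by \(\epsilon\), that is, as pointing away from a chosen endpoint. The claim is that \(\epsilon^e\sigma^e=+1\) exactly when the region on a fixed side of \(e\), say the left, is shaded, possibly after a global sign.

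Verifying the claim is the key step. From the proof of \cref{prop:boundary-edge-ring} we have \(\sigma^a\rho^a=\sum_i\epsilon^a_i\sigma^{e^a_i}X^a_i\), and \(\rho^a=X^a_0-X^a_1+X^a_2-X^a_3\). So at every node \(a\), the local products \(\epsilon^a_i\sigma^{e^a_i}\) equal \(\sigma^a\cdot(+,-,+,-)\) as \(i\) runs cyclically around the node. Around a 4-valent vertex the checkerboard pattern alternates in exactly this way. Moreover, each interior edge contributes opposite values of \(\epsilon\) at its two ends while keeping \(\sigma^e\), and this matches the fact that the shaded side of an edge swaps from left to right when the orientation is reversed. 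Together these make the local assignment consistent along every interior edge, and connectedness of \(D\) spreads it to the whole diagram.

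Applying the claim at the four boundary edges finishes the argument. The boundary regions alternate in colour as one goes around \(\partial D^2\) through \(*,\Endi,\Endii,\Endiii\), so the signs \(\epsilon^{e_i}\sigma^{e_i}\) alternate. The normalisation \(\sigma^{e_0}=+1\) fixes the overall sign, and since \(\rho^\partial\) only matters up to sign, either global sign gives the displayed relation. The main obstacle is the bookkeeping in this local consistency check. It uses the fixed node orientation convention, where the edges \(e^a_0\), \(e^a_1\) of sign \(\sigma^a\) sit on one side and \(e^a_2\), \(e^a_3\) on the other. I would first verify the claim directly on the elementary diagram, where \(\rho^\partial=X_0-X_1+X_2-X_3\) is exactly the relation in \cref{exa:diagrams:basic}. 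I would then propagate it node by node, inducting on the number of nodes by cutting \(D\) along an interior edge.
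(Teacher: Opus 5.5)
Your argument is correct and is essentially the paper's. The paper says the lemma follows directly from the definitions: \(c_0,c_x,c_{x+y},c_y\) end at \(\ast,\Endi,\Endii,\Endiii\), so one just reads off \(X_0=0\), \(X_1=x\), \(X_3=y\) and \(X_2=X_1+X_3-X_0=x+y\) under the identification of \(\Rd(D)\) with \(\CoeffRing[x,y,z]\) from \cref{exa:diagrams:basic}. The paper never proves that \(\rho^\partial=\pm(X_0-X_1+X_2-X_3)\) for an arbitrary singular diagram; it takes this for granted here and uses it again in the potential computation in the proof of \cref{prop:mfs-well-defined}. Your checkerboard argument is a valid proof of that fact, via the local pattern \(\epsilon^a_i\sigma^{e^a_i}=\sigma^a(+,-,+,-)\) and the constancy of the global sign along interior edges.
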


	\begin{lemma}\label{lem:x-a-identifies}
		Let \(e\) and \(e'\) be two edges 
		that meet at an endpoint of an arc \(a\) in \(\Diag\). 
		Then \([X^e]=[X^{e'}]\in R/(x^a)\). 
		\qed
	\end{lemma}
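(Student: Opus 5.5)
The statement to prove is Lemma~\ref{lem:x-a-identifies}: if \(e\) and \(e'\) meet at an endpoint of an arc \(a\), then \([X^e]=[X^{e'}]\) in \(R/(x^a)\). The plan is to work locally at the node \(a\) and use only the two relations available there. One is the node relation \(\rho^a=X^a_0-X^a_1+X^a_2-X^a_3=0\), which holds in \(R\) by definition of \(I(D)\). The other is the vanishing of \(x^a\) in the quotient.

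\textbf{Dotted arc.} Suppose \(\lab(a)=\arcD\). Then \(x^a=\sigma^a(X^a_1-X^a_0)\), and since \(\sigma^a=\pm1\) is a unit, killing \(x^a\) gives \(X^a_1=X^a_0\). The node relation then yields \(X^a_2=X^a_3\).

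Next I identify which edges meet at each endpoint. In the pictorial realization of \(\DiagD\), the dotted arc joins the strand through \(e^a_0,e^a_1\) to the strand through \(e^a_2,e^a_3\). So the two pairs of edges meeting at its two endpoints are \(\{e^a_0,e^a_1\}\) and \(\{e^a_2,e^a_3\}\).

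\textbf{Thick arc.} Suppose \(\lab(a)=\arcT\). Then \(x^a=X^a_3-X^a_0\), so killing it gives \(X^a_3=X^a_0\). The node relation then yields \(X^a_1=X^a_2\). In \(\DiagT\) the strands pair \(\{e^a_0,e^a_3\}\) and \(\{e^a_1,e^a_2\}\), and these are exactly the pairs of edges meeting at the endpoints of the thick arc.

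In both cases this gives the claim. This matches Remark~\ref{rem:mfs-well-defined}: identifying one opposite pair forces the identification of the adjacent pair.

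The only real obstacle is bookkeeping. I must check against the figures for \(\DiagD\) and \(\DiagT\) that the pairs of edges lying on a common strand near the endpoints are the ones stated above, and not the complementary pairing. If the pairing were reversed, the relation \(\rho^a=0\) would still deliver the other equality, so the conclusion survives either way. Once this is checked, the argument is a two-line computation.
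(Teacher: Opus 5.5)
Your argument is correct and is exactly the definitional check the paper leaves implicit. Killing \(x^a\) identifies the edge variables on one strand through the resolved node, and \(\rho^a=0\) then identifies those on the other strand, with the pairings \(\{e^a_0,e^a_1\},\{e^a_2,e^a_3\}\) for \(\arcD\) and \(\{e^a_0,e^a_3\},\{e^a_1,e^a_2\}\) for \(\arcT\) that you state.

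Drop your closing hedge, though, because it is false. With the complementary pairing the lemma would fail: for \(\DiagD\) itself, \(R/(x^a)\cong\CoeffRing[\varG,y]\) with \(X_0=X_1=0\) and \(X_2=X_3=y\), so \(X_0\neq X_3\). The pairing therefore has to be confirmed, not hedged. The one you state is the right one, and the text confirms it. Under \cref{thm:quasi-iso} and \cref{thm:OmegaFullyFaithful}, \(M_\arcD\mapsto\circ\mapsto\Li\). Moreover \(\Lo=Q_0\) has connectivity \(\conny\), so \(\Lo\) joins \(\ast\) to \(\Endiii\), and hence \(\Li\) joins \(\ast\) to \(\Endi\).
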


	\begin{proposition}\label{prop:edge-label-independent}
		Let \(a^*\) be a bonding arc and 
		\(e\) an edge on a closed component or open diagram segment \(c_\star\). 
		Then \([X^e]=[Y_\star]\in R/\mathfrak{a}(a^*)\).
	\end{proposition}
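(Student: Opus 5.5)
The plan is to work in $R/\mathfrak{a}(a^*)$, where $x^a=0$ for every arc $a\neq a^*$. By \cref{lem:x-a-identifies}, the two edges meeting at an endpoint of such an arc $a$ then have equal classes. So I will first show that, away from $a^*$, the class $[X^e]$ is constant along each closed component and along each open diagram segment. I then identify the constant using \cref{lem:edge-label-boundary} for open segments, and a separate argument for closed components.

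For the first step, consider a node $a\neq a^*$. At that node the component through it is cut into pieces, and the four edges $e^a_0,\dots,e^a_3$ pair up into the two strands of the resolution (as in $\DiagD$ or $\DiagT$). The relation $\rho^a=X^a_0-X^a_1+X^a_2-X^a_3=0$ together with $x^a=0$ should force the two edges on each strand to agree. If $x^a=\pm(X^a_1-X^a_0)$ vanishes, then $X^a_0=X^a_1$, and $\rho^a$ gives $X^a_2=X^a_3$. If instead $x^a=X^a_3-X^a_0$ vanishes, then $X^a_0=X^a_3$ and $X^a_1=X^a_2$. I will check in each of the two labels that these pairings are exactly the strand pairings of the resolution. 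This is essentially the content of \cref{lem:x-a-identifies}, and \cref{rem:mfs-well-defined} signals that it holds.

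Walking along a component from edge to edge through nodes other than $a^*$ therefore keeps $[X^e]$ constant. This covers all of a closed component, and each open segment $c_\star$ from the boundary up to $a^*$. For an open segment, the boundary edge $e_\star$ has $[X^{e_\star}]=Y_\star$ by \cref{lem:edge-label-boundary}, and this finishes that case.

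For closed components $c_i$, the statement requires $[X^e]=Y_0=0$. Here I expect the main obstacle, since a closed component touches no boundary edge. My plan is to exploit the fact that $\mathfrak{a}(a^*)$ also kills the arcs joining $c_i$ to other components. Take an arc $a\neq a^*$ joining $c_i$ to another component $c'$; such an arc exists by connectivity of $D$. At that node the four edges belong two to $c_i$ and two to $c'$, and both vanishing relations above hold in some form. Because $c_i$ is a closed loop, its two edges at $a$ have the same class $Y$. The combination of $\rho^a=0$ and $x^a=0$ then constrains $Y$ against the class $Y'$ on $c'$.

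I will try to show this forces $Y=0$, possibly together with $Y'$ being unconstrained. I would check this against the sign $\sigma^a$ and the relation $x^a=\sigma^a(X^a_1-X^a_0)$ or $X^a_3-X^a_0$, where the two edges of one strand belong to different components. If a single arc does not suffice, I would fall back on an induction over the component graph $G_\Diag$, starting from the open components, whose classes are determined. Failing that, I would use \cref{cor:edge-ring-generator-criterion} with the arc system containing $a^*$ to compare ranks: if $R/\mathfrak{a}(a^*)$ turns out to be $\Rd$ itself, every $[X^e]$ lies in $\Rd$, and by the loop argument it equals $0$.
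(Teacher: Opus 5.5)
Your first two steps are correct and are exactly the paper's argument. Modulo $\mathfrak{a}(a^*)$, every $x^a$ with $a\neq a^*$ vanishes. So by \cref{lem:x-a-identifies}, the class $[X^e]$ is constant along any path on $|\Diag|$ that avoids the node $a^*$. For an open diagram segment $c_\star$, this constant is $Y_\star$ by \cref{lem:edge-label-boundary}.

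The closed-component part, however, rests on a misreading and sets out to prove a false statement. For a closed component $c_i$, the symbol $Y_\star$ means $Y_i=X^{e_i}$, the variable of the reference edge $e_i$ on $c_i$ fixed in \cref{thm:edge-ring-generator-criterion-alt-all}; it is not $Y_0=0$.

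The claim $[X^e]=0$ is false whenever $m>0$. By \cref{thm:ideal-independent-of-arc-system} and \cref{cor:edge-ring-generator-criterion}, $R/\mathfrak{a}(a^*)=R/\mathfrak{a}(A)=\Rd[Y_1,\dots,Y_m]$ for any arc system $A\ni a^*$, with the $[Y_i]$ free polynomial generators. You can also see this locally. At an arc $a\neq a^*$ joining $c_i$ to another component $c'$, the relations $x^a=0$ and $\rho^a=0$ only identify the two edges within each of the two strands through the node $a$. No relation between the classes on $c_i$ and on $c'$ arises. So neither your single-arc computation nor an induction over $G_\Diag$ can force $[Y_i]=0$, and the rank comparison you propose as a fallback actually refutes your target.

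The repair needs no new idea. Both endpoints of the bonding arc $a^*$ lie on the open components, so all edges at the node $a^*$ lie on open components and $c_i$ never passes through $a^*$. Your walking argument therefore already gives $[X^e]=[X^{e_i}]=[Y_i]$ for every edge $e$ on $c_i$. Replace your last paragraph with this observation.
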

	\begin{proof}%
		For closed components, 
		this follows directly from iteratively applying \cref{lem:x-a-identifies} 
		to the arcs that we meet along a path on the closed component.
		The same argument works for open diagram segments, using \cref{lem:edge-label-boundary}. 
	\end{proof}

	\begin{lemma}\label{lem:criterion-x-in-mathfrak-a}
		Let \(A\subset\arcs(\Diag)\) be a subset of arcs containing an arc system. 
		If \(a\in A\) is a non-bonding arc 
		such that the graph \(G_{A\smallsetminus\{a\}}\) is disconnected
		then \(x^a\in \mathfrak{a}(A)\).   
	\end{lemma}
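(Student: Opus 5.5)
Removing the edge $a$ from the tree-containing graph $G_A$ disconnects it into two pieces. The plan is to use the relation $\rho^{\partial}$ on one piece to express $x^a$ in terms of elements of $\mathfrak{a}(A)$. Let $G'$ be the component of $G_{A\smallsetminus\{a\}}$ not containing the bonding-arc structure. Since $a$ is non-bonding and $A$ contains a bonding arc $a^*$, which is an edge joining the two open components, both open components lie in the same component of $G_{A\smallsetminus\{a\}}$. Hence $G'$ consists only of closed components. Let $\Diag'$ denote the sub-diagram formed by the closed components in $G'$, together with all arcs of $\Diag$ having both endpoints on them.

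The key step is a local analogue of \cref{prop:boundary-edge-ring}. Consider the signed sum
\[
\Sigma=\sum_{b}\sigma^b\rho^b ,
\]
taken over all nodes $b$ of $\Diag$ that have at least one endpoint on a component of $G'$. I expect this sum to telescope: every edge lying on a component of $G'$ appears in exactly two such node relations, with opposite signs $\epsilon^e\sigma^e$, exactly as in the proof of \cref{prop:boundary-edge-ring}. The only surviving terms come from arcs with one endpoint on $G'$ and the other endpoint outside it. Since $\Sigma=0$ in $R$, this yields a linear relation among the $X^e$ for edges $e$ adjacent to such mixed arcs, taken on the outside endpoints.

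Next I would use the identity $\sigma^b\rho^b=\pm x^b_{\arcD}\pm\ldots$ at each mixed arc. Both $x^b_\arcD=\sigma^b(X^b_1-X^b_0)$ and $x^b_\arcT$ are differences of adjacent edge variables. Rewriting each leftover pair accordingly should express $\Sigma$ as $\pm 2x^a$ (or $\pm x^a$) plus a combination of the $x^b$ for the other mixed arcs $b$. Every mixed arc other than $a$ lies outside $A$, for otherwise $G_{A\smallsetminus\{a\}}$ would connect $G'$ to the rest. Hence all those $x^b$ lie in $\mathfrak{a}(A)$, and $x^a\in\mathfrak{a}(A)$ follows.

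The main obstacle is the sign and coefficient bookkeeping in this last rewrite. I have to check that the contribution of $a$ is a unit multiple of $x^a$ rather than $z^a$ or $0$, and that no factor of $2$ appears, since characteristic $2$ must be allowed. If a stray factor of $2$ does appear, I would instead split the sum differently: take the node relations only on one side of the cut, namely the nodes lying on $G'$ together with $a$ counted once. Then the boundary of $G'$ consists of a single crossing edge pair at $a$, and the telescoping leaves exactly $\sigma^a$ times a difference of two edge variables adjacent to $a$, which is $\pm x^a$ by the definition of $x^a_{\lab(a)}$, modulo the $x^b$ with $b\notin A$.
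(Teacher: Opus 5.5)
Your argument is correct, and the factor of $2$ you worry about does not arise. To close the bookkeeping gap, state one local fact. At a mixed node $b$, the two edges on the outer side are exactly the two edges meeting at the outer endpoint of the arc $b$: either $\{e^b_0,e^b_1\}$ or $\{e^b_2,e^b_3\}$ if $\lab(b)=\arcD$, and either $\{e^b_0,e^b_3\}$ or $\{e^b_1,e^b_2\}$ if $\lab(b)=\arcT$. In every case the two values of $\epsilon\sigma$ are opposite. For $\arcD$ the $\epsilon$'s differ and the $\sigma$'s agree; for $\arcT$ it is the other way round. So the pair contributes $\pm$ the difference of two edge variables meeting at an endpoint of $b$, which equals $\pm x^b$ in $R$ because $\rho^b=0$.

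Your telescoping therefore gives the exact identity $\sum_b \pm x^b=0$ in $R$, summed over all arcs with exactly one endpoint on a component of $G'$. The coefficient of $x^a$ is $\pm1$, and every other such arc lies outside $A$, so $x^a\in\mathfrak{a}(A)$ with no restriction on the characteristic. Your fallback, as phrased, is not literally right: the cut around $G'$ can also be crossed by arcs not in $A$. It only reduces to the single edge pair at $a$ after passing to $R/\mathfrak{a}(A)$, and it is then essentially the paper's argument.

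Compared with the paper, the routes differ in which nodes are summed and where the cancellation happens. The paper sums $\sigma^b\rho^b$ only over the arcs $A'\subseteq A$ forming the edges of $G'$, and works in $R/\mathfrak{a}(A)$. There, by \cref{lem:x-a-identifies}, edge variables are identified along the segments between endpoints of arcs in $A$. The paper must then check that the $\sigma$-signs stay consistent along these segments (they flip exactly when the orientation does), leaving only $[X^{e'}]-[X^{e''}]$ at the endpoint of $a$ on $G'$. You sum over all nodes touching $G'$ and cancel already in the polynomial ring, exactly as in \cref{prop:boundary-edge-ring}. This avoids both the identification of variables and the sign tracking along segments, and it yields a stronger cut relation in $R$ that does not depend on $A$. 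The paper's version instead works directly in the quotient $R/\mathfrak{a}$, where the lemma is subsequently applied.
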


	\begin{proof}%
		To simplify index notation below write \(\hat{a}\) for the arc \(a\) from the statement of the lemma. 
		By assumption 
		the graph \(G_{A\smallsetminus\{\hat{a}\}}\) consists of two connected components.
		Let \(G'\) be the connected component 
		whose vertices correspond to closed components of \(\Diag\) only. 
		Let \(A'\subset A\) be the subset of arcs that correspond to the edges in \(G'\).
		Let \(e'\) and \(e''\) be the two edges 
		that meet the arc \(\hat{a}\) and 
		that lie on a closed component corresponding to a vertex in \(G'\). 
		Then \(x^{\hat{a}}=\pm(X^{e'}-X^{e''})\in R\). 
	 	Consider
		\[
		S
		=
		\sum_{a\in A'} \sigma^a\rho^a
		=
		\sum_{a\in A'}
		\sum_{i=0}^3 \epsilon^a_i\sigma^{e^a_i} X^a_i 
		\]
		where 
		\(\epsilon^a_{i}=+1\) 
		if the edge \(e^a_i\) points away from \(a\) and 
		\(\epsilon^a_{i}=-1\) 
		if \(e^a_i\) points into \(a\),
		as in the proof of \cref{prop:boundary-edge-ring}.
		Now consider this expression 
		as an element in \(R/\mathfrak{a}\):
		By \cref{lem:x-a-identifies}, 
		this has the effect of identifying the edge variables of any two edges 
		if the two edges are connected by a path on \(|\Diag|\) 
		that does not meet any endpoint of an arc in \(A\) and,  
		moreover, the signs \(\sigma\) of these edges agree 
		if and only if the edges along the path change orientation an even number of times. 
		Therefore, each edge variable that appears in \([S]\) appears twice, 
		but with opposite sign, 
		except for \([X^{e'}]\) and \([X^{e''}]\), 
		which only appear once, but also with opposite sign. 
		So \([S]=\pm[x^{\hat{a}}]\in R/\mathfrak{a}\). 
		Since \(\rho^{a}=0\) in \(R\) for any arc \(a\) in \(\Diag\), 
		\(S=0\), 
		and hence \([x^{\hat{a}}]\in R/\mathfrak{a}\).
	\end{proof}
		
		\begin{proof}[Proof of \cref{thm:ideal-independent-of-arc-system}]
		For any non-bonding arc  \(a\subset A\) the graph 
		\(G_{A\smallsetminus\{a\}}\) is disconnected
		since \(G_A\) is a tree. 
		Now apply \cref{lem:criterion-x-in-mathfrak-a}.
	\end{proof}
	
	\subsubsection{Relations for \texorpdfstring{\(y^az^a\)}{y\^{}az\^{}a}}
	
Let $a^*$ be the bonding arc in some arc system $A$. 

\begin{definition}\label{def:mathfrak-A}
		Denote by \(\mathfrak{A}=\mathfrak{A}(A)\) 
		the sum of \(\mathfrak{a}(A)\) 
		with the ideal generated by the expressions 
		\(y^{a}z^{a}\) for \(a\in A\smallsetminus\{a^*\}\). 
	\end{definition}
	
	\begin{proposition}\label{thm:R-mod-A-generation}
		\(R/\mathfrak{A}\) is freely generated as an \(\Rd\)-module 
		by the expressions \(\prod_j Y_j^{\varepsilon_j}\) 
		where \((\varepsilon_1,\dots,\varepsilon_m)\in\{0,1\}^m\).
	\end{proposition}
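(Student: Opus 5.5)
By \cref{cor:edge-ring-generator-criterion}, $R/\mathfrak{a}(A)=\Rd[Y_1,\dots,Y_m]$ is a polynomial ring over $\Rd$. It therefore suffices to identify the images of the additional generators $y^az^a$ for $a\in A\smallsetminus\{a^*\}$ in this ring, and to show that they form a system of monic quadratic relations, one in each variable $Y_j$. The quotient by such a system is then free over $\Rd$ on the squarefree monomials $\prod_j Y_j^{\varepsilon_j}$.

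\emph{Step 1: express $y^az^a$ modulo $\mathfrak{a}$.} Since $G_A$ is a tree rooted at the vertex $\{o,o^*\}$ (the bonding arc joins the two open components), each non-bonding arc $a\in A$ corresponds to an edge joining a component to its parent. Orient that edge so that $a$ has a unique child component, a closed component $c_{j(a)}$; this gives a bijection $a\mapsto j(a)$ onto $\{1,\dots,m\}$. By \cref{prop:edge-label-independent}, modulo $\mathfrak{a}(a^*)=\mathfrak{a}(A)$ (using \cref{thm:ideal-independent-of-arc-system}), every edge variable at an endpoint of $a$ equals either $Y_{j(a)}$ or $Y_p$. Here $Y_p$ is the variable of the parent, which is some $Y_k$ or one of $0,x,y,x+y\in\Rd$. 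Because $a$ joins two different components, one endpoint lies on $c_{j(a)}$ and the other on the parent. Reading off $y^a=\pm(X^a_3-X^a_0)$ or $\pm(X^a_1-X^a_0)$ and $z^a=X^a_1+X^a_3+\varG$ then gives
\[
y^az^a\equiv \pm\bigl(Y_{j(a)}-Y_p\bigr)\bigl(\pm Y_{j(a)}\pm Y_p+\varG\bigr)\pmod{\mathfrak{a}}.
\]
This is a quadratic in $Y_{j(a)}$ with leading coefficient $\pm1$, and its lower coefficients lie in $\Rd[Y_p]$. To confirm the leading coefficient is a unit, I will use the identity $X^a_0-X^a_1+X^a_2-X^a_3=0$ together with $x^a\in\mathfrak{a}$, which identifies the two edges adjacent on one side. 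Which edges are identified depends on the label and on $\sigma^a$, but in every case the two edges on the child component carry the same class $Y_{j(a)}$, so $z^a$ indeed contains $Y_{j(a)}$ with coefficient $\pm1$.

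\emph{Step 2: triangular elimination.} I will order the closed components by distance from the root in $G_A$. The relation for $c_j$ is monic quadratic in $Y_j$, with coefficients in $\Rd[Y_k : k \text{ closer to the root}]$. Adjoining the variables in this order and quotienting step by step, each stage is a ring of the form $S[Y]/(Y^2+\beta Y+\gamma)$, which is free over $S$ on $\{1,Y\}$. Induction on $m$ then shows that $R/\mathfrak{A}$ is free over $\Rd$ with basis $\prod_j Y_j^{\varepsilon_j}$ for $\varepsilon\in\{0,1\}^m$. This is the same mechanism as in \cref{lem:quad_sp_def_retr}.

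The main obstacle is the sign and label bookkeeping in Step 1. The orientation conventions at a node, together with the signs $\sigma^a$ and $\epsilon$, determine which of $X^a_0,\dots,X^a_3$ lie on the child component. One must check in every case that $y^a$ genuinely involves $Y_{j(a)}$, rather than degenerating to a difference of two edges on the same component, which would be zero. I expect this to follow because $a\notin\mathfrak{a}$-generators: $x^a$ is the difference of the two edges on one side, so $y^a$ must cross between the two components. I will verify this by a short case check over $\lab(a)\in\{\arcD,\arcT\}$.
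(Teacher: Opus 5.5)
Your proof is correct, but it handles the relations differently from the paper. The paper never works with the generators \(y^az^a\) one arc at a time. It invokes \cref{prop:yz-sequence}, which telescopes the arc relations along the path in \(G_A\) from \(c_i\) to the open segment \(c_\star\) that \(c_i\) is attached to. This replaces the generators of the image of \(\mathfrak{A}\) in \(R/\mathfrak{a}=\Rd[Y_1,\dots,Y_m]\) by the decoupled set \(\{Y_i^2+\varG Y_i-\Delta_\star\}_i\) with \(\Delta_\star\in\Rd\). Freeness is then immediate, because the quotient is a tensor product over \(\Rd\) of the rank-two algebras \(\Rd[Y_i]/(Y_i^2+\varG Y_i-\Delta_\star)\).

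You instead keep the individual relations \(\pm\bigl(Y_j(Y_j+\varG)-Y_p(Y_p+\varG)\bigr)\), which are exactly \cref{lem:yz} modulo \(x^a\). You absorb the coupling to the parent variable by iterated free quadratic extensions along the tree. The two generating sets differ by a triangular change of generators with diagonal entries \(\pm1\), so they generate the same ideal. Your route avoids the telescoping step. The trade-off is that it does not directly give \cref{cor:yz}, namely \([Y_i]^2+\varG[Y_i]=\Delta_\star\) in \(R/\mathfrak{A}\), which the paper uses later in the merge/split edge-map computations; you would need one more induction down the tree to get it.

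On the step you flag as the main obstacle, your stated justification is not quite the right criterion. What must be checked is that \(X^a_1\) and \(X^a_3\) lie on \emph{different} strands at \(a\), not merely that the edges on the child component share a class. If both lay on \(c_{j(a)}\), the leading coefficient would be \(2\), which fails in characteristic \(2\). The check does go through in both cases:
\begin{itemize}
\item \(x^a_\arcD\) together with \(\rho^a\) pairs \(\{e^a_0,e^a_1\}\) and \(\{e^a_2,e^a_3\}\);
\item \(x^a_\arcT\) together with \(\rho^a\) pairs \(\{e^a_0,e^a_3\}\) and \(\{e^a_1,e^a_2\}\).
\end{itemize}
Since \(\{e^a_1,e^a_3\}\) is never a pair, \(z^a\equiv Y_{j(a)}+Y_p+\varG\) with all signs positive, and \(y^a\equiv\pm(Y_{j(a)}-Y_p)\).
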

	
	\begin{definition}\label{def:Delta-star}
		For \(\star\in\{0,x,x+y,y\}\) define 
		\[
		\Delta_\star =
		\begin{cases}
			0
			&
			\star=0
			\\
			x(z-y)
			&
			\star=x
			\\
			(x+y)z
			&
			\star=x+y
			\\
			y(z-x)
			&
			\star=y
		\end{cases}
		\]	so that $\Delta_\star=Y_{\star}(\varG+Y_{\star})$.
	\end{definition}

	\begin{lemma}\label{lem:yz}
		For any arc \(a\) in \(\Diag\), 
		\[
		X^a_1(\varG+X^a_1)
		-
		X^a_3(\varG+X^a_3)
		=
		\begin{cases*}
			\sigma^ax^az^a
			-
			y^az^a
			&
			if \(\lab(a)=\arcD\)
			\\
			\sigma^ay^az^a
			-
			x^az^a
			&
			if \(\lab(a)=\arcT\)
		\end{cases*}
		\]
	\end{lemma}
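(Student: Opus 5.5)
This is a direct algebraic verification in the edge ring \(R\). The plan is to substitute \(X^a_2=X^a_1+X^a_3-X^a_0\) wherever it occurs (this is the relation \(\rho^a=0\)), although the left-hand side only involves \(X^a_1\) and \(X^a_3\), so this substitution is not actually needed. Recall \(z^a=X^a_1+X^a_3+\varG\). First factor the left-hand side as a difference of two values of \(f(X)=X(\varG+X)\):
\[
X^a_1(\varG+X^a_1)-X^a_3(\varG+X^a_3)=(X^a_1-X^a_3)(X^a_1+X^a_3+\varG)=(X^a_1-X^a_3)\,z^a .
\]

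Next rewrite \(X^a_1-X^a_3\) using the two linear forms from \cref{def:mf_of_tangle}. In the case \(\lab(a)=\arcD\) we have \(x^a=\sigma^a(X^a_1-X^a_0)\) and \(y^a=X^a_3-X^a_0\). Since \(\sigma^a\in\{\pm1\}\), the first identity inverts to \(X^a_1-X^a_0=\sigma^a x^a\), so
\[
X^a_1-X^a_3=(X^a_1-X^a_0)-(X^a_3-X^a_0)=\sigma^a x^a-y^a .
\]
Multiplying by \(z^a\) gives \(\sigma^a x^az^a-y^az^a\), which is the claimed formula.

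In the case \(\lab(a)=\arcT\) the roles are swapped: \(y^a=y^a_\arcT=\sigma^a(X^a_1-X^a_0)\) and \(x^a=x^a_\arcT=X^a_3-X^a_0\). The same computation then yields \(\sigma^a y^az^a-x^az^a\).

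No step is a real obstacle. The only point needing care is the sign: \(\sigma^a\) attaches to whichever of \(x^a,y^a\) equals \(\sigma^a(X^a_1-X^a_0)\), and it must be inverted correctly using \((\sigma^a)^2=1\).
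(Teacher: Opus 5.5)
Your proof is correct and matches the paper's argument: you factor the difference as \((X^a_1-X^a_3)z^a\), use \((\sigma^a)^2=1\) to identify \(X^a_1-X^a_3\) with \(\sigma^a x^a-y^a\) in the \(\arcD\) case, and get the \(\arcT\) case from the swap \(x^a_\arcT=y^a_\arcD\), \(y^a_\arcT=x^a_\arcD\). As you note, the substitution for \(X^a_2\) is unnecessary, since the identity already holds in \(\field[\varG,X_0,\dots,X_N]\) before passing to \(R\).
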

	\begin{proof}
		Let us consider the case \(\lab(a)=\arcD\) first. 
		Then, by definition,
		\begin{align*}
			\sigma^ax^az^a
			-
			y^az^a
			=
			(\sigma^ax^a - y^a)
			\cdot z^a
			&=
			(X^a_1-X^a_3)
			\cdot
			(X^a_1+X^a_3+\varG)
			\\
			&=
			(X^a_1)^2+X^a_1\varG
			-(X^a_3)^2-X^a_3\varG.
		\end{align*}
		The identity for \(\lab(a)=\arcT\) follows by observing that 
		\(
		\sigma^ay^a_\arcT z^a
		-
		x^a_\arcT z^a
		=
		\sigma^ax^a_\arcD z^a
		-
		y^a_\arcD z^a
		\).
	\end{proof}

	\begin{definition}
		Let \((a_1,\dots,a_k)\) be a path in \(G_A\) that 
		starts at a closed component \(c_i\) and 
		ends at an open component and 
		does not meet the other open component. 
		Then \(a_k\) ends on an open diagram segment \(c_\star\). 
		We say that \(c_i\) is connected to \(c_\star\) 
		via the sequence of arcs \(a_1,\dots,a_k\) in \(A\). 
	\end{definition}

	\begin{proposition}\label{prop:yz-sequence}
		Let \(c_i\) be a closed component and 
		\(c_\star\) the open diagram segment 
		that \(c_i\) is connected to
		via a sequence of arcs \(a_1,\dots,a_k\) in \(A\).
		Then there exist some signs \(\tau_i\in\{\pm1\}\) such that 
		\[
		\sum_{i=1}^k\tau_i\cdot [z^{a_i}y^{a_i}]
		=
		[Y_i]^2+\varG\cdot[Y_i]-\Delta_\star\in R/\mathfrak{a}.
		\]  
	\end{proposition}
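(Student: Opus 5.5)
The plan is to telescope \cref{lem:yz} along the path. For each arc \(a_j\) in the sequence, \cref{lem:yz} expresses the difference \(X^{a_j}_1(\varG+X^{a_j}_1)-X^{a_j}_3(\varG+X^{a_j}_3)\) as \(\pm y^{a_j}z^{a_j}\) plus a multiple of \(x^{a_j}z^{a_j}\). All arcs in \(A\) have \(x^{a}\in\mathfrak{a}=\mathfrak{a}(A)\), since \(\mathfrak{a}(A)=\mathfrak{a}(a^*)\) by \cref{thm:ideal-independent-of-arc-system} and \cref{lem:criterion-x-in-mathfrak-a} applies to the non-bonding arcs. Hence in \(R/\mathfrak{a}\) we get
\[
[X^{a_j}_1(\varG+X^{a_j}_1)]-[X^{a_j}_3(\varG+X^{a_j}_3)]=\tau_j[y^{a_j}z^{a_j}]
\]
for a sign \(\tau_j\in\{\pm1\}\). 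The remaining task is to identify, modulo \(\mathfrak{a}\), the edge variables \(X^{a_j}_1,X^{a_j}_3\) with the labels of the components the arc touches.

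The key step uses \cref{prop:edge-label-independent}. In \(R/\mathfrak{a}(a^*)\) every edge on a closed component \(c_l\) has label \([Y_l]\), and every edge on an open segment \(c_\star\) has label \([Y_\star]\). At an arc \(a\), the edges \(e^a_0,e^a_1\) lie on one side of the arc and \(e^a_2,e^a_3\) on the other. By \cref{lem:x-a-identifies} we have \([X^a_0]=[X^a_1]\) and \([X^a_2]=[X^a_3]\). So \(X^{a_j}_1\) and \(X^{a_j}_3\) represent the labels of the two components joined by \(a_j\), in some order. Consecutive arcs share a component, so the sum \(\sum_j\pm\bigl([Q_{\text{start}(a_j)}]-[Q_{\text{end}(a_j)}]\bigr)\), with \(Q_l=Y_l(\varG+Y_l)\), telescopes. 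Absorbing the orientation signs into the \(\tau_j\), it collapses to \([Y_i](\varG+[Y_i])-Y_\star(\varG+Y_\star)\). The last term equals \(\Delta_\star\) by \cref{def:Delta-star}.

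I have to be careful with one subtlety. The edge-label statement \cref{prop:edge-label-independent} is phrased for \(\mathfrak{a}(a^*)\), so I will invoke \cref{thm:ideal-independent-of-arc-system} to transfer it to \(\mathfrak{a}(A)\). I also need to check that \(\Delta_\star=Y_\star(\varG+Y_\star)\) agrees with the listed formulas in \(\Rd\), using \(z=x+y+\varG\). For instance, \(x(\varG+x)=x(z-y)\), and similarly for the other cases. This is a one-line check.

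The main obstacle is verifying that the telescoping is sign-consistent. Each quadratic term \([Q_l]\) for an intermediate component must cancel between consecutive arcs, and we are allowed to choose each \(\tau_j\) freely as \(\pm1\). Since the claim only asserts the existence of signs, I would define \(\tau_j\) to be \(+1\) if \(X^{a_j}_1\) lies on the component nearer to \(c_i\) along the path and \(-1\) otherwise. The expression \(\pm(\text{difference})\) then becomes \([Q_{\text{near}}]-[Q_{\text{far}}]\), and the sum telescopes from \([Q_i]\) to \([Q_\star]\). Finally, the path does not meet the other open component, so no term ever involves the wrong open segment; this is what guarantees that the final endpoint label is \(Y_\star\).
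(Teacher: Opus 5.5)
This is essentially the paper's proof: telescope \cref{lem:yz} along the path with suitably chosen signs, discard the \(x^{a_j}z^{a_j}\) terms using \(x^{a_j}\in\mathfrak{a}\), identify edge labels via \cref{prop:edge-label-independent}, and finish with \(\Delta_\star=Y_\star(\varG+Y_\star)\). Two small slips: \(x^{a_j}\in\mathfrak{a}\) holds because the \(a_j\) are non-bonding (the path avoids the other open component), not for every arc of \(A\), since \([\tau^{a^*}_{\Diag}x^{a^*}]=\lambda\neq0\) by \cref{cor:x-a-star}; and at a thick arc \cref{lem:x-a-identifies} gives \([X^a_0]=[X^a_3]\) and \([X^a_1]=[X^a_2]\) rather than \([X^a_0]=[X^a_1]\), but \(e^a_1\) and \(e^a_3\) still lie on the two different strands meeting the arc for either label, so your identification of \([X^{a_j}_1]\) and \([X^{a_j}_3]\) with the labels of the two components joined by \(a_j\), and hence the telescoping, go through unchanged.
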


	\begin{proof}
		By \cref{lem:yz}, 
		we can choose the signs \(\tau_i\) 
		such that the sum on the left is a telescoping sum and 
		such that \([Y_i](\varG+[Y_i])\) appears in the first summand without a sign. 
		We then use the fact that \(x^a\in\mathfrak{a}\) 
		by \cref{thm:ideal-independent-of-arc-system}
		and apply \cref{def:Delta-star}.
	\end{proof}

	\begin{corollary}\label{cor:yz}
		With notation as in \cref{prop:yz-sequence}, 
		\(
		[Y_i]^2+\varG\cdot[Y_i]
		=
		\Delta_\star\in R/\mathfrak{A}.
		\)
	\end{corollary}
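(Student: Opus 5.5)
The statement to prove is \cref{cor:yz}: in \(R/\mathfrak{A}\) we have \([Y_i]^2+\varG\cdot[Y_i]=\Delta_\star\). I would deduce it directly from \cref{prop:yz-sequence} by reducing modulo the larger ideal \(\mathfrak{A}\supseteq\mathfrak{a}\).

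The steps are as follows.

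First, fix the closed component \(c_i\). Because \(G_A\) is a tree containing both open components, there is a unique path in \(G_A\) from the vertex \(c_i\) to the pair of open components. Take the shortest initial segment of this path that ends at an open component. Then \(c_i\) is connected to some open diagram segment \(c_\star\) via a sequence of arcs \(a_1,\dots,a_k\) in \(A\), and this sequence does not meet the other open component.

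Second, I need each \(a_j\) to be a non-bonding arc. A bonding arc joins the two open components, but every \(a_j\) with \(j<k\) joins two closed components. The last arc \(a_k\) joins a closed component to an open one. Hence none of the \(a_j\) equals \(a^*\).

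Third, by \cref{def:mathfrak-A}, each product \(y^{a_j}z^{a_j}\) therefore lies in \(\mathfrak{A}\).

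Fourth, apply \cref{prop:yz-sequence}, which gives the identity
\[
\sum_{j=1}^k\tau_j\,[z^{a_j}y^{a_j}]=[Y_i]^2+\varG[Y_i]-\Delta_\star
\quad\text{in } R/\mathfrak{a}.
\]
Project this along the quotient map \(R/\mathfrak{a}\to R/\mathfrak{A}\), which exists since \(\mathfrak{a}\subseteq\mathfrak{A}\). The left-hand side vanishes in \(R/\mathfrak{A}\), and this yields the claim.

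The only step needing care is the second: checking that the bonding arc never occurs in the connecting sequence, together with the existence of a connecting sequence that avoids the second open component. Both follow from the tree structure of \(G_A\) and from \cref{rem:size-of-arc-system-fixed}, which says an arc system contains exactly one bonding arc. I do not expect a genuine obstacle here.
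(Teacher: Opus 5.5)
Your proof is correct and is exactly the paper's argument. The paper simply states that the corollary is immediate from \cref{prop:yz-sequence} and \cref{def:mathfrak-A}: each \(y^{a_j}z^{a_j}\) with \(a_j\in A\smallsetminus\{a^*\}\) lies in \(\mathfrak{A}\supseteq\mathfrak{a}\), so the left-hand side of the identity vanishes in \(R/\mathfrak{A}\). Your first step, which re-derives the connecting sequence, is redundant, since that sequence is already part of the notation taken over from \cref{prop:yz-sequence}; it does no harm.
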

	
	\begin{proof}
		Immediate from \cref{prop:yz-sequence,def:mathfrak-A}.
	\end{proof}
	
	\begin{proof}[Proof of \cref{thm:R-mod-A-generation}]
		Using \cref{prop:yz-sequence}, we can easily see that 
		\[
		\{[Y_i]^2+\varG\cdot[Y_i]-\Delta_\star\}_{i=1,\dots,m}
		\]
		is a generating set for the image of the ideal 
		\(\mathfrak{A}\) in \(R/\mathfrak{a}\). 
		By \cref{cor:edge-ring-generator-criterion}, 
		\( R/\mathfrak{a}=\Rd[Y_1,\dots,Y_m]\), 
		so the claim follows.
	\end{proof}

	The following lemmas are recorded for future use.
	
	\begin{lemma}\label{lem:null-homotopies:basic}
		The endomorphisms \(x\cdot\iW\), \(yz\cdot\iW\), \(y\cdot\iB\) and \(xz\cdot\iB\) are null-homotopic.
	\end{lemma}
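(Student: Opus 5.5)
Each of the four maps is an endomorphism of a single Koszul matrix factorization, namely \(M_\arcD=K(x;yz)\) or \(M_\arcT=\hslash^1K(-y;-xz)\). The plan is to write down explicit odd-degree homotopies \(h\) with \(\partial(h)=h\circ d+d\circ h\) equal to the given map. The identity maps \(\iW=1_{M_\arcD}\) and \(\iB=1_{M_\arcT}\) are meant here. The underlying principle is standard: for a rank-one Koszul factorization \(K(a;b)\), both multiplication by \(a\) and multiplication by \(b\) on the identity are null-homotopic. The homotopy is the identity arrow running against one of the two differentials.

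For \(M_\arcD\), write the generators as \(e_1\) in \(\hslash\)-degree \(1\) and \(e_0\) in \(\hslash\)-degree \(0\), with \(d(e_1)=x e_0\) and \(d(e_0)=yz\,e_1\).

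1. Take \(h\) with \(h(e_0)=e_1\) and \(h(e_1)=0\). Then \(dh+hd\) sends \(e_0\mapsto d(e_1)+0=x e_0\) and \(e_1\mapsto h(x e_0)=x e_1\). So \(\partial(h)=x\cdot\iW\). Here \(\partial(h)=d\circ h-(-1)^{\hslash(h)}h\circ d=dh+hd\), since \(\hslash(h)=1\).
2. Take \(h'\) with \(h'(e_1)=e_0\) and \(h'(e_0)=0\). Then \(e_1\mapsto d(e_0)=yz\,e_1\) and \(e_0\mapsto h'(yz\,e_1)=yz\,e_0\). So \(\partial(h')=yz\cdot\iW\).
3. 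The same two homotopies on \(M_\arcT\), whose differentials are \(y\) and \(xz\), give \(y\cdot\iB\) and \(xz\cdot\iB\).

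The remaining points are checks. The potential \(xyz\) plays no role beyond making \(d^2\) scalar; it need not be used explicitly. The homotopies are \(\Rd\)-linear because they are given by the identity on a free rank-one module. The only place where care is needed, and it is a minor one, is signs. With \(\hslash(h)=1\) the differential on morphisms gives \(dh+hd\), and any grading-shift sign \((-1)^i\) from \cref{def:shifted-mfs} only multiplies \(h\) by \(\pm1\). This does not affect whether the map is null-homotopic.

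If the convention in \cref{fig:saddles_and_dots_in_A} places the polynomial on the other vertical arrow, the same argument applies with the roles of \(h\) and \(h'\) interchanged.
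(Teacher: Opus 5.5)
Your proof is correct and is the same as the paper's: in each case the null-homotopy is a single diagonal arrow labelled \(1\) running against one of the two differentials. Your explicit computation of \(\partial(h)=d\circ h+h\circ d\) on the two generators just spells this out.
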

	\begin{proof}
		The null-homotopy in each case consists of a diagonal arrow labelled \(1\).
	\end{proof}
	
	\begin{lemma}\label{lem:null-homotopies:Delta}
		For \(\star\in\{0,x,x+y,y\}\) and \(\lab\in\{\arcD,\arcT\}\), \(\Delta_\star\cdot\iota_\lab\) is null-homotopic.
	\end{lemma}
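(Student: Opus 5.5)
The strategy is to write \(\Delta_\star\cdot\iota_\lab\) as a combination of the elementary null-homotopic endomorphisms from \cref{lem:null-homotopies:basic}. Here \(\iota_\lab\) is the identity of \(M_\lab\), a matrix factorization over \(\Rd=\field[x,y,z]\). Multiplication by any element \(r\in\Rd\) is a closed endomorphism of \(M_\lab\), and \(r\mapsto r\cdot\iota_\lab\) is a ring homomorphism from \(\Rd\) to closed morphisms. The null-homotopic closed endomorphisms form an ideal under this action: if \(r\cdot\iota_\lab=\partial(h)\), then for any \(s\in\Rd\) we have \(sr\cdot\iota_\lab=\partial(sh)\), since \(s\) is central and of even \(\hslash\)-degree. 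Hence it suffices to show that each \(\Delta_\star\) lies in the ideal \((x,yz)\) when \(\lab=\arcD\), and in the ideal \((y,xz)\) when \(\lab=\arcT\).

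The case \(\star=0\) is trivial. For the others I use \cref{def:Delta-star} directly.

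For \(\lab=\arcD\):
\begin{itemize}
\item \(\Delta_x=x(z-y)\in(x)\).
\item \(\Delta_{x+y}=xz+yz\in(x,yz)\).
\item \(\Delta_y=yz-xy\in(x,yz)\).
\end{itemize}

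For \(\lab=\arcT\):
\begin{itemize}
\item \(\Delta_x=xz-xy\in(y,xz)\).
\item \(\Delta_{x+y}=xz+yz\in(y,xz)\).
\item \(\Delta_y=y(z-x)\in(y)\).
\end{itemize}

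Combining these memberships with the null-homotopies of \(x\cdot\iW\), \(yz\cdot\iW\), \(y\cdot\iB\) and \(xz\cdot\iB\), the explicit null-homotopy is the corresponding \(\Rd\)-linear combination of the diagonal arrows labelled \(1\).

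The only point needing care is the claim that \(s\cdot\partial(h)=\partial(s\cdot h)\). This follows because the differential on morphism spaces is \(\Rd\)-linear, and multiplication by \(s\) commutes with the matrix factorization differentials. I do not expect any real obstacle here.
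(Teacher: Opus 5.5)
Your proposal is correct and follows the paper's proof, which simply observes that \(\Delta_\star\cdot\iota_\lab\) is an \(\Rd\)-linear combination of the null-homotopic maps \(x\cdot\iW\), \(yz\cdot\iW\), \(y\cdot\iB\), \(xz\cdot\iB\) from \cref{lem:null-homotopies:basic}. Your ideal-membership computations and the remark that \(\partial(s h)=s\,\partial(h)\) for even \(s\in\Rd\) spell out that observation explicitly.
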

	
	\begin{proof}
		This follows from the observation that 
		\(\Delta_\star\cdot\iota_\lab\) 
		can be written as a linear combination 
		of the null-homotopic maps from \cref{lem:null-homotopies:basic}.
	\end{proof}
		
	\begin{table}[t]
		\centering
		\begin{tabular}{c|cccc}
			\toprule
			\begin{tikzpicture}
				\draw[thick] (-1,-1) to ( 1, 1);
				\draw[thick] ( 1,-1) to (-1, 1);
				\draw[fill] (0,0) circle[radius=2pt];
				\node[label] at (-0.45, 0.45){\(Y^{a^*}_0\)};
				\node[label] at ( 0.45, 0.45){\(Y^{a^*}_y\)};
				\node[label] at (-0.45, -0.45 ){\(Y^{a^*}_x\)};
				\node at (1, 1.15){\phantom{\(+\)}};
				\node at (-1,-1.15){\phantom{\(+\)}};
			\end{tikzpicture} 
			&
			\begin{tikzpicture}
				\draw[thick,->] (-1,-1) to ( 1, 1);
				\draw[thick,->] ( 1,-1) to (-1, 1);
				\draw[fill] (0,0) circle[radius=2pt];
				\node[label] at (-0.45, 0.45){\(X^{a^*}_0\)};
				\node[label] at ( 0.45, 0.45){\(X^{a^*}_3\)};
				\node[label] at (-0.45,-0.45){\(X^{a^*}_1\)};
				\node at (-1.15, 1.15){\(+\)};
				\node at ( 1.15, 1.15){\(-\)};
				\node at (-1.15,-1.15){\(+\)};
				\node at (0, 1.15){\phantom{\(+\)}};
				\node at (0,-1.15){\phantom{\(+\)}};
			\end{tikzpicture}
			&
			\begin{tikzpicture}
				\draw[thick,<-] (-1,-1) to ( 1, 1);
				\draw[thick,->] ( 1,-1) to (-1, 1);
				\draw[fill] (0,0) circle[radius=2pt];
				\node[label] at (-0.45, 0.45){\(X^{a^*}_3\)};
				\node[label] at ( 0.45, 0.45){\(X^{a^*}_2\)};
				\node[label] at (-0.45,-0.45){\(X^{a^*}_0\)};
				\node at (-1.15, 1.15){\(+\)};
				\node at ( 1.15, 1.15){\(+\)};
				\node at (-1.15,-1.15){\(-\)};
				\node at (0, 1.15){\phantom{\(+\)}};
				\node at (0,-1.15){\phantom{\(+\)}};
			\end{tikzpicture} 
			&
			\begin{tikzpicture}
				\draw[thick,<-] (-1,-1) to ( 1, 1);
				\draw[thick,<-] ( 1,-1) to (-1, 1);
				\draw[fill] (0,0) circle[radius=2pt];
				\node[label] at (-0.45, 0.45){\(X^{a^*}_2\)};
				\node[label] at ( 0.45, 0.45){\(X^{a^*}_1\)};
				\node[label] at (-0.45,-0.45){\(X^{a^*}_3\)};
				\node at (-1.15, 1.15){\(-\)};
				\node at ( 1.15, 1.15){\(+\)};
				\node at (-1.15,-1.15){\(-\)};
				\node at (0, 1.15){\phantom{\(+\)}};
				\node at (0,-1.15){\phantom{\(+\)}};
			\end{tikzpicture}
			&
			\begin{tikzpicture}
				\draw[thick,->] (-1,-1) to ( 1, 1);
				\draw[thick,<-] ( 1,-1) to (-1, 1);
				\draw[fill] (0,0) circle[radius=2pt];
				\node[label] at (-0.45, 0.45){\(X^{a^*}_1\)};
				\node[label] at ( 0.45, 0.45){\(X^{a^*}_0\)};
				\node[label] at (-0.45,-0.45){\(X^{a^*}_2\)};
				\node at (-1.15, 1.15){\(-\)};
				\node at ( 1.15, 1.15){\(-\)};
				\node at (-1.15,-1.15){\(+\)};
				\node at (0, 1.15){\phantom{\(+\)}};
				\node at (0,-1.15){\phantom{\(+\)}};
			\end{tikzpicture} 
			\\
			\midrule
			\(Y^{a^*}_{x}-Y^{a^*}_{0}\)
			&
			\(X^{a^*}_1-X^{a^*}_0\)
			&
			\DWo{\(X^{a^*}_0-X^{a^*}_3\)}
			&
			\(X^{a^*}_3-X^{a^*}_2\)
			&
			\DWo{\(X^{a^*}_2-X^{a^*}_1\)}
			\\
			\(Y^{a^*}_{y}-Y^{a^*}_{0}\)
			&
			\DWo{\(X^{a^*}_3-X^{a^*}_0\)}
			&
			\(X^{a^*}_2-X^{a^*}_3\)
			&
			\DWo{\(X^{a^*}_1-X^{a^*}_2\)}
			&
			\(X^{a^*}_0-X^{a^*}_1\)
			\\
			\midrule
			\(\tau^{a^*}_{\Diag}\)
			&
			\(+1\)
			&
			\(-1\)
			&
			\(-1\)
			&
			\(+1\)
			\\
			\midrule
			\(\sigma^{a^*}\)
			&
			\(+1\)
			&
			\(-1\)
			&
			\(+1\)
			&
			\(-1\)
			\\
			\bottomrule
		\end{tabular}
		\medskip
		\caption{
			An analysis for the proof of \cref{lem:def:tau_a_star} 
			of the four different ways how the edges meeting a bonding arc \(a^*\) 
			can be connected to the ends of the diagram. 
			The second row is relevant for the case
			\(\conn{\Diag}=\arcD\)
			and the third row for the case
			\(\conn{\Diag}=\arcT\).
			When \(\lab(a^*)=\arcT\), the entries in these rows are highlighted.
		}\label{tab:def:tau_a_star}
	\end{table}

	\subsubsection{Relations for \texorpdfstring{\(x^{a^*}\)}{x\^{}a*}}
	
	\begin{definition}
		Given a bonding arc \(a^*\) of \(\Diag\) and \(\star\in\{0,x,y\}\), 
		let \(e_\star(a^*)\) be the edge that lies on the open diagram segment \(c_\star\).
		We define 
		\(Y^{a^*}_\star=X^{e_\star(a^*)}\)
		and
		\(\tau^{a^*}_{\Diag}=\sigma^{e_x(a^*)}\in\{\pm1\}\).
	\end{definition}
	
	\begin{lemma}\label{lem:def:tau_a_star}
		For any bonding arc \(a^*\) of \(\Diag\), 
		\begin{align*}
			\tau^{a^*}_{\Diag} \cdot x^{a^*}
			&=
			\begin{cases*}
				Y^{a^*}_x-Y^{a^*}_0
				& 
				if \(\conn{\Diag}=\arcD\)
				\\
				Y^{a^*}_y-Y^{a^*}_0
				& 
				if \(\conn{\Diag}=\arcT\)
			\end{cases*}
		\end{align*}
	\end{lemma}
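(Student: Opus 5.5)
The plan is a direct case analysis driven by the local picture at the bonding arc \(a^*\), using \cref{tab:def:tau_a_star}. The arc \(a^*\) is a node of \(D\) whose four adjacent edges \(e^{a^*}_0,\dots,e^{a^*}_3\) carry the orientations prescribed for nodes. The four open diagram segments \(c_0,c_x,c_{x+y},c_y\) start at these four edges, in the counter-clockwise order of their boundary ends. Since the segment \(c_0\) contains the edge at \(*\), and \(*\) points outwards by \AssumptionT, only four rotations of the node relative to the segments are possible. These correspond to the four columns of the table, according to which \(e^{a^*}_i\) lies on \(c_0\).

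In each column I would read off the edges \(e_0(a^*),e_x(a^*),e_y(a^*)\) as edges \(e^{a^*}_i\). This gives \(Y^{a^*}_\star\) in terms of the \(X^{a^*}_i\), as listed in the table. I would then determine \(\sigma^{a^*}\) and \(\tau^{a^*}_\Diag=\sigma^{e_x(a^*)}\) from \cref{lem:edge-signs}, namely \(\sigma^{e^{a^*}_0}=\sigma^{e^{a^*}_1}=-\sigma^{e^{a^*}_2}=-\sigma^{e^{a^*}_3}\). The value of \(\sigma^{a^*}\) itself has to be pinned down from \(\sigma^{e_0}=+1\) and the fact that \(c_0\) reaches \(*\). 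The point is that along an open diagram segment the sign \(\sigma\) is constant. This holds because at every other node the segment passes from \(e^a_0\) to \(e^a_1\) or from \(e^a_2\) to \(e^a_3\), which are edges with equal signs. Hence \(\sigma^{e_0(a^*)}=+1\), and this determines \(\sigma^{a^*}\) in each column, giving the bottom row of the table.

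Next I would split according to \(\conn{\Diag}\) and \(\lab(a^*)\). Recall that \(x^{a^*}_\arcD=\sigma^{a^*}(X^{a^*}_1-X^{a^*}_0)\) and \(x^{a^*}_\arcT=X^{a^*}_3-X^{a^*}_0\). When \(\conn{\Diag}=\arcD\) and \(\lab(a^*)=\arcD\), the first column gives \(Y_x-Y_0=X_1-X_0\), and the claim follows from \(\tau=\sigma=+1\). The other columns work similarly, but there one has to use the node relation \(\rho^{a^*}=0\), i.e. \(X_0-X_1+X_2-X_3=0\). For example, in column two, \(X_0-X_3=X_1-X_2\), and one must show that this equals \(\tau\sigma(X_1-X_0)\) up to that relation. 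For this I would express everything in terms of \(X_1-X_0\) and \(X_3-X_0\) using \(X_2=X_1+X_3-X_0\).

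A consistency check is needed here. Whether \(\conn{\Diag}\) is \(\arcD\) or \(\arcT\) is correlated with \(\lab(a^*)\) and with the column. This is because the resolution at \(a^*\) determines how \(c_0\) joins \(c_x\) or \(c_y\) into the open component \(o^*\). So only the combinations that are geometrically realizable, which are the highlighted or unhighlighted entries, need to be checked. In each of them, the required difference is \(\pm x^{a^*}\), with the sign equal to \(\tau\).

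I expect the main obstacle to be bookkeeping. Specifically, it is determining correctly which of the four edges lies on \(c_x\) versus \(c_y\) in each rotation and for each label, and which pairing of segments the label induces. I would handle this by drawing the four rotated local pictures and, for each label, tracing which segments the arc's smoothing connects. After that, the verification in each of the eight cases is a one-line identity in \(R\).
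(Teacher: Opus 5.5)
Your route is the paper's: a case analysis over the four rotations of \(a^*\) relative to \(c_0,c_x,c_{x+y},c_y\), reading off the \(Y^{a^*}_\star\), fixing \(\sigma^{a^*}\) and \(\tau^{a^*}_{\Diag}\), and checking the realizable label/connectivity pairs using \(\rho^{a^*}=0\). The step that fixes the global sign at \(a^*\) is wrong, however. Your claim that \(\sigma\) is constant along an open diagram segment is false. At a dotted arc a strand passes between \(e^a_1\) and \(e^a_0\) (or \(e^a_2\) and \(e^a_3\)), which do have equal signs. At a thick arc it passes between \(e^a_0\) and \(e^a_3\) (or \(e^a_1\) and \(e^a_2\)), and these have opposite signs by \cref{lem:edge-signs}. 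So \(\sigma\) flips at every thick arc met along \(c_0\), and \(\sigma^{e_0(a^*)}=+1\) fails in general.

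Concretely, if \(c_0\) leaves \(a^*\) through the incoming edge \(e^{a^*}_2\) or \(e^{a^*}_1\) (the last two rotations in \cref{tab:def:tau_a_star}), then \(\sigma^{e_0(a^*)}=-1\). In those rotations your values of \(\sigma^{a^*}\) and \(\tau^{a^*}_{\Diag}\) are both off by a sign. For a dotted \(a^*\) this does no harm, since only the product \(\tau^{a^*}_{\Diag}\sigma^{a^*}\) enters, and that product is fixed by the local node rule. But \(x^{a^*}_\arcT=X^{a^*}_3-X^{a^*}_0\) carries no \(\sigma^{a^*}\). So in the two realizable cases with \(\lab(a^*)=\arcT\) in those rotations, your computation gives \(\tau^{a^*}_{\Diag}x^{a^*}=-(Y^{a^*}_y-Y^{a^*}_0)\), respectively \(-(Y^{a^*}_x-Y^{a^*}_0)\), and the verification fails.

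The missing observation is the one the paper uses. Dotted arcs join an incoming edge to an outgoing edge, whereas thick arcs join two incoming or two outgoing edges. Hence the thick arcs on \(c_0\) are exactly the places where the edge orientation reverses relative to the direction of travel. It follows that \(\sigma^{e_0(a^*)}=+1\) if and only if the number of orientation reversals along \(c_0\) from \(e_0(a^*)\) to \(e_0\) is even. Since \(*\) points outwards by \(\mathrm{(A}T\mathrm{)}\), this holds exactly when \(e_0(a^*)\) points away from \(a^*\), i.e.\ \(e_0(a^*)\in\{e^{a^*}_0,e^{a^*}_3\}\).

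This is where \(\mathrm{(A}T\mathrm{)}\) is genuinely needed. The restriction to four rotations comes from planarity (the cyclic order of the segments around \(a^*\) matches that of their boundary ends), not from \(\mathrm{(A}T\mathrm{)}\). With this correction you recover the last two rows of the table, and your eight-case check goes through.

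A minor slip: in your column-two example you compare \(Y^{a^*}_x-Y^{a^*}_0\) with the dotted formula \(\tau\sigma(X_1-X_0)\). In that rotation, \(\conn{\Diag}=\arcD\) occurs only with \(\lab(a^*)=\arcT\). The correct comparison is therefore with \(\tau^{a^*}_{\Diag}(X^{a^*}_3-X^{a^*}_0)\), where \(\tau^{a^*}_{\Diag}=-1\).
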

	\begin{proof}
		Consider the diagrams in the first row of \cref{tab:def:tau_a_star}. 
		The top left edge \(e_0(a^*)\) in each diagram lies on \(c_0\).
		Hence \(\sigma^{e_0(a^*)}=+1\)
		if and only if 
		the number of times the orientation of the edges changes along the path from \(e_0(a^*)\) to \(e_0\)
		is even. 
		This way, we can compute the signs on the edges 
		in \cref{tab:def:tau_a_star}. 
		In particular, we can determine 
		\(\sigma^{a^*}\) and \(\tau^{a^*}_{\Diag}\), 
		as shown in the last two rows of \cref{tab:def:tau_a_star}. 
		
		Next, we compute 
		\[
		\tau^{a^*}_{\Diag} \cdot x^{a^*}
		=
		\begin{cases*}
			\tau^{a^*}_{\Diag}\sigma^a\cdot(X^{a^*}_1-X^{a^*}_0)
			=
			\tau^{a^*}_{\Diag}\sigma^a\cdot(X^{a^*}_2-X^{a^*}_3)
			& 
			if \(\lab(a^*)=\arcD\)
			\\
			\tau^{a^*}_{\Diag}\cdot (X^{a^*}_3-X^{a^*}_0)
			=
			\tau^{a^*}_{\Diag}\cdot (X^{a^*}_2-X^{a^*}_1)
			& 
			if \(\lab(a^*)=\arcT\)
		\end{cases*}
		\]
		and compare these values to the expressions 
		\(Y^{a^*}_{x}-Y^{a^*}_{0}\) and \(Y^{a^*}_{y}-Y^{a^*}_{0}\) 
		shown in \cref{tab:def:tau_a_star}.
		Observing that 
		\(\lab(a^*)=\arcD \Leftrightarrow\conn{\Diag}=\arcD\) 
		in the second and fourth column of \cref{tab:def:tau_a_star} 
		and \(\lab(a^*)=\arcD \Leftrightarrow\conn{\Diag}=\arcT\) 
		in the third and fifth column of \cref{tab:def:tau_a_star}, 
		the desired identity can now be easily verified. 
	\end{proof}
	
	\begin{remark}
		The value of \(\tau^{a^*}_{\Diag}\) 
		is not an invariant of the underlying singular diagram of \(D\). 
	\end{remark}

	\begin{definition}
		We define
		\[
		\lambda
		=
		\lambda(\Diag)
		=
		\begin{cases*}
			x & if \(\conn{\Diag}=\arcD\)\\
			y & if \(\conn{\Diag}=\arcT\)
		\end{cases*}
		\]
	\end{definition}

	\begin{corollary}\label{cor:x-a-star}
		For any bonding arc \(a^*\) of \(\Diag\), 
		\(
		[\tau^{a^*}_{\Diag} \cdot x^{a^*}]
		=
		\lambda
		\in R/\mathfrak{a}(a^*). 
		\)
	\end{corollary}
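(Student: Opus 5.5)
The plan is to combine \cref{lem:def:tau_a_star} with \cref{prop:edge-label-independent} and \cref{lem:edge-label-boundary}. By \cref{lem:def:tau_a_star}, \(\tau^{a^*}_{\Diag}\cdot x^{a^*}\) equals \(Y^{a^*}_x-Y^{a^*}_0\) if \(\conn{\Diag}=\arcD\) and \(Y^{a^*}_y-Y^{a^*}_0\) if \(\conn{\Diag}=\arcT\). Here \(Y^{a^*}_\star=X^{e_\star(a^*)}\), and \(e_\star(a^*)\) is an edge lying on the open diagram segment \(c_\star\) subordinate to \(a^*\).

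The first step is to apply \cref{prop:edge-label-independent} in \(R/\mathfrak{a}(a^*)\). Since \(e_\star(a^*)\) lies on \(c_\star\), this gives \([Y^{a^*}_\star]=[Y_\star]\) for \(\star\in\{0,x,y\}\). By the convention preceding \cref{def:diagram-segments}, \(Y_0=0\), \(Y_x=x\), and \(Y_y=y\) in \(\Rd\).

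The second step substitutes these values. When \(\conn{\Diag}=\arcD\) we get \([\tau^{a^*}_{\Diag} x^{a^*}]=x-0=x\), and when \(\conn{\Diag}=\arcT\) we get \(y-0=y\). In both cases this is \(\lambda(\Diag)\), as claimed.

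The only point needing care is that \(Y_0=X_0\) is indeed zero. This holds because \(X_0\) lies in the ideal \(I(D)\). One should also check that \(Y_x=X_1=x\) and \(Y_y=X_3=y\), using the ordering of the segments together with \cref{lem:edge-label-boundary} and the identifications in \eqref{eq:xyz}. I expect this to be immediate, so the whole proof is a short chaining of the cited results with no real obstacle.
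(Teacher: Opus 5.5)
Your proposal is correct and follows the paper's argument exactly: the paper also deduces the corollary directly from \cref{lem:def:tau_a_star} and \cref{prop:edge-label-independent}, using the convention \(Y_0=0\), \(Y_x=x\), \(Y_y=y\). Your extra consistency check via \cref{lem:edge-label-boundary} is harmless but not needed, since that lemma is already built into the proof of \cref{prop:edge-label-independent}.
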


	\begin{proof}
		This follows from \cref{prop:edge-label-independent,lem:def:tau_a_star}. 
	\end{proof}
	
	\subsection{Construction of the maps \texorpdfstring{\(\varphi_A\)}{φ\_A}}
	Let \(\Diag=\Diag_T(v)=(D,\lab)\) for some \(v\in\{0,1\}^n\). 
	As in the previous section, 
	we write \(x^a=x^{a}_{\lab(a)}\) and 
	\(y^a=y^{a}_{\lab(a)}\),
	as well as \(M^a=M^{a}_{\lab(a)}\),
	for any \(a\in\arcs(\Diag)\). 
	Furthermore, let \(\mathtt{i}_a=0\) 
	if \(a\) is a dotted arc 
	and \(\mathtt{i}_a=1\) otherwise. 
	  
	We start the construction of the map \(\varphi_A\) 
	by picking some bonding arc \(a^*\) of \(\Diag\), 
	which always exists by \cref{assums:tangle_diagram}, 
	and some arc system \(A\) containing \(a^*\). 
	Let \(\mathfrak{a}=\mathfrak{a}(A)\) 
	and \(\mathfrak{A}=\mathfrak{A}(A)\).
	We define the special deformation retract \(\varphi_A\) in three steps. 
	First, we define a special deformation retract 
	\[
	\varphi_A^1
	\co
	M(\Diag)
	=
	\bigotimes_{a\in\arcs(D)} M^a
	\longrightarrow
	\bigotimes_{a\in A} M^a
	\otimes 
	\bigotimes_{a\notin A}
	\hslash^{\mathtt{i}_a}
	R/(x^a)
	\cong
	\hslash^{\mathtt{j}}
	\bigotimes_{a\in A} 
	\left(
	M^a
	\otimes 
	R/\mathfrak{a}
	\right)
	\]
	of matrix factorizations over $R/\mathfrak{a}=\Rd[Y_1,\dots,Y_m]$.
	Here, \(\mathtt{j}\) is the number of thick arcs \(a\notin A\). 
	On the tensor factors for \(a\not\in A\), 
	the map is induced by the special deformation retractions 
	from \cref{lem:lin_sp_def_retr}; 
	on the other tensor factors, it is the identity. 
	For each \(a\in A\smallsetminus\{a^*\}\), 
	\(x^a\in\mathfrak{a}\) 
	by \cref{thm:ideal-independent-of-arc-system},
	so 
	\[
	\left(
	M^a
	\otimes 
	R/\mathfrak{a}
	\right)
	\cong
	\left(
	\hslash^{\mathtt{i}_a+1}q^1 R/\mathfrak{a}
	\xleftarrow{y^az^a}
	\hslash^{\mathtt{i}_a}q^0 R/\mathfrak{a}
	\right).
	\]
	Moreover, by \cref{cor:x-a-star}, 
	\[
	\left(
	M^{a^*}
	\otimes 
	R/\mathfrak{a}
	\right)
	=
	\left(
	\begin{tikzcd}[column sep=1cm, ampersand replacement=\&]
	\hslash^{\mathtt{i}_{a^*}+1}q^1  R/\mathfrak{a}
	\arrow[r,bend left=10, "\tau^{a^*}_{\Diag}\cdot\lambda"above]
	\&
	\hslash^{\mathtt{i}_{a^*}}q^0  R/\mathfrak{a}
	\arrow[l,bend left=10, "\tau^{a^*}_{\Diag}\cdot xyz/\lambda"below]
	\end{tikzcd}
	\right)
	\]
	as a matrix factorization over \( R/\mathfrak{a}\), 
	since the overall potential is \(xyz\).
	Applying \cref{lem:quad_sp_def_retr} $|A|-1$ times, 
	we obtain the special deformation retraction 
	\[
	\varphi_A^2
	\co
	\hslash^{\mathtt{j}}
	\bigotimes_{a\in A} 
	\left(
	M^a
	\otimes 
	 R/\mathfrak{a}
	\right)
	\longrightarrow
	\hslash^{\mathtt{j}+\mathtt{k}}
	q^{|A|-1}  R/\mathfrak{A}\otimes M^{a^*}
	\]
	of matrix factorizations over $\Rd$,
	where \(\mathtt{k}\) is the number of dotted arcs in \(A\smallsetminus\{a^*\}\).
	By \cref{thm:R-mod-A-generation}, 
		\[
		f_A
		\co
		q^{|A|-1}  R/\mathfrak{A}
		\longrightarrow
		V(\Diag)\otimes_{\field[\varG]} \Rd,
		\qquad
		\textstyle\prod_j Y_j^{\varepsilon_j}\mapsto \prod_j \mu_j^{\varepsilon_j},
		\]
	defines an \(\Rd\)-module homomorphism,
	which is in fact an isomorphism.	 
	This allows us to define an isomorphism%
	\[
	\varphi_A^3
	\co
	\hslash^{\mathtt{j}+\mathtt{k}}
	q^{|A|-1}  R/\mathfrak{A}\otimes  M^{a^*}
	\longrightarrow
	\hslash^{\mathtt{i}_v}
	V(\Diag)\otimes  M(o(\Diag))
	\]
	by
	\[
	\begin{tikzcd}[column sep=2cm]
	\hslash^{\mathtt{j}+\mathtt{k}+\mathtt{i}_{a^*}+1}
	q^{|A|}  R/\mathfrak{A}
	\arrow[d,bend left=10, "\tau^{a^*}_{\Diag}\cdot\lambda"right]
	\arrow{r}{\tau^{a^*}_{\Diag}\cdot f_A}
	&
	\hslash^{\mathtt{i}_v+\mathtt{m}}
	q^1  V(\Diag) \otimes_{\field[\varG]} \Rd
	\arrow[d,bend left=10, "\lambda"right]
	\\
	\hslash^{\mathtt{j}+\mathtt{k}+\mathtt{i}_{a^*}}
	q^{|A|-1}  R/\mathfrak{A}
	\arrow[u,bend left=10, "\tau^{a^*}_{\Diag}\cdot xyz/\lambda"left]
	\arrow{r}{f_A}
	&
	\hslash^{\mathtt{i}_v+\mathtt{m}+1}
	q^0 V(\Diag) \otimes_{\field[\varG]} \Rd
	\arrow[u,bend left=10, "xyz/\lambda"left]
	\end{tikzcd}
	\]
	where \(\mathtt{m}=1\) if \(\lambda=x\) and \(\mathtt{m}=0\) if \(\lambda=y\). 
	This map preserves \(\hslash\)-grading because, modulo 2,
	\[
	\mathtt{j}+\mathtt{k}+\mathtt{i}_{a^*}+1
	\equiv
	\sum_{a\notin A}\mathtt{i}_a
	+
	\smashoperator[r]{\sum_{a\in A\smallsetminus\{a^*\}}}(1+\mathtt{i}_a)
	+\mathtt{i}_{a^*}+1
	\equiv
	|A|+\smashoperator[r]{\sum_{a\in\arcs(\Diag)}}\mathtt{i}_a
	\equiv
	\mathtt{i}_v+\mathtt{m}.
	\]
	We now define the special deformation retract 
	\(\varphi_v=\varphi_A\) 
	as the composition 
	\(\varphi_A^3\circ\varphi_A^2\circ\varphi_A^1\). 
	As the notation suggests, 
	this map depends on the choice of an arc system \(A\). 
	However, as we will show next, 
	its homotopy type is independent of \(A\) up to signs.

	\subsection{Independence of \texorpdfstring{\(\varphi_A\)}{φ\_A} up to homotopy and signs}
	
	\begin{proposition}\label{prop:retracts-independent-of-arc-system}
		Let \(A_1\) and \(A_2\) be two arc systems 
		for two bonding arcs \(a_1^*\) and \(a_2^*\) 
		of a labelled diagram \(\Diag\), respectively. 
		Then \(\varphi_{A_1}\) is homotopic to 
		\(\varphi_{A_2}\) or \(-\varphi_{A_2}\). 
	\end{proposition}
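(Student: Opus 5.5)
The plan is to connect any two arc systems by a chain of elementary moves and to check invariance under each move. An elementary move replaces one arc of the arc system by another arc while keeping the result an arc system. First I would prove a combinatorial claim. Any two arc systems \(A_1\ni a_1^*\) and \(A_2\ni a_2^*\) are connected by a sequence of such exchanges, because maximal trees in the connected graph \(G_\Diag\) containing a bonding edge are related by standard edge exchanges. Exchanging a bonding arc for another bonding arc is one allowed move type. By \cref{rem:size-of-arc-system-fixed}, every intermediate system contains exactly one bonding arc. So it suffices to treat two cases: \(A_2=(A_1\smallsetminus\{a\})\cup\{b\}\) with \(a,b\) non-bonding, and \(A_2=(A_1\smallsetminus\{a_1^*\})\cup\{a_2^*\}\).

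In both cases I would compare the two retracts via the intermediate set \(A'=A_1\cup A_2\), which contains both arc systems. Consider the partial retraction \(\varphi^1_{A'}\), which kills \(M^c\) only for \(c\notin A'\). Then \(\varphi^1_{A_i}\) factors as a further application of \cref{lem:lin_sp_def_retr} to the factor \(M^{b}\) (respectively \(M^{a}\)) composed with \(\varphi^1_{A'}\). Next I would work over \(R/\mathfrak{a}(A')\). By \cref{cor:edge-ring-generator-criterion} this ring is \(\Rd[Y_1,\dots,Y_m,x^{b}]\), a polynomial ring with one extra variable. The claim to check is that both composites
\[
\bigotimes_{c\in A'}\bigl(M^c\otimes R/\mathfrak{a}(A')\bigr)\longrightarrow \hslash^{\mathtt{i}_v}V(\Diag)\otimes M(o(\Diag))
\]
are special deformation retractions onto the same target, with the same \(\Rd\)-linear identification up to sign. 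For the target identification, \cref{thm:ideal-independent-of-arc-system} gives \(\mathfrak{a}(A_1)=\mathfrak{a}(a_1^*)\), and in the bonding case \(\mathfrak{a}(a_1^*)=\mathfrak{a}(a_2^*)\). \Cref{cor:yz} shows that the relations \(Y_i^2+\varG Y_i=\Delta_\star\) defining \(R/\mathfrak{A}\) are the same for both systems, since they depend only on the open segment \(c_\star\) reached from \(c_i\). Therefore \(R/\mathfrak{A}(A_1)=R/\mathfrak{A}(A_2)\), and the maps \(f_{A_1},f_{A_2}\) agree. By \cref{cor:x-a-star}, the bonding factor is in both cases \(M(o(\Diag))\) twisted by the sign \(\tau^{a^*}_\Diag\). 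The factor \(\tau^{a^*}_\Diag\) in \(\varphi^3\) normalizes this, and any residual discrepancy is a global sign.

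The main obstacle is the homotopy itself. Two special deformation retracts of the same object onto the same target need not be equal, only homotopic. My approach would be the following. If \(\varphi,\varphi'\co M\to N\) are special deformation retracts with right inverses \(\iota,\iota'\), then \(\varphi'\iota\) is a homotopy equivalence of \(N\). So \(\varphi'\simeq(\varphi'\iota)\varphi\). It therefore suffices to show \(\varphi_{A_2}\circ\iota_{A_1}\simeq\pm1\) on \(V(\Diag)\otimes M(o(\Diag))\). I would compute this composite explicitly on the small two-variable Koszul piece \(M^{a}\otimes M^{b}\) over \(R/\mathfrak{a}(A')\). Here the explicit inclusions and quotient maps of \cref{lem:lin_sp_def_retr,lem:quad_sp_def_retr} are available, and the composite acts on the generators \(\prod_jY_j^{\varepsilon_j}\). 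Using \cref{prop:yz-sequence}, I expect it to equal \(\pm1\) plus multiples of \(x\), \(yz\), \(\Delta_\star\) times identities. These extra terms are null-homotopic by \cref{lem:null-homotopies:basic,lem:null-homotopies:Delta}. The delicate part is the bonding-arc exchange: there \(\lambda\) and \(\tau\) change, and I would verify the sign with \cref{tab:def:tau_a_star}.
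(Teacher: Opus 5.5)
Your reduction to single exchanges, the passage through $A_1\cup A_2$, and the formal step $\varphi_{A_2}\simeq(\varphi_{A_2}\circ\iota_{A_1})\circ\varphi_{A_1}$ are all fine. But the proposal stops exactly where the argument has to start, and the target identification you offer in its place is false.

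In the bonding case, $\mathfrak{a}(a_1^*)\neq\mathfrak{a}(a_2^*)$. By definition $\mathfrak{a}(a_2^*)$ contains $x^{a_1^*}$. On the other hand $x^{a_1^*}\notin\mathfrak{a}(a_1^*)$, since by \cref{cor:x-a-star} its class is $\pm\lambda\neq0$ in $R/\mathfrak{a}(a_1^*)=\Rd[Y_1,\dots,Y_m]$. Also, $\lambda=\lambda(\Diag)$ does not change under the exchange; what changes is which of $x^{a_1^*}$, $x^{a_2^*}$ is killed.

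In the non-bonding case, $R/\mathfrak{A}(A_1)$ and $R/\mathfrak{A}(A_2)$ are in general different quotients of $R$. The segment $c_\star$ in the relation $[Y_j]^2+\varG[Y_j]=\Delta_\star$ of \cref{cor:yz} is the one reached from $c_j$ through the arc system, and it can change under the exchange. For example, if $a_1$ joins $c_j$ to $c_x$ and $a_2$ joins it to $c_0$, then $[Y_j]^2+\varG[Y_j]$ equals $\Delta_x=x(z-y)\neq0$ in one quotient and $0$ in the other. So the two retractions are not the same projection up to a global sign. They project the common intermediate onto genuinely different quotients, and relating these is the entire content of the proposition.

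What is missing is the relation between the two exchanged arcs, and a basis in which to use it. Write $\mathfrak{b}=\mathfrak{a}(A_1\cup A_2)$, and let $\mathfrak{B}$ be $\mathfrak{b}$ plus the $y^az^a$ for the common non-bonding arcs. The paper proceeds in four steps.
\begin{enumerate}
\item It proves the key relations: $[\tau^{a_1}_{\Diag}x^{a_1}]+[\tau^{a_2}_{\Diag}x^{a_2}]=\lambda$ in $R/\mathfrak{b}$ in the bonding case, and $[y^{a_1}z^{a_1}]-\tau[y^{a_2}z^{a_2}]=\Delta+\alpha[x^{a_1}]$ with $\Delta\in\Rd$ in the non-bonding case.
\item Using \cref{prop:yz-sequence} and a quantum-grading argument, it shows that $R/\mathfrak{B}$ is free over $\Rd$ on powers of $x^{a_2}$ (resp.\ of $x^{a_1}$ and $y^{a_1}z^{a_1}$) times square-free monomials in the $Y_j$.
\item It evaluates $f_{A_i}\circ\pi_i$ on this basis, where $\pi_i$ are the projections from $R/\mathfrak{B}\otimes M^{a_1}\otimes M^{a_2}$. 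For instance, in the bonding case a basis element with $k>0$ powers of $x^{a_2}$ goes to $0$ under one map and to $\lambda^k\prod_j\mu_j^{\varepsilon_j}$ under the other.
\item It writes down a homotopy $\xi$ that divides by $\lambda$ (resp.\ $\Delta$) along the extra Koszul direction.
\end{enumerate}
Since $\varphi^1$ and $\varphi^2$ are quotient maps, this argument never needs the inclusions.

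Your route through $\varphi_{A_2}\circ\iota_{A_1}$ does need them, including the correction terms $\varepsilon c$ from \cref{lem:sdr-new-from-old}. It would also still need the same two relations before anything could be evaluated. As written, ``$\pm1$ plus null-homotopic multiples of $x$, $yz$, $\Delta_\star$'' is a guess. Nothing in the proposal excludes degree-zero terms such as $\mu_j\mapsto\mu_j+c\,\varG$, which are not null-homotopic.
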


	\begin{proof}
	As in the previous two subsections, 
	we write \(x^a=x^{a}_{\lab(a)}\) and \(y^a=y^{a}_{\lab(a)}\), 
	as well as \(M^a=M^{a}_{\lab(a)}\), 
	for any \(a\in\arcs(\Diag)\). 
	An induction argument shows 
	that we may assume without loss of generality that 
	the arc systems \(A_1\) and \(A_2\) 
	differ in a single arc. 
	Let \(B= A_1 \cap A_2\) so that 
	\(A_1=B\cup \{a_1\}\) and \(A_2=B\cup \{a_2\}\) 
	for some arcs \(a_1\neq a_2\). 
	We distinguish two cases.
	
	\myitheading{Case 1} 
	Suppose \(a_1=a^*_1\) is a bonding arc; 
	then so is \(a_2=a^*_2\). 
	Without loss of generality, 
	we may assume that when travelling 
	along the open component \(o^*\) of \(\Diag\), 
	starting at the distinguished end, 
	we first meet the arc \(a_1\) and then \(a_2\). %
	Let 
	\(\mathfrak{b}=\mathfrak{a}(A_1\cup A_2)\) 
	so that 
	\[
	\mathfrak{a}_1=\mathfrak{a}(A_1)=\mathfrak{b}+(x^{a_2})
	\quad
	\text{ and }
	\quad
	\mathfrak{a}_2=\mathfrak{a}(A_2)=\mathfrak{b}+(x^{a_1})
	\]
	By \cref{lem:criterion-x-in-mathfrak-a}, 
	\(x^a\in\mathfrak{b}\) for every arc \(a\neq a_1,a_2\) 
	that meets the open component \(o^*\) of \(\Diag\).
	Therefore,  
	by \cref{lem:x-a-identifies,lem:edge-label-boundary},
	\([Y^{a_1}_0]=0\) 
	and 
	\[
	\begin{cases*}
		[Y^{a_1}_x]=[Y^{a_2}_0]\in R/\mathfrak{b}
		\text{ and }
		[Y^{a_2}_x]=x\in R/\mathfrak{b}
		&
		if \(\conn{\Diag}=\arcD\) and
		\\
		[Y^{a_1}_y]=[Y^{a_2}_0]\in R/\mathfrak{b}
		\text{ and }
		[Y^{a_2}_y]=y\in R/\mathfrak{b}
		&
		if \(\conn{\Diag}=\arcT\). 
	\end{cases*}
	\]
	Together with \cref{lem:def:tau_a_star}, 
	this implies that in any case,
	\begin{equation}\label{eq:delooping:mf:delta_x_a:case1}
		[\tau_{\Diag}^{a_1}\cdot x^{a_1}]
		+
		[\tau_{\Diag}^{a_2}\cdot x^{a_2}]
		=
		\lambda(\Diag)
		=
		\lambda
		\in R/\mathfrak{b}. 
	\end{equation}
	By \cref{cor:edge-ring-generator-criterion}, 
	\(R/\mathfrak{b}\) is freely generated 
	as a polynomial ring over \(\Rd\) 
	by \([x^{a_2}]\) and \([Y_i]\) for \(i=1,\dots,m\).
	
	Let \(\mathfrak{B}\) be the sum of \(\mathfrak{b}\) and 
	the ideal in \(R\) generated by \(y^{a}z^{a}\) 
	where 
	\(
	a\in B
	=
	A_1 \cap A_2
	=
	A_1\smallsetminus\{a^*_1\}
	=
	A_2\smallsetminus\{a^*_2\}
	\),
	so that 
	\[
	\mathfrak{A}_1=\mathfrak{A}(A_1)=\mathfrak{B}+(x^{a_2})
	\quad
	\text{ and }
	\quad
	\mathfrak{A}_2=\mathfrak{A}(A_2)=\mathfrak{B}+(x^{a_1}).
	\]
	By \cref{prop:yz-sequence}, 
	the image of \(\mathfrak{B}\) in \(R/\mathfrak{a}_1\) 
	is generated by expressions 
	\([Y_i]^2+\varG [Y_i]+\alpha_i\), 
	where \(\alpha_i\in \Rd\) and \(i=1,\dots,n\).
	Therefore, the image of \(\mathfrak{B}\) in \(R/\mathfrak{b}\) 
	is generated by expressions 
	\([Y_i]^2+\varG [Y_i]+\alpha_i+[x^{a_2}]\cdot\beta_i\), 
	where \(\beta_i\in R/\mathfrak{b}\) and \(i=1,\dots,n\).
	By considering the quantum grading of these expressions, 
	we see that
	\(\beta_i\) is a linear expression in the \([Y_i]\) over \(\Rd[x^{a_2}]\). 
	Therefore, \(R/\mathfrak{B}\) is freely generated 
	as an \(\Rd\)-module by 
	\[
	b_{k,\varepsilon}
	=
	(\tau_{\Diag}^{a_2}\cdot [x^{a_2}])^k\prod_i[Y_i]^{\varepsilon_i}
	\quad
	\text{where }
	k\geq0
	\text{ and }
	\varepsilon 
	=
	(\varepsilon_1,\dots,\varepsilon_m)\in\{0,1\}^m.
	\]

	We are now ready to compare the deformation retractions 
	\(\varphi_{A_1}\) and \(\varphi_{A_2}\). 
	For \(i=1,2\), \cref{lem:lin_sp_def_retr} gives us a map 
	\[
	\pi_i\co
	M_{A_1,A_2}
	=
	R/\mathfrak{B}
	\otimes
	 M^{a_1}
	\otimes
	 M^{a_2}
	\longrightarrow
	 R/\mathfrak{A}_i
	\otimes
	M^{a^*_i},
	\]
	and the map \(\varphi^2_{A_i}\circ\varphi^1_{A_i}\) 
	factors through this map, as shown on the left of 
	\cref{fig:delooping:mf:homotopy:overview}. 
	For simplicity, we drop the gradings from the notation, 
	since they are immaterial;
	without loss of generality 
	we may assume that the Kozsul signs in the differential 
	of \(M_{A_1,A_2}\) 
	are as shown as in \cref{fig:delooping:mf:homotopy:i}
	which shows a more detailed version of the diagram 
	contained in the shaded region of 
	\cref{fig:delooping:mf:homotopy:overview}.
	It suffices to construct a homotopy \(\xi\) 
	between \(\varphi^3_{A_1}\circ\pi_1\) and \(\varphi^3_{A_2}\circ\pi_2\).
	The two pairs of consecutive dashed arrows 
	in the top half of \cref{fig:delooping:mf:homotopy:i}
	are equal to \(\varphi^3_{A_1}\circ\pi_1\). 
	Similarly, the dashed arrows in the bottom half 
	represent the map \(\varphi^3_{A_2}\circ\pi_2\). 
	We calculate the compositions of the four pairs 
	of consecutive dashed arrows in this diagram as follows:%
	\[
	(f_{A_1}\circ\pi_1)(b_{k,\varepsilon})
	=
	\begin{cases}
	\prod_j \mu_j^{\varepsilon_j}
	&
	k=0
	\\
	0 
	&
	k>0
	\end{cases}
	\quad
	\text{and}
	\quad
	(f_{A_2}\circ\pi_2)(b_{k,\varepsilon})
	=
	\lambda^k\textstyle\prod_j \mu_j^{\varepsilon_j}.
	\]
	For the first identity, 
	we use the fact that 
	\([x^{a_2}]=0 \in R/\mathfrak{A}_1\); 
	for the second identity, 
	we use \([x^{a_1}]=0\in R/\mathfrak{A}_2\) 
	together with relation~\eqref{eq:delooping:mf:delta_x_a:case1}.
	The map \(\xi\) represented by the dotted arrow in 
	\cref{fig:delooping:mf:homotopy:i} and defined by 
	\[
	\xi(b_{k,\varepsilon})
	=
	\begin{cases}
	0
	&
	k=0
	\\
	\lambda^{k-1} \prod_j \mu_j^{\varepsilon_j}
	&
	k>0
	\end{cases}
	\]
	establishes the desired homotopy. 
	Indeed, postcomposition with the differential gives
	\[
		d \circ \xi(b_{k,\varepsilon})
		=
		\begin{cases}
		0
		&
		k=0
		\\
		\lambda^{k} \prod_j \mu_j^{\varepsilon_j}
		&
		k>0
		\end{cases}
	\]
	Precomposition with the differential \(d_1\) labelled \(x^{a_1}\) gives
		\begin{align*}
		\xi \circ d_1(b_{k,\varepsilon})
		&=
		\xi ((\tau_{\Diag}^{a_1}\lambda-\tau_{\Diag}^{a_1}\tau_{\Diag}^{a_2}x^{a_2})\cdot b_{k,\varepsilon})
		\\
		&
		=
		\tau_{\Diag}^{a_1}\lambda \cdot \xi (b_{k,\varepsilon})
		-
		\tau_{\Diag}^{a_1} \cdot \xi (b_{k+1,\varepsilon})
		=
		-\tau_{\Diag}^{a_1} (f_{A_1}\circ\pi_1)(b_{k,\varepsilon})
	\end{align*}
	and similarly for the differential \(d_2\) labelled \(x^{a_2}\):
	\[
		\xi \circ d_2(b_{k,\varepsilon})
		=
		\xi (\tau_{\Diag}^{a_2}\cdot b_{k+1,\varepsilon})
		=
		\tau_{\Diag}^{a_2}\cdot \lambda^{k} \prod_j \mu_j^{\varepsilon_j}
		=
		\tau_{\Diag}^{a_2}(f_{A_2}\circ\pi_2)(b_{k,\varepsilon})
	\]
	Since \(\pi_1\) and \(f_{A_1}\) preserve \(\hslash\)-grading, \(\xi\) does not. 
	In summary, 
		\[
		\partial(\xi)
		=
		d \circ \xi - (-1)^{\hslash(\xi)} \xi\circ d=d \circ \xi + \xi\circ d
		=
		(\varphi^3_{A_2}\circ\pi_2)-(\varphi^3_{A_1}\circ\pi_1).
		\]
	
		\begin{figure}[t]
		\begin{subfigure}{\textwidth}	
			\centering
			\begin{tikzpicture}[scale=0.8]
				\path[fill,lightgray,rounded corners=30pt] (-7.5,0) -- (-3,2) -- (3,2) -- (8,0) -- (3,-2) -- (-3,-2) -- cycle;
				\node(L) at (-9,0){\(M(\Diag)\)};
				\node(C) at (-5,0){\(M_{A_1,A_2}\)};
				\node(T) at (0,1.4){\(R/\mathfrak{A}_1\otimes M^{a^*_1}\)};%
				\node(B) at (0,-1.4){\(R/\mathfrak{A}_2\otimes M^{a^*_2}\)};%
				\node(R) at (5,0){\(V(\Diag)\otimes M(o(\Diag))\)};
				\draw[->,dotted] (C) to [tight,above]node{\(\xi\)} (R);
				\draw[->,dashed,bend left=7] (T) to [tight,above right]node{\(\varphi^3_{A_1}\)} (R);
				5		\draw[->,dashed,bend left=7] (C) to [tight,above left]node{\(\pi_1\)} (T);
				\draw[->,dashed,bend right=7] (C) to [tight,below left]node{\(\pi_2\)} (B);
				\draw[->,dashed,bend right=7] (B) to [tight,below right]node{\(\varphi^3_{A_2}\)} (R);
				\draw[->,in=180,out=25] (L) to [tight,above left]node{\(\varphi^2_{A_1}\circ\varphi^1_{A_1}\)} (T);
				\draw[->] (L) to node{} (C);
				\draw[->,in=180,out=-25] (L) to [tight,below left]node{\(\varphi^2_{A_2}\circ\varphi^1_{A_2}\)} (B);
			\end{tikzpicture}
			\caption{%
				The general strategy for constructing the homotopy
				between \(\varphi_{A_1}\) and \(\varphi_{A_2}\).
				In case~2, \(a^*_1=a^*_2=a^*\).
			}
			\label{fig:delooping:mf:homotopy:overview}
		\end{subfigure}
		
		\begin{subfigure}{\textwidth}	
			\centering
			\begin{tikzpicture}[scale=0.7]
				\begin{scope}[shift={(-4,0)}]%
					\node(Lt) at (0,2){\(R/\mathfrak{B}\)};%
					\node(Lb) at (0,-2){\(R/\mathfrak{B}\)};%
					\node(Lr) at (2,0){\(R/\mathfrak{B}\)};%
					\node(Ll) at (-2,0){\(R/\mathfrak{B}\)};%
					\draw[->] (Lt) to[bend left=7,above right,tight]node{\(x^{a_1}\)} (Lr);
					\draw[->] (Lr) to[bend left=7,above right,tight]node{} (Lt);
					\draw[->] (Ll) to[bend left=7,above right,tight]node{\(x^{a_1}\)} (Lb);
					\draw[->] (Lb) to[bend left=7,above right,tight]node{} (Ll);
					\draw[->] (Ll) to[bend right=7,below right,tight]node{\(-x^{a_2}\)} (Lt);
					\draw[->] (Lt) to[bend right=7,below right,tight]node{} (Ll);
					\draw[->] (Lb) to[bend right=7,below right,tight]node{\(x^{a_2}\)} (Lr);
					\draw[->] (Lr) to[bend right=7,below right,tight]node{} (Lb);
				\end{scope}
				\begin{scope}[shift={(0,+1.5)}]%
					\node(Tt) at (0,2){\(R/\mathfrak{A}_1\)};%
					\node(Tr) at (2,0){\(R/\mathfrak{A}_1\)};%
					\draw[->] (Tt) to[bend left=7,above right,tight]node{\(x^{a_1}\)} (Tr);
					\draw[->] (Tr) to[bend left=7,above right,tight]node{} (Tt);
				\end{scope}
				\begin{scope}[shift={(0,-1.5)}]%
					\node(Bb) at (0,-2){\(R/\mathfrak{A}_2\)};%
					\node(Br) at (2,0){\(R/\mathfrak{A}_2\)};%
					\draw[->] (Bb) to[bend right=7,below right,tight]node{\(x^{a_2}\)} (Br);
					\draw[->] (Br) to[bend right=7,below right,tight]node{} (Bb);
				\end{scope}
				\begin{scope}[shift={(+8,0)}]%
					\node(Rr) at (2,0){\(V(\Diag)\otimes \Rd\)};%
					\node(Rl) at (-2,0){\(V(\Diag)\otimes \Rd\)};%
					\draw[->] (Rr) to[bend left=7,below,tight]node{} (Rl);
					\draw[->] (Rl) to[bend left=7,above,tight]node{\(\lambda\)} (Rr);
				\end{scope}
				\draw[dashed,->,out=30,in=-170] (Lt) to [tight,above left]node{\(\pi_1\)} (Tt);
				\draw[dashed,->,out=0,in=90] (Tt) to[tight,above right] node{\(\tau_{\Diag}^{a_1}\cdot f_{A_1}\)} (Rl);
				\draw[dashed,->,out=30,in=-170] (Lr) to [tight,above left]node{\(\pi_1\)} (Tr);
				\draw[dashed,->,out=0,in=90] (Tr) to [tight,above]node{\(f_{A_1}\)} (Rr);
				\draw[dashed,->,out=-30,in=170] (Lb) to [tight,below left]node{\(\pi_2\)} (Bb);
				\draw[dashed,->,out=0,in=-90] (Bb) to[tight,below right] node{\(\tau_{\Diag}^{a_2}\cdot f_{A_2}\)} (Rl);
				\draw[dashed,->,out=-30,in=170] (Lr) to [tight,below left]node{\(\pi_2\)} (Br);
				\draw[dashed,->,out=0,in=-90] (Br) to [tight,below]node{\(f_{A_2}\)} (Rr);
				\draw[->,dotted] (Lr) to [tight,above]node{\(\xi\)} (Rl);
			\end{tikzpicture}
			\caption{%
				The homotopy \(\xi\)
				between \(\varphi^3_{A_1}\circ\pi_1\)
				and \(\varphi^3_{A_2}\circ\pi_2\)
				in case 1
			}
			\label{fig:delooping:mf:homotopy:i}
		\end{subfigure}
		
		\begin{subfigure}{\textwidth}	
			\centering
			\begin{tikzpicture}[scale=0.6]
				\begin{scope}[shift={(-4,0)}]%
					\node(Lt) at (0,3){\(R/\mathfrak{B}\)};%
					\node(Lb) at (0,-3){\(R/\mathfrak{B}\)};%
					\node(Lr) at (3,0){\(R/\mathfrak{B}\)};%
					\node(Ll) at (-3,0){\(R/\mathfrak{B}\)};%
					\draw[->] (Lt) to[bend left=7,above right,tight]node{\(-y^{a_2}z^{a_2}\)} (Lr);
					\draw[->] (Lr) to[bend left=7,below left,tight,pos=0.3]node{\(-x^{a_2}\)} (Lt);
					\draw[->] (Ll) to[bend left=7,above right,tight,pos=0.3]node{\(y^{a_2}z^{a_2}\)} (Lb);
					\draw[->] (Lb) to[bend left=7,below left,tight]node{\(x^{a_2}\)} (Ll);
					\draw[->] (Ll) to[bend right=7,below right,tight,pos=0.3]node{\(y^{a_1}z^{a_1}\)} (Lt);
					\draw[->] (Lt) to[bend right=7,above left,tight]node{\(x^{a_1}\)} (Ll);
					\draw[->] (Lb) to[bend right=7,below right,tight]node{\(y^{a_1}z^{a_1}\)} (Lr);
					\draw[->] (Lr) to[bend right=7,above left,tight,pos=0.3]node{\(x^{a_1}\)} (Lb);
				\end{scope}
				\node(T) at (3,2.5){\(R/\mathfrak{A}_1\)};%
				\node(B) at (3,-2.5){\(R/\mathfrak{A}_2\)};%
				\node(R) at (8,0){\(V(\Diag)\otimes \Rd\)};
				\draw[dashed,->,bend left=7] (Lt) to [tight, above]node{\(\pi_1\)}  (T);
				\draw[dashed,->,bend left=7] (T) to [tight, above right]node{\(f_{A_1}\)} (R);
				\draw[dashed,->,bend right=7] (Lb) to [tight, below]node{\(\pi_2\)} (B);
				\draw[dashed,->,bend right=7] (B) to [tight, below right]node{\(f_{A_2}\)} (R);
				\draw[->,dotted] (Lr) to [tight,above]node{\(\xi\)} (R);
			\end{tikzpicture}
			\caption{%
				The homotopy \(\xi\)
				between \(\varphi^3_{A_1}\circ\pi_1\) 
				and \(\varphi^3_{A_2}\circ\pi_2\)
				in case 2.
				The overall tensor factor corresponding to the bonding arc \(a^*\) is omitted.
			}
			\label{fig:delooping:mf:homotopy:ii}
		\end{subfigure}
		\caption{%
			Commutative diagrams illustrating the second step 
			in the proof of \cref{lem:delooping:mf}
		}\label{fig:delooping:mf:homotopy}
	\end{figure}
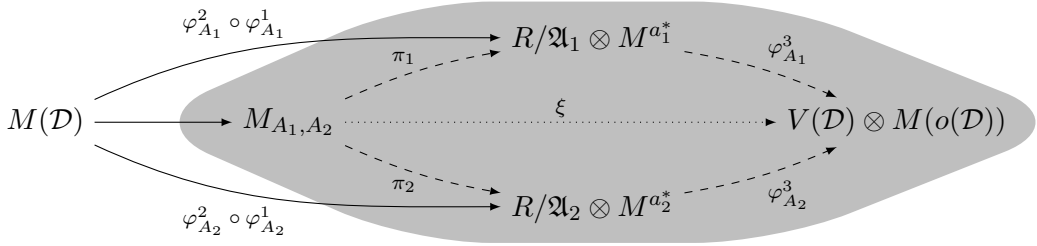
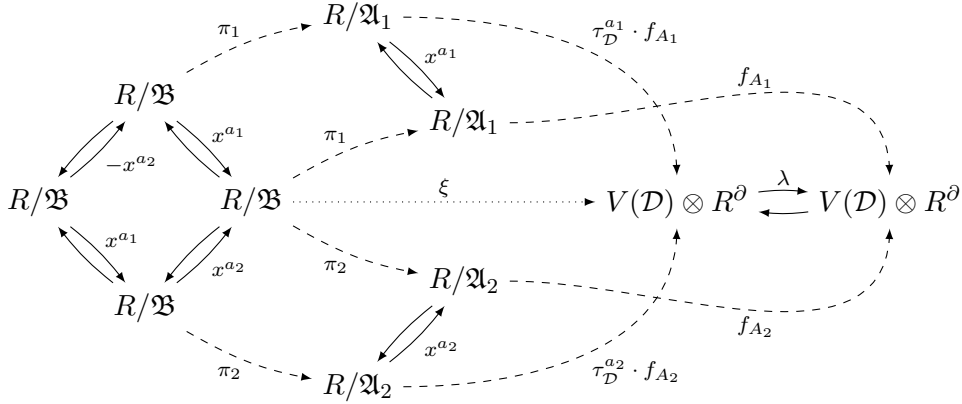
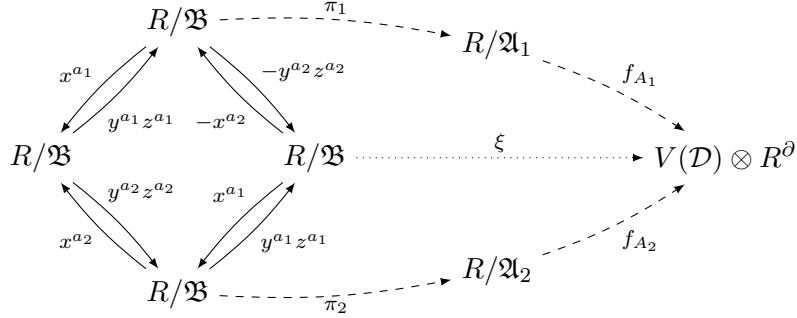
	
	\myitheading{Case 2} 
	Suppose \(a_1\) is not bonding arc; 
	then neither is \(a_2\). 
	Let \(a^*\) be the common bonding arc of \(A_1\) and \(A_2\). 
	The graph \(G_{A_1\cap A_2}\) has two connected components. 
	Let \(G'\) be the connected component 
	whose vertices correspond to
	closed components of \(\Diag\) only. 
	Each of the arcs \(a_1\) and \(a_2\) 
	connects \(G'\)
	to the other connected component of \(G_{A_1\cap A_2}\). 
	In particular, the arc \(a_1\) ends 
	on a closed component \(c_j\) in \(G'\). 
	Let \(e'\) and \(e''\) be the edges on \(c_j\) meeting \(a_1\). 
	By construction, these are different edges. 
	After potentially switching the roles of \(e'\) and \(e''\), 
	we can assume that 
	\(x^{a_1}=X^{e'}-X^{e''}\in R\). 
	As in the previous case, 
	let \(\mathfrak{b}=\mathfrak{a}(A_1\cup A_2)\) 
	so that by \cref{thm:ideal-independent-of-arc-system},
	\[
	\mathfrak{a}
	=
	\mathfrak{a}(a^*)
	=
	\mathfrak{b}+(x^{a_1})
	=
	\mathfrak{b}+(x^{a_2}).
	\]
	By \cref{cor:edge-ring-generator-criterion}, \(R/\mathfrak{b}\) is freely generated as a polynomial ring over \(\Rd\) by \([x^{a_1}]\) and \([Y_i]\) for \(i=1,\dots,m\). 
	
	Let \(\mathfrak{B}\) be the sum of \(\mathfrak{b}\) and the ideal in \(R\) generated by $y^{a}z^{a}$ for $a\in A_1\cap A_2 \smallsetminus\{a^*\}$, so that
	\[
	\mathfrak{A}_1=\mathfrak{A}(A_1)=\mathfrak{B}+(x^{a_2},y^{a_1}z^{a_1})
	\quad
	\text{ and }
	\quad
	\mathfrak{A}_2=\mathfrak{A}(A_2)=\mathfrak{B}+(x^{a_1},y^{a_2}z^{a_2}).
	\]
	We claim that \(R/\mathfrak{B}\) 
	is freely generated as an \(\Rd\)-module by 
	\[
	b_{k,\ell,\varepsilon}
	=
	(x^{a_1})^k
	(y^{a_1}z^{a_1})^\ell
	\prod_j[Y_j]^{\varepsilon_j}
	\quad
	\text{where }
	k,\ell\geq0
	\text{ and }
	\varepsilon = (\varepsilon_1,\dots,\varepsilon_m)\in\{0,1\}^m.
	\] 
	This can be seen as follows. 
	For \(i=1,\dots,m\), 
	suppose that \(c_i\) is connected in \(A_1\) 
	to the open diagram segment \(c_{\star(i)}\). 
	Then by \cref{prop:yz-sequence}, 
	the image of \(\mathfrak{B}\) in \(R/\mathfrak{a}\) 
	is generated by expressions 
	\([Y_i]^2+\varG [Y_i]-\alpha_i\) 
	for \(i\in\{1,\dots,m\}\smallsetminus\{j\}\), 
	where 
	\[
	\alpha_i
	=
	\begin{cases*}
		\Delta_{\star(i)}\pm [y^{a_1}z^{a_1}]
		&
		if \(c_i\) is a vertex in \(G'\),
		\\
		\Delta_{\star(i)}
		&
		otherwise.
	\end{cases*}
	\]
	Therefore, the image of \(\mathfrak{B}\) in \(R/\mathfrak{b}\) 
	is generated by expressions 
	\([Y_i]^2+\varG [Y_i]-\alpha_i+[x^{a_1}]\cdot\beta_i\) 
	for \(i\in\{1,\dots,m\}\smallsetminus\{j\}\), 
	where \(\beta_i\in R/\mathfrak{b}\).
	Similarly, 
	by \cref{lem:yz}
	\begin{align*}
		\pm[y^{a_1}z^{a_1}]
		&=
		[Y_j]^2+\varG[Y_j]-\alpha_j
		\in R/\mathfrak{a},
	\end{align*}
	where 
	\(\alpha_j=\Delta_\star\in \Rd\) 
	if \(a_1\) connects \(c_j\) to an open diagram segment, 
	namely \(c_\star\),
	and 
	\(\alpha_j=[Y_{j'}]^2+\varG [Y_{j'}]\) 
	if \(a_1\) connects \(c_j\) to a closed component, 
	namely \(c_{j'}\). 
	Therefore, 
	\[
	\pm[y^{a_1}z^{a_1}]
	=
	[Y_j]^2+\varG[Y_j]-\alpha_j
	+[x^{a_1}]\cdot \beta_j
	\in R/\mathfrak{b},
	\]
	where \(\beta_j\in R/\mathfrak{b}\).
	By considering the quantum grading of these expressions, 
	we see that for all \(i'\in\{1,\dots,m\}\), 
	\(\beta_{i'}\) is a linear expression in the edge variables \([Y_i]\) over \(\Rd[x^{a_1}]\).
	So by first eliminating all non-linear expressions in \([Y_j]\) 
	and then all non-linear expressions in \([Y_i]\) 
	for \(i\in\{1,\dots,m\}\smallsetminus\{j\}\), 
	we can express any element in \(R/\mathfrak{B}\) 
	as a unique linear combination of elements 
	\(b_{k,\ell,\varepsilon}\). 
	
	Next, we write the image of \(y^{a_2}z^{a_2}\) 
	in \(R/\mathfrak{B}\) in terms of our basis. 
	First, by applying \cref{prop:yz-sequence} 
	to the closed component \(c_j\) twice, 
	once for the arc system \(A_1\) and 
	once for the arc system \(A_2\), 
	we observe that there exists a sign $\tau\in\{\pm1\}$ such that 
	\[
		[y^{a_1}z^{a_1}]
		-
		\tau\cdot [y^{a_2}z^{a_2}]
		=\Delta \in R/(\mathfrak{B}+(x^{a_1})),
	\]
	for some \(\Delta\in \Rd\). 
	So there is some \(\alpha\in R/\mathfrak{B}\) such that 
	\begin{equation}\label{eq:delooping:mf:delta_yz:case2}
		[y^{a_1}z^{a_1}]
		-
		\tau\cdot [y^{a_2}z^{a_2}]
		=
		\Delta
		+
		\alpha\cdot[x^{a_1}]
		\in R/\mathfrak{B}. 
	\end{equation}
	
	We are now ready to compare the deformation retractions 
	\(\varphi_{A_1}\) and \(\varphi_{A_2}\). 
	For \(i=1,2\), 
	the map \(\varphi^2_{A_i}\circ\varphi^1_{A_i}\) 
	factors through some map 
	\[
	\pi_i\co
	M_{A_1,A_2}
	=
	R/\mathfrak{B}
	\otimes
	M^{a_1}
	\otimes
	M^{a_2}
	\otimes
	M^{a^*}
	\rightarrow
	R/\mathfrak{A}_i
	\otimes
	M^{a^*}.
	\]
	As in Case~1, 
	it therefore suffices to construct a homotopy between 
	\(\varphi^3_{A_1}\circ\pi_1\) and \(\varphi^3_{A_2}\circ\pi_2\).
	\cref{fig:delooping:mf:homotopy:ii} 
	shows a more detailed version of the diagram 
	contained in the shaded region of 
	\cref{fig:delooping:mf:homotopy:overview}. 
	On the tensor factors \(M^{a^*}\), 
	the maps \(\varphi^3_{A_i}\circ\pi_i\) 
	are identical. 
	When restricted to the first tensor factors, 
	they are determined by
	\[
	(f_{A_1}\circ\pi_1)\co 
	b_{k,\ell,\varepsilon}
	\mapsto
	\begin{cases}
	\prod_j \mu_j^{\varepsilon_j}
	&
	k=0=\ell
	\\
	0 
	&
	k>0\text{ or } \ell>0
	\end{cases}
	\quad
	\text{and}
	\quad
	(f_{A_2}\circ\pi_2)
	\co
	b_{k,\ell,\varepsilon}
	\mapsto
	\begin{cases}
	\Delta^\ell\prod_j \mu_j^{\varepsilon_j}
	&
	k=0
	\\
	0 
	&
	k>0
	\end{cases}
	\]
	To determine the second map, 
	we are using the 
	relation~\eqref{eq:delooping:mf:delta_yz:case2}.
	The desired homotopy \(\xi\) is now given by the map 
	represented by the dotted arrow in 
	\cref{fig:delooping:mf:homotopy:ii} 
	and defined by 
	\[
	\xi
	\co
	b_{k,\ell,\varepsilon}
	\mapsto
	\begin{cases}
	\Delta^{\ell-1}\prod_j \mu_j^{\varepsilon_j}
	&
	k=0\text{ and }\ell\neq0
	\\
	0 
	&
	\text{otherwise}
	\end{cases}
	\]
	on the first tensor factor and 
	by the identity on the tensor factor \(M^{a^*}\).
	Indeed, 
	precomposition with the differential \(d_2\) 
	labelled \(-y^{a_2}z^{a_2}\) 
	gives
	\begin{align*}
		\xi \circ d_2(b_{k,\ell,\varepsilon})
		&=
		-\tau\cdot \xi (( y^{a_1}z^{a_1}-\Delta -\alpha\cdot [x^{a_1}])\cdot b_{k,\ell,\varepsilon})
		\\
		&
		=
		-\tau\cdot 
		\Big(
			\xi (b_{k,\ell+1,\varepsilon})
			-
			\Delta\cdot \xi (b_{k,\ell,\varepsilon})
			-
			\underbrace{\xi(\alpha\cdot  b_{k+1,\ell,\varepsilon})}_{=0}
		\Big)
		=
		-\tau\cdot (f_{A_1}\circ\pi_1)(b_{k,\ell,\varepsilon})
	\end{align*}
	and precomposition with the differential \(d_1\) labelled \(y^{a_1}z^{a_1}\) gives
		\[
		\xi \circ d_1(b_{k,\ell,\varepsilon})
		=
		\xi (b_{k,\ell+1,\varepsilon})
		=
		(f_{A_2}\circ\pi_2)(b_{k,\ell,\varepsilon})
		\]
		In summary, 
		\[
		\partial(\xi)
		=
		d \circ \xi - (-1)^{\hslash(\xi)} \xi\circ d=d \circ \xi + \xi\circ d
		=
		(f_{A_2}\circ\pi_2)
		-
		\tau\cdot (f_{A_1}\circ\pi_1)
		\]
		Thus, \((f_{A_1}\circ\pi_1)\) is chain homotopic to \(\tau\cdot(f_{A_2}\circ\pi_2)\). 
\end{proof}

	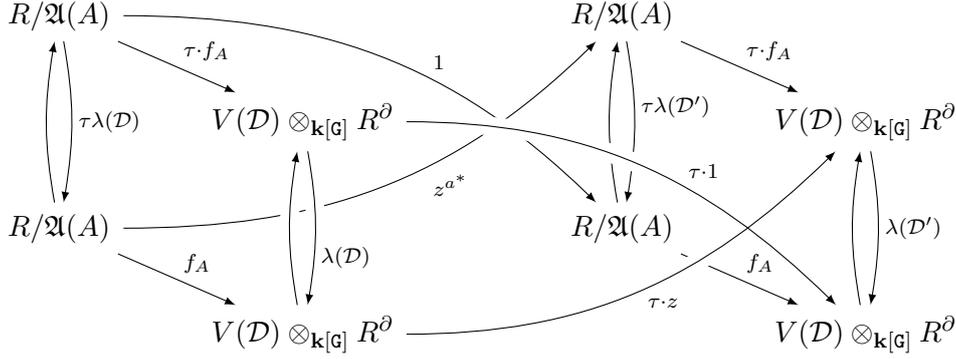
\begin{figure}[t]
		\(
			\begin{tikzcd}[column sep=1cm]
			R/\mathfrak{A}(A)
			\arrow[bend left=10]{dd}{\tau\lambda(\Diag)}
			\arrow{rd}{\tau\cdot f_A}
			\arrow[in=135,out=0]{rrrdd}[pos=0.6]{1}
			&&&
			R/\mathfrak{A}(A)
			\arrow[bend left=10]{dd}[pos=0.4]{\tau\lambda(\Diag')}
			\arrow[dd,bend right=10,leftarrow]
			\arrow{rd}{\tau\cdot f_A}
			\arrow[out=-135,in=0,leftarrow]{llldd}[pos=0.4]{z^{a^*}}
			\\
			&
			V(\Diag) \otimes_{\field[\varG]} \Rd
			\arrow[in=135,out=0,phantom,crossing over]{rrrdd}{}
			&&&
			V(\Diag) \otimes_{\field[\varG]} \Rd
			\arrow[dd,bend left=10, "\lambda(\Diag')"right]
			\\
			R/\mathfrak{A}(A)
			\arrow[uu,bend left=10]
			\arrow{rd}{f_A}
			&&&
			R/\mathfrak{A}(A)
			\arrow{rd}{f_A}
			\\
			&
			V(\Diag) \otimes_{\field[\varG]} \Rd
			\arrow[uu,bend left=10,crossing over]
			\arrow[uu,bend right=10, "\lambda(\Diag)"right,crossing over,leftarrow,pos=0.3]
			\arrow[in=-135,out=0,crossing over]{rrruu}[pos=0.5,swap]{\tau\cdot z}
			&&&
			V(\Diag) \otimes_{\field[\varG]} \Rd
			\arrow[uu,bend left=10]
			\arrow[in=0,out=135,leftarrow]{llluu}[pos=0.4,swap]{\tau\cdot 1}
		\end{tikzcd}
		\)
		\caption{%
			The edge map in the third step, case 1, 
			in the proof of \cref{lem:delooping:mf} 
		}\label{fig:delooping:mf:edge_maps:step1}
	\end{figure}
	
\subsection{Identification of edge maps up to signs}
	
	We now show that the constructed deformation retracts \(\varphi_v\) 
	satisfy the compatibility condition with edge maps \(D^{v}_{v'}\) 
	as stated in \cref{lem:delooping:mf}. 
	By \cref{prop:retracts-independent-of-arc-system}, 
	it suffices to verify this condition 
	for any arc systems on \(\Diag_T(v)\) and \(\Diag_T(v')\) 
	that we find convenient. 
	
	For notation, let \(\hat{a}\) be the node \(a^v_{v'}\)
	at which the labels of 
	\(\Diag=\Diag_T(v)=(D,\lab)\) and 
	\(\Diag'=\Diag_T(v')=(D,\lab')\) differ. 
	Let \(\lab_0=\lab(\hat{a})\) 
	and \(\lab_1=\lab'(\hat{a})\).
	Observe that \(\hat{a}\) is either a bonding arc 
	in both \(\Diag\) and \(\Diag'\) 
	or in neither labelled diagrams. 
	We treat these two cases separately.  
	
	\myitheading{Case 1}
	Suppose \(\hat{a}\) is a bonding arc in both \(\Diag\) and \(\Diag'\). 
	We can choose a common arc system \(A\) 
	for \(\Diag\) and \(\Diag'\)
	containing the bonding arc \(a^*=\hat{a}\). 
	Note that 
	\(\tau^{a^*}_{\Diag}=\tau^{a^*}_{\Diag'}=\tau\), 
	since the labelling of \(\Diag\) and \(\Diag'\) 
	agree away from \(a^*\). 
	By \cref{prop:edge-label-independent},
	\[
	[z^{a^*}]
	=
	[X^{a^*}_1+X^{a^*}_3+\varG]
	=
	[X^{a^*}_0+X^{a^*}_2+\varG]
	=
	[Y^{a^*}_x+Y^{a^*}_y+\varG]
	=
	x+y+\varG=z\in R/\mathfrak{a}(A).
	\]
	Therefore 
	the diagram in \cref{fig:delooping:mf:edge_maps:step1}
	commutes. 
	Moreover, its front face agrees with the map \(\mathcal{S}^v_{v'}\) 
	up to the sign \(\tau\).
	Hence
	\(
	\tau\cdot\mathcal{S}^v_{v'}\circ \varphi_v
	=
	\varphi_{v'}\circ \boldnowarningf
	\).
	So if \(\psi_v\) is the right inverse of the special deformation retract \(\varphi_v\), 
	\(
	\tau\cdot\mathcal{S}^v_{v'}
	=
	\varphi_{v'}\circ \boldnowarningf \circ \psi_v
	\).	

	\myitheading{Case 2}
	Suppose \(\hat{a}\) is a not bonding arc 
	in \(\Diag\) nor in \(\Diag'\). 
	In this case, \(\Diag'\) is obtained from \(\Diag\) 
	by either merging a closed component with another component or 
	splitting a closed component off another component.
	Thus, one diagram has one more closed component than the other diagram. 
	Let us pick a bonding arc \(a^*\) 
	in the diagram with one more closed component. 
	The arc \(a^*\) is then also a bonding arc in the other diagram.
	We treat merge and split maps separately:

	\myitheading{Merge maps} 
	Suppose \(\Diag'\) is obtained from \(\Diag\) by merging two components. 
	Let \(A=A_1\) be an arc system for \(\Diag'\) 
	containing the bonding arc \(a^*\). 
	Since the ends of the arc \(\hat{a}\) 
	lie on the same component of \(\Diag'\), 
	\(\hat{a}\not\in A\), and 
	\(A_0=A\cup\{\hat{a}\}\) is an arc system for~\(\Diag\). 	
	Let \(\mathfrak{A}\) be the ideal generated by 
	\(x^{a}_{\lab(a)}=x^{a}_{\lab'(a)}\) 
	for arcs \(a\not\in A_0\) of \(\Diag\) and
	\(y^{a}_{\lab(a)}z^{a}=y^{a}_{\lab'(a)}z^{a}\) 
	for all \(a\in A\smallsetminus \{a^*\}\).
	Observe that
	\[
	\mathfrak{A}_0
	=
	\mathfrak{A}(A_0)
	=
	\mathfrak{A}+(y^{\hat{a}}_{\ell_0}z^{\hat{a}})
	\subseteq
	\mathfrak{A}+(x^{\hat{a}}_{\ell_1})
	=
	\mathfrak{A}(A_1)
	=
	\mathfrak{A}_1.
	\]
	This inclusion comes from the fact that 
	\(x^{\hat{a}}_{\ell_1}=y^{\hat{a}}_{\ell_0}\). 
	
	The left hand side of \cref{fig:delooping:mf:edge_maps:step2} 
	shows the special deformation retract for \(M(\Diag)\) 
	obtained by applying the same procedure 
	as in the construction of the map \(\varphi_{A_0}\), 
	except that we do not simplify the matrix factorization 
	for the arc \(\hat{a}\). 
	The right hand side is a special deformation retract of \(M(\Diag')\). 
	It can be obtained from the first 
	by replacing the matrix factorization \(M^{\hat{a}}_{\lab_0}\) 
	by \(M^{\hat{a}}_{\lab_1}\).
	The map induced by \(\mathbf{f}\) 
	is indicated by the two diagonal arrows. 
	Only one of them matters to us, 
	namely the diagonal arrow 
	from the top left 
	to the bottom right
	labelled by the identity. 
	Indeed, after eliminating 
	\(y^{\hat{a}}_{\lab_0}z^{\hat{a}}\) and 
	\(x^{\hat{a}}_{\lab_1}\), respectively, 
	using \cref{lem:lin_sp_def_retr}, 
	this map induces the horizontal map 
	at the top of the following diagram:
	\[
	\begin{tikzcd}
		R/\mathfrak{A}_0\otimes M^{a^*}
		\arrow{d}{\varphi^3_{A_0}}
		\arrow{r}{1\otimes 1}
		&
		R/\mathfrak{A}_1\otimes M^{a^*}
		\arrow{d}{\varphi^3_{A_1}}
		\\
		V(\Diag)\otimes M(o(\Diag))
		\arrow{r}{}
		&
		V(\Diag')\otimes M(o(\Diag'))
	\end{tikzcd}
	\] 
	Noting that 
	\(o(\Diag')=o(\Diag)\) and that 
	\(\tau^{a^*}_{\Diag}=\tau^{a^*}_{\Diag'}\) 
	in the definition of 
	\(\varphi^3_{A_0}\) and \(\varphi^3_{A_1}\), 
	we restrict our attention to the first tensor factors:
	\[
	\begin{tikzcd}
		R/\mathfrak{A}_0
		\arrow{d}{f_{A_0}}
		\arrow{r}{1}
		&
		R/\mathfrak{A}_1
		\arrow{d}{f_{A_1}}
		\\
		V(\Diag)
		\arrow{r}{}
		&
		V(\Diag')
	\end{tikzcd}
	\] 
	We now do a case analysis to express the horizontal map 
	at the bottom of this diagram in terms of our basis 
	given by products of \(\mu_j\). 
	
	Suppose the saddle map merges two closed components 
	\(c_i\) and \(c_j\) of \(\Diag\) 
	to some closed component \(c_k\) of \(\Diag'\). 
	Let \(C\) be the index set of closed components 
	not involved in the merge operation.  
	Observe that as elements of \(R/\mathfrak{a}(A_1)\), 
	\([Y_i]=[Y_k]=[Y_j]\) by 
	\cref{prop:edge-label-independent}.
	Moreover, as elements of \(R/\mathfrak{A}_1\), \([Y_i][Y_j]=[Y_k]^2=\Delta_\star-\varG\cdot[Y_k]\) 
	by \cref{cor:yz}, 
	where \(c_\star\) is the open diagram segment 
	that \(c_k\) is connected to in \(A_1\). 
	Thus, the induced map is given by
	\[
	\mu_i^{\varepsilon_i}\mu_j^{\varepsilon_j}
	\textstyle\prod_{\ell\in C} \mu_\ell^{\varepsilon_\ell}
	\mapsto
	\begin{cases}
	\textstyle\prod_{\ell\in C} \mu_\ell^{\varepsilon_\ell}
	&
	\varepsilon_i=0=\varepsilon_j
	\\
	\mu_k\textstyle\prod_{\ell\in C} \mu_\ell^{\varepsilon_\ell}
	&
	\{\varepsilon_i,\varepsilon_j\}=\{0,1\}
	\\
	(\Delta_\star-\varG\cdot \mu_k)\textstyle\prod_{\ell\in C} \mu_\ell^{\varepsilon_\ell}
	&
	\varepsilon_i=1=\varepsilon_j
	\end{cases}
	\]
	After tensoring with the identity map on \(M(o(\Diag))=M(o(\Diag'))\), 
	we can eliminate the term \(\Delta_\star\) in the last case by a homotopy, 
	using \cref{lem:null-homotopies:Delta}.
	So the induced map is homotopic to \(\mathcal{S}^v_{v'}\). 
	
	Suppose the saddle map merges a closed component \(c_i\) 
	with an open segment \(c_\star\). 
	Let \(C\) be as above. 
	Then \([Y_i]=[Y_\star]\in \Rd\subset R/\mathfrak{a}(A_1)\) 
	and the induced map is given by 
	\begin{equation}\label{eq:delooping:mf:merge_with_open}
	\mu_i^{\varepsilon_i}\textstyle\prod_{\ell\in C} \mu_\ell^{\varepsilon_\ell}
	\mapsto
	\begin{cases}
	\prod_{\ell\in C} \mu_\ell^{\varepsilon_\ell}
	&
	\varepsilon_i=0
	\\
	[Y_\star]\prod_{\ell\in C} \mu_\ell^{\varepsilon_\ell}
	&
	\varepsilon_i=1
	\end{cases}
	\end{equation}
	The desired and actual values of \([Y_\star]\) 
	are shown in the second and third column of 
	\cref{tab:delooping:mf:edge_maps}. 
	Observe that either they agree or they differ by
	the summand \(x\) for \(\conn{\Diag}=\arcD\) and 
	the summand \(y\) for \(\conn{\Diag}=\arcT\). 
	In the first case, we are done. 
	In the second case, 
	we apply \cref{lem:null-homotopies:basic} to see that, 
	after tensoring with the identity map on \(M(o(\Diag))=M(o(\Diag'))\), 
	the induced map becomes homotopic to \(\mathcal{S}^v_{v'}\).

	\begin{figure}[t]
		\(
		\begin{tikzcd}[column sep=3cm,row sep=1cm]
			R/\mathfrak{A}\otimes M^{a^*}
			\arrow[d,bend left=10, "x^{\hat{a}}_{\lab_0}\otimes 1"right]
			\arrow[in=180,out=0]{rd}[description,label,pos=0.25]{1\otimes 1}
			&
			R/\mathfrak{A}\otimes M^{a^*}
			\arrow[d,bend left=10, "x^{\hat{a}}_{\lab_1}\otimes 1"right]
			\\
			R/\mathfrak{A}\otimes M^{a^*}
			\arrow[u,bend left=10, "y^{\hat{a}}_{\lab_0}z^{\hat{a}}\otimes 1"left]
			\arrow[in=180,out=0]{ru}[description,label,pos=0.25]{z^{\hat{a}}\otimes 1}
			&
			R/\mathfrak{A}\otimes M^{a^*}
			\arrow[u,bend left=10, "y^{\hat{a}}_{\lab_1}z^{\hat{a}}\otimes 1"left]
		\end{tikzcd}
		\)
		\caption{%
			The edge map in the third step, case 2, 
			in the proof of \cref{lem:delooping:mf}.
		}\label{fig:delooping:mf:edge_maps:step2}
	\end{figure}
	
	\begin{table}[t]
		\centering
		\begin{tabular}{c|c|c|c|c}
			\toprule
			&
			\multicolumn{2}{c|}{merge maps}
			&
			\multicolumn{2}{c}{split maps}
			\\
			\(\star\)
			&
			\(\operatorname{x}=\arcD\)
			&
			\(\operatorname{x}=\arcT\)
			&
			\(\operatorname{x}=\arcD\)
			&
			\(\operatorname{x}=\arcT\)
			\\
			\midrule
			\(0\)
			&
			\(0\)
			&
			\(0\)
			&
			\(\mu_i+\varG\)
			&
			\(\mu_i+\varG\)
			\\
			\(x\)
			&
			\DW{}{x}
			&
			\(x\)
			&
			\DW{\mu_i+\varG}{+x}
			&
			\DW{\mu_i+z}{-y}
			\\
			\(x+y\)
			&
			\DW{y}{+x}
			&
			\DW{x}{+y}
			&
			\(\mu_i+z\)
			&
			\(\mu_i+z\)
			\\
			\(y\)
			&
			\(y\)
			&
			\DW{}{y}
			&
			\DW{\mu_i+z}{-x}
			&
			\DW{\mu_i+\varG}{+y}
			\\
			\bottomrule
		\end{tabular}
		\medskip
		\caption{%
			Comparison of the edge maps induced by 
			merging/splitting with an open component. 
			\(\operatorname{x}= \conn{\Diag}=\conn{\Diag'}\).
			The first column indicates the open diagram segment \(c_\star\)
			subordinate to \(a^*\) 
			that is involved in the merge/split operation at \(\hat{a}\). 
			The non-highlighted entries in the second and third column 
			show the desired values of \([Y_\star]\in\Rd\) 
			for the map in \eqref{eq:delooping:mf:merge_with_open} 
			to agree with \(\mathcal{S}^v_{v'}\);
			the complete entries show the actual values of \([Y_\star]\in\Rd\). 
			Similarly, the last two columns show 
			the desired values of \((\mu_i+[Y_\star]+\varG)\) 
			for the map in \eqref{eq:delooping:mf:split_from_open} 
			to agree with \(\mathcal{S}^v_{v'}\); 
			the full entries show their actual values.
		}
		\label{tab:delooping:mf:edge_maps}
	\end{table}
	
	\myitheading{Split maps}
	Suppose \(\Diag'\) is obtained by splitting 
	a closed component off a component in \(\Diag\).
	Let \(A=A_0\) be an arc system for \(\Diag\) containing \(a^*\). 
	As above, $\hat{a}\not\in A$.
	Then \(A_1=A\cup\{\hat{a}\}\) is an arc system for \(\Diag'\). 
	Let \(\mathfrak{A}\) be the ideal 
	generated by \(x^{a}_{\lab(a)}=x^{a}_{\lab'(a)}\) 
	for arcs \(a\not\in A_1\) of \(\Diag'\) and 
	\(y^{a}_{\lab(a)}z^{a}=y^{a}_{\lab'(a)}z^{a}\) 
	for all \(a\in A\smallsetminus\{a^*\}\).
	Observe that
	\[
	\mathfrak{A}_0
	=
	\mathfrak{A}(A_0)
	=
	\mathfrak{A}+(x^{\hat{a}}_{\ell_0})
	\quad
	\text{and}
	\quad
	\mathfrak{A}_1
	=
	\mathfrak{A}(A_1)
	=
	\mathfrak{A}+(y^{\hat{a}}_{\ell_1}z^{\hat{a}}), 
	\]
	and since \(x^{\hat{a}}_{\ell_0}=y^{\hat{a}}_{\ell_1}\), 
	we get 
	\(\mathfrak{A}_0z^{\hat{a}}\subseteq \mathfrak{A}_1\). 
	
	As in the case of the merge maps, 
	we apply special deformation retractions 
	to all factors but the ones for \(a^*\) and \(\hat{a}\). 
	We obtain the same diagram as before, 
	namely the one in \cref{fig:delooping:mf:edge_maps:step2}.
	However, when passing to 
	\(V(\Diag)\otimes M(o(\Diag))\) and 
	\(V(\Diag')\otimes M(o(\Diag'))\), 
	we now need to eliminate 
	\(x^{\hat{a}}_{\lab_0}\) and 
	\(y^{\hat{a}}_{\lab_1}z^{\hat{a}}\), respectively. 
	So the edge map is induced by the map 
	indicated by the diagonal arrow 
	from the bottom left 
	to the top right. 
	We obtain the following diagram:
	\[
	\begin{tikzcd}
		R/\mathfrak{A}_0\otimes M^{a^*}
		\arrow{d}{\varphi^3_{A_0}}
		\arrow{r}{z^{\hat{a}}\otimes 1}
		&
		R/\mathfrak{A}_1\otimes M^{a^*}
		\arrow{d}{\varphi^3_{A_1}}
		\\
		V(\Diag)\otimes M(o(\Diag))
		\arrow{r}{}
		&
		V(\Diag')\otimes M(o(\Diag'))
	\end{tikzcd}
	\] 
	As in the previous case, 
	\(o(\Diag')=o(\Diag)\) and 
	\(\tau^{a^*}_{\Diag}=\tau^{a^*}_{\Diag'}\), 
	so we restrict our attention to the first tensor factors:
	\[
	\begin{tikzcd}
		R/\mathfrak{A}_0
		\arrow{d}{f_{A_0}}
		\arrow{r}{z^{\hat{a}}}
		&
		R/\mathfrak{A}_1
		\arrow{d}{f_{A_1}}
		\\
		V(\Diag)
		\arrow{r}{}
		&
		V(\Diag')
	\end{tikzcd}
	\] 
	By \cref{prop:edge-label-independent}, 
	\(
	z^{\hat{a}}
	=
	[Y_\diamond]+[Y_{\vartriangle}]+\varG
	\in R/\mathfrak{a}(A_1)
	\), 
	where \(c_\diamond\) and \(c_\vartriangle\) 
	are the two closed components/open diagram segments 
	connected by the arc \({\hat{a}}\) in \(\Diag'\). 
	Again, we distinguish two subcases:
	
	Suppose the saddle map splits a closed component \(c_k\) of~\(\Diag\) 
	into two closed components \(c_i\) and \(c_j\) of~\(\Diag'\). 
	Let \(C\) be the index set of closed components not involved in the merge operation. 
	Let \(c_\star\) be the open diagram segment 
	that \(c_k\) is connected to via arcs in \(A\). 
	Since $[Y_k]=[Y_i]=[Y_j]\in  R/\mathfrak{a}(A_0)$, 
	we have 
	\[
	[Y_k]
	\mapsto 
	[Y_j]([Y_i]+[Y_j]+\varG)
	=
	[Y_i][Y_j]+[Y_j](\varG+[Y_j])
	=
	[Y_i][Y_j]+\Delta_\star
	\]
	by \cref{cor:yz},  
	and the induced map is given by 
	\[
	\mu_k^{\varepsilon_k}
	\textstyle\prod_{\ell\in C} \mu_\ell^{\varepsilon_\ell}
	\mapsto
	\begin{cases}
	(\mu_i+\mu_j+\varG)\prod_{\ell\in C} \mu_\ell^{\varepsilon_\ell}
	&
	\varepsilon_k=0
	\\
	(\mu_i\mu_j+\Delta_\star)\textstyle\prod_{\ell\in C} \mu_\ell^{\varepsilon_\ell}
	&
	\varepsilon_k=1
	\end{cases}
	\]
	As in the case of the merge maps, 
	we now use \cref{lem:null-homotopies:Delta} 
	to conclude that the induced map 
	is homotopic to \(\mathcal{S}^v_{v'}\). 
	
	Suppose the saddle map splits a closed component \(c_i\) 
	off an open diagram segment \(c_\star\). 
	Let \(C\) be as above. 
	Then \([Y_i]=[Y_\star]\in R/\mathfrak{a}(A_0)\) and 
	the induced map is given by 
	\begin{equation}\label{eq:delooping:mf:split_from_open}
	\textstyle\prod_{\ell\in C} \mu_\ell^{\varepsilon_\ell}
	\mapsto
	(\mu_i+[Y_\star]+\varG)
	\textstyle\prod_{\ell\in C} \mu_\ell^{\varepsilon_\ell}
	\end{equation}
	The desired and actual values of \((\mu_i+[Y_\star]+\varG)\) 
	are shown in the last two columns of \cref{tab:delooping:mf:edge_maps}.
	As in the case of the merge maps, 
	we now use \cref{lem:null-homotopies:basic} 
	to conclude that the induced map is homotopic to \(\mathcal{S}^v_{v'}\). 
\qed

\bigskip
\begin{small}
	\pdfbookmark[section]{Acknowledgements}{Acknowledgements}
	\noindent\textbf{Acknowledgements.}
	The authors thank John Baldwin, William Ballinger, Adeel Khan, Yank\i\ Lekili, and Lukas Lewark for helpful conversations. 
	In the initial phase of this project, AK was supported by an AMS-Simons travel grant; LW was supported by an NSERC discovery accelerator supplement and the Simons Foundation and the Centre de Recherches Math\'ematiques through the Simons-CRM scholar-in-residence program; and CZ was supported by Lukas Lewark's Emmy Noether grant (project no.~412851057) from the DFG, as well as the SFB 1085 Higher Invariants in Regensburg. LW is currently supported by an NSERC discovery grant and a Killam accelerator research fellowship; and CZ is currently supported by an individual research grant (project no.~505125645) from the DFG and a Starting Grant (project no.~101115822) from the European Research Council (ERC).
\end{small}

\newcommand*{\arxiv}[1]{(\href{http://arxiv.org/abs/#1}{ArXiv:\ #1})}
\newcommand*{\arxivPreprint}[1]{\href{http://arxiv.org/abs/#1}{ArXiv preprint #1}}
\nocite{khtpp}
\bibliographystyle{myamsalpha}
\bibliography{main}

\end{document}